\documentclass[11pt]{aart}

\usepackage[letterpaper, hmargin=0.9in, top=0.9in, bottom=1.1in, footskip=0.6in]{geometry}

\usepackage{titlesec}
\titleformat{\section}[block]{\filcenter\normalfont\bfseries\large}{\thesection.}{.5em}{}\titlespacing*{\section}{0pt}{2\baselineskip}{1\baselineskip}
\titleformat{\subsection}[runin]{\normalfont\bfseries}{\thesubsection.}{.4em}{}[.]\titlespacing{\subsection}{0pt}{2ex plus .1ex minus .2ex}{.8em}
\titleformat{\subsubsection}[runin]{\normalfont\itshape}{\thesubsubsection.}{.3em}{}[.]\titlespacing{\subsubsection}{0pt}{1ex plus .1ex minus .2ex}{.5em}
\titleformat{\paragraph}[runin]{\normalfont\itshape}{\theparagraph.}{.3em}{}[.]\titlespacing{\paragraph}{0pt}{1ex plus .1ex minus .2ex}{.5em}

\usepackage[T1]{fontenc}
\usepackage[utf8]{inputenc}

\usepackage{mlmodern}

\usepackage{wrapfig}

\usepackage{tabularx,array}
\usepackage[bb=libus]{mathalpha}
\usepackage{geometry}
\usepackage{yfonts}
\usepackage[T1]{fontenc} 
\usepackage[utf8]{inputenc} 
\usepackage{mathtools}
\usepackage{amssymb}
\usepackage{amsthm}
\usepackage{bbm}
\usepackage{mathrsfs}
\usepackage[english]{babel} 
\usepackage{enumitem}
\usepackage{dsfont}
\usepackage{url}
\usepackage{graphicx}
\usepackage{braket}
\usepackage{nicematrix}
\usepackage{cancel}

\definecolor{vdarkred}{rgb}{0.6,0,0.2}
\definecolor{vdarkblue}{rgb}{0,0.2,0.6}
\usepackage[pdftex, colorlinks, linkcolor=vdarkblue,citecolor=vdarkred,pagebackref]{hyperref}

\renewcommand{\leq}{\leqslant}
\renewcommand{\geq}{\geqslant}

\newcommand{\N}{\mathbb{N}} % natuerliche Zahlen
\newcommand{\Z}{\mathbb{Z}} % ganze Zahlen
\newcommand{\Q}{\mathbb{Q}} % rationale Zahlen
\newcommand{\R}{\mathbb{R}} % reelle Zahlen
\newcommand{\C}{\mathbb{C}} % reelle Zahlen

\newcommand{\1}{\mathds{1}}

\theoremstyle{plain}
\newtheorem{thrm}{Theorem}[section]
\newtheorem{prop}[thrm]{Proposition}
\newtheorem{cor}[thrm]{Corollary}
\newtheorem{lemma}[thrm]{Lemma}

\newtheorem{ex}[thrm]{Example}

\theoremstyle{definition}
\newtheorem{mydef}[thrm]{Definition}
\newtheorem{assumption}[thrm]{Assumption}
\newtheorem{remark}[thrm]{Remark}

\usepackage{booktabs}

\numberwithin{equation}{section}
\numberwithin{figure}{section}

\begin{document}
	
	\title{Density of states of randomly sprinkled graphs}
%	\date{\vspace{-10ex}}
%	\date{}
	
	\author{Antti Knowles \and Steffen Polzer}
	\maketitle

\begin{abstract}
Independently at every vertex $x$ of a large core graph $\mathbb G$, we draw a random graph and connect $x$ to a subset of its vertices. This model of a randomly sprinkled graph captures qualitative features of commonly observed graphs; it can also be regarded as a model of quantum disorder, where disorder arises from local perturbations to the graph geometry. We show that it is naturally connected both to the Anderson model and to site percolation on $\mathbb G$ by deriving an Anderson-percolation representation for its spectrum. We characterize the support of the spectrum and derive quantitative bounds on the integrated density of states. We also analyse in detail the regime of sparse sprinkling, in particular investigating the behaviour of the density of states in the vicinity of atoms.
\end{abstract}

\setcounter{tocdepth}{1}
\tableofcontents
	
	\section{Introduction}
	
One of the most fundamental spectral properties of a graph is its \emph{empirical eigenvalue measure}, which describes its eigenvalue density. For a finite graph $\mathcal G$ with vertex set $\cal V$, it is defined as
\begin{equation} \label{def_DOS}
\mu \coloneqq \frac{1}{|\mathcal V|} \sum_{i = 1}^{|\mathcal V|} \delta_{\lambda_i}\,,
\end{equation}
where $\lambda_1, \dots, \lambda_{|\cal V|} \in \R$ is an enumeration of the eigenvalues of the adjacency matrix of $\mathcal G$. In a suitable infinite-volume limit, where $|\mathcal V| \to \infty$, the measure $\mu$ typically converges to a limiting measure -- the \emph{density of states} of the limiting graph. Depending on the complexity of the graph, the density of states can be very complicated. Complexity in graphs is often modelled using randomness, by drawing the edges of the graph from some prescribed probability distribution. A simple example of a random graph is the $d$-regular graph on $N$ vertices, where the graph is chosen uniformly at random from the set of all graphs on $N$ vertices such that each vertex has degree $d$. As $N \to \infty$, the empirical eigenvalue measure converges to the density of states of the Bethe lattice (the infinite $d$-regular tree),
 which is the Kesten--McKay law\footnote{See the imaginary part of \eqref{Equation: ST Kesten-McKay} below.}. Another simple example is the Erd\H{o}s--Rényi graph, where each edge of the complete graph on $N$ vertices is kept independently with probability $p \equiv p_N > 0$. If $Np \to \infty$ as $N \to \infty$, then the rescaled empirical eigenvalue measure converges to the semicircle law\footnote{Here we suppose that $p \leq 1/2$.}. On the other hand, if $d = Np$ is fixed as $N \to \infty$ then the limiting density of states is far more complicated with no known explicit form, and is only partially understood. In particular, if $d > 0$ it is known that the density of states has an infinite set of atoms at so-called totally real algebraic integers, corresponding to eigenvalues of finite trees\footnote{These eigenvalues are associated with \emph{tuning fork} eigenvectors; see below.}. For $d > 1$, the density of states has a nonzero continuous component, and for large enough $d$ is has a nonzero absolutely continuous component. We refer the reader to \cite[Theorem 3.3]{Bor26} for a recent summary of the extensive literature on this topic\footnote{Some key references are \cite{AB23, Bor16, BL10, BLS11, BSV17, CS21, Salez15, Salez19, Salez}.}.  See Figure \ref{fig:ER} for a simulation.

	\begin{figure}[ht!]
		\centering
		\includegraphics[width=0.5\linewidth]{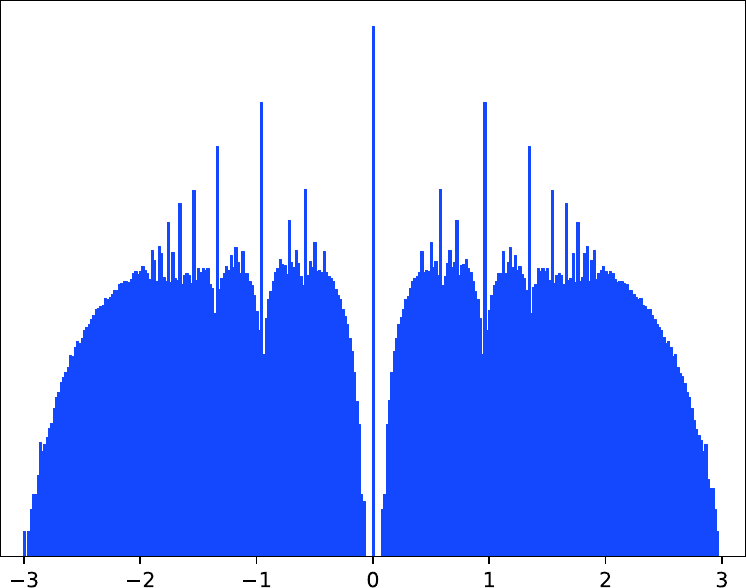}
		\caption{A Histogram of the eigenvalue density of the Erd\H{o}s--Rényi graph. The parameters are $N = 15\,000$ and $d = Np = 1.1$. The histogram was obtained by averaging over 50 realizations. The vertixal axis is logarithmic.}
		\label{fig:ER}
	\end{figure}

Many random graph models, as well numerous graphs observed in the real world, share the following qualitative structure with the Erd\H{o}s--Rényi graph with $d > 1$: a macroscopic \emph{core} to whose vertices are connected a set of smaller \emph{attached graphs}; see Figure \ref{fig:sprinkling}. One commonly used construction for the core of a graph is the Karp--Sipser core, obtained by iteratively removing a leaf and its unique neighbour. Typically, the core of the graph is expected to have a simple density of states\footnote{For instance, for the Erd\H{o}s-Rényi graph with large enough $d = Np$, it is known \cite{AB23} that the density of states of the Karp--Sipser core is purely absolutely continuous away from the spectral edge.}, and much of the complexity in the density of states of the original graph is expected to arise from the addition of the attached graphs.

	\begin{figure}[ht!]
		\centering
		\includegraphics[width=0.4\linewidth]{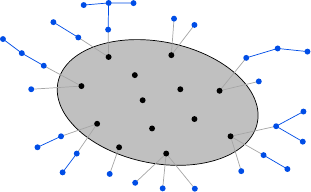}
		\caption{An illustration of the splitting of a graph into its core (in black) and the attached graphs (in blue).}
		\label{fig:sprinkling}
	\end{figure}

How this graph sprinkling procedure influences the density of states is a delicate question that has, to the best of our knowledge, not so far been investigated in the literature. The goal of this paper is to address it. To that end, we introduce a natural and general model of random graphs $\mathcal G$ that captures the qualitative structure described above and illustrated in Figure \ref{fig:sprinkling}, and we investigate its density of states. In this model, we choose a core -- a large (possibly random) graph $\mathbb G$ -- and, independently and identically distributed at every vertex $x$ of $\mathbb G$, we draw a random (possibly empty) graph $\mathrm G^x$ and connect $x$ to a subset of its vertices. This models a macroscopic core sprinkled with smaller attached graphs\footnote{Note that our model does not encompass the Erd\H{o}s--Rényi graph, because of the lack of independence of the core and the attached graphs, although we expect it to share many qualitative features with the Erd\H{o}s--Rényi graph.}.

Another important motivation for our model stems from the theory of disordered quantum systems. One of the most studied models of quantum disorder, the Anderson model, is the sum of the adjacency matrix of a lattice and a diagonal matrix whose entries are independent and identically distributed random variables. Its density of states plays an important role in many central questions of the field, entering in formulas for e.g.\ the free energy and the conductance; see e.g.\ \cite{AizenmanWarzel.2015}. The random diagonal matrix represents a random potential, which perturbs the adjacency matrix of the atomic lattice. A different perturbation of the lattice can be obtained by grafting vertices or small graphs to the vertices of the lattice, as in Figure \ref{fig:lattice}. The physical interpretation is that the disorder arises from a random sprinkling of supplementary atoms attached to the lattice (in contrast to the randomly scattered impurities within the lattice in the usual Anderson model).

	\begin{figure}[ht!]
		\centering
		\includegraphics[width=0.4\linewidth]{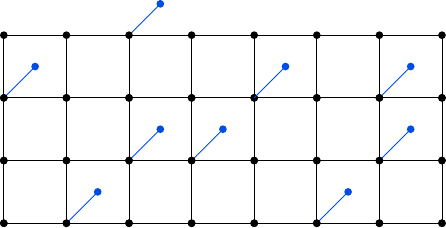}
		\caption{The lattice $\mathbb Z^2$ (in black) sprikled with attached leaves (in blue).}
		\label{fig:lattice}
	\end{figure}

Denote by $A$ the adjacency matrix of the sprinkled graph $\cal G$.
We shall analyse the empirical eigenvalue measure of $A$ in the limit of large graphs, i.e.\ in the infinite-volume limit $|\mathcal V| \to \infty$. To that end, it is helpful to rewrite the expectation of the empirical eigenvalue measure \eqref{def_DOS} for any $I \in \mathcal B(\R)$ as
\begin{equation} \label{DOS-rewriting}
\mathbb E[\mu(I)] = \mathbb E\bigg[\frac{1}{|\mathcal V|} \sum_{i = 1}^{|\mathcal V|} \1_I(\lambda_i)\bigg] = \mathbb E\bigg[\frac{1}{|\mathcal V|} \operatorname{tr} (\1_I(A)) \bigg] = \mathbb E\big[\langle \delta_o, \1_I(A) \delta_o \rangle\big],
\end{equation}
where the vertex $o$ is uniformly distributed over $\cal V$, $\1_I(A)$ denotes the spectral projection of $A$ onto the set $I$, and $\delta_{o} \coloneqq (\delta_{ox})_x$ is the unit vector supported on the vertex $o \in \cal V$. While the left-hand side of \eqref{DOS-rewriting} only makes sense for finite graphs, the right-hand side makes sense for an infinite random rooted graph $(\mathcal G, o)$ and is commonly called the {\em density of states} of $(\mathcal G, o)$.

The infinite-volume limit is naturally formulated in the sense of Benjamini--Schramm, i.e.\ with respect to the local topology on the set of rooted locally finite graphs; see Subsection \ref{Subsection: Benjamini Schramm convergence} below. We suppose that the core $\mathbb G_N$ with uniformly chosen root $\mathbb o_N$ converges to some infinite rooted graph $(\mathbb G, \mathbb o)$, and show that the corresponding sprinkled graph $\mathcal G_N$ with uniformly chosen root $o_N$ converges to an infinite rooted graph $(\mathcal G, o)$, which we also construct explicitly. As a consequence, the expected\footnote{Assuming a stronger mode of convergence for the core $\mathbb G_N$, one can also show convergence of the empirical eigenvalue measure to the density of states, in a suitable sense.} empirical eigenvalue measure of $(\mathcal G_N, o_N)$ converges to the density of states of $(\mathcal G, o)$, which we denote by $\nu$. The convergence is with respect to the Kolmogorov-Smirnov metric (uniform convergence of cumulative distribution functions), which in particular implies convergence of atoms.

It is easy to see that $\nu$ may have atoms coming from eigenvectors that are exclusively supported on the attached graphs. Such eigenvectors are sometimes called {\em tuning forks}, in analogy to the vibration modes of a tuning fork, which are supported on the prongs of the fork and vanish on the handle; see Figure \ref{fig:example_TF} below for some examples of tuning forks. Eigenvectors of a similar form have for example been applied in \cite{CCFST86} or \cite{AltDucatezKnowles.2021} in order to study the spectrum of site percolation on the integer lattice and the Erd\H{o}s--Rényi graph. The contribution of the tuning forks to the density of states, denoted by $\nu_{\operatorname{TF}}$, is relatively easy to analyse (see Proposition \ref{Proposition: Tuning fork contribution} below), and we shall therefore mainly focus on the density of states from which their contribution is removed. While it does not appear that the latter admits a simple explicit characterization, it turns out that a great deal about it can nevertheless be understood in the form of quantitative estimates, which constitute the core of our paper.

Let us now outline the main contributions of this paper; for precise statements, we refer to Subsections \ref{Subsection: notation spinkled graphs}  and \ref{Subsection: Benjamini Schramm convergence} for the definition of our model and Section \ref{Section: Results sprinkled graphs} for the results.

\begin{enumerate}
\item
We show that our model is naturally connected both to the Anderson model and to site percolation on $\mathbb G$. This yields a representation of the spectrum of the adjacency operator, excluding the tuning fork eigenvalues, which we call the {\em Anderson-percolation representation}. It is formulated in terms of the Anderson percolation Hamiltonian, which is an Anderson Hamiltonian on the site percolation subgraph of the core graph $\mathbb G$, with an explicit i.i.d.\ discrete random potential. Both the percolation probability and the random potential depend on an energy parameter $\lambda$; see \eqref{def_AH} below. In finite volume, this correspondence is an exact algebraic correspondence (see Proposition \ref{Proposition: Anderson representation in finite volume} below), which has a broad range of consequences in infinite volume for the measure $\nu - \nu_{\operatorname{TF}}$.
\end{enumerate}

As a simple example, we show (under suitable assumptions) that $\nu$ may have atoms which do not arise from tuning forks. For $\mathbb G = \mathbb Z^d$, we additionally show that atoms of $\nu$ can only appear at eigenvalues of the attached graphs which yields, for simple sprinkling distributions, a nontrivial lower bound on the continuous part of $\nu$. Further applications are listed below.

\begin{enumerate} \setcounter{enumi}{1}
\item
Assuming that $\mathbb G$ is deterministic and vertex transitive, and that its spectrum is a symmetric interval (satisfied e.g.\ when $\mathbb G$ is a suitable bipartite lattice), we characterize the support of $\nu$ explicitly. The proof uses the Anderson-percolation representation, and is inspired by the classical Kunz-Souillard argument \cite{KS80} for the Anderson model. See Figure \ref{fig:support} below for an illustration.
\item
We derive explicit upper and lower bounds on the integrated density of states of $\nu - \nu_{\operatorname{TF}}$; see Figure \ref{fig:comparedfs} below for an illustration.
\end{enumerate}

In finite volume, the Anderson-percolation representation reduces the effect of the sprinkling to a family of iterated rank-one perturbations of the adjacency matrix. This results in a qualitative picture of the effect of the sprinkling on the spectrum, which describes two competing effects: the sprinkling generates eigenvalues close to (so-called \emph{admissible}) eigenvalues of the attached graphs, while repelling the existing eigenvalues away from them, see Figure \ref{Figure: Smoothening is necessary} for an illustration of this effect. Compare also to Figure \ref{fig:ER} where we can see a similar effect in the spectrum of the Erd\H{o}s--Rényi graph.
Depending on which effect prevails, the mass of $\nu - \nu_{\operatorname{TF}}$ in a neighbourhood of an eigenvalue $\lambda$ of an attached graph can be either greater or smaller than in the original core, in which case we call $\lambda$ a \emph{source} or a \emph{sink}, respectively.

\begin{enumerate} \setcounter{enumi}{3}
\item
We investigate rigorously which of these effects prevails in the regime of sparse sprinkling, where $p \coloneqq \mathrm P(\mathrm G^x \neq \emptyset)$ is small. We find that, depending on the law of the attached graphs and the density of states of the core $\mathbb G$, eigenvalues $\lambda$ of the attached graphs can be sources or sinks.

In order to do so, we derive an expansion of $\nu - \nu_{\operatorname{TF}}$ in the limit of sparse sprinkling by computing it to first order in $p$ and estimating the second-order contribution. The first-order term $\phi$, which describes the leading contribution for sparse sprinkling, is in general not a signed measure but a genuine distribution. In addition to addressing the dichotomy of sources and sinks, we find that $\phi$ gives a rather accurate approximation to $\nu - \nu_{\operatorname{TF}}$; see Figure \ref{Figure: Example small p expansion} below for an illustration.

Moreover, we show that if the degree of the core $\mathbb G$ is large enough, the effect of the sprinkling near the eigenvalues of the attached graphs amounts to Cauchy peaks in $\nu - \nu_{\operatorname{TF}}$, which are centred around the eigenvalues of the attached graphs; see Figure \ref{fig:smallplargedglobal} below for an illustration.

Finally, in Appendix \ref{Subsection: Expansion around p=1}, we derive a similar expansion around $p = 1$. Unlike the trivial endpoint $p = 0$, the other endpoint $p = 1$ often corresponds to a nontrivial but explicitly solvable deterministic model. See Figure \ref{fig:expansionaround1} below for an illustration.
\end{enumerate}

We conclude this overview by remarking that, in this paper, we focus on the adjacency matrix $A$ of the graph $\mathcal G$, because it is the simplest matrix associated with $\cal G$ and also because it emerges naturally from the tight-binding approximation of a quantum particle hopping on a random graph, as in Figure \ref{fig:lattice}. Other matrices associated with $\mathcal G$, such as the Laplacian, are also of interest, and we believe our techniques should also be applicable to them.

\paragraph{Acknowledgements} We are grateful to Gérard Ben Arous for inspiring discussions on the problem and for helpful comments. We acknowledge funding from the European Research Council (ERC) and the Swiss State Secretariat for Education, Research and Innovation (SERI) through the consolidator grant ProbQuant, as well as funding from the Swiss National Science Foundation through the NCCR SwissMAP grant.

	\subsection{The model and basic notations}
	\label{Subsection: notation spinkled graphs} 
	Let $\mathbb G$ be a (large) finite (random or deterministic) graph\footnote{By a graph, we always mean a simple graph.} on the vertex set $\mathbb V$. For simplicity, we assume that $\mathbb V$ is deterministic. 
	Independently and identically distributed at any vertex $x\in \mathbb V$, we draw a random element $(\mathrm G^x, \varphi^x)$ consisting of a finite random graph $\mathrm G^x$ with vertex set $\mathrm V^x \subset \N$ and a vector $\varphi^x \in \{0, 1\}^{\mathrm V^x}$. We call $\mathrm G^x$ the {\em attached graph} in $x$ and specifically allow it be the empty graph (which we denote by $\emptyset$), i.e.\ the graph with empty vertex and edge set. We attach\footnote{While not necessary for our discussion, the reader might assume that every connected component of $\mathrm G^x$ contains at least at one vertex $t$ with $\varphi^x_t = 1$.} $\mathrm G^x$ to $\mathbb G$ by connecting $x$ to every vertex $t\in \mathrm V^x$ such that $\varphi^x_t = 1$; see Figure \ref{fig:notation} for an illustration. 
	\begin{figure}[ht!]
		\centering
		\includegraphics[width=0.3\linewidth]{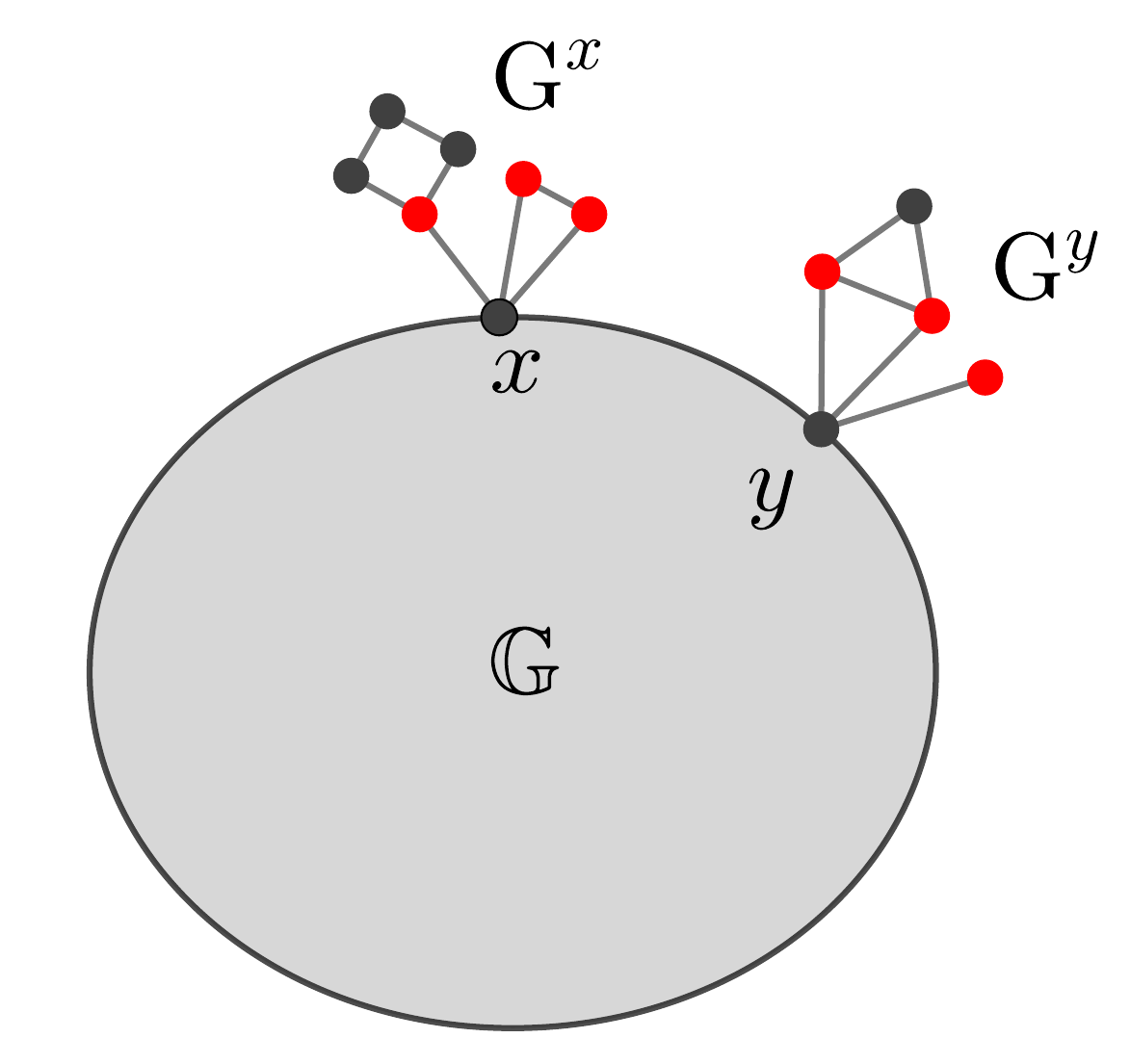}
		\caption{Illustration of our notation. All vertices $(zt)$ for which $\varphi^z_t = 1$ are drawn in red.}
		\label{fig:notation}
	\end{figure}
	We denote for $x\in \mathbb V$ and $t\in \mathrm V^x$ by $(xt)$ the vertex $t$ of $\mathrm G^x$ that has been added to $\mathbb G$. Hence, the vertex set $\mathcal V$ of the resulting sprinkled graph $\mathcal G$ is
	\begin{equation*}
		\mathcal V \coloneqq \mathbb V \cup \{(xt): x \in \mathbb V^+, \, t\in \mathrm V^x\} \quad \text{ where } \quad \mathbb V^+ \coloneqq \{x\in \mathbb V:\, \mathrm G^x \neq \emptyset\}.
	\end{equation*}
	If we denote by $B$ the adjacency matrix of $\mathbb G$ and, for $x\in \mathbb V$, by $\mathrm  T^x$ the adjacency matrix of $\mathrm G^x$, then
	the adjacency matrix $A$ of $\mathcal G$ has the block form
	\begin{equation}
		\label{Equation: blockform adjacency matrix}
		A = 
		\begin{pmatrix}
			B & E \\
			E^* & T		
		\end{pmatrix}
		\text{ with }
		T_{(xs)(yt)} \coloneqq \delta_{xy} \mathrm T^{x}_{st}, \quad E_{x (yt)} \coloneqq \delta_{xy} \varphi^x_{t}
	\end{equation}
	i.e.\ $T$ itself has block-diagonal form.
	We denote by
	\begin{equation*}
		m_x \coloneqq |\mathrm V^x|, \quad m_x^* \coloneqq \sum_{t\in \mathrm V^x} \varphi^x_t
	\end{equation*}
	the total number of vertices contained in $\mathrm G^x$ and the number of vertices in $\mathrm G^x$ connected to $x$ respectively.
	We call the distribution $\mathrm P$ of $(\mathrm G^x, \varphi^x)$ (for some arbitrary $x\in \mathbb V$) the sprinkling distribution, and denote by $\mathrm E$ the corresponding expectation.
	We drop the sub- and superscript $x$ if we draw a generic element from the sprinkling distribution, independent of all other sources of randomness; e.g.\ we denote by $(\mathrm G, \varphi)$ a generic element drawn from  $\mathrm P$ and by $m$ the total number of vertices contained in $\mathrm G$.
	
Finally, we denote by $\nu$ the expected empirical eigenvalue measure of $\mathcal G$,
	\begin{equation*}
		\nu \coloneqq \mathbb E\bigg[ \frac{1}{|\mathcal V|} \sum_{\lambda \in \sigma(A)} \gamma(A,\lambda) \delta_{\lambda}\bigg] \quad \text{ where } \gamma(A, \lambda) \coloneqq \dim \operatorname{ker}(A- \lambda),
	\end{equation*}
where $\mathbb E$ denotes expectation over the law $\mathbb P$ of the whole random graph $\mathcal G$.
	
For the convenience of the reader, we summarize the main notations of this paper in Appendix \ref{sec:symbols}.
	
	\subsection{The limit of large graphs}
	\label{Subsection: Benjamini Schramm convergence}
	We shall study $\nu$ in the limit of large graphs $\mathbb G$. To that end, we work in the framework of Benjamini-Schramm convergence, which we now briefly review. We refer the reader to \cite[Section 2]{Bor16} for a more in-depth introduction into the topic. For a rooted graph\footnote{A rooted graph $(G, r)$ is a connected graph $G$ together with a distinguished vertex $r\in V$, the root. In case the graph $G$ is not connected and $r\in V$, we sometimes abuse notation and denote by $(G, r)$ the rooted graph $(G(r), r)$ where $G(r)$ is the connected component containing $r$.}  $(G, r)$ and $R>0$ we denote by $(G, r)_R$ the ball in $G$ around $r$ with radius $R$ (with respect to the graph distance). For two rooted graphs $(G_1, r_1)$, $(G_2, r_2)$ we write $(G_1, r_1) \cong (G_2, r_2)$ if there exists a graph isomorphism between $G_1$ and $G_2$ that identifies the roots i.e.\ which maps $r_1$ to $r_2$. 
	An unlabeled rooted graph is the equivalence class of a rooted graph $(G, r)$ modulo graph isomorphisms that identify the roots. The set $\mathscr G$ of locally finite, unlabelled, rooted graphs equipped with the topology generated by the metric
	\begin{equation*}
		d\big((G_1, r_1), (G_2, r_2)\big) \coloneqq \sum_{R=1}^\infty 2^{-R} \1_{\{ (G_1, r_1)_R \ncong (G_2, r_2)_R\}  }
	\end{equation*} 
	is a Polish space whose topology is called the local topology. Notice that a sequence $((G_N, r_N))_N$ converges to $(G, r)$ with respect to $d$ if and only if $(G_N, r_N)_R \cong (G, r)_R$ eventually for every $R>0$.
	
	In order to study the sprinkling in the limit of large graphs, we assume that $\mathbb G_N$ is for every $N\in \mathbb N$ a finite (random or deterministic) graph, that $\mathbb V_N$ is deterministic, and that $|\mathbb V_N| \to \infty$ as $N \to \infty$. After uniformly choosing a vertex $\mathbb o_N$ from $\mathbb V_N$ as a root (independently from $\mathbb G_N$), we obtain a sequence of random rooted graphs $((\mathbb G_N, \mathbb o_N))_N$, which we assume to converge in distribution (with respect to the local topology) to some random rooted graph $(\mathbb G, \mathbb o)$. Two simple examples are the integer lattice $\mathbb Z^d$, which is the limit of $ \mathbb G_N = [-N, N]^d \cap \mathbb Z^d$, and the Bethe lattice $\mathbb B_d$, i.e.\ the infinite $d$-regular tree, which is the limit\footnote{Notice that the the ball of radius $N$ around a fixed vertex $\mathbb o\in \mathbb B_d$ does not converge to $\mathbb B_d$ as the latter is non-amenable: the number of vertices in the boundary is non-negligible compared to the total number of vertices in the ball.} of the random $d$-regular graph on $2N$ vertices.
	
	We now sprinkle our finite graphs $\mathbb G_N$ as in Subsection \ref{Subsection: notation spinkled graphs} (with the distribution of attached graphs being chosen constant in $N\in \mathbb N$) to obtain the sprinkled graph $\mathcal G_N$ with vertex set $\mathcal V_N$. Let $o_N$ be a vertex chosen uniformly at random from $\mathcal V_N$ (conditionally on $\mathcal G_N$). Under the assumption $\mathrm E[m] < \infty$, we shall show that the sequence $((\mathcal G_N, o_N))_N$ converges in distribution (with respect to the local topology) to a random rooted graph $(\mathcal G, o)$ (which we shall explicitly construct in terms of a sprinkling of $(\mathbb G, \mathbb o)$). As shown in \eqref{DOS-rewriting}, the expected empirical eigenvalue measure $\nu_N$ of $\mathcal G_N$ coincides with the expected spectral measure of $\mathcal G_N$ in the root $o_N$, i.e.
\begin{equation*}
\nu_N(I) = \mathbb E\big[\langle \delta_{o_N}, \1_I(A_N) \delta_{o_N} \rangle\big].
\end{equation*}	
The weak convergence of $(\mathcal G_N, o_N)$ to $(\mathcal G, o)$ in particular implies that $(\nu_N)_N$ converges to the density of states $\nu$ of $(\mathcal G, o)$ in a suitable sense; see Subsection \ref{Subsection: Results existence of the infinite volume limit} below.
	
	\section{Results}
	\label{Section: Results sprinkled graphs}
	\subsection{Existence of the infinite-volume limit}
	\label{Subsection: Results existence of the infinite volume limit}
	We start by explicitly constructing a random rooted graph $(\mathcal G, o)$ to which the sprinkled graphs $(\mathcal G_N, o_N)$ converge to as $N\to \infty$.
	
	\begin{thrm}
		\label{Theorem: Limit Theorem}
		Assume that $\mathrm E[m] < \infty$.
		The sequence of sprinkled graphs $((\mathcal G_N, o_N))_N$ converges in distribution with respect to the local topology to the random rooted graph $(\mathcal G, o)$ obtained in the following way: Conditionally on $(\mathbb G, \mathbb o)$:
		\begin{enumerate}
			\item Draw independently at any $x\in \mathbb V \setminus\{\mathbb o\}$ the random element $(\mathrm G^x, \varphi^x)$ from $\mathrm P$.
			\item Independently from this, draw $(\mathrm G^{\mathbb o}, \varphi^\mathbb o)$ from the biased distribution
			\begin{equation}
				\label{Equation: biased distribution}
				\widehat{\mathrm P}(\, \cdot \, )  \coloneqq \sum_{k=0}^\infty \frac{  (k+1) \mathrm P(m = k)}{1 + \mathrm E[m]} \mathrm P(\, \cdot \, | \,m = k).
			\end{equation}
			\item Define $\mathcal G$ to be the graph obtained by sprinkling $\mathbb G$ with $(\mathrm G^x, \varphi^x)_{x \in \mathbb V}$ as described in Subsection \ref{Subsection: notation spinkled graphs} above. Conditionally on $\mathcal G$, choose $o$ uniformly at random from the vertices $\{\mathbb o\} \cup \{(\mathbb o t): \, t \in \mathrm V^\mathbb o  \}$.
		\end{enumerate}
	\end{thrm}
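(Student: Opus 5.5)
The plan is to reduce convergence in distribution in the local topology to convergence of expectations of test functions depending only on a finite ball. It suffices to show, for every $R \in \N$ and every finite rooted graph $H$, that $\mathbb P((\mathcal G_N, o_N)_R \cong H) \to \mathbb P((\mathcal G, o)_R \cong H)$. Indeed, the sets $\{(G,r)_R \cong H\}$ are clopen and generate the topology, and they form a convergence-determining class because $\mathscr G$ is Polish and these cylinder sets form a $\pi$-system whose finite disjoint unions give all balls.

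First, decompose $o_N$ according to its ``anchor'' $x\in\mathbb V_N$. Here $o_N$ equals either $x$ or $(xt)$ for some $t\in \mathrm V^x$. Conditionally on $\mathcal G_N$, the anchor is $x$ with probability $(1+m_x)/|\mathcal V_N|$, where $|\mathcal V_N| = \sum_{y\in \mathbb V_N}(1+m_y)$. Given the anchor $x$, the root is uniform on $\{x\}\cup\{(xt)\}$. By the law of large numbers (conditionally on $\mathbb G_N$ the $m_y$ are i.i.d.\ with $\mathrm E[m]<\infty$, and $\mathbb V_N$ is deterministic), we have $|\mathcal V_N|/|\mathbb V_N| \to 1+\mathrm E[m]$ in probability. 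This is where the biasing $(k+1)\mathrm P(m=k)/(1+\mathrm E[m])$ in \eqref{Equation: biased distribution} enters.

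Concretely, write
\begin{equation*}
\mathbb P\big((\mathcal G_N,o_N)_R\cong H\big) = \mathbb E\bigg[\frac{|\mathbb V_N|}{|\mathcal V_N|}\cdot\frac{1}{|\mathbb V_N|}\sum_{x\in\mathbb V_N}\sum_{v\in\{x\}\cup\{(xt)\}} \1\{(\mathcal G_N,v)_R\cong H\}\bigg].
\end{equation*}
The next step is to replace $|\mathbb V_N|/|\mathcal V_N|$ by the constant $(1+\mathrm E[m])^{-1}$. The error is controlled by the fact that the inner normalized sum $S_N$ is bounded by $|\mathcal V_N|/|\mathbb V_N|$, whose expectation is exactly $1+\mathrm E[m]$. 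Hence the family $\{S_N\}$ is uniformly integrable. Combined with convergence in probability of the ratio, this shows that the replacement error tends to zero. Making this uniform integrability argument precise is the main technical step: it needs $\mathrm E[m]<\infty$ and the i.i.d.\ structure, and one should check that no truncation is needed beyond dominating by $1+m_{\mathbb o_N}$.

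After the replacement, the quantity becomes $(1+\mathrm E[m])^{-1}\mathbb E\big[\sum_{v\in\{\mathbb o_N\}\cup\{(\mathbb o_N t)\}}\1\{(\mathcal G_N,v)_R\cong H\}\big]$ with $\mathbb o_N$ uniform on $\mathbb V_N$. The event $(\mathcal G_N,v)_R\cong H$ for $v$ anchored at $\mathbb o_N$ depends only on two pieces of data. The first is $(\mathbb G_N,\mathbb o_N)_R$. The second is the attached graphs $(\mathrm G^y,\varphi^y)$ for $y$ in that ball, which are i.i.d.\ $\mathrm P$ and independent of $\mathbb G_N$. The distance to the core is at most $R$, and any path leaving the attachment region passes through core vertices.

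Consequently the expectation equals $\mathbb E[F_R((\mathbb G_N,\mathbb o_N)_R)]$ for a function $F_R$ defined on finite rooted graphs. This function is bounded by $1+\mathrm E[m]$ after integrating out the sprinklings: conditionally on the ball, the sum over $v$ is at most $1+m_{\mathbb o_N}$. Since $(\mathbb G_N,\mathbb o_N)_R$ converges in distribution to $(\mathbb G,\mathbb o)_R$ on a countable discrete set of isomorphism classes, and $F_R$ is bounded, the expectation converges to $\mathbb E[F_R((\mathbb G,\mathbb o)_R)]$.

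Finally, I would identify this limit with $\mathbb P((\mathcal G,o)_R\cong H)$ for the construction in the statement. Multiplying $\mathrm P$ at the root by $(1+m)/(1+\mathrm E[m])$ gives $\widehat{\mathrm P}$, and the sum over $v$ divided by $1+m$ is the uniform choice of $o$ among $\{\mathbb o\}\cup\{(\mathbb o t)\}$.
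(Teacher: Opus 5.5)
Your argument is correct, but it is organized differently from the paper's.

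The paper first proves an abstract size-biasing lemma (Lemma~\ref{Lemma: Convgerence of biased distribution}). For i.i.d.\ weights $W_x$ (there $W_x=1+m_x$) and an index $I_N$ chosen with probability proportional to $W_x$, it shows that the conditional law of finitely many weights given $I_N=1$ converges in total variation to $(\widehat W_1,W_2,\dots,W_M)$. It then applies Skorokhod's representation twice to build a coupling in which the $R$-balls of $(\mathcal G_N,o_N)$ and $(\mathcal G,o)$ eventually coincide almost surely.

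You proceed more directly:
\begin{itemize}
\item you write $\mathbb P((\mathcal G_N,o_N)_R\cong H)$ as an expected empirical fraction;
\item you replace the random normalization $Q_N:=|\mathcal V_N|/|\mathbb V_N|$ by $1+\mathrm E[m]$;
\item you use locality and the independence of the sprinkling from $(\mathbb G_N,\mathbb o_N)$ to reduce to a bounded function $F_R$ of the core ball;
\item you identify the limit through the density $(1+m)/(1+\mathrm E[m])$ of $\widehat{\mathrm P}$ with respect to $\mathrm P$.
\end{itemize}

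The replacement step needs one correction. The bound $S_N\le Q_N$ together with $\mathbb E[Q_N]=1+\mathrm E[m]$ does not by itself give uniform integrability; what actually drives the step is $Q_N\to 1+\mathrm E[m]$ in $L^1$. The step is also simpler than your discussion suggests. Since $S_N\le Q_N$, you have pointwise $|S_N/Q_N-S_N/(1+\mathrm E[m])|\le|1-Q_N/(1+\mathrm E[m])|$, and its expectation tends to zero by the $L^1$ law of large numbers.

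Your route is shorter and avoids the relabelling and Skorokhod couplings. The paper's route instead produces an almost-sure coupling and a lemma that allows weights to vanish with positive probability. The paper reuses that lemma for $W_x=\1_{\{x\in\mathbb V_\lambda\}}$ in Proposition~\ref{Proposition: Weak convergence of DOS measures of Anderson Hamiltonian}, where the coupling is needed for a strong resolvent convergence argument.
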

	We point out that the bias of the sprinkling distribution in the root bears resemblance to the bias of the offspring distribution in the root for unimodular Galton--Watson trees, see e.g.\ \cite[Equation (1)]{AB23}. 
	Unsurprisingly, one can show that the graph $(\mathcal G, o)$ has conditionally on $\{o =  \mathbb o\}$ the same distribution as the rooted graph obtained by an iid sprinkling of $(\mathbb G, \mathbb o)$ with the unbiased distribution $\mathrm P$ and by keeping $\mathbb o$ as the root. Theorem \ref{Theorem: Limit Theorem} in particular implies that the limit $(\mathcal G, o)$ is, as the limit of a sequence of unimodular graphs, unimodular \cite[Lemma 2.1]{Bor16}. For our purposes it will be sufficient to notice that unimodularity implies that the adjacency operator $A$, acting on finitely supported $\psi \in \C^\mathbb V$, is almost surely essentially self-adjoint \cite[Proposition 2.2]{Bor16}, and that ``everything shows up at the root'', meaning that for every measurable subset $M \subseteq \mathscr G$
	\begin{equation}
		\label{Equation: Everything shows up at the root}
		\mathbb P ((\mathcal G, o) \in M) = 1 \implies \mathbb P (\forall v \in \mathcal V: \, (\mathcal G, v) \in M) = 1
	\end{equation}
	see \cite[Lemma 3]{Salez}. Let $\nu$ denote the density of states of $(\mathcal G, o)$, i.e.\ the expected spectral measure of $\mathcal G$ in the root $o$, defined by $\nu(I) = \mathbb E\big[\langle \delta_{o}, \1_I(A) \delta_{o} \rangle\big]$ for all $I \in \mathcal B(\R)$ (where we abuse notation and denote the self-adjoint closure of $A$ by $A$ as well) and let $F(\lambda) \coloneqq \nu((-\infty, \lambda])$ denote the integrated density of states (i.e.\ the distribution function of $\nu$). Let $F_N$ denote the distribution function of the expected empirical eigenvalue measure $\nu_{N}$ of $\mathcal G_N$.
	As a corollary of the weak local convergence given by Theorem \ref{Theorem: Limit Theorem} we obtain with\footnote{While \cite[Theorem 2.5]{Bor16} is formulated for a deterministic sequence of finite graphs, its proof via \cite[Corollary 2.4]{Bor16} and \cite[Proposition 2.7]{Bor16} directly generalizes to our setup.} \cite[Theorem 2.5]{Bor16} convergence of $\nu_N$ to $\nu$ in the Kolmogorov--Smirnov metric, i.e.\ uniform convergence of $F_N$ to $F$ as $N\to \infty$.
	\begin{cor}
		\label{Corollary: Convergence of DOS-measures}
		We have $F_{N} \to F$ uniformly as $N\to \infty$. In other words, $\nu_{N} \to \nu$ weakly and $\nu_{N}(\{\lambda\}) \to \nu(\{\lambda\})$ for all $\lambda \in \R$.
	\end{cor}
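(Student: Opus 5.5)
The plan is to deduce the corollary from Theorem \ref{Theorem: Limit Theorem} by following the route of \cite[Theorem 2.5]{Bor16}. That route has two ingredients: weak convergence of the expected spectral measures at the root, and convergence of atoms via a rank argument. Uniform convergence of $F_N$ to $F$ is equivalent to weak convergence together with convergence of all atoms. Indeed, weak convergence gives $F_N(\lambda)\to F(\lambda)$ at continuity points of $F$. Convergence of the masses $\nu_N(\{\lambda\})\to\nu(\{\lambda\})$ at every $\lambda$ then upgrades this to pointwise convergence everywhere, including the left limits. A standard Pólya-type argument for monotone functions bounded in $[0,1]$ then yields uniformity.

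\emph{Step 1: weak convergence.} For each fixed $k$, the moment $\langle\delta_{o_N},A_N^k\delta_{o_N}\rangle$ counts closed walks of length $k$ from the root. It is therefore a bounded-on-balls function of $(\mathcal G_N,o_N)_{\lceil k/2\rceil}$. Moments alone are not enough, since degrees are unbounded and the moments need not determine $\nu$. Instead I will use resolvents, as in \cite[Corollary 2.4]{Bor16}. Fix $z\in\C\setminus\R$. The map $(G,r)\mapsto\langle\delta_r,(A-z)^{-1}\delta_r\rangle$ is bounded by $|\operatorname{Im}z|^{-1}$. It is continuous in the local topology on the set of graphs whose adjacency operator is essentially self-adjoint: local convergence together with essential self-adjointness gives strong resolvent convergence. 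Essential self-adjointness holds almost surely for the unimodular limit $(\mathcal G,o)$ by \cite[Proposition 2.2]{Bor16}. The continuous mapping theorem applied to Theorem \ref{Theorem: Limit Theorem} then gives convergence of the Stieltjes transforms, $\mathbb E\langle\delta_{o_N},(A_N-z)^{-1}\delta_{o_N}\rangle\to\mathbb E\langle\delta_o,(A-z)^{-1}\delta_o\rangle$. Since these are probability measures, this yields $\nu_N\to\nu$ weakly.

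\emph{Step 2: atoms.} Weak convergence already gives $\limsup_N\nu_N(\{\lambda\})\le\nu(\{\lambda\})$, by closedness of $\{\lambda\}$. The lower bound is the main obstacle. Here I would follow \cite[Proposition 2.7]{Bor16}. The quantity $\nu_N(\{\lambda\})=\mathbb E[\dim\ker(A_N-\lambda)/|\mathcal V_N|]$ is a normalized kernel dimension, and the plan is to approximate it from below by local quantities. Choose $\ell$ large and remove from $\mathcal G_N$ all vertices of degree larger than $\ell$. This changes $A_N$ by a matrix of rank at most twice the number of removed vertices. By the rank inequality, the normalized kernel dimension changes by at most $2\,\mathbb P(\deg o_N>\ell)$. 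This error is uniformly small in $N$ by tightness of root degrees, which follows from the weak convergence in Theorem \ref{Theorem: Limit Theorem}.

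On the truncated graph, which has bounded degree, $\dim\ker$ is controlled by the spectral projection approximated through polynomial or continuous test functions. Lower semicontinuity of the atom mass under local convergence then holds for bounded-degree graphs. The point is that for bounded degrees the normalized kernel dimension is a limit of local functionals, and one uses the Lück-type approximation argument in \cite{Bor16}. Sending $\ell\to\infty$ and using the same rank bound on the limit $(\mathcal G,o)$ gives $\liminf_N\nu_N(\{\lambda\})\ge\nu(\{\lambda\})$.

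The only adaptation needed from \cite{Bor16} is that $\mathbb G_N$ and the sprinkling are random rather than deterministic. In each step I therefore take expectations over the graph, which only requires linearity of the estimates and bounded integrands; the rank bounds hold pathwise. Combining Steps 1 and 2 with the monotone-function argument above yields uniform convergence of $F_N$ to $F$.
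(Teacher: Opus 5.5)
Your outline matches the paper's argument, which deduces the corollary from Theorem~\ref{Theorem: Limit Theorem} by invoking \cite[Theorem 2.5]{Bor16}: its proof via \cite[Corollary 2.4]{Bor16} (weak convergence through resolvents and essential self-adjointness) and \cite[Proposition 2.7]{Bor16} (the atom estimate) carries over to random $\mathbb G_N$, because the relevant bounds hold pathwise and can be averaged. The one point to state more carefully is the lower bound $\liminf_N \nu_N(\{\lambda\}) \geq \nu(\{\lambda\})$ on the truncated bounded-degree graphs: it does not follow from approximating $\1_{\{\lambda\}}$ by continuous or polynomial test functions (that only gives the $\limsup$ bound), but from the integrality argument behind \cite[Proposition 2.7]{Bor16}, which makes the mass of $(\lambda-\varepsilon,\lambda+\varepsilon)\setminus\{\lambda\}$ small as $\varepsilon \downarrow 0$ uniformly in $N$ --- this is what your ``Lück-type approximation'' has to supply.
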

	In particular, any atom of $\nu$ necessarily has to be a totally real algebraic integer, i.e.\ the root of a real-rooted monic polynomial with integer coefficients.
	
	To ensure existence of the limit $(\mathcal G, o)$, we shall from now on always assume that $\mathrm E[m]<\infty$.
	We shall state all of our additional assumptions separately before the respective result.
	
	\subsection{Atoms and the Anderson-percolation representation}
	\label{Subsection: results atoms and the Anderson-percolation representation}
	We start our study of the density of states by looking at its atoms. We define
	\begin{equation*}
		\mathrm S^x \coloneqq \sigma(\mathrm T^{x}), \quad \mathrm D \coloneqq \big\{ \lambda \in \R:\, \mathrm P(\lambda \in \mathrm S^x  ) > 0 \big\}
	\end{equation*}
	to be the spectrum of the attached graph in $x$ and its support, where by convention $\mathrm S^x \coloneqq \emptyset$ if $\mathrm G^x = \emptyset$. One can easily see that $\nu$ might have atoms in $\mathrm D$ which come from eigenvectors that are supported solely on the attached graphs. We call eigenvectors of this form {\em tuning forks}.
	
	\begin{mydef}
		We call a non-zero vector $\psi \in l^2(\mathcal V)$ a {\em tuning fork} to the eigenvalue $\lambda\in \R$ if $A\psi = \lambda \psi$ and $\psi_x = 0$ for all $x\in \mathbb V$. Consequently, we call $\lambda\in \R$ a {\em tuning fork eigenvalue} if there exists a tuning fork to the eigenvalue $\lambda$. We denote by $\mathcal T$ the set of tuning fork eigenvalues. 
	\end{mydef}
	See Figure \ref{fig:example_TF} for examples.
	\begin{figure}
		\centering
		\includegraphics[width=0.7\linewidth]{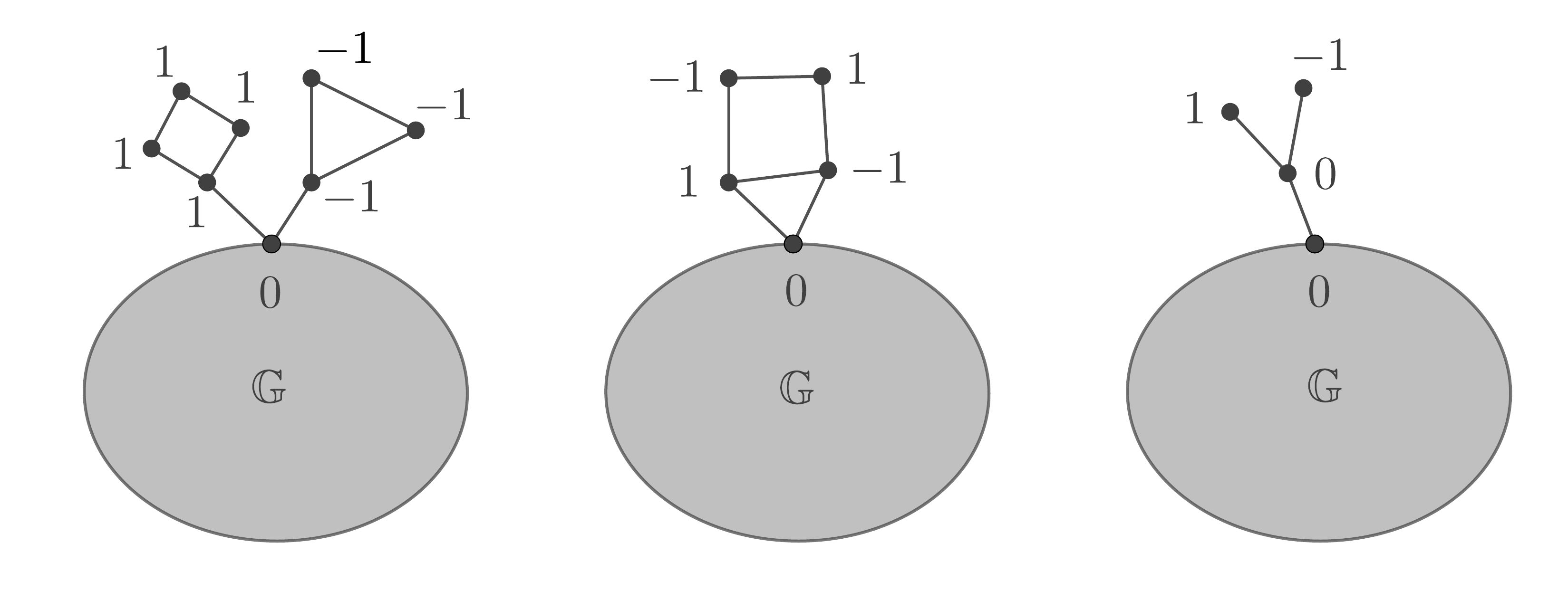}
		\caption{Example of tuning forks to the eigenvalues $2$, $-2$ and $0$ (from the left). 
		In the first case, $2$ is an admissible eigenvalue in the sense of Definition \ref{Definition: Admissible eigenvalue}, in the second and third case $-2$ and $0$ are not admissible eigenvalues.}
		\label{fig:example_TF}
	\end{figure}
	Notice that tuning forks do not depend on the interaction between $\mathbb G$ and the attached graphs:
	if $\psi^x  \in \mathbb C^{\mathrm V^x}$ is a non-zero vector such that
	\begin{equation*}
		\mathrm T^{x} \psi = \lambda \psi, \quad \langle \varphi^x, \psi^x \rangle = 0
	\end{equation*}
	then we obtain a tuning fork by extending $\psi^x$ by zero to all of $\mathcal V$, i.e.\ by setting $\psi_y \coloneqq 0$ and $\psi_{(yt)} \coloneqq \delta_{xy} \psi^x_t$ for all $y\in \mathbb V$ and $t\in \mathrm V_y$. On the other hand, every tuning fork is the direct sum of tuning forks of that form. In particular, we have
	\begin{equation}
		\label{Equation: tuning fork contained in sepctrum of AG}
		\mathcal T  \, \subseteq \,  \bigcup_{x\in \mathbb V} \mathrm S^x.
	\end{equation}
	Eigenvectors of a similar form have already been used in \cite{CCFST86} in order to show discontinuity of the density of states of the infinite cluster of site percolation on the integer lattice. 
	Notice that in the case where $(\mathbb G, \mathbb o)$ is almost surely infinite, both sets in \eqref{Equation: tuning fork contained in sepctrum of AG} are almost surely constant by the Borel--Cantelli lemma (recall that both are subsets of the totally real algebraic integers, which are countable). We introduce the tuning fork-contribution $\nu_{ \operatorname{TF}}$ to the density of states. 
	Let
	\begin{align*}
		\nu_{\operatorname{TF}} &\coloneqq \lim_{N\to \infty}\mathbb E\Big[\frac{1}{|\mathcal V_N|} \sum_{\lambda \in \mathrm D} \gamma_{\operatorname{TF}}(A_N, \lambda) \delta_\lambda \Big] \\
		\text{ where } \quad \gamma_{\operatorname{TF}}(A_N, \lambda) &\coloneqq \dim \big\{\psi\in \mathbb C^{\mathcal V_N}:\, A_N \psi = \lambda \psi\, \text{ and } \psi_x = 0 \text{ for all } x\in \mathbb V_N\big\}
	\end{align*}
	denotes the contribution of tuning forks to the multiplicity of the eigenvalue $\lambda$; we shall see in the proofs of Proposition \ref{Proposition: Tuning fork contribution} and Lemma \ref{Lemma: Total variation convergence of tuning fork contribution} that this limit indeed exists in total variation. Let us express the tuning fork-contribution in terms of the sprinkling distribution. We define
	\begin{equation*}
		\mathrm S^x_* \coloneqq \{\lambda \in \mathrm S^x:\, \exists \psi\in \C^{\mathrm V^x}:\, \mathrm T^{x}\psi = \lambda \psi \text{ and } \langle \varphi^x, \psi \rangle \neq 0 \}
	\end{equation*}
	to be the set of all eigenvalues of the attached graph in $x$ to which there exists an eigenvector that is non-orthogonal to $\varphi^x$. 
	As usual, we drop the sub- and superscript $x$ if we draw a generic element from the sprinkling distribution, independently of all other sources of randomness. As before, for a matrix $M$ and $\lambda \in \R$, we denote $\gamma(M, \lambda) \coloneqq \dim\operatorname{ker}(M-\lambda)$.
	\begin{prop}
		\label{Proposition: Tuning fork contribution}
		We have
		\begin{equation*}
			\nu_{\operatorname{TF}} = \frac{1}{1 + \mathrm E[m]} \mathrm E\Big[\sum_{\lambda \in \mathrm S}\big(\gamma(\mathrm T, \lambda) -\1_{\{\lambda \in  \mathrm S_*\}} \big) \delta_\lambda \Big].
		\end{equation*}
		In particular,
		\begin{equation}
			\label{Equation: total mass of tuning fork contribution}
			\nu_{\operatorname{TF}}(\R) = \frac{\mathrm E\big[ m  - | \mathrm S_*|\big]}{1 + \mathrm E[m]}. 
		\end{equation}
	\end{prop}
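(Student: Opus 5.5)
The plan is to compute $\gamma_{\operatorname{TF}}(A_N,\lambda)$ exactly in finite volume, average it, and then pass to the limit. By the discussion preceding the proposition, every tuning fork is a direct sum of tuning forks supported on a single attached graph. Concretely, if $\psi$ is a tuning fork then $\psi_x=0$ for all $x\in\mathbb V_N$. The eigenvalue equation at a vertex $(xt)$ reads $(\mathrm T^x\psi^x)_t=\lambda\psi^x_t$, since the only neighbour of $(xt)$ outside $\mathrm G^x$ is $x$, where $\psi$ vanishes. The eigenvalue equation at $x$ reads $\sum_y B_{xy}\psi_y+\langle\varphi^x,\psi^x\rangle=\lambda\psi_x$, which reduces to $\langle \varphi^x,\psi^x\rangle=0$. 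Hence the tuning fork eigenspace for $\lambda$ is exactly
\[
\bigoplus_{x\in\mathbb V_N}\big(\ker(\mathrm T^x-\lambda)\cap\{\varphi^x\}^\perp\big).
\]

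Next I compute the dimension of each summand. Let $P_\lambda$ denote the orthogonal projection onto $\ker(\mathrm T^x-\lambda)$. Since $\mathrm T^x$ is symmetric, $\ker(\mathrm T^x-\lambda)\cap\{\varphi^x\}^\perp$ is the orthogonal complement of $P_\lambda\varphi^x$ inside $\ker(\mathrm T^x-\lambda)$. Its dimension is therefore $\gamma(\mathrm T^x,\lambda)-1$ if $P_\lambda\varphi^x\neq0$, and $\gamma(\mathrm T^x,\lambda)$ otherwise. The condition $P_\lambda\varphi^x\ne0$ is exactly $\lambda\in\mathrm S^x_*$, because some eigenvector is non-orthogonal to $\varphi^x$ iff the projection is nonzero. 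Consequently
\[
\gamma_{\operatorname{TF}}(A_N,\lambda)=\sum_{x\in\mathbb V_N}\big(\gamma(\mathrm T^x,\lambda)-\1_{\{\lambda\in\mathrm S^x_*\}}\big).
\]
The sum over $\lambda\in\mathrm D$ may be replaced by the sum over $\lambda\in\mathrm S^x$ almost surely, since $\mathrm S^x\subseteq\mathrm D$ almost surely (countably many candidates of positive probability).

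It remains to handle the normalisation $1/|\mathcal V_N|$, where $|\mathcal V_N|=|\mathbb V_N|+\sum_x m_x$ is random. This is the only real obstacle. Write
\[
\frac{1}{|\mathcal V_N|}\sum_x X_x=\frac{|\mathbb V_N|^{-1}\sum_x X_x}{1+|\mathbb V_N|^{-1}\sum_x m_x},
\]
where $X_x=\sum_{\lambda\in\mathrm S^x}(\gamma(\mathrm T^x,\lambda)-\1_{\{\lambda\in\mathrm S^x_*\}})\delta_\lambda$ is an i.i.d.\ family of finite measures with total mass at most $m_x$, independent of $\mathbb G_N$. By the weak law of large numbers, using $\mathrm E[m]<\infty$, the denominator converges in probability to $1+\mathrm E[m]$. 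Likewise, for each fixed $\lambda$ the mass $|\mathbb V_N|^{-1}\sum_xX_x(\{\lambda\})\to\mathrm E[X(\{\lambda\})]$ in probability. The whole ratio is bounded by $1$, since the tuning fork count never exceeds $|\mathcal V_N|$, so bounded convergence gives convergence of expectations atom by atom.

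To upgrade to total variation, I use that the total masses are dominated. The ratio's total mass converges in $L^1$ to $\mathrm E[m-|\mathrm S_*|]/(1+\mathrm E[m])$. Note that $\sum_\lambda\gamma(\mathrm T,\lambda)=m$ and $\sum_\lambda \1_{\{\lambda\in\mathrm S_*\}}=|\mathrm S_*|$. Atomwise convergence of measures on the countable set $\mathrm D$, together with convergence of total masses, yields total variation convergence by Scheffé's lemma. This gives both the formula for $\nu_{\operatorname{TF}}$ and \eqref{Equation: total mass of tuning fork contribution}.
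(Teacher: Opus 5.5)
Your proposal is correct and follows essentially the paper's argument: an exact finite-volume formula $\gamma_{\operatorname{TF}}(A_N,\lambda)=\sum_{x}\big(\gamma(\mathrm T^x,\lambda)-\1_{\{\lambda\in\mathrm S^x_*\}}\big)$, then the law of large numbers for numerator and denominator together with bounded convergence. The differences are minor. You obtain the formula by directly identifying the tuning-fork eigenspace as $\bigoplus_x \ker(\mathrm T^x-\lambda)\cap\{\varphi^x\}^\perp$, rather than through the block-rank computation in Proposition~\ref{Proposition: Anderson representation in finite volume}. You also get total-variation convergence from Scheffé's lemma, using $\sum_\lambda \mathrm E[\gamma(\mathrm T,\lambda)-\1_{\{\lambda\in\mathrm S_*\}}]=\mathrm E[m-|\mathrm S_*|]$, instead of the tail-truncation argument of Lemma~\ref{Lemma: Total variation convergence of tuning fork contribution}; this is slightly cleaner.
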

	
	We now represent the non-tuning fork-contribution $(\nu -\nu_{ \operatorname{TF}})(\{\lambda\})$ to an atom in terms of a family of Anderson models on a graph obtained by site percolation on $\mathbb G$.
	For $z\in \mathbb C\setminus \mathrm S^x$, we define
	\begin{equation*}
		r_x(z) \coloneqq \big\langle \varphi^x, (\mathrm  T^x-z)^{-1} \varphi^x \big\rangle
	\end{equation*}
	where $r_x \coloneqq 0$ in case that $\mathrm G^x = \emptyset$.
	Notice that we can continuously extend $r_x$ to a function $r_x: \hat{\mathbb C} \to \hat{\mathbb C}$ on the Riemann sphere $\hat{\mathbb C} = \mathbb C \cup \{\infty\}$. Then $r_x(\lambda) = \infty$ if and only if $\lambda \in \mathrm S^x_*$.
	We define the vertex set
	\begin{equation*}
		\mathbb V_{\lambda} \coloneqq \{x \in \mathbb V:\, \lambda \notin \mathrm S^x_*\}
	\end{equation*}
	and denote by $\mathbb G_{\lambda}$ the induced subgraph of $\mathbb G$ and by $B_\lambda$ its adjacency operator. Provided that $\mathrm P(r(\lambda) \neq \infty)>0$, one can show that $(\mathbb G_\lambda, \mathbb o)$ conditionally on $\{\mathbb o \in \mathbb V_\lambda\}$ is unimodular as well, as it is the Benjamini--Schramm limit of the graphs obtained from site percolation on $\mathbb G_N$, compare to the proof of Proposition \ref{Proposition: Weak convergence of DOS measures of Anderson Hamiltonian}. Hence, 
	$B_\lambda$ is almost surely essentially self-adjoint as well. On $l^2(\mathbb V_\lambda)$ we define the operator
	\begin{equation} \label{def_AH}
		H_{\lambda} \coloneqq B_\lambda + V_\lambda \quad \text{ where } \quad V_\lambda \coloneqq -\sum_{x\in \mathbb V_{\lambda}} r_x(\lambda) \langle \delta_x, \, \cdot \, \rangle \delta_x.
	\end{equation}
	In particular, for $\lambda \notin \mathrm D$ we have $\mathbb V_{\lambda} = \mathbb V$ and $H_\lambda$ is, conditionally on $o\in \mathbb V$ (which removes the bias of the distribution of $r_\mathbb o$), the Anderson Hamiltonian with potential $(-r_x(\lambda))_{x\in \mathbb V}$ on $l^2(\mathbb V)$. In order to guarantee self-adjointness of $H_\lambda$ by the Kato-Rellich Theorem,  we assume that at least one of the following two assumptions is satisfied:
	\begin{align}
		\text{We have }\operatorname{sup}_{x \in \mathbb V}\operatorname{deg}(x) < \infty\text{ almost surely.} \label{Assumption: bounded degrees in G} \tag {BoundedDeg}\\
		\text{The exists some }M>0\text{ s.t. }m\leq M \text{ almost surely.} \label{Assumption:: m a.s. bounded} \tag{BoundedAG}
	\end{align}
	For $\lambda\in \R$, we define $\rho_{\lambda}$ to be the density of states of $H_\lambda$ conditionally on $\{o \in \mathbb V_\lambda\}$ i.e.\
	\begin{equation*}
		\rho_{\lambda}(I) \coloneqq \mathbb E\big[ \big\langle \delta_o, \1_I(H_\lambda) \delta_o \big\rangle \big|o \in \mathbb V_\lambda\big], \quad I \in \mathcal B(\R).
	\end{equation*}
	As the next theorem shows, we can express the non-tuning fork contribution to the atoms of the density of states $\nu$ of our sprinkled graph in terms of the family $(\rho_{\lambda})_{\lambda \in \R}$ of density of states.
	\begin{thrm}
		\label{Theorem: Anderson representation of atoms}
		Assume that \eqref{Assumption: bounded degrees in G} or \eqref{Assumption:: m a.s. bounded} holds. Then we have for all $\lambda \in \R$
		\begin{equation*}
			\nu(\{\lambda\}) = \nu_{\operatorname{TF}}(\{\lambda\}) + \frac{\mathrm P(\lambda \notin \mathrm S_*)}{1+\mathrm E[m]} \rho_{\lambda} (\{\lambda\}).
		\end{equation*}
		If $I\subseteq \R$ is an open interval and $\lambda \in \overline{I}$ then
		\begin{equation*}
			\nu(I) \geq  \nu_{\operatorname{TF}}(I) + \frac{\mathrm P(\lambda \notin \mathrm S_*)}{1+\mathrm E[m]} \rho_{\lambda} (I). 
		\end{equation*}
	\end{thrm}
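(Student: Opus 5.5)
We first establish the statement in finite volume and then pass to the limit with Corollary \ref{Corollary: Convergence of DOS-measures}, together with the analogous convergence for the Anderson--percolation Hamiltonians (as announced in the text before \eqref{def_AH}).

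\textbf{Finite-volume correspondence.} Fix a finite core $\mathbb G_N$ and $\lambda \in \R$. Let $\psi = (u, w)$ with $u \in \C^{\mathbb V_N}$ and $w$ supported on the attached graphs. The eigenvalue equation $A\psi = \lambda\psi$ reads
\begin{equation*}
Bu + Ew = \lambda u, \qquad E^*u + Tw = \lambda w .
\end{equation*}
Decompose each $w^x$ along the spectral decomposition of $\mathrm T^x$. If $\lambda \notin \mathrm S^x_*$, the component $\lambda$-eigenspace part of $w^x$ is orthogonal to $\varphi^x$, hence a tuning fork. The rest is forced to be $w^x = -u_x(\mathrm T^x-\lambda)^{-1}\varphi^x$, interpreted on the orthogonal complement of the $\lambda$-eigenspace. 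This contributes $\langle \varphi^x, w^x\rangle = -r_x(\lambda) u_x$.

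If $\lambda \in \mathrm S^x_*$, the component of $\varphi^x$ in the $\lambda$-eigenspace is nonzero. Solvability of $(\mathrm T^x-\lambda)w^x = -u_x\varphi^x$ then forces $u_x = 0$, while $\langle\varphi^x,w^x\rangle$ becomes a free parameter $c_x$.

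Consequently $u$ is supported on $\mathbb V_\lambda$. Restricting the first equation to $\mathbb V_\lambda$ gives $(B_\lambda + V_\lambda)u = \lambda u$. The equations at $x \notin \mathbb V_\lambda$ are $(Bu)_x + c_x = 0$, which determine the $c_x$ uniquely. Therefore
\begin{equation*}
\gamma(A_N,\lambda) = \gamma_{\operatorname{TF}}(A_N,\lambda) + \gamma(H_{\lambda,N},\lambda) + \#\{\text{extra free directions}\}.
\end{equation*}
The extra free directions come from each $x \notin \mathbb V_\lambda$: once $c_x$ is fixed, the $\lambda$-eigenspace part of $w^x$ is determined up to a tuning fork. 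So this count is zero beyond the tuning-fork part. In the bookkeeping, the $\lambda$-eigenvector of $\mathrm T^x$ with nonzero overlap with $\varphi^x$ is exactly the one subtracted in Proposition \ref{Proposition: Tuning fork contribution}.

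\textbf{Normalizing the multiplicity identity.} Dividing by $|\mathcal V_N|$ and taking expectations yields
\begin{equation*}
\nu_N(\{\lambda\}) = \nu_{\operatorname{TF},N}(\{\lambda\}) + \frac{\mathbb E|\mathbb V_{\lambda,N}|}{|\mathcal V_N|}\,\rho_{\lambda,N}(\{\lambda\}).
\end{equation*}
Here $\rho_{\lambda,N}$ is the expected empirical measure of $H_{\lambda,N}$. The prefactor equals $|\mathbb V_N|\,\mathrm P(\lambda\notin\mathrm S_*)/\mathbb E|\mathcal V_N|$ up to concentration, and this tends to $\mathrm P(\lambda\notin\mathrm S_*)/(1+\mathrm E[m])$ by the law of large numbers, using $\mathrm E[m]<\infty$.

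\textbf{Passing to the limit.} The left side converges by Corollary \ref{Corollary: Convergence of DOS-measures}. The tuning-fork term converges by the total-variation convergence used for Proposition \ref{Proposition: Tuning fork contribution}.

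For $\rho_{\lambda,N}$, the graphs $(\mathbb G_{\lambda,N}, \text{root})$ with i.i.d.\ marks $r_x(\lambda)$ converge in the local sense, as marked graphs, to $(\mathbb G_\lambda,\mathbb o)$ conditioned on $\mathbb o\in\mathbb V_\lambda$. Under \eqref{Assumption: bounded degrees in G} or \eqref{Assumption:: m a.s. bounded}, the potential is either bounded or harmless for Kato--Rellich. The marked-graph version of \cite[Theorem 2.5]{Bor16} then gives uniform convergence of distribution functions, and in particular convergence of atoms. This last step is the main obstacle. With unbounded potential, we would need the rank-inequality argument of Bor16 (atoms via normalized kernel dimensions and local approximations), which should adapt because diagonal perturbations preserve locality; this is the Proposition \ref{Proposition: Weak convergence of DOS measures of Anderson Hamiltonian} alluded to above.

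\textbf{The inequality for open intervals.} In finite volume, write $r_x(z)$ near $\lambda$. For each eigenvalue $\mu \in I$ of $H_{\lambda,N}$, we would like an eigenvalue of $A_N$ nearby. The cleaner route is a counting inequality from the Schur complement. The number of eigenvalues of $A_N$ in $I$, minus tuning forks, is at least the number of eigenvalues of $B - \sum_x r_x(z)\delta_x\delta_x^*$ in $I$, evaluated at $z=\lambda$ as a limiting endpoint. This holds because $z\mapsto B+V(z)$ is monotone decreasing in $z$ away from poles, since $r_x'>0$. A continuity and monotonicity argument then shows that each eigenvalue branch crossing $I$ produces an eigenvalue of $A_N$ in $I$.

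Taking expectations and letting $N\to\infty$, we apply the portmanteau theorem for open $I$ on the left, which gives $\liminf \nu_N(I) \leq \nu(I)$, together with the convergence of $\rho_{\lambda,N}(I)$. Since the uniform convergence makes open/closed issues irrelevant, this yields the inequality.
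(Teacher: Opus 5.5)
Your finite-volume identity is fine. The rank--nullity argument for the map $(u,w)\mapsto u$, whose kernel consists exactly of the tuning forks, is a clean alternative to the paper's block-rank computation. Your interval inequality rests on the same monotone-branch idea as the paper. You would still need to make precise how the branches pass through $\infty$ at poles $\mu\in\mathrm S^x_*$ and reduce to $H_{\lambda,N}$ on the smaller set $\mathbb V_{\lambda,N}$.

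The genuine gap is the step you flag as the main obstacle: the convergence $\rho_{\lambda,N}(\{\lambda\})\to\rho_\lambda(\{\lambda\})$. It cannot be obtained by invoking a marked-graph version of \cite[Theorem 2.5]{Bor16}. Local convergence gives weak convergence $\rho_{\lambda,N}\to\rho_\lambda$, hence by the portmanteau theorem only $\limsup_N\rho_{\lambda,N}(\{\lambda\})\le\rho_\lambda(\{\lambda\})$. The reverse bound requires ruling out that finite-volume eigenvalues accumulate near $\lambda$ without sitting at $\lambda$. In the graph case this is done using that the adjacency matrix has integer entries. The diagonal entries $-r_x(\lambda)$ of $H_{\lambda,N}$ are arbitrary reals (in general not even algebraic), so that argument is unavailable. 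Kato--Rellich concerns only self-adjointness, and rank inequalities only control finite-rank perturbations, so neither addresses this.

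The missing idea is that your own interval inequality closes the gap, provided you apply it before, not after, the atom identity. Use it in finite volume for the two intervals $(\lambda-\varepsilon,\lambda)$ and $(\lambda,\lambda+\varepsilon)$, which contain $\lambda$ in their closure. Normalize and let $N\to\infty$, using three inputs:
the Kolmogorov--Smirnov convergence of $\nu_N$, where integrality \emph{is} available through $A_N$;
the total-variation convergence of the tuning-fork part;
and the law of large numbers.
You get
\begin{equation*}
\limsup_{N\to\infty}\rho_{\lambda,N}\big((\lambda-\varepsilon,\lambda+\varepsilon)\setminus\{\lambda\}\big)\le \frac{1+\mathrm E[m]}{\mathrm P(\lambda\notin\mathrm S_*)}\,\nu\big((\lambda-\varepsilon,\lambda+\varepsilon)\setminus\{\lambda\}\big),
\end{equation*}
whose right-hand side vanishes as $\varepsilon\downarrow0$. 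Together with $\liminf_N\rho_{\lambda,N}((\lambda-\varepsilon,\lambda+\varepsilon))\ge\rho_\lambda((\lambda-\varepsilon,\lambda+\varepsilon))$, this yields $\liminf_N\rho_{\lambda,N}(\{\lambda\})\ge\rho_\lambda(\{\lambda\})$. That is convergence of the atom exactly at the energy at which $H_\lambda$ is evaluated, which is all the theorem needs; this is the route the paper takes.

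For the inequality part you likewise only need $\liminf_N\rho_{\lambda,N}(I)\ge\rho_\lambda(I)$ from weak convergence. Note that the portmanteau bound you state for $\nu_N$ on open $I$ has the wrong direction. This is harmless only because Kolmogorov--Smirnov convergence gives $\nu_N(I)\to\nu(I)$.
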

	Let us summarize some of the consequences of Theorem \ref{Theorem: Anderson representation of atoms}.
	\begin{cor}
		\label{Corollary: Atoms through subgraphs}
		Assume that $\mathrm P(\lambda \in  \mathrm S_*) > 0$ and that $p = \mathrm P(\mathrm G \neq \emptyset)<1$. Assume that $(\mathbb G, \mathbb o)$ has with positive probability a finite subgraph with eigenvalue $\lambda$. Then
		\begin{equation*}
			\nu(\{\lambda\}) > \nu_{ \operatorname{TF}}(\{\lambda\}).
		\end{equation*}
	\end{cor}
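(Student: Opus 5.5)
The plan is to apply the first identity of Theorem~\ref{Theorem: Anderson representation of atoms}. It suffices to show that both factors in the non-tuning-fork term are strictly positive:
\begin{equation*}
\mathrm P(\lambda \notin \mathrm S_*) > 0 \qquad\text{and}\qquad \rho_\lambda(\{\lambda\}) > 0 .
\end{equation*}

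The first factor is immediate. If $\mathrm G = \emptyset$ then $\mathrm S_* = \emptyset$, so $\mathrm P(\lambda \notin \mathrm S_*) \geq \mathrm P(\mathrm G = \emptyset) = 1-p > 0$. (I am assuming, as the statement implicitly does, that the Kato--Rellich hypothesis \eqref{Assumption: bounded degrees in G} or \eqref{Assumption:: m a.s. bounded} is in force, so that the theorem applies.)

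For the second factor I will build an exact eigenvector of $H_\lambda$ with positive probability. By hypothesis there are a radius $R$ and a finite connected rooted graph $K$ such that, with positive probability, $(\mathbb G, \mathbb o)_R$ contains a copy of $K$ having $\lambda$ as an eigenvalue; I fix such a copy measurably. Write $\partial K$ for its outer vertex boundary in $\mathbb G$, which is finite by local finiteness. Conditionally on $(\mathbb G,\mathbb o)$, I consider the event $\mathcal E$ on which:
\begin{equation*}
\mathrm G^x = \emptyset \text{ for all } x \in K, \qquad \lambda \in \mathrm S^y_* \text{ for all } y \in \partial K .
\end{equation*}
The sprinklings at different vertices are independent, and the one at $\mathbb o$ (if $\mathbb o$ lies in $K$ or $\partial K$) is drawn from $\widehat{\mathrm P}$. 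Each individual requirement still has positive probability: $\widehat{\mathrm P}(\mathrm G=\emptyset) = \mathrm P(m=0)/(1+\mathrm E[m]) > 0$, and $\widehat{\mathrm P}(\lambda \in \mathrm S_*) \geq \mathrm P(\lambda\in\mathrm S_*)/(1+\mathrm E[m]) > 0$ because $\lambda \in \mathrm S_*$ forces $m\geq 1$. Hence $\mathcal E$ has positive probability.

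On $\mathcal E$, every vertex of $K$ lies in $\mathbb V_\lambda$ with $r_x(\lambda)=0$, while every vertex of $\partial K$ is deleted. So $K$ is a connected component of $\mathbb G_\lambda$, and on it the potential $V_\lambda$ vanishes. An eigenvector of the adjacency matrix of $K$ with eigenvalue $\lambda$, extended by zero, is therefore a genuine $l^2$ eigenvector of $H_\lambda$. It follows that $\1_{\{\lambda\}}(H_\lambda)\neq 0$ with positive probability.

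The step I expect to need the most care is transferring this to the root, i.e.\ showing $\mathbb E[\langle \delta_o, \1_{\{\lambda\}}(H_\lambda)\delta_o\rangle \mid o\in\mathbb V_\lambda] > 0$. I would argue by contradiction. If the expectation vanished, then almost surely $\langle \delta_o, \1_{\{\lambda\}}(H_\lambda)\delta_o\rangle = 0$. The graph $(\mathbb G_\lambda,\mathbb o)$, marked with the i.i.d.\ potentials, is unimodular; this is noted before \eqref{def_AH} via its Benjamini--Schramm approximation by site percolation, with marks added in the same way. Applying the ``everything shows up at the root'' principle \eqref{Equation: Everything shows up at the root} in this marked setting gives $\langle \delta_v, \1_{\{\lambda\}}(H_\lambda)\delta_v\rangle = 0$ for all $v$ almost surely. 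That forces $\1_{\{\lambda\}}(H_\lambda) = 0$ almost surely, contradicting the previous paragraph.

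Alternatively, one can avoid the marked version of \eqref{Equation: Everything shows up at the root} by a direct mass-transport argument. On $\mathcal E$, each vertex of $K$ sends mass $\langle\delta_x,\1_{\{\lambda\}}(H_\lambda)\delta_x\rangle$ to $\mathbb o$. Using $\sum_{x\in K}\langle\delta_x,\1_{\{\lambda\}}(H_\lambda)\delta_x\rangle \geq 1$, together with $|K|$ and $R$ being bounded, this yields a positive lower bound on the expectation at the root. Combining the two positive factors with Theorem~\ref{Theorem: Anderson representation of atoms} gives $\nu(\{\lambda\}) > \nu_{\operatorname{TF}}(\{\lambda\})$.
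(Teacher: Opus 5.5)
Your proposal follows the paper's argument. It uses Theorem~\ref{Theorem: Anderson representation of atoms} and the bound $\mathrm P(\lambda\notin\mathrm S_*)\geq 1-p$. It then builds an event on which a finite eigenvector of the core is isolated (no attached graphs on it, $\lambda\in\mathrm S^y_*$ on its outer boundary), so that it becomes an exact eigenvector of $H_\lambda$. As in the paper, ``subgraph'' must be read as \emph{induced} subgraph. The component of $\mathbb G_\lambda$ you isolate is the induced graph on $V(K)$, so an eigenvector of a non-induced $K$ need not be an eigenvector of $H_\lambda$.

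The one step that does not work as written is your first transfer to the root. By the paper's convention, the rooted graph $(\mathbb G_\lambda,\mathbb o)$ is the connected component of $\mathbb o$ in $\mathbb G_\lambda$. Applied to it, \eqref{Equation: Everything shows up at the root} only gives $\langle\delta_v,\1_{\{\lambda\}}(H_\lambda)\delta_v\rangle=0$ for $v$ in that component. On $\mathcal E$ your eigenvector lives on $K$, which is a separate component of $\mathbb G_\lambda$ unless $\mathbb o\in V(K)$, so no contradiction results.

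The paper avoids this by applying the principle to the (unmarked, connected) core $(\mathbb G,\mathbb o)$ before sprinkling. This selects the finite subgraph with an eigenvector $\psi$ satisfying $\psi_{\mathbb o}\neq0$. On the sprinkling event, which has positive conditional probability by independence, one then has directly $\langle\delta_{\mathbb o},\1_{\{\lambda\}}(H_\lambda)\delta_{\mathbb o}\rangle\geq|\psi_{\mathbb o}|^2/\|\psi\|^2>0$. No marked version of the principle is needed.

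Your mass-transport alternative also closes the gap, provided three things hold. It must be run on $\mathbb G$ rather than $\mathbb G_\lambda$, under the unbiased marks. It must also use an isomorphism-invariant rule. For example, let each $u$ send $\langle\delta_u,\1_{\{\lambda\}}(H_\lambda)\delta_u\rangle$ (set to $0$ for $u\notin\mathbb V_\lambda$) to every $v$ with $d_{\mathbb G}(u,v)\leq R$. This yields $\mathbb E\big[\langle\delta_{\mathbb o},\1_{\{\lambda\}}(H_\lambda)\delta_{\mathbb o}\rangle\,\#\{v:\, d_{\mathbb G}(\mathbb o,v)\leq R\}\big]\geq\mathbb P(\mathcal E)>0$, and hence $\rho_\lambda(\{\lambda\})>0$.
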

	\begin{ex}
		Assume that $\mathbb G  = \mathbb Z$ and that $\mathrm G$ is for any $n\in \N$ with probability $2^{-n-1}$ the line segment $\{1, \hdots, n\}$ with $\varphi = \delta_1$. Then $\nu_{\operatorname{TF}} = 0$ but the set of atoms of $\nu$ is dense in $[-2, 2] = \operatorname{supp}(\nu_0)$. 
	\end{ex}
	For the proof of Corollary \ref{Corollary: Atoms through subgraphs}, it is sufficient to notice that the following event happens with positive probability: the graph $\mathbb G$ has a finite subgraph which has an eigenfunction $\psi$ to the eigenvalue $\lambda$, there is no attached graph in any vertex $x\in \operatorname{supp}(\psi)$ while in every vertex $x$ with $\operatorname{dist}(x, \operatorname{supp}(\psi)) = 1$ we have $r_x(\lambda) = \infty$. Combining this with \eqref{Equation: Everything shows up at the root} (which guarantees that $\psi_\mathbb o \neq 0$ with positive probability) and Theorem \ref{Theorem: Anderson representation of atoms} yields the claim. See Figure \ref{fig:atomsdosfromsubgraph} for an illustration of the corresponding eigenvectors as given by the proof of Theorem \ref{Theorem: Anderson representation of atoms}.
	
	\begin{figure}
		\centering
		\includegraphics[width=0.6\linewidth]{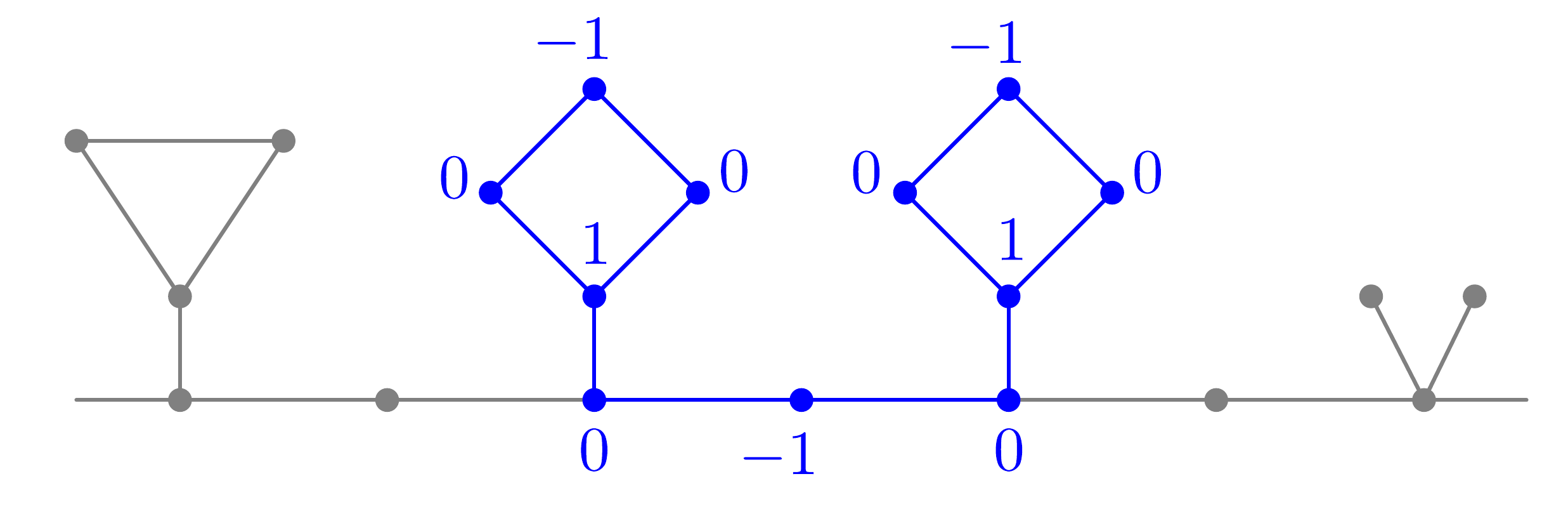}
		\caption{Illustration of Corollary \ref{Corollary: Atoms through subgraphs} for $\mathbb G = \mathbb Z$. Assuming that $\mathrm P(\mathrm G = \text{square},\, m^* = 1) > 0$ and $\mathrm P(\mathrm G = \emptyset) > 0$ there is an contribution to the atom in zero (which is the eigenvalue of a single isolated vertex) coming from compactly supported eigenvectors as depicted. Vertices in grey lie outside of the support of the eigenvector.}
		\label{fig:atomsdosfromsubgraph}
	\end{figure}
	
	A natural question might be how the operation of sprinkling relates to other graph operations such as the cartesian product or the free product of two graphs (whose spectral measures are obtained from taking the convolution and the free additive convolution of the spectral measures of the components, see \cite[Section 1.3]{Bor16}). As Corollary \ref{Corollary: Atoms through subgraphs} shows, the sprinkling of a graph might decrease the regularity of its density of states, even if we subtract the contribution of tuning forks. Hence, a potential connection at least does not seem to be trivial as both the convolution as well as the free additive convolution of two measures is continuous as soon as one of the two components is continuous. 
	
	The density of states of the Anderson-Hamiltonian on $\mathbb Z^d$ is known to be atomless \cite{DS84}. Hence, if $\mathbb G = \mathbb Z^d$ and $\lambda \notin \mathrm D$ (such that $\mathbb V_\lambda = \mathbb Z^d$) then $\rho_\lambda(\{\lambda\}) = 0$. We hence obtain as a corollary of Theorem \ref{Theorem: Anderson representation of atoms} that $\nu$ cannot have any atoms outside of $\mathrm D$.  
	
	\begin{cor}
		\label{Corollary: No atoms outside of D for integer lattice}
		Assume that $\mathbb G = \mathbb Z^d$ for some $d\geq 1$. Then $\nu(\{\lambda\}) = 0$ for all $\lambda \notin \mathrm D$.
	\end{cor}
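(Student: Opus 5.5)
The corollary will follow by combining the atom formula of Theorem \ref{Theorem: Anderson representation of atoms} with the known continuity of the density of states of the Anderson model on $\mathbb Z^d$ \cite{DS84}.

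First I check the hypotheses of Theorem \ref{Theorem: Anderson representation of atoms}. Since $\mathbb G = \mathbb Z^d$ has all degrees equal to $2d$, assumption \eqref{Assumption: bounded degrees in G} holds, so the theorem applies. Fix $\lambda \notin \mathrm D$. By definition of $\mathrm D$ we have $\mathrm P(\lambda \in \mathrm S) = 0$.

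Next I dispose of the tuning-fork contribution. The formula in Proposition \ref{Proposition: Tuning fork contribution} is a sum of atoms located at points of $\mathrm S$, so
\[
\nu_{\operatorname{TF}}(\{\lambda\}) \leq \frac{\mathrm E\big[\gamma(\mathrm T,\lambda)\1_{\{\lambda\in\mathrm S\}}\big]}{1+\mathrm E[m]} = 0 .
\]
The same observation gives $\mathrm S_* \subseteq \mathrm S$, hence $\mathrm P(\lambda \in \mathrm S^x_*) = 0$ for each $x$. Since $\mathbb Z^d$ is countable, almost surely $\mathbb V_\lambda = \mathbb Z^d$. Moreover, almost surely $r_x(\lambda)$ is a finite real number for every $x$, because $\lambda \notin \mathrm S^x$. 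Theorem \ref{Theorem: Anderson representation of atoms} therefore reduces to
\[
\nu(\{\lambda\}) = \frac{1}{1+\mathrm E[m]}\,\rho_\lambda(\{\lambda\}),
\]
and it remains to show $\rho_\lambda(\{\lambda\})=0$.

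For the last step, note that $\{o\in\mathbb V_\lambda\} = \{o \in \mathbb V\}$ almost surely. Conditioning on it removes the bias at the root, as remarked after \eqref{def_AH}. Hence $\rho_\lambda$ is the density of states of $H_\lambda = B + V_\lambda$ on $l^2(\mathbb Z^d)$, where $V_\lambda$ has potential values $-r_x(\lambda)$ that are i.i.d.\ across $x\in\mathbb Z^d$ with the law of $-r(\lambda)$ under the unbiased $\mathrm P$.

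The potential is real and finite but may be unbounded. It is also discrete, and possibly constant if $\mathrm P(\mathrm G=\emptyset)=1$. The possibly constant case is harmless: the free Laplacian on $\mathbb Z^d$, shifted by a constant, has purely absolutely continuous density of states.

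The main point to verify is that the result of \cite{DS84} covers arbitrary, in particular discrete and unbounded, i.i.d.\ single-site distributions. I expect it does: the argument of Delyon--Souillard gives continuity of the integrated density of states for any ergodic potential on $\mathbb Z^d$, via a unique continuation and counting argument that needs no regularity of the potential. Self-adjointness is guaranteed by the bounded-degree assumption together with the operator being a sum of a bounded and a multiplication operator. Hence $\rho_\lambda(\{\lambda\})=0$, and so $\nu(\{\lambda\})=0$.
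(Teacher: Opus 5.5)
Your proposal is correct and is essentially the paper's own argument. For $\lambda\notin\mathrm D$ one has $\nu_{\operatorname{TF}}(\{\lambda\})=0$ and $\mathbb V_\lambda=\mathbb Z^d$ almost surely, so Theorem \ref{Theorem: Anderson representation of atoms} reduces $\nu(\{\lambda\})$ to a multiple of $\rho_\lambda(\{\lambda\})$, which vanishes because the density of states of the i.i.d.\ Anderson model on $\mathbb Z^d$ is atomless \cite{DS84}. The step you hedge on is fine: the Delyon--Souillard argument only uses that an eigenfunction restricted to a box of side $L$ is determined by its values on $O(L^{d-1})$ boundary sites, so it needs no regularity or boundedness of the real, finite i.i.d.\ potential.
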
	
	
	Let us now consider the simplest case in which $(\mathrm G, \varphi)$ agrees with probability $p$ with the deterministic pair $(\mathrm G_*, \varphi_*)$ and is with probability $1-p$ empty, i.e.\ in which the sprinkling distribution is given by
	\begin{equation}
		\label{Assumption: Detetministic attached graphs}
		\mathrm P = p \delta_{(\mathrm G_*, \varphi_*)} + (1-p)\delta_{(\emptyset, \emptyset)}. \tag{BernoulliAG}
	\end{equation}
	We denote by $M$ the total number of vertices contained in $\mathrm G_*$ such that $\mathrm E[m] = pM$.
	
	\begin{mydef}
		\label{Definition: Admissible eigenvalue}
		Assuming $\mathrm P$ is of the form \eqref{Assumption: Detetministic attached graphs}, we call $\lambda \in \mathrm D$ admissible if $\lambda \in \mathrm S^x_*$ conditionally on $\mathrm G = \mathrm G_*$.
	\end{mydef}
	See Figure \ref{fig:example_TF} for an example.
	Assuming \eqref{Assumption: Detetministic attached graphs} and admissible eigenvalues, Theorem \ref{Theorem: Anderson representation of atoms} can be restated in terms of site percolation on $\mathbb G$. We denote by $\mathbb G_{\operatorname{perc}}$ the subgraph induced by 
	\begin{equation*}
		\mathbb V^+ \coloneqq \{x\in \mathbb V:\, m_x \neq 0\}
	\end{equation*}
	(which defines a site percolation on $\mathbb G$ with parameter $p$) and by $\nu_{\operatorname{perc}}$ the density of states of $(\mathbb G_{\operatorname{perc}}, \mathbb o)$ conditionally on $\{\mathbb o \in \mathbb V_{\operatorname{perc}}\}$.
	\begin{cor}
		\label{Proposition: Tuning fork contribution for deterministic AG}
		Assuming \eqref{Assumption: Detetministic attached graphs} and that $\lambda \in \mathrm D$ is admissible we have
		\begin{equation*}
			\nu(\{\lambda\}) = \nu_{ \operatorname{TF}}(\{\lambda\}) + \frac{1-p}{1 + pM} \nu_{\operatorname{perc}}(\{\lambda\}).
		\end{equation*}
		Assuming that all $\lambda \in \mathrm D$ are admissible, we have
		\begin{align}
			\nu(\mathrm D) &= \frac{p(M - |\mathrm D|)}{1 + pM} + \frac{1-p}{1 + pM} \nu_{\operatorname{perc}}(\mathrm D), \nonumber \\
			\nu(\mathbb R \setminus \mathrm D) &= \frac{p(|\mathrm D| + 1)}{1 + pM} + \frac{1-p}{1 + pM} \nu_{ \operatorname{perc}}(\mathbb R \setminus \mathrm D) \label{Equation: Mass continuous part}. 
		\end{align}
	\end{cor}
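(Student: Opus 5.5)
Throughout, $\lambda \in \mathrm D$ is admissible under \eqref{Assumption: Detetministic attached graphs}. The plan is to specialise Theorem \ref{Theorem: Anderson representation of atoms} to this case, and then to sum over the finitely many points of $\mathrm D$ using Proposition \ref{Proposition: Tuning fork contribution}.

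\emph{Identifying $\mathbb V_\lambda$ and $H_\lambda$.} A vertex $x$ with $\mathrm G^x = \mathrm G_*$ has $\lambda \in \mathrm S^x_*$ by admissibility, so it is removed. A vertex with $\mathrm G^x = \emptyset$ has $\mathrm S^x_* = \emptyset$, so it is kept. Hence $\mathbb V_\lambda$ is exactly the set of vertices carrying no attached graph. Moreover $r_x \equiv 0$ on $\mathbb V_\lambda$ by convention, so $V_\lambda = 0$ and $H_\lambda = B_\lambda$ is simply the adjacency operator of a Bernoulli site percolation subgraph of $\mathbb G$. Conditionally on the root being retained, its density of states $\rho_\lambda$ is therefore the percolation density of states, which I identify with $\nu_{\operatorname{perc}}$. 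One point needs care here: $\nu_{\operatorname{perc}}$ must be read with the sites carrying no attached graph as the retained sites (each retained independently with probability $1-p$). I would check this against the definition of $\mathbb G_{\operatorname{perc}}$; by the symmetry of the construction it is the same object up to relabelling $p \leftrightarrow 1-p$. Since $\mathrm P(\lambda \notin \mathrm S_*) = \mathrm P(\mathrm G = \emptyset) = 1-p$ and $\mathrm E[m] = pM$, the first identity then follows directly from Theorem \ref{Theorem: Anderson representation of atoms}.

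\emph{Computing $\nu(\mathrm D)$.} Here $\mathrm D = \sigma(\mathrm T_*)$ is finite, and it is the set of distinct eigenvalues of $\mathrm G_*$. If all $\lambda \in \mathrm D$ are admissible, then $\mathrm S_* = \mathrm D$ on $\{\mathrm G = \mathrm G_*\}$, so $|\mathrm S_*| = |\mathrm D|$ there and $|\mathrm S_*| = 0$ otherwise. The measure $\nu_{\operatorname{TF}}$ is supported in $\mathrm D$, so \eqref{Equation: total mass of tuning fork contribution} gives
\[
\nu_{\operatorname{TF}}(\mathrm D) = \nu_{\operatorname{TF}}(\R) = \frac{p(M - |\mathrm D|)}{1+pM}.
\]
Summing the first identity over the finitely many $\lambda \in \mathrm D$ then yields the formula for $\nu(\mathrm D)$.

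\emph{Computing $\nu(\R \setminus \mathrm D)$.} Use $\nu(\R) = 1$ and $\nu_{\operatorname{perc}}(\R) = 1$:
\[
\nu(\R\setminus \mathrm D) = 1 - \frac{p(M-|\mathrm D|)}{1+pM} - \frac{1-p}{1+pM}\big(1 - \nu_{\operatorname{perc}}(\R\setminus\mathrm D)\big).
\]
A short computation of the constant term, $(1 + pM - pM + p|\mathrm D| - 1 + p)/(1+pM) = p(|\mathrm D|+1)/(1+pM)$, gives \eqref{Equation: Mass continuous part}.

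The only real obstacle is the bookkeeping in the first step. One must correctly match $\rho_\lambda$, which is conditioned on $\{o \in \mathbb V_\lambda\}$ for the possibly biased root, with the percolation density of states $\nu_{\operatorname{perc}}$. This is already handled inside Theorem \ref{Theorem: Anderson representation of atoms} through the prefactor $\mathrm P(\lambda \notin \mathrm S_*)/(1+\mathrm E[m])$, so nothing beyond the identification above should be needed.
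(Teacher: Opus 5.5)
Your proposal is correct and is the paper's own argument: you specialise Theorem \ref{Theorem: Anderson representation of atoms} (here $\mathbb V_\lambda=\{x:\mathrm G^x=\emptyset\}$, $V_\lambda=0$, $\mathrm P(\lambda\notin\mathrm S_*)=1-p$, $\mathrm E[m]=pM$) and combine it with the total tuning-fork mass \eqref{Equation: total mass of tuning fork contribution}. The only thing to tighten is your side remark: $\nu_{\operatorname{perc}}$ must be the percolation that retains the sites with $\mathrm G^x=\emptyset$ (retention probability $1-p$), which is how the paper actually uses it in Subsection \ref{Subsection: Atom in zero for Z and BerAG}, and this is not interchangeable with the retention-$p$ percolation ``up to relabelling''. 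For example, for $\mathbb G=\mathbb Z$, a single leaf and $\lambda=0$, Proposition \ref{Proposition: Mass of atom in zero for line} forces $\nu_{\operatorname{perc}}(\{0\})=p/(2-p)$, whereas retention $p$ would give $(1-p)/(1+p)$.
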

	For the proof of Corollary \ref{Proposition: Tuning fork contribution for deterministic AG}, it suffices to combine Theorem \ref{Theorem: Anderson representation of atoms} with the total mass of the tuning fork contribution given in \eqref{Equation: total mass of tuning fork contribution}. 
	
	Assume that $\mathbb G$ is deterministic and vertex transitive, such that the density of states $\nu_0$ of $\mathbb G$ coincides with the spectral measure in some arbitrary vertex of $\mathbb G$. If $\Sigma_0 \coloneqq \operatorname{supp}(\nu_0)$ is a symmetric\footnote{We point out that the spectrum of an bipartite graph is always symmetric.} interval then $\operatorname{supp}(\nu_0) = [-\|B\|, \|B\|]$. If $P_\lambda:l^2(\mathbb V)\to l^2(\mathbb V_\lambda)$ denotes the restriction to $\mathbb V_\lambda$ then $\|B_\lambda\| = \|P_\lambda B P_\lambda^*\| \leq \|B \|$. We hence have $\nu_{\operatorname{perc}}(\R \setminus \Sigma_0) = 0$.
	With Corollary \ref{Proposition: Tuning fork contribution for deterministic AG} we hence obtain that the mass to an atom in $\mathrm D \setminus \Sigma_0$ comes solely from tuning forks.
	\begin{cor}
		\label{Corollary: Only TF contribution outside Sigma0 for berAG}
		Assume that $\mathbb G$ is vertex transitive and deterministic and assume that $\Sigma_0 \coloneqq \operatorname{supp}(\nu_0)$ is a symmetric interval. Assume that \eqref{Assumption: Detetministic attached graphs} holds. If $\lambda \in \mathrm D\setminus \Sigma_0$ is admissible then
		\begin{equation*}
			\nu(\{\lambda\}) = \nu_{ \operatorname{TF}}(\{\lambda\}).
		\end{equation*}
		In particular, if $\mathbb G = \mathbb Z^d$, if \eqref{Assumption: Detetministic attached graphs} holds and if all $\lambda \in \mathrm D$ are admissible then the pure point part $\nu_{\operatorname{pp}}$ of $\nu$ satisfies $\nu_{\operatorname{pp}}(\R \setminus \Sigma_0) =  \nu_{ \operatorname{TF}}(\R \setminus \Sigma_0)$.
	\end{cor}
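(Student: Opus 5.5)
The first claim follows by combining Corollary \ref{Proposition: Tuning fork contribution for deterministic AG} with a norm bound for the percolation subgraph. The second claim then uses the additional fact that, on $\mathbb Z^d$, atoms outside $\mathrm D$ are excluded by Corollary \ref{Corollary: No atoms outside of D for integer lattice}.

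\emph{Step 1.} Fix an admissible $\lambda \in \mathrm D \setminus \Sigma_0$. Under \eqref{Assumption: Detetministic attached graphs}, admissibility gives $\mathbb V_\lambda = \{x : \mathrm G^x = \emptyset\}$. The graph $\mathbb G_\lambda$ is therefore an induced subgraph of $\mathbb G$, and $H_\lambda = B_\lambda$ because $r_x(\lambda)=0$ on empty attached graphs. Whatever the labelling of the percolation cluster in Corollary \ref{Proposition: Tuning fork contribution for deterministic AG}, the relevant operator is a compression $P B P^*$ of $B$ to an induced subgraph. Hence its norm is at most $\|B\|$, and its spectrum lies in $[-\|B\|,\|B\|]$.

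\emph{Step 2.} We identify $[-\|B\|,\|B\|]$ with $\Sigma_0$. Since $\mathbb G$ is vertex transitive and deterministic, $\sigma(B) = \operatorname{supp}(\nu_0)$: every spectral projection $\1_I(B)\neq 0$ has $\langle\delta_x,\1_I(B)\delta_x\rangle>0$ for some $x$, and by transitivity this holds for all $x$. Thus $\Sigma_0 = \sigma(B)$ is a symmetric interval containing $\pm\|B\|$, so $\Sigma_0 = [-\|B\|,\|B\|]$. The spectral measure at the root of the compressed operator is supported in its spectrum, so $\nu_{\operatorname{perc}}(\R\setminus\Sigma_0)=0$. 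In particular $\nu_{\operatorname{perc}}(\{\lambda\})=0$, and Corollary \ref{Proposition: Tuning fork contribution for deterministic AG} yields $\nu(\{\lambda\})=\nu_{\operatorname{TF}}(\{\lambda\})$.

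\emph{Step 3.} Now take $\mathbb G=\mathbb Z^d$, which satisfies the hypotheses: it is transitive and $\Sigma_0=[-2d,2d]$. Since $\nu$ has countably many atoms, $\nu_{\operatorname{pp}}(\R\setminus\Sigma_0)=\sum_{\lambda\notin\Sigma_0}\nu(\{\lambda\})$. By Corollary \ref{Corollary: No atoms outside of D for integer lattice}, only $\lambda\in\mathrm D$ contribute. For each such $\lambda$, Step 2 gives $\nu(\{\lambda\})=\nu_{\operatorname{TF}}(\{\lambda\})$, since all points of $\mathrm D$ are admissible. Finally, $\nu_{\operatorname{TF}}$ is purely atomic and supported on $\mathrm D$ by Proposition \ref{Proposition: Tuning fork contribution}, so summing gives $\nu_{\operatorname{TF}}(\R\setminus\Sigma_0)$.

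The main point needing care is Step 2: we must make sure $\nu_{\operatorname{perc}}$ is the density of states of an operator dominated in norm by $B$, namely the self-adjoint closure of a bounded compression. Bounded degree makes $B$ bounded, so all of this is elementary.
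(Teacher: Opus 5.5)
Your proposal is correct and follows the paper's own argument. The paper combines Corollary~\ref{Proposition: Tuning fork contribution for deterministic AG} with the compression bound $\|B_\lambda\| = \|P_\lambda B P_\lambda^*\| \le \|B\|$ and $\Sigma_0 = [-\|B\|,\|B\|]$ to get $\nu_{\operatorname{perc}}(\mathbb R\setminus\Sigma_0)=0$, then uses Corollary~\ref{Corollary: No atoms outside of D for integer lattice} for the $\mathbb Z^d$ statement; your transitivity argument that $\sigma(B)=\Sigma_0$, hence $\pm\|B\|\in\Sigma_0$, spells out a step the paper only asserts.
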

	For $\mathbb G= \mathbb Z^d$ we have seen in Corollary \ref{Corollary: No atoms outside of D for integer lattice} above that $\nu$ does not have any atoms outside of $\mathrm D$. Hence, Equation \eqref{Equation: Mass continuous part} gives us a lower bound on the total mass of the continuous part of $\nu$.
	\begin{cor}
		Assume that $\mathbb G = \mathbb Z^d$ for some $d\geq 1$. Assuming \eqref{Assumption: Detetministic attached graphs} and that all $\lambda \in \mathrm D$ are admissible then the continuous part $\nu_{\operatorname{cts}}$ of $\nu$ satisfies
		\begin{equation*}
			\nu_{\operatorname{cts}}(\R) = \nu(\R \setminus \mathrm D) \geq \frac{p(|\mathrm D| + 1)}{1 + pM}.
		\end{equation*}
	\end{cor}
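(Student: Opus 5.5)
The plan is to split the total mass of $\nu$ into its pure point and continuous parts and to pin down the pure point part exactly using the earlier corollaries. Since $\nu$ is a probability measure, $\nu_{\operatorname{cts}}(\R) = 1 - \nu_{\operatorname{pp}}(\R)$. By Corollary \ref{Corollary: No atoms outside of D for integer lattice}, the lattice $\mathbb G = \mathbb Z^d$ gives $\nu(\{\lambda\}) = 0$ for all $\lambda \notin \mathrm D$. Hence every atom of $\nu$ lies in $\mathrm D$. Under \eqref{Assumption: Detetministic attached graphs} the set $\mathrm D = \sigma(\mathrm T_*)$ is finite (the support of $\mathrm S$ is the spectrum of the fixed graph $\mathrm G_*$, or empty). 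Therefore $\nu_{\operatorname{pp}} = \nu|_{\mathrm D}$, so $\nu_{\operatorname{pp}}(\R) = \nu(\mathrm D)$ and $\nu_{\operatorname{cts}}(\R) = 1 - \nu(\mathrm D) = \nu(\R \setminus \mathrm D)$. This is the claimed equality.

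For the inequality, I would invoke \eqref{Equation: Mass continuous part} from Corollary \ref{Proposition: Tuning fork contribution for deterministic AG}, which applies because all $\lambda \in \mathrm D$ are admissible. It gives
\begin{equation*}
\nu(\R\setminus \mathrm D) = \frac{p(|\mathrm D|+1)}{1+pM} + \frac{1-p}{1+pM}\,\nu_{\operatorname{perc}}(\R\setminus\mathrm D).
\end{equation*}
The second term is nonnegative because $p \le 1$ and $\nu_{\operatorname{perc}}$ is a (conditional) probability measure. Dropping it yields the bound. If $p = 1$, then $\nu_{\operatorname{perc}}$ is conditioned on a null event, but its coefficient vanishes, so the term is simply $0$ by convention and the bound is unaffected.

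The only point needing care is that no atoms hide outside $\mathrm D$; this is exactly Corollary \ref{Corollary: No atoms outside of D for integer lattice}, which itself rests on the absence of atoms for the Anderson model on $\mathbb Z^d$ \cite{DS84}. The one thing to verify there is that the Anderson Hamiltonian $H_\lambda$ it uses has a genuinely i.i.d.\ potential $-r_x(\lambda)$ on all of $\mathbb Z^d$ when $\lambda \notin \mathrm D$. The rest is bookkeeping.
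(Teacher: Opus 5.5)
Your proof is correct and follows the paper's argument. Corollary~\ref{Corollary: No atoms outside of D for integer lattice} puts all atoms of $\nu$ in the finite set $\mathrm D$, so $\nu_{\operatorname{cts}}(\R)=\nu(\R\setminus\mathrm D)$, and dropping the nonnegative $\nu_{\operatorname{perc}}$ term in \eqref{Equation: Mass continuous part} gives the bound; your remarks on countability of $\mathrm D$ and on the case $p=1$ only make explicit what the paper leaves implicit.
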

	We point out that for the case of super-critical edge percolation and $d=2$, a lower bound on the continuous part of the density of states is known \cite{BSV17}. A corresponding lower bound for site percolation would in particular give a non-trivial lower bound on $\nu_{\operatorname{perc}}( \R \setminus \mathrm D)$ and hence, as a consequence of \eqref{Equation: Mass continuous part}, an improved lower bound on $\nu_{\operatorname{cts}}(\R)$.
	
	As an example, we explicitly calculate $\nu(\{0\})$ for $\mathbb G = \mathbb Z$ with \eqref{Assumption: Detetministic attached graphs} and 0 being admissible. We shall revisit this result in Proposition \ref{Proposition: Smoothening is necessary 2} below.

	\begin{prop}
		\label{Proposition: Mass of atom in zero for line}
		Assume that $\mathbb G = \mathbb Z$ and that \eqref{Assumption: Detetministic attached graphs} holds. If $0\in \mathrm D$ is admissible, we have
		\begin{equation*}
			\nu(\{0\}) - \nu_{\operatorname{TF}}(\{0\})  = \frac{p(1-p)}{(1+pM)(2-p)}.
		\end{equation*}
	\end{prop}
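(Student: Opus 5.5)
By Corollary \ref{Proposition: Tuning fork contribution for deterministic AG}, with $0$ admissible, the difference $\nu(\{0\}) - \nu_{\operatorname{TF}}(\{0\})$ equals $\frac{1-p}{1+pM}\,\nu_{\operatorname{perc}}(\{0\})$. So it suffices to show
\begin{equation*}
\nu_{\operatorname{perc}}(\{0\}) = \frac{p}{2-p}.
\end{equation*}
Here $\nu_{\operatorname{perc}}$ is the density of states of the site-percolation subgraph of $\mathbb Z$ (with parameter $p$) at the root, conditioned on the root being retained. (From the Anderson-percolation representation, $\mathbb V_0$ consists of the vertices where $0\notin \mathrm S^x_*$, i.e.\ the empty attached graphs, which have probability $1-p$. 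I will nevertheless follow the corollary's labelling and compute with retention probability $q$, then substitute $q$ at the end. I must check which of $p$ and $1-p$ is meant: with retention probability $1-p$ the formula below gives $\frac{1-p}{1+p}$, so the target $\frac{p}{2-p}$ forces retention probability $p$. In the final step I will cross-check this against the normalisations of Theorem \ref{Theorem: Anderson representation of atoms}.)

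\textbf{Step 1: reduce to finite paths.} Conditionally on $0$ being retained, the root lies in a finite path segment of length $n\geq 1$. This happens with probability $n q^{n-1}(1-q)^2$, since the root can occupy any of the $n$ positions and the segment must be bounded by two removed sites. The root is in an infinite cluster only with probability zero when $q<1$.

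\textbf{Step 2: atom at $0$ for a single path.} The path $P_n$ has eigenvalues $2\cos(\pi j/(n+1))$ for $j=1,\dots,n$. Thus $0$ is an eigenvalue exactly when $n$ is odd, and it is then simple, with eigenvector $(1,0,-1,0,1,\dots)$ supported on the odd positions. After normalisation, each of the $(n+1)/2$ odd positions carries weight $2/(n+1)$.

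\textbf{Step 3: average over the root position and sum.} Averaging over the $n$ possible root positions, the spectral weight at $0$ is $\frac1n\sum_i |\psi_i|^2 = \frac1n$ for odd $n$. Therefore
\begin{equation*}
\nu_{\operatorname{perc}}(\{0\}) = \sum_{n \text{ odd}} q^{n-1}(1-q)^2 = \frac{(1-q)^2}{1-q^2} = \frac{1-q}{1+q}.
\end{equation*}
With retention probability $1-p$ this equals $\frac{p}{2-p}$, which is the required value. So the formula holds with $q = 1-p$, consistent with $\mathbb V_0$ being the set of empty attached graphs.

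\textbf{Main obstacle.} The delicate point is the bookkeeping. One has to verify that the conditioning on $o\in\mathbb V_0$ in the definition of $\rho_0$, together with the bias of the root distribution, produces exactly the prefactor $\frac{\mathrm P(0\notin \mathrm S_*)}{1+\mathrm E[m]} = \frac{1-p}{1+pM}$. One must also check that the percolation here is the set where $0\notin \mathrm S^x_*$, which has density $1-p$. With these confirmed, the computation above completes the proof.
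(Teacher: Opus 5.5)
Your proof is correct. After the common first step, Theorem~\ref{Theorem: Anderson representation of atoms}, it computes the percolation atom by a different route from the paper.

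\textbf{How the routes differ.} The paper uses the theorem only to reduce to the case where $\mathrm G_*$ is a single leaf. It then works in finite volume on the segment $[N]$. Deleting each leaf together with its neighbour does not change $\dim\ker A_N$, so $\dim\ker A_N$ equals the number $X_N^-$ of odd components of the subgraph induced by the vertices with empty attached graph. The paper computes $\mathbb E[X_N^-]$ through explicit recursions in $N$, tracking the parity of the last component via $\sigma_N$. It then passes to the limit using convergence of atoms (Corollary~\ref{Corollary: Convergence of DOS-measures}).

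You stay in infinite volume instead. You read off $\rho_0(\{0\})=\nu_{\operatorname{perc}}(\{0\})$ directly from two facts: the size-biased law $nq^{n-1}(1-q)^2$ of the root's cluster, and the explicit zero mode of $P_n$. This identifies the answer as the number of odd clusters per open vertex, $\sum_{n \text{ odd}} q^{n-1}(1-q)^2=(1-q)/(1+q)$ with $q=1-p$.

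\textbf{What each buys.} Your route is shorter and avoids both the recursions and the $N\to\infty$ limit. It also gives $\nu_{\operatorname{perc}}(\{\lambda\})$ for any $\lambda$ in the same way, by counting the eigenvalues $2\cos(\pi j/(n+1))$ equal to $\lambda$. The paper's route is purely finite-dimensional. It is meant as the motivating special case of Proposition~\ref{Proposition: Anderson representation in finite volume}.

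\textbf{Two small corrections to the write-up.}

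First, the parenthetical remark in your opening paragraph is inverted. With retention probability $q=1-p$ one gets $(1-q)/(1+q)=p/(2-p)$, not $(1-p)/(1+p)$. So the target does \emph{not} force retention probability $p$. Step~3 states this correctly; the aside contradicts it and should simply be deleted. Retention $1-p$ is also what the paper's own proof uses: there the percolation graph is what remains after deleting $\mathbb V^+$.

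Second, your ``main obstacle'' is not an extra obstacle. The prefactor is exactly the content of Theorem~\ref{Theorem: Anderson representation of atoms}. All you need to record is the following:
\begin{itemize}
\item $\mathrm P(0\notin\mathrm S_*)=1-p$ and $\mathrm E[m]=pM$.
\item $\mathbb V_0=\{x:\mathrm G^x=\emptyset\}$, and $r_x(0)=0$ there, so $H_0=B_0$.
\item Conditioning on $o\in\mathbb V_0$ forces $o=\mathbb o$ and $\mathrm G^{\mathbb o}=\emptyset$, which removes the root bias.
\end{itemize}
Hence $\rho_0$ is the density of states of Bernoulli$(1-p)$ site percolation on $\mathbb Z$ seen from an open root.
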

	
	\subsection{Support in the lattice case}
	\label{Subsection: results support}
	We now explicitly calculate the support of the density of states under assumptions that in particular include the case where $\mathbb G$ is the integer lattice $\mathbb Z^d$, the Bethe lattice $\mathbb B_d$ or the Honeycomb lattice (i.e.\ Graphene) $\mathbb H$.
	As an application of ergodic theory shows, in all of those cases the spectrum of the adjacency operator $A$ of $\mathcal G$ is almost surely constant and agrees with the support of the density of states, see Proposition \ref{Proposition: Spectrum agrees almost surely with support of DOS measure} later in the text. In \cite{KS80}, Kunz and Souillard calculated the spectrum of the Anderson Hamiltonian on $\mathbb Z^d$. In order to apply a similar argument to our model, we give a representation of the spectrum of the adjacency operator $A$ in terms of the Anderson-percolation Hamiltonians $H_\lambda$.
	\begin{thrm}
		\label{Theorem: Anderson representation of the spectrum}
		If both \eqref{Assumption: bounded degrees in G} and \eqref{Assumption:: m a.s. bounded} hold we have almost surely
		\begin{equation*}
			\sigma(A) = \big\{\lambda \in \mathbb R:\, \lambda \in \sigma(H_\lambda)  \big\} \cup \mathcal T.
		\end{equation*}
	\end{thrm}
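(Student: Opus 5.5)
We prove the two inclusions separately, using a Schur complement (Feshbach) reduction of $A$ onto the core. Write $\mathcal V = \mathbb V \cup \mathcal V'$ with $\mathcal V'$ the attached vertices. Then $A$ has the block form \eqref{Equation: blockform adjacency matrix}, and $T$ is block diagonal with blocks $\mathrm T^x$. Under \eqref{Assumption:: m a.s. bounded} and \eqref{Assumption: bounded degrees in G}, $A$ is bounded, and so are $B$, $B_\lambda$ and (away from poles) $V_\lambda$.

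\emph{Key identity.} For $\lambda\notin\bigcup_x \mathrm S^x$ we have
\[
A-\lambda \text{ invertible} \iff B-\lambda - E(T-\lambda)^{-1}E^* \text{ invertible},
\]
and $E(T-\lambda)^{-1}E^*$ is the diagonal operator with entries $r_x(\lambda)$. Hence this is exactly $H_\lambda-\lambda$, with $\mathbb V_\lambda=\mathbb V$.

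For $\lambda$ in the (countable) set $\bigcup_x\mathrm S^x$ we modify the reduction. At sites $x\notin \mathbb V_\lambda$, the function $r_x$ has a pole at $\lambda$. We decompose $\ell^2(\mathrm V^x)$ into $\ker(\mathrm T^x-\lambda)$ and its complement, split the kernel further into the part orthogonal to $\varphi^x$ (tuning forks) and the rest, and perform a Schur complement only on the remaining blocks. The part of the kernel not orthogonal to $\varphi^x$ couples to $x$ with nonzero weight. This forces $\psi_x=0$ for any approximate eigenvector, so the site $x$ is removed, which is how the site percolation $\mathbb G_\lambda$ arises. The result is an exact finite-block statement: $\lambda\in\sigma(A)\setminus\mathcal T$ iff $\lambda\in\sigma(H_\lambda)$, formulated via Weyl sequences. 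The uniform bounds $m\le M$ and bounded degrees guarantee that all the local Schur complements have uniformly bounded inverses, because only finitely many graph types $(\mathrm G,\varphi)$ occur. The quantities $\mathrm{dist}(\lambda,\mathrm S^x\setminus\{\lambda\})$ are therefore bounded below uniformly in $x$, which keeps the Feshbach maps bounded.

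\emph{Inclusion $\subseteq$.} Let $\lambda\in\sigma(A)\setminus\mathcal T$ and take a Weyl sequence $\psi_n$ with $\|(A-\lambda)\psi_n\|\to0$. If $\lambda\in\bigcup\mathrm S^x$ but $\lambda\notin\mathcal T$, the kernel components are controlled as above. Project onto the core, setting $u_n=P_\lambda\psi_n$. The attached components are determined up to small error by $u_n$ through the bounded local resolvents, so $\|u_n\|$ stays bounded below. Then $\|(H_\lambda-\lambda)u_n\|\to0$, and $\lambda\in\sigma(H_\lambda)$.

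\emph{Inclusion $\supseteq$.} Take a Weyl sequence $u_n$ for $H_\lambda$ at $\lambda$ and lift it: set $\psi_x=u_n(x)$ on $\mathbb V_\lambda$, zero on $\mathbb V\setminus\mathbb V_\lambda$, and $\psi^x=-(\mathrm T^x-\lambda)^{-1}\varphi^x u_n(x)$ on attached graphs. At sites where $\lambda\in\mathrm S^x\setminus\mathrm S^x_*$, use the inverse on the complement of the kernel, which is legitimate since $\varphi^x\perp\ker$. On $\mathbb V\setminus\mathbb V_\lambda$, the residual $(A\psi)_y$ for $y\notin\mathbb V_\lambda$ need not vanish. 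We cancel it by adding a multiple of the eigenvector $\chi^y\in\ker(\mathrm T^y-\lambda)$ with $\langle\varphi^y,\chi^y\rangle\ne0$, chosen to absorb $\sum_{x\sim y}\psi_x$; this changes nothing else because $(\mathrm T^y-\lambda)\chi^y=0$. The lifted vectors have norm comparable to $\|u_n\|$ and form a Weyl sequence for $A$. Finally, $\mathcal T\subseteq\sigma(A)$ is trivial.

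\emph{Main obstacle.} The hardest part is the $\subseteq$ direction at $\lambda\in\bigcup\mathrm S^x$. There one must show that Weyl sequences of $A$ are not asymptotically concentrated on non-tuning-fork kernel directions at infinitely many sites without producing an approximate eigenvector of $H_\lambda$, and that whatever concentrates there is approximately a superposition of tuning forks. My plan is to decompose $\psi_n$ orthogonally into a tuning-fork part $\tau_n$ and the rest. If $\lambda\notin\mathcal T$, then $\lambda$ has no tuning forks at any site, so $\tau_n=0$ and the uniform gap estimates close the argument. The remaining subtle point is $\lambda\in\mathcal T$, which is trivially contained in both sides anyway.
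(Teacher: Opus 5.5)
Your proposal is correct in substance. The lift you use for $\supseteq$ is exactly the paper's construction, but your $\subseteq$ direction is organized differently.

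\textbf{How the paper argues.} The paper does not split energies. For every $\lambda$, including tuning-fork energies, it proves
\[
\|(H_\lambda-\lambda)\tilde\phi\|\le C'\,\|(A-\lambda)(\phi,\psi)\|,
\]
where $\tilde\phi$ is the restriction of the core part to $\mathbb V_\lambda$. This uses the Moore--Penrose pseudoinverse at $x\in\mathbb V_\lambda$ and a fixed non-orthogonal kernel vector at each $y\notin\mathbb V_\lambda$. It then argues by dichotomy. Either $\tilde\phi_\varepsilon\not\to0$, which gives $\lambda\in\sigma(H_\lambda)$. Or the whole Weyl sequence escapes into the attached graphs with $\|(T-\lambda)\psi_\varepsilon\|,\|E\psi_\varepsilon\|\to0$, and Lemma~\ref{Lemma: Estimate distance to tuning fork eigenvalues} excludes this unless $\lambda\in\mathcal T$. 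That lemma is a compactness argument resting on Lemma~\ref{Lemma: distance to finite sets}.

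\textbf{How you argue.} You handle generic energies by an exact Schur complement. At the finitely many exceptional energies you discard $\lambda\in\mathcal T$ at the outset. ``No tuning forks at any site'' then means $\lambda\notin\mathrm S^x$ for $x\in\mathbb V_\lambda$, and $\ker(\mathrm T^y-\lambda)=\operatorname{span}\{\chi^y\}$ with $|\langle\varphi^y,\chi^y\rangle|\ge\delta>0$ for $y\notin\mathbb V_\lambda$. So you can bound the whole Weyl vector by its core part. You only need a pointwise positivity over finitely many graph types, not the paper's uniform bound on $\operatorname{dist}(\lambda,\mathcal T)$. The cost is a case split, and your Schur step is actually redundant: the exceptional-energy argument works verbatim with $\mathbb V_\lambda=\mathbb V$. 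The paper's single estimate covers all energies at once.

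\textbf{A step to fix.} In $\subseteq$, write the Weyl vector as $(\phi,\psi)$ with $u_n=\phi|_{\mathbb V_\lambda}$. The coefficients $c_y=\langle\chi^y,\psi^y\rangle$ at $y\notin\mathbb V_\lambda$ are not determined by any local resolvent, contrary to what you state. The attached-graph equation at $y$ only gives $|\phi_y|\le\delta^{-1}\|(\mathrm T^y-\lambda)\psi^y+\phi_y\varphi^y\|$, so $\phi_y$ is small rather than zero. It also gives smallness of $\psi^y-c_y\chi^y$. The $c_y$ are instead fixed by the core equation at $y$:
\[
c_y\langle\varphi^y,\chi^y\rangle=[(A-\lambda)(\phi,\psi)]_y-\sum_{x\sim y}\phi_x+\lambda\phi_y-\langle\varphi^y,\psi^y-c_y\chi^y\rangle .
\]
Bounded degree then gives $\sum_y|c_y|^2\lesssim d^2\delta^{-2}\|u_n\|^2+o(1)$. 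Only with this does $\|u_n\|$ stay bounded below. This is the same mechanism you already use to build the lift.

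\textbf{A wording correction.} ``$\lambda\in\sigma(A)\setminus\mathcal T$ iff $\lambda\in\sigma(H_\lambda)$'' should read ``for $\lambda\notin\mathcal T$: $\lambda\in\sigma(A)$ iff $\lambda\in\sigma(H_\lambda)$''. As written it fails whenever $\lambda\in\mathcal T\cap\sigma(H_\lambda)$.
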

	We shall apply Theorem \ref{Theorem: Anderson representation of the spectrum} and the Kunz-Souillard Theorem for the Anderson model in order to explicitly determine the support of $\nu$ under suitable assumptions of $\mathbb G$. From now on, we denote by $\nu_0$ the density of states of $(\mathbb G, \mathbb o)$. We define
	\begin{equation*}
		\Sigma \coloneqq \operatorname{supp}(\nu), \quad \Sigma_0 \coloneqq \operatorname{supp}(\nu_0).
	\end{equation*}

	\begin{figure}
		\centering
		\begin{minipage}[c]{0.5\textwidth}
			\centering
			\includegraphics[width=\textwidth]{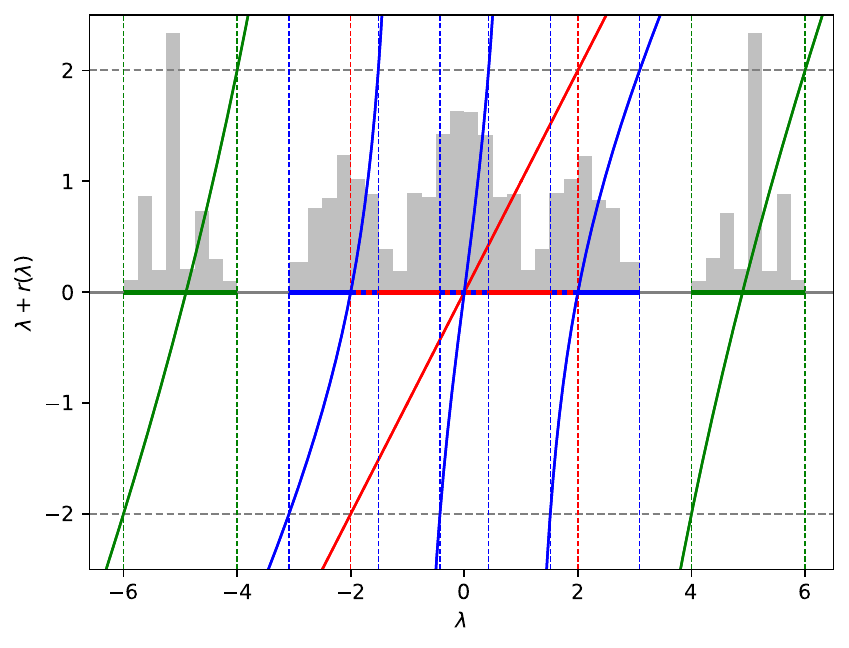}
		\end{minipage}
		\hfill
		\begin{minipage}[c]{0.47\textwidth}
			\centering
			\includegraphics[width=\textwidth]{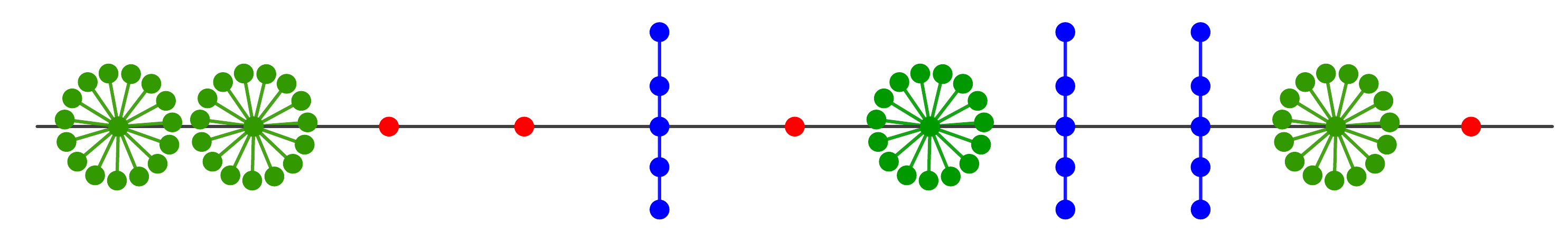}
		\end{minipage}
		\caption{Support of the density of states for $\mathbb G = \mathbb Z$ (for which $\Sigma_0 = [-2, 2]$) and where $\mathrm G$ is with probability $0.4$ empty (graph of $\lambda + r(\lambda)$ in red) with probability $0.3$ a collection of two line segments of length two (graph in blue) and with  probability $0.3$ a collection of 15 leaves (graph in green). By Theorem \ref{Theorem: Spectrum in the lattice case}, the spectrum $\Sigma$ is the union of the red, blue and green intervals. In grey, a histogram of the mass distribution of $\nu - \nu_{\operatorname{TF}}$ is depicted (simulation, not normalized).}
		\label{fig:support}
	\end{figure}

	\begin{thrm}
		\label{Theorem: Spectrum in the lattice case}
		Assume that $\mathbb G$ is deterministic, vertex transitive and assume that $\Sigma_0$ is a symmetric interval\footnote{Which is, for example, the case if $\Sigma_0$ is an interval and $\mathbb G$ is bipartite.}. Then $\sigma(A) = \Sigma$ almost surely and
		\begin{equation*}
			\Sigma =  \overline{\big\{\lambda \in \mathbb R:\, \mathbb P(\lambda + r(\lambda) \in \Sigma_0) > 0  \big\} \cup \mathcal T }.
		\end{equation*}	
	\end{thrm}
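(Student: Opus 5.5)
The plan is to prove two inclusions, with $\sigma(A)=\Sigma$ almost surely taken from Proposition \ref{Proposition: Spectrum agrees almost surely with support of DOS measure}. Write $\Sigma_0=[-\|B\|,\|B\|]$. The identity that drives everything is the Schur complement of the block form \eqref{Equation: blockform adjacency matrix}. For $z\notin\bigcup_x \mathrm S^x$,
\begin{equation*}
A-z \text{ is invertible} \iff B - z - \operatorname{diag}(r_x(z))_{x\in\mathbb V} \text{ is invertible},
\end{equation*}
and for real $\lambda$ with $\lambda\notin\mathrm S^x_*$ this is $H_\lambda-\lambda = B_\lambda - \operatorname{diag}(W_x(\lambda))$, where $W_x(\lambda):=\lambda+r_x(\lambda)$. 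In finite volume this is the algebraic content of Theorem \ref{Theorem: Anderson representation of the spectrum}. If \eqref{Assumption:: m a.s. bounded} is not assumed here, I will work directly with the Schur complement and the resolvent rather than quote that theorem.

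\emph{Lower inclusion} (Kunz--Souillard type). Since $\mathcal T\subseteq\sigma(A)$ trivially and $\Sigma$ is closed, it suffices to take $\lambda$ with $\mathrm P(W(\lambda)\in\Sigma_0)>0$ and show $\lambda\in\sigma(A)$. Such a $\lambda$ satisfies $\lambda\notin\mathrm S_*$ with positive probability. Because $\Sigma_0$ is an interval, we can pick $E_0\in\Sigma_0$ with $\mathrm P(|W(\lambda)-E_0|<\varepsilon)>0$ for every $\varepsilon>0$. Since $E_0\in\sigma(B)$, there is a finitely supported unit vector $\psi$ with $\|(B-E_0)\psi\|\le\varepsilon$. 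Vertex transitivity lets us translate its support $K$ anywhere. Independence and Borel--Cantelli, applied over infinitely many disjoint translates of $K$, then give almost surely a translate $K'$ on which every $x\in K'$ satisfies $x\in\mathbb V_\lambda$ and $|W_x(\lambda)-E_0|<\varepsilon$. The attached graphs at vertices of $K'$ or adjacent to it do not matter, since $\psi$ only sees $B$ and the diagonal on $K'$.

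On that translate, $\|(H_\lambda-\lambda)\psi\|\le 2\varepsilon$, so $\lambda\in\sigma(H_\lambda)$. To pass to $A$, lift $\psi$ to $\tilde\psi=\psi\oplus\bigl(-(T-\lambda)^{-1}E^*\psi\bigr)$. Here $\lambda\notin\mathrm S^x$ can be arranged on $K'$ by also requiring it in the positive-probability event; otherwise shift $\lambda$ slightly and use closedness. Then $\|(A-\lambda)\tilde\psi\|\le 2\varepsilon\le 2\varepsilon\|\tilde\psi\|$, so $\lambda\in\sigma(A)$.

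\emph{Upper inclusion.} Suppose $\lambda_0$ is not in the right-hand side. Then there is an open interval $U\ni\lambda_0$ with $\mathrm P(W(\lambda)\in\Sigma_0)=0$ for all $\lambda\in U$ and $U\cap\mathcal T=\emptyset$. The key observation is that no sign information is needed. If $|W_x|\ge\|B\|+\delta$ for all $x\in\mathbb V_\lambda$, then since $\|B_\lambda\|\le\|B\|$,
\begin{equation*}
\|(B_\lambda-W)\psi\|\ge\|W\psi\|-\|B_\lambda\psi\|\ge\delta\|\psi\|,
\end{equation*}
so $H_\lambda-\lambda$ is invertible with inverse bounded by $\delta^{-1}$. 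I would turn this into a bound on the resolvent of $A$ at $z=\lambda+i\eta$ near $U$, uniform in $\eta$. The steps are:
\begin{enumerate}
\item Use $\operatorname{Im}$-monotonicity, i.e.\ $r_x'(\lambda)=\langle\varphi,(\mathrm T-\lambda)^{-2}\varphi\rangle>0$ on $\R$, together with the fact that $\lambda+r_x(\lambda)$ is increasing between poles. This upgrades ``$W(\lambda)\notin\Sigma_0$ a.s.\ for each $\lambda\in U$'' to a uniform $\delta$ on a slightly smaller interval.
\item Work with a countable dense set of $\lambda$ so that the almost-sure statements can be intersected.
\item Reconstruct $\langle\delta_o,(A-z)^{-1}\delta_o\rangle$ from the Schur complement. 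This shows $\nu(U')=\nu_{\operatorname{TF}}(U')$, and the atoms of $\nu_{\operatorname{TF}}$ lie in $\mathcal T$, which is disjoint from $U$.
\end{enumerate}

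The main obstacle is this uniformity step. The bound $|W_x(\lambda)|\ge\|B\|+\delta$ must hold simultaneously for all $x$ and all $\lambda$ near $\lambda_0$. The difficulty is near points of $\mathrm D$: there $r_x$ has poles, vertices drop out of $\mathbb V_\lambda$, and the Schur complement degenerates. I would first try to handle these points by the monotone behaviour of $W_x$ through each pole, which forces $|W_x|$ to be large just next to a pole. The contribution of $(T-z)^{-1}$ near such $\lambda$ I would then control separately, showing it only produces the tuning-fork atoms.
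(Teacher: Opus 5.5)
\textbf{Verdict: there is a genuine gap.} Your lower inclusion works, but the upper inclusion $\Sigma\subseteq\overline{\{\ldots\}\cup\mathcal T}$ is left unproven at its decisive step.

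\textbf{What works.} The lower inclusion is correct, and it is more direct than the paper's, which obtains it only through the truncated laws $\mathrm P_n=\mathrm P(\,\cdot\,|\,m\le n)$ and the inclusion $\Sigma^n\subseteq\Sigma$. Your worry about $\lambda\in\mathrm S^x$ on $K'$ is moot. If an attached graph with $\lambda+r(\lambda)\in\Sigma_0$ has $\lambda$ in its spectrum, then $\lambda\notin\mathrm S_*$ for it, so $\lambda$ is a tuning-fork eigenvalue and lies in $\mathcal T$ almost surely; alternatively, lift with $(\lambda-\mathrm T^x)^+\varphi^x$ as the paper does. Step 1 of the upper inclusion is also correct, and even uniform over all realizations of the attached graphs. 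If $W_x(\lambda)>\|B\|$, then moving left, $W_x$ decreases with slope at least one and must hit $\|B\|$ before the next pole, at a point outside $U$. Hence $W_x(\lambda)-\|B\|\ge\operatorname{dist}(\lambda,U^c)$, and $\|(H_\lambda-\lambda)^{-1}\|\le\delta^{-1}$ holds simultaneously on a smaller interval.

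\textbf{The gap.} It is step 3, which carries the whole content of the upper inclusion, and whose main obstacle you explicitly leave open. The identity you build on (invertibility of $A-z$ versus invertibility of $B-z-\operatorname{diag}(r_x(z))$) is only justified when $(T-z)^{-1}$ is bounded, i.e.\ when $z\notin\sigma(T)=\overline{\bigcup_x\mathrm S^x}$.
\begin{itemize}
\item At real $\lambda$ this fails on $\mathrm D$.
\item For unbounded $m$ it also fails wherever eigenvalues of different attached graphs accumulate. Your hypotheses on $U$ do not exclude this: admissible poles of infinitely many types can accumulate inside $U$ while the gap of step 1 holds.
\item Passing to $z=\lambda+\mathrm i\eta$ only moves the problem into an $\eta\downarrow0$ limit. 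That limit would have to be uniform over infinitely many types, including the cancellation between $\mathrm R^x_{tt}$ and $((\mathrm R^x\varphi^x)_t)^2G_{xx}$ at admissible poles.
\item The natural constants, $\sup_x\|(\mathrm T^x-\lambda)^+\|$ and $\inf_y|\langle\varphi^y,\Psi^y\rangle|$, can be infinite.
\end{itemize}

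\textbf{What the paper supplies.} It uses two ideas that your proposal lacks.

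First, a real-$\lambda$ Weyl-sequence argument: the inclusion $\sigma(A)\subseteq\{\lambda:\lambda\in\sigma(H_\lambda)\}\cup\mathcal T$ in Theorem \ref{Theorem: Anderson representation of the spectrum}. It never inverts $\mathrm T^x-\lambda$ on its kernel, so it does not degenerate on $\mathrm D$.
\begin{itemize}
\item Restrict the core part $\phi_\varepsilon$ of an approximate eigenvector of $A$ to $\mathbb V_\lambda$.
\item On $\mathbb V_\lambda\cap\mathbb V^+$, compare $E\psi_\varepsilon$ with $V_\lambda\tilde\phi_\varepsilon$ via $(\lambda-\mathrm T^x)^+$, using Lemmas \ref{Lemma: r in terms of pseudoinverse} and \ref{Lemma: get Anderson potential from solution to eigenvalue equation}.
\item Off $\mathbb V_\lambda$, use $|\phi_y|\,|\langle\Psi^y,\varphi^y\rangle|\le\|(\mathrm T^y-\lambda)\psi^y+\phi_y\varphi^y\|$.
\item If $\phi_\varepsilon\to0$, Lemma \ref{Lemma: Estimate distance to tuning fork eigenvalues} gives $\lambda\in\mathcal T$.
\end{itemize}

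Second, these constants are finite only under \eqref{Assumption:: m a.s. bounded}, so the general case is reduced to bounded $m$ by truncation. One has $A_n\to A$ in the strong resolvent sense, so every point of $\sigma(A)$ is a limit of points of $\sigma(A_n)$. Since $\mathrm P_n\ll\mathrm P$, also $\sigma(A_n)\subseteq\{\lambda:\mathbb P(\lambda+r(\lambda)\in\Sigma_0)>0\}\cup\mathcal T$. This is exactly where the closure in the statement comes from.

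\textbf{A cheaper route for bounded $m$.} Your own approach can be finished easily in that case.
\begin{itemize}
\item $\mathrm D$ is then finite, and the Schur complement with bounded $(T-\lambda)^{-1}$ excludes every $\lambda\in U'\setminus\mathrm D$ from $\sigma(A)$.
\item The finitely many points of $\mathrm D\cap U'$ would then be isolated points of $\sigma(A)$, hence eigenvalues.
\item The exact block computation forces the core part of such an eigenvector into $\ker(H_\lambda-\lambda)=\{0\}$. So it is a tuning fork, contradicting $U\cap\mathcal T=\emptyset$.
\end{itemize}
Without a truncation step, however, the unbounded case remains open in your proposal.
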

	See Figure \ref{fig:support} for an illustration of Theorem \ref{Theorem: Spectrum in the lattice case}. A family of examples for which the assumptions of Theorem \ref{Theorem: Spectrum in the lattice case} are satisfied can be constructed by using the following observation: if $\mathbb G$ is deterministic, vertex transitive and bipartite and if $\Sigma_0$ is an interval then the $d$-fold cartesian product $\mathbb G^d$ and the $d$-fold free product $\mathbb G^{\boxtimes d}$ (whose spectral measures are the $d$-fold convolution $\nu_0^{*d}$ and the $d$-fold free additive convolution $\nu_0^{\boxtimes d}$ respectively, see \cite{Bor16}) satisfy the assumptions of Theorem \ref{Theorem: Spectrum in the lattice case}. We refer the reader to \cite{BelinschiBercoviciHo.2024} for a proof that the free product of two probability measures with connected support has connected support.

	\subsection{Concentration of mass}
	\label{subsection: Concentration of mass}
	We show, roughly speaking, that the absolute number of eigenvalues tends to increase predominately in intervals in which many eigenvalues of the attached graphs lie. 
	In order to state our following results in a cleaner fashion, we introduce the measure
	\begin{equation}
		\label{Equation: Definition hat nu}
		\hat{\nu} \coloneqq (1+ \mathrm E[m])\big(\nu - \nu_{\operatorname{TF}}\big).
	\end{equation}
	Notice that Proposition \ref{Proposition: Tuning fork contribution} implies that the total mass of $\hat \nu$ is given by
	\begin{equation}
		\label{Equation: Total mass of hat nu}
		\hat\nu(\R) = 1 + \mathrm E[| \mathrm S_*|].
	\end{equation}
	One easily shows that the convergence of $\nu_N$ to $\nu$ with respect to the Kolmogorov-Smirnov metric (i.e.\ the uniform convergence of the cumulative distributions functions given by Corollary \ref{Corollary: Convergence of DOS-measures}) implies that 
	\begin{equation*}
		\lim_{N\to \infty} \mathbb E\Big[\frac{1}{|\mathbb V_N|}\sum_{\lambda \in \sigma(A_N)} \big(\gamma(A_N, \lambda) - \gamma_{\operatorname{TF}, N}(A_N, \lambda) \big) \delta_{\lambda} \Big] = \hat \nu
	\end{equation*}
	in the Kolmogorov--Smirnov metric, see Lemma \ref{Lemma: convergence of nu hat} later in the text.
	Hence, the measure $\hat{\nu}$ can be used in order to quantify the expected absolute (rather than relative) change of the number of eigenvalues in a given set, not considering the contribution of tuning forks. Let
	\begin{equation*}
		 \mathrm N \coloneqq \{\lambda \in \R:\, r(\lambda) = 0\}, \quad \zeta \coloneqq \mathrm E\Big[ \1_{\{\mathrm G \neq \emptyset\}} \sum_{\lambda \in \mathrm N} \delta_\lambda \Big].
	\end{equation*} 
	On the event $\{\mathrm G \neq \emptyset\}$, the function $r$ is strictly increasing on every connected component of $\mathbb R \setminus \mathrm S_*$, with $\lim_{t  \to \lambda} |r(t)| = \infty$ for all $\lambda \in \mathrm S_*$. Between two elements of $\mathrm S_*$ there hence lies exactly one element of $\mathrm N$ and we have $|\mathrm N| = |\mathrm S_*| - 1$. We obtain with \eqref{Equation: Total mass of hat nu}
	\begin{equation*}
		(\hat\nu - \nu_0)(\R) = \mathbb E[|\mathrm S_*|] = \zeta(\R) + \mathrm P(\mathrm G \neq \emptyset)
	\end{equation*}
	where $\nu_0$ denotes, as earlier, the density of states of $(\mathbb G, \mathbb o)$.
	We show with the Anderson-percolation representation that the measure $\zeta$ can even be used in order to estimate the influence of the sprinkling on the expected number of eigenvalues in some interval $I$.

	\begin{figure}
		\centering
		\begin{minipage}[c]{0.6\textwidth}
			\centering
			\includegraphics[width=\textwidth]{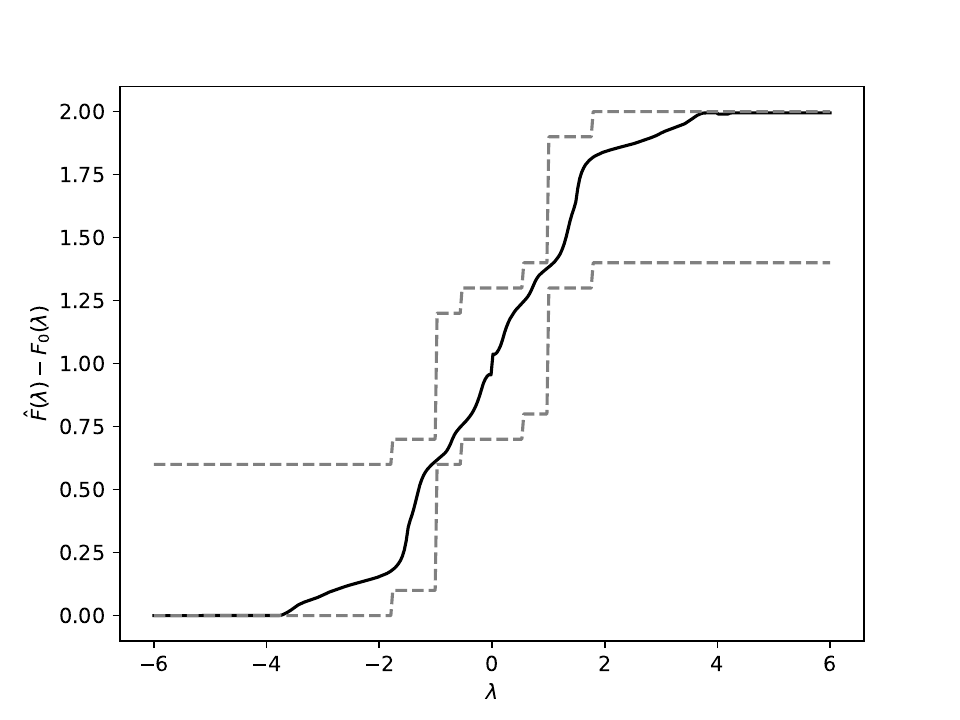}
		\end{minipage}
		\hfill
		\begin{minipage}[c]{0.39\textwidth}
			\centering
			\includegraphics[width=\textwidth]{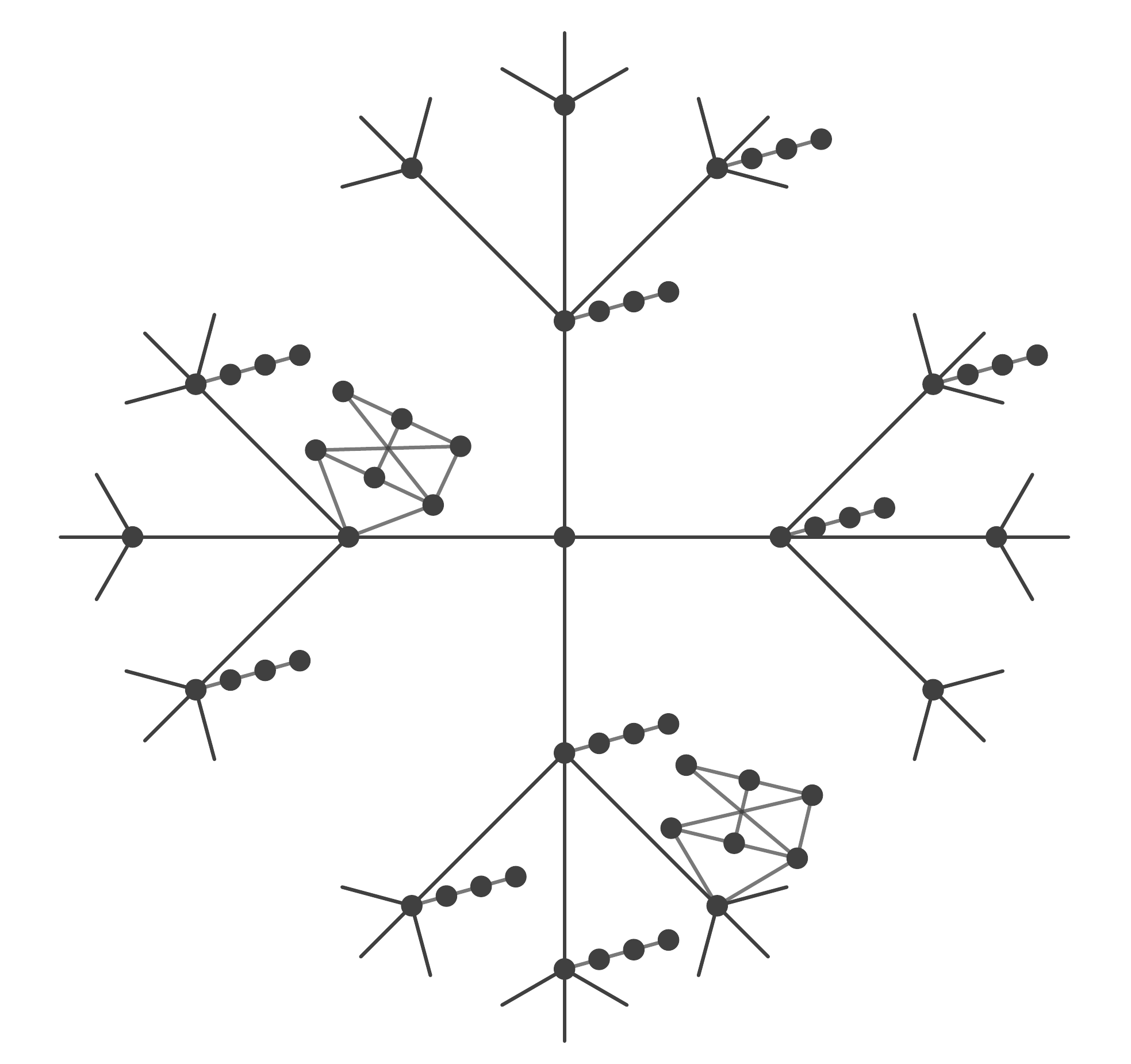}
		\end{minipage}
		\caption{Distribution function of $\hat\nu-\nu_0$ for the sprinkled Bethe lattice $\mathbb B_4$ (simulation) and the upper and lower bounds given by Theorem  \ref{Theorem: concentration of mass inside of Sigma_0}. Here $\mathrm G$ is with probability 0.4 empty, with probability 0.5 a line segment of length 3 and with probability 0.1 the graph containing six vertices depicted above. The total mass of $\hat\nu-\nu_0$ is given by $\mathrm E[|\mathrm S_*|] = 0.5\cdot 3 + 0.1\cdot 5 = 2$.}
		\label{fig:comparedfs}
	\end{figure}

	\begin{thrm}
		\label{Theorem: concentration of mass inside of Sigma_0}
		Let $\hat F$, $F_0$ and $F_\zeta$ denote the cumulative distribution functions of $\hat \nu$, $\nu_0$ and $\zeta$ respectively. Then we have for all $\lambda \in \R$
		\begin{equation*}
			F_\zeta(\lambda) \leq (\hat F - F_0)(\lambda) \leq F_\zeta(\lambda) + p \quad \text{ where } \quad p \coloneqq \mathrm P(\mathrm G \neq \emptyset).
		\end{equation*}
		In particular, if $I = (a, b]$ with $a<b$ then
		\begin{equation*}
			\zeta(I)-p \, \leq \, (\hat \nu - \nu_0)(I) \, \leq  \zeta(I)+p.
		\end{equation*}
	\end{thrm}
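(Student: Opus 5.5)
We work in finite volume and pass to the limit at the end. By Lemma \ref{Lemma: convergence of nu hat} and Corollary \ref{Corollary: Convergence of DOS-measures}, applied also to the unsprinkled core, it suffices to prove the finite-volume inequalities
\begin{equation*}
\mathbb E\Big[\frac{1}{|\mathbb V_N|}\sum_{x\in\mathbb V_N}\#(\mathrm N^x\cap(-\infty,\lambda])\Big]\le \mathbb E\big[\hat N_N(\lambda)-N_{0,N}(\lambda)\big]/|\mathbb V_N| \le (\text{same})+p .
\end{equation*}
Here $\hat N_N(\lambda)$ counts the non-tuning-fork eigenvalues of $A_N$ in $(-\infty,\lambda]$ with multiplicity, and $N_{0,N}(\lambda)$ counts the eigenvalues of $B_N$ in the same range. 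The limits $F_\zeta$ and $p$ on the left and right come from the law of large numbers, since the sprinkling is i.i.d. For the interval statement, take differences at $b$ and $a$.

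\textbf{Step 1: reduce to a single attached graph.} First discard the tuning forks. On the orthogonal complement of the tuning-fork space, each attached block $(\mathrm T^x,\varphi^x)$ reduces to the cyclic subspace generated by $\varphi^x$. This has dimension $|\mathrm S^x_*|$ and simple eigenvalues $\mathrm S^x_*$, so $\hat N_N$ is the eigenvalue counting function of the reduced operator $\tilde A_N$. Now add the attached graphs one vertex $x$ at a time. The key claim is that adding one reduced block at $x$ changes the counting function at $\lambda$ by an amount in $\{k_x, k_x+1\}$, where $k_x=\#(\mathrm N^x\cap(-\infty,\lambda])$ whenever $\mathrm G^x\neq\emptyset$. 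The change is $0$ when $\mathrm G^x=\emptyset$. Summing over $x$ then gives the lower bound $\sum_x k_x$ and the upper bound $\sum_x k_x+\#\{x:\mathrm G^x\ne\emptyset\}$, which is exactly the claim.

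\textbf{Step 2: the single-step count via Schur complements.} Let $C$ be the current operator on the current vertex set and let $\tilde A=\begin{pmatrix} C & e\,\delta_x\\ \cdot & D\end{pmatrix}$, where $D$ is the reduced block with spectrum $\mathrm S^x_*$. For $\lambda\notin \mathrm S^x_*$, the Haynsworth inertia additivity gives
\begin{equation*}
\#\{\sigma(\tilde A)<\lambda\}=\#\{\sigma(D)<\lambda\}+\#\{\sigma(C-r_x(\lambda)\,\delta_x\delta_x^*-\lambda)<0\}.
\end{equation*}
Here $r_x(\lambda)=\langle\varphi^x,(\mathrm T^x-\lambda)^{-1}\varphi^x\rangle$ is exactly the Schur complement term appearing in \eqref{def_AH}. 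The correction $-r_x(\lambda)\delta_x\delta_x^*$ is a rank-one perturbation, so by interlacing the second count differs from $\#\{\sigma(C)<\lambda\}$ by $0$ or $-1$ if $r_x(\lambda)>0$, and by $0$ or $+1$ if $r_x(\lambda)<0$. Since $r_x$ is increasing between poles, $\#\{\sigma(D)<\lambda\}$ combined with the sign of $r_x(\lambda)$ equals $\#\{\mathrm N^x<\lambda\}+\1_{\{r_x(\lambda)<0\}}$. Checking this requires tracking the poles and zeros, starting from $r_x\to 0^-$ at $-\infty$. So the increment lies in $\{k_x,k_x+1\}$, as claimed. To handle closed versus open endpoints and the points $\lambda\in\mathrm S^x_*\cup \mathrm N^x$, we use right-continuity: approximate $\lambda$ from the right by generic points.

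\textbf{Step 3: limits.} Average over $N$ and pass to the limit. Since the $k_x$ are i.i.d.\ with mean $F_\zeta(\lambda)$, their empirical average converges to $F_\zeta(\lambda)$ and the count of nonempty attached graphs to $p$. Corollary \ref{Corollary: Convergence of DOS-measures} together with Lemma \ref{Lemma: convergence of nu hat} then gives pointwise convergence of the distribution functions, and the normalization by $|\mathbb V_N|$ rather than $|\mathcal V_N|$ matches the factor $1+\mathrm E[m]$ in \eqref{Equation: Definition hat nu}.

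The main obstacle is the bookkeeping in Step 2. The sign conventions, the behaviour at the poles, and the degenerate cases (for instance $\lambda$ coinciding with an eigenvalue of $C$) all have to be handled so that the bound holds for every $\lambda$ and not just generically.
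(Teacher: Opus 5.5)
Step 2 does not close as written, because two signs are wrong. Your overall architecture matches the paper's: work in finite volume, attach one block at a time, show a per-vertex increment in $\{k_x,k_x+1\}$, take expectations, and pass to Kolmogorov--Smirnov limits. Step 1 (restricting to the cyclic subspaces of $\varphi^x$) is also correct.

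The two sign slips are as follows.
\begin{itemize}
\item As $\lambda\to-\infty$ the matrix $\mathrm T^x-\lambda$ is positive definite, so $r_x\to 0^+$ there, not $0^-$ (and $r_x\to0^-$ at $+\infty$).
\item More importantly, your rank-one interlacing signs are reversed. If $r_x(\lambda)>0$, then $-r_x(\lambda)\delta_x\delta_x^*$ is negative semidefinite and eigenvalues move down, so $\#\{\sigma(C-r_x(\lambda)\delta_x\delta_x^*)<\lambda\}-\#\{\sigma(C)<\lambda\}\in\{0,1\}$. If $r_x(\lambda)<0$, this difference lies in $\{-1,0\}$.
\end{itemize}
Your identity $\#\{\sigma(D)<\lambda\}=\#\{\mathrm N^x<\lambda\}+\1_{\{r_x(\lambda)<0\}}$ is correct. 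Combined with the interlacing signs you wrote, however, it gives increments in $\{k_x-1,k_x\}$ when $r_x(\lambda)>0$ and in $\{k_x+1,k_x+2\}$ when $r_x(\lambda)<0$, at generic $\lambda$. A leaf attached to $C=(0)$, evaluated at $\lambda=-1/2$, already shows the problem: the count goes up by one while $r_x(\lambda)=2>0$.

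With the correct signs, both cases give $\{k_x,k_x+1\}$. Your right-limit device then covers every $\lambda$, including coincidences with $\sigma(C)$, because the inertia identity is exact. So this is a repairable slip, not a missing idea.

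Once repaired, your route to the single-step count differs from the paper's.
\begin{itemize}
\item The paper (Proposition \ref{Proposition: Concentration of mass in finite volume}) combines the finite-volume identity $\gamma(A_N,\lambda)-\gamma_{\operatorname{TF}}(A_N,\lambda)=\gamma(H_{\lambda,N},\lambda)$ with the continuous eigenvalue branches $\alpha_i(\lambda)$ of $B_N-r_x(\lambda)\delta_x\delta_x^*$ from Proposition \ref{Proposition: Parametrize eigenvalues of Anderson Hamiltonian in finite volume}. Between consecutive zeros of $r_x$, the branches are cyclically shifted by one index as one branch passes through $\infty$ at the unique pole. The intermediate value theorem then gives exactly one extra fixed point $\alpha_i(\lambda)=\lambda$ per such interval, and at most one on a partial interval.
\item You instead remove the tuning forks via cyclic subspaces and read off the count at each fixed $\lambda$ from Haynsworth inertia additivity plus rank-one interlacing.
\end{itemize}
Your argument is purely algebraic. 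It needs neither the branch parametrization, with its limit analysis at the poles, nor fixed-point counting. It also makes the induction over vertices immediate, because the base operator $C$ may be any Hermitian matrix; the paper writes out only the case $\mathbb V_N^+=\{x\}$. In exchange, the paper's version exhibits the mechanism: existing eigenvalues are repelled toward the zeros of $r_x$, and one new eigenvalue is created near each admissible eigenvalue. It also reuses the Anderson--percolation machinery that drives the rest of the paper.

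Your Step 3 coincides with the paper's. One small point: in finite volume the expectations of $|\mathbb V_N|^{-1}\sum_x k_x$ and $|\mathbb V_N|^{-1}\sum_x\1_{\{\mathrm G^x\neq\emptyset\}}$ are exactly $F_\zeta(\lambda)$ and $p$, so no law of large numbers is needed.
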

	In other words, the total number of eigenvalues tends to increase predominately in intervals $I$ in which many zeros of $r$ lie; see Figure \ref{fig:comparedfs} for an illustration. Theorem \ref{Theorem: concentration of mass inside of Sigma_0} in particular improves the trivial bound
	\begin{equation*}
		\| \hat F - F_0\|_\infty \leq 4 \mathrm E[m],
	\end{equation*}
	which can directly be obtained by finite rank perturbation theory; see the proof of Lemma \ref{Lemma: General Lipschitz estimates} later in the text.
	
	If the expected number of admissible eigenvalues in an interval $I$ is at least $2p$, i.e.\ if $\mathbb E\big[|\mathrm S_*\cap I||\mathrm G \neq \emptyset\big] \geq 2$, then $\zeta(I) \geq p$ and hence Theorem \ref{Theorem: concentration of mass inside of Sigma_0} implies that
	\begin{equation}
		\label{Equation: Increase of eigenvalues in I by sprinkling}
		\hat \nu(I) \geq \nu_0(I)\,.
	\end{equation}
If the expected number of admissible eigenvalues in $I$ is $p$, \eqref{Equation: Increase of eigenvalues in I by sprinkling} does not need to hold; see Corollary \ref{Corollary: mass can decrease around tree eigenvalue for small p} and Corollary \ref{Corollary: Total repulsion for p=1 and detAG} for examples where the expected number of eigenvalues in $I$ is $p$ (with $p$ being close to 0 and 1 respectively) and where \eqref{Equation: Increase of eigenvalues in I by sprinkling} fails.
	
	We now consider the case of sparse sprinkling, i.e.\ small $p$, and study where the mass of $\hat \nu$ is concentrated. 
	For simplicity, we only consider the case where $\Sigma_0$ is an interval and $\mathrm D\subseteq \Sigma_0$. Then $r$ does not have any zeros outside of $\Sigma_0$, i.e.\ $\zeta(\R \setminus \Sigma_0) = 0$, and Theorem \ref{Theorem: concentration of mass inside of Sigma_0} yields
	\begin{equation}
		\label{Equation: Estimate on mass outside of Sigma0}
		\hat{\nu}(\R \setminus \Sigma_0) \leq 2 p.
	\end{equation}
	Hence, for small $p$ most of the mass of $\hat \nu$ lies inside of $\Sigma_0$. In the upcoming Subsection \ref{Subsection: Results sparse srpinkling}, we present an expansion of $\hat \nu$ for sparse sprinkling that, under assumptions including the case where $\mathbb G$ is the integer or Bethe lattice and \eqref{Assumption:: m a.s. bounded} holds, implies that most mass outside of $\Sigma_0$ (meaning all mass up to order $o(p)$) is concentrated in a set of small Lebesgue measure. Let $g$ denote the Stieltjes transform of the spectral measure of the adjacency operator $B$ of $\mathbb G$ in the root $\mathbb o$ i.e.\ let
	\begin{equation}
		\label{Equation: Definition of g}
		g(z) \coloneqq \big\langle \delta_\mathbb o, (B-z)^{-1} \delta_\mathbb o \big\rangle, \quad z\in \C^+.
	\end{equation}
	If $\mathbb G$ is deterministic and vertex transitive then $g$ is deterministic as well and coincides with the Stieltjes transform $s_0$ of $\nu_0$. We define
	\begin{equation}
		\label{Equation: Definition of Z}
		\mathcal Z \coloneqq \big\{\lambda \in \mathbb R\setminus \Sigma_0:\, \mathbb P(g(\lambda)r(\lambda) = 1) > 0 \big\}
	\end{equation}	
	which is a finite set if $g$ is deterministic and \eqref{Assumption:: m a.s. bounded} holds. For $\varepsilon>0$ we define
	\begin{equation*}
		\mathcal Z_\varepsilon \coloneqq \{\lambda  \in \R: \, \operatorname{dist}(\lambda, \mathcal Z) \leq \varepsilon  \}, \quad  \mathcal I_\varepsilon \coloneqq \{\lambda  \in \R: \, \operatorname{dist}(\lambda, \Sigma_0 \cup \mathcal Z) \leq \varepsilon  \}.
	\end{equation*}

	\begin{figure}
		\centering
		\begin{minipage}[c]{0.6\textwidth}
			\centering
			\includegraphics[width=\textwidth]{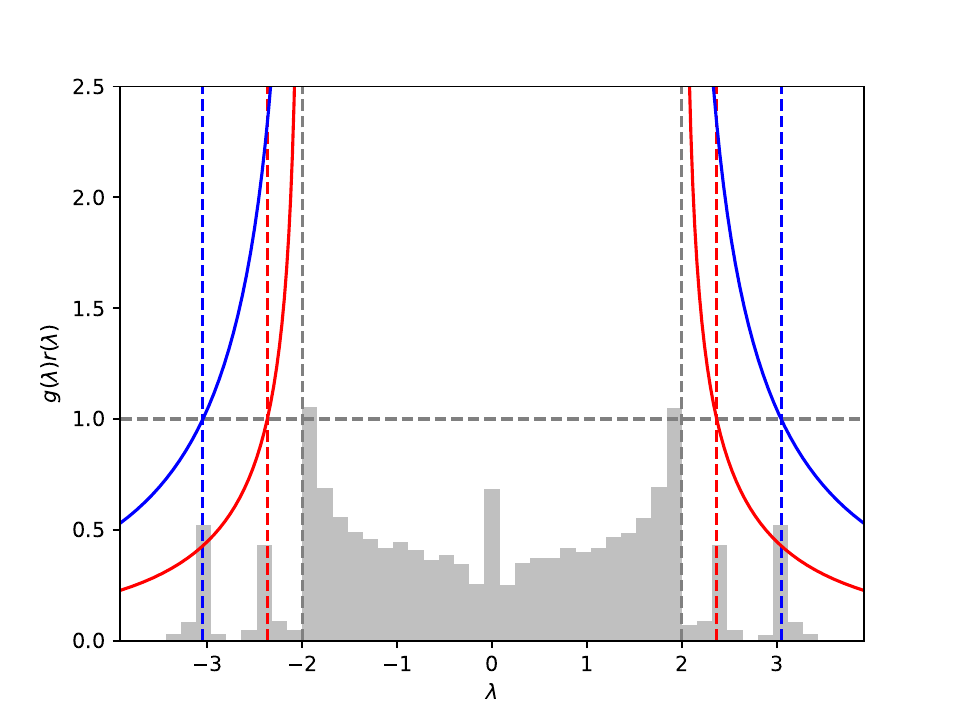}
		\end{minipage}
		\hfill
		\begin{minipage}[c]{0.39\textwidth}
			\centering
			\includegraphics[width=\textwidth]{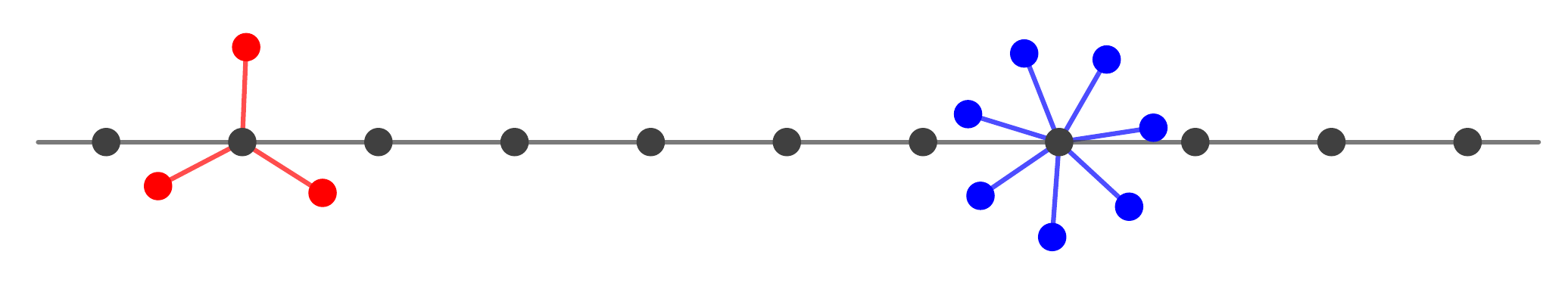}
		\end{minipage}
		\caption{Histogram of the mass distribution of $\hat \nu$ (simulation, non-normalized) for $\mathbb G = \mathbb Z$ and $\mathrm P(\mathrm G= \text{three leaves}) =  0.05$, $\mathrm P(\mathrm G= \text{seven leaves}) = 0.05$ and $\mathrm P(\mathrm G = \emptyset) = 0.9$. Graph of $\lambda \mapsto g(\lambda)r(\lambda)$ conditionally on $\{\mathrm G = n \text{ leaves}\}$ in red and blue for $n=3$ and $n=7$ respectively. Mass outside of $\Sigma_0 = [-2, 2]$ is located close to $\Sigma_0 \cup \mathcal Z$ where $\mathcal Z$ is indicated by the dashed vertical red and blue lines. Notice that $\hat \nu(\mathcal Z) = 0$ by Corollary \ref{Corollary: No atoms outside of D for integer lattice}, i.e.\ the mass close to $\mathcal Z$ does not come from atoms.}
		\label{fig:concentrationmassoutsidesigma0}
	\end{figure}

	\begin{prop}
		\label{Proposition: Concentration of mass outside of Sigma0}
		For every $M \in \mathbb N$ there exists a constant $ C = C(M)$ such that the following holds.
		If $\mathbb G$ is deterministic and vertex transitive with $\mathrm D \subseteq \Sigma_0 = [a, b]$ for some $a<b$ and if \eqref{Assumption:: m a.s. bounded} holds with $m\leq M$ a.s. then for every $\delta \in (0, 1/8)$
		\begin{equation}
			\label{Equation: Most mass close to Sigma_0 or Z}
			\hat \nu(\mathbb R \setminus \mathcal I_{p^{\delta}}) \leq C p^{2-8\delta}.
		\end{equation}
		Moreover, if $p^{\delta} \leq \operatorname{dist}(\Sigma_0, \mathcal Z)$ then
		\begin{equation}
			\label{Equation: Bounds on mass close to Z}
			c(\mathbb G, \mathrm P)p - C p^{2-8\delta} \leq \hat \nu(\mathcal Z_{p^{\delta}}) \leq 2p
		\end{equation}
		where
		\begin{equation}
			\label{Equation: Definiton of c(G)}
			c(\mathbb G, \mathrm P) \coloneqq \mathrm P\big(\lim_{\lambda \uparrow a} g(\lambda)r(\lambda)>1 |\mathrm G \neq \emptyset \big) +  \mathrm P\big(\lim_{\lambda \downarrow b} g(\lambda)r(\lambda)>1 |\mathrm G \neq \emptyset \big)
		\end{equation}
		is positive if and only if $\mathcal Z \neq \emptyset$.
	\end{prop}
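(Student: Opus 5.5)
The plan is to work in finite volume with the Schur complement and take $N\to\infty$ at the end, using Lemma \ref{Lemma: convergence of nu hat}. Fix $\lambda\notin\Sigma_0$ and set $G(\lambda)\coloneqq(B-\lambda)^{-1}$. Since $\mathrm D\subseteq\Sigma_0$, we have $\mathbb V_\lambda=\mathbb V$.

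\textbf{Reduction.} Eliminate the attached graphs, and then the core, to turn the eigenvalue equation into a Birman--Schwinger type equation on $\mathbb V^+$:
\begin{equation*}
u=\mathcal R(\lambda)\,G(\lambda)\,u,\qquad \mathcal R\coloneqq\operatorname{diag}(r_x(\lambda))_{x\in\mathbb V^+},
\end{equation*}
for $u=\psi|_{\mathbb V^+}\neq0$. Non-tuning-fork eigenvectors at $\lambda\notin\Sigma_0$ correspond bijectively to such $u$.

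Split $G=g(\lambda)\1+O(\lambda)$ on $\mathbb V^+$, where $O$ is the off-diagonal part. Row by row the equation reads
\begin{equation*}
(1-g(\lambda)r_x(\lambda))\,u_x=r_x(\lambda)\sum_{y\neq x}G_{xy}(\lambda)u_y .
\end{equation*}
Two elementary facts about the diagonal coefficient follow from \eqref{Assumption:: m a.s. bounded} with $m\le M$.
\begin{enumerate}
\item On $\R\setminus\Sigma_0$ the functions $r$ and $g$ are smooth and monotone, with $g(\lambda)\to0$ as $|\lambda|\to\infty$ and $|r|\le C(M)$ away from $\mathrm D$.
\item There are only finitely many possible pairs $(\mathrm G,\varphi)$. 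Hence, for $\lambda\notin\mathcal I_\varepsilon$, we get $|1-g(\lambda)r(\lambda)|\ge c\,\varepsilon^{k}$ with a fixed power $k$ (I expect $k\le 2$ or $3$), and $\|G(\lambda)\|\le\varepsilon^{-1}$.
\end{enumerate}

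\textbf{Proof of \eqref{Equation: Most mass close to Sigma_0 or Z}.} Let $E\coloneqq\operatorname{diag}(1-gr_x)$. Every eigenvector satisfies $u=E^{-1}\mathcal R\,O\,u$. Counting multiplicities over all $\lambda$ in a compact interval $J\subseteq\R\setminus\mathcal I_\varepsilon$ (note $\hat\nu$ has no mass beyond $\|B\|+M$ plus a constant), the number of eigenvalues is bounded by a Hilbert--Schmidt quantity. Concretely, a solution requires $\|E^{-1}\mathcal R O\|\ge1$, and the counting is controlled by
\begin{equation*}
\operatorname{tr}\big(|E^{-1}\mathcal R O|^2\big)\le C\varepsilon^{-2k}\sum_{x\ne y\in\mathbb V^+}|G_{xy}(\lambda)|^2 .
\end{equation*}
To make this uniform over $\lambda\in J$, I will count spectral flow. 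The eigenvalues of the self-adjoint family $\lambda\mapsto \mathcal R^{1/2}G\mathcal R^{1/2}$ (or a symmetrized version of it) are monotone in $\lambda$. Each crossing of $1$ inside $J$ is therefore witnessed at a grid point of a $p^{2}$-fine mesh, with a Lipschitz bound in $\lambda$ of order $\varepsilon^{-2}$.

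Take expectations and divide by $|\mathbb V_N|$. By independence, $\mathbb P(x,y\in\mathbb V^+)=p^2$, and $\sum_y|G_{\mathbb o y}|^2\le\|G\|^2\le\varepsilon^{-2}$. This gives an expected count of order $p^2\varepsilon^{-2k-2}$ up to logarithmic grid losses. Choosing $\varepsilon=p^\delta$ and absorbing constants yields $Cp^{2-8\delta}$. This is the main obstacle: getting a clean, $\lambda$-uniform eigenvalue count with only polynomial losses in $\varepsilon$. My first attempt will use the monotone spectral-flow argument; the fallback is a first-order resolvent expansion bounding $\operatorname{tr}\1_J(A)$.

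\textbf{Proof of \eqref{Equation: Bounds on mass close to Z}.} For the upper bound, note that all zeros of $r$ lie between points of $\mathrm S_*\subseteq\Sigma_0$. Hence $F_\zeta$ is constant on each component of $\R\setminus\Sigma_0$, and so is $F_0$. Theorem \ref{Theorem: concentration of mass inside of Sigma_0} then gives $(\hat F-F_0)$-increments of at most $p$ on the left of $a$ and at most $p$ on the right of $b$, for a total of $\le2p$.

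For the lower bound, call $x\in\mathbb V^+$ \emph{isolated} if no other $y\in\mathbb V^+$ is close enough to make $\sum_{y\neq x}|G_{xy}|$ large. Suppose $x$ is isolated and $\lim_{\lambda\uparrow a}g r_x>1$. Since $g r_x\to0$ as $\lambda\to-\infty$ and is monotone, there is a crossing of $1$ at some $z\in\mathcal Z$. A one-dimensional intermediate value (or Rouché) argument for the perturbed equation then produces an eigenvalue within $p^\delta$ of $z$, and similarly on the side of $b$. Non-isolated points have probability $O(p^2\cdot\mathrm{poly}(\varepsilon^{-1}))$. Summing over $x$ and normalizing gives $\hat\nu(\mathcal Z_{p^\delta})\ge c(\mathbb G,\mathrm P)p-Cp^{2-8\delta}$.

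Finally, by this same monotonicity, $c(\mathbb G,\mathrm P)>0$ if and only if some crossing exists, that is, if and only if $\mathcal Z\neq\emptyset$.
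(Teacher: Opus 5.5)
Your reduction to $H_\lambda-\lambda=B-\lambda-\mathcal R(\lambda)$ on $\mathbb V_\lambda=\mathbb V$ is fine. So is your proof of $\hat\nu(\mathcal Z_{p^\delta})\le 2p$, which is exactly the paper's argument via Theorem~\ref{Theorem: concentration of mass inside of Sigma_0}. But the step that carries the proposition, the $\lambda$-uniform eigenvalue count, fails as described. A Hilbert--Schmidt bound at a fixed $\lambda$ only gives $\dim\ker(1-K(\lambda))\le\|K(\lambda)\|_{HS}^2$, i.e.\ it controls the multiplicity at that single point. Your grid does not repair this. A mesh of width $p^2$ on $J$ has $\asymp|J|p^{-2}$ points. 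The Hilbert--Schmidt bound on the number of eigenvalues of $K(\lambda_j)$ in a window $[1,1+C\varepsilon^{-2}p^2]$ does not shrink with the window, since it is not a Wegner estimate. Summing over the grid therefore gives $O(|J|\varepsilon^{-2k-2})$ per vertex: the factor $p^2$ is lost entirely, not up to logarithms.

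The missing idea is to use the monotonicity to telescope instead of discretising. Take $\lambda>b$; the side $\lambda<a$ is symmetric. There $\lambda\mapsto H_\lambda-\lambda$ has no poles and derivative $-1-\mathcal R'(\lambda)\le-1$. By Proposition~\ref{Proposition: Anderson representation in finite volume}, the number of non-tuning-fork eigenvalues in $(\lambda_1,\lambda_2]$ is then exactly $N_{>0}(H_{\lambda_1}-\lambda_1)-N_{>0}(H_{\lambda_2}-\lambda_2)$, where $N_{>t}$ counts eigenvalues above $t$. Since $-\mathcal R\ge0$ there, Birman--Schwinger gives $N_{>0}(H_\lambda-\lambda)=N_{>1}(K(\lambda))$, with $K=|\mathcal R|^{1/2}(\lambda-B)^{-1}|\mathcal R|^{1/2}=D+O$ and $D=\operatorname{diag}(g(\lambda)r_x(\lambda))$. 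Combine Weyl's inequality $N_{>s+t}(X+Y)\le N_{>s}(X)+N_{>t}(Y)$, the bound $N_{>t}(\pm O)\le t^{-2}\|O\|_{HS}^2$, and your gap $|1-gr_x|\ge c\varepsilon^k$ off $\mathcal I_\varepsilon$. This gives $\big|N_{>1}(K(\lambda))-\#\{x\in\mathbb V^+:\,g(\lambda)r_x(\lambda)>1\}\big|\lesssim\varepsilon^{-2k}\|O(\lambda)\|_{HS}^2$. If no point of $\mathcal Z$ lies between $\lambda_1$ and $\lambda_2$, the two diagonal counts coincide. Only the two endpoint Hilbert--Schmidt norms remain, and their expectation per vertex is $O(p^2\operatorname{poly}(\varepsilon^{-1}))$.

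The same telescoping with $\lambda_{1,2}=z\mp\varepsilon$, $z\in\mathcal Z$, is what the lower bound in \eqref{Equation: Bounds on mass close to Z} needs. The diagonal counts then differ by the number of $x\in\mathbb V^+$ with $g(z)r_x(z)=1$; summed over $z$ this averages to $c(\mathbb G,\mathrm P)p$, counted with multiplicity.

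Your isolated-vertex argument does not deliver this. An intermediate value or Rouch\'e argument at each isolated $x$ produces \emph{an} eigenvalue near $z$. It does not show that distinct vertices produce distinct eigenvalues, which is what a lower bound on $\hat\nu(\mathcal Z_{p^\delta})$ requires. Moreover, an $\ell^1$ isolation criterion is unusable on non-amenable cores. On $\mathbb B_d$ with $d\ge3$, $|G_{xy}(\lambda)|$ decays only like $q(\lambda)^{\operatorname{dist}(x,y)}$ with $(d-1)q(\lambda)>1$ for $\lambda\in(2\sqrt{d-1},d)$, so there $\sum_{y\in\mathbb V^+}|G_{xy}(\lambda)|=\infty$ almost surely.

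You also still have to absorb finite-volume defects. Outlying eigenvalues of $B_N$ outside $\Sigma_0$ break both $\|(B_N-\lambda)^{-1}\|\le\varepsilon^{-1}$ and the sign condition in Birman--Schwinger, and $(B_N-\lambda)^{-1}_{xx}\neq g(\lambda)$. These are $o(|\mathbb V_N|)$ effects by Corollary~\ref{Corollary: Convergence of DOS-measures} and local convergence, handled by rank-perturbation bounds.

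For comparison, the paper does no eigenvalue counting at all. It tests $\hat\nu=\nu_0+p\phi+p^2R$ from Theorem~\ref{Theorem: small p expansion in dual of Ck} against mollified indicators $f_\varepsilon$ equal to $1$ on $\R\setminus\mathcal I_\varepsilon$ (respectively on $\mathcal Z$) and vanishing near $\Sigma_0\cup\mathcal Z$ (respectively off $\mathcal Z_\varepsilon$). By Proposition~\ref{Proposition: Condition that phi locally agrees with measure}, $\phi=\sum_{\lambda\in\mathcal Z}\mathbb P(g(\lambda)r(\lambda)=1\,|\,\mathrm G\neq\emptyset)\,\delta_\lambda$ off $[a,b]$. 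So the first-order term is $0$ (respectively $c(\mathbb G,\mathrm P)$), while the remainder is $\lesssim p^2\|f_\varepsilon\|_{C^8}\lesssim p^2\varepsilon^{-8}$; this is the source of the exponent $2-8\delta$. Once repaired as above, your route is self-contained (no Helffer--Sj\"ostrand or second-derivative bounds) and gives a smaller power of $\varepsilon^{-1}$, at the cost of the spectral-flow bookkeeping.
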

	
	In other words, the mass of $\hat \nu$ is up to order $o(p)$ located in $\mathcal I_{p^{\delta}}$ i.e.\ close to $\Sigma_0 \cup \mathcal Z$ and, in case that $c(\mathbb G, \mathrm P) = 0$, even close to $\Sigma_0$. Notice that Theorem \ref{Theorem: Spectrum in the lattice case} implies that the support of $\hat \nu$ is typically not a subset of $\mathcal I_{p^\delta}$ and that Corollary \ref{Corollary: No atoms outside of D for integer lattice} implies for $\mathbb G = \mathbb Z^d$ that the mass inside of $\mathcal Z_{p^\delta}$ does not come from atoms of $\hat \nu$.
	
	Since we assumed $\mathrm D \subset [a, b]$, $r(a)$ and $r(b)$ are non-negative/non-positive and are increasing/decreasing under the addition of attached graphs to $\mathrm G$ (under the constraint $\mathrm D \subseteq \Sigma_0$).
	Hence, it might come to a transition from $c(\mathbb G, \mathrm P) = 0$ to $c(\mathbb G, \mathrm P) > 0$ as we increase $|r|$ on $\R \setminus [a, b]$  (e.g.\ by increasing the number connected components of $\mathrm G$), see Figure \ref{Figure: Example small p expansion} for an example.
	If $\sup(\Sigma_0) = \operatorname{deg}(\mathbb o)$ (which is equivalent to $0$ being contained in the spectrum of the discrete Laplacian on $\mathbb G$, a property sometimes called the absence of a spectral gap) then
	\begin{equation}
		\label{Equation: limit of g in terms of transience probability}
		-\lim_{\lambda \downarrow b} 1/g(\lambda) = \operatorname{deg}(\mathbb o) \operatorname{tp}(\mathbb G)
	\end{equation} 
	where  $\operatorname{tp}(\mathbb G)$ denotes the transience probability of a symmetric random walk on $\mathbb G$, i.e.\ the probability that the walk never returns to its initial position. If $\mathbb G$ is additionally bipartite, we have $a = -b$ as well as $\lim_{\lambda \uparrow a} 1/g(\lambda) = \operatorname{deg}(\mathbb o) \operatorname{tp}(\mathbb G)$ by symmetry of $\nu_0$ around the origin. In particular if $\sup(\Sigma_0) = \operatorname{deg}(\mathbb o)$ and if $\mathbb G$ is recurrent and bipartite, e.g.\ if $\mathbb G = \mathbb Z^d$ with $d\leq 2$, then $\lim_{\lambda \uparrow a}g(\lambda) = \lim_{\lambda \downarrow b}-g(\lambda) = \infty$ such that $c(\mathbb G, \mathrm P)=2$ and hence $\hat \nu(\mathcal Z_{p^\delta}) = 2p + o(p)$. On the other hand, for $\mathbb G = \mathbb Z^3$ we have \cite{MW40}
	\begin{equation*}
		0.65 \leq \operatorname{tp}(\Z^3) \leq 0.66.
	\end{equation*}
	If $\mathrm G$ is, for example, with probability $p$ a collection of $M$ leaves and with probability $1-p$ empty then we have with Equation \eqref{Equation: limit of g in terms of transience probability}
	\begin{equation}
		\label{Eqaution: critical number of leaves}
		c(\Z^3, \mathrm P) =
		\begin{cases}
			0 &\text{ if } M \leq \lfloor 36 \operatorname{tp}(\Z^3) \rfloor = 23 \\
			2 &\text{ if } M \geq 24
		\end{cases}
	\end{equation}
	(as $b=-a= \operatorname{deg}(\mathbb o) = 6$, $r(6) = -M/6$). Hence, if $M\leq 23$ then $\mathcal Z = \emptyset$ and all mass up to order $o(p)$ is located no further than $\mathcal O(p^\delta)$ from $\Sigma_0$, and if $M\geq 24$ then $\hat \nu(\mathcal Z_{p^\delta}) = 2p + o(p)$.
	
	\subsection{Expansion for sparse sprinkling}
	\label{Subsection: Results sparse srpinkling}
	Our remaining results will cover an approximation of the density of states in the limit of sparse sprinkling, i.e.\ for small $p \coloneqq \mathrm P(\mathrm G \neq \emptyset)$.  By treating $p$ as a continuous parameter while keeping the conditional distribution $\mathrm P_1 \coloneqq \mathrm P( \, \cdot \, | \mathrm G \neq \emptyset)$ constant and then differentiating the Stieltjes transform
	\begin{equation*}
		\hat s(z) \coloneqq \int_{\R} \frac{1}{\lambda-z} \, \hat \nu(\mathrm d\lambda)
	\end{equation*}
	of $\hat \nu$ with respect to $p$, we derive an expansion of the form
	\begin{equation}
		\label{Equation: Small p expansion ST}
		\hat s(z) = s_0(z) + ph(z) + p^2 S(z)
	\end{equation}
	where $s_0$ is the Stieltjes transform of the density of states $\nu_0$ of $(\mathbb G, \mathbb o)$ and where we bound the remainder $S$ in a suitable form.
	Using the Helffer-Sjöstrand formula, we then translate \eqref{Equation: Small p expansion ST} into its functional form
	\begin{equation}
		\label{Equation: general small p exansion}
		\hat \nu = \nu_0 + p \phi +  p^2 R
	\end{equation}
	for suitable distributions $\phi$ and $R$. To avoid pathological cases, we assume $p \in (0, 1)$. We additionally make the following assumption.
	\begin{assumption}
		\label{Assumption: Assumption for small p expansion}
		We have $\mathrm E[m^2] < \infty$ and for any $a<b$, there exists a constant $c(a, b)>0$ such that almost surely
		\begin{equation}
			\label{Eqaution: Lower bound on r}
			\operatorname{Im} r(z) \geq \1_{\{\mathrm G \neq \emptyset\}}c(a, b)^{-1} \operatorname{Im}(z) \text{ for all } z\in [a, b] \times (0, 1) \subseteq \C.
		\end{equation}
	\end{assumption}
	The lower bound on $\operatorname{Im}r$ in \eqref{Eqaution: Lower bound on r} is a rather weak assumption, for which we need, roughly speaking, a lower bound on the mass of a large compact interval under the spectral measure of $\mathrm T$ with respect to $\varphi$.
	Namely, one can easily show that for every $N>0$ such that $[a, b]\subseteq [-N, N]$ we have almost surely for all $z\in [a, b] \times (0, 1)$
	\begin{equation*}
		\operatorname{Im}r(z) \geq \1_{\{\mathrm G \neq \emptyset\}} \operatorname{Im}(z) C(a, b, N) \cdot \big\langle \varphi, \1_{[-N, N]}(\mathrm T) \varphi \big\rangle
	\end{equation*}
	and where $C(a, b, N) \coloneqq (1 + |b-N|^2\vee |a+N|^2)^{-1}$. For example, if there exists some $l\in \N$ such that $\mathrm G$ contains, conditionally on $\mathrm G \neq \emptyset$, almost surely a connected component with at most $l$ vertices (on which $\varphi$ does not vanish everywhere) then \eqref{Eqaution: Lower bound on r} holds almost surely. We shall bound the remainders $S$ and $R$ in \eqref{Equation: Small p expansion ST} and \eqref{Equation: general small p exansion} in terms of the constant
	\begin{equation*}
		\mathrm C(a, b, \mathrm P_1) \coloneqq 60 (c(a, b) + 1) \mathrm E\big[m^2\big|\mathrm G \neq \emptyset\big]\mathrm E\big[m\big|\mathrm G \neq \emptyset\big].
	\end{equation*}
	Recall that we defined $g$ in \eqref{Equation: Definition of g} to be the Stieltjes transform of the spectral measure of the adjacency operator $B$ of $\mathbb G$ in the root $\mathbb o$ such that in particular $s_0 = \mathbb E[g]$. The function $h$ in \eqref{Equation: Small p expansion ST} is given by
	\begin{equation}
		\label{Equation: Definition h}
		h \coloneqq \mathbb E\bigg[\frac{\partial_z (g r)}{1 - g r} + \sum_{\lambda \in \mathrm S_*}\frac{1}{\lambda-z} \, \Big| \, \mathrm G \neq \emptyset \bigg].
	\end{equation}
	We shall see in Section \ref{Section: sparse sprinkling} that $h$ is the derivative of $\hat s$ in $p = 0$ if we treat the latter as a continuous parameter while keeping $\mathrm P_1$ constant.
	
	\begin{figure}[htbp]
		\centering
		\begin{minipage}[c]{0.6\textwidth}
			\centering
			\includegraphics[width=\textwidth]{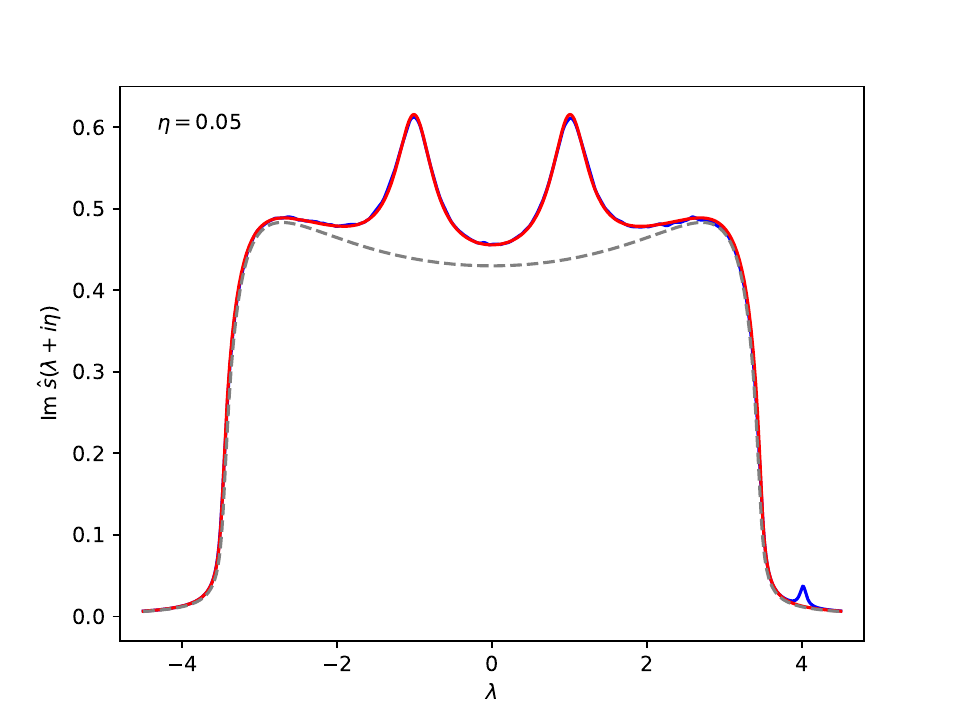}
		\end{minipage}
		\hfill
		\begin{minipage}[c]{0.39\textwidth}
			\centering
			\includegraphics[width=\textwidth]{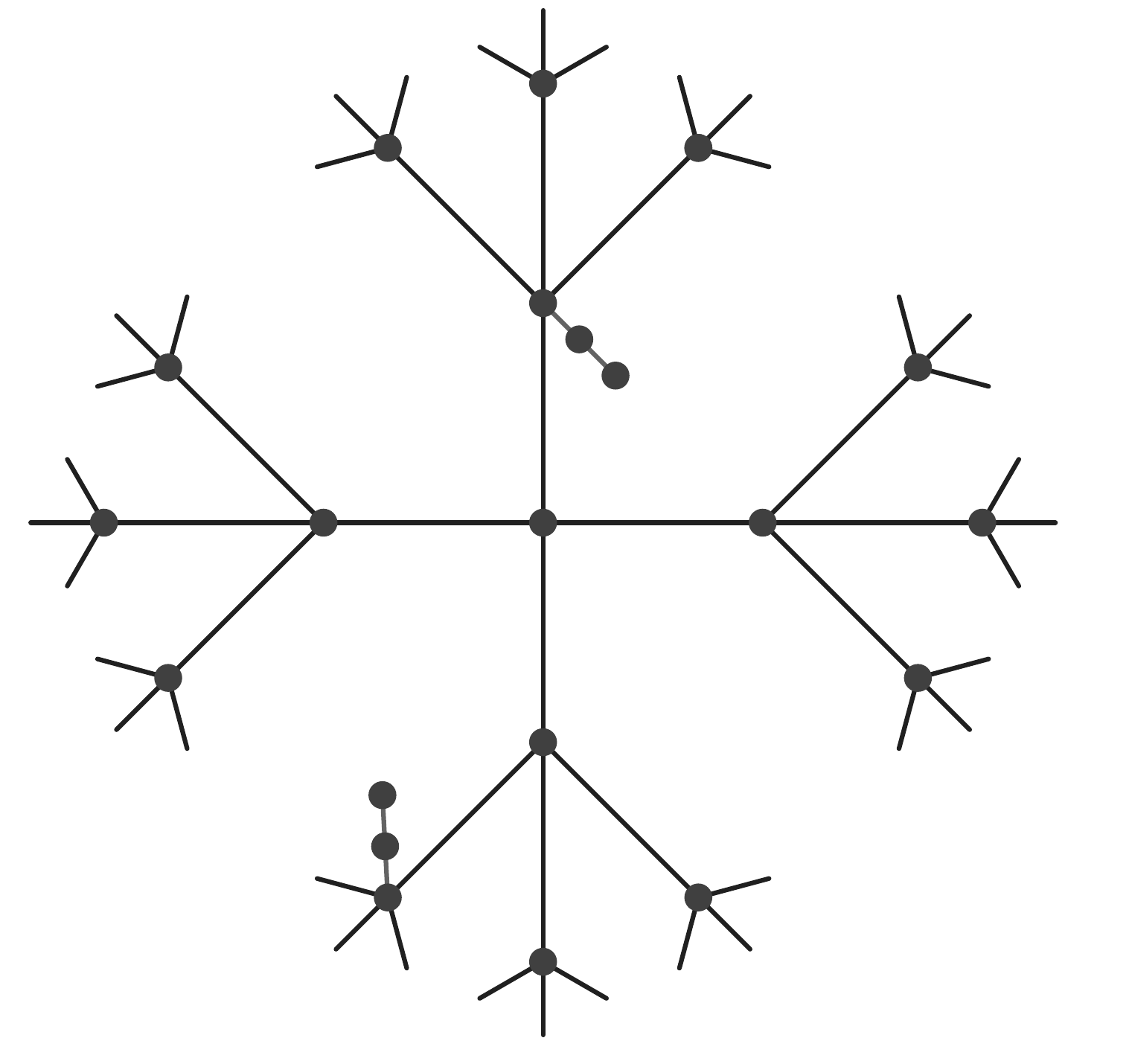}
		\end{minipage}
			\centering
		\begin{minipage}[c]{0.6\textwidth}
			\centering
			\includegraphics[width=\textwidth]{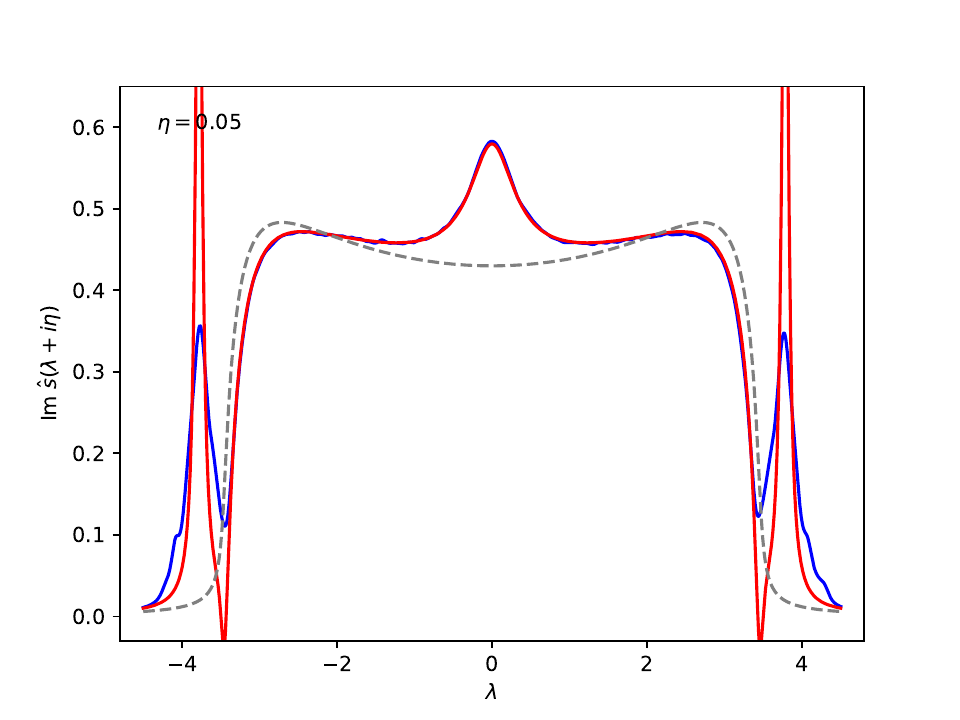}
		\end{minipage}
		\hfill
		\begin{minipage}[c]{0.39\textwidth}
			\centering
			\includegraphics[width=\textwidth]{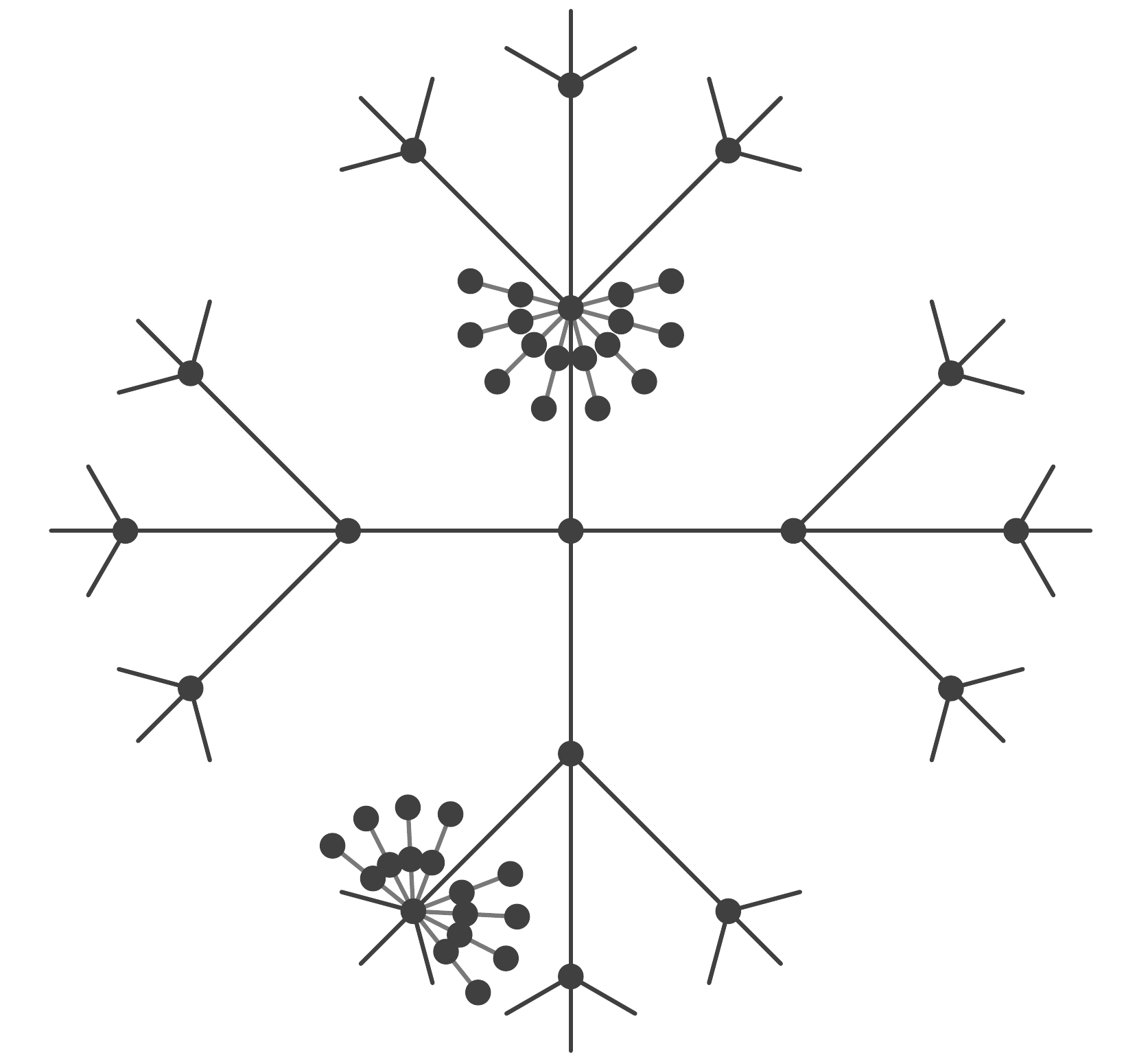}
		\end{minipage}
	
		\caption{
			Imaginary part of the Stieltjes transform of $\hat \nu$ for $\mathbb G = \mathbb B_4$ and $\mathrm P(\mathrm G = n\text{ lines of length 2})=0.05$, $\mathrm P(\mathrm G = \emptyset )=0.95$, where $n=1$ on the left hand side and $n=8$ on the right hand side, for a spectral resolution of $\eta = 0.05$. Simulation in blue, approximation by $s_0 + ph$ as given by Theorem \ref{Theorem: small p expansion ST} in red, Stieltjes transform of the Kesten--McKay law (i.e.\ of the density of states of $\mathbb B_4$, see the imaginary part of \eqref{Equation: ST Kesten-McKay} below) in grey. As we shall see in Section \ref{Section: Anderson-percolation representation},
			increasing $n$ pushes the mass of $\hat \nu$ towards $-\infty$ and $\infty$ and, in this case, towards 0 as it is a zero of $r$. We have $c(\mathbb G, \mathrm P) = 0$ for $n=1$ and $c(\mathbb G, \mathrm P)¨= 2$ for $n=8$; creating in the second case two peaks of mass $p + o(p)$ outside of $\Sigma_0$, see Proposition \ref{Proposition: Concentration of mass outside of Sigma0}. These peaks correspond to singularities of $h$ which are atoms of the distribution $\phi$ given in Theorem \ref{Theorem: small p expansion in dual of Ck}, compare to Proposition \ref{Proposition: Condition that phi locally agrees with measure}. The small peak at $\lambda = 4$ is a finite-volume artifact (coming from the Perron--Frobenius eigenvalue).
		}
		\label{Figure: Example small p expansion}
	\end{figure}	
	
	\begin{thrm}
		\label{Theorem: small p expansion ST}
		If $S:\mathbb C^+ \to \mathbb C$ is the unique function such that
		\begin{equation*}
			\hat s(z) = s_0(z) + ph(z) + p^2S(z)
		\end{equation*}
		for all $z\in \mathbb C^+$, then for all $a<b$ and $z\in (a, b) \times (0, 1) \subseteq\mathbb C$
		\begin{equation}
			\label{Equation: Estimates on h and S}
			|h(z)| \leq 5 \mathrm E[m|\mathrm G \neq \emptyset] \operatorname{Im}(z)^{-1}, \quad   |S(z)| \leq \mathrm C(a, b, \mathrm P_1) (1-p)^{-1} \operatorname{Im}(z)^{-7}.
		\end{equation}
	\end{thrm}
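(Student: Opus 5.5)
We will treat $p$ as a continuous parameter with $\mathrm P_1$ fixed, write $\hat s = \hat s_p$, and obtain the expansion by Taylor's theorem with integral remainder:
\[
\hat s_p = \hat s_0 + p\,\partial_p \hat s_p|_{p=0} + \int_0^p (p-q)\,\partial_q^2 \hat s_q \,\mathrm dq .
\]
So $h = \partial_p\hat s|_{p=0}$ and $S = p^{-2}\int_0^p (p-q)\partial_q^2\hat s_q\,\mathrm dq$. The first task is an explicit formula for $\hat s$ via Schur complements. Conditionally on $o\in\mathbb V$, the resolvent of $A$ restricted to $\mathbb V$ is $(B - R(z) - z)^{-1}$ with $R(z)=\operatorname{diag}(r_x(z))$. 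The resolvent entries at vertices $(\mathbb o t)$ are $((\mathrm T-z)^{-1})_{tt}$ plus $\langle \delta_t,(\mathrm T-z)^{-1}\varphi\rangle^2 G_{\mathbb o\mathbb o}$. Summing over the root cell and using the biased law $\widehat{\mathrm P}$ of Theorem \ref{Theorem: Limit Theorem}, the factor $1+\mathrm E[m]$ cancels, and subtracting tuning forks yields
\[
\hat s(z) = \mathbb E\big[(1-\partial_z r_{\mathbb o}(z))\,G_{\mathbb o\mathbb o}(z)\big] + \mathrm E\Big[\textstyle\sum_{\lambda\in\mathrm S_*}\frac{1}{\lambda-z}\Big] ,
\]
where $G=(B-R-z)^{-1}$. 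Here one needs $\operatorname{tr}(\mathrm T-z)^{-1}$ minus the tuning fork poles to equal $\sum_{\lambda\in\mathrm S_*}(\lambda-z)^{-1}$ plus terms combining with $\partial_z r=\langle\varphi,(\mathrm T-z)^{-2}\varphi\rangle$. I would verify this identity first in finite volume and then pass to the limit.

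Next, to differentiate in $p$, write $r_x=\xi_x \tilde r_x$ with $\xi_x\sim\mathrm{Ber}(p)$ i.i.d. Then $\mathbb E[F(\xi)]$ has $p$-derivative equal to $\sum_x \mathbb E[F(\xi^{x,1})-F(\xi^{x,0})]$ (Russo-type formula). At $p=0$ only one site is occupied. The root term gives $\mathbb E[-\partial_z r\, g/(1-gr)]$ plus the change in $g$, computed via the rank-one formula $G_{\mathbb o\mathbb o}=g/(1-gr)$. The non-root sites $x$ contribute $\sum_x \big(G^{(x)}_{\mathbb o\mathbb o}-g\big)=\sum_x g_{\mathbb ox}^2\, r/(1-g_{xx}r)$. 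Using unimodularity (mass transport) to move the root, $\sum_x g_{\mathbb o x}(z)^2=\partial_z g_{\mathbb o\mathbb o}$, which combines into $\mathbb E[\partial_z(gr)/(1-gr)]$. Adding the $\mathrm S_*$ term and the $\mathrm P(\mathrm G\neq\emptyset)$ weight gives \eqref{Equation: Definition h}. The bound $|h|\le 5\mathrm E[m|\mathrm G\ne\emptyset]\operatorname{Im}(z)^{-1}$ should follow from the equivalent finite-rank perturbation picture: $h$ is the Stieltjes transform of the first-order change caused by adding one attached graph, which is a signed measure of total variation at most about $4m+|\mathrm S_*|\le 5m$. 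This is the same argument as Lemma \ref{Lemma: General Lipschitz estimates}, giving the $\operatorname{Im}(z)^{-1}$ bound.

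The main obstacle is the second derivative $\partial_q^2\hat s_q$ at general $q\in(0,p)$, which involves pairs of sites $x,y$ with resolvents of the full Anderson-type operator $B-R-z$. Terms like $\sum_{x,y}G_{\mathbb ox}G_{xy}G_{y\mathbb o}\cdots$ must be summed. I would control them with Ward identities $\sum_x|G_{\mathbb o x}|^2=\operatorname{Im}G_{\mathbb o\mathbb o}/\operatorname{Im} z$, combined with $\operatorname{Im}(-r)\le 0$ so that $\|G\|\le \operatorname{Im}(z)^{-1}$. Denominators $1-G_{xx}r_x$ arise from rank-one updates; they are bounded below using \eqref{Eqaution: Lower bound on r}, which gives $\operatorname{Im}$ of the updated potential $\gtrsim c(a,b)^{-1}\operatorname{Im} z$, and hence each such inverse costs a power of $\operatorname{Im}(z)^{-1}$ times $c(a,b)+1$. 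The factors $m$ come from $|\partial_z r|\le m\operatorname{Im}(z)^{-2}$ and $|r|\le m\operatorname{Im}(z)^{-1}$, giving $\mathrm E[m^2]\mathrm E[m]$. The $(1-p)^{-1}$ arises because conditioning on a site being empty (needed in the rank-one updates) is reweighted by $1/(1-q)$. Counting powers along the chain of at most seven resolvent-type factors yields $\operatorname{Im}(z)^{-7}$, and the numerical constant $60$ comes from bookkeeping.
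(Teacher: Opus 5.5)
Your route is the paper's route: interpolate in $p$ with $\mathrm P_1$ fixed, differentiate Bernoulli expectations site by site, use rank-one Schur updates, move the root to turn $\sum_x G_{\mathbb o x}^2$ into $\partial_z G_{\mathbb o\mathbb o}$, and pay $(1-q)^{-1}$ for conditioning a site to be empty. The first-order part has two slips.

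First, the root-cell sum is $(1+\partial_z r_{\mathbb o})G_{\mathbb o\mathbb o}+\operatorname{tr}(\mathrm T-z)^{-1}$, because $\sum_t\big((\mathrm T-z)^{-1}\varphi\big)_t^2=\langle\varphi,(\mathrm T-z)^{-2}\varphi\rangle=\partial_z r$. With your sign the root and non-root contributions add up to $(r\,\partial_z g-g\,\partial_z r)/(1-gr)$ rather than $\partial_z(gr)/(1-gr)$.

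Second, the bound on $h$ cannot come from a total-variation bound on the first-order change. A finite-rank perturbation controls the counting function, not the total variation of the difference of eigenvalue measures; indeed $\phi$ need not even be a signed measure (Proposition~\ref{Proposition: Smoothening is necessary 1}). What works is $|\operatorname{tr}((A-z)^{-1}-(A'-z)^{-1})|\le 2\operatorname{rank}(A-A')\operatorname{Im}(z)^{-1}\le 4m^*\operatorname{Im}(z)^{-1}$, with $A'$ obtained by deleting the edges between the core and the attached graphs. Add $|\mathrm S_*|\operatorname{Im}(z)^{-1}$ from the tuning-fork correction to get the constant $5$.

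The genuine gap is the second-order bound, which is the substance of the theorem and which you only sketch. You compute the first derivative only at $q=0$, while the Taylor remainder needs $\partial_q^2\hat s_q$ on all of $(0,p)$. Differentiating $\mathbb E_q[(1+\partial_z r_{\mathbb o})G_{\mathbb o\mathbb o}]$ twice directly produces, through mixed differences in two sites $x\neq y$, three-point terms such as $G_{\mathbb o x}G_{xy}G_{y\mathbb o}$. These are evaluated at a resolvent that depends on the pair $(x,y)$ and are multiplied by random site-dependent factors, so they are not entries of a fixed matrix product. Ward-type bounds control only row sums $\sum_x|G_{xy}|^2\le\eta^{-2}$ (with $\eta=\operatorname{Im}z$) and say nothing about $\sum_{x,y}|G_{\mathbb o x}||G_{xy}||G_{y\mathbb o}|$.

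The missing step is to repeat your mass-transport computation at every $q$. Use $\theta_xG$ (the resolvent with site $x$ emptied) in place of $g$, and the identity $\partial_zG=G(1+\partial_z R)G$ in place of $\sum_x g_{\mathbb ox}^2=\partial_z g$. This gives the one-site formula $\partial_q\hat s_q=\mathbb E_q\big[\partial_z(\theta_{\mathbb o}G_{\mathbb o\mathbb o}\tilde r)/(1-\theta_{\mathbb o}G_{\mathbb o\mathbb o}\tilde r)+\sum_{\lambda\in\tilde{\mathrm S}_*}(\lambda-z)^{-1}\big]$, where $\tilde r,\tilde m$ refer to an independent attached graph drawn from $\mathrm P_1$. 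One further Russo step then has the following structure:
\begin{itemize}
\item Only pairs of sites occur, and every term contains $(\theta_{xy}G_{xy})^2$ (both sites emptied) or its $z$-derivative; the derivative is handled by Cauchy's formula on a contour of size $\eta$.
\item $\theta_{xy}G$ is independent of the attached graphs at $x$ and $y$, so the moments factor as $\mathrm E[m^2|\mathrm G\neq\emptyset]\,\mathrm E[m|\mathrm G\neq\emptyset]$.
\item Conditioning on $x$ being empty gives $\sum_{x}\mathbb E_q|\theta_{xy}G_{xy}|^2\le(1-q)^{-1}\eta^{-2}$.
\end{itemize}

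Even with this reduction, your plan of bounding $|r_x|$ and $|1-G_{xx}r_x|^{-1}$ separately, each inverse costing a factor $(c(a,b)+1)\operatorname{Im}(z)^{-1}$, loses powers of $\eta$ and of $c(a,b)$. It does not yield $\eta^{-7}$ with the stated constant. The inputs needed are:
\begin{itemize}
\item $|\tilde r/(1-w\tilde r)|\le 2\tilde m\eta^{-1}$ for every $w\in\mathbb C^+$. This uses no lower bound on $\operatorname{Im}\tilde r$; it follows from $|1/\tilde r-w|\ge\operatorname{Im}\tilde r/|\tilde r|^2$ and $(\operatorname{Re}\tilde r)^2\le\tilde m\eta^{-1}\operatorname{Im}\tilde r$.
\item $|\partial_z\tilde r/\tilde r|\le\eta^{-1}$.
\end{itemize}
The hypothesis \eqref{Eqaution: Lower bound on r} is used in the second-derivative bound only for $|\partial_z\tilde r/\tilde r^2|\le c(a,b)\eta^{-2}$, which is the source of the single factor $c(a,b)+1$.

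Finally, if you work directly in infinite volume, you must justify the Russo formula over infinitely many sites and the $C^2$-dependence of $\hat s_q$ on $q$. The paper avoids this by proving everything on $\mathbb G_N$, where $\hat\nu_p$ is a polynomial in $p$. It then passes to the limit via $\hat s_N\to\hat s$ and dominated convergence for $h_N\to h$, which again uses $|1/r|\le c(a,b)\eta^{-1}$.
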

	See Figure \ref{Figure: Example small p expansion} for an illustration of Theorem \ref{Theorem: small p expansion ST}.
	Let us now translate \eqref{Equation: Small p expansion ST} into its functional form \eqref{Equation: general small p exansion}. As we shall see below, the remainder $pR^2$ in \eqref{Equation: general small p exansion} will in general only be small compared to $\nu_0 + p \phi$ if we test both against sufficiently smooth functions, and the quantity $\phi$ in our expansion will in general not be a signed measure but a distribution.
	On that note, let us introduce some terminology. A linear map $\phi: C^1_c(\R) \to \mathbb R$ is called a distribution of order at most one if for any $a<b$ its restriction to $C^1_c(a, b)$ is continuous (with respect to the usual norm $\|f\|_{C^1} \coloneqq \|f\|_\infty +  \|f'\|_\infty$), we refer the interested reader to \cite{Tr67} for an introduction into the topic. Any distribution $\phi$ of order at most one can be written in the form $\phi(f) = \mu_0(f) + \mu_1(f')$ with signed Radon measures\footnote{Here, a signed Radon measure $\mu$ is a linear map $\phi: C_c(\R) \to \R$ such that the restriction of $\mu$ to $C_c(a, b)$ is for any $a<b$ continuous. By the Riesz–Markov–Kakutani representation theorem, there exists Radon measures $\mu^+$, $\mu^-$ such that $\phi(f) = \int f(x) \,\mu^+(\mathrm dx) - \int f(x) \,\mu^-(\mathrm dx)$ for all $f \in C_c(\R)$. Notice that $\mu^+ - \mu^-$ is in general not a well defined signed measure in the measure theoretic sense as both $\mu_1$ and $\mu_2$ might have infinite total mass.} $\mu_0$ and $\mu_1$. As usual, we denote for $a<b$ and $k\in \N$
	\begin{equation*}
		\|\phi\|_{(C_c^{k}(a, b))'} \coloneqq \sup\big\{|\phi(f)|:\, f\in C^k_c(a, b),\, \|f\|_{C^k} \leq 1 \big\} \text{ where } \|f\|_{C^k}  \coloneqq \sum_{i=0}^k \|f^{(i)}\|_\infty \text{ for }f\in C^k_c(a, b).
	\end{equation*}
	Comparing  \eqref{Equation: Small p expansion ST} and \eqref{Equation: general small p exansion} seems to suggest that $h$ plays the role of a Stieltjes transform of $\phi$. This is in fact true if we assume that the support of $\nu$ is compact, in the sense that we then have
	\begin{equation}
		\label{Equation: ST of phi for compact support}
		h(z) = \phi\bigg(\frac{\varphi(\cdot )}{\, \cdot \, -z} \bigg) \quad \text{ for } z \in \C \setminus \R
	\end{equation}
	where $\varphi \in C_c^1(\R)$ is an arbitrary cutoff function with $\varphi = 1 $ on $\operatorname{supp}(\nu)$. In order to get a similar duality even for the case where the support of $\hat \nu$ is not compact (where we cannot define the Stieltjes transform of $\phi$ as the right hand side of \eqref{Equation: ST of phi for compact support}), we apply the Helffer-Sjöstrand formula. The latter states that for any $f\in C^1_c(\C)$ and any $\lambda \in \mathbb R$
	\begin{equation}
		\label{Equation: Helffer-Sjöstrand}
		f(\lambda) = \frac{1}{\pi}\int_\C \frac{\overline{\partial} f(z)}{\lambda - z} \, \mathrm dz 
	\end{equation}
	where $\overline{\partial} = \frac{1}{2}(\partial_\lambda + \mathrm i\partial_\eta)$ denotes the antiholomorphic derivative, see e.g.\ \cite[Proposition C1]{BK19} for a proof.
	Assuming that there exists some $C>0$ such that
	\begin{equation}
		\label{Equation: Bound on antiholomorphic derivative of f}
		|\overline{\partial} f(\lambda + \mathrm i \eta)| \leq C \eta
	\end{equation}
	for all $\lambda, \eta \in \R$, then \eqref{Equation: Helffer-Sjöstrand} and Fubini's theorem yield
	\begin{equation}
		\label{Equation: ST and Helffer-Sjöstrand}
		\hat \nu(f) = \frac{1}{\pi}\int_\C \hat s(z)\overline{\partial} f(z)\, \mathrm dz.
	\end{equation}
	The Helffer-Sjöstrand formula is commonly applied to sufficiently smooth functions on $\mathbb R$ by defining almost analytic extensions of the latter. For that purpose, let us fix some even cutoff function $\psi\in C_c^\infty(\R)$ such that
	\begin{equation}
		\label{Equation: Cutoff-function}
		\forall \eta \in [-1/2, 1/2]:\, \psi(\eta) = 1, \quad \operatorname{supp}(\psi) \subseteq [-1, 1], \quad  \|\psi\|_{\infty} \leq 1, \quad  \|\psi'\|_{\infty} \leq 2.1.
	\end{equation}
	Through
	\begin{equation}
		\label{Equation: almost analytic extension of f}
		f_{\psi}(\lambda + \mathrm i \eta) \coloneqq \psi(\eta)\cdot \big( f(\lambda) + \mathrm i \eta f'(\lambda) \big),
	\end{equation}
every $f\in C_c^2(\R)$ can
	be extended to a function $f_{\psi}\in C^1_c(\C)$ for which \eqref{Equation: Bound on antiholomorphic derivative of f} and consequently \eqref{Equation: ST and Helffer-Sjöstrand} holds. 
	\begin{thrm}
		\label{Theorem: small p expansion in dual of Ck}
		Under Assumption \ref{Assumption: Assumption for small p expansion} the functional
		\begin{equation}
			\label{Equation: Definition of phi through h}
			C^2_c(\R) \to \R, \quad f \mapsto \phi(f) \coloneqq \frac{1}{\pi}\int_{\C \setminus\R} h(z) \overline{\partial} f_{\psi}(z) \,  \mathrm dz
		\end{equation}
		is well defined, independent of the choice of $\psi$ and can be extended to a distribution of order at most one. If $R$ is the unique distribution such that
		\begin{equation*}
			\hat \nu = \nu_0 +  p \phi +   p^2 R,
		\end{equation*}
		then for any $a<b$
		\begin{equation*}
			\|R\|_{(C_c^{8}(a, b))'}  \leq 2^{6}|b-a|C(a, b, \mathrm P_1)(1-p)^{-1}.
		\end{equation*}
	\end{thrm}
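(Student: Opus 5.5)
The proof translates Theorem \ref{Theorem: small p expansion ST} into distributional form via the Helffer--Sjöstrand representation \eqref{Equation: ST and Helffer-Sjöstrand}. Fix $f\in C^2_c(a,b)$ and its almost analytic extension $f_\psi$ from \eqref{Equation: almost analytic extension of f}. A direct computation gives
\begin{equation*}
\overline{\partial} f_\psi(\lambda+\mathrm i\eta)=\tfrac12\big(\mathrm i\eta\psi(\eta) f''(\lambda)+\mathrm i\psi'(\eta)(f(\lambda)+\mathrm i\eta f'(\lambda))\big).
\end{equation*}
The first term is $O(\eta)$. The second term is supported in $1/2\le|\eta|\le1$. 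So $\overline\partial f_\psi$ vanishes to first order at $\eta=0$.

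\textbf{Well-definedness of $\phi$.} Combined with $|h(z)|\le 5\mathrm E[m|\mathrm G\neq\emptyset]\eta^{-1}$ from \eqref{Equation: Estimates on h and S}, the integral \eqref{Equation: Definition of phi through h} converges absolutely. It is bounded by $C|b-a|\,\|f\|_{C^2}$, using $h(\bar z)=\overline{h(z)}$ for the lower half plane. For independence of $\psi$, take two cutoffs $\psi_1,\psi_2$. The difference $f_{\psi_1}-f_{\psi_2}$ vanishes for $|\eta|\le1/2$. On $\{|\eta|\ge1/2\}$, $h$ is holomorphic, so Stokes' theorem gives $\int h\,\overline\partial(f_{\psi_1}-f_{\psi_2})=0$, since the difference is compactly supported away from $\R$.

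\textbf{Extension to order one.} I would not use the naive $C^2$ bound. Instead I integrate by parts in $\lambda$ in the $f''$ term, moving one derivative onto $h$. I then use $\eta\partial_\lambda h=\eta\partial_z h$ together with $\partial_\eta$-integration by parts, which is the standard trick of writing $\eta\,\partial_\lambda h=-\mathrm i\eta\,\partial_\eta h$ and integrating by parts in $\eta$. This produces terms $\int h\,\psi f'$ and boundary-free terms involving only $f,f'$, each bounded by $C\|f\|_{C^1}$ as long as $|h|\lesssim \eta^{-1}$ integrates against them. The remaining logarithmic divergence at $\eta\to0$ is the main obstacle. If necessary I would exploit that $\eta h(\lambda+\mathrm i\eta)$ stays bounded and that $\int_0^1\eta\partial_\eta h\,d\eta$ can be controlled by $\sup_\eta|\eta h|$. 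Density of $C^2_c$ in $C^1_c$ then gives the unique continuous extension.

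\textbf{Remainder bound.} By \eqref{Equation: ST and Helffer-Sjöstrand} applied to $\hat\nu$ and $\nu_0$, and by linearity, $p^2R(f)=\frac1\pi\int p^2S\,\overline\partial f_\psi$, which is valid for $f\in C^2_c$. The bound $|S|\lesssim\eta^{-7}$ is too singular for the first-order extension. I would therefore use a higher-order almost analytic extension $f_{\psi,k}(\lambda+\mathrm i\eta)=\psi(\eta)\sum_{j=0}^{7}\frac{(\mathrm i\eta)^j}{j!}f^{(j)}(\lambda)$. It satisfies $|\overline\partial f_{\psi,k}|\le \frac{\eta^7}{7!}|f^{(8)}|+|\psi'|(\ldots)$. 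Independence of the chosen extension for $\hat s-s_0-ph$ again follows from holomorphy off the real axis.

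Then
\begin{equation*}
|R(f)|\le\frac{2}{\pi}\int_a^b\int_0^1 \mathrm C(a,b,\mathrm P_1)(1-p)^{-1}\eta^{-7}\big(\eta^7\|f^{(8)}\|_\infty+\1_{\eta\ge1/2}2.1\cdot 2^7\|f\|_{C^7}\big)\,d\eta\, d\lambda.
\end{equation*}
This gives $\le 2^6|b-a|\mathrm C(a,b,\mathrm P_1)(1-p)^{-1}\|f\|_{C^8}$ after tracking constants, which I expect to fit the stated $2^6$ with some care. The identity also extends the definition of $R$ consistently.
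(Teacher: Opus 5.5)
Your arguments for well-definedness, for independence of $\psi$ (Stokes on $\{1/2\le|\eta|\le 1\}$, where $h$ is holomorphic; the paper instead gets this for free from $\phi_N(f)=\int f\,\mathrm d\phi_N$), and for the remainder are sound. For the remainder you use Helffer--Sjöstrand with the degree-seven almost analytic extension and $|S|\le \mathrm C(a,b,\mathrm P_1)(1-p)^{-1}\operatorname{Im}(z)^{-7}$, which is what the paper does in finite volume before passing to the limit. Keeping the factor $\tfrac12$ in $\overline\partial$, the constant comes out well below $2^6$.

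The gap is the extension of $\phi$ to a distribution of order at most one. This cannot be obtained from $|h(z)|\lesssim \operatorname{Im}(z)^{-1}$ and holomorphy alone. The logarithmic divergence you run into is genuine, and $\int_0^1 \eta\,\partial_\eta h\,\mathrm d\eta$ is \emph{not} controlled by $\sup_\eta|\eta h|$: integrating by parts leaves $\int_\varepsilon^1 h\,\mathrm d\eta$, which can grow like $\log(1/\varepsilon)$. For instance, take $h(z)=\mathrm i\sum_{k\ge 0}2^k\mathrm e^{\mathrm i 2^k z}$ on $\C^+$, extended by $h(\bar z)=\overline{h(z)}$. It satisfies $|h(\lambda+\mathrm i\eta)|\le \sum_k 2^k\mathrm e^{-2^k\eta}\le C\eta^{-1}$. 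Yet the functional defined by \eqref{Equation: Definition of phi through h} is $\pi^{-1}\sum_k 2^k\cos(2^k\lambda)$, i.e.\ $\pi^{-1}W''$ for the Weierstrass function $W(\lambda)=-\sum_k 2^{-k}\cos(2^k\lambda)$. If this were of order one on some interval, $W'$ would be a Radon measure there. Then $W$ would be of bounded variation and hence a.e.\ differentiable there, contradicting Hardy's nowhere-differentiability theorem. So the Stieltjes-transform bound yields only order two, which your naive $C^2$ estimate already gives.

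The missing ingredient is the Kolmogorov--Smirnov Lipschitz bound coming from eigenvalue interlacing, not from the Stieltjes transform. Theorem~\ref{Theorem: concentration of mass inside of Sigma_0}, applied to the interpolating distributions \eqref{def_Pp}, gives $\|\hat F_p-F_0\|_\infty\le p\,\mathrm E[m\,|\,\mathrm G\neq\emptyset]$. Integration by parts turns this into $|(\hat\nu_p-\nu_0)(f)|\le p\,|b-a|\,\mathrm E[m\,|\,\mathrm G\neq\emptyset]\,\|f'\|_\infty$ (Lemma~\ref{Proposition: C1 distance estimate}).

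The paper uses this in finite volume, in three steps:
\begin{itemize}
\item $\phi_N=\partial_p\hat\nu_{N,p}|_{p=0}$ is a signed measure with $\|\phi_N\|_{(C^1_c(a,b))'}\le |b-a|\,\mathrm E[m\,|\,\mathrm G\neq\emptyset]$, uniformly in $N$.
\item $\phi_N(f)\to\phi(f)$ for $f\in C^2_c(\R)$, by dominated convergence using \eqref{Equation: Estimate on h in finite volume}.
\item A diagonal (Banach--Alaoglu type) argument then yields the order-one extension.
\end{itemize}

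Within your infinite-volume route the repair is just as short. Your $(C^8_c(a,b))'$ bound on $R$ does not use the extension. It is uniform as $p\downarrow 0$ with $\mathrm P_1$ fixed, and $h$ depends only on $\mathrm P_1$. Hence for $f\in C^8_c(a,b)$ you have $\phi(f)=\lim_{p\downarrow 0}p^{-1}(\hat\nu_p-\nu_0)(f)$, so $|\phi(f)|\le |b-a|\,\mathrm E[m\,|\,\mathrm G\neq\emptyset]\,\|f'\|_\infty$. Density of $C^8_c(a,b)$ in $C^1_c(a,b)$ then gives the extension.
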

	In other words, if $p^2\mathbb E[m^2\big|\mathrm G \neq \emptyset]$ and $p^2c(a, b)$ are small compared to $p$ then $\hat \nu \approx \nu_0 +  p \phi$ in a weak sense, namely when tested against functions in $C^8_c(a, b)$. Under the assumption \eqref{Assumption: Detetministic attached graphs}, a similar expansion is possible around $p=1$, see Subsection \ref{Subsection: Expansion around p=1} later in the text. As the next Proposition shows, the distribution $\phi$ given in Theorem \ref{Theorem: small p expansion in dual of Ck} is in general not a signed measure.
	\begin{prop}
		\label{Proposition: Smoothening is necessary 1}
		Assume that $\mathbb G$ is a deterministic vertex transitive graph and that $\nu_0$ has infinitely many atoms. Assume that \eqref{Assumption:: m a.s. bounded} holds. Then $\phi$ as given by Theorem \ref{Theorem: small p expansion in dual of Ck} is not a signed Radon measure.
	\end{prop}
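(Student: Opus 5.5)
The plan is to show that $\phi$ carries a point mass of size bounded away from zero, in fact $-1$, at all but finitely many atoms of $\nu_0$. Since $\Sigma_0$ is compact, these atoms accumulate, so $\phi$ would have infinite total variation on a compact interval, which is impossible for a signed Radon measure.

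\emph{Setup.} Since $\mathbb G$ is deterministic and vertex transitive, $g=s_0$ is deterministic. A vertex-transitive, locally finite graph has bounded degree, so $\Sigma_0$ is compact. Hence the infinitely many atoms $(\lambda_k)_k$ of $\nu_0$, with weights $w_k>0$, have an accumulation point $\lambda_\infty$. Under \eqref{Assumption:: m a.s. bounded} with $m\le M$, only finitely many pairs $(\mathrm G,\varphi)$ occur. Therefore $\mathrm D\supseteq \mathrm S_*$ and the zero sets $\mathrm N$ form a finite set, and after discarding finitely many $k$ we may assume almost surely on $\{\mathrm G\neq\emptyset\}$ that $r$ is analytic near $\lambda_k$ with $r(\lambda_k)\in\R\setminus\{0\}$. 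Near $\lambda_k$ we have $g(z)=w_k/(\lambda_k-z)+g_k(z)$, with $g_k$ the Stieltjes transform of $\nu_0-w_k\delta_{\lambda_k}$. Thus $1-gr$ has a simple pole at $\lambda_k$ in the sense that $(z-\lambda_k)(1-g r)(z)\to w_k r(\lambda_k)\neq 0$ as $z=\lambda_k+\mathrm i\eta$, $\eta\downarrow0$. Since $\partial_z(gr)/(1-gr)=-\partial_z\log(1-gr)$, I expect
\[
\lim_{\eta\downarrow 0}\,(-\mathrm i\eta)\,h(\lambda_k+\mathrm i\eta)=-1 .
\]
Here the sum $\sum_{\lambda\in\mathrm S_*}(\lambda-z)^{-1}$ does not contribute, since $\lambda_k\notin \mathrm S_*$. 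To verify this limit, write $-\partial_z\log(1-gr)$ as $\partial_z\log(z-\lambda_k)$ plus $-\partial_z\log\big((z-\lambda_k)(1-gr)\big)$, and show that $\eta$ times the second term tends to $0$. This should follow from $|g_k'(\lambda_k+\mathrm i\eta)|\le \nu_0(\R\setminus\{\lambda_k\}\,;\,|x-\lambda_k|^{-2}$ truncated$)=o(\eta^{-2})$ by dominated convergence, together with the nonvanishing of the limit above. Uniformity in the (finitely many) realizations of $\mathrm G$ lets the expectation pass through the limit.

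\emph{Identifying atoms of $\phi$.} Suppose, for contradiction, that $\phi$ is a signed Radon measure $\mu$. Fix a compact interval $J$ around $\lambda_\infty$ and a cutoff $\chi\in C_c^\infty$ equal to $1$ on $J$. I would show that the local Stieltjes transform $s_{\chi\mu}(z)=\int \chi(x)(x-z)^{-1}\mu(\mathrm dx)$ differs from $h$ on $\{\operatorname{Re}z\in J^\circ\}$ by a function $e$ with $\eta\, e(\lambda+\mathrm i\eta)\to0$ for $\lambda\in J^\circ$. To do so, insert $f_\psi$ for test functions $f$ supported in $J$ into \eqref{Equation: Definition of phi through h}, compare with \eqref{Equation: ST and Helffer-Sjöstrand} for $\chi\mu$, and use uniqueness of a function determined by such pairings, which is holomorphic off $\R$ with polynomial growth in $\operatorname{Im}(z)^{-1}$ by \eqref{Equation: Estimates on h and S}. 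Given this, the standard fact $\mu(\{\lambda\})=\lim_{\eta\downarrow0}(-\mathrm i\eta)s_{\chi\mu}(\lambda+\mathrm i\eta)$ for a finite signed measure, which follows by dominated convergence, yields $\mu(\{\lambda_k\})=-1$ for infinitely many $\lambda_k\in J$. Then $|\mu|(J)=\infty$, contradicting the Radon property.

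\emph{Main obstacle.} The delicate point is the identification step: transferring the pointwise residue of $h$ into an atom of $\phi$ when $h$ is only known through the Helffer--Sjöstrand pairing and the support of $\hat\nu$ need not be compact. I would first try the direct route above, which controls $h-s_{\chi\mu}$ via its distributional boundary values. If that is awkward, the fallback is to test $\phi$ against bumps $f_{k,\varepsilon}$ of width $\varepsilon$ centred at $\lambda_k$. One would show $\phi(f_{k,\varepsilon})\to -f_{k,\varepsilon}(\lambda_k)=-1$ as $\varepsilon\to0$, splitting the $z$-integral into $\eta\lesssim\varepsilon$, where the residue asymptotics govern, and $\eta\gtrsim\varepsilon$, where $|h|\lesssim\eta^{-1}$ and $|\overline\partial f_\psi|\lesssim \eta\|f''\|_\infty$ give an $O(1)$ term. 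That last term would need to be shown small by a separate argument, since the crude bound only gives $O(1)$.
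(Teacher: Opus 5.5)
Your residue computation is correct and matches the paper's. The paper obtains $\lim_{\eta\downarrow0}(-\mathrm i\eta)h(\lambda+\mathrm i\eta)=-1$ at every atom $\lambda$ of $\nu_0$ outside the finite set $\mathrm D\cup\{\lambda:\,\mathrm P(r(\lambda)=0\,|\,\mathrm G\neq\emptyset)>0\}$. It does so directly from $\eta g(\lambda+\mathrm i\eta)\to\mathrm i\nu_0(\{\lambda\})$ and $\eta^2g'(\lambda+\mathrm i\eta)\to-\nu_0(\{\lambda\})$. Your log-derivative splitting needs exactly the analogous dominated-convergence facts $\eta g_k\to0$ and $\eta^2g_k'\to0$. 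The final contradiction is also the same.

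The gap is the identification step, which you leave open. In your direct route, the pairings $\int e\,\overline\partial f_\psi$ with $f\in C_c^2(J^\circ)$ do not determine $e=h-s_{\chi\mu}$, because any function holomorphic across $J^\circ$ pairs to zero. So ``uniqueness'' is not the mechanism. What you would actually need is this: a function holomorphic off $\R$ with $|e|\lesssim\operatorname{Im}(z)^{-1}$ and vanishing distributional jump on $J^\circ$ continues holomorphically across $J^\circ$. That is an edge-of-the-wedge type statement, true and standard but real work, and you have not supplied it. Your fallback, as you note yourself, leaves an uncontrolled $O(1)$ term.

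The missing observation makes this step immediate: your worry that $\operatorname{supp}\hat\nu$ might be non-compact is unfounded here. $\mathbb G$ is vertex transitive, hence of constant finite degree $d$, and $m\le M$. So every vertex of $\mathcal G$ has degree at most $d+M$, giving $\|A\|\le d+M$. Hence $\hat\nu_p$ and $\nu_0$ are supported in $K=[-(d+M),d+M]$ for every $p$ (with $\mathrm P_1$ fixed).

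\begin{itemize}
\item Test $\hat\nu_p=\nu_0+p\phi+p^2R_p$ against functions supported off $K$ and let $p\downarrow0$, using that $R_p$ is bounded in $(C^8_c(a,b))'$ uniformly for small $p$. This gives $\operatorname{supp}\phi\subseteq K$.
\item Test the same expansion against $\chi(x)(x-z)^{-1}$, with $\chi\in C_c^\infty$ equal to $1$ on $K$, and compare with Theorem~\ref{Theorem: small p expansion ST}. This gives $h(z)=\phi(\chi/(\cdot-z))$, which is \eqref{Equation: ST of phi for compact support}.
\end{itemize}

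Hence if $\phi$ were a signed Radon measure, it would be a \emph{finite} signed measure with Stieltjes transform $h$. Then $\phi(\{\lambda\})=\lim_{\eta\downarrow0}(-\mathrm i\eta)h(\lambda+\mathrm i\eta)=-1$ at infinitely many points, contradicting finiteness of its total variation. No accumulation point or local cutoff is needed. This is how the paper argues: it notes that under these hypotheses $\operatorname{supp}\hat\nu$ is compact, so $\phi$ would be a finite signed measure whose atoms are read off from $h$.
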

	
	For example, it is know that the lamplighter group has a finite generating set $S$ such that its Cayley graph $\mathbb G$ with respect to $S$ has a purely atomic density of states, see \cite{GZ01}. Since the lamplighter group is amenable, it is in particular sofic, and hence its Cayley graph is the weak local limit of finite graphs $\mathbb G_N$ \cite[Example 4.4, Theorem 5.1]{Pes08}. That being said, under conditions that are in particular satisfied if $\mathbb G$ is the integer or Bethe lattice with \eqref{Assumption:: m a.s. bounded}, one can show that $\phi$ is (locally) a signed measure. Given two distributions $\phi_1$, $\phi_2$ of order at most one and an open set $U \subseteq \R$ we say that $\phi_1 = \phi_2$ on $U$ if $\phi_1(f) = \phi_2(f)$ for all $f \in C^1_c(U)$. Recall that we defined the set $\mathcal Z$ in \eqref{Equation: Definition of Z}.
	\begin{prop}
		\label{Proposition: Condition that phi locally agrees with measure}
		Assume that $\mathbb G$ is a deterministic vertex transitive graph and that \eqref{Assumption:: m a.s. bounded} holds. Assume there exists $a<b$ such that $\Sigma_0 = [a, b]$. Then the following holds.
		\begin{enumerate}
			\item If $\mathrm D \subseteq [a, b]$ then
			\begin{equation}
				\label{Equation: phi outside of support is purely atomic}
				\phi = \sum_{\lambda \in \mathcal Z}\mathbb P(g(\lambda)r(\lambda) = 1|\mathrm G \neq \emptyset)\cdot \delta_{\lambda} \quad \text{ on } \R \setminus [a, b].
			\end{equation}
			\item If $\nu_0$ has a density $f\in C^2(a, b)$ which is strictly positive on $(a, b)$ then the limit $h(t) \coloneqq \lim_{\eta \downarrow 0} h(t + \mathrm \eta \mathrm i)$ exists locally uniformly on $(a, b)$ and
			\begin{equation}
				\label{Equation: phi insinde of support is ac}
				\phi = \pi^{-1} \operatorname{Im}(h(t)) \, \mathrm dt \quad \text{ on }(a, b).
			\end{equation}	
		\end{enumerate}
	\end{prop}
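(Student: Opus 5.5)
The plan is to work directly with the explicit formula for $h$ in \eqref{Equation: Definition h}. Since $\mathbb G$ is deterministic and vertex transitive, $g = s_0$ is deterministic. By \eqref{Assumption:: m a.s. bounded} the law $\mathrm P_1$ is supported on finitely many pairs $(\mathrm G,\varphi)$ up to isomorphism. Hence $h$ is a finite convex combination of the functions
\begin{equation*}
h_{\mathrm G}(z)=\frac{\partial_z (s_0 r)}{1-s_0 r}(z)+\sum_{\lambda\in\mathrm S_*}\frac{1}{\lambda-z}.
\end{equation*}
I would treat each $h_{\mathrm G}$ separately and then average. The bridge from $h$ to $\phi$ is \eqref{Equation: Definition of phi through h} together with a Stokes argument. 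Take $f\in C^1_c(U)$, approximated by $C^2_c$ functions, with $U$ open. If $h$ extends holomorphically across $U$ apart from isolated poles, I would cut out the strip $|\operatorname{Im} z|<\varepsilon$ and small discs around the poles, and integrate by parts in $\int h\,\overline\partial f_\psi$. The resulting boundary terms are $\frac{1}{2\pi\mathrm i}\int (h(t+\mathrm i\varepsilon)-h(t-\mathrm i\varepsilon)) f(t)\,\mathrm dt$ plus residue contributions. Since $h(\bar z)=\overline{h(z)}$, the jump equals $2\mathrm i\operatorname{Im} h(t+\mathrm i\varepsilon)$. This gives both parts, provided the boundary values behave well.

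For part 1, consider $U=\R\setminus[a,b]$. Here $s_0$ is real analytic and strictly increasing. Since $\mathrm D\subseteq[a,b]$, $r$ is real analytic on $U$, and $\mathrm S_*\cap U=\emptyset$. So $h_{\mathrm G}$ is meromorphic on a neighbourhood of $U$ and real on $U$, and its only possible singularities are zeros of $1-s_0r$. There the logarithmic-derivative structure $\frac{\partial_z(s_0r)}{1-s_0r}=-\partial_z\log(1-s_0r)$ gives simple poles with residue $-(\text{order of the zero})$. The key step is to show these zeros are simple. On each component of $U$, $s_0r$ is the product of two functions that are increasing there (note $s_0'>0$ and $r'>0$ off the spectra), and one has to check the signs. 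On $(b,\infty)$ both $s_0$ and $r$ are negative, so $s_0r>0$. Its derivative is $s_0'r+s_0r'$, which is negative plus negative, so $s_0r$ is strictly decreasing. On $(-\infty,a)$ both are positive, giving $s_0'r+s_0r'>0$. Thus $s_0r=1$ has at most one solution per component, and that solution is simple. With the orientation above (residue $-1$, contour picking up $-2\pi\mathrm i\cdot\frac{1}{\pi}\cdot$, i.e.\ the Helffer–Sjöstrand normalisation), the residues contribute exactly $+\delta_\lambda$. Averaging over $\mathrm P_1$ then yields \eqref{Equation: phi outside of support is purely atomic}, with weights $\mathbb P(g(\lambda)r(\lambda)=1\mid\mathrm G\neq\emptyset)$. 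The jump term vanishes because $h$ is real on $U$ away from poles.

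For part 2, on $(a,b)$ I would use that $f\in C^2$ is positive. By standard Stieltjes/Plemelj estimates, $s_0(t+\mathrm i\eta)\to s_0(t)$ with $\operatorname{Im}s_0(t)=\pi f(t)>0$, locally uniformly, and likewise $s_0'$ converges (a $C^2$ density gives $C^1$-type regularity of the boundary values, locally, via Privalov). The main obstacle is the denominator $1-s_0r$ near real points. At $t\notin\mathrm S$ with $r(t)\neq0$ we get $\operatorname{Im}(s_0r)=r\pi f\neq0$, so the denominator is bounded away from zero. The bad points are poles of $r$ ($t\in\mathrm S_*$) and zeros of $r$, which are finitely many.

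At zeros of $r$ the denominator tends to $1$, so there is no problem. At $\lambda\in\mathrm S_*$ I would rewrite $\frac{\partial(s_0r)}{1-s_0r}=\frac{\partial(s_0r)/(s_0r)}{1/(s_0r)-1}$. Near $\lambda$ we have $1/r\to0$ holomorphically, and the simple-pole parts of $\partial\log r$ produce a term $-1/(z-\lambda)$. This cancels against $\frac{1}{\lambda-z}$ from the $\mathrm S_*$ sum, so $h$ is continuous there too. Points of $\mathrm S\setminus\mathrm S_*$ are removable for $r$.

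Hence $h$ has continuous boundary values on $(a,b)$, locally uniformly, and the Stokes computation above reduces to $\pi^{-1}\int\operatorname{Im}h(t)f(t)\,\mathrm dt$, which is \eqref{Equation: phi insinde of support is ac}. The step I expect to be most delicate is verifying the uniform convergence of $\partial_z s_0$ up to the real axis and controlling the pole cancellation uniformly in $\eta$.
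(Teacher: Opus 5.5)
Your proposal is correct and follows essentially the same route as the paper. It reduces via \eqref{Assumption:: m a.s. bounded} to finitely many deterministic attached graphs, and uses the strict monotonicity of $gr$ on $(-\infty,a)$ and $(b,\infty)$ to show that zeros of $1-gr$ there are simple and produce unit atoms. On $(a,b)$ it uses the boundary values of $g,g'$ (Proposition~\ref{Proposition: Limit of ST}) and the cancellation of the pole at $\lambda\in\mathrm S_*$ (cf.\ Lemma~\ref{Lemma: Expansion of h around admissible eigenvalue}), then concludes with the Green's-theorem argument of Lemma~\ref{Lemma: Local uniform convergence of Im ST}.

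The only difference is cosmetic: you extract the atoms by a residue computation, whereas the paper subtracts $\frac{1}{\lambda-z}$ and shows that the imaginary part of the remainder vanishes uniformly as $\eta\downarrow 0$. In the residue version, since $h$ is meromorphic across $\R\setminus[a,b]$, you only need to excise small discs around the real poles rather than a strip.
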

	Notice that we do not make any statement about the behavior of $\phi$ at the boundary of $\Sigma_0$.
	Recall that for $\mathbb G = \mathbb Z^d$ and $\mathrm D\subseteq \Sigma_0$ we have $\hat \nu(\mathcal Z) = 0$ by Corollary \ref{Corollary: No atoms outside of D for integer lattice}. In other words, while $\phi$ has in this case atoms in $\mathcal Z$ (provided it is non-empty) $\hat \nu$ does not. As we summarize in the upcoming Proposition \ref{Proposition: Smoothening is necessary 2}, this shows that even under the assumptions of Proposition \ref{Proposition: Condition that phi locally agrees with measure}, which guarantee that $\phi$ is locally a measure, it is in general not possible to give an analogous bound (i.e.\ one that only depends on $a, b$ and $\mathrm P_1$) on the remainder $p^2R$ from Theorem \ref{Theorem: small p expansion in dual of Ck} in total variation norm, i.e.\ in operator norm in the dual of the continuous functions.
	\begin{figure}
		\begin{minipage}[t]{1.0\textwidth}
			\includegraphics[width=0.5\textwidth]{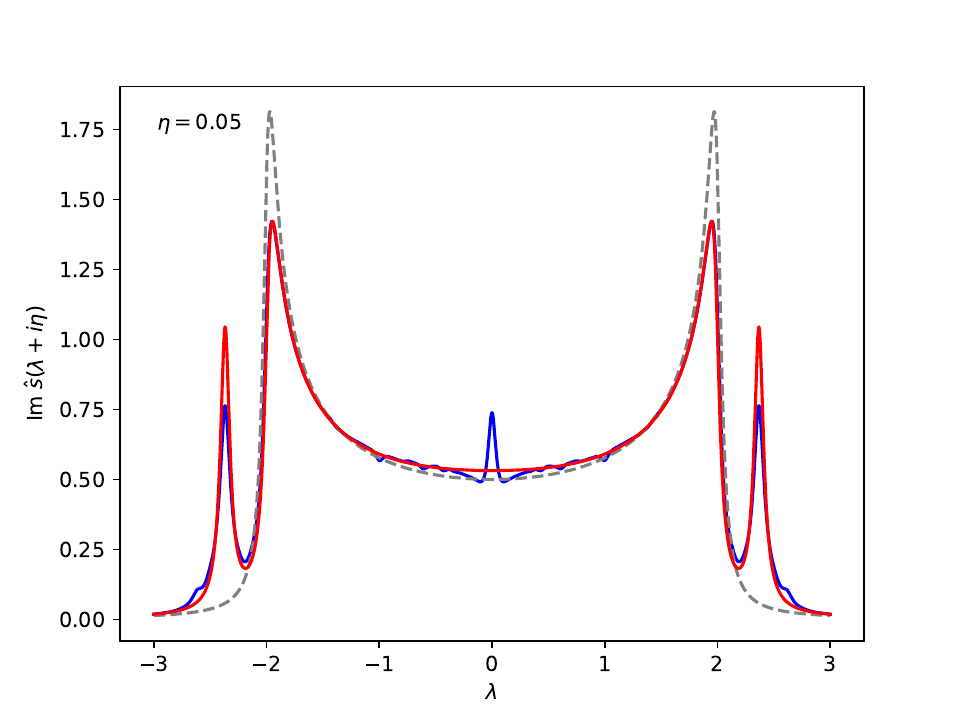}
			\includegraphics[width=0.5\textwidth]{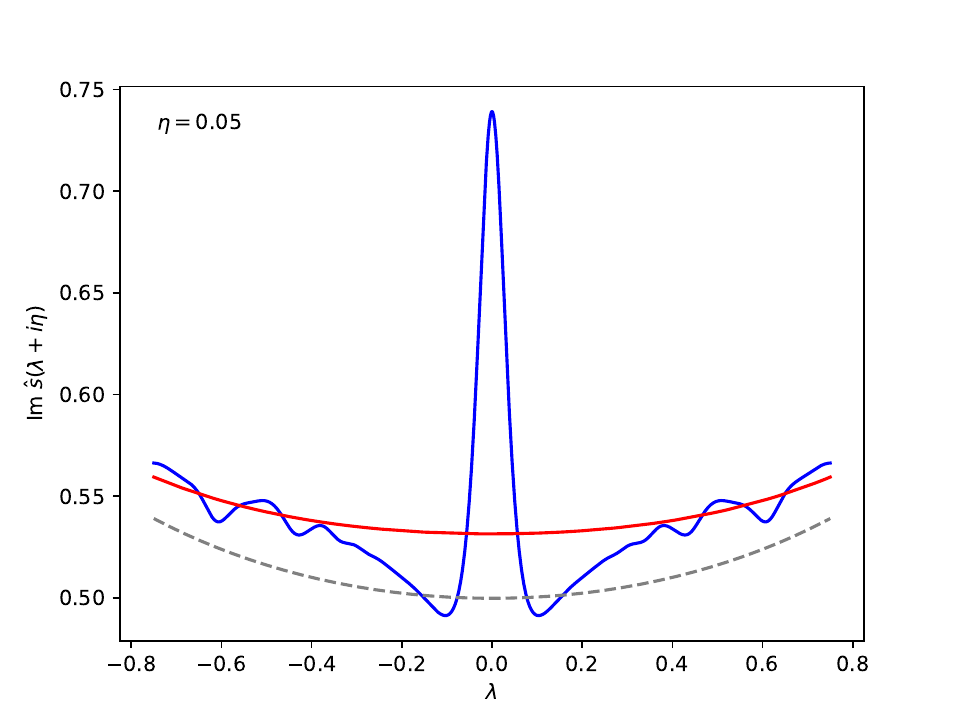}
		\end{minipage}
		\caption{Imaginary part of the Stieltjes transform of $\hat \nu$ for $\mathbb G = \mathbb Z$ and $\mathrm P(\mathrm G = \text{three leaves})=0.05$, $\mathrm P(\mathrm G = \emptyset )=0.95$ for a spectral resolution of $\eta = 0.05$. Simulation in blue, approximation via $s_0 + ph$ as given by Theorem \ref{Theorem: small p expansion ST} in red, Stieltjes transform of the $\operatorname{arcsin}$ law (i.e.\ of the density of states of $\mathbb Z$) in grey. Notice that $\hat \nu$ has, contrary to $\phi$, an atom in zero of mass $p + o(p)$, compare to Propositions \ref{Proposition: Mass of atom in zero for line} and \ref{Proposition: Condition that phi locally agrees with measure}. By recurrence of $\mathbb Z$, the sprinkling creates peaks of mass $p + o(p)$ outside of $\Sigma_0 = [-2, 2]$, compare to Propositions \ref{Proposition: Concentration of mass outside of Sigma0} and \eqref{Equation: limit of g in terms of transience probability}. These peaks correspond to atoms of $\phi$ by Proposition \ref{Proposition: Condition that phi locally agrees with measure} but do not come from atoms of $\hat \nu$ by Corollary \ref{Corollary: No atoms outside of D for integer lattice}.
		}
		\label{Figure: Smoothening is necessary}
	\end{figure}
	
	\begin{prop}
		\label{Proposition: Smoothening is necessary 2}
		Assume that $\mathbb G = \mathbb Z^d$ and that \eqref{Assumption: Detetministic attached graphs} holds with $\mathrm D \subset \Sigma_0 = [-2d, 2d]$. Let $R_p$ be the remainder defined in Theorem \ref{Theorem: small p expansion in dual of Ck}. If $c(\mathbb G, \mathrm P)$ defined in \eqref{Equation: Definiton of c(G)} is positive then\footnote{Notice that $R_p$ is by Proposition \ref{Proposition: Condition that phi locally agrees with measure} a measure on $\R \setminus \Sigma_0$ and on $\Sigma_0^\circ$.} $(R_p(\mathcal Z))_p$ is unbounded. In particular, by \eqref{Equation: limit of g in terms of transience probability} the family $(R_p(\mathcal Z))_p$ is for $d\leq 2$ never bounded. Moreover, if $d=1$ and if $0$ is admissible then $(R_p(\{0\}))_p$ is unbounded as well.  
	\end{prop}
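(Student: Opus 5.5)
The argument is by contradiction: compare the exact atoms of $\hat\nu$, which come from Corollary \ref{Corollary: No atoms outside of D for integer lattice} and Proposition \ref{Proposition: Mass of atom in zero for line}, with the atoms of $\phi$ given by Proposition \ref{Proposition: Condition that phi locally agrees with measure}. Away from $\partial\Sigma_0$, the identity $\hat\nu = \nu_0 + p\phi + p^2R_p$ is an identity of measures. For $\lambda\in\mathcal Z$ we therefore get $p^2R_p(\{\lambda\}) = \hat\nu(\{\lambda\}) - \nu_0(\{\lambda\}) - p\,\phi(\{\lambda\})$. Since $\mathcal Z\subseteq\R\setminus\Sigma_0$, we have $\nu_0(\{\lambda\})=0$.

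\textbf{Atoms at $\mathcal Z$.} Under \eqref{Assumption: Detetministic attached graphs} we have $\mathrm D\subseteq\Sigma_0$, so $\mathcal Z\cap\mathrm D=\emptyset$. Corollary \ref{Corollary: No atoms outside of D for integer lattice} then gives $\nu(\{\lambda\})=0$, and since $\nu_{\operatorname{TF}}\le\nu$, also $\hat\nu(\{\lambda\})=0$. The quantities $g$, $r$ and $\mathcal Z$ do not depend on $p$, because $\mathrm P_1=\delta_{(\mathrm G_*,\varphi_*)}$ is fixed. By \eqref{Equation: phi outside of support is purely atomic},
\[
\phi(\{\lambda\})=\mathbb P(g(\lambda)r(\lambda)=1\mid \mathrm G\neq\emptyset)=1 \qquad\text{for every }\lambda\in\mathcal Z.
\]
Proposition \ref{Proposition: Concentration of mass outside of Sigma0} states that $c(\mathbb G,\mathrm P)>0$ is equivalent to $\mathcal Z\neq\emptyset$. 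Hence $\mathcal Z$ is nonempty and finite, and $R_p(\mathcal Z) = -p^{-1}|\mathcal Z|\to-\infty$ as $p\downarrow0$. For $d\le2$, recurrence together with \eqref{Equation: limit of g in terms of transience probability} makes $g$ blow up at $\pm2d$. Since $r(\pm2d)\neq0$ for any nonempty $\mathrm G_*$ (indeed $r$ is monotone with sign fixed outside $\mathrm D$), the product $gr$ tends to $+\infty$ at the endpoints. So $c=2>0$, and the first claim applies.

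\textbf{The atom at $0$ for $d=1$.} Here $0\in(-2,2)$ lies in the interior of $\Sigma_0$. By part 2 of Proposition \ref{Proposition: Condition that phi locally agrees with measure}, whose hypotheses hold for the arcsine density (it is $C^2$ and positive on $(-2,2)$), $\phi$ is absolutely continuous there, so $\phi(\{0\})=0$. Also $\nu_0(\{0\})=0$. By Proposition \ref{Proposition: Mass of atom in zero for line},
\[
\hat\nu(\{0\})=(1+pM)\,\frac{p(1-p)}{(1+pM)(2-p)}=\frac{p(1-p)}{2-p}.
\]
Consequently $R_p(\{0\}) = \frac{1-p}{p(2-p)}\to\infty$ as $p\downarrow0$.

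\textbf{Main obstacle.} The delicate step is justifying that $R_p$ may be evaluated on singletons: the relation $\hat\nu=\nu_0+p\phi+p^2R_p$ must hold as measures on open neighbourhoods of the points, and not only against $C^8_c$ test functions. I would handle this by restricting to a small open interval $U$ around $\lambda$. On $U$, $\phi$ is a locally finite measure by Proposition \ref{Proposition: Condition that phi locally agrees with measure}, so $R_p=p^{-2}(\hat\nu-\nu_0-p\phi)$ is a signed Radon measure on $U$. Testing against bumps $f_n\downarrow\1_{\{\lambda\}}$ and using dominated convergence then identifies $R_p(\{\lambda\})$ as above.
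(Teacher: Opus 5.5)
Your proposal is correct and follows the paper's proof essentially step for step. First, Corollary \ref{Corollary: No atoms outside of D for integer lattice} gives $\hat\nu(\mathcal Z)=0$, and \eqref{Equation: phi outside of support is purely atomic} gives $\phi(\{\lambda\})=1$ on $\mathcal Z$; together these yield $R_p(\mathcal Z)=-p^{-1}|\mathcal Z|$. Second, for $d=1$, Proposition \ref{Proposition: Mass of atom in zero for line} combined with $\phi(\{0\})=0$ yields $R_p(\{0\})=\frac{1-p}{p(2-p)}$.

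Your localization argument for evaluating $R_p$ on singletons simply spells out what the paper leaves to a footnote. One small overstatement: the blanket claim that the expansion is an identity of measures on all of $\Sigma_0^\circ$ needs the $C^2$-density hypothesis of Proposition \ref{Proposition: Condition that phi locally agrees with measure}. You only invoke that claim for $d=1$, where the hypothesis holds, so the argument is unaffected.
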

	See Figure \ref{Figure: Smoothening is necessary} for an illustration of Proposition \ref{Proposition: Smoothening is necessary 2}. We finish the section by pointing out that the statement of Theorem \ref{Theorem: small p expansion ST} can be easily rephrased in terms of the Stieltjes transform of the  density of states $\nu$ by solving \eqref{Equation: Definition hat nu} for the latter,
	expanding $(1+\mathrm E[m])^{-1} = 1-p\mathrm E[m| \mathrm G \neq \emptyset] + \mathcal O(p^2)$ and then applying Lemma \ref{Lemma: ST of tuning fork contribution}.
	An equivalent of Theorem \ref{Theorem: small p expansion in dual of Ck} for $\nu$ then follows in the same manner.
	\begin{cor}
		\label{Corollary: small p expansion in dual of Ck for nu}
		The statements from Theorem \ref{Theorem: small p expansion ST} and Theorem \ref{Theorem: small p expansion in dual of Ck} remain true if we replace $\hat \nu$ by $\nu$, $\mathrm C(a, b, \mathrm P_1)$ by $2\mathrm C(a, b, \mathrm P_1)$ and $h$ by
		\begin{equation*}
			\mathbb E\bigg[q- mg + \frac{\partial_z (g r)}{1 - g r} \,  \Big| \,\mathrm G \neq \emptyset \bigg] \quad  \text{ where } \quad q(z) \coloneqq \operatorname{tr} \, (\mathrm T-z)^{-1}.
		\end{equation*}
	\end{cor}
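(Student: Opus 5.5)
The plan is to deduce the statement for $\nu$ from the one for $\hat\nu$, using the definition \eqref{Equation: Definition hat nu} together with the explicit tuning fork contribution of Proposition \ref{Proposition: Tuning fork contribution}. Solving \eqref{Equation: Definition hat nu} gives
\begin{equation*}
\nu = (1+\mathrm E[m])^{-1}\hat\nu + \nu_{\operatorname{TF}}, \qquad \nu_{\operatorname{TF}} = \frac{p}{1+p\mathrm E[m|\mathrm G\neq\emptyset]}\,\mathrm E\Big[\sum_{\lambda\in\mathrm S}\big(\gamma(\mathrm T,\lambda)-\1_{\{\lambda\in\mathrm S_*\}}\big)\delta_\lambda\,\Big|\,\mathrm G\neq\emptyset\Big],
\end{equation*}
where $\mathrm E[m]=p\,\mathrm E[m|\mathrm G\neq \emptyset]$. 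Passing to Stieltjes transforms (Lemma \ref{Lemma: ST of tuning fork contribution}), the transform of $\nu_{\operatorname{TF}}$ equals $\frac{p}{1+\mathrm E[m]}\mathrm E[q - \sum_{\lambda\in\mathrm S_*}(\lambda-z)^{-1}\,|\,\mathrm G\neq\emptyset]$. Here I use that $\operatorname{tr}(\mathrm T-z)^{-1}=\sum_{\lambda\in\mathrm S}\gamma(\mathrm T,\lambda)(\lambda-z)^{-1}$.

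Next insert $\hat s = s_0 + ph + p^2S$ from Theorem \ref{Theorem: small p expansion ST}, and expand $(1+p\,\mathrm E[m|\mathrm G\neq\emptyset])^{-1} = 1 - p\,\mathrm E[m|\cdot] + p^2\mathrm E[m|\cdot]^2(1+\mathrm E[m])^{-1}$ exactly. The first-order term is $h - \mathrm E[m|\cdot]s_0 + \mathrm E[q-\sum_{\mathrm S_*}(\lambda-z)^{-1}|\cdot]$. The $\mathrm S_*$ sums cancel against those in $h$ from \eqref{Equation: Definition h}. Since $g$ is independent of $(\mathrm G,\varphi)$ given $\mathrm G\neq\emptyset$ (the non-root sprinkling is unbiased) and $s_0=\mathbb E[g]$, we can write $\mathrm E[m|\cdot]s_0=\mathbb E[mg|\cdot]$. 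This yields exactly the claimed first-order function. One point needs care: the expectation in \eqref{Equation: Definition h} is under the unbiased conditional law, so the independence holds.

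The new remainder consists of four pieces: $(1+\mathrm E[m])^{-1}S$; $-\mathrm E[m|\cdot](1+\mathrm E[m])^{-1}h$; $\mathrm E[m|\cdot]^2(1+\mathrm E[m])^{-1}s_0$; and $-\mathrm E[m|\cdot](1+\mathrm E[m])^{-1}\mathrm E[q-\sum_{\mathrm S_*}|\cdot]$. I bound them using $|s_0|,|q|/m,|\sum_{\mathrm S_*}|/m\le \operatorname{Im}(z)^{-1}$, the bound on $h$ in \eqref{Equation: Estimates on h and S}, and $\mathrm E[m|\cdot]^2\le \mathrm E[m^2|\cdot]$. Because $\operatorname{Im}z<1$, each extra piece is at most a constant multiple of $\mathrm E[m^2|\cdot]\mathrm E[m|\cdot]\operatorname{Im}(z)^{-7}$. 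The sum of these constants is below $60\,\mathrm E[m^2|\cdot]\mathrm E[m|\cdot]$, since $\mathrm E[m|\cdot]\ge 1$ and $\mathrm E[m^2|\cdot]\ge \mathrm E[m|\cdot]$. The total is therefore dominated by $2\mathrm C(a,b,\mathrm P_1)(1-p)^{-1}\operatorname{Im}(z)^{-7}$. The main obstacle is this bookkeeping of constants. The bound on the new $h$ also follows, since $|\partial_z(gr)/(1-gr)|$ is controlled as in the proof for $\hat\nu$ and the remaining terms are at most $2\mathrm E[m|\cdot]\operatorname{Im}(z)^{-1}$.

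The distributional version is obtained by rerunning the proof of Theorem \ref{Theorem: small p expansion in dual of Ck} verbatim with these two inputs. That proof only uses, via \eqref{Equation: ST and Helffer-Sjöstrand}, the expansion of the Stieltjes transform together with the $\operatorname{Im}(z)^{-1}$ bound on $h$ and the $\operatorname{Im}(z)^{-7}$ bound on the remainder. The constant $2\mathrm C$ then propagates linearly into the bound on $\|R\|_{(C^8_c(a,b))'}$.
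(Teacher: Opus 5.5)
Your route is the one the paper indicates: solve \eqref{Equation: Definition hat nu} for $\nu$, expand $(1+\mathrm E[m])^{-1}$ exactly, and use Lemma \ref{Lemma: ST of tuning fork contribution}. The parts you carry out are correct. The $\mathrm S_*$-sums cancel, and $\mathrm E[m|\mathrm G\neq\emptyset]\,s_0=\mathbb E[mg|\mathrm G\neq\emptyset]$ because the generic $(\mathrm G,\varphi)$ is independent of $(\mathbb G,\mathbb o)$. Your three extra remainder pieces total at most $8\,\mathrm E[m|\mathrm G\neq\emptyset]^2\operatorname{Im}(z)^{-1}\le \mathrm C(a,b,\mathrm P_1)\operatorname{Im}(z)^{-7}$, which gives $2\mathrm C(a,b,\mathrm P_1)$.

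What your estimates do not deliver is the bound $|h_\nu(z)|\le 5\,\mathrm E[m|\mathrm G\neq\emptyset]\operatorname{Im}(z)^{-1}$, where $h_\nu\coloneqq\mathbb E[q-mg+\partial_z(gr)/(1-gr)\,|\,\mathrm G\neq\emptyset]$. This bound is part of the statement.
\begin{itemize}
\item The only $\operatorname{Im}(z)^{-1}$-control of $\mathbb E[\partial_z(gr)/(1-gr)\,|\,\mathrm G\neq\emptyset]$ comes from the rank argument in the proof of Lemma \ref{Proposition: Estimate on difference of ST}, differentiated at $p=0$. It gives $4\,\mathrm E[m^*|\mathrm G\neq\emptyset]\operatorname{Im}(z)^{-1}$; the pointwise bound used in the proof of Theorem \ref{Theorem: small p expansion ST} is of order $\operatorname{Im}(z)^{-5}$.
\item Adding your separate bounds $|q|,|mg|\le m\operatorname{Im}(z)^{-1}$ then yields the constant $6$, not $5$.
\item The fix: $q/m$ and $g$ are Stieltjes transforms of probability measures, so their values at $z$ lie in the closed disc with centre $\mathrm i/(2\operatorname{Im} z)$ and radius $1/(2\operatorname{Im} z)$. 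Hence $|q-mg|\le m\operatorname{Im}(z)^{-1}$, and so $|h_\nu|\le (4\,\mathrm E[m^*|\mathrm G\neq\emptyset]+\mathrm E[m|\mathrm G\neq\emptyset])\operatorname{Im}(z)^{-1}\le 5\,\mathrm E[m|\mathrm G\neq\emptyset]\operatorname{Im}(z)^{-1}$.
\end{itemize}

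The distributional part is also not a verbatim rerun of the proof of Theorem \ref{Theorem: small p expansion in dual of Ck}, for two reasons.
\begin{itemize}
\item That proof gets ``order at most one'' from the uniform $(C^1_c)'$ bound on the finite-volume measures $\phi_N$ (via Lemma \ref{Proposition: C1 distance estimate}). An $\operatorname{Im}(z)^{-1}$ bound on $h$ alone only controls the Helffer--Sj\"ostrand functional in $(C^2_c)'$.
\item In finite volume, $\nu_N$ is not exactly $(1+\mathrm E[m])^{-1}$ times $\hat\nu_N$ plus the tuning-fork part, because $|\mathcal V_N|/|\mathbb V_N|$ is random.
\end{itemize}
Both points are harmless if you work in infinite volume. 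There the new first-order term is $\phi-\mathrm E[m|\mathrm G\neq\emptyset]\,\nu_0+\tau$, with $\tau\coloneqq\mathrm E[\sum_{\lambda\in\mathrm S}(\gamma(\mathrm T,\lambda)-\1_{\{\lambda\in\mathrm S_*\}})\delta_\lambda\,|\,\mathrm G\neq\emptyset]$ a finite measure. So it is of order at most one and is given by the Helffer--Sj\"ostrand formula with $h_\nu$. The $(C^8_c(a,b))'$ bound then follows in either of two ways:
\begin{itemize}
\item from your bound on the new Stieltjes remainder, via the same Helffer--Sj\"ostrand computation; or
\item directly, by combining the bound on $R$ with $\|\phi\|_{(C^1_c(a,b))'}\le|b-a|\,\mathrm E[m|\mathrm G\neq\emptyset]$ and $\|\mu\|_{(C^1_c(a,b))'}\le\tfrac12|b-a|\,\mu(\R)$ for $\mu\in\{\nu_0,\tau\}$.
\end{itemize}
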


	\subsection{Expansion for sparse sprinkling and large degrees in $\mathbb G$}
	\label{Subsection: results sparse sprinkling and large d}
	We now study the density of states in the limit of large degrees in $\mathbb G$ and sparse sprinkling. Namely, we sprinkle a sequence of graphs $(\mathbb G_d)_d$, under assumptions which in particular include the case where $\mathbb G_d= \mathbb Z^d$ or $\mathbb G_d= \mathbb B_d$, and study the density of states of the sprinkled graph in the limit of large $d$ and small $p$. Roughly speaking, we consider the case where the density of states $\nu_{0, d}$ of $\mathbb G_d$ has a density $f_d$ which scales as $f_d(x) \approx d^{-1/2}f(d^{-1/2}x)$ for some sufficiently smooth probability density $f$, see Assumption \ref{Assumption large d expansion} later in the text. This is in particular the case if $\mathbb G_d$ is either the integer lattice $\mathbb Z^d$ or the Bethe lattice $\mathbb B_d$. The density $f_d$ is in both cases of the form $f_d(x) \approx d^{-1/2}f(d^{-1/2}x)$, where $f$ is for $\mathbb G_d = \mathbb Z^d$ the density of $ \mathcal N(0, 2)$ and for $\mathbb G_d = \mathbb B_d$ the density of the semicircular law: $\mathbb Z^d$ is the $d$-fold cartesian product of the graph $\mathbb Z$, implying that\footnote{This can be shown by comparing the moments of both measures, see \cite[Section 1.3]{Bor16}.} $\nu_{0, d} = \nu_{0, 1}^{*d}$, and (at least if we restrict ourselves to even $d$) from the fact that $\mathbb B_{2d}$ is the $d$-fold free product of $\mathbb G$, implying that\footnote{This can be found in \cite[Section 1.3]{Bor16} as well.} $\nu_{0, 2d} = \nu_{0, 1}^{\boxplus d}$, combined with the local central limit theorem and its free counterpart. To state our result, let us denote for $a\in \R$ and $b>0$ by $\operatorname{Cau}(a, b)$ the Cauchy distribution with density
	\begin{equation*}
		x \mapsto \frac{1}{\pi} \frac{b}{(x-a)^2 + b^2}.
	\end{equation*}	
	Under Assumption \ref{Assumption large d expansion}, on any bounded interval $I$ the limit $g_d(\lambda) = \lim_{\eta \downarrow 0} g_d(\lambda + \mathrm i \eta)$ exists for sufficiently large $d$ uniformly in $\lambda\in I$, compare with Proposition \ref{Proposition: Condition that phi locally agrees with measure}. For $\lambda \in \R$, we define	
	\begin{equation*}
		\quad b_d(\lambda) \coloneqq w(\lambda) \operatorname{Im}(g_d(\lambda)) \quad \text{ where }w(\lambda) \coloneqq \big\langle \varphi, \1_{\{\lambda\}}(\mathrm T) \varphi \big\rangle
	\end{equation*}
	denotes mass of $\{\lambda\}$ under the spectral measure of $\mathrm T$ with respect to $\varphi$.
	
	\begin{figure}
		\centering
		\begin{minipage}[c]{0.6\textwidth}
			\centering
			\includegraphics[width=\textwidth]{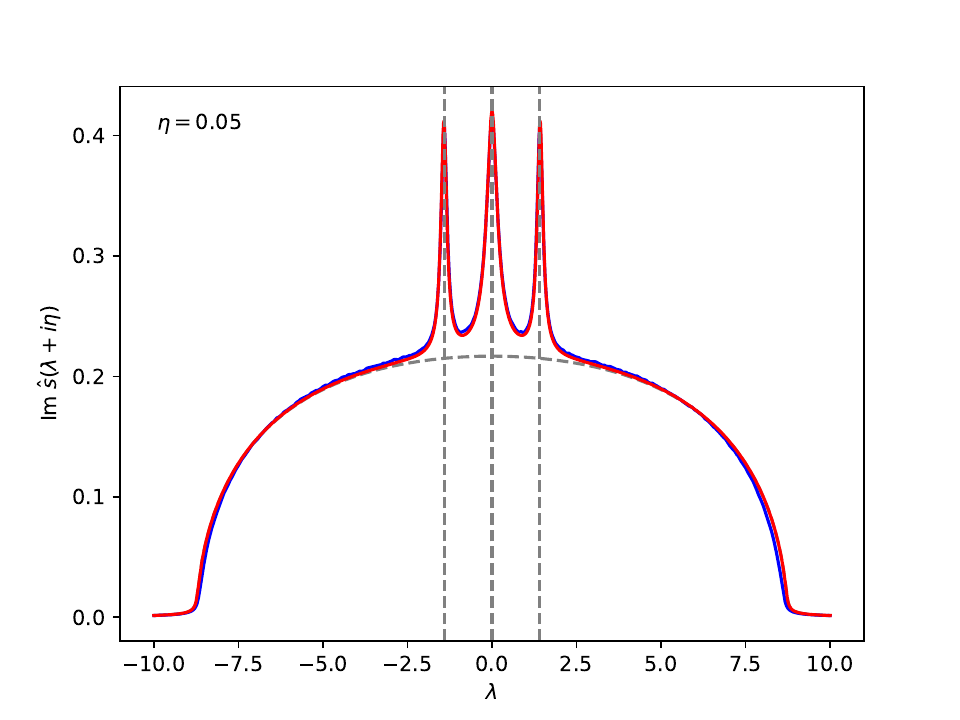}
		\end{minipage}
		\hfill
		\begin{minipage}[c]{0.39\textwidth}
			\centering
			\includegraphics[width=\textwidth]{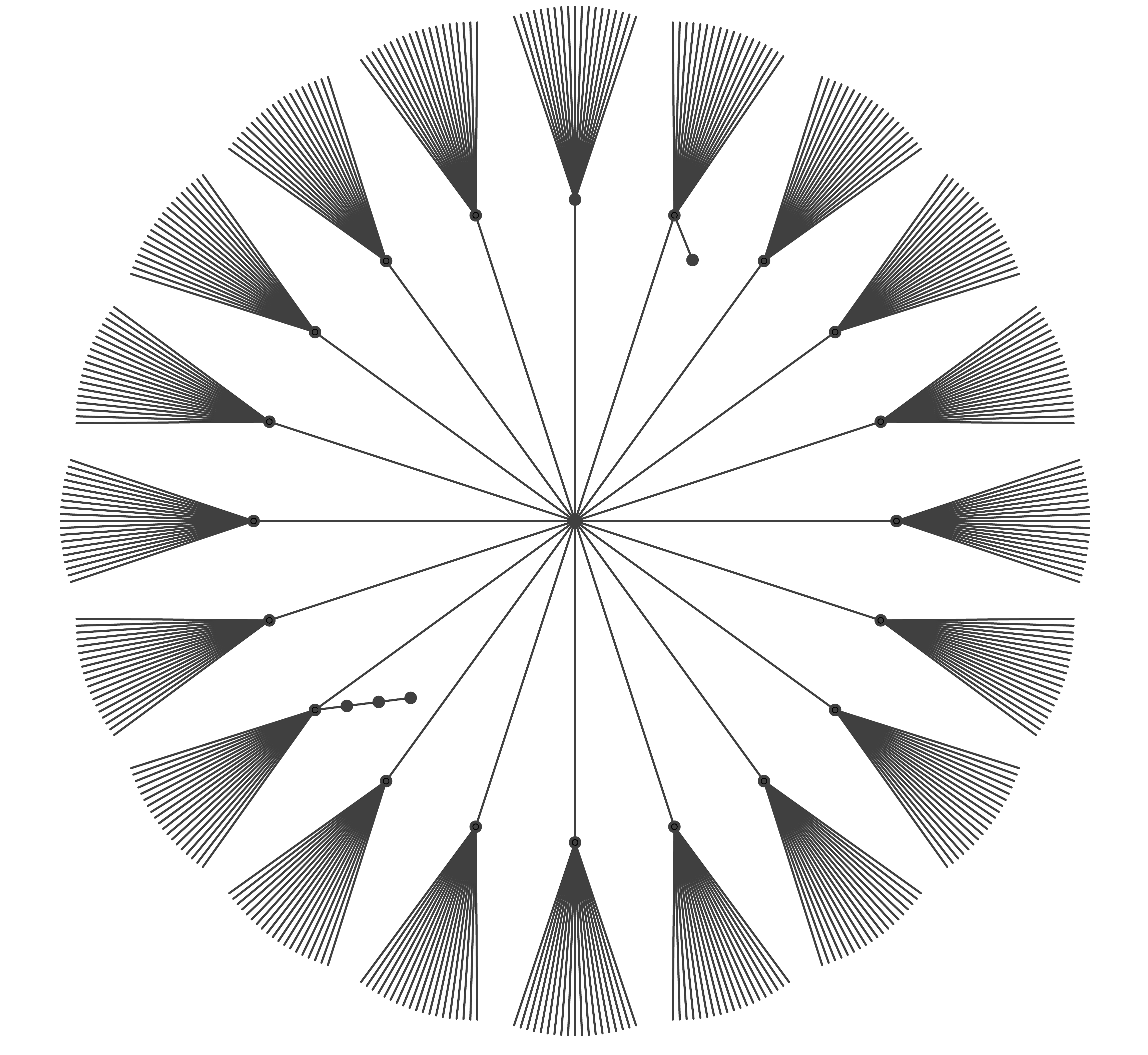}
		\end{minipage}
		\caption{Imaginary part of the Stieltjes transform of $\hat \nu$ for $\mathbb G = \mathbb B_{20}$ and $\mathrm P(\mathrm G = \text{line of length }3)=0.01$, $\mathrm P(\mathrm G = \text{leaf})=0.01$, $\mathrm P(\mathrm G = \emptyset )=0.98$ for a spectral resolution of $\eta = 0.05$. Simulation in blue, approximation via the Stieltjes transform of the two leading terms in \eqref{Equation: Small p large d} in red, Kesten--McKay law in grey. The vertical grey lines indicate the eigenvalues of the attached graphs. Notice that the peak in the middle is the superposition of two Cauchy peaks, as both the leaf as well as the line of length 3 have eigenvalue 0, with $w(\lambda)$ (and hence the width of the peak) being bigger for the single leaf.}
		\label{fig:smallplargedglobal}
	\end{figure}

	\begin{thrm}
		\label{Theorem: Large d-expansion of first order term}
		Assume that $\mathbb G = \mathbb Z^d$ or $\mathbb G = \mathbb B_d$ (or, more generally, that Assumption \ref{Assumption large d expansion} holds) and assume that \eqref{Assumption:: m a.s. bounded} holds. If $R_d$ is the remainder from Theorem \ref{Theorem: small p expansion in dual of Ck} then
		\begin{equation}
			\label{Equation: Small p large d}
			\hat \nu_d = \nu_{0,d} + p \mathbb E\Big[\sum_{\lambda \in \mathrm S_*} \operatorname{Cau}\big(\lambda, b_d(\lambda)\big) \big|\mathrm G \neq \emptyset \Big] + p \mathcal O(d^{-1/30}) + p^2 R_{d}
		\end{equation}
		where $\mathcal O(d^{-1/30})$ is with respect to total variation norm on any bounded and open interval.
	\end{thrm}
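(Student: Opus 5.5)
The plan is to work entirely at the level of the first-order term $\phi$ from Theorem \ref{Theorem: small p expansion in dual of Ck}. Since $\hat\nu_d = \nu_{0,d} + p\phi_d + p^2R_d$ is already given, it suffices to show that on any bounded open interval $I$
\begin{equation*}
\Big\|\phi_d - \mathbb E\Big[\sum_{\lambda\in\mathrm S_*}\operatorname{Cau}(\lambda,b_d(\lambda))\,\Big|\,\mathrm G\neq\emptyset\Big]\Big\|_{\mathrm{TV}(I)} = \mathcal O(d^{-1/30}).
\end{equation*}
Under \eqref{Assumption:: m a.s. bounded} there are only finitely many possible pairs $(\mathrm G,\varphi)$, so we can fix one attached graph and prove the estimate for the summand
\begin{equation*}
h_{\mathrm G}(z)=\frac{\partial_z(g_dr)}{1-g_dr}+\sum_{\lambda\in\mathrm S_*}\frac{1}{\lambda-z}.
\end{equation*}
We then average. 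By Proposition \ref{Proposition: Condition that phi locally agrees with measure}(2), on the bulk $\Sigma_{0,d}^\circ\supset I$ (for $d$ large) $\phi_d$ has density $\pi^{-1}\operatorname{Im}h(t+\mathrm i0)$. The task is therefore an $L^1(I)$ estimate on boundary values.

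The key observation is that $g_d$ is small and slowly varying on $I$. From $f_d(x)\approx d^{-1/2}f(d^{-1/2}x)$, one expects $|g_d|\lesssim d^{-1/2}$ and $|g_d'|\lesssim d^{-1}$ uniformly on a complex neighbourhood of $I$. Note that $\operatorname{Im}g_d\asymp d^{-1/2}$ and $\operatorname{Re}g_d = \mathcal O(d^{-1})$ near the origin; more precisely, on $I$ we have $g_d(t)=d^{-1/2}G(d^{-1/2}t)+\text{error}$, with $G$ the Stieltjes transform of $f$. Writing $1-g_dr = 0$, the zeros near each $\lambda\in\mathrm S_*$ occur where $r(z)\approx w(\lambda)/(\lambda-z)$ is of order $d^{1/2}$. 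Near $\lambda$ we therefore write
\begin{equation*}
r(z)=\frac{w(\lambda)}{\lambda-z}+r_\lambda(z),
\end{equation*}
with $r_\lambda$ analytic near $\lambda$. A direct computation then gives
\begin{equation*}
\frac{\partial_z(g_dr)}{1-g_dr}+\frac{1}{\lambda-z}=\frac{1}{\lambda-w(\lambda)g_d(z)-z}+E(z),
\end{equation*}
where $E$ collects terms involving $g_d'$, $g_dr_\lambda$ and the difference between $g_d(z)$ and $g_d(\lambda)$. Replacing $w(\lambda)g_d(z)$ by $w(\lambda)g_d(\lambda)$ gives the Stieltjes transform of $\operatorname{Cau}(\lambda-w\operatorname{Re}g_d(\lambda),\,b_d(\lambda))$. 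The real shift $w\operatorname{Re}g_d(\lambda)=\mathcal O(d^{-1/2})$ moves a Cauchy law of width $\asymp d^{-1/2}$ by a comparable amount. I therefore expect to need the symmetry or the finer asymptotics $\operatorname{Re}g_d=o(d^{-1/2})$ on $I$. Failing that, the shift has to be absorbed and the leading term defined with the shifted centre. I would check that $\operatorname{Re}g_d(\lambda)\cdot d^{1/2}\to \operatorname{Re}G(0)$-type limits vanish near the relevant points, or else that the assumption includes this.

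The order of steps is as follows. First, derive from Assumption \ref{Assumption large d expansion} quantitative bounds on $g_d,g_d'$ and their boundary values on a neighbourhood of $I$, with explicit powers of $d$. Second, partition $I$ into windows $|t-\lambda|\le d^{-\alpha}$ around each $\lambda\in\mathrm S_*\cap\overline I$ and their complement. Third, on the complement, show $|1-g_dr|\ge 1/2$, so that the first term of $h_{\mathrm G}$ is $\mathcal O(d^{-1/2+\alpha}\cdot\text{poly})$ in sup norm, while the pole terms $1/(\lambda-z)$ have zero imaginary part off $\lambda$. The Cauchy laws contribute total mass $\mathcal O(d^{-1/2+\alpha})$ there. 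Fourth, inside the windows, use the local expansion above and bound the $L^1$ norm of $\operatorname{Im}E$ together with the discrepancy from freezing $g_d$ at $\lambda$. This costs roughly $d^{-1/2}\cdot d^{-\alpha}$ relative to the peak width $d^{-1/2}$, plus errors from the denominator. Finally, optimize $\alpha$; the crude exponent $1/30$ suggests a lossy balancing of several such error powers, so no sharpness is needed.

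The main obstacle is step four: near a pole, the denominator $\lambda-z-w g_d(z)+(\lambda-z)g_dr_\lambda$ has imaginary part only of order $d^{-1/2}$. Perturbations of size $d^{-1/2-\epsilon}$ must therefore be controlled in $L^1$ and not merely pointwise, and this hinges on the quantitative approximation of $g_d$ by its scaling limit on real boundary values. I would first prove Lipschitz control of $g_d$ on $I+\mathrm i[0,1]$ with constant $d^{-1+\epsilon}$, derived from smoothness of $f$ in Assumption \ref{Assumption large d expansion}. I would then integrate the resolvent identity for the two Cauchy-type kernels explicitly.
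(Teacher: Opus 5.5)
Your plan follows the paper's proof of Proposition \ref{Proposition: distance of first order term to sum of Cauchy kernels} closely. You condition on one of the finitely many attached graphs and write $\phi$ on $I$ as $\pi^{-1}\operatorname{Im}h(t)\,\mathrm dt$ via Proposition \ref{Proposition: Condition that phi locally agrees with measure}. Near each $\lambda\in\mathrm S_*$ you compare with $(\lambda-t-w(\lambda)g_d(\lambda))^{-1}$. This is the paper's $h_4$, whose imaginary part is $\pi$ times the density of $\operatorname{Cau}(\lambda-w(\lambda)\operatorname{Re}g_d(\lambda),b_d(\lambda))$. You handle the complement of the windows via $|1-g_dr|\ge 1/2$ and recentre with Lemma \ref{Lemma: TV-distance of Cauchy distributions}.

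The genuine gap is the step you leave open, and it is exactly what the stated form (centres at $\lambda$) depends on. You never establish $\operatorname{Re}g_d(\lambda)=\mathcal O(d^{-1})$ uniformly on bounded intervals. You assert it ``near the origin'' but then work with $w\operatorname{Re}g_d=\mathcal O(d^{-1/2})$. A shift comparable to the width $b_d(\lambda)\asymp d^{-1/2}$ makes the recentring cost order one in total variation. A bound of $o(d^{-1/2})$ would give no rate, and your fallback with shifted centres proves a different statement.

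The missing input is already in Assumption \ref{Assumption large d expansion}. Bipartiteness makes $f_d$ even, so by Proposition \ref{Proposition: Limit of ST}, $\operatorname{Re}g_d(0)=\int_{-c}^{c}\frac{f_d(t)-f_d(0)}{t}\,\mathrm dt=0$. Next, use $\operatorname{Re}g_d'(\lambda)=\int_0^\infty\frac{f_d(\lambda+t)+f_d(\lambda-t)-2f_d(\lambda)}{t^2}\,\mathrm dt$ and split at $t=\gamma d^{1/2}/2$. Apply $\sup_{|x|\le\gamma d^{1/2}}|f_d''|\le Cd^{-3/2}$ on the inner part and $\|f_d\|_\infty\le Cd^{-1/2}$ on the tail. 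This gives $|\operatorname{Re}g_d'|\le Cd^{-1}$ on $I$, hence $|\operatorname{Re}g_d(\lambda)|\le C|\lambda|d^{-1}$. Evenness also gives $f_d'(0)=0$, so $|\operatorname{Im}g_d'|=\pi|f_d'|=\mathcal O(d^{-3/2})$. This is Lemma \ref{Lemma: Estimate on g for large d}, and with it the recentring costs only $\mathcal O(d^{-1/2})$.

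The ``main obstacle'' you anticipate in step four is not one. You do not need a quantitative comparison of $g_d$ with a scaling limit. The real-axis bounds $|\operatorname{Re}g_d|\le Cd^{-1}$, $cd^{-1/2}\le\operatorname{Im}g_d\le Cd^{-1/2}$ and $|g_d'|\le Cd^{-1}$ on $I$ suffice.

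The paper uses two window scales, $\varepsilon=d^{-7/20}$ and $\delta=d^{-7/30}$, and bounds each piece pointwise times the window length; this is where the exponent $1/30$ comes from. Your single-window route also works. Write $r=w/(\lambda-z)+r_\lambda$. In the window the expression becomes $N/D$, where $N-1=\mathcal O(d^{-1/2})$ and $D=(\lambda-t)(1-g_dr_\lambda)-wg_d$ satisfies $|D|\gtrsim|t-\lambda|+d^{-1/2}$. Compare with $(\lambda-t-wg_d(\lambda))^{-1}$ and integrate the difference rather than bounding it pointwise. This gives an $L^1$ error of order $d^{-1/2}\log d$, a better rate than the paper's.
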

	See Figure \ref{fig:smallplargedglobal} for an illustration of Theorem \ref{Theorem: Large d-expansion of first order term}.
	Notice that the width of the Cauchy peaks is of order $w(\lambda)\operatorname{Im}(g_d(\lambda)) \asymp d^{-1/2}$. While the proof of Theorem \ref{Theorem: Large d-expansion of first order term} suggests that the Cauchy peaks are offset by $w(\lambda) \operatorname{Re}(g_d(\lambda)) \asymp d^{-1}$ from $\lambda$, our error estimate is in fact too rough to draw this conclusion, see Lemma \ref{Lemma: TV-distance of Cauchy distributions} later it the text. Notice that Theorem \ref{Theorem: Large d-expansion of first order term} applied to the Stieltjes transforms allows us to choose the spectral resolution $\eta$ smaller than the width of the Cauchy peaks provided that we choose $p$ small enough. However, the imaginary part of the Stieltjes transform of the error-term $p \mathcal O(d^{-1/30})$ might then not be pointwise small, but only be small in the $L^1$ sense. 
	
	The  heuristic leading to the proof of Theorem \ref{Theorem: Large d-expansion of first order term} suggests that even for small $d$ peaks in the spectrum might be generated by eigenvalues of the attached graphs $\lambda$ with small but positive $w(\lambda)$, see Remark \ref{Remark: Expansion around small w} and Figure \ref{fig: small w expansion} later in the text.
	
	\subsection{Sources and Sinks}
	\label{Subsection: results sources and sinks}
	As we shall see in the Proof of Theorem \ref{Theorem: concentration of mass inside of Sigma_0}, connecting the attached graphs to a vertex $x$ has two competing effects: existing eigenvalues close to an admissible eigenvalue $\lambda$ are repulsed from $\lambda$ towards the zeros of $r_x$ while new eigenvalues close to $\lambda$ might be generated. Assuming \eqref{Assumption: Detetministic attached graphs}, we study which of the two effects dominates for small $p$ (in a smoothened fashion). For $\lambda \in \mathbb R$ and $\delta>0$ we set
	\begin{equation*}
		\mathcal D(\lambda, \delta) \coloneqq \big\{f\in C_c^1(\lambda - \delta, \lambda + \delta):\,  f \neq 0 \text { and } f(x)\geq 0 \text { for all }x \big\}.
	\end{equation*}
	\begin{mydef}
		Assuming \eqref{Assumption: Detetministic attached graphs}, we call $\lambda \in \mathrm D$ a {\em source} if there exists a $\delta>0$ such that for all $f\in \mathcal D(\lambda, \delta)$ we have $\hat{\nu}_p(f) > \nu_0(f)$ as long as $p>0$ is sufficiently small. If the statement holds after we reverse the inequality, we call $\lambda$ a {\em sink}.
	\end{mydef}
	
	\begin{prop}
		\label{Proposition: Criterion for repulsive eigenvalues}
		Assume that $\mathbb G$ is a deterministic vertex transitive graph and that there exists $a<b$ such that $\Sigma_0 = [a, b]$. Assume that $\nu_0$ has a density $f \in C^2(a, b)$ which is strictly positive on $(a, b)$. Assume that \eqref{Assumption: Detetministic attached graphs} holds. Assume that $\lambda \in \mathrm D \cap (a, b)$ is admissible. Then
		\begin{align*}
			&w(\lambda)^{-1} \operatorname{Im}g(\lambda) > \operatorname{Im}g'(\lambda) \operatorname{Re}g(\lambda) - \operatorname{Im}g(\lambda) \operatorname{Re}g'(\lambda)\quad \implies \quad \text{ $\lambda$ is a source}\\
			&w(\lambda)^{-1} \operatorname{Im}g(\lambda) < \operatorname{Im}g'(\lambda) \operatorname{Re}g(\lambda) - \operatorname{Im}g(\lambda) \operatorname{Re}g'(\lambda) \quad \implies \quad \text{ $\lambda$ is a sink}.
		\end{align*}
	\end{prop}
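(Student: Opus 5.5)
The plan is to read off the sign of $\hat\nu_p - \nu_0$ near $\lambda$ from the first-order term $\phi$ of Theorem \ref{Theorem: small p expansion in dual of Ck}. Near $\lambda$ this term is a continuous density whose sign at $\lambda$ is exactly the quantity in the criterion. Under \eqref{Assumption: Detetministic attached graphs} the conditional law $\mathrm P_1$ is a point mass, so $h = \partial_z(gr)/(1-gr) + \sum_{\mu \in \mathrm S_*} (\mu - z)^{-1}$ with deterministic $g$ and $r$. Assumption \ref{Assumption: Assumption for small p expansion} holds because $\mathrm G_*$ is finite and $\varphi_* \neq 0$.

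\textbf{Step 1: local form of $h$ near $\lambda$.} Since $\lambda$ is admissible, $r(z) = w(\lambda)/(\lambda - z) + \tilde r(z)$ with $\tilde r$ analytic near $\lambda$. Write $\partial_z(gr)/(1-gr) = -\partial_z \log(1-gr)$ and
\[
1 - gr = \frac{\lambda - z - g w(\lambda) - (\lambda - z) g \tilde r}{\lambda - z}.
\]
The term $\partial_z \log(\lambda - z) = -(\lambda-z)^{-1}$ produced by the denominator cancels the summand $(\lambda - z)^{-1}$ in the sum over $\mathrm S_*$. What remains is
\[
h(z) = \frac{1 + w g'(z) + \partial_z\big((\lambda - z) g\tilde r\big)}{\lambda - z - w g(z) - (\lambda-z) g(z) \tilde r(z)} + (\text{terms analytic near } \lambda).
\]
By Proposition \ref{Proposition: Condition that phi locally agrees with measure}(2) and the $C^2$-density hypothesis, $g$ and $g'$ extend continuously to $(a,b)$ from above. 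Moreover $\operatorname{Im} g(\lambda) > 0$, because the density is positive there. Hence the denominator is bounded away from zero near $t = \lambda$: at $t=\lambda$ it equals $-w g(\lambda) \neq 0$. So $h(t)$ is continuous on a neighbourhood $(\lambda - \delta, \lambda + \delta)$, and on it $\phi = \pi^{-1} \operatorname{Im} h(t)\,\mathrm dt$ by \eqref{Equation: phi insinde of support is ac}.

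\textbf{Step 2: the sign of $\operatorname{Im} h(\lambda)$.} At $t = \lambda$ the $\tilde r$-terms vanish, so $h(\lambda) = -(1 + w g')/(w g)$ up to real contributions from the analytic remainder. I would check that this remainder is real on the axis: it comes from the other poles $\mu \neq \lambda$, which give real values at $t=\lambda$. It is exactly here that the precise bookkeeping is delicate, and I expect this to be the main computational obstacle. Granting it,
\[
\operatorname{Im} h(\lambda) = \frac{\operatorname{Im} g - w\,(\operatorname{Im} g' \operatorname{Re} g - \operatorname{Im} g \operatorname{Re} g')}{w |g|^2}.
\]
Under the first hypothesis this is positive, and by continuity $\operatorname{Im} h \geq c > 0$ on a possibly smaller interval $(\lambda-\delta,\lambda+\delta)$. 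Consequently $\phi(f) \geq c \pi^{-1} \|f\|_{L^1} > 0$ for every $f \in \mathcal D(\lambda, \delta)$. The sink case is symmetric.

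\textbf{Step 3: controlling the remainder for $C^1$ test functions.} Theorem \ref{Theorem: small p expansion in dual of Ck} only controls $R$ on $C^8_c$, so I fix $f \in \mathcal D(\lambda,\delta)$ and mollify it to $f_\varepsilon \in C^\infty_c(\lambda-\delta,\lambda+\delta)$ with $\|f' - f_\varepsilon'\|_\infty \to 0$. For $u = f - f_\varepsilon$, integration by parts and Theorem \ref{Theorem: concentration of mass inside of Sigma_0} give
\[
|(\hat\nu_p - \nu_0)(u)| \leq 2\delta \|u'\|_\infty \sup|\hat F - F_0| \leq 2\delta (\zeta(\R) + p)\|u'\|_\infty \leq C p \|u'\|_\infty .
\]
Here I use $\zeta(\R) = p\,(|\mathrm S_*|-1)$. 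Also $|\phi(u)| \leq C\|u\|_\infty$, since $\phi$ has a bounded density on the interval. I choose $\varepsilon$ so that both errors are at most $p\,\phi(f)/4$. Then for this $f_\varepsilon$ I apply $|p^2 R(f_\varepsilon)| \leq C_{f,\varepsilon}\, p^2 (1-p)^{-1}$, which is below $p\,\phi(f)/4$ for small $p$. This yields
\[
\hat\nu_p(f) - \nu_0(f) \geq p\,\phi(f)/4 > 0,
\]
so $\lambda$ is a source. The reversed inequalities give the sink statement.
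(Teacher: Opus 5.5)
Your argument is correct and follows the paper's proof. Like the paper, you compute $\operatorname{Im}h(\lambda)$ from the local expansion of $h$ at the admissible eigenvalue (the paper cites Lemma~\ref{Lemma: Expansion of h around admissible eigenvalue} for this), use $\phi=\pi^{-1}\operatorname{Im}h\,\mathrm dt$ near $\lambda$ from Proposition~\ref{Proposition: Condition that phi locally agrees with measure}, and transfer the sign to $\hat\nu_p-\nu_0$ for small $p$. Your Step~3 usefully makes explicit the passage from $C^8_c$ to $C^1_c$ test functions, which the paper compresses into ``$\phi(f)=\partial_p\hat\nu_p(f)|_{p=0}$''.

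One small correction to Step~2: at $t=\lambda$ the term $\partial_z\big((\lambda-z)g\tilde r\big)$ equals $-g(\lambda)\tilde r(\lambda)$, not zero. So $h(\lambda)$ also contains $\tilde r(\lambda)/w(\lambda)$, which is the third term in the paper's lemma. This term is real, so your formula for $\operatorname{Im}h(\lambda)$ is unaffected.
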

	
	Whether one of the criteria above is satisfied depends only locally on the spectrum of the attached graphs (in the form of $w(\lambda)$) but globally on the spectrum of $\mathbb G$, as $\operatorname{Re}(g)$ depends globally on the latter, see Proposition \ref{Proposition: Limit of ST} in Appendix \ref{Section: Facts about Stieltjes transforms}. For bipartite $\mathbb G$, the spectral density $\operatorname{Im}(g)$ is symmetric and $\operatorname{Re}g(0) = 0$. In particular, provided that $\operatorname{Im} g(0)>0$, we then have that $\lambda = 0$ is a source if $w(0)^{-1} > - \operatorname{Re}g'(0)$
	and a sink if the reverse inequality holds.
	If $\operatorname{Im}(g)$ has a global maximum in $\lambda = 0$, then $\operatorname{Re}g'(0)<0$ holds, again compare to Proposition \ref{Proposition: Limit of ST}. In this case $\lambda$ transitions from being a source to a sink as $w(0)^{-1}$ increases (e.g.\ as we increase the number of graphs contained in $\mathrm G$ who have zero as an eigenvalue). A natural question might be under which conditions the same transition occurs for $\lambda \neq 0$. If we consider e.g.\ again the bipartite case and assume that $\operatorname{Im}(g)$ is increasing on $(-\infty, 0]$ and decreasing on $[0, \infty)$ (e.g.\ for $\mathbb G = \mathbb B_d$ with $d$ sufficiently large), then a sufficient condition for the occurrence of the aforementioned transition in any $\lambda \in \Sigma_0$ is that $\operatorname{Re}g$ is strictly decreasing in $\Sigma_0$ (where we at least formally have $ \operatorname{Re}g' = -\sqrt{-\partial_\lambda^2}\operatorname{Im} g > 0$, see Remark \ref{Remark: Derivative of real part of ST}). This is in particular the case for the Bethe lattice with $d\geq 3$.
	\begin{prop}
		\label{Proposition: Source-Sink transition Bethe lattice}
		Assume $\mathbb G = \mathbb B_d$ for some $d\geq 2$. Assume that \eqref{Assumption: Detetministic attached graphs} holds and that  $\lambda \in \mathrm D \cap (-2 \sqrt{d-1}, 2\sqrt{d-1})$ is admissible. If $d = 2$ then $\lambda$ is always a source. If $d\geq 3$ then $\lambda$ is a source if
		\begin{equation*}
			w(\lambda) < \frac{2 (d^2 -\lambda^2)}{d-2} \Big(1 - \frac{\lambda^2}{4(d-1)} \Big)
		\end{equation*}
		and a sink if the reverse inequality holds.
	\end{prop}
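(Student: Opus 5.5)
The plan is to deduce the statement directly from Proposition \ref{Proposition: Criterion for repulsive eigenvalues}, using the explicit Stieltjes transform of the Kesten--McKay law. First I check the hypotheses of that proposition for $\mathbb G = \mathbb B_d$.
\begin{itemize}
\item $\mathbb B_d$ is deterministic and vertex transitive, so $g = s_0$.
\item $\Sigma_0 = [-2\sqrt{d-1}, 2\sqrt{d-1}]$ is an interval.
\item For $d \geq 3$, $\nu_0$ has the Kesten--McKay density
\begin{equation*}
f(\lambda) = \frac{d\sqrt{4(d-1)-\lambda^2}}{2\pi(d^2-\lambda^2)},
\end{equation*}
which is real-analytic and strictly positive on the open interval, since $d^2 > 4(d-1)$ there. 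For $d=2$ (i.e.\ $\mathbb Z$) it is the arcsine density $\frac{1}{\pi\sqrt{4-\lambda^2}}$, which is likewise smooth and positive on $(-2,2)$.
\end{itemize}
Hence the boundary values $g(\lambda) = \lim_{\eta\downarrow 0} g(\lambda+\mathrm i\eta)$ exist and are smooth on the interior, and it only remains to evaluate the two sides of the criterion.

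Next I write down the boundary value of $g$ on $(-2\sqrt{d-1}, 2\sqrt{d-1})$. Set $s(\lambda) \coloneqq \sqrt{4(d-1)-\lambda^2}$ and $D(\lambda) \coloneqq d^2-\lambda^2$. From the standard formula $g(z) = \frac{-(d-2)z - d\sqrt{z^2-4(d-1)}}{2(d^2-z^2)}$ (branch chosen so that $\operatorname{Im} g>0$ on $\C^+$) we get
\begin{equation*}
\operatorname{Re} g = -\frac{(d-2)\lambda}{2D}, \qquad \operatorname{Im} g = \frac{d\, s}{2D}.
\end{equation*}
I then compute $W \coloneqq \operatorname{Im}g'\operatorname{Re}g - \operatorname{Im}g\operatorname{Re}g'$. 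Where $\lambda\neq 0$ one may use $W = (\operatorname{Re} g)^2 \,\partial_\lambda(\operatorname{Im} g/\operatorname{Re} g)$. Since $\operatorname{Im}g/\operatorname{Re}g = -\frac{d\, s}{(d-2)\lambda}$ and $s'\lambda - s = -(\lambda^2+s^2)/s = -4(d-1)/s$, this gives
\begin{equation*}
W = \frac{d(d-1)(d-2)}{D^2 s}.
\end{equation*}
To avoid dividing by $\operatorname{Re}g$ at $\lambda=0$, I would instead verify this identity by direct differentiation of the explicit formulas, or note that both sides are continuous in $\lambda$.

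The case $d=2$ is then immediate: $\operatorname{Re}g\equiv 0$ on the bulk, so $W=0$, while $w(\lambda)^{-1}\operatorname{Im}g(\lambda)>0$ because $\lambda$ is admissible (so $w(\lambda)>0$). Thus the source criterion holds for every admissible $\lambda$. For $d\geq 3$, the source condition $w^{-1}\operatorname{Im} g > W$ reads
\begin{equation*}
\frac{d\, s}{2Dw} > \frac{d(d-1)(d-2)}{D^2 s}.
\end{equation*}
Since $D, s, w>0$, this is equivalent to
\begin{equation*}
w < \frac{s^2 D}{2(d-1)(d-2)} = \frac{2(d^2-\lambda^2)}{d-2}\Big(1-\frac{\lambda^2}{4(d-1)}\Big).
\end{equation*}
The reverse strict inequality gives a sink in the same way.

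The only delicate point I expect is getting the branch and sign conventions of $g$ right on the cut, together with the legitimacy of the boundary values and their derivatives. These follow from the smoothness and positivity of the density via Proposition \ref{Proposition: Limit of ST}. I would double-check the signs against the semicircle limit $\operatorname{Re} g' <0$ near $0$ as $d\to\infty$; everything else is an elementary computation.
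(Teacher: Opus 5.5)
Your proposal is correct and follows essentially the paper's own argument. The paper likewise plugs the explicit Kesten--McKay Stieltjes transform into Proposition~\ref{Proposition: Criterion for repulsive eigenvalues}. Your quantity $W = d(d-1)(d-2)/(D^2 s)$ is exactly the paper's identity $\frac{\operatorname{Im} g'}{\operatorname{Im} g}\operatorname{Re} g - \operatorname{Re} g' = \frac{2(d-1)(d-2)}{D s^2}$ multiplied by $\operatorname{Im} g$. Your explicit check of that proposition's hypotheses, including the $d=2$ arcsine case, is a welcome addition the paper leaves implicit.
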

	Hence, as $d$ grows it becomes easier to be a source and if we are sufficiently close to the boundary $\pm 2\sqrt{d-1}$ of $\Sigma_0$ it becomes easier to be a sink.
	We finish this section by pointing out that Proposition \ref{Proposition: Criterion for repulsive eigenvalues} shows that \eqref{Equation: Increase of eigenvalues in I by sprinkling} does not need to hold for intervals $I$ for which the expected number of admissible eigenvalues in $I$ is only $p$.
	\begin{cor}
		\label{Corollary: mass can decrease around tree eigenvalue for small p}
		Under the assumptions of Proposition \ref{Proposition: Source-Sink transition Bethe lattice}, assume that 
		\begin{equation*}
			w(\lambda) > \frac{2 (d^2 -\lambda^2)}{d-2} \Big(1 - \frac{\lambda^2}{4(d-1)} \Big).
		\end{equation*}
		Then for all sufficiently small $p$ there exists some $c = c(p)>0$ such that $\lim_{p \downarrow 0}c(p) = 0$ and such that
		\begin{equation*}
			\hat \nu([\lambda-c, \lambda+c]) < \nu_0([\lambda-c, \lambda+c]).
		\end{equation*}
	\end{cor}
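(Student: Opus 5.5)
This is a direct consequence of the sink case of Proposition \ref{Proposition: Source-Sink transition Bethe lattice}: the hypothesis on $w(\lambda)$ is exactly the reversed inequality there, so $\lambda$ is a sink. By definition, there is therefore some $\delta>0$ such that for every $f\in\mathcal D(\lambda,\delta)$ we have $\hat\nu_p(f)<\nu_0(f)$ for all sufficiently small $p$. The task is to pass from this smoothed statement to one about the mass of a shrinking closed interval $[\lambda-c,\lambda+c]$.

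First fix a sequence $c_n\downarrow 0$ with $c_n<\delta/2$. For each $n$ choose a test function $f_n\in\mathcal D(\lambda,\delta)$ with
\[
\1_{[\lambda-c_n,\lambda+c_n]}\le f_n\le \1_{[\lambda-2c_n,\lambda+2c_n]}.
\]
Since $\lambda$ is a sink, there is $p_n>0$ with $\hat\nu_p(f_n)<\nu_0(f_n)$ for all $p<p_n$. We may take $(p_n)$ decreasing to $0$. For $p\in[p_{n+1},p_n)$ we then set $c(p)$ to be a suitable value of order $c_n$, which guarantees $c(p)\to0$ as $p\downarrow0$.

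Next we must convert the inequality for $f_n$ into one for the indicator of a closed interval. Here $\nu_0$ is the Kesten--McKay law, which has a continuous density near $\lambda\in(-2\sqrt{d-1},2\sqrt{d-1})$, so
\[
\nu_0(f_n)-\nu_0([\lambda-c_n,\lambda+c_n])=O(c_n).
\]
On the other hand, $\hat\nu_p(f_n)\ge\hat\nu_p([\lambda-c_n,\lambda+c_n])$ holds only in the wrong direction for this purpose. So one needs a quantitative gap: $\nu_0(f_n)-\hat\nu_p(f_n)$ must be bounded below by something larger than the $O(c_n)$ error.

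This quantitative gap is the main obstacle. The plan is to use the expansion $\hat\nu=\nu_0+p\phi+p^2R$ of Theorem \ref{Theorem: small p expansion in dual of Ck}, together with the local absolute continuity $\phi=\pi^{-1}\operatorname{Im}h(t)\,\mathrm dt$ on $(a,b)$ from Proposition \ref{Proposition: Condition that phi locally agrees with measure}. The sink computation in the proof of Proposition \ref{Proposition: Criterion for repulsive eigenvalues} presumably shows that $\phi$ restricted near $\lambda$ has a negative net contribution. It cannot simply have negative density everywhere, since a Cauchy-type peak is created while mass is repelled. What we need is that $\phi(\1_{[\lambda-c,\lambda+c]})<-\kappa$ for some $\kappa>0$ as $c\downarrow0$, i.e.\ a negative singular-like contribution concentrated at $\lambda$ of order $p$.

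Choosing $c=c(p)$ with $p^{-1}c(p)\to0$ slowly, we would then estimate
\[
\hat\nu([\lambda-c,\lambda+c])-\nu_0([\lambda-c,\lambda+c])\le -p\kappa+p\,o(1)+p^2\|R\|,
\]
which is negative. The remainder term $p^2R$ is only controlled when tested against smooth functions in $C^8$, so it must be handled with mollified indicators whose $C^8$ norm is $O(c^{-8})$. This forces $p^2c^{-8}\ll p$, i.e.\ $c\gg p^{1/8}$, which is compatible with $c(p)\to0$. The $p\,o(1)$ error from $\phi$ on a set of size $c$ is handled by the continuity of $\operatorname{Im}h$ away from the singular part. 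With this choice the claimed inequality holds for all sufficiently small $p$.
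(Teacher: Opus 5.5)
Your reduction to the sink property is correct, but the argument you build on it has a genuine gap. The claim you need is $\phi(\1_{[\lambda-c,\lambda+c]})<-\kappa$ with $\kappa>0$ independent of $c$, and it is false. Under these hypotheses, Proposition~\ref{Proposition: Condition that phi locally agrees with measure} gives $\phi=\pi^{-1}\operatorname{Im}h(t)\,\mathrm dt$ on the interior of $\Sigma_0$, with $\operatorname{Im}h$ continuous there. So $\phi$ has no singular part at $\lambda$, and $\phi([\lambda-c,\lambda+c])=O(c)$.

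Conversely, the sink condition is exactly $\operatorname{Im}h(\lambda)<0$ (see the proof of Proposition~\ref{Proposition: Criterion for repulsive eigenvalues}). Hence the density of $\phi$ \emph{is} negative on a whole neighbourhood of $\lambda$, contrary to your remark. The first-order gain on an interval of length $2c$ is therefore only of order $pc$.

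Passing from a smooth test function to the indicator costs the $\nu_0$-mass of the transition layer, which is comparable to the layer's width. (The $\hat\nu$-side of your sandwich, $\hat\nu([\lambda-c_n,\lambda+c_n])\le\hat\nu_p(f_n)$, is actually in the right direction.) Making that width $\lesssim pc$ forces the $C^8$-norm controlling $p^2R$ to be $\gtrsim(pc)^{-8}$, and $p^2(pc)^{-8}\gg pc$. So no sandwich argument of this kind closes. Your two constraints $c(p)=o(p)$ and $c\gg p^{1/8}$ are also incompatible.

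The missing idea is the layer-cake formula. Suppose $f\ge0$ is continuous, compactly supported in $(\lambda-c_0,\lambda+c_0)$, symmetric about $\lambda$ and nonincreasing in $|t-\lambda|$. Then for $0<s<\max f$ the superlevel set $\{f\ge s\}$ is a closed interval $[\lambda-c_s,\lambda+c_s]$ with $0<c_s<c_0$, and
\[
(\hat\nu-\nu_0)(f)=\int_0^{\max f}(\hat\nu-\nu_0)\big([\lambda-c_s,\lambda+c_s]\big)\,\mathrm ds .
\]
So $(\hat\nu-\nu_0)(f)<0$ directly yields an interval of negative $(\hat\nu-\nu_0)$-mass, with no error term at all.

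The paper applies this to the normalized mollifier $f_\varepsilon=\varphi_\varepsilon(\cdot-\lambda)$ with $\varepsilon=p^{\beta}$ for a small fixed $\beta>0$. Since $\int f_\varepsilon=1$, $\phi(f_\varepsilon)\to\pi^{-1}\operatorname{Im}h(\lambda)<0$ is of order one. Meanwhile $p^2|R(f_\varepsilon)|=o(p)$ by Theorem~\ref{Theorem: small p expansion in dual of Ck}, which gives $c(p)\le p^{\beta}$.

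With this step your first paragraph would in fact already suffice. Apply the sink property to fixed symmetric unimodal bumps $f_n\in\mathcal D(\lambda,\delta)$ supported in $(\lambda-c_n,\lambda+c_n)$, and choose thresholds $p_n\downarrow0$. The layer-cake formula then gives $c(p)<c_{n(p)}\to0$ without any quantitative gap.
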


	\section{Existence of the infinite-volume limit: proof of Theorem \ref{Theorem: Limit Theorem}}
\label{Section: BS-Convergence}

In this section, we prove Theorem \ref{Theorem: Limit Theorem}, i.e.\ show convergence of the sprinkled graphs $\mathcal G_N$ as $N\to \infty$ in the sense of Benjamini-Schramm.  
We start by briefly summarizing our approach.  
Remember that we denote by $o_N$ an uniformly chosen vertex from $\mathcal V_N$. For $x\in \mathbb V_N$ let
\begin{equation*}
	\mathcal V_{N}^x \coloneqq \{x\} \cup \{(xt):\, t\in \mathrm V^x  \}.
\end{equation*}
We project $o_N$ to $\mathbb V_N$ by defining $\mathbb o_N \coloneqq x$ where $x \in \mathbb V_N$ is the unique vertex such that $o_N \in \mathcal V_{N}^x$. Notice that $\mathbb o_N$ is uniformly distributed in $\mathbb V_N$ and independent of $\mathbb G_N$. Hence, $(\mathbb G_N, \mathbb o_N)$ converges by assumption in distribution to $(\mathbb G, \mathbb o)$ as $N\to \infty$. However, conditionally on $\mathbb o_N$ the family $(\mathrm G^x, \varphi^x)_x$ is neither independent nor identically distributed and large $m_{\mathbb o_N}$ is favored. That being said, we can recover a sample of $(\mathcal G_N, o_N)$ in the following way: Denote $W_{x} = 1 + m_x$ and
\begin{enumerate}
	\item Draw a sample of $(\mathbb G_N, \mathbb o_N)$.
	\item Draw a sample of $(W_x)_{x}$ from the conditional distribution of $W$ given $\mathbb o_N$.
	\item Draw $(\mathrm G^x, \varphi^x)_{x\in \mathbb V_N}$ from the conditional distribution $\bigotimes_{x \in \mathbb V_N}\mathrm P( \, \cdot \, |W_x)$. 
	\item Draw $o_N$ uniformly from $\mathcal V_{N}^{\mathbb o_N}$.
\end{enumerate}
Our task of determining the local limit of $\mathcal G_N$ therefore becomes to determine the limiting distribution of the conditional distribution of a finite subfamily of $W$ (that contains $W_{\mathbb o_N}$) given $\mathbb o_N$. Notice that for any fixed $x\in \mathbb V_N$, the conditional probability that $\mathbb o_N$ is $x$ given $W$ is given by
\begin{equation*}
	\mathbb P(\mathbb o_N = x|W) = \frac{W_{x}}{\sum_{y\in \mathbb V_N} W_y}.
\end{equation*}

We now show the aforementioned convergence of a finite subfamily of $W$ conditionally on $\mathbb o_N$. While in the previous construction $W_x\geq 1$ is certainly satisfied, in order to allow for the treatment of the Anderson-percolation model in the next section, we consider the case in which $W_x$ might be zero with positive probability. We denote by $*$ a general placeholder element.

\begin{lemma}
	\label{Lemma: Convgerence of biased distribution}
	Let $W = (W_n)_{n \geq 1}$ be a sequence of iid $\mathbb N$ valued random variables such that $\mathbb E[W_1] < \infty$ and such that $\mathbb P(W_1=0) < 1$.
	For $N\in \mathbb N$ let $I_N$ be a $\{*, 1 , \hdots, N\}$ valued random variable satisfying
	\begin{equation*}
		\mathbb P(I_N = k|W) =
		\begin{cases}
			\dfrac{W_k}{\sum_{i=1}^N W_i} \quad &\text{ if }(W_1, \hdots, W_N) \neq (0, \hdots, 0) \\
			\1_{\{k=*\}} \quad &\text{ if }(W_1, \hdots, W_N) = (0, \hdots, 0)
		\end{cases}
	\end{equation*}
	for all $k\geq 1$. Let $\widehat{W}_1$ be $\mathbb N$ valued random variable independent of $W$ with
	\begin{equation*}
		\mathbb P(\widehat W_1 = k) = \frac{k\cdot \mathbb P(W_1 = k)}{\mathbb E[W_1]} \quad \text{ for all }k\in \mathbb N.
	\end{equation*}
	Then, for any fixed $M\in \mathbb N$ the distribution of $(W_1, \hdots, W_M)$ conditionally on $\{I_N = 1\}$ converges in total variation to the distribution of $(\widehat W_1, W_2, \hdots, W_M)$.
\end{lemma}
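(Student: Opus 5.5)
The plan is to compute the conditional law explicitly and compare it with the target law via a law of large numbers. Set $\mu \coloneqq \mathbb E[W_1] \in (0,\infty)$, which is positive since $\mathbb P(W_1=0)<1$. Write $S_N \coloneqq \sum_{i=1}^N W_i$ and $S_N' \coloneqq \sum_{i=M+1}^N W_i$. For $w=(w_1,\dots,w_M)\in \mathbb N^M$ we have
\begin{equation*}
\mathbb P\big((W_1,\dots,W_M)=w,\, I_N=1\big) = \prod_{i=1}^M \mathbb P(W_i=w_i)\; \mathbb E\Big[\frac{w_1}{w_1+\dots+w_M+S_N'}\Big],
\end{equation*}
with the convention that the ratio is $0$ when $w_1=0$. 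The event $I_N=*$ does not contribute, because it forces $W_1=0$. Summing over $w$ gives $\mathbb P(I_N=1)=\mathbb E[W_1/S_N]\,\1$, with the same convention. By exchangeability this equals $\frac1N \mathbb P(S_N>0)$. So the conditional probability mass function is
\begin{equation*}
q_N(w) = \prod_{i=1}^M \mathbb P(W_i=w_i)\; \frac{N\,\mathbb E\big[w_1/(|w|+S_N')\big]}{\mathbb P(S_N>0)}.
\end{equation*}
The target mass function is
\begin{equation*}
q(w)=\prod_{i=1}^M \mathbb P(W_i=w_i)\,\frac{w_1}{\mu}.
\end{equation*}

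The first step is pointwise convergence $q_N(w)\to q(w)$ for each fixed $w$. We have $\mathbb P(S_N>0)\to 1$. By the strong law of large numbers, $S_N'/N \to \mu$ almost surely. This gives $N w_1/(|w|+S_N') \to w_1/\mu$ almost surely. The main obstacle is to justify exchanging limit and expectation, since $N/(|w|+S_N')$ is unbounded when $S_N'$ is small.

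To handle this, split on the event $\{S_N' \ge N\mu/2\}$. On that event the integrand is bounded by $2w_1/\mu$, so dominated convergence applies. On the complement the integrand is at most $N w_1/(|w|)\le N$ (when $w_1\ge1$). So it suffices to show $N\,\mathbb P(S_N'<N\mu/2)\to 0$. This holds by a Chernoff bound: the truncated variables $W_i\wedge K$ are bounded, and choosing $K$ with $\mathbb E[W_1\wedge K]>3\mu/4$ makes $\mathbb P(S_N'<N\mu/2)$ decay exponentially in $N$.

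Finally, pointwise convergence of probability mass functions on the countable set $\mathbb N^M$, with both $q_N$ and $q$ summing to $1$, implies total variation convergence by Scheffé's lemma. The limit $q$ is exactly the law of $(\widehat W_1, W_2,\dots,W_M)$ with $\widehat W_1$ independent of the rest, which completes the argument.
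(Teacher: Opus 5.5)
Your proof is correct and follows essentially the same route as the paper. Both arguments derive an explicit conditional mass function, using $\mathbb P(I_N=1)=\mathbb P(S_N>0)/N=(1-p^N)/N$. Both then prove pointwise convergence by a law of large numbers on a good event plus an exponentially small bad event, and use countability of $\mathbb N^M$ (Scheffé) to upgrade to total variation. The only cosmetic difference is the choice of bad event: the paper defines it by a small fraction of nonzero $W_i$, so Hoeffding applies to indicators without truncation, and it uses the $L^1$ law of large numbers. You instead split on $S_N'$ itself and truncate.
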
		

\begin{proof}
	Since $\mathbb N^M$ is countable, it is sufficient to show that for any $k \in \mathbb N^M$
	\begin{equation}
		\label{Equation: Pointwise convegrence of pmf}
		\mathbb P((W_1, \hdots, W_M) = k |I_N = 1) \to \mathbb P((\widehat W_1, \hdots, W_M) = k) 
	\end{equation}
	as $N\to \infty$. If $k_1 = 0$ then
	\begin{equation*}
		\mathbb P((W_1, \hdots, W_M) = k |I_N = 1) = 0 = \mathbb P((\widehat W_1, \hdots, W_M) = k),
	\end{equation*}
	so let us assume that $k_1 \neq 0$. Let $p \coloneqq \mathbb  P(W_1 = 0)$. Notice that
	\begin{equation*}
		\mathbb P(I_N = 1) = \mathbb P\big(I_N = 1|(W_1, \hdots, W_N) \neq (0, \hdots, 0)\big)\cdot (1-p^N) = \frac{1-p^N}{N}.
	\end{equation*}
	We therefore have
	\begin{align*}
		&\mathbb P\big((W_1, \hdots, W_M) = k| I_N = 1 \big) \\
		&= \sum_{\tilde k \in \mathbb N^{N-M}} \mathbb P\big(I_N = 1|W = (k, \tilde k)\big) \mathbb P(W = (k, \tilde k)) \mathbb P(I_N = 1)^{-1} \nonumber \\
		&= \frac{N}{1-p^{N}} \cdot \mathbb P(W_1, \hdots, W_M = k) \sum_{\tilde k \in \mathbb N^{N-M}} \frac{k_1}{\sum_{i=1}^M k_i + \sum_{j=1}^{N-M} \tilde k_j  } \mathbb P\big((W_{M+1}, \hdots, W_N) = \tilde k\big) \\
		&= \frac{1}{1-p^{N}} \cdot k_1 \mathbb P(W_1 = k_1) \prod_{i=2}^M \mathbb P(W_i = k_i) \cdot \mathbb E\bigg[\frac{1}{N^{-1}\sum_{i=1}^M k_i +  N^{-1}\sum_{i=1}^{N-M} W_i} \bigg].  
	\end{align*}
	We estimate
	\begin{align*}
		&\big|\mathbb P((W_1, \hdots, W_M) = k |I_N = 1) - \mathbb P((\widehat W_1, \hdots, W_M) = k) \big| \\
		&\leq \frac{p^N}{1-p^N} \mathbb P((\widehat W_1, \hdots, W_M) = k) + \Big|\mathbb P((W_1, \hdots, W_M) = k |I_N = 1) - \frac{1}{1-p^N} \mathbb P((\widehat W_1, \hdots, W_M) = k) \Big|\\
		&\leq \frac{1}{1-p^N}\bigg(p^N \, + \,  k_1 \mathbb P(W_1 = k_1) \prod_{i=2}^M \mathbb P(W_i = k_i) \cdot \bigg|\mathbb E\bigg[\frac{1}{N^{-1}\sum_{i=1}^M k_i +  N^{-1}\sum_{i=1}^{N-M} W_i} \bigg] - \frac{1}{\mathbb E[W_1]} \bigg| \bigg).
	\end{align*}
	Let us define the random variable
	\begin{equation*}
		Y_N \coloneqq \frac{\Big|N^{-1}\sum_{i=1}^{N-M} W_i - \mathbb E[W_1]\Big| + C N^{-1}}{N^{-1} +  N^{-1}\sum_{i=1}^{N-M} W_i} \quad \text{ where } C \coloneqq \sum_{i=1}^M k_i.
	\end{equation*}
	Since $k_1 \geq 1$, we have
	\begin{equation}
		\label{Equation: Estimate ev 1/averages Wi}	
		\bigg| \mathbb E\bigg[\frac{1}{N^{-1}\sum_{i=1}^M k_i +  N^{-1}\sum_{i=1}^{N-M} W_i} \bigg] - \frac{1}{\mathbb E[W_1]}\bigg| \leq \frac{\mathbb E[Y_N]}{\mathbb E[W_1]}.
	\end{equation}
	We define the event
	\begin{equation*}
		\mathcal A_N \coloneqq \bigg\{\frac{1}{N}\sum_{i=1}^{N-M} \1_{\{W_i \geq 1\}} < \frac{1-p}{2}  \bigg\}.
	\end{equation*}
	We then have
	\begin{align*}
		\mathbb E[Y_N\1_{\mathcal A_N^c}] \leq \frac{2}{1-p} \mathbb E\Big[\Big|N^{-1}\sum_{i=1}^{N-M} W_i - \mathbb E[W_1]\Big| + C N^{-1} \Big] \to 0
	\end{align*}
	as $N\to \infty$ by the $L^1$-law of large numbers. On the other hand, we have
	\begin{align*}
		Y_N \leq \frac{N^{-1}\sum_{i=1}^{N-M} W_i}{N^{-1} +  N^{-1}\sum_{i=1}^{N-M} W_i} +  \frac{\mathbb E[W_1] + C N^{-1}}{N^{-1} +  N^{-1}\sum_{i=1}^{N-M} W_i} \leq 1 + N\mathbb E[W_1] + C
	\end{align*}
	leading to
	\begin{equation*}
		\mathbb E[Y_N \1_{\mathcal A_N}] \leq \big(1 + N\mathbb E[W_1] + C\big) \mathbb P(\mathcal A_N) \to 0
	\end{equation*}
	as $\mathbb P(\mathcal A_N) \to 0$ exponentially fast by large deviations theory (for example by Hoeffding's inequality). Hence, $\mathbb E\mathbb [Y_N] \to 0$ as $N\to \infty$ which implies with \eqref{Equation: Estimate ev 1/averages Wi} that \eqref{Equation: Pointwise convegrence of pmf} holds.
\end{proof}

\begin{proof}[Proof of Theorem \ref{Theorem: Limit Theorem}]
	As the space of locally finite unlabeled graphs is separable, we might define by Skorokhod's representation theorem $(\mathbb G_N, \mathbb o_N)$ and $(\mathbb G, \mathbb o)$ on a common probability space such that $(\mathbb G_N, \mathbb o_N) \to (\mathbb G, \mathbb o)$ almost surely, i.e.\ such that almost surely we have for every $R>0$ that $(\mathbb G_N, \mathbb o_N)_R \cong (\mathbb G, \mathbb o)_R$ eventually. After relabeling the vertices, we might assume without loss of generality that $\mathbb V_N, \mathbb V \subseteq \mathbb N^+$ and that $\mathbb o_N = \mathbb o = 1$ for every $N\in \N$; and that almost surely $(\mathbb G_N, 1)_R = (\mathbb G, 1)_R$ eventually for all $R>0$. We now define the sprinkled graphs $(\mathcal G_N, o_N)$ and $(\mathcal G, o)$ as described in the beginning of this section. Namely, let $(W^x)_{x\geq 1}$ a sequence of iid random variables such that the distribution of $W^x$ is for every $x \in \mathbb N^+$ given by the distribution of $m + 1$ under the sprinkling distribution $\mathrm P$. Let $I_N$ be for every $N\in \N$ be a $\{1, \hdots, |\mathbb V_N|\}$ valued random variable such that 
	\begin{equation*}
		\mathbb P(I_N = x|W) =
		\frac{W^x}{\sum_{y=1}^{|\mathbb V_N|} W^y} \quad \text{ for all }x\in \{1, \hdots, |\mathbb V_N|\}.
	\end{equation*}
	For every $N\in \mathbb N$ let $((\widehat W_N^x)_{x \geq 1}, ((\mathrm G_N^x, \varphi_N^x))_{x\geq 1}, o_N)$ be independent of $(\mathbb G_N, 1)$ such that
	\begin{enumerate}
		\item The distribution of $\widehat W_N = (\widehat W_N^1, \hdots, \widehat W_N^{|\mathbb V_N|})$ coincides with the distribution of $(W^1, \hdots, W^{|\mathbb V_N|})$ conditioned on $\{I_N = 1\}$
		\item The distribution of $((\mathrm G^1_N, \varphi^1_N), \hdots, (\mathrm G^{|\mathbb V_N|}_N, \varphi^{|\mathbb V_N|}_N) )$ conditionally on $\widehat W_N$ is $\bigotimes_{x=1}^{|\mathbb V_N|} \mathrm P(\, \cdot \, |\widehat W_N^x)$
		\item Conditionally on $((\widehat W_N^x)_x, (\mathrm G_N^x, \varphi^x)_x)$, the random vertex $o_N$ is chosen uniformly from the set $\{1\} \cup \{(1t):\, t\in \mathrm V^1\}$.
	\end{enumerate}
	Let $\mathcal G_N$ be the graph obtained from sprinkling $\mathbb G_N$ with $(\mathrm G_N^x, \varphi_N^x)_{1\leq x\leq |\mathbb V_N|}$. In the same manner we define the sprinkled graph $(\mathcal G, o)$ through $((\widehat W^x)_{x \geq1}, ((\mathrm G^x, \varphi^x))_{x\geq 1}, o)$ where the sequence $(\widehat W^x)_{x \geq 1}$ is independent with
	\begin{equation*}
		\mathbb P(\widehat W^1 = k) = \frac{k\cdot \mathbb P(W^1 = k)}{\mathbb E[W^1]} \quad \text{ for all }k\in \mathbb N, \quad (\widehat W^x)_{x \geq 2} \overset{d}{=} (W^x)_{x\geq 1},
	\end{equation*}
	and where $((\mathrm G^x, \varphi^x))_{x\geq 1}, o$ are defined accordingly. By Lemma \ref{Lemma: Convgerence of biased distribution} and another application of Skorokhod's representation theorem\footnote{Where we equip the state spaces of $(W^x)_{x\geq 1}$ and $((\mathrm G^x, \varphi^x))_{x\geq 1}$ with the respective product topologies.} we might assume that $((\widehat W_N^x)_{x \geq 1}, (\mathrm G_N^x)_{x\geq 1}, o_N)$ and $((\widehat W_x)_{x\geq 1}, (\mathrm G^x)_{x\geq 1}, o)$ are defined on the same probability space and that we have almost surely for every $R>0$ that
	\begin{equation*}
		(\widehat W_N^x)_{1\leq x \leq R} = (\widehat W^x)_{1\leq x \leq R}, \quad ((\mathrm G_N^x, \varphi^x_N))_{1\leq x \leq R} = ((\mathrm G^x, \varphi^x))_{1\leq x \leq R}, \quad o_N = o
	\end{equation*}
 eventually. We then have almost surely for every $R>0$ that $(\mathcal G_N, o_N)_R = (\mathcal G, o)_R$ eventually, which shows the claim. 
\end{proof}

\section{Anderson-percolation representation}
\label{Section: Anderson-percolation representation}
In this Section, we prove the Anderson-percolation representation of the atoms and the spectrum, as given in Theorem \ref{Theorem: Anderson representation of atoms} and Theorem \ref{Theorem: Anderson representation of the spectrum} respectively. We start our discussion in Subsection \ref{Subsection: Atom in zero for Z and BerAG}, and motivate our approach by calculating $\nu(\{0\})$ under the assumptions $\mathbb G= \mathbb Z$ and \eqref{Assumption: Detetministic attached graphs} with 0 being admissible. Generalizing this approach in Subsections \ref{Subsection: starting remarks Anderson-perolation representation} and \ref{Subsection: Anderson-percolation representation in finite volume} will then yield the equivalent of Theorem \ref{Theorem: Anderson representation of atoms} in finite volume. In Subsection \ref{Subsection: Anderson Percolation representation of atoms} we take the infinite-volume limit in the latter in order to show Theorem \ref{Theorem: Anderson representation of the spectrum}. In Subsection \ref{Subsection: Anderson-percolation representation of the spectrum}, we show a similar representation for the spectrum of the adjacency operator and thereby prove Theorem \ref{Theorem: Anderson representation of the spectrum}, which we apply in Section \ref{Section: support of the DOS} to determine the support of the density of states.

\subsection{Introductory example: proof of Proposition \ref{Proposition: Mass of atom in zero for line}}
\label{Subsection: Atom in zero for Z and BerAG}
To motivate the approach taken in the proof of Theorem \ref{Theorem: Anderson representation of atoms}, we start by showing Proposition \ref{Proposition: Mass of atom in zero for line} and calculate the atom in zero for the case $\mathbb G = \mathbb Z$, assuming \eqref{Assumption: Detetministic attached graphs} and that $0$ is an admissible eigenvalue.
Considering Theorem \ref{Theorem: Anderson representation of atoms}, it is sufficient to show the statement for the case where $\mathrm G_*$ is a single leaf, i.e.\ to show that in this case
\begin{equation}
	\label{Equation: Atom in zero for Z and single leaves}
	\nu(\{0\}) = \frac{p(1-p)}{(1+p)(2-p)}.
\end{equation}
As we shall see in the Proof of Theorem \ref{Theorem: Anderson representation of atoms}, we defined admissible eigenvalues exactly such  that the proof of \eqref{Equation: Atom in zero for Z and single leaves} for the case of single leaves generalizes to the full proof of Proposition \ref{Proposition: Mass of atom in zero for line}.
In the following, let $\mathbb G_N$ be the line segment of length $N$, i.e.\ the vertex set of $\mathbb G_N$ is given by $\mathbb V_N = [N]$ and its edge set is given by $\{\{i, i+1\}: \, 1 \leq i \leq N-1\}$. We assume that \eqref{Assumption: Detetministic attached graphs} holds, with $\mathrm G_*$ being a single leaf.
\begin{lemma}
	We have for all $N\in \N$
	\begin{equation*}
		\operatorname{dim}(\operatorname{ker}(B_N)) = \begin{cases}
			0  \text{ if $N$ is even} \\
			1 \text{ if $N$ is odd}.
		\end{cases}
	\end{equation*}
\end{lemma}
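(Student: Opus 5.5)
The plan is to solve the eigenvalue equation $B_N \psi = 0$ directly by a transfer (recursion) argument along the path. Write $\psi = (\psi_1, \dots, \psi_N)$. The equation $(B_N\psi)_i = 0$ reads
\begin{equation*}
\psi_2 = 0, \qquad \psi_{i-1} + \psi_{i+1} = 0 \quad (2 \leq i \leq N-1), \qquad \psi_{N-1} = 0 .
\end{equation*}
The first $N-1$ equations (for rows $1, \dots, N-1$) determine $\psi$ uniquely from the free parameter $\psi_1$: one gets $\psi_{2j} = 0$ for all $j$ and $\psi_{2j+1} = (-1)^j \psi_1$. Hence the kernel is at most one-dimensional, being contained in the span of this single candidate vector.

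It then remains to check the last equation $\psi_{N-1} = 0$, which is the boundary condition at vertex $N$. If $N$ is odd, then $N-1$ is even, so $\psi_{N-1} = 0$ automatically. The vector $\psi_{2j+1} = (-1)^j$, $\psi_{2j} = 0$ is therefore a nonzero kernel element, and the dimension is exactly $1$. If $N$ is even, then $N-1$ is odd, so $\psi_{N-1} = \pm \psi_1$, and the condition forces $\psi_1 = 0$, hence $\psi = 0$; the dimension is $0$. The degenerate case $N=1$, where $B_1 = 0$ has a one-dimensional kernel, fits the odd case and should be noted separately, since the recursion above then has no interior equations.

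There is no real obstacle here. The only care needed is bookkeeping the boundary rows correctly and treating $N = 1$ (and $N = 2$) separately. As an alternative cross-check, one could invoke the explicit spectrum $2\cos(\pi k/(N+1))$, $k = 1, \dots, N$, which vanishes exactly when $N+1 = 2k$, that is, only for odd $N$, and then with multiplicity one.
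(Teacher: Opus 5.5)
Your proof is correct. Its first half matches the paper: showing that rows $1,\dots,N-1$ determine $\psi$ from $\psi_1$ is exactly the statement $\operatorname{rank}(B_N)\geq N-1$, which the paper uses to get $\dim\ker(B_N)\leq 1$.

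The two routes diverge in deciding between $0$ and $1$. The paper never examines the last row and argues by parity instead. $B_N$ is the adjacency matrix of a tree, hence bipartite, so its spectrum is symmetric with multiplicities and the nonzero eigenvalues pair up as $\pm\lambda$. Therefore $N-\dim\ker(B_N)$ is even, and together with $\dim\ker(B_N)\leq 1$ this forces the claimed values. You instead check the boundary equation $\psi_{N-1}=0$ directly.

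Your version is fully elementary and produces the kernel vector $(1,0,-1,0,1,\dots)$ explicitly. The paper's version needs no bookkeeping beyond the rank bound. It rests on a general fact: for any bipartite graph, $\dim\ker$ has the same parity as the number of vertices. So the only path-specific input there is $\operatorname{rank}(B_N)\geq N-1$.

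Your separate treatment of $N=1$ is appropriate. The cross-check via the simple eigenvalues $2\cos(\pi k/(N+1))$ is also valid.
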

\begin{proof}
	One easily convinces oneself that $\operatorname{rank}(B_N) \geq N-1$ and hence $\operatorname{dim}(\operatorname{ker}(B_N)) \leq 1$. As $B_N$ is the adjacency matrix of a tree, its spectrum is symmetric and the statement follows.
\end{proof}
We now use an observation that was used in \cite{BLS11} in order to study the atom in zero for Galton--Watson trees: the dimension of the kernel of the adjacency matrix stays invariant under removing a leaf and its unique neighbour. We apply this and remove all $x\in \mathbb V^+$ as well as all previously attached leaves. We are left with the site percolation graph $\mathbb G_{N, \operatorname{perc}}$ which consists of a collection of line segments. The kernel of each of these line segments has either dimension zero (in case the length of the segment is even) or dimension one (in case the length of the segment is odd) and we obtain the following result.
\begin{lemma}
	\label{Lemma: Lemma 0}
	We have
	\begin{equation*}
		\nu(\{0\}) = \lim_{N\to \infty} \mathbb E\bigg[\frac{X_N^-}{|\mathcal G_{N}|}\bigg],
	\end{equation*}
	where $X_N^-$ denotes for $N\in \N$ the number of connected components of $\mathbb G_{N, \operatorname{perc}}$ of odd size.
\end{lemma}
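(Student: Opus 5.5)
We work in finite volume: $\mathbb G_N$ is the path on $[N]$, and each $x \in \mathbb V^+$ (chosen independently with probability $p$) carries a single leaf $(x1)$. The plan is to compute $\dim\ker A_N$ exactly, divide by $|\mathcal V_N| = N + |\mathbb V_N^+|$, and pass to the limit. By Corollary \ref{Corollary: Convergence of DOS-measures}, $\nu_N(\{0\}) \to \nu(\{0\})$, and
\begin{equation*}
\nu_N(\{0\}) = \mathbb E\big[\dim\ker(A_N)/|\mathcal V_N|\big],
\end{equation*}
so it suffices to show $\dim\ker A_N = X_N^-$ deterministically for every realization.

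The key step is the leaf-removal invariance used in \cite{BLS11}. Let $\ell$ be a leaf with unique neighbour $x$, and write $G' = G \setminus \{\ell, x\}$. Then $\dim\ker A_G = \dim\ker A_{G'}$. To see this, take $\psi \in \ker A_G$. The eigen-equation at $\ell$ reads $\psi_x = 0$. The equation at $x$ reads $\psi_\ell + \sum_{y \sim x, y \neq \ell} \psi_y = 0$, which determines $\psi_\ell$ uniquely. The equations at the other vertices $y$ do not involve $\psi_\ell$, and since $\psi_x = 0$ they are exactly the equations $A_{G'}\psi|_{G'} = 0$. Hence restriction $\psi \mapsto \psi|_{G'}$ is a linear bijection from $\ker A_G$ onto $\ker A_{G'}$, its inverse extending by $\psi_x = 0$ and $\psi_\ell$ determined as above. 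We apply this once for every $x \in \mathbb V_N^+$, each time removing the attached leaf $(x1)$ together with $x$. These removals are compatible: after earlier removals, $(x1)$ is still a leaf attached to $x$, since it is adjacent only to $x$. What remains is the site-percolation subgraph $\mathbb G_{N,\operatorname{perc}}$ induced by $\mathbb V_N \setminus \mathbb V_N^+$, a disjoint union of line segments.

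By the preceding lemma, a segment of length $k$ has kernel of dimension $1$ if $k$ is odd and $0$ if $k$ is even. The kernel of a disjoint union is the direct sum of the kernels of the components, so $\dim\ker A_N = X_N^-$. This proves the lemma with $|\mathcal G_N| = |\mathcal V_N|$.

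There is one bookkeeping point: in this lemma $\mathbb G_{N,\operatorname{perc}}$ should denote the graph on the vertices without attached graphs, that is, the complement of $\mathbb V^+$. This matches the heuristic description given just before the statement, namely removing all $x \in \mathbb V^+$. No step is a real obstacle; the only care needed is verifying the bijection in the leaf-removal step.
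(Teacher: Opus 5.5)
Your proposal is correct and follows the paper's own argument. The paper likewise applies the leaf-removal invariance of $\dim\ker$ from \cite{BLS11} to each $x\in\mathbb V_N^+$ together with its leaf, reduces to the disjoint union of segments forming $\mathbb G_{N,\operatorname{perc}}$, uses the parity lemma for segments, and obtains the limit from the convergence of atoms in Corollary \ref{Corollary: Convergence of DOS-measures}. You additionally give a correct proof of the leaf-removal bijection, which the paper only cites, and your reading of $\mathbb G_{N,\operatorname{perc}}$ as the subgraph induced by $\mathbb V_N\setminus\mathbb V_N^+$ is the intended one, as the recursions in Lemmas \ref{Lemma: Lemma 1} and \ref{Lemma: Lemma 2} confirm.
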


Hence, the task becomes to determine the expected number of connected components of odd size. We do this by deriving several recursions. For that purpose, we define
\begin{align*}
	X_N &\coloneqq \text{number of connected components of }\mathbb G_{N, \operatorname{perc}} \\
	X_N^+ &\coloneqq \text{number of connected components of }\mathbb G_{N, \operatorname{perc}} \text{ of even size}
\end{align*}
as well as
\begin{equation*}
	\sigma_N \coloneqq \begin{cases}
		1 &\text{ if } m_N = 0 \text { and the connected component of $N$ in $\mathbb G_{N, \operatorname{perc}}$ has even size} \\
		- 1 &\text{ if } m_N = 0 \text { and the connected component of $N$ in $\mathbb G_{N, \operatorname{perc}}$ has odd size} \\
		0 &\text{ if } m_N = 1.
	\end{cases}
\end{equation*}
We couple the graphs $(\mathbb G_{N, \operatorname{perc}})_N$ in a natural manner by defining them through restriction of the corresponding site percolation on $\mathbb N$.

\begin{lemma}
	\label{Lemma: Lemma 1}
	For all $N\in \mathbb N$
	\begin{align*}
		\mathbb E[X_N] &= 1 + (N-2)p - (N-1)p^2\\
		\mathbb P(\sigma_N = 1) &= \frac{1 - (p-1)^{N+1}}{2-p} - p.
	\end{align*}
\end{lemma}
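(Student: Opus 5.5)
The plan is to use the coupling of the $\mathbb G_{N,\operatorname{perc}}$ as restrictions of one Bernoulli site configuration on $\mathbb N$ and to compute both quantities directly. The $m_x$, $x\in\mathbb N$, are i.i.d.\ with $\mathrm P(m_x=1)=p$ and $\mathrm P(m_x=0)=1-p$. Since we remove every $x\in\mathbb V^+$, the vertices of $\mathbb G_{N,\operatorname{perc}}$ are exactly the $x\in[N]$ with $m_x=0$. Its connected components are the maximal runs of consecutive such vertices.

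For $\mathbb E[X_N]$ I will count each component by its leftmost vertex. A vertex $x\in[N]$ is the left endpoint of a component if and only if $m_x=0$ and either $x=1$ or $m_{x-1}=1$. By linearity of expectation and independence,
\begin{equation*}
\mathbb E[X_N] = (1-p) + (N-1)p(1-p) = 1+(N-2)p-(N-1)p^2,
\end{equation*}
which is the first claim.

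For $\sigma_N$, set $a_N\coloneqq\mathbb P(\sigma_N=1)$ and $b_N\coloneqq\mathbb P(\sigma_N=-1)$, so that $a_N+b_N=\mathbb P(m_N=0)=1-p$. The key observation is that $\sigma_N=1$ exactly when $m_N=0$ and the component of $N-1$ in $\mathbb G_{N-1,\operatorname{perc}}$ exists and has odd size, i.e.\ $\sigma_{N-1}=-1$. Indeed, appending an open vertex $N$ enlarges the component of $N-1$ by one, or creates a singleton (odd size) if $m_{N-1}=1$. Since $m_N$ is independent of $\sigma_{N-1}$, this gives the recursion
\begin{equation*}
a_N=(1-p)\,b_{N-1}=(1-p)\big(1-p-a_{N-1}\big),
\end{equation*}
with initial value $a_1=0$, because a single vertex forms a component of odd size.

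This linear recursion has fixed point $a^*=(1-p)^2/(2-p)$, and the deviation satisfies $a_N-a^*=-(1-p)(a_{N-1}-a^*)=(p-1)(a_{N-1}-a^*)$. Starting from $a_1-a^*=-(1-p)^2/(2-p)=-(p-1)^2/(2-p)$, induction gives $a_N=a^*-(p-1)^{N+1}/(2-p)$. Using $\frac{1}{2-p}-p=\frac{(1-p)^2}{2-p}$, this is exactly the stated formula $\frac{1-(p-1)^{N+1}}{2-p}-p$.

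There is no real obstacle here. The only point needing care is the parity bookkeeping in the recursion step: the case $m_{N-1}=1$ must be absorbed correctly, since then $\sigma_{N-1}=0$ and $N$ starts a new odd component, which yields $\sigma_N=-1$ rather than $\sigma_N=1$.
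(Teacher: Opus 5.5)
Your proof is correct and is essentially the paper's argument. The paper obtains $\mathbb E[X_N]$ from the increment identity $X_{N+1}=X_N+\1_{\{m_N=1,\,m_{N+1}=0\}}$, which is your left-endpoint count written recursively. It then uses the same recursion $\mathbb P(\sigma_{N+1}=1)=(1-p)\,\mathbb P(\sigma_N=-1)=(1-p)^2-(1-p)\,\mathbb P(\sigma_N=1)$ with $\mathbb P(\sigma_1=1)=0$. The only difference is that the paper unrolls this recursion into the geometric sum $\sum_{k=2}^N (p-1)^k$, whereas you solve it via the fixed point.
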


\begin{proof}
	For all $N \in \N$, we have
	\begin{equation*}
		X_{N+1} = X_N + \1_{\{m_N = 1, m_{N+1} = 0\}}
	\end{equation*}
	and therefore
	\begin{equation*}
		\mathbb E[X_{N+1}] = \mathbb E[X_N] + p(1-p).
	\end{equation*}
	Since $\mathbb E[X_{1}] = 1-p$, the first identity follows. For the second identity, notice that
	\begin{align*}
		\mathbb P(\sigma_{N+1} = 1) &= \mathbb P(m_{N+1} = 0, \sigma_{N} = -1) \\
		&=(1-p)\big(1 - \mathbb P(\sigma_{N} = 0) - \mathbb P(\sigma_{N} = 1) \big) \\
		& = (1-p)^2 - (1-p)\mathbb P(\sigma_{N} = 1).
	\end{align*}
	Since $\mathbb P(\sigma_{1} = 1) = 0$, we get for all $N\in \N$
	\begin{equation*}
		\mathbb P(\sigma_{N} = 1) = \sum_{k=2}^N (-1)^k(1-p)^k = \frac{1 - (p-1)^{N+1}}{2-p} - p. \qedhere
	\end{equation*}
\end{proof}

\begin{lemma}
	\label{Lemma: Lemma 2}
	For all $N\in \mathbb N$
	\begin{align*}
		\mathbb E[X_N^-] = (N-1) \frac{p(1-p)}{2-p} + 1-p + \frac{2(1-p)}{2-p}\Big(p - \frac{1 - (p-1)^{N+1}}{2-p}\Big).
	\end{align*}
\end{lemma}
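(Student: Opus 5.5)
The plan is to derive a one-step recursion for $\mathbb E[X_N^-]$ in $N$, using the natural coupling of the $\mathbb G_{N,\operatorname{perc}}$ via restriction of a single site percolation on $\mathbb N$. I will then sum the recursion using the formula for $\mathbb P(\sigma_N = 1)$ from Lemma \ref{Lemma: Lemma 1}. The whole argument mirrors the proof of Lemma \ref{Lemma: Lemma 1}. Passing from $N$ to $N+1$ only affects the component containing vertex $N$, and $\sigma_N$ encodes exactly the needed information about it.

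\textbf{Step 1: case analysis.} I examine the effect of adding vertex $N+1$.
\begin{itemize}
\item If $m_{N+1} = 1$, vertex $N+1$ is removed, so $X_{N+1}^- = X_N^-$.
\item If $m_{N+1} = 0$ and $\sigma_N = 0$ (vertex $N$ is occupied), vertex $N+1$ starts a new component of size one. This raises $X^-$ by one.
\item If $m_{N+1} = 0$ and $\sigma_N = 1$, an even component is extended to an odd one. This again raises $X^-$ by one.
\item If $m_{N+1} = 0$ and $\sigma_N = -1$, an odd component becomes even. This lowers $X^-$ by one.
\end{itemize}
Since $m_{N+1}$ is independent of $\sigma_N$, this gives
\begin{equation*}
\mathbb E[X_{N+1}^-] = \mathbb E[X_N^-] + (1-p)\big(\mathbb P(\sigma_N = 0) + \mathbb P(\sigma_N = 1) - \mathbb P(\sigma_N = -1)\big).
\end{equation*}

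\textbf{Step 2: simplify the increment.} I use $\mathbb P(\sigma_N = 0) = p$ and $\mathbb P(\sigma_N = -1) = 1 - p - \mathbb P(\sigma_N = 1)$. The increment then becomes $(1-p)\big(2p - 1 + 2\mathbb P(\sigma_N = 1)\big)$. Inserting Lemma \ref{Lemma: Lemma 1}, this equals
\begin{equation*}
(1-p)\,\frac{p - 2(p-1)^{N+1}}{2-p}.
\end{equation*}
So the increment is the constant $\frac{p(1-p)}{2-p}$ plus a geometric term.

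\textbf{Step 3: sum.} The base case is $\mathbb E[X_1^-] = 1-p$, since a single unsprinkled vertex is an odd component. Summing the increments from $1$ to $N-1$ produces $(N-1)\frac{p(1-p)}{2-p}$ plus a finite geometric sum in $(p-1)$. I will rewrite that geometric sum, via the closed form $\sum_{k=2}^{N}(p-1)^k$ already used in Lemma \ref{Lemma: Lemma 1}, as $\frac{2(1-p)}{2-p}\big(p - \frac{1-(p-1)^{N+1}}{2-p}\big)$.

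The only real care point is this final bookkeeping of the geometric sum's index range and sign. The step is routine, and I would verify it by checking the cases $N = 1$ and $N = 2$ directly.
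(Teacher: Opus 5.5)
Your proof is correct and is essentially the paper's argument: the same coupling, the same one-step recursion driven by $\sigma_N$ and the independence of $m_{N+1}$ from $\sigma_N$, and the same summation using Lemma~\ref{Lemma: Lemma 1}. The only difference is that you recurse directly on $X_N^-$ (using $\mathbb P(\sigma_N=0)=p$), whereas the paper recurses on the even count via $X_{N+1}^+ = X_N^+ - \1_{\{m_{N+1}=0\}}\sigma_N$ and then takes $\mathbb E[X_N^-] = \mathbb E[X_N] - \mathbb E[X_N^+]$ using the first identity of Lemma~\ref{Lemma: Lemma 1}; your increment $(1-p)\frac{p-2(p-1)^{N+1}}{2-p}$ and the geometric-sum step $\sum_{k=1}^{N-1}(p-1)^{k+1}=\frac{1-(p-1)^{N+1}}{2-p}-p$ give exactly the stated formula.
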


\begin{proof}
	We have for all $N \in \mathbb N$
	\begin{equation*}
		X_{N+1}^+ = \1_{\{m_{N} = 1\}}X_N^+ +  \1_{\{m_N = 0\}}\Big(X_N^+ - \1_{\{m_{N+1} = 0\}} \sigma_N \Big) = X_N^+ - \1_{\{m_{N+1} = 0\}} \sigma_N
	\end{equation*}
	(since $\sigma_N = 0$ if $m_N = 1$). Taking the expected value yields
	\begin{equation*}
		\mathbb E[X_{N+1}^+] = \mathbb E[X_{N}^+] - (1-p)\mathbb E[\sigma_N] = \mathbb E[X_{N}^+] - 2(1-p)\mathbb P(\sigma_N = 1) + (1-p)^2.
	\end{equation*}
	Since $\mathbb E[X_{1}^+] = 0$, we get inductively
	\begin{equation*}
		\mathbb E[X_{N}^+] = (N-1)(1-p)^2 - 2(1-p) \sum_{k=1}^{N-1}\mathbb P(\sigma_k = 1)
	\end{equation*}
	for all $N \in \N$. Lemma \ref{Lemma: Lemma 1} and a straight forward calculation yield
	\begin{equation*}
		\mathbb E[X_{N}^+] = (N-1)\Big(\frac{p}{2-p} - p^2\Big) -  \frac{2(1-p)}{2-p}\Big(p - \frac{1 - (p-1)^{N+1}}{2-p}\Big).
	\end{equation*}
	Again with Lemma \ref{Lemma: Lemma 1}, we get
	\begin{equation*}
		\mathbb E[X_N^-] = \mathbb E[X_N] - \mathbb E[X_N^+] = 1 + (N-2)p - (N-1)p^2  - \mathbb E[X_N^+]
	\end{equation*}
	which yields the result. 
\end{proof}
\begin{proof}[Proof of \eqref{Equation: Atom in zero for Z and single leaves}]
	By the law of large numbers, the dominated convergence theorem and Lemmas \ref{Lemma: Lemma 0} and \ref{Lemma: Lemma 2} we have
	\begin{equation*}
		\nu_{N}(\{0\}) = \lim_{N\to \infty} \mathbb E\Big[\frac{X_N^-}{N + \sum_{i=1}^N m_i}\Big] = \lim_{N\to \infty} \frac{1}{1+p}  \mathbb E\big[X_N^-/N\big] = \frac{p(1-p)}{(1+p)(2-p)}. \qedhere
	\end{equation*}
\end{proof}

\subsection{Notation and preliminary remarks}
\label{Subsection: starting remarks Anderson-perolation representation}
In order to show Theorems \ref{Theorem: Anderson representation of atoms} and \ref{Theorem: Anderson representation of the spectrum}, we write our adjacency operator $A$ in the block form \eqref{Equation: blockform adjacency matrix}. We write vectors in $l^2(\mathcal V)$ in the form $(\phi, \psi)$ where $\phi\in l^2(\mathbb V)$ and $\psi \in \bigoplus_{x \in \mathbb V^+} l^2(\mathrm V^x)$. We additionally denote for $x\in \mathbb V^+$  by $\psi^x \in l^2(\mathrm V^x)$ the vector defined by $\psi^x_{t} \coloneqq \psi_{(xt)}$.
We then have
\begin{equation*}
	\forall x\in \mathbb V:\, (E \psi)_{x} = \1_{\{x\in \mathbb V^+\}}\langle \varphi^x, \psi^x \rangle, \quad \quad \forall x\in \mathbb V^+:\, (E^* \phi)^x = \phi_x \varphi^x.
\end{equation*}
In order to motivate the approach taken in the proofs of Theorem \ref{Theorem: Anderson representation of atoms} and Theorem \ref{Theorem: Anderson representation of the spectrum}, let us rewrite the eigenvalue equation $A(\phi, \psi) = \lambda (\phi, \psi)$ as 
\begin{align}
	\label{Equation: eigenvalue equation in block form}
	&(B-\lambda)\phi + E\psi = 0      &\iff &\forall x\in \mathbb V:\,  [(A-\lambda) \phi]_x + \1_{\{x\in \mathbb V^+\}}\langle \varphi^x, \psi^x \rangle = 0 \nonumber \\ 
	&(T-\lambda)\psi + E^*\phi = 0	   &     &\forall x\in \mathbb V^+:\, (\mathrm  T^x-\lambda) \psi^x + \phi_x\varphi^x = 0.									  
\end{align}
Now, if $\lambda \notin \mathrm D$, then 
\begin{equation*}
	(\mathrm  T^x-\lambda) \psi^x + \phi_x\varphi^x = 0 \implies \langle \varphi^x, \psi^x \rangle = \phi_x \big\langle \varphi^x, (\lambda-\mathrm  T^x)^{-1} \varphi^x \big\rangle = -\phi_x r_x(\lambda)
\end{equation*}
and hence \eqref{Equation: eigenvalue equation in block form} implies that $(H_\lambda - \lambda)\phi = 0$. For the proof of Theorem \ref{Theorem: Anderson representation of the spectrum} we now need to modify and extend this argument to take into account that $\lambda \in \sigma(A)$ might not be an eigenvalue of $A$ i.e.\ might lie in the continuous spectrum of $A$ and might lie in $\mathrm D$ i.e.\ might be an eigenvalue of one of the attached graphs. We start by proving three simple lemmas which will be central for our further analysis. 

\begin{lemma}
	\label{Lemma: r in terms of pseudoinverse}
	Let $x\in \mathbb V_\lambda$ and let $(\mathrm  T^x - \lambda)^+$ denote the Moore--Penrose pseudoinverse of $\mathrm  T^x - \lambda$. Then
	\begin{equation*}
		r_x(\lambda) = \langle \varphi^x, (\mathrm  T^x - \lambda)^+ \varphi^x \rangle.
	\end{equation*}
\end{lemma}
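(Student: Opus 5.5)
The plan is a spectral decomposition of the finite symmetric matrix $\mathrm T^x$. Write $\mathrm T^x = \sum_{\mu \in \mathrm S^x} \mu P_\mu$, where $P_\mu$ is the orthogonal projection onto $\ker(\mathrm T^x - \mu)$. The Moore--Penrose pseudoinverse of $\mathrm T^x - \lambda$ is then
\begin{equation*}
	(\mathrm T^x - \lambda)^+ = \sum_{\mu \in \mathrm S^x \setminus \{\lambda\}} (\mu - \lambda)^{-1} P_\mu ,
\end{equation*}
that is, it inverts on $\operatorname{ran}(\mathrm T^x-\lambda)$ and vanishes on $\ker(\mathrm T^x - \lambda)$. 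This holds because the operator on the right satisfies the four Penrose identities, which is immediate since everything is diagonal in the same basis.

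On the other side, for $z \notin \mathrm S^x$ we have
\begin{equation*}
	r_x(z) = \sum_{\mu \in \mathrm S^x} \frac{\langle \varphi^x, P_\mu \varphi^x\rangle}{\mu - z} = \sum_{\mu \in \mathrm S^x} \frac{\|P_\mu \varphi^x\|^2}{\mu - z}.
\end{equation*}
The key observation is that $x \in \mathbb V_\lambda$ means $\lambda \notin \mathrm S^x_*$. Hence either $\lambda \notin \mathrm S^x$, or $\lambda \in \mathrm S^x$ and every eigenvector to $\lambda$ is orthogonal to $\varphi^x$, which gives $P_\lambda \varphi^x = 0$.

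In both cases the term $\mu = \lambda$ is absent from, or vanishes in, the sum. Thus $r_x$ extends continuously to $\lambda$ with the finite value
\begin{equation*}
	r_x(\lambda) = \sum_{\mu \ne \lambda} \frac{\|P_\mu\varphi^x\|^2}{\mu-\lambda} = \langle \varphi^x, (\mathrm T^x-\lambda)^+\varphi^x\rangle .
\end{equation*}
This is exactly the claimed identity. The same computation also confirms the earlier remark that $r_x(\lambda)=\infty$ if and only if $\lambda\in\mathrm S^x_*$.

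The trivial case $\mathrm G^x=\emptyset$ needs separate mention: there both sides are $0$ by convention.

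No real obstacle is expected. The only point requiring care is the interpretation of $r_x(\lambda)$ for $\lambda\in\mathrm S^x\setminus\mathrm S^x_*$ as the continuous extension at a removable singularity, which is exactly what the vanishing of $P_\lambda\varphi^x$ ensures.
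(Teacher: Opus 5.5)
Your proof is correct and is essentially the paper's argument. Both reduce the claim to the spectral decomposition of $\mathrm T^x$ with respect to $\varphi^x$. The paper obtains the pseudoinverse through the limit $(\mathrm T^x-\lambda)^+=\lim_{\varepsilon\downarrow 0}((\mathrm T^x-\lambda)^2+\varepsilon)^{-1}(\mathrm T^x-\lambda)$, whereas you write it out via the projections $P_\mu$. You also make explicit where $x\in\mathbb V_\lambda$ is used, namely $P_\lambda\varphi^x=0$, which the paper uses only implicitly when it writes $r_x(\lambda)=\int (t-\lambda)^{-1}\,\mu(\mathrm dt)$.
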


\begin{proof}
	Let $x\in \mathbb V_\lambda$ and let $\mu$ denote the spectral measure of $T^{x}$ with respect to $\varphi^x$ such that
	\begin{equation*}
		r_x(\lambda) = \int_{\R} \frac{1}{t-\lambda} \, \mu(\mathrm dt).
	\end{equation*}
	Using the representation
	\begin{equation*}
		(T^{x} - \lambda)^+ = \lim_{\varepsilon \downarrow 0} \big( (\mathrm  T^x-\lambda)^2 + \varepsilon\big)^{-1}(\mathrm  T^x-\lambda)
	\end{equation*}
	we obtain
	\begin{align*}
		\langle \varphi^x, (\mathrm  T^x - \lambda)^+ \varphi^x \rangle &= \lim_{\varepsilon \downarrow 0} \big\langle \varphi^x,\big( ( T^{x}-\lambda)^2 + \varepsilon\big)^{-1}(T^{x} - \lambda) \varphi^x \big\rangle  =  \lim_{\varepsilon \downarrow 0} \int_{\R} \frac{t-\lambda}{(t-\lambda)^2+\varepsilon} \, \mu(\mathrm dt) = r_x(\lambda). \qedhere
	\end{align*}
\end{proof}

\begin{lemma}
	\label{Lemma: when is varphi in image}
	For $x \in \mathbb V^+$ we have
	\begin{equation*}
		x\in \mathbb V_\lambda  \iff \varphi^x \in \operatorname{im}(\mathrm  T^x - \lambda).
	\end{equation*}
\end{lemma}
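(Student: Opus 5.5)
Since $\mathrm T^x$ is a real symmetric matrix, the image of $\mathrm T^x - \lambda$ is the orthogonal complement of its kernel:
\begin{equation*}
\operatorname{im}(\mathrm T^x-\lambda) = \operatorname{ker}(\mathrm T^x-\lambda)^\perp .
\end{equation*}
The plan is to reduce both sides of the claimed equivalence to a statement about $\varphi^x$ and this eigenspace. We treat the case $\lambda\notin \mathrm S^x$ first. Then $\mathrm T^x-\lambda$ is invertible, so its image is all of $\C^{\mathrm V^x}$ and contains $\varphi^x$. On the other side, $\mathrm S^x_*\subseteq \mathrm S^x$, so $\lambda\notin\mathrm S^x_*$ and hence $x\in\mathbb V_\lambda$. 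Both sides hold, and the equivalence is immediate in this case.

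Now assume $\lambda\in\mathrm S^x$. By definition, $x\notin \mathbb V_\lambda$ means $\lambda\in \mathrm S^x_*$. This holds exactly when there is an eigenvector $\psi\in\operatorname{ker}(\mathrm T^x-\lambda)$ with $\langle\varphi^x,\psi\rangle\neq 0$. In other words, $x\in\mathbb V_\lambda$ holds exactly when $\varphi^x\perp\operatorname{ker}(\mathrm T^x-\lambda)$. By the displayed identity this is the same as $\varphi^x\in\operatorname{im}(\mathrm T^x-\lambda)$, which proves the equivalence.

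This covers both implications simultaneously. The only point requiring care is the orthogonal-complement identity, which is standard: for a self-adjoint operator $M$ on a finite-dimensional space, $\operatorname{im}(M)=\operatorname{ker}(M^*)^\perp=\operatorname{ker}(M)^\perp$. The conjugation convention in $\langle\cdot,\cdot\rangle$ does not matter, because orthogonality is symmetric. I do not expect any genuine obstacle; the lemma is a direct reformulation of the definition of $\mathrm S^x_*$.
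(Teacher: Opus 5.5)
Your proof is correct and matches the paper's: both rewrite $x\notin\mathbb V_\lambda$ as the existence of a vector in $\operatorname{ker}(\mathrm T^x-\lambda)$ that is not orthogonal to $\varphi^x$, and then use $\operatorname{ker}(\mathrm T^x-\lambda)^\perp=\operatorname{im}(\mathrm T^x-\lambda)$ for the symmetric matrix $\mathrm T^x$. Your separate case $\lambda\notin\mathrm S^x$ is harmless but unnecessary: there the kernel is trivial, so the same chain of equivalences applies verbatim.
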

\begin{proof}
	For $\lambda \in \R$ and $x\in \mathbb V$ we have $x\notin \mathbb V_\lambda$ i.e.\ $r_x(\lambda) = \infty$ if any only if $\mathrm T^{x}$ has an eigenvector to the eigenvalue $\lambda$ which is non orthogonal to $\varphi^x$. Hence,
	\begin{equation*}
		x\notin \mathbb V_\lambda \iff \varphi^x \notin \operatorname{ker}(\mathrm  T^x - \lambda)^\perp = \operatorname{im}(\mathrm  T^x - \lambda). \qedhere
	\end{equation*}
\end{proof}

\begin{lemma}
	\label{Lemma: get Anderson potential from solution to eigenvalue equation}
	For all $x\in \mathbb V^+ \cap \mathbb V_\lambda$ we have
	\begin{equation*}
		(\mathrm  T^x-\lambda) \psi^x + \phi_x\varphi^x = 0 \implies \langle \varphi^x, \psi^x \rangle  = -\phi_x r_x(\lambda).
	\end{equation*}
\end{lemma}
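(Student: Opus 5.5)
The plan is to take the inner product of the equation $(\mathrm T^x-\lambda)\psi^x + \phi_x\varphi^x = 0$ with a suitable vector built from the Moore--Penrose pseudoinverse, and then use Lemma \ref{Lemma: r in terms of pseudoinverse} to identify the resulting quantity with $r_x(\lambda)$.

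Set $M \coloneqq \mathrm T^x - \lambda$, a real symmetric matrix. Its pseudoinverse $M^+$ is then also symmetric, and $MM^+ = M^+M = P$, where $P$ denotes the orthogonal projection onto $\operatorname{im}(M) = \operatorname{ker}(M)^\perp$.

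Since $x\in \mathbb V^+\cap\mathbb V_\lambda$, Lemma \ref{Lemma: when is varphi in image} gives $\varphi^x \in \operatorname{im}(M)$, so $P\varphi^x = \varphi^x$. Now pair the assumed identity $M\psi^x = -\phi_x \varphi^x$ with $M^+\varphi^x$. On the left-hand side,
\begin{equation*}
\langle M^+\varphi^x, M\psi^x\rangle = \langle M M^+ \varphi^x, \psi^x\rangle = \langle P\varphi^x, \psi^x \rangle = \langle \varphi^x, \psi^x\rangle .
\end{equation*}
On the right-hand side,
\begin{equation*}
-\phi_x \langle M^+\varphi^x, \varphi^x\rangle = -\phi_x \langle \varphi^x, M^+ \varphi^x\rangle = -\phi_x r_x(\lambda),
\end{equation*}
where the last step is Lemma \ref{Lemma: r in terms of pseudoinverse}. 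Here the inner-product convention and the reality of $\varphi^x$ and $M^+$ let us move the scalar $\phi_x$ outside without conjugation issues.

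There is no genuine obstacle. The only point needing care is that the component of $\psi^x$ lying in $\operatorname{ker}(M)$ is arbitrary, and we must check that it does not contribute. It does not, because $\varphi^x\perp\operatorname{ker}(M)$, which is exactly the fact encoded by $P\varphi^x=\varphi^x$.
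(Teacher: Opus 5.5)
Your proof is correct and uses the same two facts as the paper: $\varphi^x\in\operatorname{im}(\mathrm T^x-\lambda)=\ker(\mathrm T^x-\lambda)^\perp$ (Lemma \ref{Lemma: when is varphi in image}) and the pseudoinverse formula for $r_x(\lambda)$ (Lemma \ref{Lemma: r in terms of pseudoinverse}). The paper first evaluates the overlap for the particular solution $\hat\psi^x=\phi_x(\lambda-\mathrm T^x)^+\varphi^x$ and then notes that any other solution differs from it by an element of $\ker(\mathrm T^x-\lambda)\perp\varphi^x$; you instead pair the equation with $(\mathrm T^x-\lambda)^+\varphi^x$, which handles all solutions at once and is only a cosmetic difference.
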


\begin{proof}
	If $x\in \mathbb V^+ \cap \mathbb V_\lambda$, then $\varphi^x \in \operatorname{im}(\mathrm  T^x - \lambda)$ and hence $\hat \psi^x \coloneqq \phi_x(\lambda-\mathrm  T^x)^+ \varphi^x$ is the minimal norm solution to
	\begin{equation*}
		(\mathrm  T^x-\lambda) \hat \psi^x + \phi_x\varphi^x = 0.
	\end{equation*}
	Hence,
	\begin{equation*}
		\langle \varphi^x, \hat \psi^x \rangle = \phi_x \langle \varphi^x, (\lambda-\mathrm  T^x)^+ \varphi^x \rangle = -\phi_x r_x(\lambda).
	\end{equation*}
	If $\psi^x$ is a general solution to $(\mathrm  T^x-\lambda) \psi^x + \phi_x\varphi^x = 0$ then $\psi^x - \hat \psi^x \in \operatorname{ker}(\mathrm  T^x - \lambda)$ and hence
	\begin{equation*}
		\langle \varphi^x, \psi^x \rangle = \langle \varphi^x, \hat \psi^x\rangle + \langle \varphi^x, \psi^x - \hat \psi^x\rangle = \langle \varphi^x, \hat \psi^x\rangle = -\phi_x r_x(\lambda)
	\end{equation*}
	since $\varphi^x \in  \operatorname{ker}(\mathrm  T^x-\lambda)^\perp$.
\end{proof}

\subsection{Anderson-percolation representation in finite volume}
\label{Subsection: Anderson-percolation representation in finite volume}

In the following, we show a finite-volume version of Theorem \ref{Theorem: Anderson representation of atoms} and Theorem \ref{Theorem: Anderson representation of the spectrum}.
Let $\lambda \in \mathbb R$ and $\mathbb V_{\lambda, N} \coloneqq \{x\in \mathbb V_N:\, r_x(\lambda) \neq \infty\}$. We define our Anderson-percolation model in finite volume by
\begin{equation}
	H_{\lambda, N} \coloneqq B_{\lambda, N} + V_{\lambda, N}, \quad V_{\lambda, N} \coloneqq -\sum_{x\in \mathbb V_{\lambda, N}} r_x(\lambda) \ket{\delta_x}\bra{\delta_x}
\end{equation}
where $B_{\lambda, N}$ is the adjacency matrix of $\mathbb V_{\lambda, N}$. As before, we denote by
\begin{equation*}
	\gamma_{\operatorname{TF}}(A_N, \lambda)\coloneqq \dim \big\{\psi\in \mathbb C^{\mathcal V_N}:\, A_N \psi = \lambda \psi\, \text{ and } \psi_x = 0 \text{ for all } x\in \mathbb V_N\big\}
\end{equation*}
the tuning fork contribution to the multiplicity of an eigenvalue $\lambda$ of $\mathcal G_N$.

Our approach will rely mainly on the following observation: As in \eqref{Equation: eigenvalue equation in block form}, we can rewrite the eigenvalue equation $A_N(\phi, \psi) = \lambda(\phi, \psi)$ as
\begin{align*}
	&\forall x\in \mathbb V_N:\, [(A_N-\lambda) \phi]_x + \1_{\{x\in \mathbb V_N^+\}}\langle \varphi^x, \psi^x \rangle = 0 \nonumber \\ 
	&\forall x\in \mathbb V_N^+:\, (\mathrm  T^x-\lambda) \psi^x + \phi_x\varphi^x = 0.									  
\end{align*}
If $x\in \mathbb V_N \setminus \mathbb V_{\lambda, N}$, i.e.\ if $r_x(\lambda) = \infty$, then Lemma \ref{Lemma: when is varphi in image} implies that $\phi_x = 0$. On the other hand, if $x\in \mathbb V_N^+\setminus \mathbb V_{\lambda, N}$ then Lemma \ref{Lemma: get Anderson potential from solution to eigenvalue equation} yields $\langle \varphi^x, \psi^x \rangle = -\phi_x r_x(\lambda) = (V_{\lambda, N}\phi)_x$.

\begin{figure}
	\centering
	\includegraphics[width=0.6\linewidth]{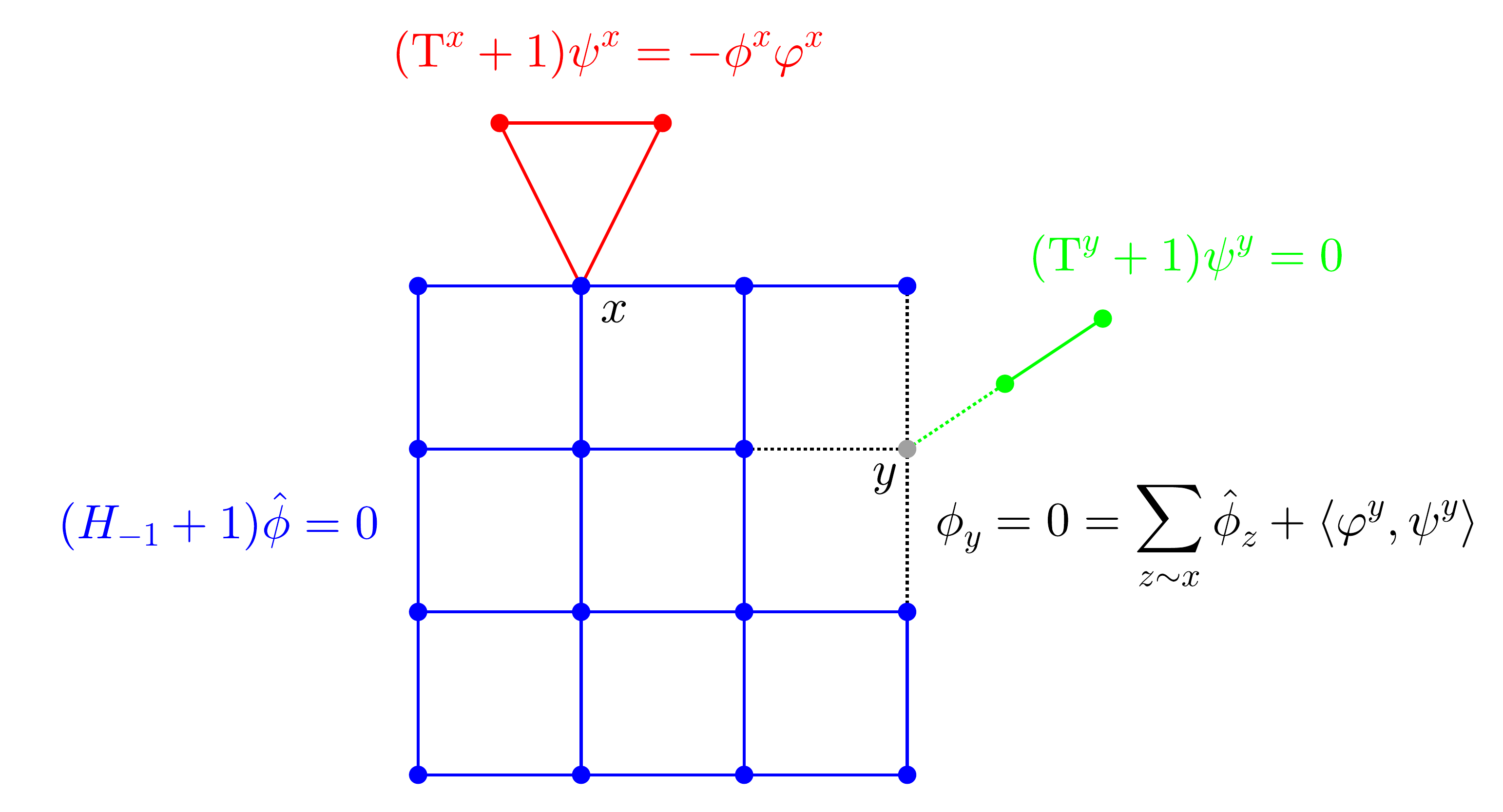}
	\caption{Illustration of the proof of Proposition \ref{Proposition: Anderson representation in finite volume} for $\lambda = -1$. Vertices in $\mathbb V_{-1}$ are in blue. Notice that $x\in \mathbb V_{-1}$ since $\varphi^x = (1,1)^T$ is orthogonal to the eigenvector $(-1, 1)^T$ of $\mathrm T^x = \mathrm T^y$ to the eigenvalue $-1$, while $y \notin \mathbb V_{-1}$ since $\varphi^y = (1,0)^T$ is non-orthogonal to $(-1, 1)^T$. By Lemma \ref{Lemma: when is varphi in image}, the equation $(\mathrm T^z + 1)\psi^z = -\phi^z \varphi^z$ has for $\phi^z \neq 0$ a solution if $z=x$ and no solution if $z=y$.
	}
	\label{fig:andersonpercolationeigenvectors}
\end{figure}

\begin{prop}
	\label{Proposition: Anderson representation in finite volume}
	We have for all $\lambda \in \R$
	\begin{equation*}
		\gamma(A_N, \lambda) = \gamma(H_{\lambda, N}, \lambda) + \gamma_{\operatorname{TF}}(A_N, \lambda)  
	\end{equation*}
	as well as
	\begin{equation*}
		\gamma_{\operatorname{TF}}(A_N, \lambda) = \sum_{x\in \mathbb V_N}\gamma(\mathrm T^x, \lambda) - \1_{\{\lambda \in \mathrm S^x_* \}}.
	\end{equation*}
\end{prop}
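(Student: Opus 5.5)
The plan is to construct an explicit linear map from $\ker(A_N-\lambda)$ onto $\ker(H_{\lambda,N}-\lambda)$ and identify its kernel with the tuning-fork space. Write an eigenvector as $(\phi,\psi)$, as in \eqref{Equation: eigenvalue equation in block form}, and define $\Pi(\phi,\psi) \coloneqq \phi|_{\mathbb V_{\lambda,N}}$. The first step is to check that $\Pi$ maps into $\ker(H_{\lambda,N}-\lambda)$.

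\emph{Vertices outside $\mathbb V_{\lambda,N}$.} For $x\in\mathbb V_N\setminus\mathbb V_{\lambda,N}$, the second equation $(\mathrm T^x-\lambda)\psi^x=-\phi_x\varphi^x$ forces $\phi_x=0$ by Lemma \ref{Lemma: when is varphi in image}, since $\varphi^x\notin\operatorname{im}(\mathrm T^x-\lambda)$. Hence $\phi$ is supported on $\mathbb V_{\lambda,N}$.

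\emph{Vertices inside $\mathbb V_{\lambda,N}$.} For $x\in\mathbb V_{\lambda,N}$, the row of the first equation reads $\sum_{y\sim x}\phi_y-\lambda\phi_x+\1_{\{x\in\mathbb V^+_N\}}\langle\varphi^x,\psi^x\rangle=0$. The neighbour sum only sees $y\in\mathbb V_{\lambda,N}$, so it equals $(B_{\lambda,N}\phi)_x$. For $x\in\mathbb V^+_N$, Lemma \ref{Lemma: get Anderson potential from solution to eigenvalue equation} gives $\langle\varphi^x,\psi^x\rangle=-r_x(\lambda)\phi_x$. For $x\notin\mathbb V^+_N$ we have $r_x=0$, so both cases agree. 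This yields $(H_{\lambda,N}-\lambda)\Pi(\phi,\psi)=0$.

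\emph{Kernel of $\Pi$.} The kernel of $\Pi$ consists of eigenvectors with $\phi|_{\mathbb V_{\lambda,N}}=0$. Since $\phi$ already vanishes off $\mathbb V_{\lambda,N}$, this means $\phi=0$, which is exactly the tuning-fork space. Its dimension is $\gamma_{\operatorname{TF}}(A_N,\lambda)$.

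\emph{Surjectivity.} Given $\phi\in\ker(H_{\lambda,N}-\lambda)$, extend $\phi$ by zero to $\mathbb V_N$. For $x\in\mathbb V^+_N\cap\mathbb V_{\lambda,N}$, set $\psi^x\coloneqq\phi_x(\lambda-\mathrm T^x)^+\varphi^x$. This solves the second equation because $\varphi^x\in\operatorname{im}(\mathrm T^x-\lambda)$, by Lemma \ref{Lemma: when is varphi in image}. For $x\in\mathbb V^+_N\setminus\mathbb V_{\lambda,N}$ we have $\phi_x=0$, so we set $\psi^x\coloneqq0$.

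The first equation must then be checked at every $x\in\mathbb V_N$. At $x\in\mathbb V_{\lambda,N}$ it reduces to $(H_{\lambda,N}-\lambda)\phi=0$, again using Lemma \ref{Lemma: get Anderson potential from solution to eigenvalue equation}. At $x\notin\mathbb V_{\lambda,N}$ it reads $\sum_{y\sim x}\phi_y=0$, which need not hold.

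This is the main obstacle, and the fix is to correct $\psi^x$ at these vertices. Since $x\in\mathrm S^x_*$-type vertices have $\varphi^x\notin\ker(\mathrm T^x-\lambda)^\perp$, there is a $\chi^x\in\ker(\mathrm T^x-\lambda)$ with $\langle\varphi^x,\chi^x\rangle\neq0$. Adding a multiple $c\chi^x$ to $\psi^x$ keeps the second equation valid, because $\phi_x=0$ there. The multiple $c$ can be chosen so that $c\langle\varphi^x,\chi^x\rangle=-\sum_{y\sim x}\phi_y$. This correction affects only row $x$, so the other rows are unchanged and the construction produces an eigenvector of $A_N$ with $\Pi(\phi,\psi)=\phi$. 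Rank–nullity then gives the first identity.

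For the second identity, the tuning-fork space splits as a direct sum over $x$ of the spaces $\{\psi^x:(\mathrm T^x-\lambda)\psi^x=0,\ \langle\varphi^x,\psi^x\rangle=0\}$, as noted before Proposition \ref{Proposition: Tuning fork contribution}. Each summand is $\ker(\mathrm T^x-\lambda)\cap(\varphi^x)^\perp$. The functional $\langle\varphi^x,\cdot\rangle$ restricted to $\ker(\mathrm T^x-\lambda)$ is nonzero exactly when $\lambda\in\mathrm S^x_*$. So the dimension of each summand is $\gamma(\mathrm T^x,\lambda)-\1_{\{\lambda\in\mathrm S^x_*\}}$, with the convention that empty $\mathrm G^x$ contributes $0$. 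Summing over $x$ gives the second identity.
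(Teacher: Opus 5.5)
Your proof is correct, but it takes a different route from the paper's.

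\textbf{The paper's route.} The paper writes $(A_N-\lambda)(\phi,\psi)=0$ as one block linear system in $(\hat\phi,\psi_1,\psi_2)$. Here $\psi_1$ and $\psi_2$ are the components on the attached graphs at vertices of $\mathbb V^+_N\cap\mathbb V_{\lambda,N}$ and of $\mathbb V_N\setminus\mathbb V_{\lambda,N}$, respectively. It then counts ranks. The coupling block $C^{(0)}$ can be eliminated because its columns lie in the column space of $T^{(1)}-\lambda$. The last block-row adds exactly $|\mathbb V_N\setminus\mathbb V_{\lambda,N}|$ to the rank, because the rows $\varphi^{y}$ of $C^{(2)}$ are not in the row space of $T^{(2)}-\lambda$. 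This gives the closed formula $\gamma(A_N,\lambda)=\gamma(H_{\lambda,N},\lambda)+\sum_x\big(\gamma(\mathrm T^x,\lambda)-\1_{\{\lambda\in\mathrm S^x_*\}}\big)$. A second rank computation gives the same sum for $\gamma_{\operatorname{TF}}(A_N,\lambda)$, and the first identity follows by comparing the two.

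\textbf{Your route.} You construct the restriction map $\Pi:\ker(A_N-\lambda)\to\ker(H_{\lambda,N}-\lambda)$ and identify its kernel with the tuning-fork space. You then prove surjectivity by an explicit lift: the pseudoinverse solution on $\mathbb V^+_N\cap\mathbb V_{\lambda,N}$, plus a multiple of an eigenvector $\chi^x$ with $\langle\varphi^x,\chi^x\rangle\neq0$ to absorb $\sum_{y\sim x}\phi_y$ at each $x\notin\mathbb V_{\lambda,N}$. The first identity then follows directly from rank--nullity, without ever computing $\gamma(A_N,\lambda)$ in closed form.

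\textbf{Comparison.} Your lift is exactly the construction the paper uses later, for approximate eigenvectors, in the proof of Theorem \ref{Theorem: Anderson representation of the spectrum}. Your argument is therefore its exact finite-volume counterpart. It also describes every eigenvector of $A_N$ explicitly, as a lift of an eigenvector of $H_{\lambda,N}$ plus a tuning fork. The paper's block-matrix bookkeeping, by contrast, delivers both multiplicities in a single computation.

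\textbf{Minor wording.} The phrase ``$x\in\mathrm S^x_*$-type vertices'' should read ``vertices $x$ with $\lambda\in\mathrm S^x_*$, i.e.\ $x\notin\mathbb V_{\lambda,N}$''.
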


\begin{proof}
	We omit the $N$-dependency in the notation. Let $\hat \phi$ denote the restriction of $\phi$ to $\mathbb V_\lambda$.
	With Lemmas \ref{Lemma: when is varphi in image} and \ref{Lemma: get Anderson potential from solution to eigenvalue equation}, we can rewrite the eigenvalue equation $(A-\lambda)(\psi, \psi) = 0$ as
	\begin{align*}
		\begin{cases}
			\forall x\in \mathbb V:\,  &[(A-\lambda) \phi]_x + \langle \varphi^x, \psi^x \rangle = 0 \nonumber \\ 
			\forall x\in \mathbb V^+:\, &(\mathrm  T^x-\lambda) \psi^x + \phi_x\varphi^x = 0	
		\end{cases}
		\iff
		\begin{cases}
			&(H_\lambda - \lambda) \hat \phi = 0 \nonumber \\
			&\forall x\in \mathbb V \setminus \mathbb V_\lambda:\, \phi_x = 0  = \sum_{y\sim x}\phi_y + \langle \varphi^x, \psi^x \rangle \\
			&\forall x\in \mathbb V_\lambda\cap \mathbb V^+:\, (\mathrm  T^x-\lambda) \psi^x + \phi_x \varphi^x = 0 \\
			&\forall x\in \mathbb V\setminus \mathbb V_\lambda:\, (\mathrm  T^x-\lambda) \psi^x = 0;
		\end{cases}
	\end{align*}
	see Figure \ref{fig:andersonpercolationeigenvectors} for an illustration. 
	Let us denote by $T^{(1)}$ and $T^{(2)}$ the restriction of $T$ to $\mathbb V^+ \cap \mathbb V_\lambda$ and $\mathbb V\setminus \mathbb V_\lambda = \mathbb V^+\setminus \mathbb V_\lambda$ respectively. Then we can rewrite the above as	
	\begin{align*}
		\forall x\in \mathbb V\setminus \mathbb V_\lambda:\, \phi_x = 0, \quad
		\begin{pmatrix}
			H_\lambda - \lambda & 0 & 0\\
			C^{(0)} & T^{(1)} - \lambda & 0 & \\
			0 & 0 & T^{(2)} - \lambda \\
			C^{(1)}  & 0  &  C^{(2)}
		\end{pmatrix}
		\begin{pmatrix}
			\hat \phi \\
			\psi_1 \\
			\psi_2
		\end{pmatrix}
		=
		\begin{pmatrix}
			0 \\
			0 \\
			0
		\end{pmatrix},
	\end{align*}
	where
	\begin{align*}
		&\forall x \in \mathbb V_\lambda\cap \mathbb V_+,\, \forall t\in \mathrm V^x,\, \forall  y\in \mathbb V_\lambda:\ & C^{(0)}_{(xt)y} &\coloneqq \delta_{xy} \varphi^x_{t}\\
		&\forall x\in \mathbb V\setminus \mathbb V_\lambda, \, \forall y\in \mathbb V_\lambda:\, & C^{(1)}_{x y} &\coloneqq \delta_{x\sim y}\\
		&\forall x \in \mathbb V\setminus \mathbb V_\lambda,\, 	\forall y \in \mathbb V\setminus \mathbb V_\lambda, \, \forall t\in \mathrm V^y:\, & C^{(2)}_{x(yt)} &\coloneqq \delta_{xy} \varphi^x_{t}.
	\end{align*}
	In other words, if we enumerate the vertices of $\mathbb V_\lambda$ by $x_1, \hdots, x_n$ and the vertices of $\mathbb V \setminus \mathbb V_\lambda$ by $y_1, \hdots, y_m$, we have
	\setcounter{MaxMatrixCols}{20}
	\begin{equation*}
		\begin{pmatrix}
			C^{(0)} & T^{(1)} - \lambda
		\end{pmatrix}\,
		=
		\,
		\begin{pmatrix}
			&\varphi^{x_1} &\vline & &\vline & &\vline& \mathrm T^{x_1}-\lambda & \vline   &         &  \vline &         & \\ \hline
			&&\vline & \varphi^{x_2} &\vline & &\vline &  & \vline   &    \mathrm T^{x_2}-\lambda      &  \vline &         & \\ \hline
			& &\vline &  &\vline & \ddots &\vline &   &\vline   &          &  \vline &   \ddots      &
		\end{pmatrix}							
	\end{equation*}
	as well as
	\begin{equation*}
		\begin{pmatrix}
			T^{(2)} - \lambda \\ C^{(2)}
		\end{pmatrix}^*\,
		=
		\,
		\begin{pmatrix}
			&\mathrm T^{y_1}-\lambda & \vline &                  & \vline &         &  \vline&\varphi^{y_1} &\vline & &\vline & &    \\ \hline
			& & \vline & \mathrm T^{y_1}-\lambda                 & \vline &         &  \vline & &\vline &  \varphi^{y_2} &\vline &  &   \\ \hline
			& & \vline &                & \vline &    \ddots     &  \vline            & &\vline & &\vline & \ddots &
		\end{pmatrix}.							
	\end{equation*}
	Now, Lemma \ref{Lemma: when is varphi in image} implies that the columns of $C^{(0)}$ are contained in the column space of $T^{(1)}-\lambda$ while the rows of $C^{(2)}$ are not contained in the row space of $T^{(2)}-\lambda$ (which coincides by symmetry with the column space of $T^{(2)}-\lambda$). We hence obtain
	\begin{align*}
		\operatorname{rank}
		\begin{pmatrix}
			H_\lambda - \lambda & 0 & 0\\
			C^{(0)} & T^{(1)} - \lambda & 0 & \\
			0 & 0 & T^{(2)} - \lambda \\
			C^{(1)}  & 0  &  C^{(2)}
		\end{pmatrix}
		&= 
		\operatorname{rank}
		\begin{pmatrix}
			H_\lambda - \lambda & 0 & 0\\
			0 & T^{(1)} - \lambda & 0 & \\
			0 & 0 & T^{(2)} - \lambda \\
			C^{(1)}  & 0  &  C^{(2)}
		\end{pmatrix} \\
		&= 
		\operatorname{rank}
		\begin{pmatrix}
			H_\lambda - \lambda & 0 & 0\\
			0 & T^{(1)} - \lambda & 0 & \\
			0 & 0 & T^{(2)} - \lambda 
		\end{pmatrix}
		+  |\mathbb V \setminus \mathbb V_{\lambda}|
	\end{align*}
	implying by the dimension formula
	\begin{align*}
		\gamma(A, \lambda) &= \operatorname{dim}\operatorname{ker} \begin{pmatrix}
			H_\lambda - \lambda & 0 & 0\\
			0 & T^{(1)} - \lambda & 0 & \\
			0 & 0 & T^{(2)} - \lambda 
		\end{pmatrix} - |\mathbb V \setminus \mathbb V_{\lambda}|\\
		&= \gamma(H_\lambda, \lambda) + \sum_{x\in \mathbb V_N} \gamma(\mathrm T^{x}, \lambda) -  \1_{\{\lambda \in \mathrm S^x_*\}}.
	\end{align*}			
	On the other hand, we can apply a similar reasoning to determine $\gamma_{\operatorname{TF}}(A, \lambda)$. The vector $(\phi, \psi)$ is a tuning fork to the eigenvalue $\lambda$ i.e.\ satisfies $(A-\lambda)(\phi, \psi) = 0$ and $\phi=0$ if and only if 
	\begin{equation*}
		(T-\lambda)\psi = 0,\, \quad \forall x \in \mathbb V: \, \phi_x = 0 =\1_{\{x\in \mathbb V^+\}} \langle \varphi^x, \psi^x \rangle.
	\end{equation*}
	Hence, if we define
	\begin{equation*}
		\forall x\in \mathbb V^+,\, \forall y\in \mathbb V^+\cap \mathbb V_\lambda, \, \forall t\in \mathrm V^y:\, \hat C^{(1)}_{x(yt)} \coloneqq \delta_{xy} \varphi^x_t, \quad \forall x\in \mathbb V^+,\, \forall y\in \mathbb V \setminus \mathbb V_\lambda,\, \forall t\in \mathrm V^y: \, \hat C^{(2)}_{x(yt)} \coloneqq \delta_{xy} \varphi^x_t
	\end{equation*}
	we have
	\begin{equation*}
		\gamma_{\operatorname{TF}}(A,\lambda) = 
		\operatorname{dim} \operatorname{ker}
		\begin{pmatrix}
			T^{(1)}-\lambda & 0\\
			0 & T^{(2)}-\lambda\\
			\hat C^{(1)} & \hat C^{(2)}
		\end{pmatrix} = \sum_{x\in \mathbb V_N}  \gamma(\mathrm T^{x}, \lambda) - \1_{\{\lambda \in \mathrm S^x_*\}}. \qedhere
	\end{equation*}
\end{proof}

\begin{figure}
	\centering
	\includegraphics[width=0.6\linewidth]{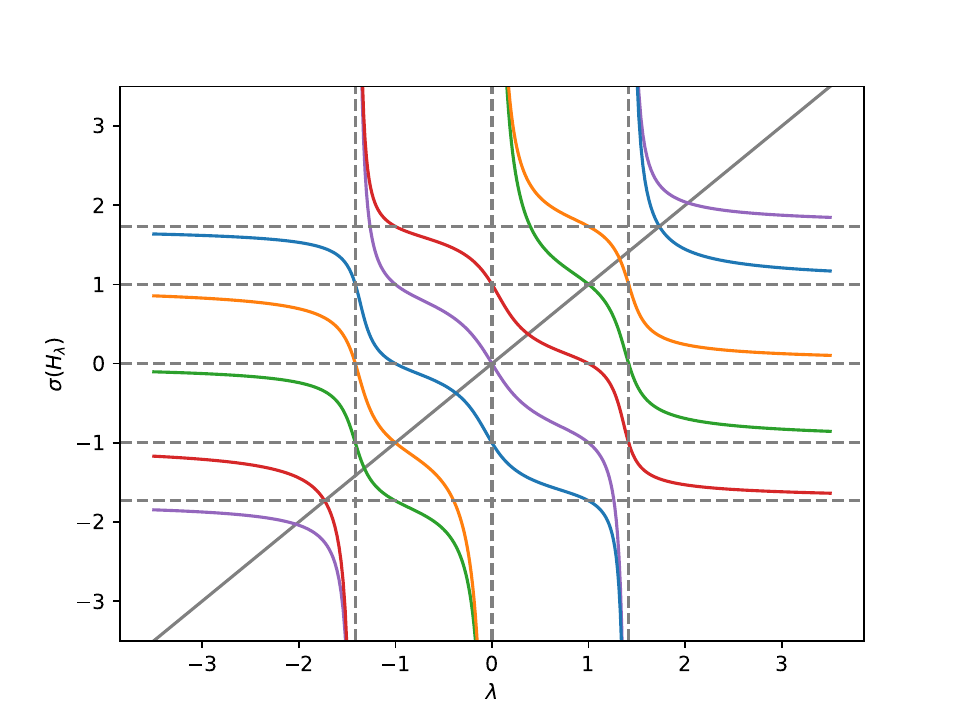}
	\caption{Illustration of Proposition \ref{Proposition: Anderson representation in finite volume} and Proposition \ref{Proposition: Parametrize eigenvalues of Anderson Hamiltonian in finite volume}. The eigenvalues of the core graph $\mathbb G_N$ are represented by the dashed horizontal lines, the eigenvalues of the attached graphs which are contained in $\mathrm S_*$ are represented by the dashed vertical lines. The eigenvalues of $H_{\lambda, N}$ as a function of $\lambda$ (on the horizontal axis) are represented coloured. Their points of intersection with the diagonal give the eigenvalues of the sprinkled graph $\mathrm G$.}
	\label{fig:andersonrepresentation1}
\end{figure}

\begin{prop}
	\label{Proposition: Parametrize eigenvalues of Anderson Hamiltonian in finite volume}
	Let $n \coloneqq |\mathbb V_N|$. There exist continuous functions $\alpha_1, \hdots, \alpha_n: \R \to \hat{\mathbb R}$ to the one point compactification $\hat{\mathbb R} = \mathbb \R\cup \{\infty\}\cong \mathbb S^1$ of $\mathbb R$
	such that $\alpha_i$ is for all $i \in [n]$ decreasing on every connected component of $\{\lambda \in \R: \, \alpha_i(\lambda) \neq \infty\}$ and such that for all $\alpha \in \R$
	\begin{equation}
		\label{Equation: continuous parametrization of spectrum of Anderson Hamiltonian in finite volume}
		\gamma(H_{\lambda, N},  \alpha) = \# \big\{i\in [n]:\, \alpha_i(\lambda) = \alpha\big\}.
	\end{equation}	
\end{prop}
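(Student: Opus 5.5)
Since $\gamma(H_{\lambda,N},\alpha)$ only depends on which vertices lie in $\mathbb V_{\lambda,N}$ and on the values $r_x(\lambda)$, the plan is to work not with $H_{\lambda,N}$ on the varying space $l^2(\mathbb V_{\lambda,N})$ but with a family on the fixed space $\C^{\mathbb V_N}$, $n=|\mathbb V_N|$. For $x\in\mathbb V_N$ set $\theta_x(\lambda)\coloneqq -r_x(\lambda)\in\hat{\mathbb R}$. Each $\theta_x$ is continuous as a map $\R\to\hat{\mathbb R}$: on $\{\mathrm G^x\neq\emptyset\}$ it is minus a rational function whose poles are exactly $\mathrm S^x_*$. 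Moreover $\partial_\lambda r_x(\lambda)=\langle\varphi^x,(\mathrm T^x-\lambda)^{-2}\varphi^x\rangle>0$, so $\theta_x$ is strictly decreasing between consecutive poles, or identically $0$ if $\mathrm G^x=\emptyset$. We then consider $M(\lambda)\coloneqq B_N+\operatorname{diag}(\theta_x(\lambda))_x$, allowing infinite diagonal entries. An infinite entry at $x$ is interpreted as a Dirichlet condition at $x$: the vertex is removed, and we assign the eigenvalue $\infty$ to it. Under this convention the finite eigenvalues of $M(\lambda)$, with multiplicities, are exactly those of $H_{\lambda,N}$, and there are $|\mathbb V_N\setminus\mathbb V_{\lambda,N}|$ eigenvalues equal to $\infty$. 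So \eqref{Equation: continuous parametrization of spectrum of Anderson Hamiltonian in finite volume} reduces to choosing the $n$ eigenvalue branches of $M(\lambda)$ continuously in $\hat{\mathbb R}$ with the stated monotonicity.

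To make the infinite entries harmless, pass to the Cayley transform. Let $U(\lambda)$ be the unitary whose eigenvalues are $e^{2\mathrm i\arctan\alpha}$ for the eigenvalues $\alpha$ of $M(\lambda)$, with $\infty\mapsto -1$. An equivalent description: $M(\lambda)$ corresponds to the Lagrangian subspace
\[
L(\lambda)=\{(u,v)\in\C^n\oplus\C^n:\ v=B_Nu+\Theta(\lambda)u\},
\]
where for $\theta_x=\infty$ the constraint at $x$ is replaced by $u_x=0$ with $v_x$ free. Near a point $\lambda_0$ where some $\theta_x(\lambda_0)=\infty$, write $\theta_x=-1/\kappa_x$ with $\kappa_x$ real-analytic and $\kappa_x(\lambda_0)=0$. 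The relation $\kappa_xv_x=\kappa_x(B_Nu)_x-u_x$ is then analytic in $\lambda$ and gives the Dirichlet condition at $\kappa_x=0$. Hence $L(\lambda)$, and with it $U(\lambda)$, depends real-analytically on $\lambda$ on all of $\R$.

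By Rellich/Kato analytic perturbation theory for a real-analytic family of unitaries (equivalently, for the self-adjoint family $M(\lambda)$ on intervals avoiding poles, glued through the $\kappa$-chart), the eigenvalues of $U(\lambda)$ admit analytic, hence continuous, parametrizations $e^{\mathrm i\omega_i(\lambda)}$, $i\in[n]$. Setting $\alpha_i(\lambda)\coloneqq\tan(\omega_i(\lambda)/2)\in\hat{\mathbb R}$ gives continuous maps $\R\to\hat{\mathbb R}$ satisfying \eqref{Equation: continuous parametrization of spectrum of Anderson Hamiltonian in finite volume}.

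For monotonicity, on a component where $\alpha_i$ is finite we use the Hellmann--Feynman formula for an analytic branch with normalized analytic eigenvector $u_i(\lambda)$:
\[
\alpha_i'(\lambda)=\sum_x\theta_x'(\lambda)|u_i(\lambda)_x|^2\le 0 .
\]
For $x\notin\mathbb V_{\lambda,N}$ we have $u_{i,x}=0$, and this persists in the Dirichlet chart. At isolated points where $\alpha_i$ is finite but some $\theta_x=\infty$, the same formula holds in the $\kappa$-chart: there $\kappa_x'>0$ and the $x$-coordinate contributes with the correct sign. Hence $\alpha_i$ is non-increasing on each connected component of $\{\alpha_i\neq\infty\}$.

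The main obstacle is to make the gluing through the poles of the $r_x$ rigorous, namely to show that the extended family really is analytic there, so that branches can pass continuously through $\infty$. The Lagrangian/Cayley chart above is the tool for this. If that becomes messy, I would fall back on an elementary argument: on each interval between consecutive points of $\bigcup_x\mathrm S^x_*$ (finitely many), order the eigenvalues of $H_{\lambda,N}$. Near a pole $\lambda_0$ of $r_x$, a rank-one limit shows that exactly one eigenvalue of $H_{\lambda,N}$ tends to $-\infty$ as $\lambda\uparrow\lambda_0$ and one comes from $+\infty$ as $\lambda\downarrow\lambda_0$, while the others converge to the eigenvalues of the operator with $x$ removed, by interlacing. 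Matching these across $\lambda_0$ then yields the continuous $\hat{\mathbb R}$-valued branches.
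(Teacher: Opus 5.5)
Your argument is correct, but it takes a genuinely different route from the paper.

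\textbf{The paper's route.} It works directly with $H_{\lambda,N}$ throughout.
\begin{itemize}
\item On each interval between consecutive points of $\bigcup_x\mathrm S^x$ it takes the \emph{ordered} eigenvalues, which are continuous and non-increasing by Weyl's inequality.
\item At each breakpoint $\lambda_1$ it applies a Schur complement with respect to the splitting $\mathbb V=\mathbb V_{\lambda_1}\cup(\mathbb V\setminus\mathbb V_{\lambda_1})$. This shows $\operatorname{tr}(H_\lambda-z)^{-1}\to\operatorname{tr}(H_{\lambda_1}-z)^{-1}$.
\item Hence exactly $n-n_1$ branches escape to $\infty$: the lowest ones as $\lambda\uparrow\lambda_1$, the highest ones as $\lambda\downarrow\lambda_1$.
\item The pieces are glued by the cyclic index shift $j=i-(n-n_1)\bmod n$.
\end{itemize}

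\textbf{Your route.} You make the whole family real-analytic on $\R$ via the Lagrangian/Cayley chart. You then get global branches from Rellich's theorem in one step, and monotonicity from Hellmann--Feynman. The chart verification you flag does go through. Let $P$ be the set of vertices with a pole at $\lambda_0$. The constraints can be solved analytically for $u_P$ and $v_{\mathbb V\setminus P}$ in terms of the free variables $(u_{\mathbb V\setminus P},v_P)$. This works because the relevant block $I-\operatorname{diag}(\kappa_x)_{x\in P}B_{PP}$ is invertible for $\lambda$ near $\lambda_0$.

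\textbf{What each buys.} Your approach needs no bookkeeping at the poles, handles simultaneous poles at many vertices uniformly, and gives analytic branches in $\mathbb S^1$. The price is the analytic-perturbation machinery. The paper's approach is elementary. It also produces branches that are ordered on each interval and related by an explicit cyclic shift. That ordered structure is used again in the proof of Proposition~\ref{Proposition: Concentration of mass in finite volume}. Rellich branches may cross, so with your construction one would have to relabel there.

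\textbf{Two small slips.}
\begin{itemize}
\item With $\theta_x=-1/\kappa_x$ you have $\kappa_x=1/r_x$, so $\kappa_x'=\theta_x'/\theta_x^2<0$ near a pole, not $>0$. This is harmless, because that step is unnecessary. $\alpha_i$ is continuous, and non-increasing on both sides of each of the finitely many poles, hence non-increasing across them.
\item In your fallback, ``exactly one eigenvalue tends to $-\infty$'' holds only if a single vertex has its pole at $\lambda_0$. In general $|\mathbb V_N\setminus\mathbb V_{\lambda_0,N}|$ eigenvalues escape, for example under \eqref{Assumption: Detetministic attached graphs}, where all of $\mathbb V^+$ shares the same poles. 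This is precisely what the paper's Schur-complement argument accounts for.
\end{itemize}
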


\begin{proof}
	We once more omit the $N$-dependency in the notation.
	Let $\lambda_1< \hdots< \lambda_k$ be the eigenvalues of the attached graphs in increasing order, i.e.\
	\begin{equation*}
		\{\lambda_1, \hdots, \lambda_k\} = \bigcup_{x\in \mathbb V^+} \mathrm S^x.
	\end{equation*}
	Set $\lambda_0 \coloneqq -\infty$ and  $\lambda_{k+1} \coloneqq \infty$ as well as $I_i \coloneqq (\lambda_i, \lambda_{i+1})$ for all $i\in \{0, \hdots, k\}$.
	Notice that $\lambda \mapsto -r_x(\lambda)$ is on any $I_i$ a decreasing continuous function.
	As a consequence of Weyl's inequality, we can parametrize the eigenvalues of $H_{\lambda}$ on each interval $I_i$ as continuous, decreasing functions in $\lambda$. Let $\alpha_1, \hdots, \alpha_{n}: I_0 \to \R$, $\tilde \alpha_1, \hdots, \tilde \alpha_{n}: I_1 \to \R$ be continuous, decreasing functions with $\alpha_1 \leq \hdots \leq \alpha_n$, $\tilde \alpha_1 \leq \hdots \leq \tilde\alpha_n$ such that
	\begin{equation}
		\label{Equation: continuous parametrization of eigenvalues of Anderson Hamiltonian in finite volume}
		\forall \alpha \in I_0:\, \gamma(H_{\lambda},  \alpha) = \# \{i:\, \alpha_i(\lambda) = \alpha\}, \quad \forall \alpha \in I_1:\, \gamma(H_{\lambda},  \alpha) = \# \{i:\, \tilde \alpha_i(\lambda) = \alpha\}.
	\end{equation}	
	Let $n_1 \coloneqq |\mathbb V_{\lambda_1}|$ be the number of vertices $x\in \mathbb V_N$ for which $r_x(\lambda_1) \neq \infty$. We define for all $i \in [n]$
	\begin{equation*}
		\alpha_i(\lambda_1) \coloneqq \lim_{\lambda \uparrow \lambda_1} \alpha_i(\lambda), \quad  \tilde \alpha_i(\lambda_1) \coloneqq \lim_{\lambda \downarrow \lambda_1} \tilde \alpha_i(\lambda), 
	\end{equation*}
	where the limit is taken in $\hat{\mathbb R}$. We show that for all $i\in [n]$
	\begin{equation}
		\label{Equation: limit of Anderson eigenvalues as we approach tree eigenvalues 1}
		\alpha_i(\lambda_1) \neq \infty \iff i> n - n_1, \quad \tilde \alpha_i(\lambda_1) \neq \infty \iff i\leq n_1, \quad
	\end{equation}
	as well as
	\begin{equation}
		\label{Equation: limit of Anderson eigenvalues as we approach tree eigenvalues 2}
		\gamma(H_{\lambda_1},  \alpha) = \# \{i:\, \alpha_i(\lambda_1) = \alpha\} =  \# \{i:\, \tilde \alpha_i(\lambda_1) = \alpha\}
	\end{equation}
	for all $\alpha \in \R$. Hence, if we extend $\alpha_1, \hdots, \alpha_n$ to $I_1 \cup \{\lambda_1\} \cup I_2$ by
	\begin{equation*}
		\alpha_i(\lambda) \coloneqq \tilde \alpha_j(\lambda) \text{ with }j=i-(n-n_1) \text{ mod } n
	\end{equation*}
	for $\lambda \in I_2$, then $\alpha_1, \hdots, \alpha_n$ are decreasing on $I_1 \cup I_2$ and continuous on $I_1 \cup \{\lambda_1\} \cup I_2$ with \eqref{Equation: continuous parametrization of spectrum of Anderson Hamiltonian in finite volume} holding for all $\alpha \in  I_1 \cup \{\lambda_1\} \cup I_2$. Inductively applying this argument then yields the claim. In order to show \eqref{Equation: limit of Anderson eigenvalues as we approach tree eigenvalues 1} and \eqref{Equation: limit of Anderson eigenvalues as we approach tree eigenvalues 2}, let $\lambda \in I_1 \cup I_2$. We decompose $\mathbb V = \mathbb V_{\lambda_1}\times (\mathbb V \setminus \mathbb V_{\lambda_1})$ and write $H_{\lambda}$ in the block form
	\begin{equation*}
		H_{\lambda} = \begin{pmatrix}
			B^{(1)} + V^{(1)}_\lambda & B^{(12)} \\
			B^{(21)} & B^{(2)} + V^{(2)}_\lambda
		\end{pmatrix}.
	\end{equation*}	
	Notice that every diagonal element of the diagonal matrix $V^{(2)}_\lambda$ converges to $\infty \in \hat{\mathbb R}$ as $\lambda \to \lambda_1$. By the Schur complement formula, $(H_\lambda - z)^{-1}$ is for any $z\in \mathbb C^+$ of the form
	\begin{equation*}
		\big[(H_\lambda-z)^{-1}\big]_{xx}
		=
		\begin{cases}
			\Big[\Big(B^{(1)} + V^{(1)}_\lambda - z - B^{(12)}\big(B^{(2)} + V^{(2)}_\lambda  - z\big)^{-1} B^{(21)}\Big)^{-1} \Big]_{xx} &\text{ for }x\in \mathbb V_{\lambda_1} \\
			\Big[\Big(B^{(2)} + V^{(2)}_\lambda - z - B^{(21)}\big(B^{(1)} + V^{(1)}_\lambda  - z\big)^{-1} B^{(12)}\Big)^{-1} \Big]_{xx} &\text{ for }x\in \mathbb V \setminus \mathbb V_{\lambda_1}
		\end{cases}
	\end{equation*}	
	Now,
	\begin{equation*}
		(B^{(2)} + V^{(2)}_\lambda - z)^{-1} = \big(V^{(2)}_\lambda\big)^{-1} \Big((B^{(2)} - z) \big(V^{(2)}_\lambda\big)^{-1} + \1\Big)^{-1} \to 0
	\end{equation*}
	as $\lambda \to \lambda_1$, implying that
	\begin{equation*}
		\big[(H_\lambda-z)^{-1}\big]_{xx} \to \big[\big(B^{(1)} + V^{(1)}_{\lambda_0} - z\big)^{-1}\big]_{xx} = 	\big[(H_{\lambda_1}-z)^{-1}\big]_{xx} \quad \text{ for all }x\in \mathbb V_{\lambda_1} 
	\end{equation*}		
	as $\lambda \to  \lambda_1$. Similarly, one can see that
	\begin{equation*}
		\big[(H_\lambda-z)^{-1}\big]_{xx} \to 0 \text{ for all }x\in \mathbb V\setminus \mathbb V_{\lambda_1}
	\end{equation*}	
	as $\lambda \to  \lambda_1$. Hence, if
	\begin{equation*}
		\mu_\lambda \coloneqq \sum_{\alpha \in \sigma(H_\lambda)} \gamma(\alpha, H_\lambda) \cdot \delta_\alpha
	\end{equation*}
	denotes the unnormed empirical eigenvalue distribution of $H_\lambda$ we have for every $z\in \mathbb C^+$
	\begin{equation*}
		\int_{\R} \frac{1}{t-z} \mu_\lambda(\mathrm dt) = \operatorname{tr}(H_\lambda - z)^{-1} \to \operatorname{tr}(H_{\lambda_1} - z)^{-1} = \int_{\R} \frac{1}{t-z} \mu_{\lambda_1}(\mathrm dt)
	\end{equation*} 
	i.e.\ $\mu_{\lambda} \to \mu_{\lambda_1}$ vaguely as $\lambda \to \lambda_1$. Hence, $n_1$ eigenvalues of $H_\lambda$ converge to the eigenvalues of $H_{\lambda_1}$ as $\lambda \to \lambda_1$ while $n-n_1$ eigenvalues diverge to $\infty\in \hat{\R}$. Hence, \eqref{Equation: limit of Anderson eigenvalues as we approach tree eigenvalues 1} and \eqref{Equation: limit of Anderson eigenvalues as we approach tree eigenvalues 2} hold.
\end{proof}

\begin{figure}
	\centering
	\includegraphics[width=0.6\linewidth]{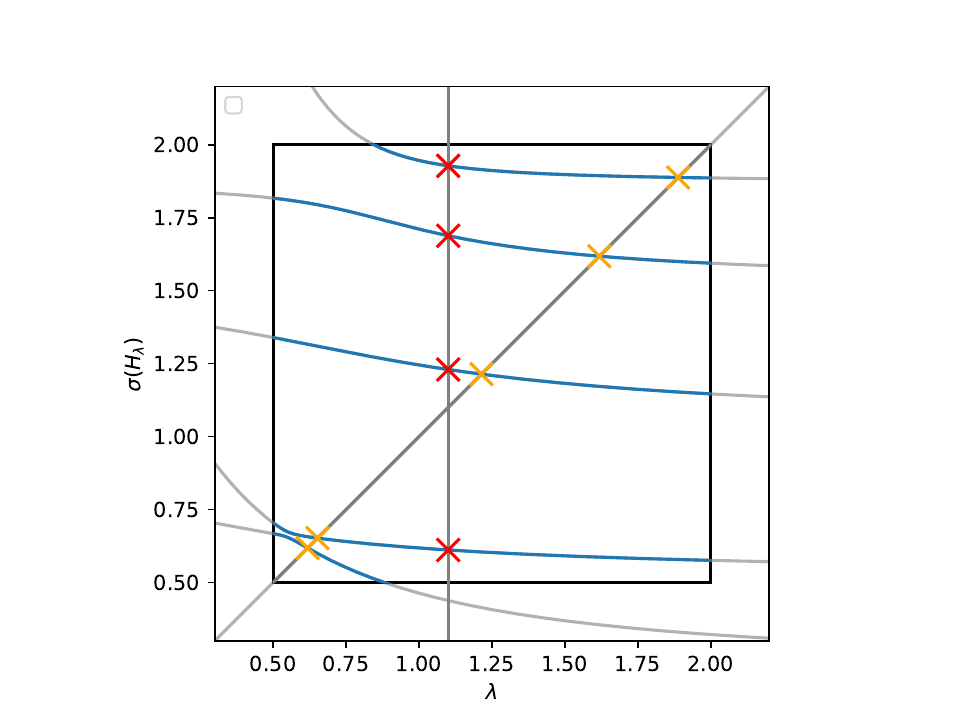}
	\caption{Illustration of the proof of Proposition \ref{Proposition: Estimate mass in terms of Anderson Hamiltonian in finite volume}. Every eigenvalue of $H_\lambda$ that lies in $I$ (depicted in red) leads to a solution $\alpha_i(\lambda_i) = \lambda_i$ that lies in $I$ (depicted in orange).}
	\label{fig:andersonrepresentation2}
\end{figure}

\begin{prop}
	\label{Proposition: Estimate mass in terms of Anderson Hamiltonian in finite volume} 
	Let $I$ be an open interval and $\lambda \in \overline{I}$. Then
	\begin{equation*}
		\sum_{\alpha \in I} \gamma(A_{N}, \alpha) - \gamma_{\operatorname{TF}}(A_{N}, \alpha)  \geq \sum_{\alpha \in I} \gamma(H_{\lambda, N}, \alpha).
	\end{equation*}
\end{prop}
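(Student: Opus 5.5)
By Proposition \ref{Proposition: Anderson representation in finite volume}, the left-hand side equals $\sum_{\alpha \in I} \gamma(H_{\alpha, N}, \alpha)$. Hence the claim reduces to a statement about the continuous branches $\alpha_1, \hdots, \alpha_n$ of Proposition \ref{Proposition: Parametrize eigenvalues of Anderson Hamiltonian in finite volume}. We must show
\begin{equation*}
\sum_{\alpha \in I} \#\{i:\, \alpha_i(\alpha) = \alpha\} \;\geq\; \#\{i:\, \alpha_i(\lambda) \in I\}.
\end{equation*}
The plan is injective: to each branch index $i$ with $\alpha_i(\lambda) \in I$ we assign a fixed point $\mu_i \in I$ of that same branch, i.e.\ a point with $\alpha_i(\mu_i) = \mu_i$. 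Distinct indices then contribute distinct pairs $(i, \mu_i)$, and the left-hand side counts such pairs with multiplicity. This is exactly the picture of Figure \ref{fig:andersonrepresentation2}.

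For fixed $i$, consider $f(\mu) \coloneqq \alpha_i(\mu) - \mu$ on $\overline{I}$ near $\lambda$. Suppose first that $\lambda$ lies in the interior of $I$ and $\alpha_i(\lambda) \geq \lambda$. We move $\mu$ to the right from $\lambda$ towards $\alpha_i(\lambda) \in I$. As long as $\alpha_i$ stays finite on $[\lambda, \alpha_i(\lambda)]$, it is decreasing there. So at $\mu = \alpha_i(\lambda)$ we get $f \leq 0$, while $f(\lambda) \geq 0$, and the intermediate value theorem gives a fixed point in $[\lambda, \alpha_i(\lambda)] \subseteq I$. The case $\alpha_i(\lambda) < \lambda$ is symmetric: move left, using that $\alpha_i$ is decreasing, so moving left can only increase it. If $\lambda \in \partial I$, only one direction stays inside $\overline I$. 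The orientation is nevertheless forced: for example, if $\lambda = \inf I$ then $\alpha_i(\lambda) \in I$ implies $\alpha_i(\lambda) > \lambda$, and we move right. The fixed point lands in the open interval because it lies strictly between $\lambda$ and $\alpha_i(\lambda)$, or equals $\alpha_i(\lambda)$.

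The main obstacle is that $\alpha_i$ takes values in $\hat{\mathbb R}$ and may pass through $\infty$ at an eigenvalue $\lambda_j$ of an attached graph inside the search interval. I would handle this by monotonicity. On each component where $\alpha_i$ is finite it is decreasing, so when moving right from a finite value $\alpha_i$ can only reach $\infty$ by tending to $-\infty$. Before that happens, $f$ becomes negative while the search interval is still inside $I$. The intermediate value theorem therefore applies on the subinterval before the pole. Symmetrically, moving left, $\alpha_i$ can only escape by tending to $+\infty$, which makes $f$ positive.

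One must then check that the resulting fixed point $\mu_i$ is counted correctly on the left, namely that $\gamma(H_{\mu_i, N}, \mu_i) \geq 1$ for that branch. This is exactly \eqref{Equation: continuous parametrization of spectrum of Anderson Hamiltonian in finite volume}, which holds at every real value including the points $\lambda_j$. Summing over $i$ and regrouping by the value of $\mu_i$ yields the inequality.
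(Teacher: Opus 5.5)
Your proof is correct and is essentially the paper's argument: you combine Proposition~\ref{Proposition: Anderson representation in finite volume} with the branches $\alpha_i$ of Proposition~\ref{Proposition: Parametrize eigenvalues of Anderson Hamiltonian in finite volume}, and use the intermediate value theorem on $[\lambda,\alpha_i(\lambda)]$ or $[\alpha_i(\lambda),\lambda)$ to assign each branch with $\alpha_i(\lambda)\in I$ its own fixed point in $I$. Your monotonicity argument for branches that pass through $\infty$ (moving right they can only escape to $-\infty$, moving left only to $+\infty$, so $\alpha_i(\mu)-\mu$ changes sign before the pole) spells out a point that the paper's proof leaves implicit.
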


\begin{proof}
	We omit the $N$-dependency in the notation.
	Let us assume that $\lambda \in I = (a, b)$, the case where $\lambda \in \{a, b\}$ can be conducted in the same manner. Let $\alpha_1, \hdots, \alpha_n$ be as in Proposition \ref{Proposition: Parametrize eigenvalues of Anderson Hamiltonian in finite volume}. We define $I, J \subseteq N$ by
	\begin{equation*}
		I \coloneqq \{i\in [n]:\, \alpha_i(\lambda) \in (a, \lambda)\}, \quad 	J \coloneqq \{j\in[n]:\, \alpha_j(\lambda) \in [\lambda, b)\}
	\end{equation*}
	such that in particular
	\begin{equation*}
		\sum_{\alpha \in I} \gamma(H_{\lambda}, \alpha) = |I| + |J|.
	\end{equation*}
	By the intermediate value theorem, there exist for all $i\in I$ some $\lambda_i \in [\alpha_i(\lambda), \lambda)$ such that $\alpha_i(\lambda_i) = \lambda_i$ and for all $j\in J$ some $\lambda_j \in [\lambda, \alpha_j(\lambda)]$ such that $\alpha_j(\lambda_j) = \lambda_j$, see Figure \ref{fig:andersonrepresentation2} for an illustration. Hence, with Proposition \ref{Proposition: Anderson representation in finite volume} we obtain
	\begin{equation*}
		\sum_{\alpha \in I}  \gamma(A, \alpha) - \gamma_{\operatorname{TF}}(A, \alpha)  = \sum_{\alpha \in I} \gamma(H_{\alpha}, \alpha) = \sum_{\alpha \in I} |\{i: \alpha_i(\alpha) = \alpha\}| \geq |I|+|J| =	\sum_{\alpha \in I} \gamma(H_{\lambda}, \alpha). \qedhere
	\end{equation*}
\end{proof}

\subsection{Anderson-percolation representation of the atoms: proof of Theorem \ref{Theorem: Anderson representation of atoms} and Proposition \ref{Proposition: Tuning fork contribution}}
\label{Subsection: Anderson Percolation representation of atoms}
In order to prove Theorem \ref{Theorem: Anderson representation of atoms}, we first take the expected value in Proposition \ref{Proposition: Anderson representation in finite volume} and then calculate its limit as $N\to\infty$. We denote by
\begin{equation*}
	\nu_{\operatorname{TF}, N} \coloneqq \mathbb E\bigg[ \frac{1}{|\mathcal V_N|}\sum_{\lambda \in \mathrm D} \gamma_{\operatorname{TF}}(A_N, \lambda) \delta_{\lambda} \bigg]
\end{equation*}
the tuning fork contribution to $\nu_{N}$, which has support in $\mathrm D$. In accordance with Subsection \ref{Subsection: results atoms and the Anderson-percolation representation}, we define 
\begin{equation*}
	\forall \lambda \in \mathrm D:\, \nu_{\operatorname{TF}}(\{\lambda\}) \coloneqq \lim_{N\to \infty} \nu_{\operatorname{TF}, N}(\{\lambda\}), \quad \nu_{\operatorname{TF}} \coloneqq \sum_{\lambda \in \mathrm D} \nu_{\operatorname{TF}}(\{\lambda\}) \delta_\lambda;
\end{equation*}
we shall see in the proof of Proposition \ref{Proposition: Tuning fork contribution} that these limits indeed exist, and in Lemma \ref{Lemma: Total variation convergence of tuning fork contribution} that we have $\nu_{\operatorname{TF}, N} \to \nu_{\operatorname{TF}}$ in total variation as $N\to \infty$.

We denote by $\mu_N$ and $\mu_{\lambda, N}$ the empirical eigenvalue distributions of $A_N$ and $H_{\lambda,N}$, i.e.\ 
\begin{equation*}
	\mu_N \coloneqq \frac{1}{|\mathcal V_N|}\sum_{\lambda \in \sigma(A_N)} \gamma(A_N, \lambda) \delta_\lambda, \quad 	\mu_{\lambda, N} \coloneqq \frac{1}{|\mathbb V_{\lambda, N}|}\sum_{\lambda \in \sigma(H_{\lambda})} \gamma(H_{\lambda}, \lambda) \delta_\lambda
\end{equation*}
(with $\mu_{\lambda, N} \coloneqq 0$ if $\mathbb V_{\lambda, N} = \emptyset$). Then Proposition \ref{Proposition: Anderson representation in finite volume} states
\begin{equation*}
	\nu_{N}(\{\lambda\}) = \nu_{\operatorname{TF}, N}(\{\lambda\}) + \mathbb E\Big[ \frac{|\mathbb V_{\lambda, N}|}{|\mathcal V_N|}\mu_{\lambda, N}(\{\lambda\})\Big]
\end{equation*}
for all $\lambda \in \R$. Since $\frac{|\mathbb V_{\lambda, N}|}{|\mathcal V_N|} \to \frac{\mathbb P(r(\lambda) \neq \infty)}{1+ \mathbb E[m]}$
almost surely as $N\to \infty$, the main task for the proof of Theorem \ref{Theorem: Anderson representation of atoms} is to show that for every $\lambda \in \R$ the density of states $\rho_{\lambda, N} = \mathbb E[\mu_{\lambda, N}]$ of $H_{\lambda, N}$ satisfies
\begin{equation}
	\label{Equation: convergence of atom of AH in lambda}
	\lim_{N\to \infty} \rho_{ \lambda, N}(\{\lambda\}) = \rho_{\lambda}(\{\lambda\}). 
\end{equation}
While the weak convergence $\rho_{\lambda, N} \to \rho_{\lambda}$ can be shown similarly to the weak convergence of the density of states under Benjamini-Schramm convergence, the same is not directly clear for the pointwise convergence of the atoms of the density of states (for which, in the graph case, one uses that the adjacency matrix has integer coefficients). That being said, we shall see that we can apply Proposition \ref{Proposition: Estimate mass in terms of Anderson Hamiltonian in finite volume} in order to conclude from the pointwise convergence of the atoms of $\nu_N$ that at least \eqref{Equation: convergence of atom of AH in lambda} holds, without making any statement about the convergence of $\rho_{\lambda, N}(\{\alpha\})$ for $\alpha \neq \lambda$. We start the proof of Theorem \ref{Theorem: Anderson representation of atoms} by showing Proposition \ref{Proposition: Tuning fork contribution} which characterizes the tuning fork contribution in the limit $N\to \infty$.

\begin{proof}[Proof of Proposition \ref{Proposition: Tuning fork contribution}]
	By Proposition \ref{Proposition: Anderson representation in finite volume}, the law of large numbers and the dominated convergence theorem, we have
	\begin{align*}
		\nu_{\operatorname{TF}}(\{\lambda\}) &= \lim_{N\to \infty} \mathbb E\Big[\frac{1}{\frac{1}{|\mathbb V_N|}\sum_{x\in \mathbb V_N}(1 + m_x)} \frac{1}{|\mathbb V_N|}\sum_{x\in \mathbb V_N} \gamma(\mathrm T^{x}, \lambda) - \1_{\{\lambda \in \mathrm S^x_*\} } \Big] \\
		&=\frac{1}{1 + \mathrm E[m]} \mathrm E\Big[\gamma(\mathrm T, \lambda ) - \1_{\{\lambda \in \mathrm S_*\}} \Big]. \qedhere
	\end{align*}
\end{proof}

\begin{lemma}
	\label{Lemma: Total variation convergence of tuning fork contribution}
	We have $\nu_{\operatorname{TF}, N}\to \nu_{\operatorname{TF}}$ in total variation as $N\to \infty$.
\end{lemma}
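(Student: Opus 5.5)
We have to show $\sum_{\lambda\in\mathrm D}|\nu_{\operatorname{TF},N}(\{\lambda\})-\nu_{\operatorname{TF}}(\{\lambda\})|\to 0$. By Proposition \ref{Proposition: Anderson representation in finite volume},
\begin{equation*}
\nu_{\operatorname{TF},N}(\{\lambda\}) = \mathbb E\Big[\frac{1}{\frac{1}{|\mathbb V_N|}\sum_{x\in\mathbb V_N}(1+m_x)}\cdot\frac{1}{|\mathbb V_N|}\sum_{x\in\mathbb V_N} g_x(\lambda)\Big],\qquad g_x(\lambda)\coloneqq \gamma(\mathrm T^x,\lambda)-\1_{\{\lambda\in\mathrm S^x_*\}}\geq 0 .
\end{equation*}
Each $g_x(\lambda)\geq 0$, since $\lambda\in\mathrm S^x_*$ implies $\gamma(\mathrm T^x,\lambda)\geq1$. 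Therefore $\nu_{\operatorname{TF},N}$ and $\nu_{\operatorname{TF}}$ are positive measures supported on the countable set $\mathrm D$. For positive measures on a countable set, pointwise convergence of the atoms together with convergence of the total masses gives convergence in total variation. This is Scheffé's lemma for counting measure: $\sum_\lambda (a_\lambda-a_\lambda^N)^+\to0$ by dominated convergence since $(a-a^N)^+\leq a$, and the total masses converge. So I will prove the two ingredients separately.

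\emph{Pointwise convergence.} This was already established in the proof of Proposition \ref{Proposition: Tuning fork contribution}. The argument uses the strong law of large numbers for the i.i.d.\ families $(1+m_x)_x$ and $(g_x(\lambda))_x$, both integrable since $g_x(\lambda)\leq m_x$. The prefactor is bounded by $1$ and $\frac{1}{|\mathbb V_N|}\sum_x g_x(\lambda)\leq \frac{1}{|\mathbb V_N|}\sum_x m_x$, so the integrand is at most $1$ and dominated convergence applies.

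\emph{Total masses.} Summing over $\lambda\in\mathrm D$ gives $\sum_\lambda g_x(\lambda)=m_x-|\mathrm S^x_*|$, since eigenvalues of $\mathrm T^x$ lie in $\mathrm D$ almost surely. Hence
\begin{equation*}
\nu_{\operatorname{TF},N}(\R)=\mathbb E\Big[\frac{\frac{1}{|\mathbb V_N|}\sum_x (m_x-|\mathrm S^x_*|)}{\frac{1}{|\mathbb V_N|}\sum_x(1+m_x)}\Big].
\end{equation*}
The integrand lies in $[0,1]$ and converges almost surely, by the law of large numbers, to $\mathrm E[m-|\mathrm S_*|]/(1+\mathrm E[m])$. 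This limit equals $\nu_{\operatorname{TF}}(\R)$ by \eqref{Equation: total mass of tuning fork contribution}, equivalently by summing the formula of Proposition \ref{Proposition: Tuning fork contribution} with Tonelli. Bounded convergence then yields convergence of the total masses.

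The only point needing care is the dependence on $N$ of the law of large numbers. Since the $(\mathrm G^x,\varphi^x)$ are i.i.d.\ and independent of $\mathbb G_N$, and only $|\mathbb V_N|\to\infty$ matters, I realise all families on one sequence of i.i.d.\ variables indexed by $\mathbb N$ with $\mathbb V_N\cong[|\mathbb V_N|]$, as in the proof of Theorem \ref{Theorem: Limit Theorem}. The strong law then applies directly. I see no substantial obstacle; the main step is recognising that the positivity of $g_x(\lambda)$ reduces total-variation convergence to convergence of the total masses.
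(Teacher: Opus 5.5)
Your proof is correct and takes essentially the same route as the paper. Both use atomwise convergence from the proof of Proposition \ref{Proposition: Tuning fork contribution}, together with a law-of-large-numbers control of the remaining mass based on $0\le \sum_{\lambda}\bigl(\gamma(\mathrm T^x,\lambda)-\1_{\{\lambda\in\mathrm S^x_*\}}\bigr)\le m_x$. The only difference is packaging: the paper computes the limit of every tail mass $\nu_{\operatorname{TF},N}(\mathrm D_M)$ and truncates by hand, whereas you need only the total mass (the case $M=1$) and let Scheff\'e's lemma do the truncation.
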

\begin{proof}
	We assume w.l.o.g.\ that $|\mathrm D| = \infty$ and enumerate $\mathrm D = \{\lambda_1, \lambda_2, \hdots\}$. We define for all $k\in \mathbb N$ and $x\in \mathbb V_N$
	\begin{equation*}
		Y_{x,k} \coloneqq \gamma(\mathrm T^{x}, \lambda_k) -\1_{\{r_x(\lambda_k) = \infty \} } 
	\end{equation*}
	such that $\gamma_{\operatorname{TF}, N}(\{\lambda_k\}) = \sum_{x\in \mathbb V_N} Y_{x, k}$. Notice that for all $x\in \mathbb V_N$
	\begin{equation}
		\label{Equation: upper bound on sum of Y_{xk}}
		\sum_{k=1}^\infty Y_{x,k} \leq m_{x}.
	\end{equation}
	For $M\in \mathbb N$, let $\mathrm D_M \coloneqq \{\lambda_M, \lambda_{M+1}, \hdots \}$. We obtain with \eqref{Equation: upper bound on sum of Y_{xk}}, the law of large numbers and the dominated convergence theorem
	\begin{align}
		\label{Equation: Estimate tuning fork tails}
		\nu_{\operatorname{TF}, N}(\mathrm D_M) &= \mathbb E\Big[\sum_{k\geq M} \frac{1}{\sum_{x\in \mathbb V_N}(1+m_x)} \sum_{x\in \mathbb V_N} Y_{x, k} \Big] \nonumber \\
		&= \mathbb E\Big[\frac{1}{|\mathbb V_N|^{-1}\sum_{x\in \mathbb V_N}(1+m_x)} \frac{1}{|\mathbb V_N|}\sum_{x\in \mathbb V_N} \sum_{k\geq M} Y_{x, k} \Big] \to \frac{\mathbb E\Big[\sum_{k\geq M} Y_{k}\Big]}{1+ \mathrm E[m]}
	\end{align}
	as $N\to \infty$, where, as usual, we drop the subscript $x$ in order to denote a generic random variable $Y_k$ such that $Y_k \overset{d}{=} Y_{x, k}$ for all $x$. Let $\varepsilon>0$. By \eqref{Equation: upper bound on sum of Y_{xk}} and \eqref{Equation: Estimate tuning fork tails}, there exists some $M\in \mathbb N$ and some $N_0 \in \mathbb N$ such that $\nu_{\operatorname{TF}, N}(\mathrm D_M) \leq \varepsilon$ for all $N\geq N_0$ and such that $\nu_{\operatorname{TF}}(\mathrm D_M) \leq \varepsilon$. Let $N_1 \geq N_0$ be such that
	\begin{equation*}
		\big|\nu_{\operatorname{TF}, N}(\{\lambda_k\}) - \nu_{\operatorname{TF}}(\{\lambda_k\})\big| \leq \varepsilon M^{-1}
	\end{equation*}
	for all $N\geq N_1$ and all $k\in \{1, \hdots, M\}$. Then for every $I\in \mathcal B(\R)$ and all $N\geq N_1$
	\begin{equation*}
		|\nu_{\operatorname{TF}, N}(I) - \nu_{\operatorname{TF}}(I)| \leq \nu_{\operatorname{TF}, N}(\mathrm D_M \cap I) + \nu_{\operatorname{TF}}(\mathrm D_M \cap I) + \sum_{k=1}^{M-1} 	|\nu_{\operatorname{TF}, N}(\{\lambda_k\}) - \nu_{\operatorname{TF}}(\{\lambda_k\})| \leq 3\varepsilon. \qedhere
	\end{equation*}
	
\end{proof}

\begin{prop}
	\label{Proposition: Weak convergence of DOS measures of Anderson Hamiltonian}
	Let $\lambda \in \mathbb R$ be such that $\mathbb P(r(\lambda) = \infty) < 1$. Then $\rho_{ \lambda, N} \to \rho_{\lambda}$ weakly as $N \to \infty$.
\end{prop}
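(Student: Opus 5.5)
We will reduce the statement to a Benjamini--Schramm convergence of rooted weighted graphs, and then use the standard fact that local weak convergence implies weak convergence of expected spectral measures at the root, as in \cite[Theorem 2.5]{Bor16}. Concretely, $\rho_{\lambda,N}$ is the expected spectral measure of $H_{\lambda,N}$ at a root chosen uniformly from $\mathbb V_{\lambda,N}$. The law of this root, conditionally on the vertex weights, is described by Lemma \ref{Lemma: Convgerence of biased distribution}. We apply it with the weights
\begin{equation*}
W_x \coloneqq \1_{\{r_x(\lambda)\neq \infty\}}, \qquad \mathbb E[W_x] = \mathbb P(r(\lambda) \neq \infty) >0 .
\end{equation*}
The $W_x$ are $\{0,1\}$-valued, so the biased variable $\widehat W_1$ is identically $1$. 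Lemma \ref{Lemma: Convgerence of biased distribution} then shows that, conditionally on the root $I_N = 1$, the local configuration of $(W_x)_x$ around the root converges to an i.i.d.\ family conditioned on $W_1 = 1$. The case where all $W_x$ vanish has probability at most $\mathbb P(r(\lambda)=\infty)^{|\mathbb V_N|}\to 0$, so it does not matter.

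The key steps, in order, are as follows. First, mark each vertex $x$ with the pair $(W_x, r_x(\lambda))$, drawn i.i.d.\ from $\mathrm P$ conditionally on $W_x$. Second, repeat the Skorokhod coupling from the proof of Theorem \ref{Theorem: Limit Theorem} with these marks in place of $(\mathrm G^x,\varphi^x)$. This gives, almost surely and for every $R$, that the $R$-ball around the root in $\mathbb G_{\lambda,N}$, together with the potential values $r_x(\lambda)$ on it, eventually coincides with the corresponding $R$-ball in $(\mathbb G_\lambda,\mathbb o)$ conditioned on $\mathbb o\in\mathbb V_\lambda$. Here we use that the core ball $(\mathbb G_N,\mathbb o_N)_R$ is independent of the marks, and that site percolation of a ball is determined by the marks inside it. Third, we pass from local convergence to spectral convergence. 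By the standard argument, for $z\in\C^+$ the resolvent entries $\langle\delta_o,(H_{\lambda,N}-z)^{-1}\delta_o\rangle$ converge almost surely to $\langle\delta_o,(H_\lambda-z)^{-1}\delta_o\rangle$ whenever the limit operator is essentially self-adjoint on finitely supported vectors, i.e.\ via strong resolvent convergence. Since the entries are bounded by $(\operatorname{Im}z)^{-1}$, dominated convergence gives convergence of the Stieltjes transforms of $\rho_{\lambda,N}$ to that of $\rho_\lambda$. Because all of these are probability measures, this yields weak convergence.

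The main obstacle is the third step: local convergence with unbounded real potential values. Strong resolvent convergence requires a common core on which $H_{\lambda,N}\psi\to H_\lambda\psi$. For finitely supported $\psi$ this holds as soon as the balls agree, because $H$ acts locally and the potential is pointwise finite on $\mathbb V_\lambda$. In addition, the limit operator must be essentially self-adjoint on finitely supported vectors. This is where (BoundedDeg) or (BoundedAG) enters. Under (BoundedAG), the potential is bounded on $\mathbb V_\lambda$, because $r_x(\lambda)$ then takes only finitely many finite values; Kato--Rellich then applies, using essential self-adjointness of $B_\lambda$ from unimodularity. Under (BoundedDeg), $B_\lambda$ is bounded and $B_\lambda+V_\lambda$ is essentially self-adjoint on its natural domain because $V_\lambda$ is a real multiplication operator. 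Once self-adjointness of $H_\lambda$ on this core is settled, the standard result (e.g.\ Reed--Simon, Theorem VIII.25) gives the convergence $(H_{\lambda,N}-z)^{-1}\delta_o\to(H_\lambda-z)^{-1}\delta_o$. Here we identify $H_{\lambda,N}$ with operators on $l^2$ of the limiting graph via the coupling, extending by zero outside the matched ball. The proof then concludes as described above.
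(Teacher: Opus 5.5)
Your proposal is correct and follows essentially the paper's route. Both arguments apply Lemma~\ref{Lemma: Convgerence of biased distribution} with $W_x=\1_{\{x\in\mathbb V_\lambda\}}$, and you use implicitly what the paper states explicitly: conditioning on $o\in\mathbb V_\lambda$ removes the bias at $\mathbb o$. Both then use a Skorokhod coupling that matches root balls together with their potential values, strong resolvent convergence on the common core of finitely supported vectors, and dominated convergence of the root Stieltjes transforms.

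One step needs tightening: the identification of $H_{\lambda,N}$ with an operator on the limiting space. It should be a unitary conjugation induced by a bijection extending the ball isomorphism (the paper's $(N,R)$-unitary maps), with zero only on the complement of the image of the whole finite graph. It should not be a truncation of $H_{\lambda,N}$ to the matched ball, since truncation would change the spectral measure at the root.
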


\begin{proof}
	As in Section \ref{Section: BS-Convergence}, we denote by $\mathbb o$ the projection of $o$ to $\mathbb V$. 
	As in the proof of Theorem \ref{Theorem: Limit Theorem}, we apply Lemma \ref{Lemma: Convgerence of biased distribution}; this time for the choice $W_x \coloneqq \1_{\{x\in \mathbb V_\lambda\}}$. We rewrite
	\begin{equation*}
		\rho _{\lambda} = \mathbb E[\mu_{\lambda, o}(\, \cdot \,)|o \in \mathbb V_\lambda] = \mathbb E[\mu_{\lambda, \mathbb o}(\, \cdot \,)|o \in \mathbb V, W_{\mathbb o} = 1]
	\end{equation*}
	where $\mu_{\lambda, x}$ denotes the spectral measure of $H_\lambda$ in $x\in \mathbb V_\lambda$. Conditioning on $\{o \in \mathbb V\}$ removes the bias of $(\mathrm G^\mathbb o, \varphi^\mathbb o)$ such that the subsequent conditioning on $W_{\mathbb o} = 1$ exactly biases the distribution of $W$ according to Lemma \ref{Lemma: Convgerence of biased distribution}.
	Let us again enumerate the vertices of $\mathbb G_N$ and $\mathbb G$ such that both have vertex sets in $\mathbb N$ with $\mathbb o = \mathbb o_N = 0$. We say that a unitary map $U:l^2(\N)\to l^2(\N)$ is $(N, R)$-unitary if $U\delta_0 = \delta_0$ and if 
	\begin{equation*}
		U H_{\lambda, N}U^* \psi = H_{\lambda} \psi \text{ for all }\psi \in l^2(\mathbb N) \text{ with }\psi_x = 0 \text{ for all }x > R.
	\end{equation*}
	As in the proof of Theorem \ref{Theorem: Limit Theorem}, Lemma \ref{Lemma: Convgerence of biased distribution} and Skorokhod's representation theorem yield the existence of a coupling of $H_\lambda$, $H_{\lambda, 1}$, $H_{\lambda, 2}, \hdots$ such that\footnote{All of the following statements should be understood pointwise on the common probability space on which $H_\lambda$, $H_{\lambda, 1}$, $H_{\lambda, 2}, \hdots$ are defined} for all $R>0$ there eventually exists a $(N, R)$ unitary map $U$
	(which is of the form $(U\psi)_x = \psi_{\varphi(x)}$ for some bijection $\varphi: \N \to \N$ obtained from extending a graph isomorphism between $(\mathbb G_N, 0)_R$ and  $(\mathbb G, 0)_R$).
	Hence, there exists a strictly increasing sequence $(N_k)_k$ of natural numbers and a sequence $(U_k)_k$ of unitary maps with $U_k \delta_0 = \delta_0$ such that for all compactly supported $\psi \in l^2(\mathbb N)$ we have $U_k H_{\lambda, N_k}U_k^* \psi = H_{\lambda} \psi$ eventually. Since the subspace of compactly supported $\psi \in l^2(\mathbb N)$ is a common core for $\tilde H_{\lambda, N_k} \coloneqq U_k H_{\lambda, N_k}U_k^*$ and $H_{\lambda}$, we have $\tilde H_{\lambda, N_k} \to H_{\lambda}$ in the strong resolvent sense. By definition of strong resolvent convergence, this in particular implies pointwise convergence of the Stieltjes transforms of the spectral measure of $\tilde H_{\lambda, N_k}$ in $\delta_0$ as to $\mu_{\lambda, 0}$ as $k\to \infty$. As $U_k\delta_0 = \delta_0$, the spectral measure of $\tilde H_{\lambda, N_k}$ in $\delta_0$ agrees with the spectral measure of $H_{\lambda, N_k}$ in $\delta_0$. Since the same argument applies to every subsequence of $(H_{\lambda, N})_N$, we obtain weak convergence of the spectral measure of $H_{\lambda, N}$ in $\delta_0$ to $\mu_{\lambda, 0}$ as $N\to \infty$.
	Taking the expected value hence yields $\rho _{\lambda, N} \to \rho _{\lambda}$ weakly as $k \to \infty$.
\end{proof}

\begin{lemma}
	\label{Lemma: Estimate mass in terms of AH for convergence of atoms}
	Let $I$ be an open interval and $\lambda \in \overline I$ such that $\mathbb P(r(\lambda) = \infty) < 1$. Then
	\begin{equation*}
		\limsup_{N \to \infty} \rho_{\lambda, N}(I) \leq \frac{1+ \mathbb E[m]}{\mathbb P(r(\lambda) \neq \infty)} (\nu-\nu_{\operatorname{TF}})(I). 
	\end{equation*}
\end{lemma}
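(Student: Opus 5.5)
The plan is to take expectations in the finite-volume inequality of Proposition \ref{Proposition: Estimate mass in terms of Anderson Hamiltonian in finite volume} and pass to the limit $N\to\infty$. Dividing that inequality by $|\mathcal V_N|$ and taking expectations gives
\begin{equation*}
(\nu_N - \nu_{\operatorname{TF},N})(I) \;\geq\; \mathbb E\Big[\frac{|\mathbb V_{\lambda,N}|}{|\mathcal V_N|}\,\mu_{\lambda,N}(I)\Big].
\end{equation*}
The left-hand side converges to $(\nu-\nu_{\operatorname{TF}})(I)$. Indeed, by Corollary \ref{Corollary: Convergence of DOS-measures} we have $F_N\to F$ uniformly, so $\nu_N(I)\to\nu(I)$ for every interval $I$, including open ones, since atoms converge as well. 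By Lemma \ref{Lemma: Total variation convergence of tuning fork contribution}, $\nu_{\operatorname{TF},N}(I)\to\nu_{\operatorname{TF}}(I)$.

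For the right-hand side, the aim is to replace the random ratio $|\mathbb V_{\lambda,N}|/|\mathcal V_N|$ by its almost sure limit $c_\lambda \coloneqq \mathrm P(r(\lambda)\neq\infty)/(1+\mathrm E[m])$, which is positive by assumption. Write $|\mathbb V_{\lambda,N}|/|\mathcal V_N| = c_\lambda + \varepsilon_N$. Here $\varepsilon_N\to0$ almost surely by the law of large numbers, since $|\mathbb V_{\lambda,N}|=\sum_x \1_{\{r_x(\lambda)\neq\infty\}}$ and $|\mathcal V_N|=\sum_x(1+m_x)$ are sums of i.i.d. variables with finite means. Moreover $|\varepsilon_N|\leq 2$ because the ratio lies in $[0,1]$. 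Since $0\leq \mu_{\lambda,N}(I)\leq 1$, dominated convergence gives $\mathbb E[|\varepsilon_N|\mu_{\lambda,N}(I)]\leq \mathbb E[|\varepsilon_N|]\to0$. Hence the right-hand side equals
\begin{equation*}
c_\lambda\,\mathbb E[\mu_{\lambda,N}(I)] + o(1) \;=\; c_\lambda\,\rho_{\lambda,N}(I) + o(1).
\end{equation*}
This uses $\rho_{\lambda,N}=\mathbb E[\mu_{\lambda,N}]$, with the convention $\mu_{\lambda,N}=0$ on $\{\mathbb V_{\lambda,N}=\emptyset\}$; that event has probability $\mathrm P(r(\lambda)=\infty)^{|\mathbb V_N|}\to0$ and therefore does not matter.

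Taking $\limsup$ gives $c_\lambda\limsup_N\rho_{\lambda,N}(I)\leq(\nu-\nu_{\operatorname{TF}})(I)$, and dividing by $c_\lambda$ yields the claim. The main obstacle is bookkeeping rather than substance. One must check that the definition of $\rho_{\lambda,N}$, as the expected normalized empirical measure of $H_{\lambda,N}$, matches the way it appears after dividing by $|\mathcal V_N|$. One must also handle the case where $\mathbb V_N$ is random only through the sprinkling, which is fine because $\mathbb V_N$ is deterministic.
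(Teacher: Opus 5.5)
Your proposal is correct and is essentially the paper's own proof. The paper likewise takes expectations in the finite-volume inequality of Proposition~\ref{Proposition: Estimate mass in terms of Anderson Hamiltonian in finite volume}, uses the Kolmogorov--Smirnov convergence of $\nu_N$ and the total-variation convergence of $\nu_{\operatorname{TF},N}$ on the left, and replaces $|\mathbb V_{\lambda,N}|/|\mathcal V_N|$ by its law-of-large-numbers limit via dominated convergence on the right. One minor remark, which applies equally to the paper: ``almost surely'' for the ratio tacitly needs a coupling across $N$, but convergence in probability (weak law of large numbers) of a ratio bounded by $1$ already gives $\mathbb E[|\varepsilon_N|]\to 0$.
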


\begin{proof}
	By the law of large numbers
	\begin{equation}
		\label{Equation: Convergence of fraction of vertices}
		\frac{|\mathbb V_{\lambda, N}|}{|\mathcal V_N|} = \frac{|\mathbb V_N|^{-1} \sum_{x}\1_{\{r_x(\lambda) \neq \infty\}}}{ |\mathbb V_N|^{-1} \sum_{x}1+m_x} \to  \frac{\mathbb P(r(\lambda) \neq \infty)}{1+ \mathbb E[m]}
	\end{equation}
	almost surely as $N\to \infty$. Hence, by Proposition \ref{Proposition: Estimate mass in terms of Anderson Hamiltonian in finite volume}, Proposition \ref{Proposition: Weak convergence of DOS measures of Anderson Hamiltonian}, Lemma \ref{Lemma: Total variation convergence of tuning fork contribution}, the uniform convergence of the cumulative distribution functions of $\nu_N$ and dominated convergence
	\begin{align*}
		(\nu-\nu_{\operatorname{TF}})(I) &= \lim_{N \to \infty}(\nu_N-\nu_{\operatorname{TF, N}})(I) \\
		&\geq \limsup_{N\to \infty} \mathbb E\Big[\frac{|\mathbb V_{\lambda, N}|}{|\mathcal V_N|} \mu_{\lambda, N}(I)\Big]
		=  \limsup_{N\to \infty} \frac{\mathbb P(r(\lambda) \neq \infty)}{1+ \mathbb E[m]} \mathbb \rho_{\lambda, N}(I).\qedhere
	\end{align*}
\end{proof}

\begin{prop}
	\label{Proposition: convergence of atoms for Anderson DOS}
	For every $\lambda \in \R$ with $\mathbb P(r(\lambda) = \infty)<1$ we have $\lim_{N \to \infty} \rho_{\lambda, N}(\{\lambda\}) = \rho_{\lambda}(\{\lambda\})$.
\end{prop}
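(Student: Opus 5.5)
We prove the two inequalities $\limsup_N \rho_{\lambda,N}(\{\lambda\}) \leq \rho_\lambda(\{\lambda\})$ and $\liminf_N \rho_{\lambda,N}(\{\lambda\}) \geq \rho_\lambda(\{\lambda\})$ separately. Throughout, write $c_\lambda \coloneqq \mathbb P(r(\lambda)\neq\infty)/(1+\mathbb E[m]) > 0$.

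\emph{Upper bound.} This comes from weak convergence alone. By Proposition \ref{Proposition: Weak convergence of DOS measures of Anderson Hamiltonian} and the Portmanteau theorem applied to the closed sets $[\lambda-\varepsilon,\lambda+\varepsilon]$, we get
\begin{equation*}
\limsup_N \rho_{\lambda,N}(\{\lambda\}) \leq \limsup_N \rho_{\lambda,N}([\lambda-\varepsilon,\lambda+\varepsilon]) \leq \rho_\lambda([\lambda-\varepsilon,\lambda+\varepsilon]).
\end{equation*}
Letting $\varepsilon\downarrow 0$ gives the upper bound.

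\emph{Lower bound.} Here we use the integrality of the graph spectrum through the atoms of $\nu$. Fix $\varepsilon>0$ and set $I_\pm \coloneqq (\lambda,\lambda\pm\varepsilon)$, oriented appropriately, so that $\lambda\in\overline{I_\pm}$. The finite-volume identity of Proposition \ref{Proposition: Anderson representation in finite volume} gives
\begin{equation*}
\mathbb E\Big[\tfrac{|\mathbb V_{\lambda,N}|}{|\mathcal V_N|}\mu_{\lambda,N}(\{\lambda\})\Big] = (\nu_N-\nu_{\operatorname{TF},N})(\{\lambda\}) \to (\nu-\nu_{\operatorname{TF}})(\{\lambda\}).
\end{equation*}
The convergence uses Corollary \ref{Corollary: Convergence of DOS-measures} for the atoms of $\nu_N$ and Lemma \ref{Lemma: Total variation convergence of tuning fork contribution} for the tuning-fork part. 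Since $|\mathbb V_{\lambda,N}|/|\mathcal V_N| \to c_\lambda$ almost surely and this ratio is bounded, dominated convergence with $\mu_{\lambda,N}\leq 1$ yields $\rho_{\lambda,N}(\{\lambda\}) \to c_\lambda^{-1}(\nu-\nu_{\operatorname{TF}})(\{\lambda\})$. So the limit exists, and it remains to show it is at least $\rho_\lambda(\{\lambda\})$.

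Apply Lemma \ref{Lemma: Estimate mass in terms of AH for convergence of atoms} to the open interval $J=(\lambda-\varepsilon,\lambda+\varepsilon)$:
\begin{equation*}
\rho_\lambda(J) \leq \liminf_N \rho_{\lambda,N}(J) \leq \limsup_N\rho_{\lambda,N}(J) \leq c_\lambda^{-1}(\nu-\nu_{\operatorname{TF}})(J),
\end{equation*}
where the first inequality is Portmanteau for open sets. Let $\varepsilon\downarrow0$. Since $\nu-\nu_{\operatorname{TF}}$ is a finite nonnegative measure (Proposition \ref{Proposition: Anderson representation in finite volume} gives $\gamma\geq\gamma_{\operatorname{TF}}$ in finite volume, and this passes to the limit), we get $(\nu-\nu_{\operatorname{TF}})(J)\downarrow(\nu-\nu_{\operatorname{TF}})(\{\lambda\})$ and $\rho_\lambda(J)\downarrow\rho_\lambda(\{\lambda\})$. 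Hence
\begin{equation*}
\rho_\lambda(\{\lambda\}) \leq c_\lambda^{-1}(\nu-\nu_{\operatorname{TF}})(\{\lambda\}) = \lim_N\rho_{\lambda,N}(\{\lambda\}),
\end{equation*}
which is the lower bound.

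\emph{Main obstacle.} The delicate step is the lower bound, and it is handled by the order of limits. We take $N\to\infty$ first on the open interval $J$ and only afterwards shrink $J$. This relies on Proposition \ref{Proposition: Estimate mass in terms of Anderson Hamiltonian in finite volume} (via Lemma \ref{Lemma: Estimate mass in terms of AH for convergence of atoms}), which bounds $H_{\lambda,N}$-mass near $\lambda$ by $A_N$-mass, together with the fact that the atoms of $\nu_N$ converge exactly. This replaces the missing integrality argument for $H_{\lambda,N}$. One also needs that $\nu_{\operatorname{TF},N}\to\nu_{\operatorname{TF}}$ on sets as well as on atoms, which Lemma \ref{Lemma: Total variation convergence of tuning fork contribution} provides.
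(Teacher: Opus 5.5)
Your proof is correct, but it organizes the lower bound differently from the paper.

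\emph{The paper's route.} The paper never computes $\lim_N \rho_{\lambda,N}(\{\lambda\})$ in this proof. It writes
$\rho_{\lambda,N}(\{\lambda\}) = \rho_{\lambda,N}((\lambda-\varepsilon,\lambda+\varepsilon)) - \rho_{\lambda,N}((\lambda-\varepsilon,\lambda)) - \rho_{\lambda,N}((\lambda,\lambda+\varepsilon))$.
It applies Lemma~\ref{Lemma: Estimate mass in terms of AH for convergence of atoms} to the two half-intervals, with $\lambda$ as an endpoint; this is exactly what your unused $I_\pm$ would be for. That bounds their asymptotic mass by $c\,\nu$ of those half-intervals. Portmanteau handles the open interval, and then $\varepsilon\downarrow 0$, where the half-intervals shrink to the empty set.

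\emph{Your route.} You first identify the limit,
$\lim_N \rho_{\lambda,N}(\{\lambda\}) = c_\lambda^{-1}(\nu-\nu_{\operatorname{TF}})(\{\lambda\})$.
This uses the exact atom identity of Proposition~\ref{Proposition: Anderson representation in finite volume}, the atom convergence of $\nu_N$ and $\nu_{\operatorname{TF},N}$, and the law of large numbers for the random prefactor. This is precisely the computation the paper carries out afterwards, in the proof of Theorem~\ref{Theorem: Anderson representation of atoms}. You then compare with $\rho_\lambda(\{\lambda\})$ via the Lemma on the two-sided interval.

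\emph{What each buys.} Your route gives the Proposition and the atom formula of the Theorem at once. It also only needs the interior case $\lambda\in I$ of Proposition~\ref{Proposition: Estimate mass in terms of Anderson Hamiltonian in finite volume}, which is the case the paper actually writes out. The paper's route keeps this proof free of the exact atom identity and the prefactor decoupling. It shows directly that the $\rho_{\lambda,N}$-mass of punctured neighbourhoods of $\lambda$ is asymptotically negligible.

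A minor point: nonnegativity of $\nu-\nu_{\operatorname{TF}}$ is not needed for the limit $\varepsilon\downarrow 0$. Continuity from above holds for the finite measures $\nu$ and $\nu_{\operatorname{TF}}$ separately.
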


\begin{proof}
	Let $\varepsilon>0$. We write
	\begin{equation*}
		\rho_{\lambda, N}(\{  \lambda\}) = \rho_{\lambda, N}((  \lambda- \varepsilon,   \lambda + \varepsilon)) - \rho_{\lambda, N}((  \lambda- \varepsilon,   \lambda)) - \rho_{\lambda, N}((  \lambda,   \lambda + \varepsilon)).
	\end{equation*}
	Let 
	\begin{equation*}
		c \coloneqq  \frac{1+ \mathbb E[m]}{\mathbb P(r(\lambda) \neq \infty)}.
	\end{equation*}
	By Lemma \ref{Lemma: Estimate mass in terms of AH for convergence of atoms}, we have
	\begin{equation*}
		\limsup_{N \to \infty}\rho_{\lambda, N}((  \lambda- \varepsilon,   \lambda)) + \rho_{\lambda, N}((  \lambda,   \lambda + \varepsilon)) \leq  c \nu((  \lambda- \varepsilon,   \lambda)) + c \nu((  \lambda,   \lambda + \varepsilon))
	\end{equation*}
	implying with the Portmanteau theorem
	\begin{align*}
		\liminf_{N\to \infty}\rho_{\lambda, N}(\{ \lambda\})&\geq \liminf_{N\to \infty}\rho_{\lambda, N}((  \lambda- \varepsilon,   \lambda + \varepsilon)) - c\big[\nu((  \lambda- \varepsilon,   \lambda)) + \nu((  \lambda,   \lambda + \varepsilon))\big] \\
		&\geq \rho_{\lambda}((  \lambda- \varepsilon,   \lambda + \varepsilon)) - c\big[\nu((  \lambda- \varepsilon,   \lambda)) + \nu((  \lambda,   \lambda + \varepsilon))\big].
	\end{align*}
	Letting $\varepsilon\downarrow 0$ and the Portmanteau theorem yield
	\begin{equation*}
		\rho_{\lambda}(\{ \lambda\}) \leq \liminf_{N\to \infty}\rho_{\lambda, N}(\{ \lambda\}) \leq \limsup_{N\to \infty}\rho_{\lambda, N}(\{ \lambda\}) \leq \rho_{\lambda}(\{ \lambda\}).  \qedhere
	\end{equation*}
\end{proof}

\begin{proof}[Proof of Theorem \ref{Theorem: Anderson representation of atoms}]
	If $\mathbb P(r(\lambda) = \infty)=1$ the statement follows directly from Proposition \ref{Proposition: Anderson representation in finite volume}, so let us assume that $\mathbb P(r(\lambda) = \infty)<1$.
	By Proposition \ref{Proposition: Anderson representation in finite volume}, Proposition \ref{Proposition: convergence of atoms for Anderson DOS} and \eqref{Equation: Convergence of fraction of vertices} we have
	\begin{align*}
		(\nu-\nu_{\operatorname{TF}})(\{\lambda\}) &= \lim_{N\to \infty} \mathbb E\Big[\frac{|\mathbb V_{\lambda, N}|}{|\mathcal V_N|} \mu_{\lambda, N}(\{\lambda\})\Big]\\
		&=  \frac{\mathbb P(r(\lambda) \neq \infty)}{1+p\mathbb E[m]} \lim_{N\to \infty}\rho_{\lambda, N}(\{\lambda\}) =  \frac{\mathbb P(r(\lambda) \neq \infty)}{1+p\mathbb E[m]} \rho_{\lambda}(\{\lambda\}).
	\end{align*}
	If $I\subseteq \R$ is an open interval and $\lambda \in \overline I$, we obtain with Lemma \ref{Lemma: Estimate mass in terms of AH for convergence of atoms} and the Portmanteau theorem
	\begin{equation*}
		(\nu-\nu_{\operatorname{TF}})(I) \geq \liminf_{N\to \infty} \frac{\mathbb P(r(\lambda) \neq \infty)}{1+p\mathbb E[m]} \rho_{\lambda, N}(I) \geq  \frac{\mathbb P(r(\lambda) \neq \infty)}{1+p\mathbb E[m]} \rho_{\lambda}(I). \qedhere
	\end{equation*}
	
\end{proof}

\subsection{Anderson-percolation representation of the spectrum: proof of Theorem \ref{Theorem: Anderson representation of the spectrum}}	
\label{Subsection: Anderson-percolation representation of the spectrum}
We now show the representation of the spectrum of the adjacency operator as given in Theorem \ref{Theorem: Anderson representation of the spectrum}. While it might be tempting to use a similar approach as taken in the proof of Theorem \ref{Theorem: Anderson representation of atoms} and take the limit in the finite dimensional representation given in Proposition \ref{Proposition: Anderson representation in finite volume}, Benjamini--Schramm convergence does in general not imply convergence of the spectra: we might, for example, add an arbitrary connected component containing $o(|\mathbb V_N|)$ vertices to $\mathbb G_N$ without changing the convergence. We shall hence prove the statement directly in infinite volume. 

\begin{lemma}
	\label{Lemma: distance to finite sets}
	Let $I, J \subseteq \R$ be two finite, non-empty sets. Then for every $\delta>0$ there exists some $C>0$ such that for all $\lambda \in \R$
	\begin{equation*}
		C \cdot \operatorname{dist}(\lambda, J) \leq \operatorname{dist}(\lambda, I)  \implies \operatorname{dist}(\lambda, J\setminus I) < \delta \text { and }\operatorname{dist}(\lambda, I) \leq C.
	\end{equation*}
\end{lemma}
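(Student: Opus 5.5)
We argue by contradiction and compactness. Note first that if $\operatorname{dist}(\lambda, J\setminus I)$ is to be meaningful we need $J\setminus I\neq\emptyset$; if $J \subseteq I$, the hypothesis $C\operatorname{dist}(\lambda,J)\le \operatorname{dist}(\lambda,I)$ together with $\operatorname{dist}(\lambda,I)\le \operatorname{dist}(\lambda,J)$ forces $\operatorname{dist}(\lambda,I)=0$ for $C>1$, i.e.\ $\lambda \in I\subseteq J$. We then interpret the conclusion with the convention $\operatorname{dist}(\cdot,\emptyset)=\infty$, or assume $J\not\subseteq I$ as in the intended application. We treat the main case $J\setminus I\neq\emptyset$.

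For the second conclusion, the bound $\operatorname{dist}(\lambda,I)\le C$, we use that for $|\lambda|$ large both distances behave like $|\lambda|$. More precisely, $\operatorname{dist}(\lambda,I)\le \operatorname{dist}(\lambda,J)+K$, where $K \coloneqq \max_{j\in J}\operatorname{dist}(j,I)$ (Hausdorff-type bound). Hence $C\operatorname{dist}(\lambda,J)\le \operatorname{dist}(\lambda,J)+K$, which gives $\operatorname{dist}(\lambda,J)\le K/(C-1)$ for $C>1$, and therefore $\operatorname{dist}(\lambda,I)\le CK/(C-1)\le 2K$ once $C\ge 2$. So it suffices to take $C\ge \max(2,2K)$.

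For the first conclusion, the previous step shows $\operatorname{dist}(\lambda,J)\le K/(C-1)$, so $\lambda$ lies within $K/(C-1)$ of some $j\in J$. Let $\delta_0>0$ be smaller than half the minimal distance between distinct points of $I\cup J$, and choose $C$ so large that $K/(C-1)<\min(\delta,\delta_0)$. If the nearest point $j$ lies in $J\setminus I$, we are done. If instead $j\in I$, then $\operatorname{dist}(\lambda,I)\le |\lambda-j| = \operatorname{dist}(\lambda,J)$, and the hypothesis gives $C\operatorname{dist}(\lambda,J)\le \operatorname{dist}(\lambda,J)$. This forces $\operatorname{dist}(\lambda,J)=0$, so $\lambda=j\in I$ and then $\operatorname{dist}(\lambda,I)=0$ as well. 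In this degenerate case the hypothesis holds trivially while the conclusion fails, since $\lambda\in I\cap J$ need not be close to $J\setminus I$.

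So the statement must be read with the hypothesis implicitly requiring $\operatorname{dist}(\lambda,I)>0$, or with $I\cap J=\emptyset$ in the application. Identifying this exceptional configuration is the only real obstacle. Under the strict reading $\operatorname{dist}(\lambda,I)>0$, the nearest $j$ cannot lie in $I$, and the choice $C=\max\bigl(2,\,2K,\,1+K/\min(\delta,\delta_0)\bigr)$ proves the lemma.
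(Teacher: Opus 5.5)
Your proof is correct, and so is your diagnosis of the statement. For $\lambda\in I\cap J$ both distances vanish, so the hypothesis holds for every $C$. For example, $I=\{0\}$, $J=\{0,10\}$, $\delta=1$, $\lambda=0$ satisfies the hypothesis but violates the conclusion.

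The paper's proof has exactly this gap. It sets $f\coloneqq 1$ on $I\cap J$ and then rewrites $f(x)>C^{-1}$ as $C\operatorname{dist}(x,J)>\operatorname{dist}(x,I\cap J)$, which is false at those points. What you actually prove is the statement with $\operatorname{dist}(\lambda,I)>0$ added to the hypothesis, and that is all Lemma~\ref{Lemma: Estimate distance to tuning fork eigenvalues} needs. There $I=\mathcal T$ and $J=\sigma(\mathrm T^x)$ can intersect, so your alternative reading $I\cap J=\emptyset$ does not cover the application. However, when $\operatorname{dist}(\lambda,\mathcal T)=0$ the inequality to be proved there is trivial.

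Your route differs from the paper's.
\begin{itemize}
\item The paper argues softly. The function $f(x)=\operatorname{dist}(x,J)/\operatorname{dist}(x,I\cap J)$ is continuous, tends to $1$ at infinity and vanishes exactly on $J\setminus I$, so it is bounded below on $\{\operatorname{dist}(\cdot,J\setminus I)\geq\delta\}$. The constant $C$ is obtained non-constructively, with a separate remark for $I\cap J=\emptyset$.
\item You combine the hypothesis with the single bound $\operatorname{dist}(\lambda,I)\le\operatorname{dist}(\lambda,J)+K$ to get $\operatorname{dist}(\lambda,J)\le K/(C-1)$. This immediately bounds $\operatorname{dist}(\lambda,I)$, and a case distinction on the nearest point of $J$ then places $\lambda$ within $\delta$ of $J\setminus I$.
\end{itemize}
Your argument is more elementary, treats both cases uniformly, gives an explicit constant, and makes the exceptional set $I\cap J$ visible. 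The paper's compactness argument offers little in return beyond a conceptual picture of the distance ratio. Applying your argument to the pair $(I\cap J,J)$ would even recover the paper's slightly stronger comparison with $\operatorname{dist}(\cdot,I\cap J)$.

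A few cosmetic fixes:
\begin{itemize}
\item Your first sentence announces an argument by contradiction and compactness, but the argument is direct.
\item $\delta_0$ plays no role and can be dropped.
\item In the remark on $J\subseteq I$ you mean $\lambda\in J\subseteq I$.
\item $C=1+K/\min(\delta,\delta_0)$ only gives $K/(C-1)=\min(\delta,\delta_0)$, not a strict inequality. Take e.g.\ $C=\max(2,2K,1+2K/\delta)$ instead.
\end{itemize}
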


\begin{proof}
	Let us first assume that $I\cap J \neq \emptyset$.
	We define the function $f:\R \to \R$ and the set $M \subseteq \R$ by
	\begin{equation*}
		f(x) \coloneqq
		\begin{cases}
			\frac{\operatorname{dist}(x, J)}{\operatorname{dist}(x, I\cap J)} &\text{ if }x \notin I\cap J\\
			1 &\text{ else,}
		\end{cases}
		\quad \quad M \coloneqq \{x\in \R:\, \operatorname{dist}(x, J\setminus I)\geq \delta\}.
	\end{equation*}
	Notice that $f$ is continuous. Since $f(x) \to 1$ as $|x|\to \infty$ and since
	\begin{equation*}
		f(x) = 0 \iff x \in J \setminus I
	\end{equation*}
	we have $\inf_{x\in M} f(x) > 0$. Let $C_0 > (\inf_{x\in M} f(x))^{-1} \vee 1$. Since $f(x) \to 1$ as $|x|\to \infty$ there exists some $C>C_0$ such that $f(x)>1/C_0$ for all $x\in \R$ with $\operatorname{dist}(x, I)>C$. Hence, if $\operatorname{dist}(x, J\setminus I) \geq \delta$ or $\operatorname{dist}(x, I)> C$ then 
	\begin{equation*}
		f(x) > C^{-1} \quad   \text{ i.e.\ } \quad C\cdot \operatorname{dist}(x, J) > \operatorname{dist}(x, I\cap J) \geq \operatorname{dist}(x, I).
	\end{equation*}
	The proof for $I\cap J = \emptyset$ follows similarly, using that $\lim_{\lambda \to \infty} \operatorname{dist}(\lambda, J)/\operatorname{dist}(\lambda, I) = 1$.
\end{proof}

Recall that we denote by $\mathcal T$ the set of tuning for eigenvalues.

\begin{lemma}
	\label{Lemma: Estimate distance to tuning fork eigenvalues}
	Assume that \eqref{Assumption:: m a.s. bounded} is satisfied. Then there exists some constant $C>0$ such that almost surely for all $\lambda \in \R$
	\begin{equation*}
		\1_{\{\mathcal T \neq \emptyset\}}\operatorname{dist}(\lambda, \mathcal T)^2 + \1_{\{\mathcal T = \emptyset\}} \leq C \inf_{\|\psi\| = 1} \|(T-\lambda) \psi\|^2 + \|E \psi\|^2.
	\end{equation*}
\end{lemma}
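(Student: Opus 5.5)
The plan is to exploit the block structure of $T$ and $E$, reduce to a single attached graph, and then use that under \eqref{Assumption:: m a.s. bounded} only finitely many attached graphs can occur, which makes all constants uniform.

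\textbf{Step 1: reduction to one block.} For $\psi \in \bigoplus_{x\in\mathbb V^+} l^2(\mathrm V^x)$ we have $(T\psi)^x = \mathrm T^x\psi^x$ and $(E\psi)_x = \langle \varphi^x,\psi^x\rangle$. Hence
\begin{equation*}
\|(T-\lambda)\psi\|^2 + \|E\psi\|^2 = \sum_{x\in \mathbb V^+} \big\langle \psi^x, Q_x(\lambda)\psi^x\big\rangle, \qquad Q_x(\lambda) \coloneqq (\mathrm T^x-\lambda)^2 + |\varphi^x\rangle\langle\varphi^x| .
\end{equation*}
So it suffices to show $q_x(\lambda) \coloneqq \min\sigma(Q_x(\lambda)) \geq C^{-1}\big(\1_{\{\mathcal T\neq\emptyset\}}\operatorname{dist}(\lambda,\mathcal T)^2 + \1_{\{\mathcal T=\emptyset\}}\big)$ for all $x$ and $\lambda$, with $C$ independent of $x$. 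Since $m\leq M$, the pair $(\mathrm G^x,\varphi^x)$ ranges, up to relabelling, over a finite deterministic list. It is therefore enough to prove, for each fixed pair $(\mathrm T,\varphi)$ on at most $M$ vertices, a bound with a constant depending only on that pair, and then take the worst constant.

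\textbf{Step 2: analysis of one block.} Let $\mathcal T_0 \coloneqq \{\lambda: \exists\, \psi\neq 0,\ \mathrm T\psi=\lambda\psi,\ \langle\varphi,\psi\rangle=0\}$. Then $\ker Q(\lambda)\neq 0$ if and only if $\lambda\in\mathcal T_0$, and $\mathcal T_0\subseteq \mathcal T$ whenever the block occurs. The function $q$ is continuous and vanishes exactly on the finite set $\mathcal T_0\subseteq[-M,M]$. For $|\lambda|\geq 2M$ we have $q(\lambda)\geq (|\lambda|-M)^2 \geq \lambda^2/4$.

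Near a point $\lambda_0\in\mathcal T_0$, let $K\coloneqq\ker Q(\lambda_0) = \ker(\mathrm T-\lambda_0)\cap\varphi^\perp$ and expand
\begin{equation*}
Q(\lambda) = Q(\lambda_0) + 2(\lambda_0-\lambda)(\mathrm T-\lambda_0) + (\lambda-\lambda_0)^2 .
\end{equation*}
The space $K$ is annihilated by $Q(\lambda_0)$ and by $\mathrm T-\lambda_0$, and these operators are symmetric, so they have no matrix elements between $K$ and $K^\perp$. Consequently $Q(\lambda)$ is block diagonal with respect to $K\oplus K^\perp$:
\begin{itemize}
\item on $K$ it equals $(\lambda-\lambda_0)^2$;
\item on $K^\perp$ it is at least $g - 2M|\lambda-\lambda_0|$, where $g>0$ is the spectral gap of $Q(\lambda_0)$ on $K^\perp$.
\end{itemize}
Hence $q(\lambda)\geq (\lambda-\lambda_0)^2$ for $|\lambda-\lambda_0|$ small. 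On the remaining compact set, which avoids neighbourhoods of $\mathcal T_0$, continuity and positivity give $q\geq c>0$.

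Combining these pieces yields two cases:
\begin{itemize}
\item if $\mathcal T_0\neq\emptyset$, then $q(\lambda)\geq c\operatorname{dist}(\lambda,\mathcal T_0)^2$ for all $\lambda$;
\item if $\mathcal T_0=\emptyset$, then $q(\lambda)\geq c(1+\lambda^2)$ for all $\lambda$.
\end{itemize}

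\textbf{Step 3: globalization.} If $\mathcal T=\emptyset$, every occurring block has $\mathcal T_0=\emptyset$, so $q_x\geq c$ and the claim holds with the indicator term. If $\mathcal T\neq\emptyset$, then $\mathcal T\subseteq[-M,M]$ and $\operatorname{dist}(\lambda,\mathcal T)^2\leq 2(\lambda^2+M^2)$. Blocks with $\mathcal T_0=\emptyset$ satisfy $q_x\geq c(1+\lambda^2)\geq c'\operatorname{dist}(\lambda,\mathcal T)^2$. Blocks with $\mathcal T_0\neq\emptyset$ satisfy $q_x\geq c\operatorname{dist}(\lambda,\mathcal T_0)^2\geq c\operatorname{dist}(\lambda,\mathcal T)^2$, because $\mathcal T_0\subseteq\mathcal T$. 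Summing over $x$ with $\sum_x\|\psi^x\|^2=1$ gives the claim.

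The main obstacle is the local quadratic lower bound near points of $\mathcal T_0$; the block-diagonal decomposition in Step 2 is the intended way around it. An alternative would be a general argument that the smallest eigenvalue of a real-analytic positive semidefinite family vanishes only to even order, but the explicit decomposition avoids this.
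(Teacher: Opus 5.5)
Your proof is correct, but it takes a different route from the paper's.

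The paper fixes a block and works directly with $\|(\mathrm T^x-\lambda)\psi^x\|$ and $|\langle\varphi^x,\psi^x\rangle|$, measured against the \emph{global} set $\mathcal T$. Using Lemma~\ref{Lemma: distance to finite sets} with $I=\mathcal T$ and $J=\sigma(\mathrm T^x)$, it splits into two regimes:
\begin{itemize}
\item If $\operatorname{dist}(\lambda,\sigma(\mathrm T^x))$ already dominates $\operatorname{dist}(\lambda,\mathcal T)$, the bound $\|(\mathrm T^x-\lambda)\psi^x\|\ge\operatorname{dist}(\lambda,\sigma(\mathrm T^x))$ suffices.
\item If $\lambda$ is close to an eigenvalue $\alpha\notin\mathcal T$ of $\mathrm T^x$, then either $\|(\mathrm T^x-\lambda)\psi^x\|$ is bounded below, or $\psi^x$ lies almost in $\ker(\mathrm T^x-\alpha)$. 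In the latter case its overlap with $\varphi^x$ is bounded below by compactness.
\end{itemize}
This requires the nested choice of constants $\delta_1,\delta_2,C_1,C_2,C_3$.

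You instead package both quantities into the single positive semidefinite matrix $Q_x(\lambda)=(\mathrm T^x-\lambda)^2+|\varphi^x\rangle\langle\varphi^x|$. You then identify the zero set of its lowest eigenvalue $q$ as exactly the block's own tuning-fork eigenvalues $\mathcal T_0$. The quadratic vanishing comes from the exact splitting over $K\oplus K^\perp$; in fact $q(\lambda)=(\lambda-\lambda_0)^2$ near $\lambda_0$. The global set $\mathcal T$ enters only at the end, through the monotonicity $\mathcal T_0\subseteq\mathcal T$ and the crude bound $\mathcal T\subseteq[-M,M]$ for blocks with $\mathcal T_0=\emptyset$.

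What your route buys:
\begin{itemize}
\item It decouples the per-block analysis from the realization-dependent set $\mathcal T$.
\item It avoids the auxiliary lemma on distances to finite sets.
\item It makes transparent why the bound is quadratic in the distance: kernel directions of $Q(\lambda_0)$ are exact eigenvectors of $\mathrm T$.
\end{itemize}
Both arguments are equally non-quantitative. Each relies on compactness and on there being only finitely many isomorphism classes of blocks with at most $M$ vertices.

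One harmless slip: the perturbation $2(\lambda_0-\lambda)(\mathrm T-\lambda_0)$ on $K^\perp$ has norm at most $2\|\mathrm T-\lambda_0\|\,|\lambda-\lambda_0|\le 4M|\lambda-\lambda_0|$, not $2M|\lambda-\lambda_0|$. Only the finiteness of this constant matters, so nothing changes.
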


\begin{proof}
	Let us first assume that $\mathcal T \neq \emptyset$. We first fix some $x\in \mathbb V^+$ and some realization $\mathrm G^x$ and restrict ourselves to $\psi$ supported on $\mathrm V^x$. Notice that $\|\varphi^x\| \leq m_x \leq M$ where $M$ is as in our Assumption \eqref{Assumption:: m a.s. bounded}. We now choose constants $\delta_1, \delta_2, C_1, C_2, C_3$ in the following way.
	\begin{enumerate}[leftmargin=*]
		\item By definition of $\mathcal T$, we have for all $\lambda \in \sigma(\mathrm  T^x)\setminus \mathcal T$ and all $\psi\in \ker(\mathrm  T^x - \lambda)$ with $\psi \neq 0$ that $\langle \varphi^x, \psi \rangle \neq 0$. 
		A simple compactness argument yields the existence of some $\delta_1 > 0$ such that 
		\begin{equation*}
			C_1 \coloneqq (1-\delta_1)\inf \Big(\Big\{ \big|\langle \varphi^x, \psi^x \rangle\big|:\, \lambda \in \sigma(\mathrm  T^x)\setminus \mathcal T, \, \psi^x\in \ker(\mathrm  T^x - \lambda) \text{ with }\|\psi^x\|= 1 \Big\}\Big) - M \sqrt{\delta_1} > 0.
		\end{equation*}
		\item
		We define $\delta \coloneqq \min\{|\lambda_1 - \lambda_2|:\, \lambda_1, \lambda_2 \in \sigma(\mathrm  T^x), \, \lambda_1 \neq \lambda_2\}$.
		Let $\lambda \in \R$ and $\alpha \in \sigma(\mathrm  T^x)$. By spectral decomposition, if $|\lambda-\alpha|< \delta/2$ then
		\begin{equation}
			\label{Equation: estimate norm of (T-lambda)psi in terms of projection}
			\|(\mathrm  T^x - \lambda) \psi^x\|^2 \geq \|(1-P_\alpha) \psi^x\|^2 \delta^2/4
		\end{equation}
		where $P_{\alpha}$ is the projection onto $\operatorname{ker}(\mathrm  T^x - \alpha)$. For $\lambda \in \mathbb R$ let $\alpha(\lambda) \coloneqq \arg \min \{|\alpha- \lambda|:\, \alpha \in \sigma(\mathrm  T^x)\}$.
		Let $\delta_1$ be as chosen in the first step.
		By \eqref{Equation: estimate norm of (T-lambda)psi in terms of projection} we can choose some $\delta_2 \in (0, \delta/2)$ such that for all $\psi^x \in l^2(\mathrm V^x)$ with $\|\psi^x\|= 1$
		\begin{equation*}
			\|(\mathrm  T^x - \lambda) \psi^x\| < \delta_2 \implies \| P_{\alpha(\lambda)} \psi^x\|^2 \geq 1 - \delta_1.
		\end{equation*}
		\item
		By Lemma \ref{Lemma: distance to finite sets} we can choose some $C_2>0$ such that
		\begin{equation*}
			C_2 \operatorname{dist}(\lambda, \sigma(\mathrm  T^x)) \leq \operatorname{dist}(\lambda, \mathcal T) \implies \operatorname{dist}(\lambda, \sigma(\mathrm  T^x) \setminus \mathcal T) <\delta_2, \quad \operatorname{dist}(\lambda, \mathcal T) \leq C_2.
		\end{equation*}
		\item We choose $C_3\geq C_2$ such that $C_3 \cdot C_1 \geq C_2$ and $C_3\delta_2 > C_2$.
	\end{enumerate}
	So if $\|\psi^x \| = 1$ then: If $\lambda \in \R$ is such that $C_2 \operatorname{dist}(\lambda, \sigma(\mathrm  T^x)) > \operatorname{dist}(\lambda, \mathcal T)$ then
	\begin{equation*}
		C_3 \|(\mathrm  T^x-\lambda) \psi^x \| \geq C_3\operatorname{dist}(\lambda, \sigma(\mathrm  T^x)) \geq \operatorname{dist}(\lambda, \mathcal T).
	\end{equation*}
	If, on the other hand, $C_2 \operatorname{dist}(\lambda, \sigma(\mathrm  T^x)) \leq \operatorname{dist}(\lambda, \mathcal T)$ then $\operatorname{dist}(\lambda, \sigma(\mathrm  T^x) \setminus \mathcal T) <\delta_2$ and $\operatorname{dist}(\lambda, \mathcal T) \leq C_2$. If $\|(\mathrm  T^x - \lambda)\psi^x\|\geq \delta_2$ then
	\begin{equation*}
		C_3 \|(\mathrm  T^x - \lambda)\psi^x\| \geq C_3 \delta_2 \geq C_2 \geq \operatorname{dist}(\lambda, \mathcal T).
	\end{equation*}
	So let us assume $\|(\mathrm  T^x - \lambda)\psi^x\|< \delta_2$. Then $\| P_{\alpha(\lambda)} \psi^x\|  \geq  \| P_{\alpha(\lambda)} \psi^x\|^2 \geq 1-\delta_1$ and $\|(1-P_{\alpha(\lambda)})\psi^x\big\| \leq \sqrt{\delta_1}$ and hence 
	\begin{align}
		\label{Equation: Estimate overlap of psi^x and phi^x}
		|\langle \varphi^x, \psi^x\rangle| &= \Big| \| P_{\alpha(\lambda)} \psi^x\| \Big\langle \varphi^x, \frac{P_{\alpha(\lambda)}\psi^x}{\| P_{\alpha(\lambda)} \psi^x\|} \Big\rangle + \big\langle \varphi^x, (1-P_{\alpha(\lambda)})\psi^x \big\rangle \Big| \nonumber  \\
		&\geq (1-\delta_1) \Big|\Big\langle \varphi^x, \frac{P_{\alpha(\lambda)}\psi^x}{\| P_{\alpha(\lambda)} \psi^x\|} \Big\rangle\Big| - \big\|\varphi^x\|\cdot \big\|(1-P_{\alpha(\lambda)})\psi^x\big\| \geq C_1
	\end{align}
	since $\alpha(\lambda)\notin \mathcal T$ as $\delta_2 < \delta/2$ and $\operatorname{dist}(\lambda, \sigma(\mathrm  T^x) \setminus \mathcal T) <\delta_2$. Hence,
	\begin{equation*}
		C_3|\langle \varphi^x, \psi^x\rangle| \geq C_3 C_1 \geq C_2 \geq \operatorname{dist}(\lambda, \mathcal T).
	\end{equation*}
	Hence, in any case we have for all $\psi^x\in l^2(\mathrm V^x)$ with $\|\psi^x\| = 1$ and all $\lambda \in \R$
	\begin{equation*}
		\operatorname{dist}(\lambda, \mathcal T)^2 \leq C_3^2 \|(\mathrm  T^x-\lambda) \psi^x\|^2 + C_3^2  |\langle \varphi^x, \psi^x \rangle |^2.
	\end{equation*}
	Let us now reintroduce the randomness in the sprinkling.
	As $m$ is almost surely bounded by $M$, the support of $\mathrm P$ is finite. Hence, there exists some constant $\tilde C>0$ such that almost surely for all $\lambda \in \R$
	\begin{equation*}
		\operatorname{dist}(\lambda, \mathcal T)^2 \leq \tilde C \cdot \inf \big\{\|(\mathrm  T^x-\lambda) \psi^x\|^2 +   |\langle \varphi^x, \psi^x \rangle |^2:\, x \in \mathbb V, \,\psi^x\in l^2(\mathrm V^x),\, \|\psi^x\|= 1 \big\}.
	\end{equation*}
	Now, let $\psi\in \bigoplus_{x\in \mathbb V^+} l^2(\mathrm V^x) $ with $\|\psi\| = 1$. As before, we denote for $x\in \mathbb V$ by $\psi^x$  the restriction of $\psi$ to $\mathrm V^x$. Then
	\begin{align*}
		\operatorname{dist}(\lambda, \mathcal T)^2 = \sum_{x\in \mathbb V} 	\operatorname{dist}(\lambda, \mathcal T) ^2  \|\psi^x\|^2 &\leq \tilde C \sum_{x\in \mathbb V} \|(\mathrm  T^x - \lambda)\psi^x\|^2 + |\langle \varphi^x, \psi^x \rangle |^2 \\
		&= \tilde  C\|(T - \lambda)\psi\|^2 + \tilde C\|E\psi\|^2.
	\end{align*}
	To finish the proof, it is sufficient to notice that the same argument (via \eqref{Equation: Estimate overlap of psi^x and phi^x}) shows that there exists some $C \geq \tilde C$ such that if $\mathcal T = \emptyset$ then for all $\psi\in \bigoplus_{x\in \mathbb V^+} l^2(\mathrm V^x) $ with $\|\psi\| = 1$
	\begin{equation*}
		1 \leq C\|(T - \lambda)\psi\|^2 + C\|E\psi\|^2. \qedhere
	\end{equation*}
\end{proof}

\begin{proof}[Proof of Theorem \ref{Theorem: Anderson representation of the spectrum}]
	We start by showing that 
	\begin{equation*}
		\big\{\lambda \in \mathbb R:\, \lambda \in \sigma(H_\lambda)  \big\} \cup \mathcal T \subseteq \sigma(A).
	\end{equation*}
	Since $\mathcal T \subseteq \sigma(A)$ holds by definition, let $\lambda \in \R$ be such that $\lambda \in \sigma(H_\lambda)$. Let $\varepsilon>0$. By Weyl's criterion, there exists some $\tilde \phi\in l^2_c(\mathbb V_\lambda)$ with $\|\tilde \phi\| = 1$ such that $\|(H_\lambda - \lambda) \tilde \phi\| < \varepsilon$. For every $x\in \mathbb V\setminus \mathbb V_\lambda$, let $\Psi^{x}$ be an eigenvector of $\mathrm G^x$ such that $\langle \varphi^x, \Psi^{x} \rangle \neq 0$. We define $(\phi, \psi) \in l^2_c(\mathcal V)$ by
	\begin{align*}
		\phi_x &\coloneqq
		\begin{cases}
			\tilde \phi_x &\text{ for } x\in \mathbb V_\lambda \\
			0 &\text{ for } x\in \mathbb V \setminus \mathbb V_\lambda 
		\end{cases} \\
		\psi^x &\coloneqq
		\begin{cases}
			\tilde \phi_x (\lambda-\mathrm  T^x)^+ \varphi^x  &\text{ for } x\in \mathbb V^+ \cap \mathbb V_\lambda \\
			-\frac{1}{\langle \varphi^x, \Psi^{x} \rangle} \sum_{y \sim x, \, y\in \mathbb V_\lambda}\tilde \phi_y \cdot \Psi^{x}  &\text{ for } x\in  \mathbb V \setminus \mathbb V_\lambda
		\end{cases} 
	\end{align*}
	where $\psi^x$ again denotes the projection of $\psi$ onto $\mathrm V^x$. 
	By Lemma \ref{Lemma: when is varphi in image}, we have for $x\in \mathbb V^+ \cap \mathbb V_\lambda$
	\begin{equation*}
		\varphi^x \in \operatorname{ker}(\mathrm  T^x - \lambda)^\perp = \operatorname{im}(\mathrm  T^x - \lambda).
	\end{equation*}
	Since  $(\lambda-\mathrm  T^x)(\lambda - \mathrm  T^x)^+$ is the orthogonal projection onto $\operatorname{im}(\mathrm  T^x -\lambda)$, we hence have
	\begin{equation}
		\label{Equation: can't come up with a name right now}
		[(A-\lambda)(\phi, \psi)]^x = (\mathrm  T^x - \lambda) \psi^x + \tilde \phi_x \varphi^x = (\mathrm  T^x - \lambda)(\lambda - \mathrm  T^x)^+ \tilde \phi^x \varphi^x +\tilde \phi^x \varphi^x  = 0
	\end{equation}
	for all $x\in \mathbb V^+ \cap \mathbb V_\lambda$. Therefore, by Lemma \ref{Lemma: get Anderson potential from solution to eigenvalue equation}, $(E \psi)_x = \langle \varphi^x, \psi^x \rangle = -r_x(\lambda) \tilde \phi_x$
	for all $x\in \mathbb V^+ \cap \mathbb V_\lambda$, which implies that
	\begin{equation*}
		[(A-\lambda)(\phi, \psi)]_x = \big[(B-\lambda) \phi + E\psi\big]_x =  [(H_\lambda-\lambda) \tilde \phi]_x
	\end{equation*}
	for all $x\in \mathbb V_\lambda$ (as $\phi_x = r_x(\lambda)= 0$ for all $x\in \mathbb V\setminus \mathbb V^+$). Notice that we defined $(\phi, \psi)$ exactly such that
	\begin{equation*}
		[(A-\lambda)(\phi, \psi)]_v = 0 \text{ for all } v\in \mathcal V \setminus \mathbb V_\lambda
	\end{equation*}
	(if $v \in \mathcal V_x$ for some $x\in \mathbb V^+ \cap \mathbb V_\lambda$ this follows from \eqref{Equation: can't come up with a name right now}, for all other $v\in \mathcal V \setminus \mathbb V_\lambda$ this follows directly from the definition). As $\| (\phi, \psi) \| \geq \|\tilde \phi\| =1$, we hence have
	\begin{equation*}
		\Big\|(A-\lambda) \frac{(\phi, \psi)}{\| (\phi, \psi) \|} \Big\| \leq  \|(A-\lambda)(\phi, \psi)\| = \|(H_\lambda -\lambda)  \tilde \phi\| < \varepsilon.
	\end{equation*}
	As $\varepsilon>0$ was arbitrary, we obtain $\lambda \in \sigma(A)$. Let us now show that also the inclusion
	\begin{equation*}
		\sigma(A) \subseteq \big\{\lambda \in \mathbb R:\, \lambda \in \sigma(H_\lambda)  \big\} \cup \mathcal T
	\end{equation*}
	holds. For every $x\in \mathbb V \setminus \mathbb V_\lambda$, there exists some $ \Psi^{x} \in \operatorname{ker}(T^{x} -\lambda)$ such that $\|\Psi^{x}\|= 1$ and such that $\langle \varphi^x, \Psi^x\rangle \neq 0$. Since Assumption \eqref{Assumption:: m a.s. bounded} holds, we can choose the $\Psi^x$ such that
	\begin{equation*}
		\delta \coloneqq \inf_{x \in \mathbb V \setminus \mathbb V_\lambda} |\langle \varphi^x, \Psi^x\rangle| > 0, \quad C \coloneqq \sup_{x \in \mathbb V^+} \|(\mathrm  T^x - \lambda)^+ \| < \infty.
	\end{equation*}
	Let $\lambda \in \sigma(A)$ and let $(\phi, \psi) \in l^2_c(\mathbb V)$.
	Let $\tilde \phi$ denote the restriction of $\phi$ to $\mathbb V_\lambda$.
	We have for all $x\in \mathbb V_\lambda$
	\begin{align}
		\label{Equation: Estimate approximate eigenvector of Anderson Hamiltonian}
		\big|\big[(H_\lambda - \lambda) \tilde \phi \big]_x\big| &= \big|\big[(B_\lambda - \lambda + V_\lambda)\tilde \phi  \big]_x\big| \nonumber  \\
		&\leq \big|\big[(B - \lambda) \phi +E\psi]_x\big| + \big|(B_\lambda \tilde\phi)_x -  (B\phi)_x \big|+ \big|(V_\lambda \tilde \phi)_x - (E\psi)_x\big|\nonumber \\
		&\leq \big|\big[(B - \lambda) \phi +E\psi]_x\big| + \sum_{y\sim x, y\notin \mathbb V_\lambda} |\phi_y| + \big|(V_\lambda \tilde \phi)_x - (E\psi)_x\big|.
	\end{align}
	Let $y\in \mathbb V \setminus \mathbb V_\lambda$ and let $P^y_\lambda$ denote the orthogonal projection onto $\operatorname{ker}(T^y - \lambda)$. Since $y\in \mathbb V \setminus \mathbb V_\lambda$
	\begin{align}
		\label{Equation: Estimate phi_y for y outside of percolation graph}
		|\phi_y| |\langle \Psi^y, \varphi^y \rangle|&= |\langle\Psi^y, \phi_y \varphi^y \rangle| \nonumber \\ 
		&\leq \|P_\lambda^y \phi_y \varphi^y \|
		= \operatorname{dist}(\phi_y \varphi^y, \operatorname{ker}(T^y - \lambda)^\perp)
		= \operatorname{dist}(\phi_y \varphi^y, \operatorname{im}(T^y - \lambda)) \nonumber \\
		&\leq \|(T^y - \lambda)\psi^y + \phi_y \varphi^y \|
	\end{align}
	such that
	\begin{align}
		\label{Equation: Estimate sum of phi_y for y outside of percolation graph}
		\sum_{x \in \mathbb V_\lambda} \Big|\sum_{y\sim x, y\notin \mathbb V_\lambda}  |\phi_y| \Big|^2 &\leq \sum_{x \in \mathbb V_\lambda} d \sum_{y\sim x, y\notin \mathbb V_\lambda}  |\phi_y|^2 
		\leq \sum_{y \notin \mathbb V_\lambda} d^2 |\phi_y|^2 \nonumber\\
		&\leq \frac{d^2}{\delta^2} \sum_{y \notin \mathbb V_\lambda}  \|(T^y - \lambda)\psi^y + \phi_y \varphi^y \|^2
		\leq \frac{d^2}{\delta^2} \|(T - \lambda)\psi + E^* \phi\|^2
	\end{align}
	with $d \coloneqq \sup_{x \in \mathbb V} \operatorname{deg}(x)$.
	On the other hand, for $x\in \mathbb V_\lambda\cap \mathbb V^+$ we have with Lemma \ref{Lemma: r in terms of pseudoinverse}
	\begin{align*}
		\big|(V_\lambda \tilde \phi)_x - (E\psi)_x\big| &= \big|\langle \varphi^x, (\lambda-\mathrm  T^x)^+ \varphi^x \rangle \phi_x - \langle \varphi^x, \psi^x \rangle \big| \\
		&\leq \big|\big\langle (\lambda-\mathrm  T^x)^+\varphi^x, \phi_x \varphi^x - (\lambda-\mathrm  T^x)\psi^x \big\rangle \big| + \big|\big\langle \varphi^x, (\lambda - \mathrm  T^x)^+(\lambda - \mathrm T^x) \psi^x - \psi^x \big\rangle \big|.
	\end{align*}
	Now, by symmetry of $\mathrm  T^x$, we have that $(\lambda - \mathrm  T^x)^+(\lambda - \mathrm  T^x)$ is the orthogonal projection onto $\operatorname{im}(\lambda - \mathrm  T^x)$ and hence
	\begin{equation*}
		\langle \varphi^x, (\lambda - \mathrm  T^x)^+(\lambda - \mathrm  T^x) \psi^x - \psi^x \big\rangle = \langle \varphi^x, P^x_\lambda \psi \rangle = 0
	\end{equation*}
	as $\varphi^x \in \operatorname{ker}(\lambda - \mathrm  T^x)^\perp$ for $x\in \mathbb V_\lambda \cap \mathbb V^+$. As we additionally have
	\begin{equation*}
		\big|\big\langle (\lambda-\mathrm  T^x)^+\varphi^x, \phi_x \varphi^x - (\lambda-\mathrm  T^x)\psi^x \big\rangle \big| \leq C M \|\phi_x \varphi^x - (\lambda-\mathrm  T^x)\psi^x\| 
	\end{equation*}
	we get
	\begin{equation*}
		\big|(V_\lambda \tilde \phi)_x - (E\psi)_x\big| \leq  CM \|(\mathrm  T^x - \lambda)\psi^x + \phi_x \varphi^x\|.
	\end{equation*}
	for all $x\in \mathbb V_\lambda \cap \mathbb V^+$. Therefore
	\begin{equation}
		\label{Equation: Estimate difference of diagonal terms}
		\sum_{x\in \mathbb V_\lambda} \big|(V_\lambda \tilde \phi)_x - (E\psi)_x\big|^2 \leq C^2 M^2 \sum_{x\in \mathbb V_\lambda} \|(\mathrm  T^x - \lambda)\psi^x + \phi_x \varphi^x\|^2 =  C^2 M^2 \|(T - \lambda)\psi + E^*\phi \|^2.
	\end{equation}
	Combining \eqref{Equation: Estimate sum of phi_y for y outside of percolation graph} and \eqref{Equation: Estimate difference of diagonal terms}, we can further estimate in \eqref{Equation: Estimate approximate eigenvector of Anderson Hamiltonian} to obtain
	\begin{align}
		\label{Equation: Estimate norm approximate eigenfunction Anderson Hamiltonian}
		\frac{1}{3}\|(H_\lambda - \lambda) \tilde \phi \|^2 &\le \sum_{x\in \mathbb V_\lambda} \big|\big[(B - \lambda) \phi +E\psi]_x\big|^2 + \sum_{x\in \mathbb V_\lambda} \Big|\sum_{y\sim x, y\notin \mathbb V_\lambda}  |\phi_y| \Big|^2 +  \sum_{x\in \mathbb V_\lambda} \big|(V_\lambda \tilde \phi)_x - (E\psi)_x\big|^2 \nonumber \\
		&\leq \|(B-\lambda)\phi + E\psi\|^2  + (d^2\delta^{-2} + C^2 M^2)\|\|(T - \lambda)\psi + E^*\phi \|^2.
	\end{align}
	Since $\lambda \in \sigma(A)$ there exists for every $\varepsilon>0$ some $(\phi_\varepsilon, \psi_\varepsilon)\in l^2(\mathcal V)$ such that $\|(\phi_\varepsilon, \psi_\varepsilon)\| = 1$ and 
	\begin{equation}
		\label{Equation: approximative eigenvector for A}
		\|(A-\lambda)(\phi_\varepsilon, \psi_\varepsilon) \|^2 =  \|(B-\lambda)\phi_\varepsilon + E\psi_\varepsilon\|^2 + \|(T-\lambda)\psi_\varepsilon + E^*\phi_\varepsilon\|^2 < \varepsilon.
	\end{equation}
	Again, let $\tilde \phi_\varepsilon$ be the restriction of $\phi_\varepsilon$ to $\mathbb V_\lambda$. By \eqref{Equation: Estimate norm approximate eigenfunction Anderson Hamiltonian}, we have
	\begin{equation*}
		\lim_{\varepsilon \downarrow 0}\|(H_\lambda - \lambda) \tilde \phi_\varepsilon \|^2 = 0.
	\end{equation*}
	Hence, as long as $\tilde \phi_\varepsilon$ does not converge to zero as $\varepsilon \downarrow 0$ then we obtain (after switching to a subsequence and normalizing)  $\lambda \in \sigma(H_\lambda)$ by Weyl's criterion. So assume that $\tilde \phi_\varepsilon \to 0$ as $\varepsilon \downarrow 0$. Then we even have $\phi_\varepsilon \to 0$ as $\varepsilon \downarrow 0$ since \eqref{Equation: Estimate sum of phi_y for y outside of percolation graph} and \eqref{Equation: approximative eigenvector for A} imply
	\begin{equation*}
		\sum_{y \in \mathbb V \setminus \mathbb V_\lambda} |\phi_y|^2 \leq \frac{1}{\delta^2} \|(T - \lambda)\psi_\varepsilon + E^* \phi_\varepsilon\|^2 \to 0.
	\end{equation*}
	Hence, we obtain with \eqref{Equation: approximative eigenvector for A}
	\begin{align*}
		\|E\psi_\varepsilon\|&\leq  \|(B-\lambda)\phi_\varepsilon + E\psi_\varepsilon\| +  \|(B-\lambda)\phi_\varepsilon\| \to 0 \\
		\|(T-\lambda) \psi_\varepsilon\| &\leq \|(T-\lambda)\psi_\varepsilon + E^*\phi_\varepsilon\| + \|E^* \phi_\varepsilon\|\leq \|(T-\lambda)\psi_\varepsilon + E^*\phi_\varepsilon\| + M\|\phi_\varepsilon\| \to 0
	\end{align*}
	as $\varepsilon \downarrow 0$. Since $\|\psi_\varepsilon\|^2 = 1- \|\phi_\varepsilon\|^2 \to 1$ as $\varepsilon\downarrow0$ we obtain with Lemma \ref{Lemma: Estimate distance to tuning fork eigenvalues} that $\operatorname{dist}(\lambda, \mathcal T) = 0$ i.e.\ that $\lambda \in \mathcal T$.
\end{proof}

\section{Support of the density of states: proof of Theorem \ref{Theorem: Spectrum in the lattice case}}
\label{Section: support of the DOS}

We now apply Theorem \ref{Theorem: Anderson representation of the spectrum} in order to prove Theorem \ref{Theorem: Spectrum in the lattice case} and determine the support of the density of states under the given assumptions. We start by showing that, under the assumptions of Theorem \ref{Theorem: Spectrum in the lattice case}, the spectrum of the operators $A$, $H_\lambda$, are almost surely constant by ergodic theory. A random operator\footnote{While the usual definition of a random operator entails that $H(\omega)$ acts for all $\omega\in \Omega$ on the same Hilbert space, for simplicity of notation, we ignore this fact and simply note that  $\ell^2(\mathcal V) \cong \ell^2(\mathbb N)$ under the assumptions of Theorem \ref{Theorem: Spectrum in the lattice case}.} $H$ defined on a probability space $(\Omega, \mathcal F, \mathbb P)$ is called ergodic if there exists a family of measure preserving transformations $(\Phi_i)_{i\in I}$ on $\Omega$ which is ergodic in the sense that
\begin{equation*}
	\forall C\in \mathcal F:\, \big((\forall i \in I:\, \Phi_i^{-1}(C) = C) \implies \mathbb P(C) \in \{0, 1\}\big)
\end{equation*}	
such that $H(\Phi_i\omega)$ is for every $\omega \in \Omega$ and $i\in I$ unitarily equivalent to $H(\omega)$. If $H$ is an ergodic operator, then its spectrum is almost surely constant \cite{KM82}. While our operators $A$ and $H_\lambda$ are not ergodic themselves (as the distribution of $(\mathrm G^\mathbb o, \varphi^\mathbb o)$ is biased), their distributions are absolutely continuous with respect to ergodic operators, namely the one obtained by removing the bias of $\mathrm G^\mathbb o$ (which can, for example, be done by conditioning on $o \in \mathbb V$). Hence, they have almost surely constant spectrum.

\begin{prop}
	\label{Proposition: Ergodicity of operators}
	Assume that $\mathbb G$ is deterministic, vertex transitive and infinite. Consider the operators $\hat A$, $\hat H_\lambda$, $\lambda \in \R$ which are obtained by removing the bias of $(\mathrm G^{\mathbb o}, \varphi^\mathbb o)$. Then $\hat A$, $\hat H_\lambda$ can be defined as ergodic operators.
\end{prop}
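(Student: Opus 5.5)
We realise everything on a product probability space and let the automorphism group of $\mathbb G$ act on it by shifting the i.i.d.\ labels. Concretely, we take $\Omega \coloneqq \prod_{x\in \mathbb V} \Omega_0$, where $\Omega_0$ is the (countable, since graphs on finite subsets of $\N$ are countable) state space of pairs $(\mathrm G, \varphi)$. We equip it with the product $\sigma$-algebra and the product measure $\mathbb P \coloneqq \mathrm P^{\otimes \mathbb V}$. Since the bias at $\mathbb o$ is removed, all coordinates are i.i.d.\ with law $\mathrm P$. For $\omega = (\mathrm G^x, \varphi^x)_{x\in \mathbb V}$ we let $\hat A(\omega)$ be the adjacency operator of the sprinkled graph $\mathcal G(\omega)$. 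We let $\hat H_\lambda(\omega) = B_\lambda + V_\lambda$ act on $l^2(\mathbb V_\lambda(\omega))$, extended if necessary by the zero operator on $l^2(\mathbb V\setminus \mathbb V_\lambda)$, so that all operators act on a common Hilbert space. The index set is $I \coloneqq \operatorname{Aut}(\mathbb G)$, acting by $(\Phi_g \omega)_x \coloneqq \omega_{g^{-1}x}$. Each $\Phi_g$ is measure preserving because it only permutes i.i.d.\ coordinates.

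The unitary equivalence is then a relabelling argument. For $g \in \operatorname{Aut}(\mathbb G)$, define $U_g$ on $l^2(\mathcal V)$ by $U_g\delta_x = \delta_{gx}$ for $x\in\mathbb V$ and $U_g \delta_{(xt)} = \delta_{(gx\, t)}$. Since $g$ preserves the edges of $\mathbb G$ and the attached graph at $gx$ in $\Phi_g\omega$ is that at $x$ in $\omega$, $U_g$ is induced by a graph isomorphism $\mathcal G(\omega)\to\mathcal G(\Phi_g\omega)$. Hence $U_g \hat A(\omega) U_g^* = \hat A(\Phi_g \omega)$, and this holds on the common core of finitely supported vectors, hence for the closures. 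Strictly, $\mathcal V$ depends on $\omega$. Following the footnote, we fix an identification of each $l^2(\mathcal V(\omega))$ with $l^2(\N)$, or we simply note that unitary equivalence is what is required. For $\hat H_\lambda$, the restriction of $U_g$ to $l^2(\mathbb V)$ maps $\mathbb V_\lambda(\omega)$ onto $\mathbb V_\lambda(\Phi_g\omega)$, since $r_{gx}(\lambda)(\Phi_g\omega) = r_x(\lambda)(\omega)$. It therefore intertwines both $B_\lambda$ and $V_\lambda$.

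The remaining, and main, step is ergodicity of $(\Phi_g)_{g}$. Here we use that $\mathbb G$ is infinite and vertex transitive: for every finite $F\subseteq\mathbb V$ and every finite set of vertices there should exist $g$ with $gF$ disjoint from it. Given this, the standard mixing argument applies. Take an invariant set $C$ and approximate it by a cylinder set $C_\varepsilon$ depending only on the coordinates in a finite set $F$. Choose $g$ with $gF\cap F=\emptyset$. Then $\Phi_g^{-1}C_\varepsilon$ depends on coordinates disjoint from $F$ and is therefore independent of $C_\varepsilon$. This yields $\mathbb P(C) = \mathbb P(C\cap \Phi_g^{-1}C) \approx \mathbb P(C_\varepsilon)^2 \approx \mathbb P(C)^2$, so $\mathbb P(C)\in\{0,1\}$.

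The existence of such $g$ is where the potential obstacle lies. By transitivity we can map a fixed vertex of $F$ to a vertex at distance greater than $2\operatorname{diam}(F)$ from $F$, which exists since $\mathbb G$ is infinite and connected and hence unbounded. Then $gF$ lies within distance $\operatorname{diam}(F)$ of that far vertex, and is therefore disjoint from $F$, because automorphisms preserve distances. This disposes of the difficulty. Finally, measurability of $\omega\mapsto \hat A(\omega)$ and $\omega\mapsto\hat H_\lambda(\omega)$, in the sense of weak measurability of the resolvents, follows because matrix elements depend on finitely many coordinates and the operators are essentially self-adjoint on finitely supported vectors under \eqref{Assumption: bounded degrees in G} or \eqref{Assumption:: m a.s. bounded}.
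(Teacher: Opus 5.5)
Your proposal is correct and follows essentially the paper's route: i.i.d.\ labels on $\Omega_0^{\mathbb V}$ are shifted by automorphisms of $\mathbb G$, unitary equivalence comes from relabelling vertices, and ergodicity follows from a Hewitt--Savage-type argument that approximates an invariant set by a cylinder event and translates it to a disjoint region; your explicit construction of $g$ with $gF\cap F=\emptyset$ just spells out the paper's choice of $y$ with $(\mathbb G,y)_R$ and $(\mathbb G,\mathbb o)_R$ disjoint. One small caution: extending $\hat H_\lambda$ by the zero operator on $l^2(\mathbb V\setminus\mathbb V_\lambda)$ adds $0$ to the spectrum, which matters for the later use (almost sure constancy of $\sigma(\hat H_\lambda)$), so it is cleaner to identify $l^2(\mathbb V_\lambda(\omega))$ with $l^2(\N)$ as in the paper's footnote, or to apply the invariance argument to $\hat H_\lambda$ itself.
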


\begin{proof}
	We only show the statement for $\hat A$, the statement for $\hat H_\lambda$ can be shown in the same manner.
	We equip
	\begin{equation*}
		\Omega_0 \coloneqq \Big\{ (\mathrm G, \varphi):\, \mathrm G \text{ finite graph with vertex set }\mathrm V \subset \N, \, \varphi \in \{0, 1\}^{\mathrm V}   \Big\}
	\end{equation*}
	with the power set $\mathcal F_0 = \mathrm P(\Omega_0)$ as a $\sigma$-algebra and define our iid sequence $(\mathrm G^x, \varphi^x)_{x\in \mathbb V}$ as the identity on $(\Omega_0^{\mathbb V}, \mathcal F_0^{\otimes \mathbb V}, \mathrm P^{\otimes \mathbb V})$. For all $x, y\in \mathbb G$ there exists by assumption a graph automorphism $\phi_{xy}:\mathbb V \to \mathbb V$ with $\phi_{xy}(x) = y$. The map $\phi = \phi_{xy}$ naturally defines a measure preserving transformation $\Phi = \Phi_{xy}$ on $(\Omega_0^{\mathbb V}, \mathcal F_0^{\otimes \mathbb V}, \mathrm P^{\otimes \mathbb V})$ by
	\begin{equation*}
		\Phi(\mathrm G^z, \varphi^z)_{z\in \mathbb V} \coloneqq (\mathrm G^{\phi^{-1}(z)}, \varphi^{\phi^{-1}(z)} )_{z\in \mathbb V}.
	\end{equation*}
	For all $\omega \in  \Omega_0^{\mathbb V}$, our adjacency operator $\hat A$ satisfies $\hat A(\Phi\omega) = U_\omega^* \hat A(\omega) U_\omega$ where the unitary map $U_\omega:l^2(\mathcal V(\Phi\omega)) \to l^2(\mathcal V(\omega) )$ is defined by
	\begin{equation*}
		\forall z\in \mathbb V:\, (U_\omega\psi)_z \coloneqq 	\psi_{\phi(z)}, \quad \forall z\in \mathbb V^+, \forall t\in \mathrm V^z(\omega):\, (U_\omega\psi)_{(zt)} \coloneqq \psi_{(\phi(z)t)}
	\end{equation*}
	for all $\psi \in l^2(\mathcal V(\Phi\omega))$. It is left to show that the family $(\Phi_{xy})_{x, y \in \mathbb V}$ is ergodic. We proceed similarly to the proof of the Hewitt–Savage zero–one law. 
	Let $C \in \mathcal F$ be such that $\Phi^{xy} C = C$ for all $x, y\in \mathbb V$. We denote by $\Delta$ the symmetric set difference. Let $\varepsilon>0$. Then there exists some $R>0$ and some $\tilde C \in \sigma\big((\mathrm G^{x}, \varphi^x)_{x \in (\mathbb G, \mathbb o)_R}\big)$ such that $\mathbb P(C \Delta \tilde C) < \varepsilon$. Let $y\in \mathbb V$ be such that $(\mathbb G, y)_R$ and $(\mathbb G, \mathbb o)_R$ do not have any vertices in common and let $\Phi = \Phi_{\mathbb oy}$. For $D \coloneqq \Phi \tilde C$ we have
	\begin{equation*}
		\mathbb P(C \Delta D) = \mathbb P(\Phi C \Delta \Phi \tilde C) = \mathbb P( C \Delta \tilde C)< \varepsilon
	\end{equation*}
	and hence
	\begin{equation*}
		\mathbb P\big(C \Delta (\tilde C \cap D)\big) \leq \mathbb P(C \Delta \tilde C) + \mathbb P(C \Delta D) < 2 \varepsilon.
	\end{equation*}
	Now $D$ (which only depends on vertices in $(\mathbb G, y)_R$) and $\tilde C$ are independent and hence
	\begin{equation*}
		\mathbb P(C) \leq \mathbb P( \tilde C \cap D) + \mathbb P\big(C \Delta (\tilde C \cap D)\big) \leq \mathbb P(\tilde C)^2 + 2 \varepsilon \leq (\mathbb P(C) + \varepsilon)^2 + 2 \varepsilon.
	\end{equation*}
	Since $\varepsilon>0$ was arbitrary, we obtain $\mathbb P(C) \leq \mathbb P(C)^2$ and hence $\mathbb P(C) \in \{0, 1\}$.
\end{proof}

\begin{prop}
	\label{Proposition: Spectrum agrees almost surely with support of DOS measure}
	Assume that $\mathbb G$ is deterministic, vertex transitive and infinite. Then $\sigma(A) = \Sigma \coloneqq \operatorname{supp}(\nu)$ almost surely.
\end{prop}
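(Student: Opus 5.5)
The plan is to follow the standard argument for ergodic operators, that the almost sure spectrum equals the support of the density of states, and to handle the bias at the root by working with $\hat A$ from Proposition \ref{Proposition: Ergodicity of operators}. First I will reduce to the unbiased operator. The law of $(\mathrm G^{\mathbb o},\varphi^{\mathbb o})$ under $\widehat{\mathrm P}$ is absolutely continuous with respect to $\mathrm P$: it has density $(1+m)/(1+\mathrm E[m])$. All other attached graphs have the same law in both models. Hence the laws of $A$ and $\hat A$ are mutually absolutely continuous on the event $\{m_{\mathbb o}\ge 0\}$, which is everything, so almost sure statements transfer. By Proposition \ref{Proposition: Ergodicity of operators} and \cite{KM82}, $\sigma(\hat A)=\Sigma'$ almost surely for some deterministic closed set $\Sigma'$, and therefore also $\sigma(A)=\Sigma'$ almost surely. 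It remains to show $\Sigma'=\Sigma$.

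For $\Sigma\subseteq\Sigma'$: if $\lambda\notin\Sigma'$, pick an open interval $I\ni\lambda$ with $I\cap\Sigma'=\emptyset$. Then $\1_I(A)=0$ almost surely, so $\nu(I)=\mathbb E\langle\delta_o,\1_I(A)\delta_o\rangle=0$ and $\lambda\notin\Sigma$.

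For $\Sigma'\subseteq\Sigma$: let $I$ be an open interval with $\nu(I)=0$. Then $\langle\delta_o,\1_I(A)\delta_o\rangle=0$ almost surely, i.e.\ $\1_I(A)\delta_o=0$ almost surely. Applying ``everything shows up at the root'' \eqref{Equation: Everything shows up at the root} to the measurable event $M=\{(\mathcal G,v):\1_I(A)\delta_v=0\}$ gives $\1_I(A)\delta_v=0$ for all $v\in\mathcal V$ almost surely. Since $\{\delta_v\}$ spans a dense set, $\1_I(A)=0$, so $\sigma(A)\cap I=\emptyset$ almost surely, hence $I\cap\Sigma'=\emptyset$. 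Taking a countable base of intervals shows $\Sigma'\subseteq\Sigma$.

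The main technical point is checking that $M$ is a measurable subset of $\mathscr G$, i.e.\ that $(\mathcal G,v)\mapsto\langle\delta_v,\1_I(A)\delta_v\rangle$ is measurable in the local topology. I will approximate $\1_I$ by continuous functions and these by resolvents. The resolvent entries $\langle\delta_v,(A-z)^{-1}\delta_v\rangle$ are limits of functions of finite balls, using the strong resolvent convergence of truncations that is guaranteed by essential self-adjointness, exactly as in the proof of Proposition \ref{Proposition: Weak convergence of DOS measures of Anderson Hamiltonian}. Alternatively, one can avoid unimodularity and argue directly with the unbiased ergodic operator: $\mathbb E\langle\delta_x,\1_I(\hat A)\delta_x\rangle$ is the same for all $x\in\mathbb V$ by transitivity, and it is bounded by a constant multiple of $\nu(I)$ for both core vertices and attached vertices via the bias density.
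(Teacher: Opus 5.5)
Your proposal is correct and follows the paper's proof. Almost sure constancy of $\sigma(A)$ comes from the ergodic unbiased operator together with absolute continuity of the biased root law, the inclusion $\Sigma\subseteq\Sigma'$ comes from taking expectations at the root, and the reverse inclusion comes from \eqref{Equation: Everything shows up at the root} (the paper applies it once to $M=\{(G,v):\mu_{(G,v)}(\R\setminus\Sigma)=0\}$ rather than interval by interval). Your measurability check and your alternative argument via transitivity and the bias density $(1+m)/(1+\mathrm E[m])$ are valid supplements that the paper does not spell out.
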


\begin{proof}
	By Proposition \ref{Proposition: Ergodicity of operators}, there exist some set $\tilde \Sigma \subseteq \R$ such that $\sigma(A) = \tilde \Sigma$ almost surely.
	For a rooted locally finite graph $(G, v)$, we denote by $\mu_{(G, v)}$ the spectral measure of $G$ in the root $v$.
	For the inclusion $\Sigma \subseteq \tilde \Sigma$ it is sufficient to notice that for all $\lambda \in \R \setminus \tilde \Sigma$ there exists some $\varepsilon>0$ such that $I \coloneqq (\lambda -\varepsilon, \lambda + \varepsilon) \subseteq \R\setminus \tilde \Sigma$ and hence $\mu_{(\mathcal G, o)}(I) = 0$ almost surely i.e.\ $\nu(I) = \mathbb E[\mu_{(\mathcal G, o)}(I)] = 0$ and thus $\lambda \notin \Sigma$.  On the other hand, applying \eqref{Equation: Everything shows up at the root} for the choice $M \coloneqq \{(G, v):\, \mu_{(G, v)}(\R \setminus \Sigma) = 0\}$ yields for the spectral projection $P = \1_{\R \setminus \Sigma}(A)$ that almost surely $\| P \delta_v\|^2 =  \mu_{(\mathcal G, v)}(\R \setminus \Sigma) = 0$
	for all $v\in \mathcal V$, i.e.\ $P = 0$ almost surely and hence $\tilde \Sigma \subseteq \Sigma$. 
\end{proof}

\begin{prop}
	\label{Proposition: Kunz-Souillard-type theorem}  
	Under the assumptions of Theorem \ref{Theorem: Spectrum in the lattice case}, additionally assume that \eqref{Assumption:: m a.s. bounded} holds. We then have for all $\lambda \in \R$
	\begin{equation*}
		\mathbb P(\sigma(H_\lambda) = \Sigma_\lambda) = 1 \quad \text{ where }\quad \Sigma_\lambda \coloneqq \{\alpha \in \R:\, \mathbb P(\alpha + r(\lambda) \in \Sigma_0) > 0\}.
	\end{equation*}
\end{prop}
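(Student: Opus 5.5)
The proof splits into the two inclusions $\Sigma_\lambda\subseteq\sigma(H_\lambda)$ and $\sigma(H_\lambda)\subseteq\Sigma_\lambda$, each holding almost surely. Throughout, $\lambda$ is fixed, and the case $\mathbb P(r(\lambda)=\infty)=1$ is trivial: then $\mathbb V_\lambda=\emptyset$ almost surely, so $\sigma(H_\lambda)=\emptyset$, and $\Sigma_\lambda=\emptyset$ since $r(\lambda)$ is never finite. Two preliminary facts will be used.

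\begin{itemize}
\item Under \eqref{Assumption:: m a.s. bounded} the sprinkling distribution has finite support. Hence $r(\lambda)$ takes only finitely many values $r_1,\dots,r_k\in\R$ with positive probability, besides possibly $\infty$. This list includes $0$ if $\mathrm P(\mathrm G=\emptyset)>0$.
\item Since $\mathbb G$ is deterministic and vertex transitive, $\Sigma_0=\operatorname{supp}(\nu_0)$ coincides with $\sigma(B)$. By assumption $\Sigma_0$ is a symmetric interval, so $\Sigma_0=[-\|B\|,\|B\|]$.
\end{itemize}

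Consequently
\begin{equation*}
\Sigma_\lambda=\bigcup_{i=1}^k\big(\Sigma_0-r_i\big)=\bigcup_{i=1}^k\big[-r_i-\|B\|,\,-r_i+\|B\|\big],
\end{equation*}
which is a finite union of closed intervals and in particular closed. Finally, the bias of $(\mathrm G^{\mathbb o},\varphi^{\mathbb o})$ changes the law only by an absolutely continuous factor on a single site, so almost sure statements may be proved for the unbiased ergodic operator $\hat H_\lambda$ of Proposition \ref{Proposition: Ergodicity of operators}.

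\emph{Upper inclusion.} Write $H_\lambda=B_\lambda+V_\lambda$. The diagonal potential $V_\lambda$ has entries in $\{-r_1,\dots,-r_k\}$, so $\sigma(V_\lambda)\subseteq\{-r_i\}$. Moreover $\|B_\lambda\|=\|P_\lambda BP_\lambda^*\|\le\|B\|$. Let $\alpha$ satisfy $\operatorname{dist}(\alpha,\{-r_i\})>\|B\|$. Then $V_\lambda-\alpha$ is invertible with $\|(V_\lambda-\alpha)^{-1}\|<\|B\|^{-1}$, and a Neumann series shows that $H_\lambda-\alpha=(V_\lambda-\alpha)\big(1+(V_\lambda-\alpha)^{-1}B_\lambda\big)$ is invertible. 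Hence $\sigma(H_\lambda)\subseteq\bigcup_i[-r_i-\|B\|,-r_i+\|B\|]=\Sigma_\lambda$ deterministically. This is the only point where the symmetry of $\Sigma_0$ is used: it turns the crude bound $\sigma(V)+[-\|B\|,\|B\|]$ into exactly $\Sigma_0-r_i$.

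\emph{Lower inclusion.} This is the Kunz--Souillard step, and I expect it to carry the substance of the proof. Let $\alpha\in\Sigma_\lambda$ and pick $i$ with $\beta\coloneqq\alpha+r_i\in\Sigma_0=\sigma(B)$. Since $\sigma(H_\lambda)$ is closed, it suffices to find, for each $\varepsilon>0$, a unit vector $\psi\in l^2(\mathbb V_\lambda)$ with $\|(H_\lambda-\alpha)\psi\|<\varepsilon$.
\begin{enumerate}
\item By Weyl's criterion, and since finitely supported vectors form a core for the bounded operator $B$, there is a finitely supported unit vector $\phi$ with $\|(B-\beta)\phi\|<\varepsilon$. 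Its support lies in some ball $(\mathbb G,\mathbb o)_R$.
\item Consider the event that every vertex of a ball $(\mathbb G,y)_{R+1}$ satisfies $r_x(\lambda)=r_i$. This event has probability $\mathrm P(r(\lambda)=r_i)^{|(\mathbb G,\mathbb o)_{R+1}|}>0$. Choose infinitely many centres $y$ with pairwise disjoint balls. By independence and Borel--Cantelli, almost surely some such ball $(\mathbb G,y)_{R+1}$ is entirely contained in $\mathbb V_\lambda$ and carries constant potential $-r_i$. If $\mathbb G$ is finite, the statement is to be read for the limiting infinite graph.
\item Transport $\phi$ by an automorphism $\phi_{\mathbb o y}$ to a vector $\tilde\phi$ supported in $(\mathbb G,y)_R$. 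On this support, and on its neighbours within the ball of radius $R+1$, the operator $H_\lambda$ acts as $B-r_i$. Hence
\begin{equation*}
\|(H_\lambda-\alpha)\tilde\phi\|=\|(B-\beta)\phi\|<\varepsilon .
\end{equation*}
\end{enumerate}
Taking a countable sequence $\varepsilon_n\downarrow0$ and a countable dense set of $\alpha$, and then using closedness of $\sigma(H_\lambda)$, gives $\Sigma_\lambda\subseteq\sigma(H_\lambda)$ almost surely.

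The delicate points are making the event in step 2 sit inside $\mathbb V_\lambda$, which requires $r_i\neq\infty$ and is guaranteed by the choice of $i$, and justifying the reduction from the biased operator to the ergodic one. I would handle the latter by conditioning on $\{o\in\mathbb V\}$ as in Proposition \ref{Proposition: Spectrum agrees almost surely with support of DOS measure}, noting that the biased and unbiased laws are mutually absolutely continuous on the relevant events.
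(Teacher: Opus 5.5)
Your proof is correct. Your upper inclusion is essentially the paper's. The paper also uses $\operatorname{dist}(\sigma(H_\lambda),\sigma(V_\lambda))\le\|B_\lambda\|\le\|B\|$ together with $\Sigma_0=[-\|B\|,\|B\|]$. You make this perturbation bound explicit with a Neumann series and obtain the inclusion pointwise on the full-measure event where every $r_x(\lambda)$ lies in the finite support.

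\textbf{Lower inclusion: the genuine difference.} The paper first invokes Proposition \ref{Proposition: Ergodicity of operators} and the almost sure constancy of the spectrum of ergodic operators. This gives $\sigma(H_\lambda)=\tilde\Sigma_\lambda$ almost surely for a deterministic closed set $\tilde\Sigma_\lambda$. It then only needs \emph{positive} probability that $r_x(\lambda)=-\beta$ on the fixed support of an approximate eigenvector of $B$. That yields $\mathbb P(\operatorname{dist}(\alpha,\sigma(H_\lambda))<\varepsilon)>0$, hence $\operatorname{dist}(\alpha,\tilde\Sigma_\lambda)<\varepsilon$.

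You instead upgrade positive probability to probability one by hand. You use Borel--Cantelli over infinitely many disjoint balls and transport the approximate eigenvector by automorphisms of $\mathbb G$.

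\textbf{What each route buys.} The paper's route is shorter, because the ergodicity machinery is needed anyway for Proposition \ref{Proposition: Spectrum agrees almost surely with support of DOS measure}. Yours is self-contained and identifies the almost sure spectrum directly, without the abstract constancy theorem. Using radius $R+1$ also gives a genuine equality $\|(H_\lambda-\alpha)\tilde\phi\|=\|(B-\beta)\phi\|$. The paper's one-ball version only gives an inequality, since neighbours of the support may leave $\mathbb V_\lambda$; that inequality suffices there.

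\textbf{Two remarks on your write-up.} First, the reduction to the unbiased ergodic operator $\hat H_\lambda$, which you flag as a delicate point, is not used in your argument. You can choose all centres $y$ so that the balls avoid $\mathbb o$, and the upper bound only needs $\widehat{\mathrm P}\ll\mathrm P$.

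Second, your sentence about finite $\mathbb G$ is muddled. The statement tacitly requires $\mathbb G$ to be infinite, as does Proposition \ref{Proposition: Ergodicity of operators}. For a single vertex, $\sigma(H_\lambda)$ is random. Infiniteness is exactly what supplies your infinitely many disjoint balls, so state it as a hypothesis rather than reinterpreting the statement.
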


\begin{proof}
	While this follows as the Kunz--Souillard Theorem \cite{KS80} for the spectrum of the usual Anderson Hamiltonian, we give a proof for the convenience of the reader. By Proposition \ref{Proposition: Ergodicity of operators} there exists some deterministic set $\tilde \Sigma_\lambda \subset \R$ such that $\sigma(H_\lambda) = \tilde \Sigma_\lambda$ almost surely. Since $H_\lambda = B_\lambda + V_\lambda$, and as deleting vertices does not increase the spectral radius, we have
	\begin{equation}
		\label{Equation: Distance of spectra}
		\operatorname{dist}(\sigma(H_\lambda), \sigma(V_\lambda)) \leq \|B_\lambda\| \leq \|B\|.
	\end{equation}
	Since \eqref{Assumption:: m a.s. bounded} holds, the spectrum $\sigma(V_\lambda)$ is a finite set with
	\begin{equation*}
		\sigma(V_\lambda) = \{\beta \in \R: \, \mathbb P(-r(\lambda) = \beta) > 0\} \quad \text{a.s.}
	\end{equation*}
	As the spectrum of $B$ is a symmetric interval, we have $\Sigma_0 = \big[-\|B\|, \|B\|\big]$. Hence, if $\alpha \in \tilde \Sigma_\lambda$ then by \eqref{Equation: Distance of spectra}
	\begin{equation*}
		\mathbb P\big(\alpha + r(\lambda) \in \Sigma_0\big) = \mathbb P(|\alpha + r(\lambda)|\leq \|B\|) > 0.
	\end{equation*}
	On the other hand, assume that $\alpha\in \R$ is such that $\mathbb P(\alpha + r(\lambda) \in \Sigma_0) > 0$,
	i.e.\ that there almost surely exist some $\beta \in \sigma(V_\lambda)$ such that $\alpha - \beta \in \Sigma_0$. Let $\varepsilon>0$. Then there exists some compactly supported unit vector $\psi\in l^2_c(\mathbb V)$ such that $\|(B - \alpha + \beta)\psi\|< \varepsilon$. Now, with positive probability $r_x(\lambda) = -\beta$ for all $x\in \operatorname{supp}(\psi)$ (which in particular implies $x\in \mathbb V_\lambda$ for all $x\in \operatorname{supp}(\psi)$) and hence
	\begin{equation*}
		\mathbb P(\|(H_\lambda - \alpha)\psi\| < \varepsilon) \geq \mathbb P\big(\|(H_\lambda - \alpha)\psi\| = \|(B - \alpha + \beta)\psi\| \big) \geq \mathbb P(\forall x\in \operatorname{supp}(\psi):\, r_x(\lambda)= -\beta) > 0
	\end{equation*}
	and hence $\mathbb P(\operatorname{dist}(\alpha, \sigma(H_\lambda)) < \varepsilon) > 0$.
	Since $\sigma(H_\lambda) = \tilde \Sigma_\lambda$ almost surely and since $\tilde\Sigma_\lambda$ is closed we obtain $\alpha \in \tilde \Sigma_\lambda$.
\end{proof}

\begin{prop}
	\label{Proposition: Kunz-Souillard-type theorem simultaneously in all lambda}  
	Under the assumptions of Theorem \ref{Theorem: Spectrum in the lattice case}, additionally assume that \eqref{Assumption:: m a.s. bounded} holds. Then $\mathbb P(\forall \lambda \in \R:\, \sigma(H_\lambda) = \Sigma_\lambda) = 1$.
\end{prop}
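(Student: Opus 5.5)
The plan is to handle all $\lambda$ at once by isolating two countable families of probability-one events that do not depend on $\lambda$. The key structural fact is \eqref{Assumption:: m a.s. bounded}. Under it, $\mathrm P$ is supported on finitely many \emph{types} $\tau=(\mathrm G_\tau,\varphi_\tau)$, each of positive probability, and the empty graph is one of them. For every $\lambda$, the potential $V_\lambda$ and the vertex set $\mathbb V_\lambda$ are determined by the type configuration $(\tau_x)_{x\in\mathbb V}$ together with the finitely many functions $\lambda\mapsto r_\tau(\lambda)\in\hat{\mathbb R}$. The bias at the root only changes the law of $\tau_{\mathbb o}$ by an absolutely continuous factor, so it is harmless. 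Since $\mathbb G$ is infinite, locally finite and connected, $\mathbb V$ is countable.

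\emph{Upper inclusion.} Let $\Omega_1$ be the event that every type $\tau$ occurs at some vertex of $\mathbb V\setminus\{\mathbb o\}$. By Borel--Cantelli, $\mathbb P(\Omega_1)=1$. On $\Omega_1$ we have, simultaneously for every $\lambda$,
\begin{equation*}
\sigma(V_\lambda)=\{-r_\tau(\lambda):\, \tau \text{ a type with } r_\tau(\lambda)\neq\infty\}=\{\beta:\,\mathbb P(-r(\lambda)=\beta)>0\}.
\end{equation*}
I then repeat the first half of the proof of Proposition \ref{Proposition: Kunz-Souillard-type theorem} verbatim. It uses \eqref{Equation: Distance of spectra} and $\Sigma_0=[-\|B\|,\|B\|]$, and both are deterministic facts. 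This gives $\sigma(H_\lambda)\subseteq\Sigma_\lambda$ for all $\lambda$ on $\Omega_1$.

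\emph{Lower inclusion.} Fix a countable dense set $Q\subseteq\Sigma_0$. For each $e\in Q$ and $k\in\N$, fix a compactly supported unit vector $\psi_{e,k}$ with $\|(B-e)\psi_{e,k}\|<1/k$. Let $\Omega_2$ be the event that for every type $\tau$, every $e\in Q$ and every $k$, there is an automorphism $\phi$ of $\mathbb G$ such that every vertex of $\phi(\operatorname{supp}\psi_{e,k})$ has type $\tau$. The graph is infinite and vertex transitive, so we can pick infinitely many automorphic images of the finite set $\operatorname{supp}\psi_{e,k}$ that are pairwise disjoint and avoid $\mathbb o$. By independence, each carries type $\tau$ everywhere with the same positive probability, so Borel--Cantelli gives probability one. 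Being a countable intersection, $\mathbb P(\Omega_2)=1$.

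Now fix $\lambda$ and $\alpha\in\Sigma_\lambda$. There is a type $\tau$ with $r_\tau(\lambda)$ finite and $e_0:=\alpha+r_\tau(\lambda)\in\Sigma_0$. Given $\varepsilon>0$, choose $e\in Q$ with $|e-e_0|<\varepsilon$ and $k>1/\varepsilon$. On $\Omega_2$, let $\psi$ be the transported copy of $\psi_{e,k}$ sitting on a region where every vertex has type $\tau$. Then $\operatorname{supp}\psi\subseteq\mathbb V_\lambda$ and $V_\lambda\psi=-r_\tau(\lambda)\psi$. Moreover, $B_\lambda\psi$ is the restriction of $B\psi$ to $\mathbb V_\lambda$, so the contribution coming from $B$ can only decrease in norm. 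Hence
\begin{equation*}
\|(H_\lambda-\alpha)\psi\|\le\|(B-e_0)\psi\|\le\|(B-e)\psi\|+|e-e_0|<2\varepsilon .
\end{equation*}
Since $\varepsilon$ is arbitrary, Weyl's criterion gives $\alpha\in\sigma(H_\lambda)$. This holds for all $\lambda$ and all $\alpha\in\Sigma_\lambda$ on $\Omega_2$, so $\mathbb P(\forall\lambda:\,\sigma(H_\lambda)=\Sigma_\lambda)\ge\mathbb P(\Omega_1\cap\Omega_2)=1$.

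\emph{Main obstacle.} The delicate point is making the approximate eigenvectors independent of $\lambda$. Proposition \ref{Proposition: Kunz-Souillard-type theorem} produces a null set that depends on $\lambda$, and there are uncountably many $\lambda$. The remedy is to work with the finitely many types rather than with the values $r_x(\lambda)$, and with the countable dense energy set $Q$ rather than with $\alpha$ itself. One must check that the patterns required (a region of constant type $\tau$) do not depend on $\lambda$, which is exactly what the reduction to types provides.
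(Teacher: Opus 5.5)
Your proof is correct, but it takes a different route from the paper's.

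\textbf{The paper's route.} It argues by continuity in $\lambda$. Proposition \ref{Proposition: Kunz-Souillard-type theorem} gives $\sigma(H_\lambda)=\Sigma_\lambda$ for each fixed $\lambda$. That proof relies on ergodicity (Proposition \ref{Proposition: Ergodicity of operators}) to upgrade ``with positive probability'' to ``almost surely''. Taking the countably many $\lambda\in\mathbb Q\cup\mathrm D$ gives one common full-measure event. For $\lambda\notin\mathrm D$, the paper approximates by rationals $\lambda_n\notin\mathrm D$. For these, $\mathbb V_{\lambda_n}=\mathbb V$ and $\|V_{\lambda_n}-V_\lambda\|\to0$, so $\sigma(H_{\lambda_n})\to\sigma(H_\lambda)$. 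The paper also uses, without stating it, that $\Sigma_{\lambda_n}\to\Sigma_\lambda$, which follows from continuity of the finitely many $r_\tau$ off $\mathrm D$.

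\textbf{Your route.} You rerun the Kunz--Souillard argument uniformly in $\lambda$. The required patterns are patterns of types, and the energies range over a countable dense $Q$, so your full-measure events do not depend on $\lambda$. In place of ergodicity you use the second Borel--Cantelli lemma on infinitely many disjoint automorphic copies of each finite support. This is where vertex transitivity, infinitude and local finiteness enter.

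\textbf{What each buys.} Your version is more self-contained. It does not need Proposition \ref{Proposition: Ergodicity of operators}, it treats $\lambda\in\mathrm D$ and $\lambda\notin\mathrm D$ in the same way, and it avoids the continuity of $\lambda\mapsto\Sigma_\lambda$. The paper's version is shorter once the fixed-$\lambda$ statement is in hand.

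\textbf{Cosmetic points.}
\begin{itemize}
\item Your claim that the empty graph is always one of the types is unjustified; it fails when $\mathrm P(\mathrm G=\emptyset)=0$. The claim is never used, so this is harmless.
\item For the upper inclusion, $\Omega_1$ is more than you need. It suffices that, almost surely, every vertex carries a type of positive probability, which gives $\sigma(V_\lambda)\subseteq\{-r_\tau(\lambda)\}$.
\item Types should be understood as isomorphism classes of $(\mathrm G,\varphi)$. The labels of $\mathrm V\subset\mathbb N$ are arbitrary, so finiteness holds only up to relabelling.
\end{itemize}
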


\begin{proof}
	For a sequence $(M_n)_n$ of subsets of $\mathbb R$ we write $\lim_{n\to \infty} M_n$ for the set of all $\lambda\in \R$ for which there exists a sequence $(\lambda_n)_n$ with $\lambda_n\in M_n$ for all $n\in \N$ such that $\lim_{n\to \infty} \lambda_n = \lambda$. Let $\lambda\in \mathbb \R\setminus \mathrm D$. Since \eqref{Assumption:: m a.s. bounded} holds, $\mathrm D$ is finite. There hence exists some $(\lambda_n)_n \subseteq \Q \setminus \mathrm D$ such that $\lambda_n \to \lambda$ as $n\to \infty$. Then $\mathbb V_{\lambda_n} = \mathbb V_\lambda = \mathbb V$ for all $n\in \N$ and $\|V_{\lambda_n} - V_{\lambda}\| \to 0$ (again as a consequence of \eqref{Assumption:: m a.s. bounded}). Hence,  $\sigma(H_{\lambda}) = \lim_{n \to \infty} \sigma(H_{\lambda_n})$.
	The statement now follows from the fact that by Proposition \ref{Proposition: Kunz-Souillard-type theorem}, $\mathbb P(\forall \lambda \in \Q \cup \mathrm D:\, \sigma(H_\lambda) = \Sigma_\lambda) = 1$.
\end{proof}

\begin{cor}
	\label{Corollary: Support under additonal assumtpion of bounded m}
	Under the assumptions of Theorem \ref{Theorem: Spectrum in the lattice case}, additionally assume that \eqref{Assumption:: m a.s. bounded} holds. Then
	\begin{equation*}
		\Sigma = \{\lambda \in \R: \,  \mathbb P(\lambda + r(\lambda) \in \Sigma_0) > 0\} \cup \mathcal T.
	\end{equation*}
\end{cor}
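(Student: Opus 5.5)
The plan is to chain the results already established in this section. Since $\mathbb G$ is deterministic, vertex transitive and (being infinite, as a vertex transitive graph whose spectrum is a nondegenerate interval) infinite, Proposition \ref{Proposition: Spectrum agrees almost surely with support of DOS measure} gives $\sigma(A) = \Sigma$ almost surely. Under \eqref{Assumption:: m a.s. bounded}, and since vertex transitivity gives bounded degrees, i.e.\ \eqref{Assumption: bounded degrees in G}, Theorem \ref{Theorem: Anderson representation of the spectrum} applies. Hence almost surely
\begin{equation*}
	\Sigma = \sigma(A) = \{\lambda \in \R:\, \lambda \in \sigma(H_\lambda)\} \cup \mathcal T.
\end{equation*}

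The next step is to replace the random set $\sigma(H_\lambda)$ by the deterministic set $\Sigma_\lambda$, simultaneously for all $\lambda$. This is exactly Proposition \ref{Proposition: Kunz-Souillard-type theorem simultaneously in all lambda}: on a full-measure event we have $\sigma(H_\lambda) = \Sigma_\lambda$ for every $\lambda\in\R$. On this event,
\begin{equation*}
	\lambda \in \sigma(H_\lambda) \iff \lambda \in \Sigma_\lambda \iff \mathbb P(\lambda + r(\lambda) \in \Sigma_0) > 0.
\end{equation*}
Intersecting the two almost sure events yields the claimed identity, with the right-hand side deterministic. The set $\mathcal T$ is almost surely constant by the Borel--Cantelli remark made after \eqref{Equation: tuning fork contained in sepctrum of AG}, so it can be treated as deterministic as well.

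Two points need checking. First, for $\lambda$ with $\mathbb V_\lambda$ possibly empty (all $r_x(\lambda)=\infty$), the convention is $\sigma(H_\lambda)=\emptyset$. In that case $\Sigma_\lambda$ should also be empty, since $r(\lambda)=\infty$ almost surely means that $\alpha+r(\lambda)$ never lies in $\Sigma_0$; this is consistent. Second, $r(\lambda)$ in $\Sigma_\lambda$ is the unbiased law, whereas $H_\lambda$ carries the root bias. This does not matter, because the spectrum is insensitive to absolutely continuous changes of law, as already noted before Proposition \ref{Proposition: Ergodicity of operators}.

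I expect no genuine obstacle here. The main work, namely the uniformity in $\lambda$, has already been carried out in Proposition \ref{Proposition: Kunz-Souillard-type theorem simultaneously in all lambda}, so the corollary is essentially a bookkeeping combination of these three results.
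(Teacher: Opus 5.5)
Your proposal is correct and follows the paper's argument: the paper also obtains the corollary by chaining Proposition \ref{Proposition: Spectrum agrees almost surely with support of DOS measure}, Theorem \ref{Theorem: Anderson representation of the spectrum} and the Kunz--Souillard-type Propositions \ref{Proposition: Kunz-Souillard-type theorem} and \ref{Proposition: Kunz-Souillard-type theorem simultaneously in all lambda} on a common full-measure event. The paper leaves three of your checks implicit, and you handle them correctly: bounded degrees from vertex transitivity (so that Theorem \ref{Theorem: Anderson representation of the spectrum} applies), the case $\mathbb V_\lambda = \emptyset$, and the harmlessness of the root bias.
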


\begin{proof}
	By Theorem \ref{Theorem: Anderson representation of the spectrum}, Proposition \ref{Proposition: Spectrum agrees almost surely with support of DOS measure}, Proposition \ref{Proposition: Kunz-Souillard-type theorem} and Proposition \ref{Proposition: Kunz-Souillard-type theorem simultaneously in all lambda}, we have almost surely
	\begin{equation*}
		\Sigma = \sigma(A) = \big\{\lambda \in \mathbb R:\, \lambda \in \sigma(H_\lambda)  \big\} \cup \mathcal T =  \{\lambda \in \R:\, \mathbb P(\lambda + r(\lambda) \in \Sigma_0) > 0\} \cup \mathcal T. \qedhere
	\end{equation*}
\end{proof}

\begin{proof}[Proof of Theorem \ref{Theorem: Spectrum in the lattice case}]
	We approximate $\mathcal G$ by a sequence of sprinkled graphs with bounded degrees. For sufficiently large $n\in \mathbb N$, we define the sprinkling distribution $\mathrm P_n(\, \cdot\, ) \coloneqq \mathrm P(\, \cdot \, |m\leq n)$,
	and denote by $(\tilde{\mathcal G}_{n},\tilde{o}_{n}) $ the $N\to \infty$ limit of the sprinkling of $\mathbb G_N$ with $\mathrm P_n$ as given by Theorem \ref{Theorem: Limit Theorem}. We denote by $\Sigma^n$ the support of the density of states of $(\tilde{\mathcal G}_{n},\tilde{o}_{n})$, by $A_n$ the adjacency operator of $\tilde{\mathcal G}_{n}$ and by $\mathcal T_n$ the respective tuning fork eigenvalues. With Skorokhod's representation theorem, we might assume without loss of generality that for every $R>0$ we have $(\tilde{\mathcal G}_{n},\tilde{o}_{n})_R = (\mathcal G, o)_R$ for sufficiently large $n$. Then we have for all $\psi \in l^2_c(\mathcal V)$ that $A_n \psi \to A \psi$ almost surely. Since $l^2_c(\mathcal V)$ is a common kernel, we have \cite[Theorem VIII.25]{RS80} almost surely $A_n \to A$ in the strong resolvent sense. This implies \cite[Theorem VIII.24]{RS80} that almost surely $\sigma(A) \subseteq \lim_{n\to \infty} \sigma(A_n)$ and henceforth $\Sigma \subseteq \lim_{n\to \infty} \Sigma^n$ by Proposition \ref{Proposition: Spectrum agrees almost surely with support of DOS measure}. On the other hand, by a similar argument as in the proof of Proposition \ref{Proposition: Kunz-Souillard-type theorem} one sees that $\Sigma^n \subseteq \Sigma$ for all $n\in \N$. As $\Sigma$ is closed, we hence have $\lim_{n\to \infty} \Sigma^n \subseteq \Sigma$ as well. We therefore obtain with Corollary \ref{Corollary: Support under additonal assumtpion of bounded m} that
	\begin{equation*}
		\Sigma = \lim_{n\to \infty}\Sigma^n = \lim_{n\to \infty} \{\lambda \in \R:\, \mathbb P(\lambda + r_n(\lambda) \in \Sigma_0) > 0\} \cup \mathcal T_n.
	\end{equation*}	
	Since
	\begin{equation*}
		\{\lambda \in \R:\, \mathbb P(\lambda + r_n(\lambda) \in \Sigma_0) > 0\} \cup \mathcal T_n \subseteq \{\lambda \in \R:\, \mathbb P(\lambda + r(\lambda) \in \Sigma_0) > 0\} \cup \mathcal T 
	\end{equation*}
	for all $n\in \N$, we directly obtain
	\begin{equation*}
		\lim_{n\to \infty}\{\lambda \in \R:\, \mathbb P(\lambda + r_n(\lambda) \in \Sigma_0) > 0\} \cup \mathcal T_n \subseteq \overline{\{\lambda \in \R:\, \mathbb P(\lambda + r(\lambda) \in \Sigma_0) > 0\} \cup \mathcal T }.
	\end{equation*}
	On the other hand, if $\mathbb P(\lambda + r(\lambda) \in \Sigma_0) > 0$ or $\lambda \in \mathcal T$ then $\mathbb P(\lambda + r_n(\lambda) \in \Sigma_0) > 0$ or $\lambda \in \mathcal T_n$ for all sufficiently large $n$ leading to 
	\begin{equation}
		\label{Equation: Set inclusion for spectra}
		\{\lambda \in \R:\, \mathbb P(\lambda + r(\lambda) \in \Sigma_0) > 0\} \cup \mathcal T \subseteq \lim_{n\to \infty} \{\lambda \in \R:\, \mathbb P(\lambda + r_n(\lambda) \in \Sigma_0) > 0\} \cup \mathcal T_n = \Sigma.
	\end{equation}
	Taking the closure in \eqref{Equation: Set inclusion for spectra} finishes the proof.
\end{proof}

\section{Concentration of mass: proof of Theorem \ref{Theorem: concentration of mass inside of Sigma_0}}
\label{Section: concentration of mass}
We now apply the Anderson-percolation representation in order to prove Theorem \ref{Theorem: concentration of mass inside of Sigma_0} which states that the sprinkling increases the expected number of eigenvalues (not considering tuning forks) predominately in intervals in which many eigenvalues of the attached graphs lie. To simplify notation, let us define
\begin{equation*}
	\mu_{N, 0} \coloneqq \frac{1}{|\mathbb V_N|}\sum_{\lambda \in \sigma(B_N)} \gamma(B_N, \lambda) \delta_{\lambda}, \quad
	\hat \mu_{N, \operatorname{TF}} \coloneqq \frac{1}{|\mathbb V_N|}\sum_{\lambda \in \sigma(A_N)} \gamma_{\operatorname{TF}, N}(A_N, \lambda) \cdot \delta_{\lambda}
\end{equation*}
and let
\begin{equation} \label{Equation: Definition hatnu_N}
	\hat \mu_{N} \coloneqq \frac{1}{|\mathbb V_N|}\sum_{\lambda \in \sigma(A_N)} \big(\gamma(A_N, \lambda) - \gamma_{\operatorname{TF}, N}(A_N, \lambda) \big) \delta_{\lambda} = \frac{|\mathcal V_N|}{|\mathbb V_N|} \cdot \mu_N - \hat \mu_{N, \operatorname{TF}}, \qquad
	\hat \nu_N \coloneqq \mathbb E[\hat \mu_N],
\end{equation}
where $\mu_N$ denotes, as previously, the empirical eigenvalue distribution of $B_N$. We start by showing convergence of $\hat \nu_N$ to $\nu$ as $N\to \infty$.

\begin{lemma}
	\label{Lemma: convergence of nu hat} 
	We have $\lim_{N\to \infty}\hat \nu_{N} = \hat \nu$ in the Kolmogorov--Smirnov metric.
\end{lemma}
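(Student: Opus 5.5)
We prove that $\hat \nu_N \to \hat \nu = (1+\mathrm E[m])(\nu-\nu_{\operatorname{TF}})$ uniformly in distribution functions by splitting $\hat\nu_N$ according to its definition \eqref{Equation: Definition hatnu_N}. We have
\begin{equation*}
	\hat \nu_N = \mathbb E\Big[\frac{|\mathcal V_N|}{|\mathbb V_N|}\mu_N\Big] - \mathbb E[\hat \mu_{N, \operatorname{TF}}],
\end{equation*}
and we treat the two terms separately. For the tuning fork term, we note that $\hat \mu_{N,\operatorname{TF}} = \frac{|\mathcal V_N|}{|\mathbb V_N|}\cdot \frac{1}{|\mathcal V_N|}\sum_\lambda \gamma_{\operatorname{TF}}(A_N,\lambda)\delta_\lambda$. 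By Proposition \ref{Proposition: Anderson representation in finite volume}, the latter sum equals $\frac{1}{|\mathbb V_N|}\sum_{x}\sum_{\lambda}(\gamma(\mathrm T^x,\lambda)-\1_{\{\lambda\in\mathrm S^x_*\}})\delta_\lambda$. This is an average of i.i.d.\ finite measures $Y_x$ of total mass at most $m_x$. We then repeat the proof of Lemma \ref{Lemma: Total variation convergence of tuning fork contribution}, now without the normalising denominator: tail truncation on $\mathrm D_M$, the $L^1$ law of large numbers, and pointwise convergence on finitely many atoms. This shows $\mathbb E[\hat\mu_{N,\operatorname{TF}}] \to \mathrm E[\sum_\lambda(\gamma(\mathrm T,\lambda)-\1_{\{\lambda\in\mathrm S_*\}})\delta_\lambda] = (1+\mathrm E[m])\nu_{\operatorname{TF}}$ in total variation, by Proposition \ref{Proposition: Tuning fork contribution}. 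Total variation convergence implies Kolmogorov--Smirnov convergence.

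For the main term, we write $\frac{|\mathcal V_N|}{|\mathbb V_N|} = 1 + \frac{1}{|\mathbb V_N|}\sum_x m_x =: W_N$. By the $L^1$ law of large numbers, $\mathbb E|W_N - (1+\mathrm E[m])| \to 0$. Since $\mu_N$ is a probability measure, for every $\lambda$ we have
\begin{equation*}
	\Big|\mathbb E[W_N \mu_N((-\infty,\lambda])] - (1+\mathrm E[m])\,\mathbb E[\mu_N((-\infty,\lambda])]\Big| \le \mathbb E|W_N - (1+\mathrm E[m])|,
\end{equation*}
uniformly in $\lambda$. Moreover $\mathbb E[\mu_N((-\infty,\lambda])] = F_N(\lambda)\to F(\lambda)$ uniformly by Corollary \ref{Corollary: Convergence of DOS-measures}. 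Hence the distribution function of the main term converges uniformly to $(1+\mathrm E[m])F$.

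Combining the two limits gives uniform convergence of the distribution function of $\hat\nu_N$ to that of $(1+\mathrm E[m])(\nu - \nu_{\operatorname{TF}}) = \hat\nu$. The only mildly delicate step is the tuning fork term. There the measures $Y_x$ may have infinitely many atoms when $\mathrm D$ is infinite, so one needs the uniform tail control $\sum_{k\ge M} Y_{x,k}\le m_x$ together with $\mathrm E[m]<\infty$ and dominated convergence, exactly as in Lemma \ref{Lemma: Total variation convergence of tuning fork contribution}. The rest is routine.
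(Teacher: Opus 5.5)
Your proof is correct and follows the paper's argument. The main term is handled exactly as there, via the $L^1$ law of large numbers for $|\mathcal V_N|/|\mathbb V_N|$ and Corollary~\ref{Corollary: Convergence of DOS-measures}. For the tuning fork term the paper differs slightly: it keeps the $|\mathcal V_N|$-normalised measure, splits off the fluctuation of $|\mathcal V_N|/|\mathbb V_N|$ by the law of large numbers, and invokes Lemma~\ref{Lemma: Total variation convergence of tuning fork contribution}.

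Your handling of that term is simpler than you present it. Since $\hat\mu_{N,\operatorname{TF}} = |\mathbb V_N|^{-1}\sum_x Y_x$ with identically distributed $Y_x$, linearity of expectation and Proposition~\ref{Proposition: Tuning fork contribution} give $\mathbb E[\hat\mu_{N,\operatorname{TF}}] = \mathrm E[Y] = (1+\mathrm E[m])\nu_{\operatorname{TF}}$ exactly for every $N$. So the step you call mildly delicate needs no truncation and no law of large numbers.
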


\begin{proof}
	The statement readily follows from the convergence of $\nu_N$ to $\nu$ in the Kolmogorov--Smirnov metric (given by Corollary \ref{Corollary: Convergence of DOS-measures}), convergence of  $\nu_{N, \operatorname{TF}}$ to $\nu_{\operatorname{TF}}$ in total variation (given by Lemma \ref{Lemma: Total variation convergence of tuning fork contribution}) and the $L^1$ law of large numbers after writing
	\begin{align*}
		\hat \nu_N - \hat\nu &= \mathbb E\Big[\frac{\sum_{x\in \mathbb V_N} 1+m_x}{|\mathbb V_N|}(\mu_{N} - \mu_{N, \operatorname{TF}})  \Big] - (1+ \mathbb E[m])(\nu - \nu_{\operatorname{TF}}) \\
		&= \mathbb E\Big[ \Big(\frac{\sum_{x\in \mathbb V_N} 1+m_x}{|\mathbb V_N|}- (1+ \mathbb E[m]) \Big) \mu_{N}  \Big] + (1+ \mathbb E[m])(\nu_N - \nu) \\
		&\quad-\mathbb E\Big[ \Big(\frac{\sum_{x\in \mathbb V_N} 1+m_x}{|\mathbb V_N|}- (1+ \mathbb E[m]) \Big) \mu_{N, \operatorname{TF}}  \Big] - (1+ \mathbb E[m])(\mathbb E[\mu_{N, \operatorname{TF}} ] - \nu_{\operatorname{TF}}). \qedhere
	\end{align*}
\end{proof}
Using the Anderson-percolation representation in finite volume, i.e.\ Proposition \ref{Proposition: Anderson representation in finite volume}, we now show a finite-volume version of Theorem \ref{Theorem: concentration of mass inside of Sigma_0}. For $x\in \mathbb V_N$, we define
\begin{equation*}
	\mathrm N^x \coloneqq \{\lambda \in \mathbb R:\, r_x(\lambda) = 0 \}, \quad 	\xi_x \coloneqq \1_{\{\mathrm G^x \neq \emptyset\}}\sum_{\lambda \in \mathrm N^x} \delta_\lambda
\end{equation*}
such that in particular $\zeta = \mathbb E[\xi_x]$.

\begin{figure}
	\centering
	\includegraphics[width=0.6\linewidth]{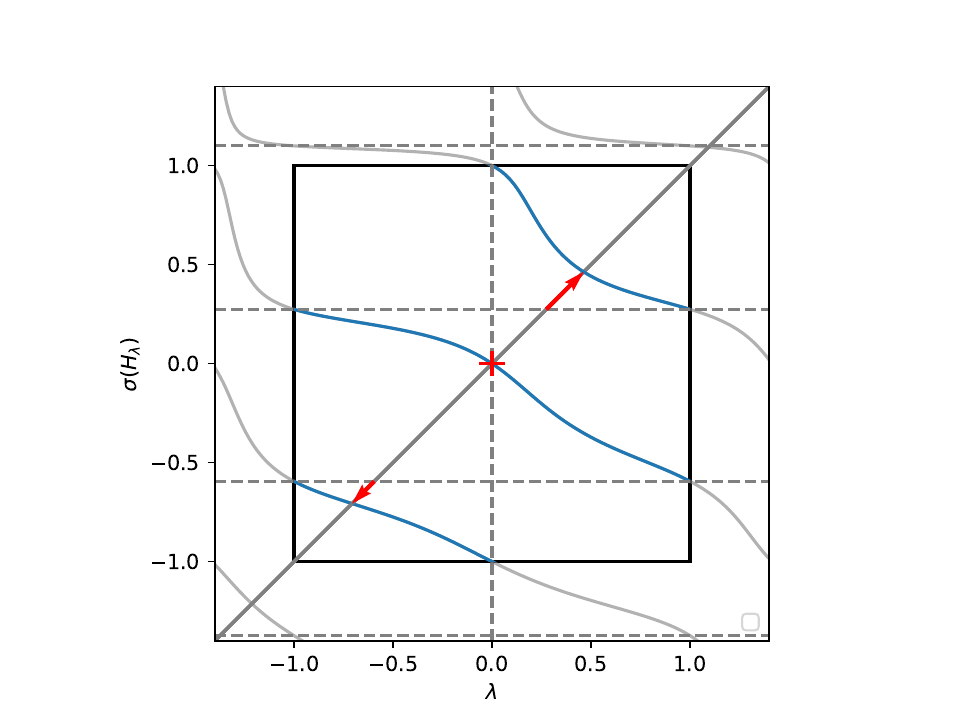}
	\caption{Illustration of the proof of Proposition \ref{Proposition: Concentration of mass in finite volume}. The horizontal dashed lines represent the eigenvalues of $\mathbb G_N$, the vertical dashed line the eigenvalue $0\in \mathrm S_x$. By attaching the attached graphs at $x$, the existing eigenvalues a repulsed from zero towards the zeros $\pm 1$ of $r_x$ while a new eigenvalue is created (in red).}
	\label{fig:andersonrepresentation3}
\end{figure}

\begin{prop}
	\label{Proposition: Concentration of mass in finite volume}
	Let $I = (-\infty, b]$ for some $b\in \mathbb R$. Then
	\begin{equation}
		\label{Equation: Concentration of mass in finite volume}
		\frac{1}{|\mathbb V_N|}\sum_{x\in\mathbb V} \xi_x(I) \leq (\hat \mu_N - \mu_{N, 0})(I) \leq	\frac{1}{|\mathbb V_N|}\sum_{x\in\mathbb V} \xi_x(I) + \1_{\{\mathrm G^x \neq \emptyset\}}.
	\end{equation}
\end{prop}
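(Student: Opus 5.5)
Read the right-hand side as $\frac{1}{|\mathbb V_N|}\sum_{x}\big(\xi_x(I)+\1_{\{\mathrm G^x\neq\emptyset\}}\big)$. The statement is deterministic, so the plan is to fix a realisation and prove it by induction, attaching the graphs one vertex at a time. Enumerate $\mathbb V_N^+=\{x_1,\dots,x_k\}$. Let $A^{(j)}$ be the adjacency matrix of the graph in which only $\mathrm G^{x_1},\dots,\mathrm G^{x_j}$ are attached, so $A^{(0)}=B_N$ and $A^{(k)}=A_N$. Write
\begin{equation*}
\hat n_j(b)\coloneqq\sum_{\alpha\le b}\big(\gamma(A^{(j)},\alpha)-\gamma_{\operatorname{TF}}(A^{(j)},\alpha)\big).
\end{equation*}
Then $|\mathbb V_N|\,(\hat\mu_N-\mu_{N,0})(I)=\sum_j\big(\hat n_j(b)-\hat n_{j-1}(b)\big)$. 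It therefore suffices to prove the one-step bound
\begin{equation*}
\xi_{x}(I)\le \hat n_j(b)-\hat n_{j-1}(b)\le \xi_{x}(I)+1,\qquad x=x_j.
\end{equation*}

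\textbf{Reduction to a rank-one problem.} For the one-step bound, apply Proposition \ref{Proposition: Anderson representation in finite volume} to the graph with graphs attached at $x_1,\dots,x_j$. This gives $\hat n_j(b)=\sum_{\alpha\le b}\gamma(H^{(j)}_{\alpha},\alpha)$. Here $H^{(j)}_\alpha$ and $H^{(j-1)}_\alpha$ differ only at $x$: either by the rank-one term $-r_x(\alpha)\ket{\delta_x}\bra{\delta_x}$, or, when $r_x(\alpha)=\infty$, by deleting $x$. Proposition \ref{Proposition: Parametrize eigenvalues of Anderson Hamiltonian in finite volume} gives continuous branches $\alpha_i(\lambda)$ into $\hat{\mathbb R}$, decreasing in $\lambda$. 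So $\hat n_j(b)$ counts the solutions of $\alpha_i(\lambda)=\lambda$ with $\lambda\le b$. Each branch is decreasing, so it crosses the diagonal exactly once on each interval between consecutive times at which it wraps through $\infty$. Consequently $\hat n_j(b)$ equals the number of wraps of the branches at times $\ge b$, up to a boundary correction that I will pin down (the branch values at $\lambda=b$, compared with $b$). In other words, it is an eigenvalue-counting quantity.

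\textbf{Interlacing.} A cleaner route, which I would try first, is to interpolate directly. Set $A^{(j)}(s)$ equal to $A^{(j-1)}$ plus $\mathrm G^x$ with coupling $s\varphi^x$, for $s\in[0,1]$ (at $s=0$ the graph is merely disjointly present). For fixed $\lambda\notin\mathrm S^x$, the Schur complement at $x$ replaces the diagonal entry by $-s^2r_x(\lambda)$. The count of eigenvalues $\le b$ of $A^{(j)}(s)$ can then be computed as
\begin{equation*}
\#\{\text{eigenvalues of }H^{(j-1)}\le b\}+\#\{\text{eigenvalues of }\mathrm T^x\le b\}+\text{a correction}.
\end{equation*}
The correction is found from Sylvester inertia of the Schur complement $A^{(j)}-b$ with respect to the block $\mathrm T^x-b$: the number of negative eigenvalues of $A^{(j)}-b$ equals the number for $\mathrm T^x-b$ plus the number for the Schur complement $(A^{(j-1)}-b)-r_x(b)\,\delta_x\delta_x^*$ (sign convention to be checked). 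The latter is a rank-one perturbation of $A^{(j-1)}-b$, so its negative count differs from that of $A^{(j-1)}-b$ by $0$ or by $\pm1$, with the sign determined by the sign of $r_x(b)$. Subtracting the tuning-fork count $\gamma(\mathrm T^x,\cdot)-\1_{\mathrm S^x_*}$ leaves $|\mathrm S^x_*\cap(-\infty,b]|$ minus a $0/1$ term. By the interlacing of the zeros and poles of $r_x$ (strictly increasing between consecutive points of $\mathrm S_*^x$, with exactly one zero between consecutive poles), $|\mathrm S^x_*\cap(-\infty,b]|$ equals $\xi_x(I)+1$ or $\xi_x(I)$. It equals $\xi_x(I)+1$ exactly when $r_x(b)$ has the sign that triggers the rank-one loss. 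This is where the bounds come from.

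\textbf{Main obstacle.} The hardest step is the sign and degenerate-case bookkeeping. This covers $b\in\mathrm S^x$, where the Schur complement is unavailable, and vertices $x\notin\mathbb V_b$; a perturbation $b\pm\epsilon$ combined with right-continuity of the counting function should settle these. It also requires matching the rank-one $\pm1$ shift with the $\xi_x(I)$ versus $\xi_x(I)+1$ dichotomy so that the net change always lies in $\{\xi_x(I),\xi_x(I)+1\}$. I expect the outcome to be: if $r_x(b)<0$, $\mathrm S^x_*\cap(-\infty,b]$ has $\xi_x(I)+1$ elements; if $r_x(b)>0$, only $\xi_x(I)$. Summing over $j$ and dividing by $|\mathbb V_N|$ gives \eqref{Equation: Concentration of mass in finite volume}.
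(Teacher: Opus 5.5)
Your proposal is correct. Your first sketch, via the Anderson branches, is essentially the paper's argument. The route you say you would actually pursue, the inertia argument, is genuinely different.

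\emph{The paper's route.} It attaches one graph at $x$ and parametrizes the eigenvalues of $B_N-r_x(\lambda)\langle\delta_x,\cdot\,\rangle\delta_x$ by the continuous decreasing branches of Proposition~\ref{Proposition: Parametrize eigenvalues of Anderson Hamiltonian in finite volume}. By interlacing, on each interval $(t_i,t_{i+1}]$ between consecutive zeros of $r_x$, exactly one branch wraps through $\infty$, at the unique point of $\mathrm S^x_*$ there, while the branches are cyclically shifted. It then counts solutions of $\alpha_i(\lambda)=\lambda$ by the intermediate value theorem. These are identified with non-tuning-fork eigenvalues through Proposition~\ref{Proposition: Anderson representation in finite volume}.

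\emph{Your route.} You work statically at the single energy $b$. For $b\notin\mathrm S^x$, Haynsworth inertia additivity gives
\[
N_{\le b}(A^{(j)})=N_{\le b}(\mathrm T^x)+N_{\le b}\bigl(A^{(j-1)}-r_x(b)\delta_x\delta_x^*\bigr).
\]
Here $N_{\le b}$ counts eigenvalues in $(-\infty,b]$; additivity holds for $n_0$ as well as $n_-$, so you can count eigenvalues $\le b$ directly. The rank-one term shifts the count by an element of $\{0,1\}$, $\{-1,0\}$ or $\{0\}$, according as $r_x(b)>0$, $<0$ or $=0$. The pole--zero interlacing of $r_x$ gives $|\mathrm S^x_*\cap(-\infty,b]|=\xi_x(I)+\1_{\{r_x(b)<0\}}$. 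Combining these yields the one-step bound, and $b\in\mathrm S^x$ follows by right-continuity, as you say.

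\emph{What each buys.} Your version needs no branch tracking across poles and handles multiplicities automatically. It uses only the additive tuning-fork formula, not the branch parametrization or the first identity of Proposition~\ref{Proposition: Anderson representation in finite volume}. You can even drop the induction: take the Schur complement with respect to all of $T$ at once. What remains is a diagonal perturbation of $B$ with $\#\{r_x(b)<0\}$ positive and $\#\{r_x(b)>0\}$ negative entries, which gives the full bound directly. The paper's dynamic picture, in exchange, shows the mechanism: existing eigenvalues are pushed towards the zeros of $r_x$, and one new eigenvalue is created near each pole. This is what underlies the later source/sink discussion.

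\emph{Minor slips.}
\begin{enumerate}
\item In your first sketch, the count should involve wraps at times $\le b$, not $\ge b$.
\item In the inertia paragraph, the rank-one term is a gain or a loss depending on the sign of $r_x(b)$. So ``minus a $0/1$ term'' covers only the case $r_x(b)<0$.
\item In the same paragraph, $H^{(j-1)}$ should read $A^{(j-1)}$.
\item The interpolation in $s$ is never used.
\end{enumerate}
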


\begin{proof}
	Let $n\coloneqq |\mathbb V_N|$.
	We only show the statement for the case where $\mathbb V^+_N = \{x\}$ for some $x\in \mathbb V$, the general statement follows by inductively applying the same argument. Let $t_1< \hdots< t_k \in \mathbb R$ be such that $\mathrm N^x =  \{t_1, \hdots, t_k\}$.
	We define $t_0 \coloneqq -\infty$ and $t_{k+1} \coloneqq \infty$. There exist a unique $\beta \in (-t_0, t_{1}]$ such that $\beta \in \mathrm S^x_*$.
	Let $\alpha_1, \hdots, \alpha_n:[-\infty, \infty]\to \hat{\mathbb R}$ be parametrizations of the eigenvalues of  $H_{\lambda, N} = B_N - r_x(\lambda) \langle \delta_x, \, \cdot \, \rangle \delta_x$,
	as given by Proposition \ref{Proposition: Parametrize eigenvalues of Anderson Hamiltonian in finite volume}, where $r_x(-\infty) = r_x(\infty) \coloneqq 0$, and where we choose $\alpha_1, \hdots, \alpha_n$ such that $\alpha_1(t_0) \leq \hdots \leq \alpha_n(t_0)$.
	Since $r_x(t_0) = r_x(t_1) = 0$, we have that $(\alpha_i(t_0))_{1\leq i \leq n}$ and $(\alpha_i(t_1))_{1\leq i \leq n}$ are the eigenvalues of $B_N$.
	By eigenvalue interlacing, the following holds:
	The functions $\alpha_2, \hdots, \alpha_n$ are decreasing on $(t_0, t_1]$ while $\alpha_1$ is decreasing on $(-t_0, \beta)$ and on $(\beta, t_{1}]$ with $\lim_{\lambda \uparrow \beta} \alpha_1(\lambda) = -\infty$, $\lim_{\lambda \downarrow \beta} \alpha_1(\lambda) = \infty$. We have $\alpha_1(t_1) = \alpha_n(t_0)$, $\alpha_i(t_1) = \alpha_{i-1}(t_0)$ for $i\in \{2, \hdots, n\}$. Hence, if $l\in [n]$ is such that $\{j\in [n]:\, \alpha_j(t_1)\in (t_0, t_{1}]  \} = \{1, \hdots, l\}$,
	then, by the intermediate value theorem, there are $l$ solutions $\lambda_i \in (t_0, t_1]$ of $\lambda_i = \alpha_i(\lambda_i)$ corresponding to $i=1, \hdots, l$, and there is an additional solution in $(t_0, t_{1}]$ corresponding to $i=l+1 \, \operatorname{mod}(n)$, see Figure \ref{fig:andersonrepresentation3} for an illustration. By Proposition \ref{Proposition: Anderson representation in finite volume}, we have that $|\mathbb V_N|\hat \mu_N((t_0, t_{1}])$ is exactly the total number of solutions to one of the equations $\lambda = \alpha_i(\lambda)$, $\lambda \in (t_0, t_{1}]$.
	Hence, we obtain $(\hat \mu_N - \mu_{N, 0})((t_0, t_{1}]) = \frac{1}{|\mathbb V_N|}$.
	The same argument shows that for all $b\in (t_0, t_{1})$ we have $0 \leq (\hat \mu_N - \mu_{N, 0})((t_0, b]) \leq \frac{1}{|\mathbb V_N|}$.
	Repeating the same argument on all intervals $(t_i, t_{i+1}]$ with $i=2, \hdots, k$ and summing over all $i$ such that $t_i < b$ yields
	\begin{equation*}
		\frac{1}{|\mathbb V_N|} \xi_x(I) \leq (\hat \mu_N - \mu_{N, 0})((-\infty, b]) \leq \frac{1}{|\mathbb V_N|}(\xi_x(I)+1). \qedhere
	\end{equation*}
\end{proof}

\begin{proof}[Proof of Theorem \ref{Theorem: concentration of mass inside of Sigma_0}]
	After taking the expected value in \eqref{Equation: Concentration of mass in finite volume}, this follows from Lemma \ref{Lemma: convergence of nu hat}.
\end{proof}

\section{Expansion for sparse sprinkling}
\label{Section: sparse sprinkling}
In this section, we prove the expansion of the density of states in the limit of sparse sprinkling as given by Theorem \ref{Theorem: small p expansion in dual of Ck} and study its properties. We start in Subsection \ref{Subsection: Small p expansion in finite volume} by proving the equivalent of Theorem \ref{Theorem: small p expansion in dual of Ck} in finite volume before then taking in Subsection \ref{Subsection: Small p expansion in infinite volume} the infinite-volume limit. In Subsection \ref{Subsection: Regularity of first order term}, we prove Proposition \ref{Proposition: Condition that phi locally agrees with measure} in which we give conditions under which the first-order term $p\phi$ in our expansion is (locally) a signed measure. In Subsection \ref{Subsection: Necessity of Smoothening}, we prove Propositions \ref{Proposition: Smoothening is necessary 1} and \ref{Proposition: Smoothening is necessary 2} which show that our expansion in general only holds in a weak sense. Finally, we discuss in Subsection \ref{Subsection: Expansion around p=1} how one can derive under the Assumption \eqref{Assumption: Detetministic attached graphs} a similar expansion around $p=1$. 

\subsection{Expansion for sparse sprinkling in finite volume}
\label{Subsection: Small p expansion in finite volume}
We start the proof of Theorem \ref{Theorem: small p expansion in dual of Ck} by showing its equivalent in finite volume, i.e.\ for the measures $\hat{\nu}_N$ defined in \eqref{Equation: Definition hatnu_N}, which converge to $\hat{\nu}$ as $N\to \infty$ by Lemma \ref{Lemma: convergence of nu hat}. Namely, we show
\begin{prop}
	\label{Propositio: Small p expansion in finite volume}
	Under Assumption \ref{Assumption: Assumption for small p expansion}, the function
	\begin{equation}
		h_N \coloneqq \mathbb E\bigg[\frac{\partial_z (g_N r)}{1 - g_N r} + \sum_{\lambda \in \mathrm S_*}\frac{1}{\lambda-z}\Big| \mathrm G\neq \emptyset \bigg] .
	\end{equation}
	is the Stieltjes transform of a signed measure $\phi_N$. If $R_N$ is the unique signed measure such that
	\begin{equation*}
		\hat \nu_N = \nu_{0, N} +  p \phi_N +   p^2 R_N,
	\end{equation*}
then $\|R_N\|_{(C^{8}(a, b))'}  \leq 2^{6} |b-a| \mathrm C(a, b, \mathrm P_1)(1-p)^{-1}$ for any $a<b$. Moreover, we have
	\begin{align}
		\|\phi_N\|_{C^1_c(a, b)} &\leq |b-a| \mathbb E[m|\mathrm G \neq \emptyset],  \label{Equation: Estimate on phi in finite volume}  \\ 
		\sup_{z\in \C \setminus \R}  |\operatorname{Im}(z) h_N(z)| &\leq 5 \cdot \mathbb E[m|\mathrm G \neq \emptyset] \label{Equation: Estimate on h in finite volume}.
	\end{align}
\end{prop}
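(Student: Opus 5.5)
The plan is to work with a fixed realization of the core $\mathbb G_N$ and get an exact formula for the (unnormalized) Stieltjes transform of $\hat\mu_N$ from a Schur complement. By the block form \eqref{Equation: blockform adjacency matrix}, $E(T-z)^{-1}E^* = \operatorname{diag}(r_x(z))_{x\in\mathbb V_N}$. Hence
\begin{equation*}
\det(A_N-z) = \prod_{x\in\mathbb V_N^+}\det(\mathrm T^x-z)\cdot \det\big(B_N - z - \mathcal R(z)\big), \qquad \mathcal R(z)\coloneqq \operatorname{diag}(r_x(z)).
\end{equation*}
The tuning-fork multiplicities $\gamma(\mathrm T^x,\lambda)-\1_{\{\lambda\in \mathrm S^x_*\}}$ from Proposition \ref{Proposition: Anderson representation in finite volume} are exactly the zeros of $\det(\mathrm T^x-z)$ that do not survive as poles of $r_x$. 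Taking $-\partial_z\log$ then gives, for $z\in\C^+$,
\begin{equation*}
|\mathbb V_N|\,\hat s_N(z) = \operatorname{tr}\Big[\big(B_N-z-\mathcal R(z)\big)^{-1}\big(1+\mathcal R'(z)\big)\Big] + \sum_{x\in\mathbb V_N^+}\sum_{\lambda\in \mathrm S^x_*}\frac{1}{\lambda-z}.
\end{equation*}
Consistency with the counting of Proposition \ref{Proposition: Anderson representation in finite volume} serves as a check on this formula. Since $\operatorname{Im} r_x\ge 0$ on $\C^+$, we have $\|(B_N-z-\mathcal R)^{-1}\|\le \operatorname{Im}(z)^{-1}$, and $|r_x'(z)|\le m_x\operatorname{Im}(z)^{-2}$.

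Next I regard $F(p)\coloneqq \mathbb E[|\mathbb V_N|\hat s_N(z)]$ as a polynomial in $p$, with $\mathrm P_1$ held fixed and the sites of $\mathbb V_N^+$ given by i.i.d.\ Bernoulli$(p)$ indicators. I write $F(p)=F(0)+pF'(0)+\int_0^p (p-t)F''(t)\,\mathrm dt$. The term $F(0)$ is $\operatorname{tr}(B_N-z)^{-1}$. The term $F'(0)$ is a sum over single-site insertions. By the rank-one determinant identity $\det(B_N-z-r_x\delta_x\delta_x^*) = \det(B_N-z)\,(1-g_{N,x} r_x)$, a single insertion contributes $\partial_z(g_N r)/(1-g_N r)$ plus the $\mathrm S_*$ sum. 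This produces exactly $h_N$, and $\phi_N$ is the associated signed measure. The measure $\phi_N$ is an average of differences of counting measures under a perturbation of rank at most $m_x+1$, so the interlacing/finite-rank bound gives total variation of order $\mathbb E[m\mid \mathrm G\neq\emptyset]$. This yields \eqref{Equation: Estimate on phi in finite volume}, and \eqref{Equation: Estimate on h in finite volume} follows from $|\operatorname{Im}(z)\,s_\mu(z)|\le \|\mu\|$.

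The remainder $F''(t)$ is a sum over ordered pairs $x\neq y$ of expected second differences $\Delta_x\Delta_y$, where $\Delta_x$ compares the configuration with an attached graph at $x$ to the one without, averaged over the percolation configuration at density $t$. Conditioning so that $x,y$ are forced on or off produces factors of $(1-t)^{-1}$, which become the $(1-p)^{-1}$. I would expand each second difference with the resolvent identity. It comes out as a product of off-diagonal entries $G_{xy}G_{yx}$ of $G=(B_N-z-\mathcal R)^{-1}$, times factors $(1-G_{xx}r_x)^{-1}$ and $r_x,r_x'$. The sum over $y$ is then controlled by the Ward identity $\sum_y|G_{xy}|^2\le \operatorname{Im}(z)^{-1}\operatorname{Im}G_{xx}\le \operatorname{Im}(z)^{-2}$.

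I expect the main obstacle to be controlling the denominators $(1-G_{xx}r_x)^{-1}$, which can blow up near eigenvalues of the attached graphs. Here Assumption \ref{Assumption: Assumption for small p expansion} is essential. I will rewrite $(1-G_{xx}r_x)^{-1}$ as $(r_x^{-1}-G_{xx})^{-1}r_x^{-1}$. Since $\operatorname{Im}(-r_x^{-1})\ge \operatorname{Im} r_x/|r_x|^2$, combined with $\operatorname{Im} r_x\ge c(a,b)^{-1}\operatorname{Im}z$, this gives $|r_x^{-1}-G_{xx}|^{-1}\lesssim c(a,b)\,|r_x|^2\operatorname{Im}(z)^{-1}$. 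I will then absorb powers of $|r_x|\le m_x\operatorname{Im}(z)^{-1}$. Bookkeeping the powers produces $\mathrm E[m^2\mid\cdot]\,\mathrm E[m\mid\cdot]$ and at worst $\operatorname{Im}(z)^{-7}$, giving $|S_N(z)|\le \mathrm C(a,b,\mathrm P_1)(1-p)^{-1}\operatorname{Im}(z)^{-7}$ on $(a,b)\times(0,1)$.

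Finally, I transfer the Stieltjes bound to $R_N$ via Helffer--Sjöstrand. For $f\in C^8_c(a,b)$ I use the order-seven almost analytic extension $\psi(\eta)\sum_{k\le 7}f^{(k)}(\lambda)(\mathrm i\eta)^k/k!$, whose $\overline\partial$ is $O(\|f\|_{C^8}\,\eta^7)$ for $|\eta|\le 1/2$ and is supported in $|\eta|\le 1$. Integrating against $|S_N|\lesssim \eta^{-7}$ over $(a,b)\times(0,1)$ gives the factor $|b-a|$ and the constant $2^6$.
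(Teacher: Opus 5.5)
Your derivation of $h_N$ from $\det(A_N-z)=\prod_x\det(\mathrm T^x-z)\,\det(B_N-z-\mathcal R(z))$ and the rank-one determinant identity is correct. The gap is in the estimate of the second-order term, which is the heart of the statement.

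\textbf{The second-order bound.} You bound each denominator separately by $|(1-G_{xx}r_x)^{-1}|\le c(a,b)|r_x|\eta^{-1}$ and then use $|r_x|\le m_x\eta^{-1}$. This costs a factor $c(a,b)\,m_x\eta^{-2}$ per denominator. Take a typical piece of the second difference, $\frac{\partial_z r_y}{1-G_{yy}r_y}\cdot\frac{r_xG_{xy}^2}{1-G_{xx}r_x}$. With your bounds it is only controlled by $c(a,b)^2m_x^2m_y^2\eta^{-7}|G_{xy}|^2$, i.e.\ $\eta^{-9}$ after the Ward sum. Terms in which $\partial_z$ falls on a denominator are worse. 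You would also pick up powers of $c(a,b)$ and moments of $m$ that are not in $\mathrm C(a,b,\mathrm P_1)$, and an order-seven almost analytic extension (hence $C^8$) would no longer suffice.

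The missing idea is that a denominator never has to be bounded on its own. Every term lets you pair it with a factor $r_x$, and for $\operatorname{Im}w\ge0$ one has $|r_x/(1-wr_x)|=|r_x^{-1}-w|^{-1}\le|r_x|^2/\operatorname{Im}r_x\le m_x\eta^{-1}$. The last step is Cauchy--Schwarz for the spectral measure of $\mathrm T^x$ at $\varphi^x$, which gives $|r_x|^2\le m_x^*\eta^{-1}\operatorname{Im}r_x$. Together with $|\partial_zr_x|\le\eta^{-1}\operatorname{Im}r_x\le\eta^{-1}|r_x|$, this needs no assumption at all.

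Assumption~\ref{Assumption: Assumption for small p expansion} is needed only where $\partial_z$ hits a denominator. There $\partial_z\frac{r_x}{1-G_{xx}r_x}=\big(\frac{r_x}{1-G_{xx}r_x}\big)^2\big(\frac{\partial_zr_x}{r_x^2}+\partial_zG_{xx}\big)$, and the assumption gives $|\partial_zr_x/r_x^2|\le c(a,b)\eta^{-2}$. This is why $\mathrm C(a,b,\mathrm P_1)$ is linear in $c(a,b)$.

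With this, split the second difference into a difference of numerators over one denominator plus a difference of reciprocals (the paper's $T_1,T_2$). Every term is then at most $(c(a,b)+1)\eta^{-5}$, times a factor quadratic in one of $m_x,m_y$ and linear in the other, times $|G_{xy}|^2$. For $\partial_zG_{xy}^2$, use instead its average over a contour of length $O(\eta)$, via Cauchy's formula. The Ward sum must be applied to a single matrix, which you arrange with $\mathbb E_p[\,\cdot\mid\chi_x=0]\le(1-p)^{-1}\mathbb E_p[\,\cdot\,]$ for nonnegative quantities; it then gives $\eta^{-7}$. In your effective-resolvent notation, note also that $\partial_zG=G(1+\mathcal R')G$ is the core block of $(A_N-z)^{-2}$. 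Hence it is bounded by $\eta^{-2}$ even though $\mathcal R'$ alone is not.

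\textbf{The first-order bounds.} These rest on a false step. Finite-rank perturbation and interlacing control distribution functions, not total variation, since a rank-one perturbation can move every eigenvalue of $B_N$. A bound $\|\phi_N\|_{\mathrm{TV}}\lesssim\mathbb E[m\mid\mathrm G\neq\emptyset]$ uniform in $N$ is false in general: it would make the limit $\phi$ a signed Radon measure, contradicting Proposition~\ref{Proposition: Smoothening is necessary 1}. So $|\operatorname{Im}(z)s_\mu(z)|\le\|\mu\|$ is not available.

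What you do get, with the sharp constant from the interlacing argument of Proposition~\ref{Proposition: Concentration of mass in finite volume}, is $\|\hat F_p-F_0\|_\infty\le p\,\mathbb E[m\mid\mathrm G\neq\emptyset]$. Hence the distribution function of $\phi_N$ is bounded by $\mathbb E[m\mid\mathrm G\neq\emptyset]$. Integrating by parts against $f'$ gives \eqref{Equation: Estimate on phi in finite volume}; this is where the factor $|b-a|$ comes from. Integrating by parts against $(t-z)^{-1}$ gives $|h_N(z)|\le\pi\,\mathbb E[m\mid\mathrm G\neq\emptyset]\operatorname{Im}(z)^{-1}$, which implies \eqref{Equation: Estimate on h in finite volume}. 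Alternatively, use the rank bound on resolvent traces as the paper does.
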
	
For the rest of Subsection \ref{Subsection: Small p expansion in finite volume}, we fix $N$ and omit the $N$-dependency in the notation. 
For the proof of Proposition \ref{Propositio: Small p expansion in finite volume}, we treat $p = \mathrm P(\mathrm G \neq \emptyset)$ as a continuous parameter and sprinkle $\mathbb G$ with the sprinkling distribution
\begin{equation} \label{def_Pp}
	\mathrm P_p \coloneqq p \mathrm P(\, \cdot \, | \mathrm G \neq \emptyset) + (1-p) \delta_\emptyset
\end{equation}
in order to interpolate between the core graph $\mathbb G$ (corresponding to the choice $p = 0$), and our sprinkled graph $\mathcal G$ (corresponding to the choice $p = \mathrm P(\mathrm G \neq \emptyset)$). We then differentiate the accordingly defined measure $\hat \nu_p$ in $p$. In order to do so, we define the following coupling: Assume that $\mathbb G$ is defined on the probability space $(\Omega, \mathcal A, \mathbb P)$.
\begin{enumerate}
	\item Let $\mathrm P_1 \coloneqq \mathrm P(\, \cdot \, |\mathrm G \neq \emptyset)$
	and let $(\tilde{\mathrm G}^x, \tilde \varphi^x)_{x\in \mathbb V}$ be a family of iid random elements with distribution $\mathrm P_1$, defined on some probability space $(\Sigma, \mathcal F, \mathbf P)$.
	\item Let $\chi$ be the identity on $\{0, 1\}^\mathbb V$. We define $\mathcal G$ on $\Omega\times \Sigma \times \{0, 1\}^{\mathbb V}$ by sprinkling $\mathbb G$ with $(\mathrm G^x, \varphi^x)_{x\in \mathbb V}$ defined by
	\begin{equation*}
		(\mathrm G^x, \varphi^x) \coloneqq \begin{cases}
			(\tilde{\mathrm G}^x, \tilde \varphi^x) \quad &\text{ if }\chi_x = 1\\
			\, (\emptyset, \emptyset) \quad &\text{ if }\chi_x = 0
		\end{cases}
	\end{equation*}
	\item For $p\in [0, 1]$ we equip $\Omega\times \Sigma \times \{0, 1\}^{\mathbb V}$ with $\mathbb P_p \coloneqq \mathbb P \otimes \mathbf P \otimes \operatorname{Ber}(p)^{\otimes \mathbb V}$.
\end{enumerate}
For $p=0$ the graph $\mathcal G$ coincides in distribution with $\mathbb G$ while for $p=\mathrm P(\mathrm G \neq \emptyset)$ we obtain the usual iid sprinkling of $\mathbb G$ with sprinkling distribution $\mathrm P$.
We denote expected values taken with respect to $\mathbb P_p$ by $\mathbb E_p$. For $x\in \mathbb V$, we define $\tilde r_x$, $\tilde m_x$, $\tilde m_x^*$, $ \tilde{\mathrm  T}^x$, $\tilde{\mathrm S}^x$, $\tilde{\mathrm S}^x_*$ accordingly, i.e.\ with $(\mathrm G^x, \varphi^x)$ being replaced by $(\tilde{\mathrm G}^x, \tilde \varphi^x)$. We omit, as usual, the super/subscript $x$ to denote generic versions of these random variables drawn from $\mathrm P_1$ (independently from all other sources of randomness).

\subsubsection{Lipschitz estimates}
We start by showing local Lipschitz continuity at $p = 0$ of the cumulative distribution function $\hat{F}_p$ and the Stieltjes transform $\hat{s}_p$ of $\hat{\nu}_p$. This will later yield \eqref{Equation: Estimate on phi in finite volume} and \eqref{Equation: Estimate on h in finite volume}. In Subsection \ref{Subsection: Expansion around p=1}, we discuss how one can show global Lipschitz estimates.

\begin{lemma}
	\label{Proposition: C1 distance estimate}
	We have
	\begin{equation}
		\label{Equation: Estimate C1 distance}
		\|\hat \nu_p - \nu_0\|_{C^1_c(a, b)} \leq  p |b-a|\mathbb E[\tilde m].
	\end{equation}
	
\end{lemma}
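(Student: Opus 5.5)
The plan is to reduce the $(C^1_c(a,b))'$ estimate to a uniform bound on the difference of cumulative distribution functions, and then obtain that uniform bound from the finite-volume concentration estimate in Proposition \ref{Proposition: Concentration of mass in finite volume}.

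\textbf{Step 1: reduction to distribution functions.} Let $\hat F_p$ and $F_0$ be the distribution functions of $\hat\nu_p$ and $\nu_0$ (in finite volume, with $N$ suppressed), and set $D \coloneqq \hat F_p - F_0$. For $f \in C^1_c(a,b)$, integration by parts (Stieltjes) gives
\begin{equation*}
(\hat\nu_p - \nu_0)(f) = -\int_a^b f'(t)\, D(t)\,\mathrm dt .
\end{equation*}
There are no boundary terms because $f$ vanishes near $a$ and $b$. Hence
\begin{equation*}
|(\hat\nu_p - \nu_0)(f)| \leq \|f'\|_\infty\, |b-a|\, \sup_t |D(t)| \leq \|f\|_{C^1}\, |b-a|\, \sup_t|D(t)| .
\end{equation*}
It therefore suffices to show $\sup_t |D(t)| \leq p\,\mathbb E[\tilde m]$.

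\textbf{Step 2: pointwise bound via the concentration estimate.} Fix a realization and apply Proposition \ref{Proposition: Concentration of mass in finite volume} with $I = (-\infty, t]$. This gives
\begin{equation*}
0 \leq (\hat\mu_N - \mu_{N,0})((-\infty,t]) \leq \frac{1}{|\mathbb V_N|}\sum_{x\in\mathbb V_N}\Big(\xi_x(\R) + \1_{\{\mathrm G^x\neq\emptyset\}}\Big).
\end{equation*}
The lower bound uses $\xi_x \geq 0$. For the upper bound, recall that on $\{\mathrm G^x \neq \emptyset\}$ we have $|\mathrm N^x| = |\mathrm S^x_*| - 1$. So the summand equals $\1_{\{\mathrm G^x \neq\emptyset\}}|\mathrm S^x_*| \leq m_x$, since $\mathrm S^x_*$ is a set of eigenvalues of $\mathrm T^x$ and has at most $m_x$ elements. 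The summand vanishes when $\mathrm G^x = \emptyset$.

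\textbf{Step 3: taking expectations.} Take expectations under $\mathbb P_p$. In the coupling, $m_x = \chi_x \tilde m_x$ with $\chi_x \sim \operatorname{Ber}(p)$ independent of $\tilde m_x$, so $\mathbb E_p[m_x] = p\,\mathbb E[\tilde m]$. This yields $0 \leq D(t) \leq p\,\mathbb E[\tilde m]$ for every $t$, and combining with Step 1 gives \eqref{Equation: Estimate C1 distance}.

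\textbf{Main obstacle.} The only delicate point is checking that Proposition \ref{Proposition: Concentration of mass in finite volume} applies with all of $\mathbb V^+_N$ and not just a single vertex. Its proof handles one attached vertex and asserts that induction gives the general case. I would verify this induction by adding the attached graphs one vertex at a time: at each step the base operator becomes the previous Anderson-type operator, and the interlacing argument still produces an increment in $[\xi_x(I), \xi_x(I)+1]$, since it only uses the rank-one structure $-r_x(\lambda)\ket{\delta_x}\bra{\delta_x}$. If that induction causes trouble, the fallback is a cruder rank-perturbation bound, but that costs a constant factor (compare the bound $4\mathrm E[m]$), so I would insist on the induction.
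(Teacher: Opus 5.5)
Your proposal is correct and is essentially the paper's proof: the uniform bound $\|\hat F_p - F_0\|_\infty \le p\,\mathbb E[\tilde m]$ comes from Proposition~\ref{Proposition: Concentration of mass in finite volume} together with $|\mathrm N^x| = |\mathrm S^x_*| - 1 \le m_x - 1$, and integration by parts against $f'$ then gives the $(C^1_c(a,b))'$ estimate. For the induction you flag, do not use the previous $\lambda$-dependent Anderson-type operator as the base, because the one-vertex counting argument needs the base spectrum at the zeros of $r_y$ to be independent of $\lambda$; instead take the already-sprinkled graph as the new core and attach one more $\mathrm G^y$, using that tuning-fork counts are additive over vertices.
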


\begin{proof}
With the finite-volume version of Theorem \ref{Theorem: concentration of mass inside of Sigma_0} (see Proposition \ref{Proposition: Concentration of mass in finite volume}), we obtain $\|\hat F_p - F_0 \|_\infty \leq p\mathbb E[\tilde m]$,
since the total number of zeros of $r$ on the event $\{\mathrm G\neq \emptyset\}$ is $|\mathrm S_{*}|-1$ and hence does not exceed $m-1$. For every $a<b$ and $f\in C^1_c(a, b)$ we obtain by integration by parts,
	\begin{equation*}
		\int_\R (\hat \nu_p - \nu_0)(\mathrm dx) \, f(x) = -\int_\R (\hat \nu_p - \nu_0)(\mathrm dx) \int_{x}^b \mathrm dy \, f'(y) = -\int_a^b \mathrm dy \, f'(y) (\hat F_p(y) - F_0(y)),
	\end{equation*}
	and \eqref{Equation: Estimate C1 distance} follows. 		
\end{proof}

\begin{lemma}
	\label{Lemma: ST of tuning fork contribution}
	The Stieltjes transform of
	\begin{equation*}
		\hat \nu_{p, \operatorname{TF}} \coloneqq \frac{1}{|\mathbb V|} \mathbb E_p\Big[\sum_{\lambda \in \sigma(A)} \gamma_{\operatorname{TF}}(A, \lambda) \delta_\lambda \Big]
	\end{equation*}
	is given by
	\begin{equation*}
		\hat s_{p, \operatorname{TF}} = p\mathbb E\Big[\tilde q - \sum_{\lambda \in \tilde{\mathrm S}_*}\frac{1}{\lambda - z} \Big] \quad \text{ where } \quad \tilde q \coloneqq \operatorname{tr}\, (\tilde{\mathrm T} - z)^{-1}.
	\end{equation*}
\end{lemma}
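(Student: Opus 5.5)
We are in finite volume, with $\mathbb G=\mathbb G_N$ fixed and the normalisation $1/|\mathbb V|$. The plan is to compute $\hat\nu_{p,\operatorname{TF}}$ atom by atom using the finite-volume formula of Proposition \ref{Proposition: Anderson representation in finite volume}, and then take the Stieltjes transform of the resulting atomic measure.

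\emph{Step 1: tuning fork count.} By Proposition \ref{Proposition: Anderson representation in finite volume}, for every $\lambda\in\R$,
\begin{equation*}
\gamma_{\operatorname{TF}}(A,\lambda)=\sum_{x\in\mathbb V}\big(\gamma(\mathrm T^x,\lambda)-\1_{\{\lambda\in\mathrm S^x_*\}}\big).
\end{equation*}
Each summand vanishes unless $\lambda\in\mathrm S^x$ and $\mathrm G^x\neq\emptyset$. Under the coupling, $(\mathrm G^x,\varphi^x)$ equals $(\tilde{\mathrm G}^x,\tilde\varphi^x)$ when $\chi_x=1$ and is empty otherwise. Therefore
\begin{equation*}
\sum_\lambda\gamma_{\operatorname{TF}}(A,\lambda)\delta_\lambda=\sum_{x\in\mathbb V}\chi_x\sum_{\lambda\in\tilde{\mathrm S}^x}\big(\gamma(\tilde{\mathrm T}^x,\lambda)-\1_{\{\lambda\in\tilde{\mathrm S}^x_*\}}\big)\delta_\lambda .
\end{equation*}

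\emph{Step 2: expectation.} We take $\mathbb E_p$ and divide by $|\mathbb V|$. The $\chi_x$ are independent of $(\tilde{\mathrm G}^x,\tilde\varphi^x)$ with $\mathbb E_p[\chi_x]=p$, and the pairs $(\tilde{\mathrm G}^x,\tilde\varphi^x)$ are identically distributed. Note that $\mathbb V$ is deterministic, so we can average over it directly. This gives
\begin{equation*}
\hat\nu_{p,\operatorname{TF}}=p\,\mathbf E\Big[\sum_{\lambda\in\tilde{\mathrm S}}\gamma(\tilde{\mathrm T},\lambda)\delta_\lambda-\sum_{\lambda\in\tilde{\mathrm S}_*}\delta_\lambda\Big].
\end{equation*}
The summands are bounded by $\tilde m$, and $\mathbf E[\tilde m]<\infty$. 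By dominated convergence (Fubini), the interchange of expectation and sum is therefore justified.

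\emph{Step 3: Stieltjes transform.} For $z\in\C\setminus\R$ we integrate $(\lambda-z)^{-1}$ against $\hat\nu_{p,\operatorname{TF}}$. The spectral theorem for the finite symmetric matrix $\tilde{\mathrm T}$ gives
\begin{equation*}
\sum_{\lambda\in\tilde{\mathrm S}}\frac{\gamma(\tilde{\mathrm T},\lambda)}{\lambda-z}=\operatorname{tr}(\tilde{\mathrm T}-z)^{-1}=\tilde q .
\end{equation*}
Integrability holds because $|\tilde q|\leq\tilde m/|\operatorname{Im}z|$ and the $\tilde{\mathrm S}_*$ term is bounded similarly. Substituting yields the claimed formula $\hat s_{p,\operatorname{TF}}=p\,\mathbb E\big[\tilde q-\sum_{\lambda\in\tilde{\mathrm S}_*}(\lambda-z)^{-1}\big]$.

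The main obstacle is only bookkeeping. One must check that the convention $\mathrm S^x=\emptyset$ for empty attached graphs makes empty vertices contribute nothing, and that the finite-volume identity holds for all $\lambda$ and not only on $\mathrm D$. This matters because the definition of $\hat\nu_{p,\operatorname{TF}}$ sums over $\sigma(A)$, which contains every $\lambda$ with $\gamma_{\operatorname{TF}}>0$.
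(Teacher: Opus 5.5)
Your proposal is correct and is essentially the paper's proof. Both read off $\gamma_{\operatorname{TF}}(A,\lambda)=\sum_{x}\big(\gamma(\mathrm T^x,\lambda)-\1_{\{\lambda\in\mathrm S^x_*\}}\big)$ from Proposition \ref{Proposition: Anderson representation in finite volume}, identify $\sum_{\lambda}\gamma(\mathrm T,\lambda)/(\lambda-z)$ with $\operatorname{tr}(\mathrm T-z)^{-1}$, and extract the factor $p$ because empty attached graphs contribute nothing; your use of the independence of $\chi_x$ from $(\tilde{\mathrm G}^x,\tilde\varphi^x)$ is just the coupling form of the paper's conditioning on $\{\mathrm G\neq\emptyset\}$.
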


\begin{proof}
	By Proposition \ref{Proposition: Anderson representation in finite volume} we have 
	\begin{equation*}
		\hat \nu_{p, \operatorname{TF}} = \mathbb E_p\Big[ \sum_{\lambda \in \mathrm S} \big(\gamma(T, \lambda) -\1_{\{\lambda \in \mathrm S_*\} } \big) \delta_\lambda\Big]
	\end{equation*}
	such that
	\begin{equation*}
		\hat s_{p, \operatorname{TF}} = \mathbb E_p\Big[\sum_{\lambda \in \mathrm S} \frac{\gamma(\mathrm T, \lambda)}{\lambda - z} - \sum_{\lambda \in \mathrm S_*}\frac{1}{\lambda - z}\Big]= \mathbb E_p\Big[\operatorname{tr} \, (\mathrm T - z)^{-1}  - \sum_{\lambda \in \mathrm S_*}\frac{1}{\lambda - z}\Big].
	\end{equation*}
	Since the integrand vanishes if $\mathrm G = \emptyset$, the statement follows by conditioning on $\mathrm G\neq \emptyset$.
\end{proof}

\begin{lemma}
	\label{Proposition: Estimate on difference of ST}
For all $z \in \C^+$ we have $|\hat s_p(z) - s_0(z)| \leq 5 p \mathbb E[\tilde m] \operatorname{Im}(z)^{-1}$.
\end{lemma}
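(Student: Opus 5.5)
The natural route is to convert the Kolmogorov--Smirnov estimate from the proof of Lemma \ref{Proposition: C1 distance estimate} into a Stieltjes transform estimate, using integration by parts. Recall that Proposition \ref{Proposition: Concentration of mass in finite volume}, after taking $\mathbb E_p$, gives for every $\lambda\in\R$
\begin{equation*}
0\le (\hat F_p-F_0)(\lambda)\le \frac{1}{|\mathbb V|}\,\mathbb E_p\Big[\sum_{x\in\mathbb V}\big(\xi_x(\R)+\1_{\{\mathrm G^x\neq\emptyset\}}\big)\Big]\le p\,\mathbb E[|\tilde{\mathrm S}_*|]\le p\,\mathbb E[\tilde m].
\end{equation*}
Here we used $|\mathrm N^x|=|\mathrm S^x_*|-1$ on $\{\mathrm G^x\neq\emptyset\}$ and $\mathbb P_p(\mathrm G^x\neq\emptyset)=p$. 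The total masses satisfy $(\hat\nu_p-\nu_0)(\R)=p\,\mathbb E[|\tilde{\mathrm S}_*|]$, so $\hat F_p-F_0$ does not vanish at $+\infty$.

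\textbf{Step 1 (integration by parts).} Write $f_z(t)=(t-z)^{-1}$ and $G\coloneqq \hat F_p-F_0$, which is of bounded variation with $G(-\infty)=0$ and $G(+\infty)=c\coloneqq p\,\mathbb E[|\tilde{\mathrm S}_*|]$. Integrating by parts (truncating to $[-L,L]$ and letting $L\to\infty$) gives
\begin{equation*}
\hat s_p(z)-s_0(z)=\int f_z\,\mathrm d G=-\int_\R G(t)\,f_z'(t)\,\mathrm dt=\int_\R \frac{G(t)}{(t-z)^2}\,\mathrm dt .
\end{equation*}
The boundary terms vanish because $G$ is bounded and $f_z(\pm L)\to 0$.

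\textbf{Step 2 (the estimate).} Bound the right-hand side by
\begin{equation*}
\|G\|_\infty\int_\R\frac{\mathrm dt}{|t-z|^2}=\|G\|_\infty\,\frac{\pi}{\operatorname{Im} z}\le \pi\,p\,\mathbb E[\tilde m]\,\operatorname{Im}(z)^{-1}.
\end{equation*}
This is within the claimed constant $5$, since $\pi<5$.

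The only point needing care is that $\hat\nu_p$ is a finite positive measure in finite volume, so all integrals are finite sums and the integration by parts is legitimate. I also need $\|G\|_\infty\le p\,\mathbb E[\tilde m]$ uniformly in $\lambda$, including the upper-bound side with the extra $\1_{\{\mathrm G^x\neq\emptyset\}}$ term. That bound holds because $\xi_x(\R)+1=|\mathrm S^x_*|\le m_x$.

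As a fallback, one could avoid integration by parts by splitting $\hat s_p=s_{\hat\mu}$ via Lemma \ref{Lemma: ST of tuning fork contribution}, expressing $\frac{|\mathcal V|}{|\mathbb V|}s_{\mu}$ through resolvent traces, and using finite-rank perturbation bounds: each attached graph is a perturbation of rank at most $2m_x$, and the trace of a resolvent difference is bounded by rank times $\operatorname{Im}(z)^{-1}$. Counting the contributions ($m_x$ from the new block trace, $2m_x$ from the perturbation, $m_x$ from the tuning fork subtraction) plausibly explains the constant $5$. The integration-by-parts route is the cleaner one, however, and I would follow it.
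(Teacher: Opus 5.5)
Your proof is correct, but it takes a different route from the paper. You take the Kolmogorov--Smirnov bound $0\le \hat F_p-F_0\le p\,\mathbb E[|\tilde{\mathrm S}_*|]$ from Proposition~\ref{Proposition: Concentration of mass in finite volume}, which is the same input used in Lemma~\ref{Proposition: C1 distance estimate}. You then integrate by parts against $(t-z)^{-1}$, which amounts to running the $C^1$ argument of that lemma with a test function that is not compactly supported. The boundary terms vanish as you say. Your remark that everything is a finite sum is not accurate when $\mathrm P$ has infinite support, but your truncation only uses boundedness of $G$, so nothing breaks. You even get the sharper constant $\pi$ in place of $5$.

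The paper never passes through distribution functions. It deletes the edges $\{x,(xt)\}$ with $\varphi^x_t=1$ to obtain $A'$, whose normalized expected resolvent trace is exactly $s_0+p\,\mathbb E[\tilde q]$. This $p\,\mathbb E[\tilde q]$ cancels against the corresponding part of $\hat s_{p,\operatorname{TF}}$ from Lemma~\ref{Lemma: ST of tuning fork contribution}. The resolvent-trace difference is at most $2\operatorname{Rank}(A-A')\operatorname{Im}(z)^{-1}$ with $\operatorname{Rank}(A-A')=2\sum_x m_x^*$, which gives $4p\,\mathbb E[\tilde m]\operatorname{Im}(z)^{-1}$. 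The leftover term $p\,\mathbb E\big[\sum_{\lambda\in\tilde{\mathrm S}_*}(\lambda-z)^{-1}\big]$ supplies the fifth unit. So your fallback is essentially the paper's proof, but its bookkeeping is off: the block trace cancels rather than contributing, and the perturbation contributes $4m_x^*$, not $2m_x$.

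As for what each route buys: yours is shorter once the interlacing and Anderson-percolation machinery behind the finite-volume concentration result is available, and it gives a better constant. The paper's argument is a self-contained finite-rank perturbation bound. It also carries over verbatim to the Stieltjes-transform part of Lemma~\ref{Lemma: General Lipschitz estimates}. There the available Kolmogorov--Smirnov bound is only $4h\,\mathbb E[\tilde m]$, so your conversion would give the worse constant $4\pi$.
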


\begin{proof}
	Let $\mathcal G'$ denote the graph obtained by deleting the edges
	\begin{equation*}
		\big\{  \{x, (xt) \}:\, x \in \mathbb V^+,\, t\in \mathrm V^x \text{ with } \varphi^x_t = 1 \big\}
	\end{equation*}
	from the graph $\mathcal G$ and let $A'$ be its adjacency matrix. Notice that
	\begin{equation*}
		\frac{1}{|\mathbb V|}\mathbb E_p\big[ \operatorname{tr} \, (A'-z)^{-1} \big] = s_0 + \mathbb E_p \Big[\frac{1}{|\mathbb V|} \sum_{x\in \mathbb V^+} \operatorname{tr}\, (\mathrm T^{x} -z)^{-1} \Big]  = s_0 + p \mathbb E[\tilde q]
	\end{equation*}
	such that we obtain with Lemma \ref{Lemma: ST of tuning fork contribution}
	\begin{align*}
		\hat s_p - s_0 &= |\mathbb V|^{-1} \mathbb E_p\big[ \operatorname{tr} \, (A-z)^{-1} \big] - \hat s_{p, \operatorname{TF}} - s_0 \\
		&= |\mathbb V|^{-1} \mathbb E_p\big[ \operatorname{tr} \, (A-z)^{-1} - \operatorname{tr} \, (A'-z)^{-1} \big]  + p \mathbb E\Big[\sum_{\lambda \in \tilde{\mathrm S}_*}\frac{1}{\lambda - z} \Big].
	\end{align*}
	We use the estimate \cite[Lemma F.5]{BK19}
	\begin{align*}
		\Big|\operatorname{tr}\big((A-z)^{-1}  - (A'-z)^{-1} \big) \Big| &\leq \operatorname{Rank}\big((A-z)^{-1}  - (A'-z)^{-1} \big) \big\|(A-z)^{-1}  - (A'-z)^{-1} \big\| \\
		&\leq 2 \operatorname{Rank}(A - A') \operatorname{Im}(z)^{-1}
	\end{align*}	
	where first inequality follows from singular value decomposition and the second inequality follows from the resolvent identity
	\begin{equation*}
		(A-z)^{-1}  - (A'-z)^{-1} = (A-z)^{-1}(A' - A)(A'-z)^{-1}.
	\end{equation*}
	Since $\operatorname{Rank}(A - A') = 2\sum_{x \in \mathbb V} m_x^*$, we obtain
	\begin{align*}
		|\hat s_p - s_0|  &\leq 2|\mathbb V|^{-1} \operatorname{Im}(z)^{-1} \mathbb E_p\big[ \operatorname{Rank}(A' - A) \big] + p \mathbb E\Big[\sum_{\lambda \in \tilde{\mathrm S}_*}\frac{1}{|\lambda - z|} \Big] \leq 5 p \mathbb E[\tilde m] \operatorname{Im}(z)^{-1}. \qedhere
	\end{align*}

\end{proof}	  

\subsubsection{Differentiating $\hat{\nu}_p$ in $p$}
As the next step towards the proof of Proposition \ref{Propositio: Small p expansion in finite volume}, we differentiate $\hat{\nu}_p$ in $p$.
The distribution of $\mathcal G$ conditionally on $\chi$ no longer depends on $p$ and by integrating out the randomness in $\chi$, one convinces oneself that $\hat \nu_p$ is a polynomial of degree $|\mathbb V|$ in $p$ (whose coefficients are signed measures) and hence in particular differentiable in $p$, even with respect to the total variation norm. To explicitly calculate this derivative, we  will differentiate the Stieltjes transform
\begin{equation*}
	\hat s_p(z) = \int_\R \frac{1}{t-z} \, \hat \nu_p(\mathrm dt) =  \int_\R \frac{t-\lambda + \mathrm i \eta}{(t-\lambda)^2 + \eta^2} \, \hat \nu_p(\mathrm dt), \quad z= \lambda + \mathrm i \eta \in \C \setminus\R
\end{equation*}
of $\hat \nu_p$ in $p$. After integrating out the randomness in $\mathbb G$ and $(\tilde{\mathrm G^x})_{x}$, the task becomes to differentiate an expected value taken with respect to the distribution $\operatorname{Ber}(p)^{\otimes \mathbb V}$ of $\chi$ in $p$. We define for $x\in \mathbb V$
\begin{align*}
	&\theta^x: \{0, 1\}^{\mathbb V} \to \{0, 1\}^{\mathbb V},\quad \theta^x(\omega) \coloneqq (\omega_y \vee \delta_{xy})_{y} \\
	&\theta_x: \{0, 1\}^{\mathbb V} \to \{0, 1\}^{\mathbb V},\quad \theta_x(\omega) \coloneqq (\omega_y \wedge \delta_{xy})_{y}.
\end{align*}
For a function $f:\Omega\times \Sigma \times \{0, 1\}^{\mathbb V} \to M$ and $x\in \mathbb V$ we define $\theta^xf$ and $\theta_x f$ by 
\begin{equation*}
	(\theta_x f)(\omega_1, \omega_2, \omega_3) \coloneqq f(\omega_1, \omega_2, \theta^x \omega_3), \quad  (\theta_x f)(\omega_1, \omega_2, \omega_3) \coloneqq f(\omega_1, \omega_2, \theta_x \omega_3).
\end{equation*}
Moreover, for $x, y\in \mathbb V$ with $x\neq y$, we set $\theta^x_y f \coloneqq \theta^x \theta_y f$, $\theta_{xy} f  \coloneqq \theta_x \theta_y f$, $\theta^{xy} f \coloneqq \theta^x \theta^y f$.
\begin{lemma}
	\label{Lemma: Derivative Bernoulli expectation}
	Let $f:\Omega \times \Sigma \times \{0, 1\}^{\mathbb V} \to \C$ be a bounded measurable function. Then for $p\in [0, 1]$
	\begin{equation*}
		\partial_p \mathbb E_p[f] =  \sum_{x \in \mathbb V} \mathbb E_p[\theta^x f - \theta_x f].
	\end{equation*}
\end{lemma}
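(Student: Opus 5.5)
The argument is a direct finite-dimensional computation. In finite volume $\mathbb V$ is a finite set, and under $\mathbb P_p$ the coordinates $\chi_x$ are i.i.d.\ $\operatorname{Ber}(p)$ and independent of $(\omega_1,\omega_2)$. We first integrate out $\omega_1,\omega_2$ with Fubini. Set
\begin{equation*}
F(\omega_3) \coloneqq \int f(\omega_1,\omega_2,\omega_3)\,(\mathbb P\otimes\mathbf P)(\mathrm d\omega_1,\mathrm d\omega_2).
\end{equation*}
Then $F$ is a bounded function on the finite set $\{0,1\}^{\mathbb V}$, and
\begin{equation*}
\mathbb E_p[f]=\sum_{\omega\in\{0,1\}^{\mathbb V}}F(\omega)\prod_{y\in\mathbb V}p^{\omega_y}(1-p)^{1-\omega_y}.
\end{equation*}
This is a polynomial in $p$, so it is differentiable on $[0,1]$, with one-sided derivatives at the endpoints. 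Each operator $\theta^x,\theta_x$ acts only on the third coordinate, so it commutes with this integration. It therefore suffices to prove the identity for $F$ with respect to $\operatorname{Ber}(p)^{\otimes\mathbb V}$.

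The derivative is computed by the product rule, one factor at a time. Differentiating the factor $p^{\omega_x}(1-p)^{1-\omega_x}$ gives $+1$ if $\omega_x=1$ and $-1$ if $\omega_x=0$, while the remaining factors are unchanged. For fixed $x$ we group the configurations $\omega$ according to $\omega_{\mathbb V\setminus\{x\}}$. The contribution of $x$ is then
\begin{equation*}
\sum_{\omega_{\mathbb V\setminus\{x\}}}\Big(F(\omega_{\mathbb V\setminus\{x\}},1)-F(\omega_{\mathbb V\setminus\{x\}},0)\Big)\prod_{y\neq x}p^{\omega_y}(1-p)^{1-\omega_y}.
\end{equation*}

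Next we identify the two terms with $\theta^x F$ and $\theta_x F$. The function $\theta^xF$ sets $\omega_x=1$, so it does not depend on $\omega_x$. Hence its $\mathbb E_p$-expectation equals the sum above with $F(\cdot,1)$: the factor at $x$ sums to $p+(1-p)=1$.

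For the second term we read $\theta_x$ as setting the $x$-coordinate to $0$. This is what is intended by the difference-operator formula. Literally, the paper's definition $\omega_y\wedge\delta_{xy}$ kills every coordinate except $x$, so it should be $\omega_y\wedge(1-\delta_{xy})$, and the display also has a typo, defining $\theta_x f$ twice. With this reading, $\mathbb E_p[\theta_xF]$ gives the $F(\cdot,0)$ term in the same way. Summing over $x\in\mathbb V$ yields the claimed formula.

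The only step that needs care is the justification and interpretation rather than any estimate. Finiteness of $\mathbb V$ ensures that the sum is finite and the expression is a polynomial. Boundedness of $f$ ensures that Fubini applies. The definition of $\theta_x$ has to be fixed to the coordinate-lowering map for the identity to hold.
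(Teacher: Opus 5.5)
Your proof is correct and follows the paper's route. Both use Fubini to reduce to a function of the Bernoulli coordinates, then the product rule plus linearity; you expand in point indicators of configurations, where the paper expands $g$ in the product functions $\prod_{x\in I}\1_{\{\chi_x=1\}}$. Your reading of $\theta_x$ as setting the $x$-coordinate to $0$, and of the first duplicated $\theta_x f$ in the display as $\theta^x f$, is the intended one, consistent with the later use of $\theta_{\mathbb o}\mathcal G$ as the graph with no attached graph at $\mathbb o$.
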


\begin{proof}
	By Fubini's theorem, we might assume without loss of generality that $f$ is of the form $f(\omega_1, \omega_2, \omega_3) = g(\omega_3)$ for some function $g:\{0, 1\}^{\mathbb V} \to \mathbb C$.
	If $g$ is of the form $g(\omega) = \prod_{x} g_x(\omega_x)$ with $g_x: \{0, 1\} \to \mathbb C$ for $x\in \mathbb V$ then the equality simply follows from the product rule. For general $g$, it suffices to notice that $g$ can be written as a linear combination of functions of the form $\prod_{x\in I} \1_{\{ \chi_x = 1 \}}$ with $I \subseteq \mathbb V$, see e.g.\ \cite[Lemma 3.19]{FV17}.
\end{proof}

In order to differentiate $\hat s_p$ (which we express in terms of the expected trace of the resolvent), we hence need to study the change of the diagonal elements of the resolvent under attaching the attached graphs at a single vertex $x \in \mathbb V$. 
For $z\in\C^+$, we denote
\begin{equation*}
	G(z) = (A-z)^{-1}, \quad \tilde {\mathrm R}^{x}(z) \coloneqq (\tilde{\mathrm T}^{x}-z)^{-1}.
\end{equation*}

\begin{lemma}
	\label{Lemma: attach n-blocks to one vertex}
	We have for all $x\in \mathbb V$
	\begin{align}
		&\forall s,t\in \mathrm V^x: &\theta^x G_{(xs)(xt)} = \tilde{\mathrm R}^{x}_{st} +\frac{\theta_x G_{xx} ( \tilde{\mathrm R}^x \varphi^x)_s  (\tilde{\mathrm R}^x \varphi^x)_t }{1 - \theta_x G_{xx} \tilde r_x} \label{eq: Schur complement I} \\
		&\forall v, w \in \theta_x\mathcal V:  &\theta^x G_{vw} =  \theta_x G_{vw} + \frac{(\theta_x G_{xv})(\theta_x G_{xw}) \tilde r_x }{1 - \theta_x G_{xx} \tilde r_x}. \label{eq: Schur complement II}
	\end{align}
\end{lemma}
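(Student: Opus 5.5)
The approach is to use the Schur complement formula, applied twice, to the finite matrix $\theta^x A$, that is, the adjacency matrix with the attached graph at $x$ switched on. Write $\theta^x A$ in block form with respect to the splitting $\mathcal V = \theta_x\mathcal V \sqcup \{(xt):\, t\in \mathrm V^x\}$. Here $\theta_x\mathcal V$ is the vertex set of the graph in which $x$ carries no attached graph, and the remaining vertices are those of $\tilde{\mathrm G}^x$. In this decomposition,
\begin{equation*}
\theta^x A - z = \begin{pmatrix} \theta_x A - z & \delta_x \tilde\varphi^{x*} \\ \tilde\varphi^x \delta_x^* & \tilde{\mathrm T}^x - z\end{pmatrix},
\end{equation*}
because the only edges between the two blocks join $x$ to the vertices $(xt)$ with $\tilde\varphi^x_t = 1$. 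The off-diagonal block is therefore the rank-one matrix $\delta_x\tilde\varphi^{x*}$. (I write $\tilde\varphi^x$, $\tilde{\mathrm T}^x$ and $\tilde r_x$, since under $\theta^x$ the attached graph at $x$ is $\tilde{\mathrm G}^x$.)

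For \eqref{eq: Schur complement II}, take the Schur complement onto the $\theta_x\mathcal V$ block. The upper-left block of $(\theta^x A - z)^{-1}$ is the inverse of
\begin{equation*}
\theta_x A - z - \delta_x \langle\tilde\varphi^x, \tilde{\mathrm R}^x\tilde\varphi^x\rangle \delta_x^* = (\theta_x A - z) - \tilde r_x(z)\, \delta_x\delta_x^*.
\end{equation*}
Here I use that $\tilde{\mathrm R}^x$ is symmetric (not Hermitian), so the bilinear form $\langle \bar{\tilde\varphi}, \cdot\rangle$ coincides with $\tilde r_x$ because $\tilde\varphi^x$ is real. This is a rank-one perturbation of $\theta_x A - z$. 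The Sherman--Morrison formula then gives
\begin{equation*}
\theta^x G_{vw} = \theta_x G_{vw} + \frac{\tilde r_x\, (\theta_x G)_{vx}(\theta_x G)_{xw}}{1-\tilde r_x\, \theta_x G_{xx}},
\end{equation*}
and symmetry of $\theta_x G$ turns $(\theta_x G)_{vx}$ into $\theta_x G_{xv}$.

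For \eqref{eq: Schur complement I}, take the Schur complement onto the lower-right block instead. That block of the inverse is $(\tilde{\mathrm T}^x - z - \tilde\varphi^x (\theta_x G_{xx})\tilde\varphi^{x*})^{-1}$, again a rank-one perturbation, now of $\tilde{\mathrm T}^x - z$. Sherman--Morrison gives $\tilde{\mathrm R}^x + \theta_x G_{xx}(\tilde{\mathrm R}^x\tilde\varphi^x)(\tilde{\mathrm R}^x\tilde\varphi^x)^T/(1-\theta_x G_{xx}\tilde r_x)$. Taking the $(s,t)$ entry yields the claim.

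The main obstacle is justifying that the denominators do not vanish and that the needed inverses exist, because $z\in\C^+$ is non-real while the matrices involved are complex symmetric. First, $\tilde{\mathrm T}^x - z$ and $\theta_x A - z$ are invertible because their spectra are real. Second, for $1-\theta_x G_{xx}\tilde r_x \neq 0$, I would argue by contradiction. If this quantity vanished, the Schur complement would be singular, contradicting the invertibility of $\theta^x A - z$, since the determinant factorizes as $\det(\tilde{\mathrm T}^x - z)\det(\text{Schur complement})$. The same determinant factorization handles the other Schur complement. With this in hand, both identities follow directly.
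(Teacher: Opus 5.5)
Your proof is correct and is essentially the paper's argument. Both block-decompose $\theta^x A - z$ along $\theta_x\mathcal V$ and the vertices of $\tilde{\mathrm G}^x$, apply the Schur complement formula, and resolve the resulting rank-one perturbation with Sherman--Morrison. The differences are minor: for the first identity the paper writes the lower-right block as $\tilde{\mathrm R}^x + \tilde{\mathrm R}^x\tilde\varphi^x\langle\delta_x,(\theta_x A - z - \tilde r_x\delta_x\delta_x^T)^{-1}\delta_x\rangle(\tilde\varphi^x)^T\tilde{\mathrm R}^x$ and applies Sherman--Morrison on the core side, whereas you invert $\tilde{\mathrm T}^x - z - \theta_x G_{xx}\tilde\varphi^x(\tilde\varphi^x)^T$ directly, and your determinant argument for $1-\theta_x G_{xx}\tilde r_x\neq 0$ makes explicit a point the paper leaves implicit.
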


\begin{proof}
	To simplify notation, we assume without loss of generality that $\chi_x = 0$.
	We write $\theta^x A$ in the block form
	\begin{equation*}
		\theta^x A \coloneqq 
		\begin{pmatrix}
			A & V \\
			V^* &  \tilde{\mathrm  T}^x
		\end{pmatrix}
		\quad \text{ where }
		V = \delta_x \cdot (\varphi^x)^T
	\end{equation*}
	By the Schur-complement formula, $\theta^x G$ is of the form
	\begin{align*}
		\label{Equation: Schur complement attach one graph}
		\tag{SC}
		&\theta^x G = 
		\begin{pmatrix}
			G + G V(T-z-V^* G V)^{-1} V^* G &  * \\
			*					  &\tilde{\mathrm R}^x + \tilde{\mathrm R}^x V^*(A-z-V \tilde{\mathrm R}^x V^*)^{-1} V \tilde{\mathrm R}^x \\	
		\end{pmatrix} \\
		&= \begin{pmatrix}
			G + G \delta_x \big \langle \varphi^x, \big[T-z- G_{xx }\varphi^x (\varphi^x)^T\big]^{-1} \varphi^x \big \rangle \delta_x^T G &  * \\
			*					  &\tilde{\mathrm R}^x + \tilde{\mathrm R}^x \varphi^x \big \langle \delta_x, \big[A-z- \tilde r_x \delta_x \delta_x^T\big]^{-1}\delta_x \big \rangle  (\varphi^x)^T \tilde{\mathrm R}^x \\	
		\end{pmatrix}.
	\end{align*}
	With the Sherman–Morrison formula
	\begin{equation*}
		(A-z - \tilde r_x \delta_x \delta_x^T)^{-1} = G + \frac{\tilde r_x}{1 - G_{xx}\tilde r_x}G\delta_x^T \delta_x G
	\end{equation*}
	such that we obtain with \eqref{Equation: Schur complement attach one graph}
	\begin{equation*}
	(\theta^x G)_{(xs)(xt)} = \tilde{\mathrm R}^x_{st}  + (R^x\varphi^x)_s  (R^x \varphi^x)_t \Big(G_{xx} + \frac{G_{xx}^2 \tilde r_x}{1-G_{xx} \tilde r_x} \Big)
	\end{equation*}
	which is is \eqref{eq: Schur complement I}. The equality in \eqref{eq: Schur complement II} follows in the same manner.
\end{proof}

\begin{prop}
	\label{Proposition: Differentiation of ST in finite volume}
	Let $\mathbb o$ be drawn uniformly at random from $\mathbb V$ and let $g = \theta_{\mathbb o} G_{\mathbb o \mathbb o}$. Then for all $p\in [0, 1]$
	\begin{equation*}
		h_p \coloneqq \partial_p \hat s_p = \mathbb E_p\Big[\frac{\partial_z (g \tilde r)}{1 - g \tilde r} + \sum_{\lambda \in \tilde{ \mathrm S}_*}\frac{1}{\lambda-z} \Big] .
	\end{equation*}

\end{prop}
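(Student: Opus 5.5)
Write $\hat s_p = \hat s_p^{A} - \hat s_{p,\operatorname{TF}}$, where $\hat s_p^A(z) \coloneqq |\mathbb V|^{-1}\mathbb E_p[\operatorname{tr}(A-z)^{-1}]$; this uses \eqref{Equation: Definition hatnu_N}, since $|\mathcal V|\mu_N$ has Stieltjes transform $\operatorname{tr} G$. By Lemma \ref{Lemma: ST of tuning fork contribution}, $\hat s_{p,\operatorname{TF}}$ is linear in $p$. Hence
\begin{equation*}
\partial_p \hat s_{p,\operatorname{TF}} = \mathbb E\Big[\tilde q - \sum_{\lambda\in\tilde{\mathrm S}_*}(\lambda-z)^{-1}\Big],
\end{equation*}
and this already produces the sum over $\tilde{\mathrm S}_*$ in $h_p$. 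It therefore suffices to show
\begin{equation*}
\partial_p \hat s^A_p = \mathbb E_p\Big[\tilde q + \frac{\partial_z(g\tilde r)}{1-g\tilde r}\Big].
\end{equation*}

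For this we apply Lemma \ref{Lemma: Derivative Bernoulli expectation} to $f = |\mathbb V|^{-1}\operatorname{tr} G$. This $f$ is bounded by $\operatorname{Im}(z)^{-1}$ times the size of the vertex set, which is finite in finite volume, so the lemma gives
\begin{equation*}
\partial_p \hat s^A_p = |\mathbb V|^{-1}\sum_x \mathbb E_p\big[\operatorname{tr}\theta^x G - \operatorname{tr}\theta_x G\big].
\end{equation*}
We compute the difference of traces with Lemma \ref{Lemma: attach n-blocks to one vertex}. Summing \eqref{eq: Schur complement I} over $s=t$ gives
\begin{equation*}
\operatorname{tr}\tilde{\mathrm R}^x + \frac{\theta_xG_{xx}\,\langle \tilde{\mathrm R}^x\varphi^x, \tilde{\mathrm R}^x\varphi^x\rangle}{1-\theta_xG_{xx}\tilde r_x},
\end{equation*}
where the inner product is bilinear, without conjugation. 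Summing \eqref{eq: Schur complement II} over $v=w\in\theta_x\mathcal V$ gives
\begin{equation*}
\operatorname{tr}\theta_xG + \frac{\tilde r_x \sum_v(\theta_xG_{xv})^2}{1-\theta_xG_{xx}\tilde r_x}.
\end{equation*}
Now use the identities $\partial_z G = G^2$, so that $\sum_v G_{xv}^2 = \partial_z G_{xx}$, and $\langle \tilde{\mathrm R}\varphi,\tilde{\mathrm R}\varphi\rangle = \partial_z\tilde r$, both of which follow from symmetry of the real matrices. Writing $g_x \coloneqq \theta_xG_{xx}$, this yields
\begin{equation*}
\operatorname{tr}\theta^xG - \operatorname{tr}\theta_xG = \tilde q_x + \frac{g_x\,\partial_z\tilde r_x + \tilde r_x\,\partial_z g_x}{1-g_x\tilde r_x} = \tilde q_x + \frac{\partial_z(g_x\tilde r_x)}{1-g_x\tilde r_x}.
\end{equation*}

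Averaging over $x$ is the same as taking $x=\mathbb o$ uniform and independent, which gives the claimed formula with $g=\theta_{\mathbb o}G_{\mathbb o\mathbb o}$. The $\tilde q$ terms cancel against $\partial_p\hat s_{p,\operatorname{TF}}$. Under $\mathbb E_p$, the variable $g$ depends only on $\chi$ off $\mathbb o$ and on the attached graphs off $\mathbb o$, so it is independent of $(\tilde{\mathrm G}^{\mathbb o},\tilde\varphi^{\mathbb o})$, as the statement implicitly requires.

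The main obstacle is checking that the denominator $1-g\tilde r$ does not vanish and that the Schur-complement step is valid for $z\in\C^+$. Both follow because $\theta^xG$ exists: $1-g_x\tilde r_x = 0$ would contradict the invertibility of $A-z$ for nonreal $z$ via the Sherman–Morrison formula. Alternatively, one can argue that $g$ and $\tilde r$ are Herglotz functions with $\operatorname{Im}(-1/g)>0$, so $g\tilde r = 1$ is impossible.

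The remaining detail is to justify differentiating in $p$ under the expectation. This follows because $\hat s_p$ is a polynomial in $p$, as noted before Lemma \ref{Lemma: Derivative Bernoulli expectation}.
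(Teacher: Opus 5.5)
Your proposal is correct and is essentially the paper's argument. You split off the tuning-fork Stieltjes transform, which is linear in $p$; differentiate $|\mathbb V|^{-1}\mathbb E_p[\operatorname{tr} G]$ via Lemma \ref{Lemma: Derivative Bernoulli expectation}; and evaluate the change with the two rank-one Schur-complement formulas of Lemma \ref{Lemma: attach n-blocks to one vertex}, together with $\sum_v(\theta_x G_{xv})^2=\partial_z\theta_xG_{xx}$ and $\langle\varphi^x,(\tilde{\mathrm R}^x)^2\varphi^x\rangle=\partial_z\tilde r_x$. The only difference is that you group the whole trace difference per flipped vertex instead of splitting it into the paper's $T_1,T_2,T_3$.

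One minor point: $|\mathbb V|^{-1}\operatorname{tr}G$ is not bounded unless $m$ is, since $|\mathcal V|$ is random. It is, however, integrable because $\mathrm E[m]<\infty$, and this suffices for that lemma since $\{0,1\}^{\mathbb V}$ is finite. The paper passes over the same point.
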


\begin{proof}
	Since 
	\begin{equation*}
		\hat s_p = \mathbb E_p\Big[\frac{1}{|\mathbb V|}\sum_{\lambda \in \sigma(A)} \frac{\gamma(\lambda, A)}{\lambda - z} \Big] - \hat s_{p, \operatorname{TF}}(z) = \mathbb E_p\Big[\frac{1}{|\mathbb V|} \operatorname{tr}(G) \Big] - \hat s_{p, \operatorname{TF}}(z)
	\end{equation*}
	we can write with Lemma \ref{Lemma: ST of tuning fork contribution}
	\begin{equation}
		\label{Equation: Derivative of ST without TF}
		\partial_p \hat s_p = \frac{1}{|\mathbb V|} \partial_p \mathbb E_p\Big[\sum_{v\in \mathcal V} G_{vv}\Big] - \mathbb E[\tilde q] + \mathbb E\Big[\sum_{\lambda \in \tilde{\mathrm S}_*}\frac{1}{\lambda - z}\Big].
	\end{equation}
	We split the sum in the first expected value into contributions from vertices in $\mathbb V$ and from vertices in the attached graphs. From the former, we have by Lemma \ref{Lemma: attach n-blocks to one vertex} the contribution 
	\begin{align}
		\label{Equation: eq for term 1}
		T_1 \coloneqq \partial_p \mathbb E_p\Big[\sum_{x\in \mathbb V} G_{xx} \Big]
		&= \sum_{y\in \mathbb V} \sum_{x\in \mathbb V} \mathbb E_p\Big[\theta^y G_{xx} - \theta_y G_{xx}  \Big] \nonumber \\
		&=\sum_{y\in \mathbb V} \sum_{x\in \mathbb V} \mathbb E_p\Big[\frac{ (\theta_yG_{xy})^2 \tilde r_y }{1- (\theta_yG_{yy}) \tilde r_y} \Big] = |\mathbb V| \mathbb E\Big[\sum_{x\in \mathbb V} \frac{ (\theta_\mathbb o G_{\mathbb o x})^2  \tilde r_\mathbb o }{1- (\theta_\mathbb oG_{\mathbb o \mathbb o}) \tilde r_\mathbb o}\Big].
	\end{align}
	On the other hand
	\begin{align*}
		&\partial_p \mathbb E_p\Big[\sum_{x \in \mathbb V}\1_{\{\chi_x = 1\}}\sum_{t \in \mathrm V^x } G_{(xt)(xt)} \Big]\\
		&= \sum_{x \in \mathbb V} \sum_{y\in \mathbb V}\mathbb E_p\Big[\theta^y \1_{\{\chi_x = 1\}}\sum_{t \in \mathrm V^x } G_{(xt)(xt)}- \theta_y \1_{\{\chi_x = 1\}}\sum_{t\in \mathrm V^x } G_{(xt)(xt)}\Big] = T_2 + T_3
	\end{align*}
	where 
	\begin{equation*}
		T_2 \coloneqq  \sum_{x \in \mathbb V} \sum_{y\neq x}\mathbb E_p\Big[\1_{\{\chi_x = 1\}}\Big(\sum_{t \in \mathrm V^x } \theta^y G_{(xt)(xt)} - \sum_{t \in \mathrm V^x } \theta_y G_{(xt)(xt)} \Big)\Big], \quad 
		T_3 \coloneqq  \sum_{x \in \mathbb V} \mathbb E_p\Big[\theta^x \sum_{t \in \mathrm V^x } G_{(xt)(xt)}\Big].
	\end{equation*}
	With Lemma \ref{Lemma: attach n-blocks to one vertex} we have	
	\begin{equation}
		\label{Equation: eq for term 2}
		\frac{1}{|\mathbb V|} T_2 = \frac{1}{|\mathbb V|} \sum_{x\in \mathbb V} \sum_{y \neq x} \mathbb E_p\Big[\1_{\{\chi_x = 1 \}}\sum_{t\in \mathrm V^x } \frac{(\theta_yG_{(xt)y})^2 \tilde r_y}{1-(\theta_yG_{yy}) \tilde r_y } \Big] = \mathbb E_p\Big[ \sum_{x \neq \mathbb o} \1_{\{\chi_x = 1 \}}\sum_{t\in \mathrm V^x } \frac{(\theta_\mathbb o G_{(xt)\mathbb o})^2 \tilde r_\mathbb o}{1-(\theta_\mathbb oG_{\mathbb o\mathbb o}) \tilde r_\mathbb o }\Big].
	\end{equation}
	Combining \eqref{Equation: eq for term 1} and \eqref{Equation: eq for term 2} yields
	\begin{align}
		\label{Equation: eq for sum of term 1 and term 2}
		\frac{1}{|\mathbb V|}\big(T_1 + T_2\big) &=  \mathbb E_p\Big[ \sum_{x \in \mathbb V} \frac{ (\theta_\mathbb o G_{\mathbb o x})^2 \tilde r_\mathbb o }{1- (\theta_\mathbb oG_{\mathbb o \mathbb o}) \tilde r_\mathbb o} +    \sum_{x \neq \mathbb o} \1_{\{\chi_x = 1 \}}\sum_{t\in \tilde{\mathrm V}_x} \frac{(\theta_\mathbb o G_{(xt)\mathbb o})^2 \tilde r_\mathbb o}{1-(\theta_\mathbb oG_{\mathbb o\mathbb o}) \tilde r_\mathbb o } \Big] \nonumber \\
		&= \mathbb E_p \Big[ \frac{(\theta_\mathbb o G^2)_{\mathbb o \mathbb o} \tilde r_\mathbb o}{1-(\theta_\mathbb oG_{\mathbb o\mathbb o}) \tilde r_\mathbb o } \Big] = \mathbb E_p \Big[ \frac{ (\partial_z g) \tilde r_\mathbb o}{1-g \tilde r_\mathbb o } \Big]
	\end{align}
	were we used that there are no attached graphs in $\mathbb o$ in the graph $\theta_\mathbb o \mathcal G$. 
	
	For the term $T_3$, we have
	\begin{align}
		\label{Equation: eq for third term}
		\frac{1}{|\mathbb V|} T_3 
		&= \frac{1}{|\mathbb V|} \sum_{x\in \mathbb V} \mathbb E_p \Big[ \tilde{\mathrm R}^x_{tt} + \frac{\theta_xG_{xx} \sum_{t\in \tilde{\mathrm V}_x} \big[(\tilde {\mathrm R}^x \varphi^x)_t \big]^2 } {1-\theta_xG_{xx}\tilde r_x}  \Big] \nonumber \\
		&= \mathbb E[\tilde q] + \frac{1}{|\mathbb V|} \sum_{x\in \mathbb V} \mathbb E_p \Big[ \frac{\theta_xG_{xx} \sum_{t\in \tilde{\mathrm V}_x} \big[(\tilde {\mathrm R}^x \varphi^x)_t \big]^2 } {1-\theta_xG_{xx}\tilde r_x}  \Big].
	\end{align}
	As $\tilde{\mathrm R}^x$ is symmetric and $\varphi^x$ real, we have
	$
		(\tilde{\mathrm R}^x \varphi)_t = \overline{\big((\tilde{\mathrm R}^x)^* \varphi \big)_t}
	$
	for any $x\in \mathbb V$ and $t\in \tilde{\mathrm V}_x$ such that
	\begin{equation*}
		\sum_{t\in \tilde{\mathrm V}_x} \big[(\tilde {\mathrm R}^x \varphi^x)_t \big]^2 = \big \langle (\tilde{\mathrm R}^x)^* \varphi^x, \tilde{\mathrm R}^x \varphi^x \big \rangle = \big \langle \varphi^x, (\tilde{\mathrm R}^x)^2 \varphi^x \big \rangle = \partial_z \tilde r_x
	\end{equation*}
	for any $x\in \mathbb V$. We hence obtain with \eqref{Equation: eq for third term}
	\begin{equation}
		\label{Equation: eq for term 3}
		\frac{1}{|\mathbb V|} T_3 = \mathbb E[\tilde q] + \mathbb E_p \Big[ \frac{g \cdot \partial_z \tilde r} {1-g\tilde r}\Big].
	\end{equation}
	Combining \eqref{Equation: eq for sum of term 1 and term 2} and \eqref{Equation: eq for term 3} yields
	\begin{equation*}
		\partial_p \mathbb E_p\Big[ \frac{1}{| \mathbb V|} \sum_{v\in \mathcal V} G_{vv} \Big] =  \frac{1}{|\mathbb V|}(T_1 + T_2 + T_3) =  \mathbb E[\tilde q] + \mathbb E_p \Big[ \frac{g \cdot \partial_z \tilde r} {1-g\tilde r}\Big] + \mathbb E_p \Big[ \frac{ (\partial_z g) \tilde r}{1-g \tilde r }\Big],
	\end{equation*}
	and the claim follows from \eqref{Equation: Derivative of ST without TF}.
\end{proof}

Proposition \ref{Proposition: Differentiation of ST in finite volume} in particular yields for $p=0$ that the function $h$ is the Stieltjes transform of the signed measure $\partial_p \hat \nu_p|_{p=0}$. Lemma \ref{Proposition: C1 distance estimate} and Lemma \ref{Proposition: Estimate on difference of ST} yield \eqref{Equation: Estimate on phi in finite volume} and \eqref{Equation: Estimate on h in finite volume}. In order to finish the proof of Proposition \ref{Proposition: Differentiation of ST in finite volume} it is left to bound the second derivative $\partial_p^2 \hat \nu_p$ with respect to $\|\cdot \|_{(C^8)'}$.

\subsubsection{Bounding the second derivative}
In order bound the second derivative $\partial_p^2 \hat \nu_p$ with respect to $\|\cdot \|_{(C^8)'}$, we show that
\begin{equation}
	\label{Equation: Desired bound on second derivative of ST}
	|\partial_p^2 \hat s_p(z)| \leq \mathrm C(a, b, \mathrm P_1) (1-p)^{-1} \eta^{-7}, \quad  \forall z = \lambda + \mathrm i \eta \in (a, b) \times (0, 1).
\end{equation}
Applying the Helffer-Sjöstrand formula will then yield the desired bound on $\partial_p^2 \hat \nu_p$ in  $\|\cdot \|_{(C^8_c(a, b))'}$. In order to show \eqref{Equation: Desired bound on second derivative of ST}, we follow the following strategy: We apply Lemma \ref{Lemma: Derivative Bernoulli expectation} and Proposition \ref{Proposition: Differentiation of ST in finite volume} in order to express the second derivative  $\partial_p^2 \hat s_p$ as
\begin{equation*}
	\partial_p^2 \hat s_p = \partial_p \mathbb E_p\Big[\frac{\partial_z(gr)}{1-gr} \Big] = \mathbb E_p \bigg[\frac{1}{|\mathbb V|}\sum_{y}\sum_{x\neq y} \frac{\partial_z(\theta^x_y G_{yy} \tilde r_y)}{1-\theta^x_y G_{yy}\tilde r_y} -  \frac{\partial_z(\theta_{xy}G_{yy} \tilde r_y)}{1-\theta_{xy}G_{yy}\tilde r_y}  \bigg].
\end{equation*}
After applying Lemma \ref{Lemma: attach n-blocks to one vertex} in order to calculate $\theta_x^y G_{yy} - \theta_{xy} G_xx$ we then need to estimate expressions of the form
\begin{equation*}
	\mathbb E\Big[\frac{1}{|\mathbb V|} \sum_{y\in \mathbb V} \sum_{x\neq y} C_{xy} \theta_{xy}G_{xy}^2  \Big]
\end{equation*}
where we can bound $|C_{xy}| \leq C m_x^2 \tilde m_y^2 \eta^{-5}$ using the upcoming Lemma \ref{Lemma: Estimates on diagonal factors}. Since $\theta_{xy} G$ is independent of $(\tilde m_x, \tilde m_y)$ we hence need to bound
\begin{equation}
	\label{Equation: Double sum of Greens function}
	\mathbb E\Big[\frac{1}{|\mathbb V|} \sum_{y\in \mathbb V} \sum_{x\neq y} |\theta_{xy}G_{xy}^2|  \Big].
\end{equation}
In order to do so, we estimate the inner sum in \eqref{Equation: Double sum of Greens function} in terms of the diagonal elements of a matrix product. We write
\begin{equation*}
	\mathbb E[|\theta_{xy}G_{xy}^2|] = \mathbb E\big[|\theta_{y}G_{xy}^2| \big| \chi_x = 0 \big]  \leq (1-p)^{-1}\mathbb E\big[|\theta_{y}G_{xy}|^2 \big]
\end{equation*}
in order to obtain
\begin{align*}
	\mathbb E\Big[\frac{1}{|\mathbb V|} \sum_{y\in \mathbb V} \sum_{x\neq y} |\theta_{xy}G_{xy}^2|  \Big] &\leq (1-p)^{-1} \mathbb E\Big[\frac{1}{|\mathbb V|} \sum_{y\in \mathbb V} \sum_{x \in \mathbb V} |\theta_{y}G_{xy}^2|  \Big] \\
	&\leq (1-p)^{-1} \mathbb E\Big[\frac{1}{|\mathbb V|} \sum_{y\in \mathbb V}  \big[(\theta_y G^*)(\theta_y G)\big]_{yy}  \Big] \\
	&\leq (1-p)^{-1} \mathbb E\Big[\frac{1}{|\mathbb V|} \sum_{y\in \mathbb V} \|\theta_yG^* \theta_yG\|  \Big] \leq (1-p)^{-1} \eta^{-2}.
\end{align*}
We start by showing the aforementioned  Lemma which will yield the estimate on $|C_{xy}|$.

\begin{lemma}
	\label{Lemma: Estimates on diagonal factors}
	For every $x\in \mathbb V$, $w\in \mathbb C^+$ and $z \in \lambda + \mathrm i \eta \in  \mathbb C^+$
	\begin{equation*}
		\bigg| \frac{\tilde r_x(z)}{1-w\tilde r_x(z)}  \bigg| \leq 2\tilde m_x \eta^{-1}, \quad \bigg|\frac{\partial_z \tilde r_x(z)}{\tilde r_x(z)}\bigg|\leq \eta^{-1}
	\end{equation*}
\end{lemma}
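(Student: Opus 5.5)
Both estimates will follow from the spectral representation of $\tilde r_x$, combined with the Herglotz property and a Cauchy--Schwarz inequality. Let $\mu$ be the spectral measure of $\tilde{\mathrm T}^x$ with respect to $\tilde\varphi^x$, as in the proof of Lemma~\ref{Lemma: r in terms of pseudoinverse}. Then
\begin{equation*}
\tilde r_x(z)=\int_\R \frac{\mu(\mathrm dt)}{t-z}, \qquad \mu(\R)=\|\tilde\varphi^x\|^2=\tilde m_x^*\leq \tilde m_x .
\end{equation*}
If $\mu=0$ (for instance if $\tilde\varphi^x=0$), then $\tilde r_x\equiv 0$. In that case the first quantity vanishes, and the second I will read as $0$ by convention, since $\partial_z \tilde r_x=0$ too. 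So from now on I assume $\mu\neq 0$. For $z=\lambda+\mathrm i\eta\in\C^+$ this gives $\operatorname{Im}\tilde r_x(z)=\eta\int \mu(\mathrm dt)\,|t-z|^{-2}>0$, so in particular $\tilde r_x(z)\neq 0$.

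The key intermediate inequality comes from Cauchy--Schwarz:
\begin{equation*}
|\tilde r_x(z)|^2\leq \mu(\R)\int_\R\frac{\mu(\mathrm dt)}{|t-z|^2}=\frac{\mu(\R)}{\eta}\operatorname{Im}\tilde r_x(z)\leq \frac{\tilde m_x}{\eta}\operatorname{Im}\tilde r_x(z).
\end{equation*}
For the first estimate I write
\begin{equation*}
\frac{\tilde r_x}{1-w\tilde r_x}=\frac{1}{\tilde r_x^{-1}-w}.
\end{equation*}
Since $\operatorname{Im}(\tilde r_x^{-1})=-\operatorname{Im}\tilde r_x/|\tilde r_x|^2<0$ and $\operatorname{Im}w>0$, the imaginary parts of $\tilde r_x^{-1}$ and $-w$ have the same (negative) sign and cannot cancel. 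Hence $|\tilde r_x^{-1}-w|\geq \operatorname{Im}\tilde r_x/|\tilde r_x|^2$. In particular the denominator $1-w\tilde r_x$ never vanishes, and combining with the Cauchy--Schwarz bound gives
\begin{equation*}
\Big|\frac{\tilde r_x}{1-w\tilde r_x}\Big|\leq \frac{|\tilde r_x|^2}{\operatorname{Im}\tilde r_x}\leq \tilde m_x\eta^{-1},
\end{equation*}
which is even better than the claimed $2\tilde m_x\eta^{-1}$.

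For the second estimate I differentiate under the integral, which is legitimate since $\mu$ is finite and $\eta>0$, to get $\partial_z\tilde r_x(z)=\int \mu(\mathrm dt)\,(t-z)^{-2}$. Therefore
\begin{equation*}
|\partial_z\tilde r_x|\leq \int_\R\frac{\mu(\mathrm dt)}{|t-z|^2}=\eta^{-1}\operatorname{Im}\tilde r_x\leq \eta^{-1}|\tilde r_x|,
\end{equation*}
which is exactly the claim $|\partial_z\tilde r_x/\tilde r_x|\leq\eta^{-1}$.

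I do not expect a real obstacle. The only point that needs care is the sign argument showing $|1-w\tilde r_x|$ is bounded below, i.e.\ that the imaginary parts of $\tilde r_x^{-1}$ and $-w$ reinforce rather than cancel. This is why the hypothesis $w\in\C^+$ matters; later in the paper it is applied with $w=\theta_{xy}G_{yy}$, a diagonal resolvent entry, which indeed lies in $\C^+$.
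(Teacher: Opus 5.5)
Your proof is correct and follows the paper's argument: you use the same rewriting $\tilde r_x/(1-w\tilde r_x)=1/(\tilde r_x^{-1}-w)$ with the imaginary parts of $\tilde r_x^{-1}$ and $-w$ having the same sign, and the same bound $|\partial_z\tilde r_x|\leq \eta^{-1}\operatorname{Im}\tilde r_x\leq\eta^{-1}|\tilde r_x|$. The only difference is that you apply Cauchy--Schwarz to $|\tilde r_x|^2$ directly, whereas the paper applies it to $\operatorname{Re}(\tilde r_x)^2$ and bounds $\operatorname{Im}\tilde r_x\leq \tilde m_x\eta^{-1}$ separately, which is why you get the constant $1$ instead of $2$.
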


\begin{proof}
	We have
	\begin{equation*}
		\bigg| \frac{\tilde r_x}{1-w\tilde r_x}  \bigg| = \bigg|  \frac{1}{1/\tilde r_x-w} \bigg| \leq  \frac{1}{|\operatorname{Im}(1/\tilde r_x-w)|}  \leq  \frac{1}{|\operatorname{Im}(1/\tilde r_x)|} = \frac{|\tilde r_x|^2}{\operatorname{Im} (\tilde r_x)} = \frac{\operatorname{Re}(\tilde r_x)^2}{\operatorname{Im}(\tilde r_x)} + \operatorname{Im}(\tilde r_x)
	\end{equation*}
	where we used that $1/\tilde r_x$ and $-w$ have both negative imaginary part. Notice that $(\tilde m_x^*)^{-1} \tilde r_x$ is the Stieltjes transform of a probability measure on $\mathbb R$ which we call $\mu$. Using the inequality
	\begin{align*}
		|(\tilde m_x^*)^{-1} \operatorname{Re}(\tilde r_x(z))| &\leq \int_\R \frac{|t-\lambda|}{|t-\lambda|^2 + \eta^2} \, \mu(\mathrm dt) \\
		&\leq \int_\R \frac{1}{\sqrt{|t-\lambda|^2 + \eta^2}} \, \mu(\mathrm dt) \leq   \Big(\int_\R \frac{1}{|t-\lambda|^2 + \eta^2} \, \mu(\mathrm dt) \Big)^{1/2} = \eta^{-1/2} \sqrt{(m_x^*)^{-1} \operatorname{Im}( \tilde r_x(z))}
	\end{align*}
	(which we took from \cite[Theorem B11]{BS16}) yields
	\begin{equation*}
		\frac{\operatorname{Re}(\tilde r_x)^2}{\operatorname{Im}(\tilde r_x)} \leq \tilde m_x \eta^{-1}
	\end{equation*}
	which yields the first inequality. For the second inequality, it is sufficient to notice that
	\begin{equation*}
		|(\tilde m_x^*)^{-1}\partial_z \tilde r_x(z)| \leq \int_\R \frac{1}{|x-z|^2} \, \mu(\mathrm dx) = \int_\R \frac{1}{(x-\lambda)^2+\eta^2} \, \mu(\mathrm dx) = \eta^{-1} (\tilde m_x^*)^{-1} \operatorname{Im}(\tilde r_x(z)). \qedhere
	\end{equation*}
\end{proof}

\begin{prop}
	\label{Proposition: Estimate on second derivative of ST}
	We have for every $p\in [0, 1)$ and $z = \lambda + \mathrm i \eta \in (a, b) \times (0, 1) \subset \C$
	\begin{equation*}
		|\partial_p^2 \hat s_p(z)| \leq \mathrm C(a, b, \mathrm P_1) (1-p)^{-1} \eta^{-7}.
	\end{equation*}
\end{prop}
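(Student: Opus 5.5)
The plan follows the strategy sketched just before the statement. By Proposition \ref{Proposition: Differentiation of ST in finite volume} we have $\partial_p \hat s_p = \mathbb E_p[F(g,\tilde r)] + \mathbb E[\sum_{\lambda\in\tilde{\mathrm S}_*}(\lambda-z)^{-1}]$, where $F(g,\tilde r) \coloneqq \partial_z(g\tilde r)/(1-g\tilde r)$ and $g = \theta_{\mathbb o}G_{\mathbb o\mathbb o}$. The second term does not depend on $p$. Applying Lemma \ref{Lemma: Derivative Bernoulli expectation} once more (the function is bounded for fixed $z\in\C^+$), and using that $g$ does not depend on $\chi_{\mathbb o}$, only the terms with $x\neq\mathbb o$ survive. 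This gives
\begin{equation*}
\partial_p^2\hat s_p = \mathbb E_p\Big[\sum_{x\neq \mathbb o}\big(F(\theta^x_{\mathbb o}G_{\mathbb o\mathbb o},\tilde r_{\mathbb o}) - F(\theta_{x\mathbb o}G_{\mathbb o\mathbb o},\tilde r_{\mathbb o})\big)\Big].
\end{equation*}
Write $g_0 \coloneqq \theta_{x\mathbb o}G_{\mathbb o\mathbb o}$ and $g_1 \coloneqq \theta^x_{\mathbb o}G_{\mathbb o\mathbb o}$. By \eqref{eq: Schur complement II},
\begin{equation*}
g_1 - g_0 = \frac{(\theta_{x\mathbb o}G_{x\mathbb o})^2\,\tilde r_x}{1-\theta_{x\mathbb o}G_{xx}\,\tilde r_x},
\end{equation*}
and, differentiating in $z$, $\partial_z g_1 - \partial_z g_0$ is an explicit expression in $\theta_{x\mathbb o}G$, $(\theta_{x\mathbb o}G)^2$, $\tilde r_x$ and $\partial_z\tilde r_x$.

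Next we write $F(g_1,\tilde r) - F(g_0,\tilde r)$ as a sum of terms, each containing the factor $(\theta_{x\mathbb o}G_{x\mathbb o})^2$ or $(\theta_{x\mathbb o}G_{x\mathbb o})(\theta_{x\mathbb o}G^2)_{x\mathbb o}$, multiplied by a prefactor $C_{x\mathbb o}$. The prefactor is built from quotients such as $\tilde r/(1-g_i\tilde r)$, $\tilde r_x/(1-\theta_{x\mathbb o}G_{xx}\tilde r_x)$, $\partial_z\tilde r/\tilde r$, $\partial_z g_i$ and $G_{xx}$. We bound it with Lemma \ref{Lemma: Estimates on diagonal factors}. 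Since $g_i$ and $\theta_{x\mathbb o}G_{xx}$ are diagonal resolvent entries, they lie in $\C^+$, and the lemma gives
\begin{equation*}
|\tilde r/(1-w\tilde r)| \leq 2\tilde m\eta^{-1}, \qquad |\partial_z\tilde r|\leq \eta^{-1}|\tilde r|.
\end{equation*}
The bounds $|G_{vw}|\leq\eta^{-1}$ and $|\partial_z G_{vw}|\leq\eta^{-2}$ handle the rest. Collecting powers gives $|C_{x\mathbb o}|\leq C\tilde m_x^2\tilde m_{\mathbb o}^2\eta^{-5}$ with a universal constant $C$.

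The main obstacle is the factor $1/(1-g_i\tilde r)$ appearing in $F$ without an accompanying $\tilde r$. Here we write
\begin{equation*}
\frac{\partial_z(g\tilde r)}{1-g\tilde r} = \frac{\tilde r\,\partial_z g}{1-g\tilde r} + \frac{g\,\tilde r}{1-g\tilde r}\cdot\frac{\partial_z\tilde r}{\tilde r},
\end{equation*}
so that every occurrence of $(1-g\tilde r)^{-1}$ comes paired with $\tilde r$ and Lemma \ref{Lemma: Estimates on diagonal factors} applies. For the difference quotient in the first argument we use
\begin{equation*}
\frac{1}{1-g_1\tilde r}-\frac{1}{1-g_0\tilde r} = \frac{(g_1-g_0)\tilde r}{(1-g_1\tilde r)(1-g_0\tilde r)}.
\end{equation*}
Where an extra $\tilde r^{-1}$ would be needed, or to avoid dividing by a small $\tilde r$, we invoke the lower bound \eqref{Eqaution: Lower bound on r} on $\operatorname{Im} r$. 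This is what introduces $c(a,b)$ and restricts to $z\in(a,b)\times(0,1)$, and it costs at most a further $\eta^{-2}$; this is why the final power is $\eta^{-7}$ rather than $\eta^{-5}$.

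Finally, $\theta_{x\mathbb o}G$ is independent of $(\tilde{\mathrm G}^x,\tilde{\mathrm G}^{\mathbb o})$, so the expectation factorizes into $\mathbb E[\tilde m^2]\,\mathbb E[\tilde m]$ (or $\mathbb E[\tilde m^2]^2$, trimmed using the bound $\tilde m^*\leq\tilde m$) times
\begin{equation*}
\mathbb E\Big[\frac1{|\mathbb V|}\sum_{y}\sum_{x\neq y}|\theta_{xy}G_{xy}|^2\Big] \leq (1-p)^{-1}\eta^{-2},
\end{equation*}
which is the Ward-type bound derived just before Lemma \ref{Lemma: Estimates on diagonal factors}. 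Terms carrying $(\theta G^2)_{x\mathbb o}$ are handled by Cauchy--Schwarz together with $\|G^2\|\leq\eta^{-2}$. Tracking constants then gives $60(c(a,b)+1)\mathbb E[m^2|\cdot]\mathbb E[m|\cdot]$, that is, $\mathrm C(a,b,\mathrm P_1)(1-p)^{-1}\eta^{-7}$.
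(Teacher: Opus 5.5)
Your plan is essentially the paper's proof: a second application of Lemma~\ref{Lemma: Derivative Bernoulli expectation}, the Schur-complement formula for $\theta^x_{\mathbb o}G_{\mathbb o\mathbb o}-\theta_{x\mathbb o}G_{\mathbb o\mathbb o}$, Lemma~\ref{Lemma: Estimates on diagonal factors} together with the lower bound on $\operatorname{Im} r$ for the prefactors, and then independence plus the conditioning/Ward bound. The only real variation is that you handle $\partial_z(\theta_{x\mathbb o}G_{x\mathbb o})^2=2\,\theta_{x\mathbb o}G_{x\mathbb o}(\theta_{x\mathbb o}G^2)_{x\mathbb o}$ by Cauchy--Schwarz, which gives $\eta^{-1}\cdot\eta^{-2}$ after the $x$-sum once you pass to $\theta_{\mathbb o}G$. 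The paper instead uses a Cauchy integral over a rectangle of size $\eta$, and both routes yield the same $\eta^{-5}$ for that term.

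Two bookkeeping points need fixing. First, a prefactor $\tilde m_x^2\tilde m_{\mathbb o}^2$ would give $\mathbb E[\tilde m^2]^2$, which $\tilde m^*\le\tilde m$ cannot reduce to the stated $\mathbb E[m^2]\mathbb E[m]$; in fact every term carries at most $\tilde m_x^2\tilde m_{\mathbb o}$ or $\tilde m_x\tilde m_{\mathbb o}^2$, so you should track them individually. Second, the $\eta^{-7}$ is simply a prefactor $\eta^{-5}$ times the Ward factor $\eta^{-2}$, and this already happens for the $\partial_z g_0$ term (the paper's $T_2$); it is not an extra $\eta^{-2}$ caused by $c(a,b)$.
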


\begin{proof}
	Using that $\theta^y \theta_y G_{yy} =  \theta_y G_{yy} = \theta_y \theta_y G_{yy}$ for all $y \in \mathbb V$, we have with Lemma \ref{Lemma: Derivative Bernoulli expectation} and Proposition \ref{Proposition: Differentiation of ST in finite volume} 
	\begin{align*}
		\partial_p^2 \hat s_p = \partial_p \mathbb E_p\Big[\frac{\partial_z(gr)}{1-gr} \Big] &= \mathbb E_p \bigg[\frac{1}{|\mathbb V|}\sum_{y}\sum_{x\neq y} \frac{\partial_z(\theta^x_y G_{yy} \tilde r_y)}{1-\theta^x_y G_{yy}\tilde r_y} -  \frac{\partial_z(\theta_{xy}G_{yy} \tilde r_y)}{1-\theta_{xy}G_{yy}\tilde r_y}  \bigg] =  T_1 + T_2
	\end{align*}
	where 
	\begin{align*}
		T_1 &\coloneqq \mathbb E_p\bigg[\frac{1}{|\mathbb V|}\sum_{y}\sum_{x\neq y}  \frac{(\partial_z(\theta^x_y G_{yy}\tilde r_y) - \partial_z(\theta_{xy} G_{yy}\tilde r_y)}{1- \theta^x_y G_{yy}\tilde r_y} \bigg] \\
		T_2 &\coloneqq \mathbb E_p\bigg[\frac{1}{|\mathbb V|}\sum_{y}\sum_{x\neq y}  \big(\partial_z(\theta_{xy} G_{yy}\tilde r_y )\big) \Big(\frac{1}{1-\theta^x_y G_{yy}\tilde r_y } - \frac{1}{1-\theta_{xy} G_{yy}\tilde r_y } \Big)\bigg].
	\end{align*}
	We further write $T_1 = T_{11} + T_{12}$ where
	\begin{equation*}
		T_{11} = \mathbb E_p\bigg[ \frac{1}{|\mathbb V|}\sum_{y}\sum_{x\neq y}  \frac{(\theta^{x}_y G_{yy} - \theta_{xy} G_{yy})\cdot \partial_z \tilde r_y }{1-\theta^x_y G_{yy}\tilde r_y } \bigg]
		,\quad T_{12} \coloneqq \mathbb E_p\bigg[  \frac{1}{|\mathbb V|}\sum_{y}\sum_{x\neq y}  \frac{\tilde r_y \cdot \partial_z (\theta^x_y G_{yy} - \theta_{xy} G_{yy}) }{1-\theta_{y}^x G_{yy} \tilde r_y} \bigg].
	\end{equation*}
	By Lemma \ref{Lemma: attach n-blocks to one vertex}, we have for every $x\neq y$
	\begin{equation*}
		\theta^{x}_y G_{yy} - \theta_{xy} G_{yy} = \frac{\theta_{xy}G_{xy}^2\tilde r_x}{1-\theta_{xy} G_{xx}\tilde r_x }
	\end{equation*}
	so that
	\begin{align*}
		T_{11} &=  \mathbb E\bigg[\frac{1}{|\mathbb V|}\sum_{y}\sum_{x\neq y} \frac{\partial_z \tilde r_y}{1-\theta^x_y G_{yy}\tilde r_y} \frac{\theta_{xy}G_{xy}^2\tilde r_x}{1-\theta_{xy} G_{xx}\tilde r_x } \bigg]= \frac{1}{|\mathbb V|} \sum_{y} \sum_{x\neq y}\mathbb E\bigg[\frac{\partial_z \tilde r_y}{1-\theta^x_y G_{yy}\tilde r_y}  \frac{\theta_{y}G_{ yx}^2\tilde r_x}{1-\theta_{y} G_{xx}\tilde r_x }\Big|\chi_x = 0 \bigg].	
	\end{align*}
	Now, we have with Lemma \ref{Lemma: Estimates on diagonal factors} for every $x \neq y$
	\begin{equation*}
		\Big|\frac{\partial_z \tilde r_y}{1-\theta^x_y G_{yy }\tilde r_y} \frac{\theta_{y}G_{ yx}^2\tilde r_x}{1-\theta_{y} G_{xx}\tilde r_x }\Big| = \Big| \frac{\partial_z \tilde r_y}{\tilde r_y} \frac{\tilde r_y}{1-\theta^x_y G_{yy }\tilde r_y}\frac{\tilde r_x}{1-\theta_{y} G_{xx}\tilde r_x } \theta_{y}G_{ yx}^2 \Big| \leq 4\eta^{-3} \tilde m_x\tilde m_y \big|\theta_y G_{yx}^2\big|
	\end{equation*}
	and hence, since $\theta_y G$ is conditionally on $\chi_x = 0$ independent of $(\tilde m_x, \tilde m_y)$,
	\begin{align*}
		\bigg|\mathbb E\bigg[\frac{\partial_z \tilde r_y}{1-\theta^x_y G_{yy}\tilde r_y}  \frac{\theta_{y}G_{ yx}^2\tilde r_x}{1-\theta_{y} G_{xx}\tilde r_x }\Big|\chi_x = 0 \bigg]\bigg| &\leq 4\eta^{-3} \mathbb E[\tilde m]^2 \mathbb E\big[  |\theta_y G_{yx}^2| \big| \chi_x = 0  \big] \\
		&\leq 4\eta^{-3} \mathbb E[\tilde m]^2 (1-p)^{-1} \mathbb E\big[  |\theta_y G_{yx}^2|\big].
	\end{align*}
	Using that
	\begin{equation*}
		\sum_{x\neq y} \mathbb E_p \big[|\theta_y G_{xy}|^2 \big] \leq  \mathbb E_p\Big[\big[\theta_y G^* \theta_y G\big]_{yy}\Big] \leq \|\theta_y G^*\theta_y G\| \leq \eta^{-2}
	\end{equation*}
	hence yields
	\begin{align*}
		|T_{11}| \leq 4 \mathbb E[\tilde m]^2 (1-p)^{-1} \eta^{-5}.
	\end{align*}
	For the term $T_{12}$, we start by calculating
	\begin{equation*}
		\partial_z(\theta^x_y G_{yy} - \theta_{xy} G_{yy}) = \partial_z \frac{\theta_{xy}G_{yx}^2\tilde r_x}{1-\theta_{xy} G_{xx}\tilde r_x } = \frac{ (\partial_z \theta_{xy} G_{xy}^2) \tilde r_x}{1- \theta_{xy} G_{xx} \tilde r_x} + \frac{\theta_{xy}G_{yx}^2 \big( \partial_z \tilde r_x + \tilde r_x^2 \partial_z \theta_{xy} G_{xx} \big)}{(1- \theta_{xy} G_{xx}\tilde r_x)^2}.
	\end{equation*}
	and obtain $T_{12} = T_{121} + T_{122}$ with
	\begin{align*}
		T_{121} &\coloneqq \frac{1}{|\mathbb V|}\sum_{y} \sum_{x\neq y} \frac{\tilde r_y}{1-\theta^x_y G_{yy} \tilde r_y }\frac{\tilde r_x}{1- \theta_{xy} G_{xx} \tilde r_x} 	\partial_z \theta_{xy} G_{xy}^2\\
		T_{122} &\coloneqq \frac{1}{|\mathbb V|}\sum_{y} \sum_{x\neq y}  \frac{\tilde r_y}{1-\theta^x_y G_{yy} \tilde r_y }\frac{\tilde r_x^2}{(1- \theta_{xy} G_{xx}\tilde r_x)^2} \Big(\frac{\partial_z \tilde r_x}{\tilde r_x^2} + \partial_z \theta_{xy} G_{xx}\Big) \theta_{xy}G_{yx}^2 .
	\end{align*}
	Let $\gamma$ be a suitable parametrization of the boundary of the rectangle $[\lambda-\eta/2, \lambda+\eta/2] \times [\eta/2, 3\eta/2]$. Then by Cauchy's integral formula
	\begin{equation*}
		T_{121} \coloneqq \frac{1}{|\mathbb V|}\mathbb E_p \bigg[\sum_{y} \sum_{x\neq y} \frac{\tilde r_y}{1-\theta^x_y G_{yy} \tilde r_y }\frac{\tilde r_x}{1- \theta_{xy} G_{xx} \tilde r_x} 	\frac{1}{2\pi \mathrm i} \int_\gamma \mathrm d \xi \,  \frac{\theta_{xy}G_{xy}^2(\xi)}{(\xi - z)^2} \bigg]
	\end{equation*}
	such that we obtain with Lemma \ref{Lemma: Estimates on diagonal factors} and Fubini's theorem
	\begin{align*}
		|T_{121}| &\leq \int_\gamma\mathrm d \xi\,\frac{1}{|\mathbb V|} \sum_{y}\sum_{x\neq y} \mathbb E_p \Big[ 8 \tilde m_y \tilde m_x \eta^{-4} |\theta_{xy}G_{xy}(\xi)|^2  \Big] \\
		&=8\cdot   \mathbb E[\tilde m]^2   \eta^{-4}   \int_\gamma\mathrm d \xi\, \frac{1}{|\mathbb V|} \sum_{y}\sum_{x\neq y} \mathbb E_p \big[|\theta_yG_{xy}(\xi)|^2 \big| \chi_x = 0\big] \\
		&\leq 8\cdot  \mathbb E[\tilde m]^2 (1-p)^{-1}   \eta^{-4} \int_\gamma \mathrm d \xi\, \mathbb E_p\bigg[ \frac{1}{|\mathbb V|}\sum_{y}\big(\theta_y G^*(\xi) \theta_y G(\xi)\big)_{yy} \bigg]\\
		&\leq 32 \cdot \mathbb E[\tilde m]^2 (1-p)^{-1}  \eta^{-5}
	\end{align*}
	where we used in the last inequality that the length of $\gamma$ is $4\eta$.
	For the term $T_{122}$ we have
	\begin{align*}
		|T_{122}| &\leq \frac{1}{|\mathbb V|}\mathbb E \bigg[\sum_{y}\sum_{x\neq y} 8 \tilde m_x^2 \tilde m_y \eta^{-5} (c(a, b) + 1)  \big|\theta_{xy}G_{yx}^2\big|  \bigg] \\
		&\leq 8\cdot \mathbb E[\tilde m]\mathbb E[\tilde m^2] (c(a, b) + 1) (1-p)^{-1} \eta^{-7}.
	\end{align*}
	It is left to estimate $T_2$. We have
	\begin{align*}
		T_2 &= \mathbb E_p \bigg[\frac{1}{|\mathbb V|}\sum_y \sum_{x\neq y} \big(\partial_z(\tilde r_y \theta_{xy} G_{yy}) \big) \frac{\tilde r_y(\theta^x_y G_{yy} - \theta_{xy} G_{yy}  ) }{(1 - \theta^x_yG_{yy}\tilde r_y )(1 - \theta_{xy} G_{yy}\tilde r_y) }   \bigg] \\
		&=  \mathbb E_p \bigg[\frac{1}{|\mathbb V|}\sum_y \sum_{x\neq y} \Big(\frac{\partial_z\tilde r_y}{\tilde r_y}\theta_{xy} G_{yy}  + \partial_z \theta_{xy} G_{yy}  \Big) \frac{\tilde r_y^2}{(1 - \theta^x_yG_{yy}\tilde r_y )(1 - \theta_{xy} G_{yy}\tilde r_y)} \frac{\tilde r_x }{1 -  \theta_{xy} G_{xx} \tilde r_x} \theta_{xy} G_{xy}^2   \bigg] 
	\end{align*}
	such that by the same arguments
	\begin{equation*}
		|T_2| \leq 16 \cdot  \mathbb E[\tilde m^2] \mathbb E[\tilde m] (1-p)^{-1}\eta^{-7}.
	\end{equation*}
	Combining the estimates above concludes the proof.
\end{proof}

\subsubsection{Proof of Proposition \ref{Propositio: Small p expansion in finite volume}}
	By integrating out the probability in $\chi$ one can write $\hat \nu_p$ as a polynomial of order $|\mathbb V|$ in $p$ whose coefficients are finite signed measures. In particular, the first derivative $\phi = \partial_p \hat{\nu}_p|_{p=0}$ exists in total variation and has Stieltjes transform $h = h_0 = \partial_p \hat{s}_p|_{p=0}$. By Lemma \ref{Proposition: C1 distance estimate} and Lemma \ref{Proposition: Estimate on difference of ST} the estimates in \eqref{Equation: Estimate C1 distance} and \eqref{Equation: Estimate on h in finite volume} hold. By Proposition \ref{Proposition: Estimate on second derivative of ST}, we have for every $z = \lambda + \mathrm i \eta$ with $\lambda \in (a, b)$ and\footnote{Notice that $s_p(\overline z) = \overline{s_p(z)}$, $h(\overline z) = \overline{h(z)}$.} $\eta \in (-1, 1) \setminus\{0\}$ and all $p\in (0, 1)$
	\begin{equation}
		\label{Equation: Estimate on ST of R}
		p^{-2}\big|\hat s_p(z) - s_0(z) - p h(z)\big| \leq \frac{1}{2}\mathrm C(a, b, \mathrm P_1) (1-p)^{-1} |\eta|^{-7}.
	\end{equation}
	It is left to show that \eqref{Equation: Estimate on ST of R} implies that $R(p) \coloneqq p^{-2}(\hat \nu_p - \nu_0 - p \phi)$ satisfies for any $p\in (0, 1/2]$ and $a<b$ the estimate $\|R(p)\|_{(C^{8}(a, b))'}  \leq 2^{6}C(a, b, \mathrm P_1)(1-p)^{-1}$. We apply the Helffer-Sjöstrand formula in the form \eqref{Equation: ST and Helffer-Sjöstrand}. We extend $f\in C^8_c(a, b)$ via its almost analytic extension
	\begin{equation*}
		\tilde f(\lambda+ \mathrm i\eta) \coloneqq \psi(\eta) \sum_{k=0}^{7} \frac{(i\eta)^k}{k!} f^{(k)}(\lambda).
	\end{equation*}
	to a function $\tilde f\in C_c^1(\mathbb C)$. Then
	\begin{equation*}
		2\cdot \overline{\partial} f(\lambda + \mathrm i \eta) =  \psi(\eta) \cdot \frac{(\mathrm i \eta)^7}{7!} f^{(8)}(\lambda) + i \psi'(\eta) \sum_{k=0}^{7} \frac{(i\eta)^k}{k!} f^{(k)}(\lambda).
	\end{equation*}
	We define sets $M_1, M_2 \subset \C$ by
	\begin{equation*}
		M_1 = (a, b) \times [-1/2, 1/2], \quad M_2 = \big((a, b) \times [-1, 1])\setminus M_1.
	\end{equation*}
	We then have
	\begin{equation*}
		|\overline{\partial}  \tilde f(z)| \leq \frac{\eta^7}{2 \cdot 7!}\|f^{(8)}\|_\infty + \1_{\{z\in M_2\}} \frac{1}{2}  \|\psi' \|_\infty \|f\|_{C^7} 
	\end{equation*}
	since $\psi'(\eta) = 0$ for $|\eta| \leq 1/2$ or $|\eta|\geq 1$. Hence \eqref{Equation: Estimate on ST of R} yields with $|M_1| = |M_2| = |b-a|$ and $\operatorname{supp}(\overline{\partial}  \tilde f) \subseteq M_1\cup M_2$
	\begin{align*}
		\pi |R_p(f)| &= \Big|\int_\C p^{-2}(\hat s_p(z) - s_0(z) - p h(z))\overline{\partial} \tilde f(z) \, \mathrm dz   \Big| \\
		&\leq \frac{1}{2\cdot 7!} |b-a| \mathrm C(a, b, \mathrm P_1) (1-p)^{-1}  \|f^{(8)}\|_\infty  + 2^{5} |b-a|  \mathrm C(a, b, \mathrm P_1) (1-p)^{-1}\|\psi' \|_\infty \|f\|_{C^7}.
	\end{align*}
	Choosing $p = \mathrm P(\mathrm G \neq \emptyset)$ concludes the proof. \qed

\subsection{Expansion for sparse sprinkling in infinite volume: proofs of Theorems \ref{Theorem: small p expansion ST} and \ref{Theorem: small p expansion in dual of Ck}}
\label{Subsection: Small p expansion in infinite volume}

\begin{proof}[Proof of Theorem \ref{Theorem: small p expansion ST}]
	We reintroduce the $N$-dependency in the notation. By Lemma \ref{Lemma: convergence of nu hat} we have $\hat{\nu}_{N} \to \hat{\nu}$ in the Kolmogorov--Smirnov metric, and hence in particular weakly, such that $\hat s_N \to \hat s$ as $N\to \infty$. 
	We show that
	\begin{equation}
		\label{Equation: Convergence of h_N}
		h_N \coloneqq \mathbb E_0\Big[\frac{\partial_z (g_N r)}{1- g_N r} \big| \mathrm G\neq \emptyset \Big] \to \mathbb E_0\Big[\frac{\partial_z(g r)}{1 - g r} \big| \mathrm G\neq \emptyset \Big] \eqqcolon h
	\end{equation}
	pointwise on $\mathbb C \setminus \R$. After potentially changing the probability space (with Skorokhod's representation theorem) and relabelling the vertices, we might assume that almost surely for all $R>0$ we have $(\mathbb G_N, \mathbb o_N)_R = (\mathbb G, \mathbb o)_R$ eventually and hence almost surely $B_N\to B$ in the strong resolvent sense. This implies \cite[Theorem VIII20]{RS80} that $f(B_N) \to f(B)$ strongly for all bounded continuous $f:\mathbb R \to \mathbb C$. In particular, we have almost surely for any $z\in \C^+$
	\begin{align*}
		&g_N(z) = \langle \delta_{\mathbb o}, (B_N-z)^{-1} \delta_{\mathbb o} \rangle \to \langle \delta_{\mathbb o}, (B-z)^{-1} \delta_{\mathbb o} \rangle = g(z) \\
		&g_N'(z) =\langle \delta_{\mathbb o}, (B_N-z)^{-2} \delta_{\mathbb o} \rangle \to \langle \delta_{\mathbb o}, (B-z)^{-2} \delta_{\mathbb o} \rangle = g'(z)
	\end{align*}
	as $N\to \infty$. Using Lemma \ref{Lemma: Estimates on diagonal factors}, we have for all $a<b$ and every $z= \lambda + \mathrm i \eta \in (a, b) \times (0, 1)$
	\begin{equation*}
		\bigg|\frac{\partial_z (g_N(z) r(z))}{1- g_N(z) r(z)} \bigg| \leq 2m \eta^{-3} \bigg| \frac{1}{1- g_N(z) r(z)} \bigg| \leq 4m^2 c(a, b) \eta^{-5}
	\end{equation*}
	where we used that by assumption $|1/r(z)| \leq c(a, b) \eta^{-1}$ almost surely (conditionally on $\mathrm G\neq \emptyset$). Hence \eqref{Equation: Convergence of h_N} holds by dominated convergence. The estimates in \eqref{Equation: Estimates on h and S} follow directly from \eqref{Equation: Estimate on h in finite volume} and Proposition \ref{Proposition: Estimate on second derivative of ST}.
\end{proof}

\begin{proof}[Proof of Theorem \ref{Theorem: small p expansion in dual of Ck}]
	Using the Estimate \eqref{Equation: Estimate on h in finite volume} we obtain by dominated convergence for every $f\in C_c^2(\R)$
	\begin{equation*}
		\lim_{N\to \infty} \phi_N(f) =\lim_{N\to \infty}  \frac{1}{\pi}\int_{\C \setminus\R} h_N(z) \overline{\partial} f_{\psi}(z) \,  \mathrm dz =  \frac{1}{\pi}\int_{\C \setminus\R} h(z) \overline{\partial} f_{\psi}(z) \,  \mathrm dz \eqqcolon \phi(f).
	\end{equation*}
	Since
	\begin{equation*}
		R_{N} \coloneqq p^{-2}(\hat \nu_N - \nu_{0, N} - p \phi_N) \to p^{-2}(\hat \nu - \nu_{0} - p \phi) \eqqcolon R
	\end{equation*}			
pointwise on $C_c^2(\R)$, Proposition \ref{Propositio: Small p expansion in finite volume} implies $\|R \|_{(C^{8}(a, b))'}  \leq 2^6 |b-a| C(a, b, \mathrm P) (1-p)^{-1}$ for any $a<b$. By using \eqref{Equation: Estimate on phi in finite volume} and a diagonal argument\footnote{Namely, let $\mathcal F$ be a countable subset of $C_c^1(\R)$ such that for every $f\in C^1_c(\R)$ there exists a compact set $K\subset \R$ and a sequence $(f_n)_n \subset \mathcal F$ such that $\operatorname{supp}(f_n) \subseteq K$ for all $n\in \N$ and $f_n \to f$ with respect to $\|\cdot \|_{C^1}$. Then for all $f\in \mathcal F$ the sequence $(\phi_N(f))_N$ has a subsequence which converges. 
		By a diagonal argument, there exists a subsequence $(\phi_{N_k})_k$ such that $(\phi_{N_k}(f))_k$ converges for all $f \in \mathcal F$. One easily checks that for all $f\in C_c^1(\R)$ the sequence $(\phi_{N_k}(f))_k$ is Cauchy, i.e.\ converges, and that $f\mapsto \tilde \phi(f) \coloneqq \lim_{k \to \infty} \phi_{n_k}(f)$ defines a distribution of order at most one.} (as, for example, in the proof of the Banach--Alaoglu Theorem), one convinces oneself that there is a subsequence $(\phi_{N_k})_k$ and a distribution $\tilde \phi$ of order at most one such that $\phi_{N_k}(f) \to \tilde \phi(f)$ as $k\to \infty$ for all $f\in C_c^1(\R)$. Then $\tilde \phi$ is the unique extension of $\phi$ to a distribution of order at most 1.
\end{proof}

\subsection{Sufficient conditions for regularity: proof of Proposition \ref{Proposition: Condition that phi locally agrees with measure}}	
\label{Subsection: Regularity of first order term}

\begin{lemma}
	\label{Lemma: Expansion of h around admissible eigenvalue}
	We have on the event $\{\lambda_0 \in \mathrm S_*\}$
	\begin{equation*}
		\frac{\partial_z(gr)}{1-gr} + \sum_{\lambda \in \mathrm S_*} \frac{1}{\lambda - z} =\frac{(\partial_z r)/r^2  + \partial_zg}{1/r - g} + \sum_{\lambda\in \mathrm S_* \setminus \{\lambda_0 \}}  \frac{1}{\lambda - z} + \frac{\sum_{\lambda\in \mathrm S \setminus \{\lambda_0 \}}\Big( \frac{w(\lambda)}{\lambda - z} - (\lambda_0 - z)\frac{w(\lambda)}{(\lambda - z)^2}\Big)} {w(\lambda_0) + (\lambda_0 - z) \sum_{\lambda\in \mathrm S \setminus \{\lambda_0 \}}  \frac{w(\lambda)}{\lambda - z}}.
	\end{equation*}
\end{lemma}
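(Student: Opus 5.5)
This is a purely algebraic identity for fixed $z\in\mathbb C\setminus\mathbb R$, so I will verify it by direct manipulation of the spectral decomposition of $r$. Write $r(z)=\sum_{\lambda\in\mathrm S} \frac{w(\lambda)}{\lambda-z}$, where $w(\lambda)=\langle\varphi,\1_{\{\lambda\}}(\mathrm T)\varphi\rangle$. Note that $w(\lambda)>0$ exactly when $\lambda\in\mathrm S_*$. On the event $\{\lambda_0\in\mathrm S_*\}$ we have $w(\lambda_0)>0$, so $r$ has a genuine pole at $\lambda_0$.

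The first step is to rewrite the left-hand side by dividing numerator and denominator by $r$. Since $\partial_z(gr)=r\,\partial_z g+g\,\partial_z r$, we get
\begin{equation*}
\frac{\partial_z(gr)}{1-gr}=\frac{\partial_z g+g\,\partial_z r/r}{1/r-g}.
\end{equation*}
Comparing this with the first term on the right-hand side, the difference is
\begin{equation*}
\frac{g\,\partial_z r/r-\partial_z r/r^2}{1/r-g}=-\frac{\partial_z r}{r}\cdot\frac{1/r-g}{1/r-g}=-\frac{\partial_z r}{r}.
\end{equation*}
The identity therefore reduces to a statement about $r$ alone, with no dependence on $g$:
\begin{equation*}
-\frac{\partial_z r}{r}+\frac{1}{\lambda_0-z}=\frac{\sum_{\lambda\neq\lambda_0}\Big(\frac{w(\lambda)}{\lambda-z}-(\lambda_0-z)\frac{w(\lambda)}{(\lambda-z)^2}\Big)}{w(\lambda_0)+(\lambda_0-z)\sum_{\lambda\neq\lambda_0}\frac{w(\lambda)}{\lambda-z}},
\end{equation*}
after cancelling the common sum over $\mathrm S_*\setminus\{\lambda_0\}$.

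To prove this reduced identity, set $\rho(z)\coloneqq\sum_{\lambda\in\mathrm S\setminus\{\lambda_0\}}\frac{w(\lambda)}{\lambda-z}$, so that $(\lambda_0-z)\,r=w(\lambda_0)+(\lambda_0-z)\rho$. Call this quantity $D$; it is exactly the denominator on the right. Logarithmic differentiation gives
\begin{equation*}
\frac{\partial_z r}{r}=\frac{1}{\lambda_0-z}+\frac{\partial_z D}{D}.
\end{equation*}
Hence the left-hand side equals $-\partial_z D/D$. We compute $\partial_z D=-\rho+(\lambda_0-z)\rho'$ with $\rho'=\sum_{\lambda\neq\lambda_0}\frac{w(\lambda)}{(\lambda-z)^2}$, so $-\partial_z D=\rho-(\lambda_0-z)\rho'$, which is precisely the numerator on the right.

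The only points needing care are that $r\neq0$ and $D\neq0$, so that the divisions are legitimate. Both follow for $z\notin\mathbb R$ from $\operatorname{Im}r>0$, which holds because $w(\lambda_0)>0$ makes the spectral measure of $\mathrm T$ with respect to $\varphi$ nontrivial. Beyond that, I expect no real obstacle: the main step is simply recognising the logarithmic-derivative structure.
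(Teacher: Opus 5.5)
Your proof is correct and carries out the ``simple computation'' that the paper's proof consists of. The reduction to the $g$-independent identity $-\partial_z r/r+(\lambda_0-z)^{-1}=-\partial_z D/D$ with $D=(\lambda_0-z)r$ is valid.

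One step you did not justify: cancelling $1/r-g$ requires $1/r-g=(1-gr)/r\neq 0$. This holds wherever the left-hand side is defined, e.g.\ because $\operatorname{Im}(1/r-g)<0$ on $\mathbb C^+$.
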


\begin{proof}
	This follows by a simple computation.
\end{proof}

\begin{proof}[Proof of Proposition \ref{Proposition: Condition that phi locally agrees with measure}]
	We start with the proof of \eqref{Equation: phi outside of support is purely atomic}, so let us assume that $\mathrm D \subseteq \Sigma_0 = [a, b]$. Let $\lambda \in \R\setminus \Sigma_0$. Let us abbreviate $u = gr$. It is not hard to see that $u$ is strictly decreasing on $(b, \infty)$ and strictly increasing on $(-\infty, a)$. As \eqref{Assumption:: m a.s. bounded} holds, the support of $\mathrm P$ is finite, i.e.\ there exists $(\mathrm G_1, \varphi_1), \hdots, (\mathrm G_l, \varphi_l)$  such that
	\begin{equation*}
		\mathbb P\big((\mathrm G, \varphi) \in \big\{(\mathrm G_1, \varphi_1), \hdots, (\mathrm G_l, \varphi_l)\big\}\big) = 1.
	\end{equation*}
	We first assume that we are on the event $\{(\mathrm G, \varphi) = (\mathrm G_i, \varphi_i)\}$ for some $i\in \{1, \hdots, l\}$.
	If $u(\lambda) \neq 1$ then there exists some $\delta>0$ and some $\varepsilon_i \in (0, \operatorname{dist}(\lambda, \Sigma_0))$ such that $|1-u(z)|>\delta$ for all $z \in B_{\varepsilon_i}(\lambda)$ which implies
	\begin{equation*}
		\lim_{\eta \downarrow 0} \operatorname{Im} \frac{ \partial_zu(t + \mathrm i \eta) }{1- u(t + \mathrm i \eta)} = 0
	\end{equation*}
	uniformly\footnote{To show that the convergence is uniform, one can e.g.\ differentiate with respect to $z$, bound the derivative and apply the Arzelà--Ascoli theorem.} in $t \in (\lambda-\varepsilon_i, \lambda + \varepsilon_i)$. If, one the other hand, $u(\lambda) = 1$ then write
	\begin{equation*}
		\frac{\partial_zu(z)}{1- u(z)} =\frac{1}{\lambda- z} + \partial_zu(z) \bigg( \frac{1}{1-u(z)} - \frac{1}{\partial_zu(\lambda)(\lambda - z)}  \bigg) + \frac{\partial_zu(z) - \partial_zu(\lambda)}{\partial_zu(\lambda)(\lambda - z)}.
	\end{equation*}
	Expanding $u$ into a Taylor series around $\lambda$ yields the existence of holomorphic functions $u_1, u_2, u_3:B_{\varepsilon_i}(\lambda) \to \C$ on an open ball $B_{\varepsilon_i}(\lambda)$ around $\lambda$ with $u_j(B_{\varepsilon_i}(\lambda) \cap \R) \subset \R$ for $j = 1, 2, 3$ and \footnote{Notice that $u_2(\lambda) = \partial_zu(\lambda) \neq 0$.} $u_2(z) \neq 0$ for all $z\in B_{\varepsilon_i}(\lambda)$ such that
	\begin{align*}
		\frac{1}{1- u(z)} - \frac{1}{\partial_zu(\lambda)(\lambda - z)} = \frac{\partial_zu(\lambda)(\lambda - z) - (1 - u(z))}{(1- u(z))\partial_zu(\lambda)(\lambda-z) } = \frac{u_1(z)}{u_2(z) \partial_z u(\lambda)}
	\end{align*}
	as well as
	\begin{equation*}
		\frac{\partial_zu(z) - \partial_zu(\lambda)}{\partial_zu(\lambda)(\lambda-z)} = \frac{u_3(z)}{\partial_zu(\lambda)}
	\end{equation*}
	for all $z\in B_{\varepsilon_i}(\lambda)$. Hence, we obtain
	\begin{equation*}
		\lim_{\eta \downarrow 0} \operatorname{Im} \Big(\frac{ \partial_zu(t + \mathrm i \eta) }{1- u(t + \mathrm i \eta)} - \frac{1}{z-\lambda} \Big) = 0
	\end{equation*}
	uniformly in $t\in (\lambda-\varepsilon_i, \lambda+\varepsilon_i)$.
	Hence, if we set $\varepsilon \coloneqq \min_{i} \varepsilon_i$ we obtain by conditioning on $(\mathrm G, \varphi)$
	\begin{equation*}
		\lim_{\eta \downarrow 0} \operatorname{Im}\Big(h(t + \mathrm i \eta) - \frac{\mathbb P(g(\lambda)r(\lambda) = 1)}{\lambda - t - \mathrm i \eta} \Big)= \lim_{\eta \downarrow 0} \operatorname{Im}\Big(\mathbb E\Big[\frac{\partial_zu(t+ \mathrm i \eta)}{1- u(t + \mathrm i \eta)} + \frac{\1_{\{g(\lambda)r(\lambda) = 1\}}  }{\lambda - t - \mathrm i \eta} \Big] \Big)=0
	\end{equation*}
	uniformly in $t\in (\lambda - \varepsilon, \lambda + \varepsilon)$. This implies
	\begin{equation*}
		\phi - \sum_{\lambda \in \mathcal Z}\mathbb P(g(\lambda)r(\lambda) = 1)\cdot \delta_{\lambda} = 0  \quad \text{ on } \quad  \R \setminus \Sigma_0.
	\end{equation*}
	(compare to the proof of Lemma \ref{Lemma: Local uniform convergence of Im ST} in Appendix \ref{Section: Facts about Stieltjes transforms}).
	Let us now assume that $\nu_0$ has a density $f\in C[a, b] \cap C^2(a, b)$ which is strictly positive on $(a, b)$ and show that \eqref{Equation: phi insinde of support is ac} holds. Let us again assume that we are on the event $\{(\mathrm G, \varphi) = (\mathrm G_i, \varphi_i)\}$ for some $i\in \{1, \hdots, l\}$.
	Let $\lambda \in (a, b)$. If $\lambda \notin \mathrm S_*$ then can convince oneself with Proposition \ref{Proposition: Limit of ST} (using that $f$ is strictly positive on $(a, b)$) that there exists some $\varepsilon_i>0$ such that $1-u$ does not have any zeros in $(\lambda - \varepsilon_i, \lambda + \varepsilon_i)$ and such that the limit
	\begin{equation}
		\label{Equation: convergence of ST of first order term}
		\lim_{\eta \downarrow 0} \frac{\partial_z u(t + \mathrm i \eta)}{1 - u(t + \mathrm i \eta)} + \sum_{\lambda \in \mathrm S_*} \frac{1}{\lambda - t - \mathrm i \eta}
	\end{equation}
	exists uniformly in $t\in (\lambda - \varepsilon_i, \lambda + \varepsilon_i)$. If, on the other hand, $\lambda \in \mathrm S_*$ then $z \mapsto 1/r(z)$ and $z \mapsto \partial_z r(z)/r(z)^2$ have, by Riemann's theorem on removable singularities, a removable singularity in $\lambda$. Applying Proposition \ref{Proposition: Limit of ST},  Lemma \ref{Lemma: Expansion of h around admissible eigenvalue} and the estimate $\big|\frac{1}{1/r-g}\big| \leq \frac{1}{\operatorname{Im}{g}}$
	(which follows as both $1/r$ as well as $-g$ have negative imaginary part on $\mathbb C^+$)
	one convinces oneself that there exists some $\varepsilon_i>0$ such that the limit \eqref{Equation: convergence of ST of first order term} exists uniformly in $t\in (\lambda - \varepsilon_i, \lambda + \varepsilon_i)$. Setting $\varepsilon \coloneqq \min_i \varepsilon_i$ yields by conditioning on $(\mathrm G, \varphi)$ that $ h(t + \mathrm i \eta)$ converges uniformly in $t\in (\lambda - \varepsilon, \lambda + \varepsilon)$ as $\eta \downarrow 0$. The statement follows from Lemma \ref{Lemma: Local uniform convergence of Im ST}.
\end{proof}

\subsection{Necessity of Smoothing: proof of Proposition \ref{Proposition: Smoothening is necessary 1} and Proposition \ref{Proposition: Smoothening is necessary 2}}
\label{Subsection: Necessity of Smoothening}

\begin{proof}[Proof of Proposition \ref{Proposition: Smoothening is necessary 1}]
	We start by noticing that for every $\lambda \in \R$
	\begin{align*}
		&\lim_{\eta \downarrow 0} \eta g(\lambda + \mathrm i \eta) = \lim_{\eta \downarrow 0} \int_\R \frac{\eta(t-\lambda) }{(t-\lambda)^2 + \eta^2}\, \nu_0(\mathrm dt)  + \mathrm i \int_\R \frac{\eta^2}{(t-\lambda)^2 + \eta^2} \, \nu_0(\mathrm dt) = \mathrm i \nu_0(\{\lambda\})  \\
		&\lim_{\eta \downarrow 0} \eta^2 g'(\lambda + \mathrm i \eta) = \lim_{\eta \downarrow 0}\int_\R \frac{\eta^2(t - \lambda)^2 - \eta^4}{\big[(t-\lambda)^2 + \eta^2\big]^2} \, \nu_0(\mathrm dt)  + \mathrm i \int_\R  \frac{2\eta^3(t - \lambda)}{\big[(t-\lambda)^2 + \eta^2\big]^2} \, \nu_0(\mathrm dt) = -\nu_0(\{\lambda\}) 
	\end{align*}
	where we used dominated convergence and the bounds
	\begin{equation*}
		\sup_{t \in \R} \Big|\frac{\eta t}{t^2 + \eta^2}\Big| = \frac{1}{2}, \quad \sup_{t \in \R} \Big|\frac{\eta^2 t^2 - \eta^4}{(t^2 + \eta^2)^2}\Big| = 1, \quad  \sup_{t \in \R} \Big|\frac{2\eta^3 t}{(t^2 + \eta^2)^2}\Big| = \frac{3\sqrt{3}}{8},
	\end{equation*}
	which hold for every $\eta > 0$. Now, let us assume that $\phi$ would be a Radon measure. By the assumptions on $\mathbb G$ and $\mathrm P$, the support of $\hat \nu$ is compact such that $\phi$ would be a finite signed measure with compact support.
	Let $\mathbb A$ be the set of atoms of $\nu_0$  and let $\mathrm M = \{\lambda \in \R:\, \mathrm P(r(\lambda) = 0) > 0 \}$. For every $\lambda \in \mathbb A \setminus(\mathrm D\cup \mathbb M)$ we have
	\begin{align*}
		\phi(\{\lambda\}) = \lim_{\eta \downarrow 0} -i \eta h(\lambda + \mathrm i \eta) &=	\mathbb E\bigg[\lim_{\eta \downarrow 0} \frac{\big(-\mathrm i \eta^2 \partial_z g(\lambda  + \mathrm i \eta) \big) r(\lambda  + \mathrm i \eta) - \mathrm i \eta^2 g(\lambda  + \mathrm i \eta) \partial_z r(\lambda  + \mathrm i \eta)  }{\eta - \eta g(\lambda  + \mathrm i \eta) r(\lambda  + \mathrm i \eta)} \Big| \mathrm G \neq \emptyset \bigg] \\
		&= \mathbb E\bigg[ \frac{ \mathrm i\cdot \nu_0(\{\lambda\}) r(\lambda)}{-\mathrm i \cdot   \nu_0(\{\lambda\}) r(\lambda)} \Big| \mathrm G \neq \emptyset  \bigg] = -1
	\end{align*}
	where we might exchange the order of the limit and expected value as the support of $\mathrm P$ is finite. The assumptions imply that the set $\mathbb A \setminus(\mathrm D\cup \mathrm M)$ is infinite, yielding $\phi( \mathbb A \setminus(\mathrm D\cup \mathrm M)) = -\infty$ and hence a contradiction.
\end{proof}

\begin{proof}[Proof of Proposition \ref{Proposition: Smoothening is necessary 2}]
	Notice that the set $\mathcal Z$ does not depend on $p\in (0, 1)$. By Corollary \ref{Corollary: No atoms outside of D for integer lattice} we have $\nu_p(\mathcal Z) = 0$. We hence get with Proposition \ref{Proposition: Condition that phi locally agrees with measure}
	\begin{equation*}
		R_p(\mathcal Z) = p^{-2}\big(\nu_p(\mathcal Z) - \nu_0(\mathcal Z)  - p\phi(\mathcal Z) \big) = -p^{-1} |\mathcal Z|.
	\end{equation*}
	(for the last equality, notice that we assumed \eqref{Assumption: Detetministic attached graphs}).
	If $c(\mathbb Z^d, \mathrm P)>0$ then $\mathcal Z \neq \emptyset$ and hence
	$(R_p(\mathcal Z))_p$ is unbounded. In particular, for $d\leq 2$ this is always the case, see the discussion after Proposition \ref{Proposition: Concentration of mass outside of Sigma0}. Moreover, for $d=1$ with 0 being an admissible eigenvalue we have with Proposition \ref{Proposition: Mass of atom in zero for line} and Proposition \ref{Proposition: Condition that phi locally agrees with measure}
	\begin{equation*}
		R_p(\{0\}) = \frac{p^{-1}(1-p)}{2-p} \quad \text{ as }\quad  \hat{\nu}_p(\{0\}) = \frac{p(1-p)}{2-p}  \,\text{ and }\, \phi(\{0\}) = 0,
	\end{equation*}
	and hence $(R_p(\{0\}))_p$ is unbounded.
\end{proof}

\section{Applications of the expansion}
We now discuss two applications of the expansion given in Theorem \ref{Theorem: small p expansion in dual of Ck}:
In Subsection \ref{Subsection: Concentration of mass outside of Sigma0}, we show Proposition \ref{Proposition: Concentration of mass outside of Sigma0} which states, under the given assumptions, that the mass of $\hat \nu$ outside of $\Sigma_0$ is for small $p$ concentrated in a set of small Lebesgue measure. In Subsection \ref{Subsection: Sources and Sinks} we prove Proposition \ref{Proposition: Criterion for repulsive eigenvalues} which gives criteria whether sprinkling with \eqref{Assumption: Detetministic attached graphs} yields for small $p$ a local increase or decrease of the expected number of eigenvalues close to an admissible eigenvalue of the attached graphs. We then apply the latter to the case of the Bethe lattice and prove Proposition \ref{Proposition: Source-Sink transition Bethe lattice}. Finally, we show Corollary \ref{Corollary: mass can decrease around tree eigenvalue for small p} which gives a scenario in which sparse sprinkling yields a local reduction of mass around an eigenvalue of one of the attached graphs. 

\subsection{Concentration of mass outside of $\Sigma_0$: proof of Proposition \ref{Proposition: Concentration of mass outside of Sigma0}}
\label{Subsection: Concentration of mass outside of Sigma0}

\begin{proof}[Proof of Proposition \ref{Proposition: Concentration of mass outside of Sigma0}]
	To simplify notation, let
	\begin{equation*}
		\mathcal J_\varepsilon \coloneqq \mathbb R \setminus \mathcal I_\varepsilon = \{\lambda  \in \R: \, \operatorname{dist}(\lambda, \Sigma_0 \cup \mathcal Z) > \varepsilon  \}.
	\end{equation*}
	We define a suitable mollifier $(\varphi_\varepsilon)_{\varepsilon>0}$:  Let $\varphi \in C_c^\infty(\R)$ satisfy
	$\varphi \geq 0$, $\int_\R \varphi(x) \, \mathrm dx  = 1$, and $\operatorname{supp}(\varphi) \subseteq [-1, 1]$.
	For $\varepsilon \in (0, 1)$ define $\varphi_\varepsilon(x) \coloneqq  \varepsilon^{-1} \varphi(x/\varepsilon)$. Define $f_\varepsilon$ by taking the convolution $f_\varepsilon \coloneqq \1_{\mathcal J_{\varepsilon/2}}*\varphi_{\varepsilon/2}$. We then have for all $k \in \{0. \hdots, 8\}$
	\begin{equation*}
		\|f_{2\varepsilon}^{(k)}\|_\infty = \|\varphi_{\varepsilon}^{(k)}*\1_{\mathcal J_{\varepsilon}}\|_\infty \leq \int_\R \varepsilon^{-k-1} |\varphi^{(k)}(x/\varepsilon)| \, \mathrm dx \leq \int_{-1}^1 \varepsilon^{-k} |\varphi^{(k)}(x)| \, \mathrm dx \leq 2\varepsilon^{-k} \|\varphi^{(k)}\|_\infty
	\end{equation*}
	such that
	\begin{equation}
		\label{Equation: Upper bound on C8 norm of f_eps}
		\|f_{2\varepsilon}\|_{C^8} \leq 2\varepsilon^{-8}  \|\varphi \|_{C^8}
	\end{equation}
	for all $\varepsilon \in (0, 1)$. Notice that $f_\varepsilon$ satisfies
	\begin{equation*}
		f_\varepsilon(x)=1 \text{ for all } x\in \mathcal J_\varepsilon, \quad f_\varepsilon(x)=0 \text{ for all } x\in \Sigma_0 \cup \mathcal Z.
	\end{equation*}
	In particular, $\nu_0(f_\varepsilon) = 0$. By Proposition \ref{Proposition: Condition that phi locally agrees with measure} we have $\phi(f_\varepsilon)= 0$ as well such that we obtain with Theorem \ref{Theorem: small p expansion in dual of Ck} and \eqref{Equation: Upper bound on C8 norm of f_eps}
	\begin{equation*}
		\hat \nu(\mathcal J_\varepsilon) \leq  \hat \nu(f_\varepsilon) = p^2 R(f_\varepsilon) \leq C p^2 \|f_\varepsilon\|_{C^8} \leq C p^2 \varepsilon^{-8}   \|\varphi \|_{C^8}
	\end{equation*}
	for a suitable constant $C>0$ that only depends on $M$.
	The choice $\varepsilon = p^\delta$ yields \eqref{Equation: Most mass close to Sigma_0 or Z}. The lower bound in \eqref{Equation: Bounds on mass close to Z} follows in a similar manner: Assume $\varepsilon \leq \operatorname{dist}(\Sigma_0, \mathcal Z)$ and define $g_\varepsilon \coloneqq \1_{\mathcal Z_{\varepsilon/2}  }*\varphi_{\varepsilon/2}$.
	Then $g_\varepsilon \leq \1_{\mathcal Z_\varepsilon}$ and $g_\varepsilon(x)=1 \text{ for all } x\in \mathcal Z$, so that
 by Theorem \ref{Theorem: small p expansion in dual of Ck} and Proposition \ref{Proposition: Condition that phi locally agrees with measure}
	\begin{equation}
		\label{Equation: Lower bound on mass of J_varepsilon}
		\hat \nu(\mathcal Z_\varepsilon) \geq \hat \nu(g_\varepsilon)  = p \phi(g_\varepsilon) + p^2 R(g_\varepsilon) = p\phi(\mathcal Z) + p^2R(g_\varepsilon).
	\end{equation}
	Using that $\lambda  \mapsto r(\lambda) g(\lambda)$ is conditionally on $\mathrm G \neq \emptyset$ strictly increasing on $(-\infty, a)$ and strictly decreasing on $(b, \infty)$ we hence obtain
	\begin{align*}
		\phi(\mathcal Z) &= \sum_{\lambda \in \mathcal Z} \mathbb P(g(\lambda)r(\lambda) = 1 |\mathrm G \neq \emptyset) \\
		&= \mathbb E\Big[\sum_{\lambda \in \mathcal Z}  \1_{\{g(\lambda)r(\lambda) = 1 \} }|\mathrm G \neq \emptyset\Big] \\
		&= \mathbb P\big( \exists \lambda \in (-\infty, a):\, r(\lambda)g(\lambda) = 1\big)  + \mathbb P\big( \exists \lambda \in (b, \infty):\, r(\lambda)g(\lambda) = 1) = c(\mathbb G, \mathrm P \big).
	\end{align*}
	With \eqref{Equation: Lower bound on mass of J_varepsilon} and the estimate $R(g_\varepsilon) \leq C p^2 \varepsilon^{-8}   \|\varphi \|_{C^8}$ we obtain the lower bound in \eqref{Equation: Bounds on mass close to Z} for the choice $\varepsilon = p^{\delta}$. The upper bound in \eqref{Equation: Bounds on mass close to Z} directly follows from \eqref{Equation: Estimate on mass outside of Sigma0}.
\end{proof}

\subsection{Sources and sinks: proof of Proposition \ref{Proposition: Criterion for repulsive eigenvalues}, Proposition \ref{Proposition: Source-Sink transition Bethe lattice} and Corollary \ref{Corollary: mass can decrease around tree eigenvalue for small p}}
\label{Subsection: Sources and Sinks}

\begin{proof}[Proof of Proposition \ref{Proposition: Criterion for repulsive eigenvalues}]
	By Lemma \ref{Lemma: Expansion of h around admissible eigenvalue}, we have
	\begin{align*}
		\operatorname{Im} h(\lambda) \coloneqq \lim_{\eta \downarrow 0} \operatorname{Im} h(\lambda+ \mathrm i \eta) &= \operatorname{Im} \Big( \frac{-1/w(\lambda) - g'(\lambda)}{g(\lambda)}  \Big)\\
		&= \frac{1}{w(\lambda)} \frac{\operatorname{Im}(g(\lambda))}{|g(\lambda)|^2} - \frac{\operatorname{Im} g'(\lambda) \operatorname{Re} g(\lambda) - \operatorname{Re} g'(\lambda) \operatorname{Im} g(\lambda) }{|g(\lambda)|^2}
	\end{align*}
	so that
	\begin{equation*}
		\operatorname{Im} h(\lambda) > 0 \iff w(\lambda)^{-1} \operatorname{Im}g(\lambda) > \operatorname{Im}g'(\lambda) \operatorname{Re}g(\lambda) - \operatorname{Im}g(\lambda) \operatorname{Re}g'(\lambda).
	\end{equation*}
	Recall that $\phi$ is exactly defined such that under the assumption \eqref{Assumption: Detetministic attached graphs} we have $\phi(f) = \partial_p \hat \nu_p(f)|_{p=0}$ for all $f\in C^1_c(\R)$.
	By Proposition \ref{Proposition: Condition that phi locally agrees with measure}, $\phi$ agrees on $(a, b)$ the measure $\operatorname{Im}h(x) \, \mathrm dx$. As $h$ is continuous on $(a, b)$ (as a locally uniform limit of continuous functions) the statement follows. 
\end{proof}

\begin{proof}[Proof of Proposition \ref{Proposition: Source-Sink transition Bethe lattice}]
By a standard application of Schur's complement formula, one finds that the Stieltjes transform $g$ of $\mathbb B_d$ is 
	\begin{equation} \label{Equation: ST Kesten-McKay}
		g(\lambda) = -\frac{1}{2} \frac{(d-2)\lambda}{d^2-\lambda^2} + \mathrm i \cdot \frac{d}{2(d^2-\lambda^2)} \sqrt{4(d-1) - \lambda^2}.
	\end{equation}
	A routine calculation yields
	\begin{equation*}
		\frac{\operatorname{Im}(g'(\lambda))}{\operatorname{Im}(g(\lambda))} \operatorname{Re}(g(\lambda)) - \operatorname{Re}(g'(\lambda)) = \frac{d-2}{2} \frac{1}{d^2 - \lambda^2} \frac{4(d-1)}{4(d-1) - \lambda^2}.
	\end{equation*}
	The claim now follows from Proposition \ref{Proposition: Criterion for repulsive eigenvalues}.
\end{proof}

\begin{proof}[Proof of Corollary \ref{Corollary: mass can decrease around tree eigenvalue for small p}]
	As in the proof of Proposition \ref{Proposition: Concentration of mass outside of Sigma0}, let $(\varphi_\varepsilon)_{\varepsilon>0}$ be a Mollifier, where we now additionally choose $\varphi$ to be symmetric around the origin. Let $f_\varepsilon \coloneqq \varphi_\varepsilon(\, \cdot \, -\lambda)$.
	For every $s\in \R$ the set $I_s^\varepsilon \coloneqq \{t \in \R:\, s \leq f_\varepsilon(t)  \}$
	is either empty (if $s > \varepsilon^{-1}\varphi(0)$) or a symmetric interval around $\lambda$ with a length of at most $2\varepsilon$. We have
	\begin{equation}
		\label{Equation: cant think of a name right now}
		(\hat\nu - \nu_0)(f_\varepsilon) = \int_\R (\hat\nu - \nu_0)(\mathrm dt) \int_{0}^\infty \mathrm ds \, \1_{ \{s\leq f_\varepsilon(t) \} } = \int_{0}^{\infty} \mathrm ds \, (\hat\nu - \nu_0)(I^\varepsilon_{s}).
	\end{equation}
	As seen in the proof of Proposition \ref{Proposition: Criterion for repulsive eigenvalues}, there exists some $\varepsilon_0>0$ such that $\operatorname{Im}h(t) < 0$ for all $t\in [\lambda- \varepsilon_0, \lambda + \varepsilon_0]$ which shows by Proposition \ref{Proposition: Condition that phi locally agrees with measure} that $\phi(f_\varepsilon)< 0$ for all $\varepsilon \in (\lambda - \varepsilon_0, \lambda + \varepsilon_0)$. We fix some $\delta \in (0, 1/8)$. As seen in the proof of Proposition \ref{Proposition: Concentration of mass outside of Sigma0}, we have $|R_p(f_\varepsilon)| \leq C \varepsilon^{-8}$ yielding for the choice $\varepsilon = p^{\delta}$
	\begin{equation*}
		(\hat\nu - \nu_0)(f_\varepsilon) = p \phi(f_\varepsilon) + p^2 R_p(f_\varepsilon) < 0.
	\end{equation*}
	if $p$ is sufficiently small. Then Equation \eqref{Equation: cant think of a name right now} yields $(\hat\nu - \nu_0)(I^\varepsilon_s)<0$ for at least one $s\geq 0$.
\end{proof}

\section{Expansion for sparse sprinkling and large degrees in $\mathbb G$}
\label{Section: small p large d}
In this Section we study the expansion for sparse sprinkling for the case where degrees in $\mathbb G$ are large. Roughly speaking, we consider the case where the density of states $\nu_{0, d}$ of $\mathbb G_d$ has a density $f_d$ which scales as $f_d(x) \approx d^{-1/2}f(d^{-1/2}x)$ for some sufficiently smooth probability density $f$. More precisely, we make the the following assumptions.

\begin{assumption}
	\label{Assumption large d expansion}
	There exist constants $c, C, \gamma >0$ such that the following holds.
	\begin{enumerate}
		\item For any $d\geq 1$ the graph $\mathbb G_d$ is deterministic, vertex transitive and bipartite and there exists some $a_d >0$ such that $\Sigma_{0, d} = \operatorname{supp}(\nu_{0, d}) = [-a_d, a_d]$.
		\item The measure $\nu_{0}$ has a density $f_d \in C^2(-a_d, a_d)$ which is strictly positive on $(-a_d, a_d)$.
		\item For all sufficiently large $d$ we have $a_d> \gamma d^{1/2}$ as well as
		\begin{align*}
			\operatorname{sup} \big \{f_d(x): \, |x|\leq a_d \big\} \leq C d^{-1/2}, \quad \operatorname{sup} \big \{|f_d''(x)|: \, |x|\leq \gamma d^{1/2} \big\} \leq C d^{-3/2}.
		\end{align*}
		\item
		For every $K>0$ we have for all sufficiently large $d$
		\begin{align*}
			\operatorname{inf} \big \{f_d(x): \, |x|\leq K \big\} \geq c d^{-1/2}.
		\end{align*}
	\end{enumerate}
\end{assumption}

In Subsection \ref{Subsection: Verifying the large d small p assumptions}, we show that Assumption \ref{Assumption large d expansion} is satisfied for the integer and the Bethe lattice. In Subsection \ref{Subsetion: proof of large d small p expansion}, we then prove Theorem \ref{Theorem: Large d-expansion of first order term}.  		

\subsection{Verifying Assumption \ref{Assumption large d expansion} for the integer and Bethe lattice}
\label{Subsection: Verifying the large d small p assumptions}

We use the following local central limit theorem, whose proof is standard; see e.g.\ the argument in \cite[Theorem 4.5.1]{IL71}.

\begin{prop} 
	\label{Proposition: C2 CLT}
	Let $(X_n)_{n\in \mathbb N}$ be an iid sequence of centered random variables with variance $\sigma^2>0$ and density $f_1$ with compact support. Assume that there exists some $C, \alpha > 0$ such that the characteristic function $\varphi$ of $X_1$ satisfies $|\varphi(t)| \leq C(1+|t|)^{-\alpha}$ for all $t\in \R$. 
	Then the density $f_n$ of $n^{-1/2} \sum_{i=1}^n X_i$ is for sufficiently large $n$ two times continuously differentiable and 
	\begin{equation*}
		\|f_n - f\|_{C_0^2} = \mathcal O(n^{-1/2}) \text{ as }n \to \infty \text{ where }f(x) = \frac{1}{\sqrt{ 2\pi\sigma^2}} e^{-x^2/2\sigma^2}.
	\end{equation*}
\end{prop}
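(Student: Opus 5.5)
The plan is to work with Fourier inversion. Since $|\varphi(t)|\le C(1+|t|)^{-\alpha}$, the characteristic function of $S_n\coloneqq n^{-1/2}\sum_{i=1}^n X_i$, namely $\varphi(t/\sqrt n)^n$, satisfies
\begin{equation*}
|\varphi(t/\sqrt n)^n| \leq C^n(1+|t|/\sqrt n)^{-n\alpha}.
\end{equation*}
This lies in $L^1$ together with $t^2\varphi(t/\sqrt n)^n$ once $n\alpha>3$. So for $n$ large we can write
\begin{equation*}
f_n^{(k)}(x)=\frac1{2\pi}\int_\R (-\mathrm i t)^k e^{-\mathrm i tx}\varphi(t/\sqrt n)^n\,\mathrm dt
\end{equation*}
for $k=0,1,2$, which makes $f_n\in C^2$. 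The analogous formula holds for $f$, whose Fourier transform is $e^{-\sigma^2t^2/2}$. Hence
\begin{equation*}
\|f_n-f\|_{C^2}\le \frac{1}{2\pi}\sum_{k\le2}\int_\R |t|^k\,\big|\varphi(t/\sqrt n)^n-e^{-\sigma^2t^2/2}\big|\,\mathrm dt,
\end{equation*}
and it suffices to show that this integral is $\mathcal O(n^{-1/2})$. We split the integration domain into three regions: $|t|\le \delta\sqrt n$, $\delta\sqrt n<|t|\le K\sqrt n$, and $|t|>K\sqrt n$.

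\emph{Inner region $|t|\le\delta\sqrt n$.} Because $f_1$ has compact support, $X_1$ has all moments and $\varphi$ is smooth, so $\varphi(s)=1-\sigma^2s^2/2+\mathcal O(|s|^3)$. For $\delta$ small this gives $|\varphi(s)|\le e^{-\sigma^2s^2/4}$ and $\log\varphi(s)=-\sigma^2s^2/2+\mathcal O(|s|^3)$. Using $|e^a-e^b|\le|a-b|e^{\max(\operatorname{Re}a,\operatorname{Re}b)}$ we get
\begin{equation*}
\big|\varphi(t/\sqrt n)^n-e^{-\sigma^2t^2/2}\big|\le C\frac{|t|^3}{\sqrt n}e^{-\sigma^2t^2/4},
\end{equation*}
and integrating this against $|t|^k$ gives $\mathcal O(n^{-1/2})$. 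This is the standard Esseen/Ibragimov–Linnik step.

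\emph{Middle region $\delta\le|s|\le K$.} Here $X_1$ has a density, so it is non-lattice and $|\varphi(s)|<1$ for $s\neq0$. By continuity and compactness, $q\coloneqq\sup_{\delta\le|s|\le K}|\varphi(s)|<1$, so the contribution is at most $C n^{3/2}K^3 q^n$, which decays exponentially. The Gaussian tail over $|t|>\delta\sqrt n$ is likewise exponentially small.

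\emph{Outer region $|t|>K\sqrt n$.} Choose $K$ so large that $C(1+K)^{-\alpha}\le 1/2$. Then for $|s|\ge K$ we have $|\varphi(s)|^{n}\le |\varphi(s)|^{m}2^{-(n-m)}$ with $m$ fixed such that $m\alpha>4$. Substituting $t=\sqrt n s$ gives
\begin{equation*}
\int_{|t|>K\sqrt n}|t|^2|\varphi(t/\sqrt n)|^n\,\mathrm dt\le n^{3/2}2^{-(n-m)}\int_\R |s|^2 C^m(1+|s|)^{-m\alpha}\,\mathrm ds,
\end{equation*}
which is again exponentially small.

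Summing the three regions gives $\mathcal O(n^{-1/2})$. The step that needs most care is the outer region: the polynomial decay of $\varphi$ is exactly what makes $t^2\varphi(t/\sqrt n)^n$ integrable, and we must extract a fixed power $m$ to guarantee integrability while using the remaining $n-m$ factors for the exponential gain. Everything else is routine.
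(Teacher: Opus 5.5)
Your proposal is correct and is the standard Fourier-inversion argument that the paper invokes without detail, deferring to Ibragimov--Linnik (Theorem 4.5.1). Your three steps are exactly that argument: a Taylor expansion of $\log\varphi$ for $|t|\le\delta\sqrt n$; the non-lattice bound $\sup_{\delta\le|s|\le K}|\varphi(s)|<1$ in the middle range; and, for $|t|>K\sqrt n$, the polynomial decay of $\varphi$ to split off a fixed integrable power $|\varphi|^m$ while the remaining $n-m$ factors give exponential smallness.
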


\begin{prop}
	\label{Proposition: Assumtptions large d expansion satisfied for integer lattice}
	Assumption \ref{Assumption large d expansion} is satisfied for $\mathbb G = \mathbb Z^d$.
\end{prop}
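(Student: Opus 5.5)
We need to verify Assumption \ref{Assumption large d expansion} for $\mathbb Z^d$. The route is to realise $\nu_{0,d}=\nu_{0,1}^{*d}$ as the law of a sum of i.i.d. variables and apply Proposition \ref{Proposition: C2 CLT} after rescaling.

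Items 1 and 2 are structural. $\mathbb Z^d$ is deterministic, vertex transitive and bipartite. Its density of states is the law of $\sum_{i=1}^d 2\cos\theta_i$ with $\theta_i$ i.i.d. uniform on $[0,2\pi)$, because $\nu_{0,d}=\nu_{0,1}^{*d}$ and $\nu_{0,1}$ is the arcsine law on $[-2,2]$. Hence the support is $[-2d,2d]$, so $a_d=2d$. For $d\ge 2$ the density $f_d$ is strictly positive on the open interval, since it is a convolution of densities positive on $(-2,2)$. For $C^2$-regularity we use that $\hat\nu_{0,1}(t)=J_0(2t)$ decays like $|t|^{-1/2}$. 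Hence $\hat f_d=J_0(2t)^d$ is integrable against $|t|^2$ once $d\ge 7$, which gives $f_d\in C^2(\R)$ for large $d$. That suffices, since the assumption only concerns sufficiently large $d$. Positivity for small $d$ also holds, but is not needed.

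For item 3, put $X_i=2\cos\theta_i$. These are centred with variance $\sigma^2=2$, have compactly supported density, and satisfy $|\varphi(t)|=|J_0(2t)|\le C(1+|t|)^{-1/2}$. Proposition \ref{Proposition: C2 CLT} then gives that the density $\tilde f_d$ of $d^{-1/2}\sum_{i\le d}X_i$ satisfies $\|\tilde f_d-f\|_{C^2_0}=\mathcal O(d^{-1/2})$, where $f$ is the $\mathcal N(0,2)$ density. Since $f_d(x)=d^{-1/2}\tilde f_d(d^{-1/2}x)$, we get $\sup f_d\le d^{-1/2}(\|f\|_\infty+o(1))\le Cd^{-1/2}$ and $\sup|f_d''|=d^{-3/2}\sup|\tilde f_d''|\le Cd^{-3/2}$ globally. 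In particular this holds on $|x|\le\gamma d^{1/2}$ for any $\gamma<2$, and $a_d=2d>\gamma d^{1/2}$.

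For item 4, fix $K$. For $|x|\le K$ we have $|d^{-1/2}x|\le K d^{-1/2}\to 0$, so uniform convergence gives $\tilde f_d(d^{-1/2}x)\ge f(0)/2$ for large $d$. Therefore $f_d(x)\ge \tfrac12 f(0)\,d^{-1/2}$, which is the claim with $c=f(0)/2$.

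The main obstacle is checking the hypotheses of Proposition \ref{Proposition: C2 CLT}, specifically the polynomial decay of the arcsine characteristic function. This follows from the standard Bessel asymptotics $J_0(s)=\mathcal O(|s|^{-1/2})$, or directly from stationary phase for $\int e^{2it\cos\theta}\,\mathrm d\theta$. With that decay in hand, the constants come directly from the uniform $C^2$ local CLT.
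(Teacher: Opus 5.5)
Your proposal is correct and follows the paper's proof: identify $\nu_{0,d}=\nu_{0,1}^{*d}$ with the law of a sum of i.i.d.\ arcsine variables, use $|J_0(s)|=\mathcal O(|s|^{-1/2})$ to apply the $C^2$ local central limit theorem (Proposition~\ref{Proposition: C2 CLT}), and rescale via $f_d(x)=d^{-1/2}\tilde f_d(d^{-1/2}x)$. Your item-by-item check, including the Fourier-integrability argument that gives $f_d\in C^2$ for $d\ge 7$, spells out what the paper covers with ``the statement readily follows''.
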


\begin{proof}

	Since $\mathbb Z^d$ is the $d$-fold cartesian product of the graph $\mathbb Z$, we have $\nu_{0, d} = \nu_0^{*d}$. In other words, if $X_1, \hdots, X_d$ are iid $\operatorname{arcsin}$ distributed and $Y_d \coloneqq d^{-1/2} \sum_{i=1}^{d} X_i$ has density $\rho_d$, then $\nu_{0, d}$ is the distribution of $d^{1/2} Y_d$, i.e.\ has density $f_d(x) = d^{-1/2} \rho_d(d^{-1/2}x)$. The characteristic function of the semicircle law is given by $\varphi(t) = e^{2\mathrm i t} J_0(2t)$,
	where $J_0$ is the zero order Bessel function of the first kind. Since $|J_0(t)| = \mathcal O(t^{-1/2})$ as $t\to \infty$, we have by Proposition \ref{Proposition: C2 CLT} that $\rho_d \to \rho$ in $C^2_0(\R)$ where $\rho$ is the density of $\mathrm N(0, 2)$. The statement readily follows.
\end{proof}

\begin{prop}
	\label{Proposition: Assumtptions large d expansion satisfied for Bethe lattice}
	Assumption \ref{Assumption large d expansion} is satisfied for $\mathbb G = \mathbb B_d$.
\end{prop}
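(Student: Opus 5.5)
To verify Assumption \ref{Assumption large d expansion} for $\mathbb G = \mathbb B_d$, I would work directly with the explicit Kesten--McKay density rather than with a free central limit theorem. Taking the imaginary part of \eqref{Equation: ST Kesten-McKay} and dividing by $\pi$, the density of $\nu_{0,d}$ is
\begin{equation*}
	f_d(x) = \frac{d}{2\pi(d^2-x^2)}\sqrt{4(d-1)-x^2}, \qquad |x| \leq a_d \coloneqq 2\sqrt{d-1}.
\end{equation*}
Item 1 is then immediate: $\mathbb B_d$ is deterministic, vertex transitive and bipartite, and its support is $[-a_d,a_d]$. For item 2, $f_d$ is smooth (indeed real analytic) on $(-a_d,a_d)$, because $d^2-x^2 \geq d^2-4(d-1) = (d-2)^2 > 0$ for $d\geq 3$. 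It is strictly positive there since the square root is positive. We may restrict to sufficiently large $d$ throughout.

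Next I would rescale by writing $x = d^{1/2}y$. This gives $f_d(d^{1/2}y) = d^{-1/2}\,\rho_d(y)$ with
\begin{equation*}
	\rho_d(y) = \frac{1}{2\pi}\,\frac{\sqrt{4(1-1/d)-y^2}}{1-y^2/d}.
\end{equation*}
For $|y|\leq 2$ and $d\geq 8$ the denominator lies in $[1/2,1]$, so the numerator, being at most $2$, gives $\rho_d\leq 2/\pi$. Hence $\sup f_d \leq C d^{-1/2}$, and $a_d = 2\sqrt{d-1} > \gamma d^{1/2}$ holds, say with $\gamma = 1$. For item 4, fix $K$. For $d$ large, $|x|\leq K$ means $|y|\leq K d^{-1/2}\leq 1$. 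On that range $\rho_d(y)\geq \frac{1}{2\pi}\sqrt{4(1-1/d)-1}\geq c>0$, which yields $f_d(x)\geq c\,d^{-1/2}$.

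The only step needing care is the bound on $f_d''$ on $|x|\leq \gamma d^{1/2}$. Here the choice $\gamma<2$ keeps us away from the square-root edge. Since $f_d''(x) = d^{-3/2}\rho_d''(y)$, it suffices to bound $\rho_d''$ uniformly on $|y|\leq 1$, i.e.\ for $\gamma = 1$. There the argument $4(1-1/d)-y^2$ of the square root is at least $3-4/d\geq 2$, and the denominator $1-y^2/d$ is at least $1/2$. Both factors are therefore smooth functions of $y$ whose first two derivatives are bounded uniformly in $d\geq 8$, since the $d$-dependence enters only through the bounded parameters $1/d$. The product and quotient rules then give $\sup_{|y|\leq1}|\rho_d''(y)|\leq C$. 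This proves $\sup_{|x|\leq \gamma d^{1/2}}|f_d''(x)|\leq C d^{-3/2}$ and completes the verification.
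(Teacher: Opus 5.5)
Your proof is correct and takes essentially the paper's route: the paper only remarks that the assumption follows easily from the explicit Kesten--McKay density (offering the free central limit theorem merely as an alternative for even $d$), and your rescaling $x=d^{1/2}y$ with $\gamma=1$ supplies the computations the paper omits. The one caveat, which the paper's statement shares, is that item 1 fails literally for $d=1$ because $\mathbb B_1$ is a single edge; this is harmless, since only large $d$ is ever used.
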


\begin{proof}
	This can be easily shown by using the explicit formula for the density of the Kesten--McKay law $\nu_{0, d}$, namely
	\begin{equation*}
		f_d(x) = \frac{d}{d^2-x^2} \frac{1}{2\pi}\sqrt{(4(d-1)-x^2)\1_{\{|x|\leq 2\sqrt{d-1} \}}}
	\end{equation*}
	(see \cite{Ke84} or the proof of Proposition \ref{Proposition: Source-Sink transition Bethe lattice}). However, if we restrict ourselves to even $d$, a similar reasoning as in the proof of Proposition \ref{Proposition: Assumtptions large d expansion satisfied for integer lattice} can be applied: Since $\mathbb B_{2d}$ is the $d$-fold free product of the graph $\mathbb Z$, we have $\nu_{0, 2d} = \nu_0^{\boxplus d}$, where $\boxplus$ denotes the free additive convolution. In other words, if $X_1, \hdots, X_d$ are freely independent free random variables with $\operatorname{arcsin}$ distribution and $Y_d \coloneqq d^{-1/2} \sum_{i=1}^{d} X_i$ has density $\rho_d$, then $\nu_{0, 2d}$ is the distribution of $d^{1/2} Y_d$, i.e.\ has density $f_d(x) = d^{-1/2} \rho_d(d^{-1/2}x)$. If $\rho$ is the density of the semicircle law, we have by the local free central limit theorem \cite{BV95} local uniform convergence of the densities $\rho_d$ and its derivatives to $\rho$. Hence, up to the global estimate $\|\rho_d\|_\infty = \mathcal O(d^{-1/2})$, the proof  follows (at least for even $d$) as the proof of Proposition \ref{Proposition: Assumtptions large d expansion satisfied for integer lattice}.
\end{proof}

\subsection{Proof of Theorem \ref{Theorem: Large d-expansion of first order term}}
\label{Subsetion: proof of large d small p expansion}
Before coming to the proof of Theorem \ref{Theorem: Large d-expansion of first order term}, let us briefly summarize its main idea. For the rest of the section, we omit the $d$ dependency in the notation. As we shall show in Lemma \ref{Lemma: Estimate on g for large d}, Assumption \ref{Assumption large d expansion} implies that for every bounded interval $I$
\begin{equation*}
	\sup_{t \in I}| \operatorname{Im} g(t)| = \mathcal O(d^{-1/2}), \quad \sup_{t \in I}| \operatorname{Re} g(t)| = \mathcal O(d^{-1}), \quad \sup_{t \in I}| g'(t)| = O(d^{-1})
\end{equation*}
(where we extended $g$ to the real axis via Proposition \ref{Proposition: Limit of ST}). If $\lambda \notin \mathrm S_*$, close to $\lambda$ we have $\big|\frac{(gr)'}{1-gr} \big| \leq C d^{-1/2}$.
With Proposition \ref{Proposition: Condition that phi locally agrees with measure} and Lemma \ref{Lemma: Expansion of h around admissible eigenvalue} we hence have close to $\lambda$
\begin{equation*}
	\pi \phi \approx \mathbb E\Big[\operatorname{Im}\Big(\frac{r'(t)/r^2(t) + g'(t)}{1/r(t) - g(t)}\Big) \1_{\{\lambda \in \mathrm S_*\}} \Big] \mathrm dt
\end{equation*}
for large $d$. For $t$ close to $\lambda \in \mathrm S_*$
\begin{equation*}
	r'(t)/r^2(t) \approx w(\lambda)^{-1}, \quad 1/r(t) \approx w(\lambda)^{-1}(\lambda-t), \quad |g'(t)| \leq Cd^{-1}
\end{equation*}
which we show to imply
\begin{equation}
	\label{Equation: heuristic large d  expansion}
	\pi \phi \approx \mathbb E\Big[\operatorname{Im}\Big( \frac{w(\lambda)^{-1}}{w(\lambda)^{-1}(\lambda-t) - g(\lambda)} \Big) \1_{\{\lambda \in \mathrm S_*\}} \Big]\mathrm dt  \approx \mathbb E\Big[ \operatorname{Im}\Big( \frac{w(\lambda)^{-1}}{w(\lambda)^{-1}(\lambda-t) - \mathrm i \operatorname{Im}g(\lambda)}\Big)  \1_{\{\lambda \in \mathrm S_*\}} \Big]  \mathrm dt
\end{equation}
regardless of the fact that the integrand on the right hand side is diverging in $t = \lambda$ as $d\to \infty$. Using that
\begin{equation*}
	\pi^{-1}\operatorname{Im}\Big( \frac{w(\lambda)^{-1}}{w(\lambda)^{-1}(\lambda-t) - \mathrm i \operatorname{Im}g(\lambda)} \Big) \mathrm dt = \operatorname{Cau}(\lambda, b(\lambda))
\end{equation*}
will then yield the proof. We start by giving two Lemmas which we shall need in the proof of Theorem \ref{Theorem: Large d-expansion of first order term}.
\begin{lemma}
	\label{Lemma: Estimate on g for large d}
	Assume that Assumption \eqref{Assumption large d expansion} holds. Then for any bounded interval $I\subseteq \mathbb \R$ there exists constants $c, C>0$ such that for all sufficiently large $d$ and all $\lambda \in I$
	\begin{equation*}
		|\operatorname{Re}(g(\lambda))| \leq C d^{-1}, \quad  cd^{-1/2} \leq \operatorname{Im}(g(\lambda)) \leq C d^{-1/2}, \quad |g'(\lambda))| \leq Cd^{-1}.
	\end{equation*}
\end{lemma}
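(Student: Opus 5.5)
The plan is to work directly with the boundary values of the Stieltjes transform via Proposition \ref{Proposition: Limit of ST}. Since $\mathbb G_d$ is vertex transitive, $g$ is the Stieltjes transform of $\nu_{0,d}$. For $\lambda\in(-a_d,a_d)$ the boundary values are $\operatorname{Im} g(\lambda)=\pi f_d(\lambda)$, and $\operatorname{Re} g(\lambda)$ is the principal value integral $\mathrm{p.v.}\int f_d(t)/(t-\lambda)\,\mathrm dt$. Fix a bounded interval $I\subseteq[-K,K]$. For $d$ large, $\gamma d^{1/2}>2K$, so $I$ lies well inside the bulk.

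The imaginary part is immediate. Items 3 and 4 of Assumption \ref{Assumption large d expansion} give $c d^{-1/2}\le f_d\le Cd^{-1/2}$ on $[-K,K]$, hence the same bounds for $\operatorname{Im} g$, after multiplying by $\pi$.

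For the real part, bipartiteness makes $f_d$ even. I split the principal value integral into a near part $|t-\lambda|\le \delta$ and a far part, with $\delta=\gamma d^{1/2}/2$.

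In the near part I write $\mathrm{p.v.}\int_{|t-\lambda|\le\delta}\frac{f_d(t)-f_d(\lambda)}{t-\lambda}\,\mathrm dt$ and Taylor expand to second order. The linear term contributes $2\delta f_d'(\lambda)$. The remainder is at most $\sup|f_d''|\,\delta^2\le Cd^{-3/2}\cdot d=Cd^{-1/2}$. That is too crude, so instead I use the evenness: pair $t$ with $-t$ to rewrite
\[
\operatorname{Re} g(\lambda)=\mathrm{p.v.}\int f_d(t)\Big(\frac{1}{t-\lambda}-\frac{1}{t+\lambda}\Big)\frac{\mathrm dt}{2}
=\lambda\,\mathrm{p.v.}\int \frac{f_d(t)}{t^2-\lambda^2}\,\mathrm dt.
\]
Since $|\lambda|\le K$, it suffices to show that the p.v. integral is $O(d^{-1})$.

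\emph{Far region} $|t|\ge \gamma d^{1/2}/2$: here $1/(t^2-\lambda^2)\lesssim d^{-1}$ and $\int f_d\le 1$, giving $O(d^{-1})$.

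\emph{Near region} $|t|\le \gamma d^{1/2}/2$: I subtract $f_d(\lambda)$. This step is the main obstacle. The constant part is $f_d(\lambda)\,\mathrm{p.v.}\int_{|t|\le R}\frac{\mathrm dt}{t^2-\lambda^2}$ with $R=\gamma d^{1/2}/2$, and this equals $f_d(\lambda)\frac{1}{2\lambda}\log\frac{R-\lambda}{R+\lambda}=O(d^{-1/2}\cdot d^{-1/2})$. The difference part $\int\frac{f_d(t)-f_d(\lambda)}{t^2-\lambda^2}\,\mathrm dt$ is controlled using $f_d'(0)=0$ (evenness) and $|f_d''|\le Cd^{-3/2}$. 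These give $|f_d(t)-f_d(\lambda)|\le Cd^{-3/2}|t^2-\lambda^2|$, since $f_d(t)=f_d(0)+O(d^{-3/2}t^2)$ with an even $C^2$ correction. The integrand is therefore bounded by $Cd^{-3/2}$ over a region of length $O(d^{1/2})$, which yields $O(d^{-1})$. Small $\lambda$ needs no special care in the constant part: $\frac{1}{2\lambda}\log\frac{R-\lambda}{R+\lambda}\approx -1/R$ is bounded uniformly.

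For $g'(\lambda)$ I differentiate the same representation, or equivalently bound $\partial_\lambda$ of both parts. The imaginary part of the derivative is $\pi f_d'(\lambda)$. Evenness and the $f_d''$ bound give $|f_d'(\lambda)|\le Cd^{-3/2}|\lambda|$, which is $O(d^{-1})$. For the real part I differentiate $\lambda\,\mathrm{p.v.}\int f_d/(t^2-\lambda^2)$ after the subtraction above. The difference quotient $(f_d(t)-f_d(\lambda))/(t^2-\lambda^2)=:q(t,\lambda)$ is handled by writing $f_d(t)-f_d(\lambda)=\int_0^1 \frac{d}{ds}f_d(\sqrt{\lambda^2+s(t^2-\lambda^2)})\,\mathrm ds$, which gives a bound of $Cd^{-3/2}$ on $\partial_\lambda q$. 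The explicit log term and the far region are differentiated directly, again giving $O(d^{-1})$. Uniformity in $\lambda\in I$ follows since all constants depend only on $K,\gamma,C$.
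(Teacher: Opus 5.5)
Your argument is correct in substance, but it takes a different route from the paper for the real parts.

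\textbf{Comparison with the paper.} The paper never treats $\operatorname{Re} g$ directly. It first bounds $\operatorname{Re} g'$ via the second-difference formula $\operatorname{Re} g'(\lambda)=\int_0^\infty \frac{f_d(\lambda+x)+f_d(\lambda-x)-2f_d(\lambda)}{x^2}\,\mathrm dx$ from Proposition~\ref{Proposition: Limit of ST}, splitting at $x=\frac12\gamma d^{1/2}$:
\begin{itemize}
\item the near part is at most $Cd^{-3/2}\cdot\frac12\gamma d^{1/2}$ by Taylor expansion;
\item the far part is $O(d^{-1/2})\cdot O(d^{-1/2})$.
\end{itemize}
It then gets $|\operatorname{Re} g(\lambda)|\le|\lambda|\sup_{|x|\le|\lambda|}|\operatorname{Re} g'(x)|$ from $\operatorname{Re} g(0)=0$. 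That integrand is non-singular, so the paper needs no principal value, no explicit log term, and no difference quotient to differentiate in $\lambda$.

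Your symmetrized representation $\operatorname{Re} g(\lambda)=\lambda\,\mathrm{p.v.}\int f_d(t)/(t^2-\lambda^2)\,\mathrm dt$ with a near/far split has two advantages. It bounds $\operatorname{Re} g$ without going through $g'$, and its far region uses only $\int f_d=1$ rather than the global bound on $\sup f_d$. The cost is the extra derivative step. There is also a harmless slip: the principal value of $\int_{-R}^{R}\mathrm dt/(t^2-\lambda^2)$ is $\lambda^{-1}\log\frac{R-\lambda}{R+\lambda}$, not $(2\lambda)^{-1}\log\frac{R-\lambda}{R+\lambda}$.

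\textbf{An incorrect intermediate claim.} In the derivative step, a $C^2$ bound on $f_d$ does not give $|\partial_\lambda q|\le Cd^{-3/2}$ uniformly. Set $F(u)\coloneqq f_d(\sqrt u)$ and $u_s\coloneqq\lambda^2+s(t^2-\lambda^2)$. Then $q=\int_0^1F'(u_s)\,\mathrm ds$ and $\partial_\lambda q=2\lambda\int_0^1(1-s)F''(u_s)\,\mathrm ds$. Here $F''(u)=\frac{1}{4u}\bigl(f_d''(\sqrt u)-f_d'(\sqrt u)/\sqrt u\bigr)$ is in general only $O(d^{-3/2}/u)$. For example, at $t=\lambda$ you get $\partial_\lambda q=\lambda F''(\lambda^2)$, which is of size $d^{-3/2}/|\lambda|$ if $f_d''$ varies on scale $|\lambda|$.

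\textbf{How to fix it.} Since $u_s\ge(1-s)\lambda^2$, the weighted bound $|\lambda\,\partial_\lambda q|\le\sup_{|x|\le\gamma d^{1/2}}|f_d''|\le Cd^{-3/2}$ does hold. That is all you need, because the quantity you differentiate is $\lambda Q(\lambda)$ with $Q(\lambda)=\int_{-R}^{R}q(t,\lambda)\,\mathrm dt$. Then $\frac{\mathrm d}{\mathrm d\lambda}(\lambda Q)=Q+\lambda Q'$, and both terms are $O(d^{-1})$.

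Likewise, in your $\operatorname{Re} g$ step, justify $|f_d(t)-f_d(\lambda)|\le Cd^{-3/2}|t^2-\lambda^2|$ by the mean value theorem for $F$. This uses $|F'(u)|=|f_d'(\sqrt u)|/(2\sqrt u)\le\frac12\sup|f_d''|$, which relies on $f_d'(0)=0$. The expansion $f_d(t)=f_d(0)+O(d^{-3/2}t^2)$ alone does not control $f_d(t)-f_d(\lambda)$ near $t=\pm\lambda$.
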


\begin{proof}
	The estimates on $\operatorname{Im}(g) = \pi f$ hold by assumption. Since $\mathbb G$ is bipartite, $f$ is symmetric and hence $f'(0) = 0$. Hence 
	\begin{equation*}
		\operatorname{Im}(g'(\lambda)) = \pi f'(\lambda) \leq \pi \int_0^\lambda |f''(t)|\, \mathrm dt = \mathcal O(d^{-3/2})
	\end{equation*}
	uniformly in $\lambda \in I$. For the derivative of the real part, we use that for all $\lambda, x\in\R$ with $|\lambda|, |x| \leq \tfrac{1}{2}\gamma d^{1/2}$ by Taylor expansion with rest-term of order two
	\begin{equation*}
		\Big|\frac{f(\lambda + x) + f(\lambda -x) - 2f(\lambda)}{x^2}\Big| \leq \operatorname{sup} \big \{|f''(x)|: \, |x|\leq \gamma d^{1/2} \big\} \leq C d^{-3/2}
	\end{equation*}
	such that			
	\begin{align*}
		|\operatorname{Re}(g'(\lambda))| &= \bigg|\int_0^\infty \frac{f(\lambda + x) + f(\lambda -x) - 2f(\lambda)}{x^2} \, \mathrm dx \bigg|\\
		&\leq \tfrac{1}{2} C \gamma d^{-1} + \mathcal O(d^{-1/2})\int_{\tfrac{1}{2}\gamma d^{1/2}}^\infty \frac{1}{x^2} \, \mathrm dx = \mathcal O(d^{-1})
	\end{align*}
	uniformly in $\lambda \in I$. It is left to bound the real part of $g$. For this it is sufficient to notice that $\operatorname{Re}g(0) = 0$ by symmetry of $f$ and we hence have uniformly in $\lambda \in I$
	\begin{equation*}
		|\operatorname{Re}(g(\lambda))| \leq |\lambda| \sup_{|x|\leq \lambda} |\operatorname{Re}g'(x)| = \mathcal O(d^{-1}). \qedhere
	\end{equation*}
\end{proof}
For two signed measures $\mu_1,  \mu_2$  we denote their distance in total variation norm by $\|\mu_1 -  \mu_2\|_{\operatorname{TV}}$.
For some interval $I\subseteq \R$, we denote by $\|\mu_1 -  \mu_2\|_{\operatorname{TV}, I}$ the total variation distance of the restrictions of $\mu_1$ and $\mu_2$ to $I$.

\begin{lemma}
	\label{Lemma: TV-distance of Cauchy distributions}
	There exists a constant $C>0$ such that for all $a_1, a_2 \in \mathbb R$ and $b>0$ with $|a_1 - a_2|<b$
	\begin{equation*}
		\| \operatorname{Cau}(a_1, b) - \operatorname{Cau}(a_2, b) \|_{\operatorname{TV}} \leq C |a_1 - a_2|\cdot b^{-1}.
	\end{equation*}
\end{lemma}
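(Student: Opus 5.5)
The plan is to reduce to the standard Cauchy distribution via translation and scaling invariance, and then use the integral form of total variation together with the fundamental theorem of calculus. Write $\rho_b(x) \coloneqq \frac{1}{\pi}\frac{b}{x^2+b^2}$ for the density of $\operatorname{Cau}(0,b)$. The total variation norm of the difference of two absolutely continuous measures is the $L^1$ distance of their densities. By translating by $a_2$ and then substituting $x = b y$, one gets
\begin{equation*}
\| \operatorname{Cau}(a_1, b) - \operatorname{Cau}(a_2, b) \|_{\operatorname{TV}} = \int_\R |\rho_b(x - \delta) - \rho_b(x)|\,\mathrm dx = \int_\R |\rho_1(y - \delta/b) - \rho_1(y)|\,\mathrm dy,
\end{equation*}
where $\delta \coloneqq a_1 - a_2$. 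So it suffices to show $\int_\R |\rho_1(y - s) - \rho_1(y)|\,\mathrm dy \leq C|s|$ for all $|s| < 1$. In fact this holds for all $s$.

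Next, write $\rho_1(y-s) - \rho_1(y) = -\int_0^s \rho_1'(y - u)\,\mathrm du$. Fubini then gives
\begin{equation*}
\int_\R |\rho_1(y-s)-\rho_1(y)|\,\mathrm dy \leq |s|\,\|\rho_1'\|_{L^1}.
\end{equation*}
It remains to check that $\rho_1'(y) = -\frac{2y}{\pi(1+y^2)^2}$ is integrable. Since $\rho_1'$ is odd and $\rho_1$ is monotone on each half-line, $\|\rho_1'\|_{L^1} = 2\rho_1(0) = 2/\pi$. This yields the claim with $C = 2/\pi$, and the hypothesis $|a_1 - a_2| < b$ is not even needed.

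There is no real obstacle here. The only point requiring care is the scaling step, namely that the total variation distance is invariant under the affine change of variables, so that the dependence on $b$ enters only through the ratio $|a_1-a_2|/b$.
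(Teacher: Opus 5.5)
Your proof is correct, but it takes a different route from the paper after the common first step of rescaling to $b=1$.

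The paper bounds the difference of the two Cauchy kernels pointwise. With $a=(a_2-a_1)/b$ it writes $\bigl|\frac{1}{x^2+1}-\frac{1}{(x-a)^2+1}\bigr| = \frac{|a^2-2ax|}{(x^2+1)((x-a)^2+1)}$ and splits into two regions. For $|x|<2|a|$ the integrand is at most $5a^2$. For $|x|\ge 2|a|$ it is at most $3|ax|/((x^2+1)(x^2/4+1))$. Integrating gives a bound of the form $C(|a|^3+|a|)$, and the hypothesis $|a|<1$ is what turns the cubic contribution from the inner region into a linear one.

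You instead use the general fact that the $L^1$ modulus of continuity of translation is controlled by the total variation of the density, $\int|\rho_1(\cdot-s)-\rho_1|\le |s|\,\|\rho_1'\|_{L^1}$. You then evaluate $\|\rho_1'\|_{L^1}=2\rho_1(0)$ by unimodality. This route buys several things:
\begin{itemize}
\item it is shorter;
\item it gives the explicit constant $C=2/\pi$, which is optimal in the limit $|a_1-a_2|/b\to 0$;
\item it drops the hypothesis $|a_1-a_2|<b$;
\item it applies verbatim to translates of any density of bounded variation.
\end{itemize}
The paper's computation is more hands-on and specific to the Cauchy kernel. Its restriction $|a|<1$ is only an artefact of the case split, and a harmless one, since the total variation distance never exceeds $2$.
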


\begin{proof}
	We have
	\begin{align*}
		\pi  \| \operatorname{Cau}(a_1, b) - \operatorname{Cau}(a_2, b) \|_{\operatorname{TV}} &= \int_{-\infty}^\infty\bigg|\frac{b}{(x-a_1)^2 + b^2} - \frac{b}{(x-a_2)^2 + b^2}\bigg| \, \mathrm dx \\
		&= \int_{-\infty}^\infty\bigg|\frac{1}{x^2 + 1} - \frac{1}{(x-(a_2 - a_1)/b)^2 + 1}\bigg| \, \mathrm dx.
	\end{align*}
	For $a \coloneqq (a_2 - a_1)/b$ we have
	\begin{align*}
		\bigg|\frac{1}{x^2 + 1} - \frac{1}{(x-a)^2 + 1}\bigg| &= 	\bigg|\frac{a^2 - 2a x }{(x^2 + 1)((x-a)^2 + 1)}\bigg| \\
		&\leq  \1_{\{|x|<2|a| \}} 5a^2 +  \1_{\{|x|\geq 2|a| \}} \frac{3|ax|}{(x^2+1)(x^2/4 + 1)}.
	\end{align*}
	for all $x\in \R$, which yields the claim after integration over $x$.
\end{proof}

\begin{prop}
	\label{Proposition: distance of first order term to sum of Cauchy kernels}
	Assume that Assumption \eqref{Assumption large d expansion} holds.
	Then for any bounded and open interval $I$ there exists a constant $C>0$ such that for sufficiently large $d$
	\begin{equation*}
		\Big \| \phi -  \mathbb E_0\Big[\sum_{\lambda \in \mathrm S_* \cap I} \operatorname{Cau}\big(\lambda, b(\lambda)\big)  \Big] \Big\|_{\operatorname{TV}, I} \leq C d^{-1/30}.
	\end{equation*}
\end{prop}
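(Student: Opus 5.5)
By Proposition \ref{Proposition: Condition that phi locally agrees with measure}, since Assumption \ref{Assumption large d expansion} gives a strictly positive $C^2$ density on $(-a_d,a_d)$ and $I\subset(-\gamma d^{1/2},\gamma d^{1/2})\subset \Sigma_{0,d}$ for large $d$, $\phi$ coincides on $I$ with $\pi^{-1}\operatorname{Im}h(t)\,\mathrm dt$. The total variation distance on $I$ is therefore the $L^1(I)$ norm of $\pi^{-1}\operatorname{Im}h(t)$ minus the sum of Cauchy densities. Under \eqref{Assumption:: m a.s. bounded} the support of $\mathrm P_1$ is finite, so I condition on $(\mathrm G,\varphi)$ being one fixed realization and prove the bound for each realization with uniform constants. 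Let $\mathrm S_*\cap \overline I'=\{\lambda_1,\dots,\lambda_k\}$ for a slightly enlarged interval $I'$, and set $\varepsilon \coloneqq d^{-\kappa}$ with $\kappa$ to be optimized (the exponent $1/30$ should come out of this). Split $I$ into the windows $U_j\coloneqq(\lambda_j-\varepsilon,\lambda_j+\varepsilon)$ and the complement $I\setminus\bigcup_j U_j$.

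\emph{Away from $\mathrm S_*$.} On $I\setminus\bigcup U_j$ write the integrand as $\frac{\partial_z(gr)}{1-gr}+\sum_{\lambda\in\mathrm S_*}\frac{1}{\lambda-z}$; the second sum is real at $\eta=0$ and contributes no imaginary part. Here $|r|\le C\varepsilon^{-1}$ and $|r'|\le C\varepsilon^{-2}$, and Lemma \ref{Lemma: Estimate on g for large d} gives $|g|\le Cd^{-1/2}$ and $|g'|\le Cd^{-1}$. Hence $|gr|\le Cd^{-1/2}\varepsilon^{-1}\le 1/2$ once $\kappa<1/2$, and $|\operatorname{Im}h|\le C(d^{-1/2}\varepsilon^{-2}+d^{-1}\varepsilon^{-1})$. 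The Cauchy densities here are bounded by $b/(\pi\varepsilon^2)\le Cd^{-1/2}\varepsilon^{-2}$. This gives an $L^1$ error $O(d^{-1/2+2\kappa})$.

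\emph{Near $\lambda_j$.} Use Lemma \ref{Lemma: Expansion of h around admissible eigenvalue}. The last fraction there is real and finite near $\lambda_j$, because $w(\lambda_j)>0$, so it contributes nothing; the other $1/(\lambda-z)$ terms are real as well. The remaining term is $\operatorname{Im}\frac{r'/r^2+g'}{1/r-g}$. Expand $1/r(t)=w^{-1}(\lambda_j-t)+O((t-\lambda_j)^2)$ and $r'/r^2=w^{-1}+O(|t-\lambda_j|)$, and replace $g(t)$ by $\mathrm i\operatorname{Im}g(\lambda_j)$. This replacement costs $|\operatorname{Re} g|+|g'|\,|t-\lambda_j|\le Cd^{-1}$ by Lemma \ref{Lemma: Estimate on g for large d}. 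The model term is exactly $\pi\,\mathrm{Cau}(\lambda_j,b(\lambda_j))$ after rescaling. Bound the difference using
\[
\Big|\tfrac{A}{D}-\tfrac{A_0}{D_0}\Big|\le \tfrac{|A-A_0|}{|D|}+\tfrac{|A_0|\,|D-D_0|}{|D||D_0|},
\]
together with $|D|,|D_0|\ge \operatorname{Im}g\ge cd^{-1/2}$ (note $\operatorname{Im}(1/r)\le 0$) and $|D_0|\ge c|t-\lambda_j|$. With $\delta\coloneqq O(|t-\lambda_j|^2+d^{-1})$ the perturbation of $D$, integrating $\delta/|D_0|^2$ over $U_j$ gives roughly $\int(s^2+d^{-1})/(s^2+d^{-1})\,\mathrm ds\lesssim\varepsilon$ from the $s^2$ part. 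The $d^{-1}$ part gives $d^{-1}\cdot d^{1/2}=d^{-1/2}$. The numerator perturbation gives about $\varepsilon^2 d^{1/2}$ crudely, or better using $1/|D_0|$ integrability, $\varepsilon|\log d|$. I expect the crude bounds to give something like $\varepsilon+\varepsilon^2d^{1/2}$. Tail mass of each Cauchy outside $U_j$, but inside $I$, was already absorbed in the previous paragraph.

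\emph{Balancing.} Choose $\kappa$ to balance $d^{-1/2+2\kappa}$ against the window errors, for instance $d^{-\kappa}$ and $\varepsilon^2d^{1/2}\cdot(\text{loss})$. Any fixed positive exponent suffices, and $1/30$ leaves slack for cruder bounds.

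The main obstacle is the near-$\lambda_j$ estimate. The $O((t-\lambda_j)^2)$ correction in $1/r$ must be controlled against a denominator that is only of size $d^{-1/2}$ at $t=\lambda_j$, and the error has to stay integrable uniformly in $d$. I would first try the crude pointwise bound $|D-D_0|\le C(s^2+d^{-1})$ together with $|D||D_0|\ge c(s^2+d^{-1})$, accepting polynomial losses. It also needs care that the $\lambda_j$ are separated by a fixed amount, so the windows are disjoint for large $d$, and that $\mathrm S_*$ eigenvalues just outside $I$ are handled via $I'$.
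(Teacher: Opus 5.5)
Your reduction is sound: you condition on a realization, use $\phi=\pi^{-1}\operatorname{Im}h\,\mathrm dt$ on $I$, and compare $\operatorname{Im}(A/D)$ with $A=r'/r^2+g'$, $D=1/r-g$ against the model $A_0/D_0$ near each $\lambda_j$. The gap is in the final balancing step, which fails with the bounds you actually prove.

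\emph{Far region.} You bound $|\operatorname{Im}h|$ by its supremum there. This gives $O(d^{-1/2}\varepsilon^{-2})$ and forces $\kappa<1/4$.

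\emph{Windows.} The only lower bound you establish for the true denominator is $|D|\ge \operatorname{Im}g\ge cd^{-1/2}$. With it, $|A-A_0|/|D|\le Cd^{1/2}(s+d^{-1})$ and $|A_0|\,|D-D_0|/(|D||D_0|)\le Cd^{1/2}\sqrt{s^2+d^{-1}}$. Both integrate over $U_j$ to order $d^{1/2}\varepsilon^2$, which forces $\kappa>1/4$.

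Since $(d^{-1/2}\varepsilon^{-2})(d^{1/2}\varepsilon^{2})=1$, no choice of $\kappa$ makes the total small. So with your anticipated crude window error $\varepsilon+\varepsilon^2d^{1/2}$, the claim that ``any fixed positive exponent suffices'' is false: the two-scale scheme gives no decay at all. Your step ``integrate $\delta/|D_0|^2$'', and the ``better'' numerator bound via $1/|D_0|$, both silently replace $|D|$ by $|D_0|$. That replacement is exactly what has not been justified.

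The missing ingredient is a lower bound on $|D|$ of the same size as $|D_0|$. On the window, $\operatorname{Re}D(t)=1/r(t)-\operatorname{Re}g(t)=w(\lambda_j)^{-1}(\lambda_j-t)+O(|t-\lambda_j|^2+d^{-1})$. Together with $|\operatorname{Im}D|\ge cd^{-1/2}$, this gives $|D|\ge c(|t-\lambda_j|+d^{-1/2})$ for large $d$. Then both error terms are $O(1)$ pointwise, the window error is $O(\varepsilon)$, and $\kappa=1/6$ yields $O(d^{-1/6})$, which is even better than the stated rate.

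Alternatively, in the far region integrate the pointwise bound $Cd^{-1/2}\operatorname{dist}(t,\mathrm S_*)^{-2}$ instead of taking its supremum. This gives $d^{-1/2}\varepsilon^{-1}$, which is compatible with your crude window bound for $\kappa\in(1/4,1/2)$.

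The paper resolves the same tension with three scales, $\varepsilon=d^{-7/20}<\delta=d^{-7/30}$:
\begin{itemize}
\item crude comparisons (via $h_2,h_3,h_4$) only for $|t-\lambda_0|<\varepsilon$;
\item a direct bound on the annulus $\varepsilon<|t-\lambda_0|<\delta$, which uses precisely $|\operatorname{Re}(1/r(t))|\ge c|t-\lambda_0|$;
\item the supremum bound only for $|t-\lambda_0|\ge\delta$.
\end{itemize}
The exponent $1/30$ is just the outcome of that bookkeeping.
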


\begin{proof}
	Let $a(\lambda) \coloneqq \lambda - w(\lambda) \operatorname{Re}(g(\lambda))$.
	By Lemma \ref{Lemma: Estimate on g for large d} and Lemma \ref{Lemma: TV-distance of Cauchy distributions} we have
	\begin{equation*}
		\| \operatorname{Cau}\big(a(\lambda), b(\lambda)\big) -  \operatorname{Cau}\big(\lambda, b(\lambda)\big)\|_{\operatorname{TV}} \leq Cd^{-1/2},
	\end{equation*}
	so that is sufficient to show that
	\begin{equation*}
		\Big \|  \phi-  \mathbb E_0\Big[\sum_{\lambda \in \mathrm S_* \cap I} \operatorname{Cau}\big(a(\lambda), b(\lambda)\big)  \Big] \Big\|_{\operatorname{TV}, I} = \mathcal O( d^{-1/30}).
	\end{equation*}
	By assumption, the distribution of $\mathrm G$ is supported on the set of graphs with overall less then $m$ vertices, i.e.\ a finite set. By conditioning on $(\mathrm G, \varphi)$ we hence might assume without loss of generality that $(\mathrm G, \varphi)$ is deterministic. Additionally, we might assume without loss of generality that $\mathrm D \cap I = \{\lambda_0\}$ for some $\lambda_0 \in \mathrm D$. By Proposition \ref{Proposition: Condition that phi locally agrees with measure}, $ \phi$ agrees (for sufficiently large $d$) with $\pi^{-1}\operatorname{Im} h(t)\, \mathrm dt$ on $I$. We define the map $h_4:\mathbb R\to\mathbb C^+ $ by 
	\begin{equation*}
		h_4(t) \coloneqq \frac{w(\lambda_0)^{-1}}{w(\lambda_0)^{-1}(\lambda_0-t) - g(\lambda_0)}.
	\end{equation*}
	Then $\pi^{-1}\operatorname{Im}(h_4)$ is the density of $\operatorname{Cau}(a(\lambda_0), b(\lambda_0))$.
	In order to show the statement, it is therefore sufficient to show that
	\begin{equation*}
		\int_{I} \big| \operatorname{Im}h(t) - \operatorname{Im}h_4(t)\big| \, \mathrm dt = \mathcal O(d^{-1/30}).
	\end{equation*}
	By Lemma \ref{Lemma: Expansion of h around admissible eigenvalue} we may write $h(t) = h_0(t) + h_1(t)$ if $|t-\lambda_0|$ is sufficiently small where
	\begin{equation*}
		h_0(t) \coloneqq \frac{g'(t)}{1/r(t) - g(t)} +  \frac{\sum_{\lambda\neq \lambda_0} \frac{w(\lambda)}{\lambda - t} -  (\lambda_0 - t)\frac{w(\lambda)}{(\lambda - t)^2}}{w(\lambda_0) + (\lambda_0 - t) \sum_{\lambda\neq \lambda_0} \frac{w(\lambda)}{\lambda - t}}, \quad h_1(t) \coloneqq \frac{r'(t)/r^2(t)}{1/r(t) - g(t)}
	\end{equation*}
	and where $r/r^2$ and $1/r$ are continuously extended to $\lambda_0$.
	We define $\alpha = \tfrac{7}{20}$ and $\beta = \tfrac{7}{30}$ and set $\varepsilon = d^{-\alpha}$, $\delta = d^{-\beta}$. We further decompose 
	\begin{align*}
		h(t) - h_4(t) &= h(t) \1_{\{|t-\lambda_0| \geq \delta\}} +  (h_0(t) + h_1(t))\1_{\{|t-\lambda_0| < \delta\}} - h_4(t)\\
		&=  h(t) \1_{\{|t-\lambda_0| \geq \delta\}} +  h_0(t)\1_{\{|t-\lambda_0| < \delta\}} + (h_1(t) - h_2(t)) \1_{\{|t-\lambda_0| < \delta\}} + h_2(t) \1_{\{\varepsilon < |t-\lambda_0| < \delta\}} \\
		& \quad + (h_2(t) - h_3(t)) \1_{\{|t-\lambda_0| < \varepsilon \}} + (h_3(t) -h_4(t)) \1_{\{|t-\lambda_0| < \varepsilon \}} - h_4(t)\1_{\{|t-\lambda_0| \geq \varepsilon \}}
	\end{align*}
	where
	\begin{equation*}
		h_2(t) \coloneqq \frac{r'(t)/r(t)^2}{1/r(t) - g(\lambda_0)}, \quad h_3(t) \coloneqq \frac{r'(t)/r(t)^2}{w(\lambda_0)^{-1}(\lambda_0 - t)- g(\lambda_0)}.
	\end{equation*}
	We now estimate each term in the expansion above separately. We might choose $C>0$ sufficiently large and $c>0$ sufficiently small such that the following inequalities hold. For all $t\in I$ with $|t-\lambda_0|\geq \delta$
	\begin{equation*}
		\operatorname{Im} h(t) \leq \frac{|g(t)r'(t) + g'(t)r(t)|}{1 - |g(t)r(t)|} \leq \frac{C d^{-1/2} \delta^{-2} + C d^{-1} \delta^{-1}}{1 - Cd^{-1/2} \delta^{-1}} = \mathcal O(d^{-1/30}),
	\end{equation*} 
	for all $t\in I$ with $|t-\lambda_0|< \delta$
	\begin{equation*}
		|\operatorname{Im} h_0(t)| \leq \Big|\frac{g'(t)}{1/r(t) - g(t)}\Big| \leq \frac{|g'|}{\operatorname{Im}(g(\lambda_0))} = \mathcal O(d^{-1/2}),
	\end{equation*}
	for all $t\in I$ with $|t-\lambda_0|< \delta$
	\begin{equation*}
		|\operatorname{Im} h_1(t) - \operatorname{Im} h_2(t)| \leq \frac{C}{|1/r(t) - \xi_1(t)|^2} |g'(\xi_2(t))||t-\lambda_0| \leq \frac{C^2}{\operatorname{Im}(\xi_1(t))^2} \cdot d^{-1} \cdot \delta = \mathcal O(d^{-7/30}),
	\end{equation*}
	for some suitable $\xi_1(t)$ which lies on the line segment in $\C^+$ that connects $g(\lambda_0)$ and $g(t)$ and some suitable $\xi_2(t)$ between $t$ and $\lambda_0$. 
	
	For all $t \in I$ with $|t-\lambda_0|> \varepsilon$ we have
	\begin{align*}
		|\operatorname{Im}h_2(t)| &\leq \operatorname{Im}\Big(\frac{C}{1/r(t) - g(\lambda_0)} \Big) \\ 
		&= \frac{C\operatorname{Im}(g(\lambda_0))}{(\operatorname{Re}(1/r(t)) - \operatorname{Re}(g(\lambda_0)))^2 + \operatorname{Im}(g(\lambda_0))^2}
		\leq C^2 \frac{d^{-1/2}}{(c \varepsilon - Cd^{-1})^2} = \mathcal O(d^{4/20})
	\end{align*}
	such that
	\begin{equation*}
		\int_I \1_{\{ \varepsilon \leq |t-\lambda_0|\leq \delta\}} \operatorname{Im}(h_2(t)) \, \mathrm dt = \mathcal O(d^{-1/30}).
	\end{equation*}
	We have for all $t \in I$ with $|t-\lambda_0|< \varepsilon$
	\begin{equation*}
		|\operatorname{Im}h_2(t) - \operatorname{Im}h_3(t)| \leq \frac{C}{|\xi_3(t) - g(\lambda_0)|^2}|1/r(t) - w(\lambda_0)^{-1}(\lambda_0 - t)| \leq C^2 d |\lambda_0 - t|^2 = \mathcal O(d^{3/10})
	\end{equation*}
	for some suitable $\xi_3(t)\in \R$ leading to
	\begin{equation*}
		\int_I \big|\operatorname{Im}h_2(t) - \operatorname{Im}h_3(t)\big| \1_{\{|t-\lambda_0| < \varepsilon\}} \, \mathrm dt =  \mathcal O(d^{-1/20}).
	\end{equation*}
	Since for all $t \in I$ with $|t-\lambda_0|< \varepsilon$
	\begin{equation*}
		|\operatorname{Im}h_3(t) - \operatorname{Im}h_4(t)| \leq \frac{C |t - \lambda_0|}{|w(\lambda_0)^{-1}(\lambda_0 - t) - g(\lambda_0)|} \leq C^2 \varepsilon d^{1/2} = \mathcal O(d^{3/20})
	\end{equation*}
	we have
	\begin{equation*}
		\int_I  \big|\operatorname{Im}h_3(t) - \operatorname{Im}h_4(t)\big| \1_{\{|t-\lambda_0| < \varepsilon\}} \, \mathrm dt =  \mathcal O(d^{-1/5}).
	\end{equation*}
	Using the identity $\pi/2-\arctan(s) = \arctan(1/s)$ for $s\neq 0$ we finally obtain
	\begin{align*}
		\int_I \operatorname{Im}h_4 (t)\1_{\{|t-\lambda_0| \geq \varepsilon\}} \, \mathrm dt  &\leq 2 \int_{\varepsilon - |\lambda_0 -a(\lambda_0)|}^\infty \frac{b(\lambda_0)}{t^2 + b(\lambda_0)^2} \, \mathrm dt \\
		&= \pi - 2\arctan\big((\varepsilon-|\lambda_0-a(\lambda_0)|)b(\lambda_0)^{-1}\big) \\
		&= 2\arctan\big((\varepsilon-|\lambda_0-a(\lambda_0)|)^{-1} b(\lambda_0) \big) \leq \frac{2b(\lambda_0)}{\varepsilon-|\lambda_0-a(\lambda_0)|} = \mathcal O(d^{-3/20})
	\end{align*}
	which concludes the proof.
\end{proof}

\begin{proof}[Proof of Theorem \ref{Theorem: Large d-expansion of first order term}]
	This directly follows from Proposition \ref{Proposition: distance of first order term to sum of Cauchy kernels} and Theorem \ref{Theorem: small p expansion in dual of Ck}.
\end{proof}

\begin{figure}
	\begin{minipage}{0.97\textwidth}
		\centering
		\includegraphics[width=0.48\linewidth]{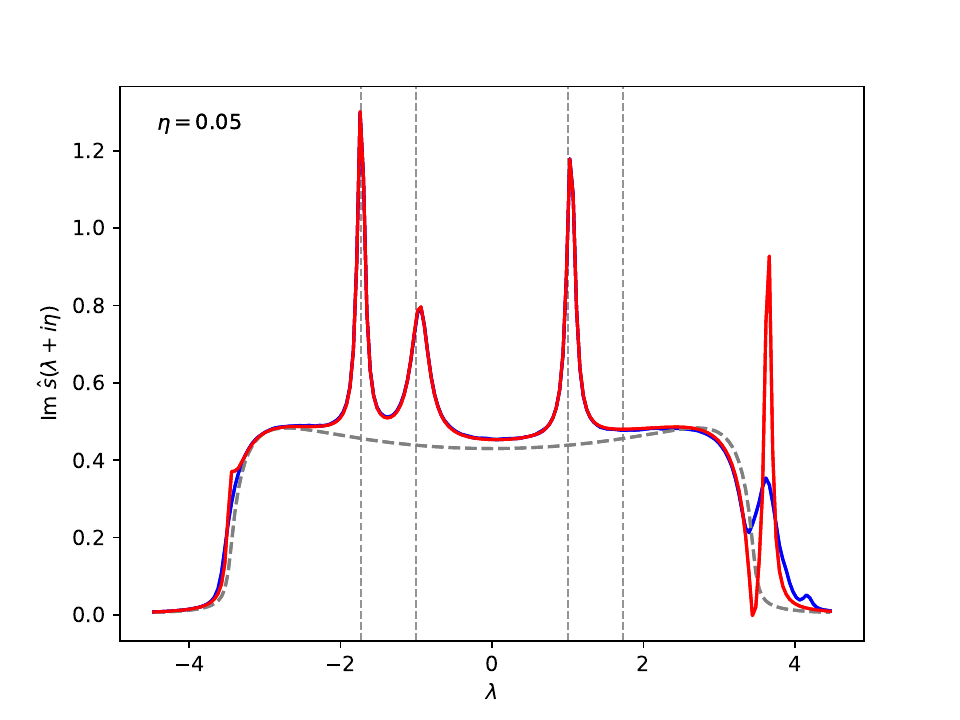}
		\includegraphics[width=0.48\linewidth]{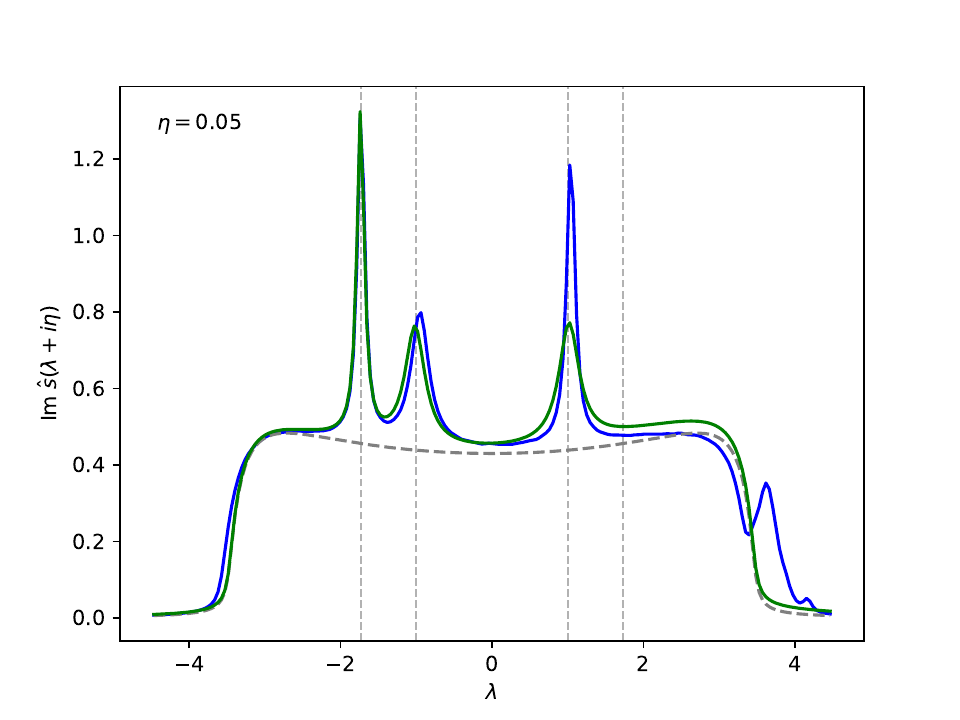}
		\includegraphics[width=0.3\linewidth]{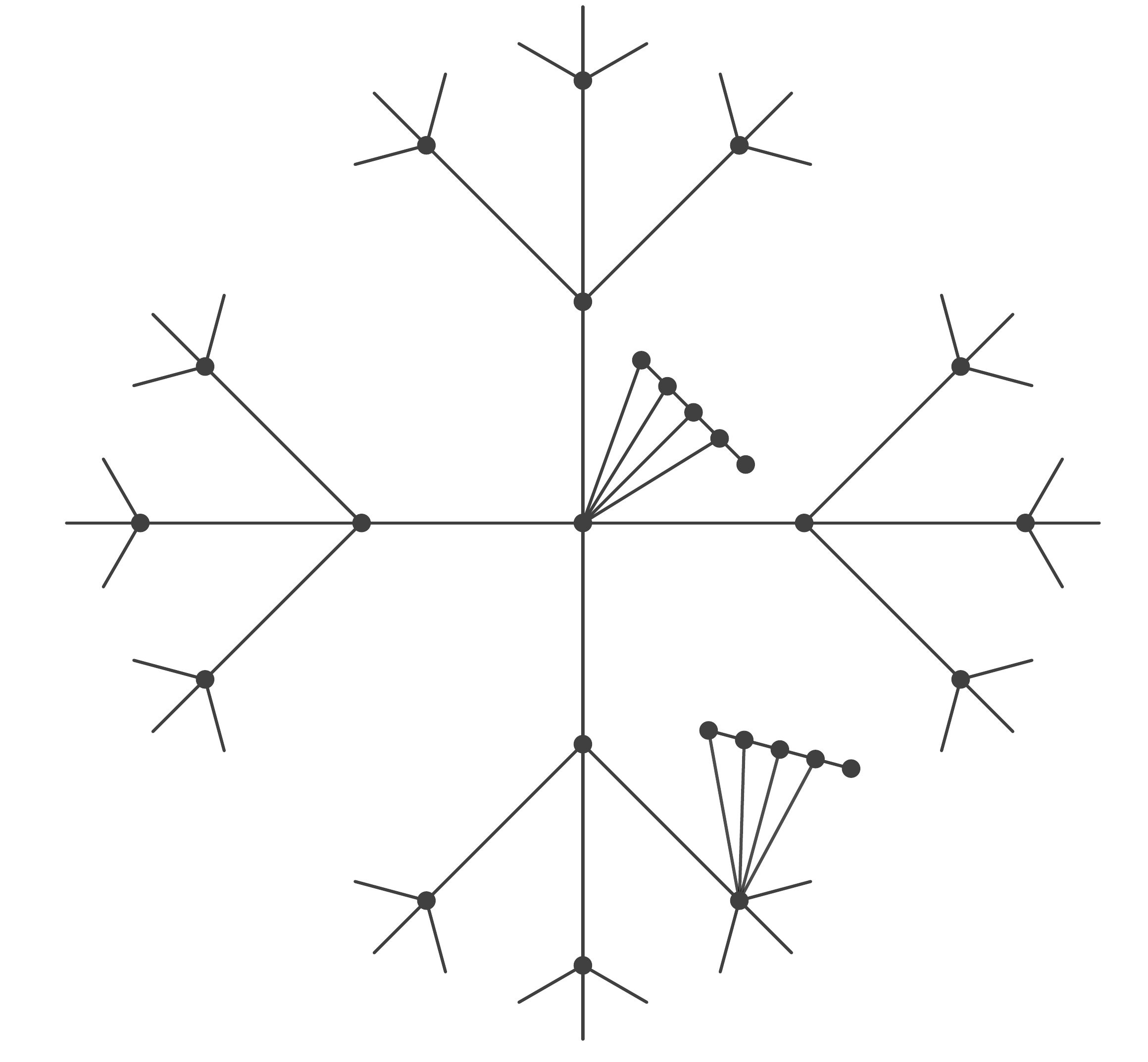}
	\end{minipage}
	\caption{Illustration of Remark \ref{Remark: Expansion around small w} for $\mathbb G = \mathbb B_4$
		and $\mathrm P(\mathrm G = \text{line of length 5})=0.05$, $\mathrm P(\mathrm G = \emptyset )=0.95$.
		Imaginary parts of the Stieltjes transforms of $\nu_0$ (grey), $\hat \nu$ from simulation (blue), and $\nu_0 + p\phi$ (red). In green, a further approximation of $\nu_0 + p\phi$ via \eqref{Equation: small w approx for h} is depicted.
		The grey vertical lines indicate the eigenvalues $(-\sqrt{3}, -1, 1, \sqrt{3})$ of the line segment that lie in $\mathrm S_*$. The corresponding weights are given by $(w(-\sqrt{3}), w(-1), w(1), w(\sqrt{3})) \approx (0.081, 0.25, 0.25, 3.482)$. Close to the eigenvalue $-\sqrt{3}$ with small weight $w(-\sqrt{3})\approx 0.081$, the approximation in \eqref{Equation: small w approx for h} matches the simulation well.}
	\label{fig: small w expansion}
\end{figure}

\begin{remark}
	\label{Remark: Expansion around small w}
	The heuristic leading to the proof of Theorem \ref{Theorem: Large d-expansion of first order term} suggests a second mechanism through which peaks in the spectrum are generated even at small $d$: assume that we are on the event $\{(\mathrm G, \varphi) = (\mathrm G_1, \varphi_1) \}$ with $(\mathrm G_1, \varphi_1)$ deterministic. Assume that we have $\lambda \in \mathrm S_*$ with $w(\lambda)$ small but positive and assume that $I$ is a sufficiently small interval containing $\lambda$ such that $r$ does not have any zeros in $I\setminus \{\lambda\}$. One might then expect with Lemma \ref{Lemma: Expansion of h around admissible eigenvalue} that
	\begin{equation}
		\label{Equation: small w approx for h}
		\pi \operatorname{Im} h(t) \mathrm dt \approx \operatorname{Cau}(a(\lambda)), b(\lambda)) \quad \text{ on }I
	\end{equation}
in a suitable sense, where as before $a(\lambda) \coloneqq \lambda - w(\lambda) \operatorname{Re}(g(\lambda))$.
	See Figure \ref{fig: small w expansion} for an illustration of this phenomenon. 
\end{remark}

\appendix

\section{Some facts about the Stieltjes transform}
\label{Section: Facts about Stieltjes transforms}

In this appendix, we collect some useful facts about the Stieltjes transform which we were unable to find in the literature.

\begin{lemma}
	\label{Lemma: Local uniform convergence of Im ST}
	Assume that $\phi$ is a distribution of order at most one with Stieltjes transform $h$, in the sense that $h$ is holomorphic\footnote{Notice that is in particular the case for $h$ defined in \eqref{Equation: Definition h}: as we saw in the proof of Theorem \ref{Theorem: small p expansion ST}, $h$ is the locally uniform limit of the Stieltjes transforms of sequence of signed measures.} on $\R \setminus \C$ with $h(\overline{z}) = \overline{h(z)}$ for all $z\in \C^+$ and
	\begin{equation*}
		\phi(f) = \frac{1}{\pi}\int_{\C} h(z) \overline \partial f_\psi(z) \, \mathrm dz
	\end{equation*}
	for all $f \in C_c^2(\R)$, where $\psi \in C_c^\infty(\R)$ is an even cutoff function satisfying \eqref{Equation: Cutoff-function} and where $f_\psi$ is the almost analytic extension defined in \eqref{Equation: almost analytic extension of f}.
	Assume that the limit $\lambda \mapsto \operatorname{Im} h(\lambda) \coloneqq \lim_{\eta \downarrow 0} \operatorname{Im} h(\lambda + \mathrm i \eta)$ exists uniformly on $[a, b]$. Then $\phi = \pi^{-1} \operatorname{Im}(h(x)) \, \mathrm dx$ on $(a, b)$.
\end{lemma}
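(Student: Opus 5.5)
Fix $f\in C^2_c(a,b)$. I will compute $\phi(f)$ from its Helffer--Sjöstrand representation by integrating out the $\eta$-variable with Stokes' theorem, which turns the area integral into a boundary integral of $h$ along a horizontal line. Once $f\in C^2_c(a,b)$ is handled, density of $C^2_c$ in $C^1_c$ and continuity of $\phi$ extend the identity to all $f\in C^1_c(a,b)$.

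\textbf{Step 1: reduce to a horizontal line integral.} Use $h(\overline z)=\overline{h(z)}$ and the real-valuedness of $f$ to combine the contributions of the upper and lower half planes, which gives
\begin{equation*}
\phi(f)=\frac{2}{\pi}\operatorname{Re}\int_{\C^+} h(z)\,\overline{\partial} f_\psi(z)\,\mathrm dz .
\end{equation*}
Cut the upper half plane at height $\eta_0>0$. On $\{\operatorname{Im} z>\eta_0\}$ the function $h$ is holomorphic, so $h\,\overline\partial f_\psi=\overline\partial(hf_\psi)$. Since $f_\psi$ has compact support, Green's formula in the form
\begin{equation*}
\int_{\{\eta>\eta_0\}}\overline\partial(F)\,\mathrm dz=\frac{\mathrm i}{2}\int_{\R} F(\lambda+\mathrm i\eta_0)\,\mathrm d\lambda
\end{equation*}
turns that part into $\frac{\mathrm i}{2}\int h(\lambda+\mathrm i\eta_0)f_\psi(\lambda+\mathrm i\eta_0)\,\mathrm d\lambda$. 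For $\eta_0<1/2$ we have $f_\psi(\lambda+\mathrm i\eta_0)=f(\lambda)+\mathrm i\eta_0 f'(\lambda)$. Taking $\frac{2}{\pi}\operatorname{Re}$ then gives
\begin{equation*}
-\frac1\pi\int f(\lambda)\operatorname{Im}h(\lambda+\mathrm i\eta_0)\,\mathrm d\lambda\;-\;\frac{\eta_0}{\pi}\int f'(\lambda)\operatorname{Re}h(\lambda+\mathrm i\eta_0)\,\mathrm d\lambda ,
\end{equation*}
up to the sign fixed by the Stieltjes convention. The convention must be checked so that the main term comes out as $+\pi^{-1}\int f\operatorname{Im}h$; I expect the signs to work out as in the measure case, where $\operatorname{Im}$ of the Stieltjes transform is a positive density.

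\textbf{Step 2: limits as $\eta_0\downarrow 0$.} The main term converges to $\pi^{-1}\int f\operatorname{Im}h(\lambda)\,\mathrm d\lambda$, because $\operatorname{supp}f\subset(a,b)$ and $\operatorname{Im}h(\cdot+\mathrm i\eta_0)$ converges uniformly on $[a,b]$. Two further terms must vanish:
\begin{itemize}
\item the area contribution over the strip $0<\eta<\eta_0$;
\item the correction term $\eta_0\int f'(\lambda)\operatorname{Re}h(\lambda+\mathrm i\eta_0)\,\mathrm d\lambda$.
\end{itemize}

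\textbf{Step 3: the strip, which is the main obstacle.} On the strip, $\overline\partial f_\psi=\tfrac{\mathrm i}{2}\eta f''(\lambda)$, so its contribution is bounded by $\eta_0\,\sup_{\eta<\eta_0}\int|f''|\,|\operatorname{Re}h|\,\eta\,\mathrm d\lambda$ together with an imaginary-part piece. Without a bound on $\operatorname{Re} h$ this is not obviously small, since a priori one only knows $|h|\lesssim\eta^{-1}$ (as in \eqref{Equation: Estimate on h in finite volume}).

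\begin{itemize}
\item \emph{Imaginary-part piece.} Uniform convergence makes $\operatorname{Im}h$ bounded near $[a,b]$, so this piece is $O(\eta_0^2)$.
\item \emph{Real-part piece.} Here I use that $h$ is holomorphic: $\partial_\eta\operatorname{Re}h=-\partial_\lambda\operatorname{Im}h$. Integrating by parts in $\lambda$ against $f''$ moves the derivative onto $f'''$. If only $C^2$ regularity is available, I instead write $\operatorname{Re}h(\lambda+\mathrm i\eta)=\operatorname{Re}h(\lambda+\mathrm i\eta_1)+\int_\eta^{\eta_1}\partial_\lambda\operatorname{Im}h\,\mathrm ds$ and integrate by parts in $\lambda$ against $f''$. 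This needs $f\in C^3$, so I first prove the identity for $f\in C^\infty_c$ and then extend by density.
\end{itemize}
The result is that $\int f''(\lambda)\operatorname{Re}h(\lambda+\mathrm i\eta)\,\mathrm d\lambda$ stays bounded as $\eta\downarrow0$. The same trick controls the $\eta_0\int f'\operatorname{Re}h$ term: by the argument above, $\int f'\operatorname{Re}h(\cdot+\mathrm i\eta)$ is bounded uniformly in $\eta$, so this term is $O(\eta_0)$.

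\textbf{Step 4: conclude.} Both $\phi$ (a distribution of order one) and $\pi^{-1}\operatorname{Im}h\,\mathrm dx$ are continuous in $\|\cdot\|_{C^1}$ on $C^1_c(a,b)$, and $C^\infty_c(a,b)$ is dense there. Hence the identity extends to all $f\in C^1_c(a,b)$, which is the claim $\phi=\pi^{-1}\operatorname{Im}(h(x))\,\mathrm dx$ on $(a,b)$.
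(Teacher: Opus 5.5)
Your argument is the paper's: Green's theorem on $\{|\eta|>\eta_0\}$, reduction to the line at height $\eta_0$, and the limit via uniform convergence of $\operatorname{Im}h$. It goes through once a sign is fixed. Integrating $\frac{\mathrm i}{2}\partial_\eta F$ over $\eta\in(\eta_0,\infty)$ gives $\int_{\{\eta>\eta_0\}}\overline\partial F\,\mathrm dz=-\frac{\mathrm i}{2}\int_\R F(\lambda+\mathrm i\eta_0)\,\mathrm d\lambda$, not $+\frac{\mathrm i}{2}$. With the correct sign, $\frac{2}{\pi}\operatorname{Re}$ of the boundary term is directly $\frac1\pi\int f\operatorname{Im}h(\cdot+\mathrm i\eta_0)\,\mathrm d\lambda+\frac{\eta_0}{\pi}\int f'\operatorname{Re}h(\cdot+\mathrm i\eta_0)\,\mathrm d\lambda$. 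No ``Stieltjes convention'' is available to adjust this sign, because the link between $\phi$ and $h$ is a hypothesis; the sign must come out of the computation, and it does.

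Your Step 3 also overestimates the strip. You already took $\operatorname{Re}$ in Step 1, and $\operatorname{Re}\bigl(h\cdot\frac{\mathrm i}{2}\eta f''\bigr)=-\frac12\eta f''\operatorname{Im}h$. So only your $O(\eta_0^2)$ piece survives, and the $\operatorname{Re}h$ piece, the $C^3$ requirement and the density detour are unnecessary. The paper simply uses that $\int_\C h\,\overline\partial f_\psi$ is a Lebesgue integral, so the strip is $o(1)$ by dominated convergence. Also, contrary to your remark, a bound $|h|\lesssim\eta^{-1}$ would suffice: it gives $|h\,\overline\partial f_\psi|\lesssim|f''|$ on the strip.

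Your treatment of the correction term $\eta_0\int f'\operatorname{Re}h(\cdot+\mathrm i\eta_0)$ is exactly right, and it supplies a step the paper skips. The paper passes from $\int\operatorname{Im}(f_\psi h)$ to $\int f\operatorname{Im}h$ without comment, although uniform convergence of $\operatorname{Im}h$ gives no direct control of $\operatorname{Re}h$. Cauchy--Riemann plus an integration by parts give $\frac{\mathrm d}{\mathrm d\eta}\int f'\operatorname{Re}h(\cdot+\mathrm i\eta)\,\mathrm d\lambda=\int f''\operatorname{Im}h(\cdot+\mathrm i\eta)\,\mathrm d\lambda$. This is bounded for small $\eta$, so the term is $O(\eta_0)$, and this already works for $f\in C^2_c(a,b)$.
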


\begin{proof}
	For $\eta\in (0, 1)$, let $K_\eta \coloneqq [a, b] \times [-\eta, \eta]$ and $M_\eta \coloneqq K_1 \setminus K_\eta$. We extend $f\in C^2_c(a, b)$ via \eqref{Equation: almost analytic extension of f} to a function $f_\psi \in C^1_c(\C)$.
	Since $h$ is holomorphic on $\C\setminus \R$, we have $\overline{\partial}h = 0$ on $\C\setminus \R$. Hence, we obtain via Greens Theorem
	\begin{align*}
		\int_{M_\eta} h(z) \overline{\partial} f_\psi(z) \, \mathrm dz &= \int_{M_\eta} \overline{\partial}\big( h(z) f_\psi(z) \big) \, \mathrm dz \\
		&= \frac{1}{2} \oint_{\partial M_\eta} \big(h(\lambda+ \mathrm i \eta)f_\psi(\lambda+ \mathrm i \eta) \mathrm d\eta - \mathrm i h(\lambda+ \mathrm i \eta)f_\psi(\lambda+ \mathrm i \eta)  \mathrm d\lambda\big) \\
		&= -\frac{\mathrm i}{2} \int_a^b \big(h(\lambda+ \mathrm i \eta)f_\psi(\lambda+ \mathrm i \eta) - h(\lambda- \mathrm i \eta)f_\psi(\lambda - \mathrm i \eta)\big) \, \mathrm d\lambda \\
		&= \int_a^b \operatorname{Im}\big(f_\psi(\lambda+ \mathrm i \eta)h(\lambda + \mathrm i \eta)\big) \, \mathrm d\lambda
	\end{align*}
	since $f_\psi$ vanishes on $\partial M_\eta \setminus \big([a, b]\times \{ \pm \eta\}\big)$ and since $h(\overline{z})f_\psi(\overline{z}) = \overline{h(z)f_\psi(z)}$. We hence obtain 
	\begin{equation*}
		\phi(f) = \lim_{\eta\downarrow 0} \frac{1}{\pi} \int_{M_\eta} h(z) \overline{\partial} f_\psi(z) \, \mathrm dz = \frac{1}{\pi} \int_a^b \operatorname{Im}(h(\lambda)) f(\lambda) \, \mathrm d\lambda. 
	\end{equation*}
	Since $C_c^2(a, b)$ is dense in $C_c^1(a, b)$, the statement follows.
\end{proof}

\begin{prop}
	\label{Proposition: Limit of ST}
	Let $f\in C^2(a, b)$ be a probability density and let $g$ be the Stieltjes transform of $f(x) \mathrm dx$. Then the limit $g(\lambda) \coloneqq \lim_{\eta \downarrow 0} g(\lambda + \mathrm i \eta)$ exists locally uniformly in $\lambda \in (a, b)$. Moreover, $\lambda \mapsto g(\lambda)$ is differentiable with $g'(\lambda) = \lim_{\eta \downarrow 0} g'(\lambda + \mathrm i \eta)$ locally uniformly in $\lambda \in (a, b)$ and
	\begin{align*}
		g(\lambda) &= \mathrm i \pi f(\lambda) + \int_{-c}^{c} \frac{f(\lambda + t) - f(\lambda)}{t} \mathrm dt \\
		g'(\lambda) &= \mathrm i \pi f'(\lambda) +  \int_0^\infty \frac{f(\lambda + t) + f(\lambda -t) - 2f(\lambda)}{t^2} \, \mathrm dt
	\end{align*}
	for some arbitrary $c \geq 2 \operatorname{max}(|a|, |b|)$ and where we extend $f$ by zero to all of $\mathbb R$.
\end{prop}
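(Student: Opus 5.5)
We prove the boundary-value formulas for $g$ by splitting the Stieltjes integral into a local piece near $\lambda$ and a far piece, and subtracting the singular part of the kernel explicitly. Fix a compact $K\subset(a,b)$ and $\delta>0$ with $[\lambda-2\delta,\lambda+2\delta]\subset(a,b)$ for all $\lambda\in K$. Write
\begin{equation*}
g(\lambda+\mathrm i\eta)=\int_{\R}\frac{f(\lambda+t)}{t-\mathrm i\eta}\,\mathrm dt
=f(\lambda)\int_{-c}^{c}\frac{\mathrm dt}{t-\mathrm i\eta}+\int_{-c}^{c}\frac{f(\lambda+t)-f(\lambda)}{t-\mathrm i\eta}\,\mathrm dt,
\end{equation*}
where $c\ge 2\max(|a|,|b|)$ ensures that $\operatorname{supp} f(\lambda+\cdot)\subseteq[-c,c]$ for all $\lambda\in(a,b)$. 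The first integral equals $2\mathrm i\arctan(c/\eta)\to\mathrm i\pi$, uniformly in $\lambda$. For the second integral, split the domain into $|t|\le\delta$ and $\delta<|t|\le c$.
\begin{itemize}
\item On $|t|\le\delta$, the mean value theorem gives $|f(\lambda+t)-f(\lambda)|\le\|f'\|_{L^\infty(K_{2\delta})}|t|$ with $K_{2\delta}$ the $2\delta$-neighbourhood of $K$. The integrand is therefore bounded uniformly in $\eta$, and dominated convergence, together with a uniform modulus of continuity, yields uniform convergence to $\int\frac{f(\lambda+t)-f(\lambda)}{t}\,\mathrm dt$.
\item On $\delta<|t|\le c$, the kernel converges uniformly to $1/t$. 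Since $f\in L^1$ and $f(\lambda)$ is bounded on $K$, this part also converges uniformly.
\end{itemize}
This gives the formula for $g(\lambda)$ together with locally uniform convergence.

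For the derivative, differentiate under the integral: $g'(z)=\int f(x)(x-z)^{-2}\,\mathrm dx$. Symmetrising in $t\mapsto -t$ on the local piece, write
\begin{equation*}
g'(\lambda+\mathrm i\eta)=\int_0^\infty\Big[\frac{f(\lambda+t)}{(t-\mathrm i\eta)^2}+\frac{f(\lambda-t)}{(t+\mathrm i\eta)^2}\Big]\mathrm dt .
\end{equation*}
Subtract and add the two terms $f(\lambda)$ and $t f'(\lambda)$. This produces three pieces.
\begin{itemize}
\item The $f(\lambda)$-terms: $\int_0^\infty\big[(t-\mathrm i\eta)^{-2}+(t+\mathrm i\eta)^{-2}\big]\mathrm dt=0$ for $\eta>0$, since the antiderivative $-(t\mp\mathrm i\eta)^{-1}$ gives $\frac{1}{-\mathrm i\eta}+\frac{1}{\mathrm i\eta}=0$.
\item The $f'(\lambda)$-terms: on $[0,\delta]$, $f'(\lambda)\int_0^\delta t\big[(t-\mathrm i\eta)^{-2}-(t+\mathrm i\eta)^{-2}\big]\,\mathrm dt$ is computed explicitly. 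The bracket equals $4\mathrm i\eta t/(t^2+\eta^2)^2$, so the integral tends to $\mathrm i\pi$ as $\eta\downarrow0$.
\item The remainder $\big[f(\lambda+t)-f(\lambda)-tf'(\lambda)\big]$ together with its mirror is $O(t^2\|f''\|_\infty)$ on $[0,\delta]$. Dominated convergence gives uniform convergence to $\int_0^\delta\frac{f(\lambda+t)+f(\lambda-t)-2f(\lambda)}{t^2}\,\mathrm dt$. The $f'(\lambda)$-contributions cancel in this symmetric limit, since the odd part $\pm tf'(\lambda)$ cancels against the $1/t^2$ kernel.
\end{itemize}
On $[\delta,\infty)$, the kernels converge uniformly to $t^{-2}$, which is integrable there, and $f$ is bounded on $K_{2\delta}$ and integrable globally. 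One should write $f(\lambda\pm t)$ directly rather than the difference, handling the $-2f(\lambda)/t^2$ term separately since it is integrable on $[\delta,\infty)$. This yields the stated formula for $g'(\lambda)$.

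Finally, the boundary function is differentiable with derivative equal to the limit of $g'(\lambda+\mathrm i\eta)$. This follows from the standard fact that $g(\cdot+\mathrm i\eta)\to g$ and $g'(\cdot+\mathrm i\eta)\to G$ locally uniformly imply $g\in C^1$ with $g'=G$, using $g(\lambda_2)-g(\lambda_1)=\lim_\eta\int_{\lambda_1}^{\lambda_2}g'(s+\mathrm i\eta)\,\mathrm ds$.

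The main obstacle is the bookkeeping of the $\mathrm i\pi f'(\lambda)$ term. After symmetrisation, the odd part produces exactly the imaginary part via the approximate identity $\frac{4\eta t^2}{(t^2+\eta^2)^2}$, which has mass $\pi$ on $(0,\infty)$. At the same time, the real parts must recombine into the stated symmetric second-difference integral over all of $(0,\infty)$ with no leftover boundary term at $t=\delta$. I would check this by verifying that the $tf'(\lambda)$ contributions only appear in the $\mathrm i\pi f'(\lambda)$ term.
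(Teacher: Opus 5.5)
Your proof is correct and follows essentially the same route as the paper. In both, you use the vanishing integrals of the kernels to subtract $f(\lambda)$ (and, for $g'$, the linear Taylor term $tf'(\lambda)$), symmetrise under $t\mapsto -t$ to produce the second difference, and split at $|t|=\delta$ into a local piece controlled by bounds on $f'$, $f''$ and a far piece controlled by integrability of $f$. The only difference is organisational: you work directly with the complex kernels $(t\mp\mathrm i\eta)^{-1}$ and $(t\mp\mathrm i\eta)^{-2}$, whereas the paper treats $\operatorname{Re}g$, $\operatorname{Im}g$, $\operatorname{Re}g'$ and $\operatorname{Im}g'$ as four separate real integrals, using only continuity of $f$ for $\operatorname{Im}g$.
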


\begin{proof}
	We show that the limits
	\begin{equation*}
		\lim_{\eta \downarrow 0} \operatorname{Re}(g(\lambda + \mathrm i \eta)), \quad \lim_{\eta \downarrow 0} \operatorname{Im}(g(\lambda + \mathrm i \eta)), \quad \lim_{\eta \downarrow 0} \operatorname{Re}(g'(\lambda + \mathrm i \eta)), \quad \lim_{\eta \downarrow 0} \operatorname{Im}(g'(\lambda + \mathrm i \eta))
	\end{equation*}
	exit locally uniformly in $\lambda\in (a, b)$ and that their limit is as given in the claim. In particular, this yields differentiability of $\lambda \mapsto g(\lambda)$ with $g'(\lambda) = \lim_{\eta \downarrow 0} g'(\lambda + \mathrm i \eta)$.
	We start by noticing that for all $\lambda \in \R$ and $\eta>0$
	\begin{align*}
		g(\lambda + \mathrm i \eta) &= \int \frac{t-\lambda}{(t-\lambda)^2 + \eta^2} f(t)\, \mathrm dt  + \mathrm i \int \frac{\eta}{(t-\lambda)^2 + \eta^2} f(t)\, \mathrm dt  \\
		g'(\lambda + \mathrm i \eta) &= \int \frac{(t - \lambda)^2 - \eta^2}{\big[(t-\lambda)^2 + \eta^2\big]^2} f(t)\, \mathrm dt  + \mathrm i \int \frac{2(t - \lambda)\eta}{\big[(t-\lambda)^2 + \eta^2\big]^2} f(t)\, \mathrm dt.
	\end{align*}
	We set for $\delta>0$ and a function $\tilde f:(a, b) \to \R$
	\begin{equation*}
		K_\delta \coloneqq [a+\delta, b-\delta], \quad \|\tilde f\|_\delta \coloneqq \sup_{t\in K_\delta} |\tilde f(t)|.
	\end{equation*}
	In the following, we fix some $\delta>0$ and show uniform convergence on $K_{2\delta}$.
	\paragraph{Convergence of $\operatorname{Im}(g)$}
	Since $\pi^{-1}\operatorname{Im}(g)$ is the convolution of $f$ with the Cauchy kernel, we have $\operatorname{Im}(g) \to \pi f$ locally uniformly. While this is well known, we give the proof for convenience of the reader. Let $\varepsilon>0$ and $\lambda \in K_{2\delta}$. Since $f$ is uniformly continuous on $K_\delta$, there exists some $\eta_0\in (0, \delta^3)$ such that $|f(s) - f(t)| < \varepsilon$ for all $s, t \in K_\delta$ with $|s-t|<\eta_0^{1/3}$. Let $\eta\in (0, \eta_0)$. Since $\int_\R \frac{\eta}{(t-\lambda)^2 + \eta^2} \, \mathrm dt = \pi$ 
	and since $f$ is a probability density, we obtain for all $\lambda \in K_{2\delta}$
	\begin{align*}
		&|\operatorname{Im}g(\lambda + \mathrm i \eta) - \pi f(\lambda) |\\
		&=  \Big|\int_\R \frac{\eta}{(t-\lambda)^2 + \eta^2} (f(t) - f(\lambda))\, \mathrm dt \Big| \\
		&\leq \int_{|t-\lambda|\leq \eta^{1/3} } \frac{\eta \varepsilon}{(t-\lambda)^2 + \eta^2}\, \mathrm dt + \int_{|t-\lambda|> \eta^{1/3} } \eta^{1/3} f(t)\, \mathrm dt +  f(\lambda)\int_{|t-\lambda|> \eta^{1/3} } \frac{\eta}{(t-\lambda)^2 + \eta^2}\, \mathrm dt \\
		&\leq \varepsilon \pi + \eta^{1/3} +  \big(\pi - 2\operatorname{arctan}(\eta^{-2/3}) \big) \|f \|_{\delta}
	\end{align*}
	which implies the uniform convergence of $\lambda \mapsto \operatorname{Im}g(\lambda + \mathrm i \eta)$ to $\pi f$ on $K_\delta$ as $\eta \downarrow 0$.
	
	\paragraph{Convergence of $\operatorname{Re}(g)$} Let $\lambda \in K_{2\delta}$. Since $\int_{- c}^{c} \frac{t}{t^2 + \eta^2} \mathrm dt = 0$
	and as $f(t+\lambda) = 0$ for all $t\in [-c, c] \setminus [a-\lambda, b-\lambda]$, we have
	\begin{equation*}
		\operatorname{Re}(g(\lambda + \mathrm i \eta)) = \int_{a-\lambda}^{b-\lambda} \frac{t}{t^2 + \eta^2}f(\lambda + t) \, \mathrm dt  = \int_{-c}^{c} \frac{t^2}{t^2 + \eta^2} \frac{f(\lambda + t) - f(\lambda)}{t} \, \mathrm dt
	\end{equation*}
	yielding
	\begin{equation}
		\label{Equation: Difference of Re g to its limit}
		\operatorname{Re}(g(\lambda + \mathrm i \eta)) - \int_{-c}^{c} \frac{f(\lambda + t) - f(\lambda)}{t} \mathrm dt = \int_{-c}^{c} \frac{-\eta^2}{t^2 + \eta^2} \frac{f(\lambda + t) - f(\lambda)}{t} \, \mathrm dt.
	\end{equation}
	For $\lambda \in K_{2\delta}$ and $|t|\leq \delta$ we have $t+\lambda \in K_\delta$ and hence
	\begin{equation}
		\label{Equation: Estimate in difference quotient}
		\Big|\frac{f(\lambda + t) - f(\lambda)}{t}\Big| \leq \1_{\{|t|< \delta  \} }\|f'\|_{ K_{\delta}} +  \1_{\{|t|> \delta  \} } \delta^{-1}\big(f(\lambda + t) + \|f\|_{2\delta}\big).
	\end{equation}
	Since $f$ is a probability density, we have
	\begin{equation*}
		\int_{-c}^{c} \frac{\eta^2}{t^2 + \eta^2} f(\lambda + t) \1_{\{|t|> \delta  \}} \, \mathrm dt \leq \eta^{2} \delta^{-2}
	\end{equation*}
	and hence Equation \eqref{Equation: Difference of Re g to its limit} yields with the Estimate \eqref{Equation: Estimate in difference quotient}
	\begin{align*}
		\Big|\operatorname{Re}(g(\lambda + \mathrm i \eta)) - \int_{-c}^{c} \frac{f(\lambda + t) - f(\lambda)}{t} \mathrm dt \Big| &\leq  \Big(\|f'\|_{ K_{\delta}} + \delta^{-1}\|f\|_{2\delta}\Big)\int_{-c}^{c} \frac{\eta^2}{t^2 + \eta^2}\, \mathrm dt + \eta^{2} \delta^{-2}
	\end{align*}
	which in turn implies the uniform convergence of $\lambda \mapsto \operatorname{Re}g(\lambda + \mathrm i \eta)$ on $K_{2\delta}$ as $\eta \downarrow 0$.
	
	\paragraph{Convergence of $\operatorname{Im}(g')$} Let $\lambda \in K_{2\delta}$. Using that $\int_{-\infty}^\infty \frac{2\eta t}{(t^2 + \eta^2)^2} \, \mathrm dt = 0$ and $\int_{-\infty}^\infty \frac{2\eta t^2}{(t^2 + \eta^2)^2} \, \mathrm dt  = \pi$,
	we obtain 
	\begin{equation*}
		\operatorname{Im}(g'(\lambda + \mathrm i \eta)) - \pi f'(\lambda)
		= \int_{-\infty}^\infty \frac{2\eta t^2}{(t^2 + \eta^2)^2} \bigg(\frac{f(\lambda + t)- f(\lambda)}{t} - f'(\lambda)  \bigg) \, \mathrm dt.
	\end{equation*}
	Estimating
	\begin{equation*}
		\Big|\frac{f(\lambda + t)- f(\lambda)}{t} - f'(\lambda)\Big| \leq \1_{\{|t|\leq \delta \}} \|f''\|_{\delta} |t|  + \1_{\{|t|> \delta \}} \big( \delta^{-1} f(\lambda+t) + \delta^{-1} \|f\|_{2\delta}  +  \|f'\|_{2\delta} \big)
	\end{equation*}
	yields with
	\begin{equation*}
		\int_{-\infty}^\infty \frac{2\eta t^2}{(t^2 + \eta^2)^2} f(\lambda+t) \1_{\{|t|> \delta \}} \, \mathrm dt \leq \int_{-\infty}^\infty 2\eta \delta^{-2} f(\lambda+t) \, \mathrm dt = 2\eta \delta^{-2}
	\end{equation*}
	the estimate
	\begin{align*}
		&|\operatorname{Im}(g'(\lambda + \mathrm i \eta)) - \pi f'(\lambda)| \\
		&\leq  \|f''\|_{\delta} \int_{-\infty}^\infty \frac{2\eta |t|^3}{(t^2 + \eta^2)^2}  \1_{\{|t|\leq \delta \}} \, \mathrm dt + (\delta^{-1} \|f\|_{2\delta}  +  \|f'\|_{2\delta}) \int_{-\infty}^\infty \frac{2\eta t^2}{(t^2 + \eta^2)^2} \1_{\{|t|> \delta \}}\, \mathrm dt + 2\delta^{-3} \eta \\
		&\leq  \|f''\|_{\delta} \int_{-\infty}^\infty \frac{\eta|t|^3}{(t^2 + 1)^2} \1_{\{|t| \leq \delta/\eta \}} \, \mathrm dt + (\delta^{-1} \|f\|_{2\delta}  +  \|f'\|_{2\delta}) \int_{-\infty}^\infty \frac{2 t^2}{(t^2 + 1)^2} \1_{\{|t|> \delta/\eta \}}\, \mathrm dt + 2\delta^{-3}  \eta
	\end{align*}
	which implies the uniform convergence of $\lambda \mapsto \operatorname{Im}g'(\lambda + \mathrm i \eta)$ on $K_\delta$ as $\eta \downarrow 0$ as
	\begin{equation*}
		\int_{-\infty}^\infty \frac{\eta|t|^3}{(t^2 + 1)^2} \1_{\{|t| \leq \delta/\eta \}} \, \mathrm dt  \leq 2 \eta \big(1 + \log(\delta/\eta) \big).
	\end{equation*}
	\paragraph{Convergence of $\operatorname{Re}(g')$} Let $\lambda \in K_{2\delta}$. Using that $\int_{-\infty}^\infty \frac{t^2 -\eta^2}{(t^2 + \eta^2)^2} \, \mathrm dx = 0$,
	we obtain
	\begin{align*}
		\operatorname{Re}(s'(\lambda + \mathrm i \eta)) &= \int_{-\infty}^\infty \frac{t^2 - \eta^2}{[t^2 + \eta^2]^2} f(\lambda + t) \, \mathrm dt = \int_0^\infty  \frac{t^4 -\eta^2t^2}{(t^2 + \eta^2)^2} \frac{f(\lambda + t) + f(\lambda - t) - 2f(\lambda)}{t^2} \, \mathrm dt.
	\end{align*}
	We estimate
	\begin{align*}
		\Big|\frac{f(\lambda + t) + f(\lambda - t) - 2f(\lambda)}{t^2}\Big| \leq \1_{\{|t|\leq \delta\}} \|f''\|_{\delta} + \1_{\{|t|>\delta\}} \big( \delta^{-2} f(\lambda+t) + \delta^{-2} f(\lambda-t) + 2f(\lambda) t^{-2} \big)
	\end{align*}
	as well as
	\begin{equation*}
		\sup_{|t|>\delta} \frac{3\eta^2t^2 + \eta^4}{(t^2 + \eta^2)^2} = \sup_{|t|>\delta/\eta} \frac{3t^2 + 1}{(t^2 + 1)^2} \leq 4 \eta^{2} \delta^{-2}
	\end{equation*}
	for $\eta< \delta$ and obtain
	\begin{align*}
		&\Big|\operatorname{Re}(s'(\lambda + \mathrm i \eta)) - \int_0^\infty \frac{f(\lambda + t) + f(\lambda - t) - 2f(\lambda)}{t^2} \, \mathrm dt \Big| \\
		&=  \Big| \int_0^\infty  \frac{3\eta^2t^2 + \eta^4}{(t^2 + \eta^2)^2} \frac{f(\lambda + t) + f(\lambda - t) - 2f(\lambda)}{t^2} \, \mathrm dt \Big| \\
		&\leq \|f''\|_{\delta} \int_0^\infty  \frac{3\eta^2t^2 + \eta^4}{(t^2 + \eta^2)^2} \, \mathrm dt + 4\eta^2 \delta^{-4} \int_0^\infty \big(f(\lambda+t) + f(\lambda-t)  + 2\delta^2 f(\lambda)t^{-2} \big)  \1_{\{|t|>\delta\}} \, \mathrm dt \\
		&\leq \eta \|f''\|_{\delta} \int_0^\infty  \frac{3t^2 + 1}{(t^2 + 1)^2} \, \mathrm dt + 8 \eta^2 \delta^{-4} + 8 \eta^2 \delta^{-3} \|f\|_{2\delta}
	\end{align*}
	which yields the uniform convergence of $\lambda \mapsto \operatorname{Re}g'(\lambda + \mathrm i \eta)$ on $K_{2\delta}$ as $\eta \downarrow 0$.
\end{proof}
\begin{remark}
	\label{Remark: Derivative of real part of ST}
	We point out that the formula for $\operatorname{Re}(g')$ given above can be interpreted in terms of the Poisson-semigroup.
	The function $\operatorname{Im}g$ solves the Laplace equation
	\begin{equation*}
		\begin{cases}
			(\partial_\lambda^2 + \partial_\eta^2)\operatorname{Im}g(\lambda + \mathrm i \eta) = 0  &\text{ for }\lambda \in \R, \eta>0\\
			\operatorname{Im}g(\lambda + \mathrm i0)  = \pi f(\lambda) &\text{ for }\lambda \in \mathbb R
		\end{cases}
	\end{equation*}
	i.e.\ we (at least formally) have $\pi^{-1}\operatorname{Im} g(\lambda + \mathrm i \eta) =  (e^{\eta T}f)(\lambda)$ where $T = - \sqrt{-\partial_\lambda^2}$ is the generator of the Poisson-semigroup.
	By the Cauchy-Riemann differential equations, we have at least formally
	\begin{equation*}
		\partial_\lambda \operatorname{Re}(g(\lambda)) =  \partial_\eta \operatorname{Im}g(\lambda + \mathrm i \eta)|_{\eta = 0} = \pi \cdot (Tf)(\lambda).
	\end{equation*}
	The integral formula for $\partial_\lambda \operatorname{Re}(g(\lambda))$ given in Proposition \ref{Proposition: Limit of ST} coincides with the known integral formula for $T$.
\end{remark}

\section{Expansion around $p=1$}
\label{Subsection: Expansion around p=1}
For the proof of Theorem \ref{Theorem: small p expansion in dual of Ck}, we treated $p$ as a continuous parameter and introduced the measure $\hat \nu_p$ corresponding to a sprinkling of $\mathbb G$ with the sprinkling distributions \eqref{def_Pp}.
Similar to Theorem \ref{Theorem: small p expansion in dual of Ck}, one can derive an expansion of $\hat \nu_p$ around $p\in [0, 1)$: Namely, we can expand
\begin{equation}
	\label{Equation: Expansion around general p}
	\hat \nu_{p+h} = \hat \nu_p + h \phi_p + h^2 R_{p, h}, \quad p, h \in [0, 1] \text{ s.t. } p+h\in [0, 1]
\end{equation}
where $\phi_p = \partial_p \hat \nu_p$ is a distribution of order at most one whose restriction to $C_c^2(\R)$ is given by
\begin{equation*}
	f \mapsto \phi_p(f) \coloneqq \frac{1}{\pi}\int_{\C \setminus\R} h_p(z) \overline{\partial} f_{\psi}(z) \,  \mathrm dz
\end{equation*}
with
\begin{equation*}
	h_p \coloneqq \mathbb E_p\bigg[\frac{\partial_z (\theta_\mathbb og_{\operatorname{s}} r)}{1 - \theta_\mathbb o g_{\operatorname{s}} r} + \sum_{\lambda \in \mathrm S_*}\frac{1}{\lambda-z}\Big| \mathrm G\neq \emptyset \bigg] \quad \text{ where } \quad g_{\operatorname{s}}(z) \coloneqq \big\langle \delta_\mathbb o, (A-z)^{-1} \delta_\mathbb o \big\rangle
\end{equation*}
and where $\mathrm T$ is, as usual, independent of $A$, i.e\ $g_{\operatorname{s}}$ and $r$ are independent.
For every $p \in [0, 1)$ and $a<b$, we have the bound 
\begin{equation*}
	\|R_{p, h}\|_{C_c^{8}(a, b)} \leq C |b-a|(1-p)^{-1}
\end{equation*}
for some $C>0$ only depending on $\mathbb E[m^2|\mathrm G\neq \emptyset]$ and the constant $c(a, b)$ from Assumption \eqref{Assumption: Assumption for small p expansion}. The proof is almost identical to the proof of Theorem \ref{Theorem: small p expansion in dual of Ck}, one solely needs to replace Lemmas \ref{Proposition: C1 distance estimate} and \ref{Proposition: Estimate on difference of ST} with the following Lipschitz estimates.
\begin{lemma}
	\label{Lemma: General Lipschitz estimates}
	We have for all $p, h\in [0, 1]$ with $p+h\in [0, 1]$, for all $z\in \C^+$ and all $a<b$,
	\begin{equation*}
		\| \hat \nu_{p+h} - \hat \nu_p \|_{(C^1_c(a, b))'} \leq 4 h |b-a| \mathbb E[\tilde m], \qquad		|\hat s_{p+h}(z) - \hat s_{p}(z)| \leq 5 h \mathbb E[\tilde m] \operatorname{Im}(z)^{-1}.
	\end{equation*}
\end{lemma}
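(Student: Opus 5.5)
The plan is to reprove Lemma \ref{Proposition: C1 distance estimate} and Lemma \ref{Proposition: Estimate on difference of ST} with $p$ replaced by a general base point. Everything stays in finite volume, and the claim passes to the limit through Lemma \ref{Lemma: convergence of nu hat} exactly as before. The key device is a monotone coupling. In the construction with $\chi$, I replace the i.i.d.\ $\operatorname{Ber}(p)$ variables by uniforms $U_x\in[0,1]$ and set $\chi^{(q)}_x\coloneqq\1_{\{U_x\le q\}}$. The sprinkled graph $\mathcal G^{(p+h)}$ is then obtained from $\mathcal G^{(p)}$ by additionally attaching $\tilde{\mathrm G}^x$ at exactly the vertices in
\begin{equation*}
\Delta\coloneqq\{x:\, p<U_x\le p+h\}.
\end{equation*}
Each vertex lies in $\Delta$ independently with probability $h$, independently of $(\tilde{\mathrm G}^x,\tilde\varphi^x)$.

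\textbf{Stieltjes transform bound.} I repeat the argument of Lemma \ref{Proposition: Estimate on difference of ST}, taking as reference graph $\mathcal G^{(p)}$ together with the graphs $\tilde{\mathrm G}^x$, $x\in\Delta$, attached as disconnected components. Call its adjacency matrix $A'$. Then $|\mathbb V|^{-1}\mathbb E\operatorname{tr}(A'-z)^{-1}$ equals the full Stieltjes transform at $p$ plus $h\,\mathbb E[\tilde q]$. The difference in tuning fork contributions is $h\,\mathbb E\big[\tilde q-\sum_{\lambda\in\tilde{\mathrm S}_*}(\lambda-z)^{-1}\big]$ by Lemma \ref{Lemma: ST of tuning fork contribution}, since that lemma is linear in the Bernoulli parameter. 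Subtracting, the $\tilde q$ terms cancel. What remains is $|\mathbb V|^{-1}\mathbb E[\operatorname{tr}(A-z)^{-1}-\operatorname{tr}(A'-z)^{-1}]$, bounded by $2\operatorname{Rank}(A-A')\operatorname{Im}(z)^{-1}$ via \cite[Lemma F.5]{BK19}. Here $\operatorname{Rank}(A-A')\le 2\sum_{x\in\Delta}\tilde m_x^*$, so this term contributes at most $4h\,\mathbb E[\tilde m]\operatorname{Im}(z)^{-1}$. The admissible-eigenvalue sum contributes at most $h\,\mathbb E[|\tilde{\mathrm S}_*|]\operatorname{Im}(z)^{-1}\le h\,\mathbb E[\tilde m]\operatorname{Im}(z)^{-1}$. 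Together these give the constant $5$.

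\textbf{Distribution-function bound.} I want $\|\hat F_{p+h}-\hat F_p\|_\infty\le 4h\,\mathbb E[\tilde m]$ in finite volume, and the $(C^1_c)'$ bound then follows by integration by parts as in Lemma \ref{Proposition: C1 distance estimate}. Proposition \ref{Proposition: Concentration of mass in finite volume} only compares with the bare core $B_N$, so I will argue directly with finite-rank interlacing. Passing from $A^{(p)}\oplus\bigoplus_{x\in\Delta}\tilde{\mathrm T}^x$ to $A^{(p+h)}$ is a perturbation of rank at most $2\sum_{x\in\Delta}\tilde m_x^*$. By interlacing, the unnormalized counting functions $\#\{\text{eigenvalues}\le\lambda\}$ differ by at most this rank. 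Subtracting the contributions of the added blocks $\tilde{\mathrm T}^x$, which number $\tilde m_x$ eigenvalues each, together with the tuning-fork counts, which by Proposition \ref{Proposition: Anderson representation in finite volume} are at most $\tilde m_x$ per vertex, leaves a deviation of order $\sum_{x\in\Delta}\tilde m_x$ times a small constant. After normalizing by $|\mathbb V|$ and taking expectations this gives $4h\,\mathbb E[\tilde m]$.

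\textbf{Main obstacle.} The delicate step is the bookkeeping of constants in this counting bound. The unnormalized tuning-fork counts and the attached spectra must be added and subtracted so that the total stays at most $4\sum_{x\in\Delta}\tilde m_x$. I would track the three contributions separately: the interlacing rank of $2m^*\le 2m$, the $\tilde m_x$ eigenvalues of the added blocks, and the change of at most $\tilde m_x$ in the tuning-fork counts. The crude sum of these three is exactly $4\tilde m_x$, which is the claimed constant.
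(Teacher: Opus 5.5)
Your proposal is correct and follows the paper's proof essentially step for step. The paper uses the same monotone coupling $\chi^p_x=\1_{\{U_x\le p\}}$ and the same intermediate graph $\mathcal G_{p,h}$, obtained from $\mathcal G_{p+h}$ by deleting the edges to the graphs newly attached at $\{p<U_x\le p+h\}$ (this is exactly your $A'$). It splits $\|\hat F_{p+h}-\hat F_p\|_\infty$ into the same three pieces of sizes $h$, $2h$ and $h$ (added block spectra, rank inequality, change in tuning-fork counts), and it gets the Stieltjes transform bound as in Lemma \ref{Proposition: Estimate on difference of ST}.
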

We shall prove Lemma \ref{Lemma: General Lipschitz estimates} at the end of this subsection. However, for $p\neq 0$ the Expansion \eqref{Equation: Expansion around general p} seems to be of limited utility as one typically cannot explicitly compute $\phi_p$ in this case. This changes, however, under the Assumption \eqref{Assumption: Detetministic attached graphs} if we expand around $p=1$. 
In the following, we show how one can then calculate the Stieltjes transforms of $\hat \nu_1$ and $\phi_1$ and derive a bound on the remainder. Notice that for $p=1$ the function $r$ as well as the set $\mathrm S_*$ are deterministic.
\begin{prop}
	\label{Propostion: s for p=1 and detAG}
	Assume that \eqref{Assumption: Detetministic attached graphs} holds with $p=1$. Then the Stieltjes transform $\hat s_1$ of $\hat \nu_1$ is given by
	\begin{equation*}
		\hat s_1(z) = \hat s_0(z+ r(z))(1 + \partial_z r(z)) + \sum_{\lambda \in \mathrm S_*}\frac{1}{\lambda-z}.
	\end{equation*}
	Moreover, we have $g_{\operatorname{s}}(z) = g(z + r(z))$.
\end{prop}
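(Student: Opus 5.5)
With $p=1$ every vertex of $\mathbb G$ carries a copy of $(\mathrm G_*,\varphi_*)$, so $r_x=r$ is the same for all $x$. The plan is to compute the resolvent of $A$ at a core vertex by a Schur complement, use this to get $g_{\operatorname{s}}$, and then assemble $\hat s_1$ from the diagonal resolvent entries, subtracting the tuning fork contribution via Lemma \ref{Lemma: ST of tuning fork contribution}. Since $\nu$ is defined through a Benjamini--Schramm limit, I would do everything in finite volume for $\mathbb G_N$ and pass to the limit as in the proof of Theorem \ref{Theorem: small p expansion ST}; here $\hat s_0$ denotes the Stieltjes transform of $\nu_0$, i.e.\ $s_0=\mathbb E[g]$.

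\textbf{Core block.} Take the block form \eqref{Equation: blockform adjacency matrix} and fix $z\in\C^+$. The attached block $T$ is block diagonal with blocks $\mathrm T$, so
\[
E(T-z)^{-1}E^*=r(z)\,\mathrm{Id}_{\mathbb V}.
\]
By the Schur complement formula, the $\mathbb V\mathbb V$-block of $(A-z)^{-1}$ is then
\[
\big(B-z-r(z)\big)^{-1}=\big(B-w\big)^{-1},\qquad w\coloneqq z+r(z).
\]
Since $\operatorname{Im} r(z)\ge 0$, we have $w\in\C^+$, so this inverse exists. Taking the diagonal entry at $\mathbb o$ gives $g_{\operatorname{s}}(z)=g(z+r(z))$ directly.

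\textbf{Attached blocks and $\hat s_1$.} The attached-vertex blocks follow from \eqref{eq: Schur complement I}. Apply it at each $x$, with $G_{xx}$ replaced by the full core resolvent entry before attachment at $x$; alternatively, read it off from the Schur complement
\[
(T-z)^{-1}+(T-z)^{-1}E^*(B-w)^{-1}E(T-z)^{-1}.
\]
Its trace over the block at $x$ equals
\[
\operatorname{tr}(\mathrm T-z)^{-1}+\big[(B-w)^{-1}\big]_{xx}\,\big\langle\varphi,(\mathrm T-z)^{-2}\varphi\big\rangle
=q(z)+\big[(B-w)^{-1}\big]_{xx}\,\partial_z r(z).
\]
Summing over $x$, dividing by $|\mathbb V|$ and taking expectations yields
\[
\frac{1}{|\mathbb V|}\mathbb E\operatorname{tr}(A-z)^{-1}=s_0(w)\,(1+\partial_z r(z))+q(z).
\]
By Lemma \ref{Lemma: ST of tuning fork contribution} with $p=1$, the tuning fork Stieltjes transform is $q(z)-\sum_{\lambda\in\mathrm S_*}(\lambda-z)^{-1}$. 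Subtracting it gives the claimed formula, after recalling that $\hat\nu_N$ in \eqref{Equation: Definition hatnu_N} is normalized by $|\mathbb V_N|$.

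\textbf{Limit $N\to\infty$.} We have $g_N(w)\to g(w)$ for fixed $w\in\C^+$ by strong resolvent convergence. Moreover, $\hat s_{1,N}\to\hat s_1$ by Lemma \ref{Lemma: convergence of nu hat}. Both sides therefore converge pointwise, and the identities pass to the limit.

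\textbf{Main obstacle.} The only delicate point is bookkeeping: the trace of the attached blocks must produce exactly $\partial_z r$. The key fact is $\sum_t\big[(\mathrm R\varphi)_t\big]^2=\partial_z r$, already used in the proof of Proposition \ref{Proposition: Differentiation of ST in finite volume}. The other point to check is $\operatorname{Im}(z+r(z))>0$, which ensures $s_0$ is evaluated inside $\C^+$.
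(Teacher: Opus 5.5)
Your proposal is correct and follows essentially the same route as the paper: the Schur complement with $E(T-z)^{-1}E^*=r(z)\,\mathrm{Id}$, reading off $g_{\operatorname{s}}(z)=g(z+r(z))$ from the core block, computing the attached-block traces via $\sum_t[(\mathrm R\varphi)_t]^2=\partial_z r$, and subtracting the tuning fork transform from Lemma \ref{Lemma: ST of tuning fork contribution}. Your extra checks that $\operatorname{Im}(z+r(z))>0$ and that the identities survive the limit $N\to\infty$ only spell out what the paper leaves to the remark that ``it is sufficient to show the statement in finite volume''.
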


\begin{proof}
	Its is sufficient to show the statement in finite volume.
	Writing $A$ in the block form \eqref{Equation: blockform adjacency matrix} yields with the Schur complement formula that $G = (A-z)^{-1}$ is of the form
	\begin{equation*}
		G = 
		\begin{pmatrix}
			(B - z- E RE^*)^{-1}  &  * \\
			*					  &R + R E^*(B- z- ER E^*)^{-1} E R \\	
		\end{pmatrix}
	\end{equation*}
	where $R = (T-z)^{-1}$. As in the proof of Lemma \ref{Lemma: attach n-blocks to one vertex}, one calculates
	\begin{align*}
		&\big[E R E^*]_{xy}  = \delta_{xy} r_x(z)\\
		&\big[R + R E^*(B-z - ER E^*)^{-1} E R\big]_{(xt)(xt)} = \mathrm R^{x}_{tt} + \big[(\mathrm R^{x} \varphi^x)_t\big]^2\Big[\big(B-z -  r_x \big)^{-1}\Big]_{xx}.
	\end{align*}
	where neither $\mathrm R^x$ nor $r_x$ depend on $x$ as $p=1$. We obtain $g_{\operatorname{s}}(z) = G_{\mathbb o\mathbb o}(z)  = g(z + r(z))$
	for all $z\in \mathbb C^+$, as well as
	\begin{align*}
		\hat s_1(z) &= \mathbb E_1\Big[\frac{1}{|\mathbb V|}\sum_{x\in \mathbb V} G_{xx} + \sum_{t \in \mathrm V^x} G_{(xt)(xt)} \Big] - \hat s_{1, \operatorname{TF}}(z) \\
		&= s_0(z + r(z))  +   \mathbb E_1\Big[\frac{1}{|\mathbb V|}\sum_{x\in \mathbb V} \sum_{t \in \mathrm V^x} \mathrm R^{x}_{tt} + \frac{1}{|\mathbb V|}\sum_{x\in \mathbb V}\big[(B-z -  r(z))^{-1}\big]_{xx} \sum_{t \in \mathrm V^x}  \big[(\mathrm R^{x} \varphi^x)_t\big]^2 \Big] - \hat s_{1, \operatorname{TF}}(z) \\
		&=  s_0(z + r(z))  +    q(z) +  (\partial_z r(z)) s_0(z + r(z)) - \hat s_{1, \operatorname{TF}}(z) \\
		&=   \hat s_0(z+ r(z))(1 + \partial_z  r(z)) + \sum_{\lambda \in \mathrm S_*}\frac{1}{\lambda-z},
	\end{align*}
	where we used Lemma \ref{Lemma: ST of tuning fork contribution} for the last equality.
\end{proof}

\begin{cor}
	\label{Corollary: Total repulsion for p=1 and detAG}
	Assume that \eqref{Assumption: Detetministic attached graphs} holds and assume that $\lambda \in \mathrm D$ is admissible. Assume that  $\Sigma_0 \coloneqq \operatorname{supp}(\nu_0)$ is bounded. Then there exist some $\varepsilon>0$ such that $\hat\nu_p([\lambda-\varepsilon, \lambda+\varepsilon]) \leq 8M(1-p)$,
	where $M$ is such that $m=M$ conditionally on $\mathrm G\neq \emptyset$.
\end{cor}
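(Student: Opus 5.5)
The plan is to compare $\hat\nu_p$ with $\hat\nu_1$. First I show that $\hat\nu_1$ puts no mass near an admissible $\lambda$. Then I bound the change in mass of a fixed interval when going from $p$ to $p=1$ by a rank/interlacing argument in finite volume. Finally I pass to the limit $N\to\infty$ using Lemma \ref{Lemma: convergence of nu hat}.

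\emph{Step 1: $\hat\nu_1$ vanishes near $\lambda$.} At $p=1$ the function $r$ is deterministic. Since $\lambda$ is admissible, $r$ has a pole at $\lambda$, so $|\lambda'+r(\lambda')|\to\infty$ as $\lambda'\to\lambda$. As $\Sigma_0$ is bounded, I choose $\varepsilon>0$ such that $\lambda'+r(\lambda')\notin\Sigma_0$ for all $0<|\lambda'-\lambda|\le\varepsilon$, and such that $[\lambda-\varepsilon,\lambda+\varepsilon]$ contains no other point of $\mathrm D$.

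Proposition \ref{Propostion: s for p=1 and detAG} gives $\hat s_1(z)=s_0(z+r(z))(1+\partial_z r(z))+\sum_{\lambda'\in\mathrm S_*}(\lambda'-z)^{-1}$. I use it as follows.
\begin{itemize}
\item Away from $\lambda$: the first term extends analytically across $(\lambda-\varepsilon,\lambda+\varepsilon)\setminus\{\lambda\}$, because its argument stays outside $\Sigma_0$. Hence $\hat\nu_1$ has no mass there.
\item At $\lambda$ itself: near $\lambda$ we have $s_0(w)=-w^{-1}+O(w^{-2})$ as $w\to\infty$, and $(1+r')/(z+r)\sim r'/r\sim(z-\lambda)^{-1}$. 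So the first term contributes an atom $-1$ at $\lambda$, which cancels the $+1$ from the sum.
\end{itemize}
Thus $\hat\nu_1([\lambda-\varepsilon,\lambda+\varepsilon])=0$. As a cross-check, the same conclusion follows directly from Theorem \ref{Theorem: Anderson representation of atoms}, since $\mathbb V_\lambda=\emptyset$ when $p=1$. I would carry out this step in finite volume, where it is an exact statement about eigenvalue counts via Proposition \ref{Proposition: Anderson representation in finite volume}.

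\emph{Step 2: finite-volume comparison.} I use the coupling of Subsection \ref{Subsection: Small p expansion in finite volume}, with $\chi\sim\operatorname{Ber}(p)^{\otimes\mathbb V}$. Passing from the $p$-configuration to the all-ones configuration means attaching a copy of $\mathrm G_*$ at each of the roughly $(1-p)|\mathbb V_N|$ vertices with $\chi_x=0$. Each attachment consists of two operations.
\begin{itemize}
\item Adding $M$ isolated vertices. By Cauchy interlacing, each added vertex changes the number of eigenvalues in a fixed closed interval $J$ by at most $1$.
\item Adding at most $M$ edges. Each edge is a rank-$2$ perturbation and so changes the count in $J$ by at most $2$.
\end{itemize}
The tuning-fork multiplicity at $x$ changes by at most $M$ (Proposition \ref{Proposition: Anderson representation in finite volume}). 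Summing these contributions and normalising by $|\mathbb V_N|$ as in \eqref{Equation: Definition hatnu_N}, I expect
\[
|\hat\nu_{N,p}(J)-\hat\nu_{N,1}(J)|\le 8M\,\mathbb E\big[|\{x:\chi_x=0\}|\big]/|\mathbb V_N|=8M(1-p).
\]
The constant needs some care to come out as $8M$, but any bound of the form $CM$ per vertex suffices for the structure of the argument.

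\emph{Step 3: infinite-volume limit.} I take $J=[\lambda-\varepsilon,\lambda+\varepsilon]$ and let $N\to\infty$. By Lemma \ref{Lemma: convergence of nu hat}, $\hat\nu_N\to\hat\nu$ in Kolmogorov--Smirnov distance, which controls closed intervals as well. This gives $\hat\nu_p(J)\le\hat\nu_1(J)+8M(1-p)=8M(1-p)$.

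I expect the main obstacle to be Step 1, specifically making the atom cancellation at $\lambda$ rigorous. The cleanest route is probably in finite volume: at $p=1$, Proposition \ref{Proposition: Anderson representation in finite volume} gives $\gamma(A_N,\lambda)=\gamma_{\operatorname{TF}}(A_N,\lambda)$ because $\mathbb V_{\lambda,N}=\emptyset$. For $\lambda'\ne\lambda$ in $J$, the eigenvalues of $H_{\lambda',N}$ lie within $\|B\|$ of $-r(\lambda')$, hence are far from $\lambda'$, so $\lambda'$ is not an eigenvalue of $A_N$ beyond tuning forks. This also avoids having to take limits of Stieltjes transforms.
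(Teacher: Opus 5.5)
Your proposal is correct and is essentially the paper's proof. The paper also chooses $\varepsilon$ so that $t+r(t)\notin\Sigma_0$ on $[\lambda-\varepsilon,\lambda+\varepsilon]\setminus\{\lambda\}$, uses Proposition~\ref{Propostion: s for p=1 and detAG} to show $\hat\nu_1$ has no mass there, and gets $\hat\nu_1(\{\lambda\})=0$ from the Anderson-percolation representation (Corollary~\ref{Proposition: Tuning fork contribution for deterministic AG}, whose factor $1-p$ vanishes at $p=1$). It then concludes from $\|\hat F_p-\hat F_1\|_\infty\le 4M(1-p)$, which is proved in Lemma~\ref{Lemma: General Lipschitz estimates} by the same finite-volume rank comparison as your Step 2.

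One caution concerns the finite-volume shortcut in your last paragraph for $\lambda'\neq\lambda$: keep the infinite-volume Stieltjes argument there instead. The spectrum $\sigma(B_N)$ need not lie in $\Sigma_0$, and $\|B_N\|$ need not stay bounded (e.g.\ the Perron--Frobenius eigenvalue of a random regular graph lies outside the limiting support). So $\hat\nu_{N,1}([\lambda-\varepsilon,\lambda+\varepsilon])$ need not vanish for finite $N$; it vanishes only in the limit.
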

\begin{proof}
	Since $\lambda$ is admissible, we have $\lim_{\eta \downarrow 0} |\tilde r(\lambda + \mathrm i \eta)| = \infty$. Since $\Sigma_0$ is bounded, there hence exists some $\varepsilon>0$ such that $I \coloneqq [\lambda-\varepsilon, \lambda+\varepsilon]$ does not contain any other elements from $\mathrm D$ and such that $t + \tilde r(t) \notin \Sigma_0$ for all $t\in I \setminus \{\lambda\}$. With Proposition \ref{Propostion: s for p=1 and detAG} we hence obtain $\lim_{\eta \downarrow 0}\operatorname{Im}\hat s_1(t + \mathrm i \eta) = 0$ for all $t\in I \setminus \{\lambda\}$ i.e.\ $\hat \nu_1(I \setminus \{\lambda\}) = 0$. On the other hand, we have $\hat \nu_1(\{\lambda\}) = 0$ by Corollary \ref{Proposition: Tuning fork contribution for deterministic AG} and hence $\hat \nu_1(I) = 0$. The statement follows from the estimate $\| \hat F_{p} - \hat F_1 \|_\infty \leq 4M (1-p)$,
	which we shall show in the proof of Lemma \ref{Lemma: General Lipschitz estimates}.
\end{proof}

\begin{figure}
	\centering
	\includegraphics[width=0.7\linewidth]{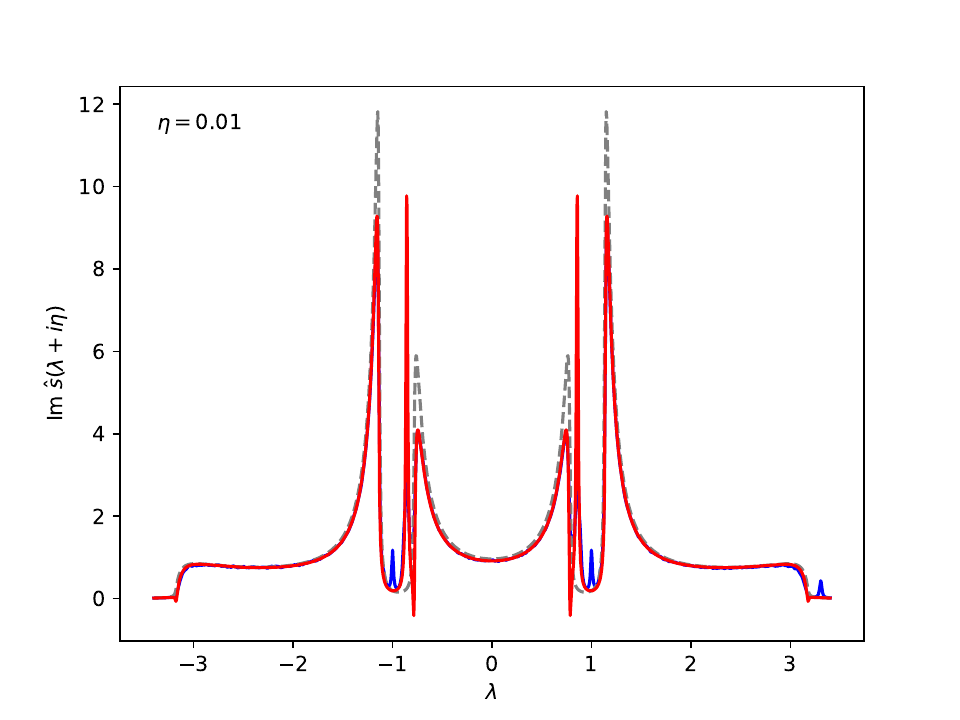}
	\caption{
		Imaginary part of the Stieltjes transform of $\hat \nu$ for $\mathbb G = \mathbb B_3$ and $\mathrm P(\mathrm G = \text{one line of length 2})=0.9$, $\mathrm P(\mathrm G = \emptyset )=0.1$. Simulation in blue, approximation as given by Proposition \ref{Propositon: Expansion around p=1} in red, graph of $\operatorname{Im} \hat s_1$ as given by Proposition \ref{Propostion: s for p=1 and detAG} in grey. By Corollary \ref{Corollary: Total repulsion for p=1 and detAG}, the mass of small intervals around the eigenvalues $\pm 1$ of the line of length two is of order $\mathcal O(1-p)$.
	}
	\label{fig:expansionaround1}
\end{figure}

\begin{prop}
	\label{Propositon: Expansion around p=1}
	Assume that \eqref{Assumption: Detetministic attached graphs} holds. We then have
	\begin{equation}
		\label{Equation: formula for h1}
		h_1(z) = \mathbb E_0\Big[\frac{\partial_z \big(g(z + r(z))r(z) \big) }{1 + g(z + r(z)) r(z)} \Big] + \sum_{\lambda \in \tilde{\mathrm S}_*}\frac{1}{\lambda-z}.
	\end{equation}
	Moreover, there exists some constant $C>0$ such that the family of remainders $(R_{p, h})_h$ in the Expansion \eqref{Equation: Expansion around general p} satisfies for $p\in (0, 1]$
	\begin{equation}
		\label{Equation: Estimate restterm for p=1}
		\|R_{p, h}\|_{(C^{10}_c(a, b))'} \leq C p^{-1} |b-a|.
	\end{equation}
\end{prop}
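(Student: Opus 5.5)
We plan to compute $h_1=\partial_p\hat s_p|_{p=1}$ from the general first-order formula $h_p$ stated for the expansion \eqref{Equation: Expansion around general p}, namely
\[
h_p=\mathbb E_p\Big[\frac{\partial_z(\theta_{\mathbb o}g_{\operatorname{s}}\,r)}{1-\theta_{\mathbb o}g_{\operatorname{s}}\,r}+\sum_{\lambda\in\mathrm S_*}\frac{1}{\lambda-z}\Big|\,\mathrm G\neq\emptyset\Bigr].
\]
This formula is obtained exactly as in Proposition \ref{Proposition: Differentiation of ST in finite volume}, since Lemmas \ref{Lemma: Derivative Bernoulli expectation} and \ref{Lemma: attach n-blocks to one vertex} hold for every $p$. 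At $p=1$, under \eqref{Assumption: Detetministic attached graphs}, every vertex other than $\mathbb o$ carries the same attached graph $(\mathrm G_*,\varphi_*)$. Hence $\theta_{\mathbb o}\mathcal G$ is the fully sprinkled graph with the attached graph at $\mathbb o$ removed, so the main task is to identify $\theta_{\mathbb o}g_{\operatorname{s}}=\langle\delta_{\mathbb o},(\theta_{\mathbb o}A-z)^{-1}\delta_{\mathbb o}\rangle$.

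For this step, we will redo the Schur complement of Proposition \ref{Propostion: s for p=1 and detAG} with $T$ restricted to the vertices outside $\mathbb o$. This gives $(\theta_{\mathbb o}G)|_{\mathbb V}=(B-z-r(z)(1-\delta_{\mathbb o}\delta_{\mathbb o}^T))^{-1}=(B-(z+r)+r\,\delta_{\mathbb o}\delta_{\mathbb o}^T)^{-1}$. By Sherman--Morrison, with $\tilde g\coloneqq g(z+r(z))$ (here $z+r(z)\in\C^+$ since $\operatorname{Im} r\ge0$), we get
\[
\theta_{\mathbb o}g_{\operatorname{s}}=\frac{\tilde g}{1+r\tilde g}.
\]
Substituting into $\partial_z(u r)/(1-ur)$ with $u=\tilde g/(1+r\tilde g)$ gives $1-ur=1/(1+r\tilde g)$ and $ur=r\tilde g/(1+r\tilde g)$. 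A short quotient-rule computation then yields
\[
\frac{\partial_z(ur)}{1-ur}=\frac{\partial_z(r\tilde g)}{1+r\tilde g}.
\]
Taking the expectation (here over $\mathbb G$ only, $r$ being deterministic) gives \eqref{Equation: formula for h1}, and the tuning-fork sum is unchanged.

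For the remainder bound \eqref{Equation: Estimate restterm for p=1}, we will bound $\partial_p^2\hat s_p$ near $p=1$ by repeating Proposition \ref{Proposition: Estimate on second derivative of ST}. The factor $(1-p)^{-1}$ there came from conditioning on $\chi_x=0$. Near $p=1$ we instead condition on $\chi_x=1$, paying $p^{-1}$, after rewriting $\theta_{xy}$-quantities via the inverse relations of Lemma \ref{Lemma: attach n-blocks to one vertex} expressing $\theta_x G$ in terms of $\theta^x G$. This inversion costs extra factors of $\eta^{-1}$, via bounds like Lemma \ref{Lemma: Estimates on diagonal factors} applied to $1/(1+\cdots)$ denominators, so we expect roughly $|\partial_p^2\hat s_p|\le Cp^{-1}\eta^{-9}$. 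Helffer--Sjöstrand with a degree-$9$ almost analytic extension then gives the $(C^{10}_c)'$ bound. Lemma \ref{Lemma: General Lipschitz estimates} supplies the needed first-order control.

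The main obstacle is this second-derivative estimate. The inverse Schur formulas contain denominators $1+\theta^xG_{xx}\,(\cdots)$ that are not obviously bounded below. We would first try to control them through the Nevanlinna property, using that $-1/\tilde r$-type quantities have definite-sign imaginary parts, together with the lower bound \eqref{Eqaution: Lower bound on r}.
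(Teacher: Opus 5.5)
Your computation of \eqref{Equation: formula for h1} is correct. The paper gets $\theta_{\mathbb o}g_{\operatorname{s}}=g_{\operatorname{s}}/(1+g_{\operatorname{s}}r)$ by inverting Lemma \ref{Lemma: attach n-blocks to one vertex} and then substitutes $g_{\operatorname{s}}(z)=g(z+r(z))$ from Proposition \ref{Propostion: s for p=1 and detAG}. Your Schur complement plus Sherman--Morrison step gives $\tilde g/(1+r\tilde g)$ directly, and the rest of the algebra is the same.

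The gap is in the remainder estimate, exactly at the step you leave open, and the fix you suggest would not work. If you write $\theta_xG$ purely in terms of $\theta^xG$, you get denominators
\[
1+\theta^xG_{xx}\tilde r_x=\tilde r_x\big(1/\tilde r_x+\theta^xG_{xx}\big),\qquad \operatorname{Im}(1/\tilde r_x)<0<\operatorname{Im}\theta^xG_{xx}.
\]
The two imaginary parts have \emph{opposite} signs. So no Nevanlinna sign argument gives a lower bound. Lemma \ref{Lemma: Estimates on diagonal factors} does not apply either, since it needs $1-w\tilde r$ with $w\in\C^+$. The lower bound \eqref{Eqaution: Lower bound on r} on $\operatorname{Im} r$ does not prevent the cancellation.

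The missing observation is that no lower bound is needed. Lemma \ref{Lemma: attach n-blocks to one vertex} with $v=x$ gives the exact identity $(1+\theta^xG_{xx}\tilde r_x)^{-1}=1-\theta_xG_{xx}\tilde r_x$. Applied with $y$'s attached graph removed, this yields
\[
\theta_{xy}G_{xy}=\theta^x_yG_{xy}\,\big(1-\theta_{xy}G_{xx}\tilde r_x\big),\qquad \big|1-\theta_{xy}G_{xx}\tilde r_x\big|\le 1+M\eta^{-2}.
\]
The factor is bounded \emph{above} a priori, because $|\theta_{xy}G_{xx}|\le\eta^{-1}$ and $|\tilde r_x|\le M\eta^{-1}$. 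You then condition on $\chi_x=1$:
\[
\mathbb E_p\big[|\theta^x_yG_{xy}|^2\big]=\mathbb E_p\big[|\theta_yG_{xy}|^2\,\big|\,\chi_x=1\big]\le p^{-1}\,\mathbb E_p\big[|\theta_yG_{xy}|^2\big],
\]
and sum over $x$ using $\sum_x\mathbb E_p|\theta_yG_{xy}|^2\le\eta^{-2}$.

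The rest of Proposition \ref{Proposition: Estimate on second derivative of ST} goes through unchanged. All its other denominators have the form $1-w\tilde r$ with $w\in\C^+$ and are controlled for every $p$. The cost is only a few extra powers of $\eta^{-1}$; the paper records $|\partial_p^2\hat s_p|\le Cp^{-1}\eta^{-9}$. Helffer--Sj\"ostrand then gives \eqref{Equation: Estimate restterm for p=1}, and this is exactly the paper's argument.
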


\begin{proof}
	By Lemma \ref{Lemma: attach n-blocks to one vertex}, we have
	\begin{equation*}
		g_{\operatorname{s}} = \theta_\mathbb og_{\operatorname{s}} + \frac{\theta_\mathbb o g^2_{\operatorname{s}} r}{1- \theta_\mathbb o g_{\operatorname{s}}r} = \frac{\theta_\mathbb o g_{\operatorname{s}}}{1- \theta_\mathbb o g_{\operatorname{s}}r} \quad \iff \quad \theta_\mathbb o g_{\operatorname{s}} = \frac{g_{\operatorname{s}}}{1 + g_{\operatorname{s}}r}.
	\end{equation*}
	which yields after a small calculation
	\begin{equation*}
		\frac{\partial_z(\theta_\mathbb og_{\operatorname{s}}r)}{1-\partial_z(\theta_\mathbb og_{\operatorname{s}}r)} = \frac{\partial_z(g_{\operatorname{s}}r) }{1+\partial_z(g_{\operatorname{s}}r)}.
	\end{equation*}
	The identity in \eqref{Equation: formula for h1} hence follows with Proposition \ref{Propostion: s for p=1 and detAG}. For the estimate in \eqref{Equation: Estimate restterm for p=1}, we notice that we have by Lemma \ref{Lemma: attach n-blocks to one vertex} 
	\begin{equation*}
		\theta^x G_{xy} = \theta_x G_{xy} + \frac{\theta_x G_{xy} \theta_x G_{xx} r}{1- \theta_x G_{xx} r} \quad \iff \quad \theta_x G_{xy} = \theta^x G_{xy} (1 - \theta_x G_{xx}r)
	\end{equation*}
	and hence $\mathbb E[| \theta_{xy} G_{xy}|^2 ] \leq C \eta^{-2} \mathbb E[| \theta_{y}^x G_{xy}|^2 ]$.
	If we hence estimate
	\begin{equation*}
		\mathbb E[| \theta_{y}^x G_{xy}|^2] = \mathbb E[| \theta_{y} G_{xy}|^2 | \chi_x = 1] \leq p^{-1} \mathbb E[| \theta_{y} G_{xy}|^2],
	\end{equation*}
	we obtain as in the proof of Proposition \ref{Proposition: Estimate on second derivative of ST} $\partial_p^2 \hat s_p(z) \leq C p^{-1} \eta^{-9}$.
	Applying the Helffer--Sjöstrand formula as in the proof of Proposition \ref{Propositio: Small p expansion in finite volume} yields \eqref{Equation: Estimate restterm for p=1}.
\end{proof}

For an illustration of the Expansion \eqref{Equation: Expansion around general p} in $p=1$ for \eqref{Assumption: Detetministic attached graphs}, see Figure \ref{fig:expansionaround1}.
It is left to show the Lipschitz estimates given in Lemma \ref{Lemma: General Lipschitz estimates}.
\begin{proof}[Proof of Lemma \ref{Lemma: General Lipschitz estimates}]
	Its is sufficient to show the statement in finite volume. We may assume without loss of generality that $h>0$.
	Let $\chi^p_x \coloneqq \1_{\{U_x \leq p\}}$ where $(U_x)_{x\in \mathbb V}$ are iid $\mathcal U[0, 1]$ distributed. Let $A^p$ denote the adjacency matrix of the graph $\mathcal G_p$ defined as $\mathcal G$ in the beginning of Subsection \ref{Subsection: Small p expansion in finite volume}, but with $(\chi_x)_x$ being replaced by $(\chi^p_x)_x$ and let $A^p$ denote its adjacency matrix. Let $\mathcal G_{p, h}$ be the graph obtained from $\mathcal G_{p+h}$ by deleting all edges contained in
	\begin{equation*}
		\big\{  \{x, (xt) \}:\, x \in \mathbb V,\, \, \chi_x^{p} = 0, \, \chi_x^{p+h} = 1, \,  t\in \tilde{\mathrm V}_x \text{ with }\tilde{\varphi}^x_t = 1\big\},
	\end{equation*}
	let $A^{p, h}$ denote its adjacency matrix and let
	\begin{equation*}
		\mu  \coloneqq \mathbb E\Big[\frac{1}{|\mathbb V|}\sum_{\lambda \in \sigma(A)} \gamma(A^{p, h}, \lambda) \delta_\lambda\Big]
	\end{equation*}
	We then have
	\begin{equation*}
		\|\hat\nu_{p}+\hat\nu_{p, \operatorname{TF}}-\mu \|_{\operatorname{TV}} = \Big \| \mathbb E\Big[\frac{1}{|\mathbb V|} \sum_{x\in \mathbb V} \1_{\{ p < U_x \leq p+h \}}  \sum_{\lambda \in \mathrm S^x} \gamma(\mathrm T^{x}, \lambda) \delta_\lambda  \Big] \Big \|_{\operatorname{TV}} \leq h \mathbb E[\tilde m]
	\end{equation*}
	as well as
	\begin{align*}
		\big|(\hat\nu_{p+h}+\hat\nu_{p+h, \operatorname{TF}}-\mu)((-\infty, \lambda])\big| &\leq \mathbb E\Big[\Big|\frac{1}{|\mathbb V|}\sum_{\lambda \in \sigma(A^{p+h})} \gamma(A^{p+h}, \lambda) \1_{\{\lambda \leq t\}} - \frac{1}{|\mathbb V|}\sum_{\lambda \in \sigma(A^{p, h})} \gamma(A^{p, h}, \lambda) \1_{\{\lambda \leq t\}} \Big|\Big] \\
		&\leq \frac{1}{|\mathbb V|} \mathbb E\Big[\operatorname{rank}(A^{p+h} - A^{p, h}) \Big] \\
		&= \frac{1}{|\mathbb V|} \mathbb E\Big[\sum_{x \in \mathbb V} \1_{\{ p < U_x \leq p+h \}} \tilde 2m_x^* \Big] \leq 2h \mathbb E[\tilde m]
	\end{align*}
	where we applied \cite[Theorem A.43]{BS16} for the second inequality. Using Proposition \ref{Proposition: Tuning fork contribution}, one easily shows that  $\big\|\hat\nu_{p+h, \operatorname{TF}}- \hat\nu_{p, \operatorname{TF}} \big\|_{\operatorname{TV}} \leq  h  \mathbb E[\tilde m]$.
	Combining the above yields $\| \hat F_{p+h} - \hat F_p \|_\infty \leq 4 h \mathbb E[\tilde m]$,
	and hence the claimed estimate in the $(C^1_c(a, b))'$ norm, see the proof of Lemma \ref{Proposition: C1 distance estimate}. The estimate on the Stieltjes transform can be shown as in the proof of Lemma \ref{Proposition: Estimate on difference of ST}, using the coupling of the $(\chi^p_x)_x$ which we introduced above.
\end{proof}

\section{List of symbols} \label{sec:symbols}

\begin{center}
\begin{tabular}{@{}llc@{}}
\toprule
\\[-1em]
	\textbf{Symbol} & \textbf{Description} & \textbf{Defined in} \\[0.1em]
	\midrule
	$\mathbb G$ & Core graph & Sec.~\ref{Subsection: notation spinkled graphs}\\
	$\mathbb V$ & Vertex set of $\mathbb G$ & Sec.~\ref{Subsection: notation spinkled graphs}\\
	$\mathbb V^+$ & Set of vertices of $\mathbb G$ with $m_x \neq 0$ & Sec.~\ref{Subsection: notation spinkled graphs}\\
	$\mathbb V_\lambda$ & Site percolation on $\mathbb V$ with percolation parameter $\mathbb P(r_x(\lambda) \neq \infty)$ & Sec.~\ref{Subsection: results atoms and the Anderson-percolation representation} \\
	$\mathcal G$ & Sprinkled graph & Sec.~\ref{Subsection: notation spinkled graphs}\\
	$\mathcal V$ & Vertex set of $\mathcal G$ & Sec.~\ref{Subsection: notation spinkled graphs}\\
	$\mathrm G^x$ & Attached graph in $x \in \mathbb V$  & Sec.~\ref{Subsection: notation spinkled graphs} \\
	$\varphi^x$ & Indicator function of set of vertices in $\mathrm G^x$ connected to $x$ & Sec.~\ref{Subsection: notation spinkled graphs} \\
	$m_x$ & Number of vertices contained in $\mathrm G^x$ & Sec.~\ref{Subsection: notation spinkled graphs} \\
	$m_x^*$ & Number of vertices contained in $\mathrm G^x$ connected to $x$ & Sec.~\ref{Subsection: notation spinkled graphs}
	\\[0.1em]
	\midrule
	\\[-1em]
	$B$ & Adjacency matrix of $\mathbb G$ & Sec.~\ref{Subsection: notation spinkled graphs} \\
	$A$ & Adjacency matrix of $\mathcal G$ & Sec.~\ref{Subsection: notation spinkled graphs}\\
	$\mathrm T^x$ & Adjacency matrix of $\mathrm G^x$ & Sec.~\ref{Subsection: notation spinkled graphs} \\
	$T$ & Block diagonal matrix with entries $\mathrm T^x$ & Sec.~\ref{Subsection: notation spinkled graphs} \\
	$H_\lambda$ & Anderson Hamiltonian on $\mathbb V_\lambda$ with potential $(-r_x(\lambda))_{x\in \mathbb V_\lambda}$ & Sec.~\ref{Subsection: results atoms and the Anderson-percolation representation}
	\\[0.1em]
	\midrule
	\\[-1em]
	$\nu_0$ & Density of states of $(\mathbb G, \mathbb o)$ & Sec.~\ref{Subsection: results support}\\
	$\nu$ & Density of states of $(\mathcal G, o)$ & Sec.~\ref{Subsection: notation spinkled graphs} \\
	$\nu_{\operatorname{TF}}$ & Tuning fork contribution to $\nu$ & Sec.~\ref{Subsection: results atoms and the Anderson-percolation representation} \\
	$\hat \nu$ & Density of states without tuning form contribution, normalized by $|\mathbb V_N|$ & Sec.~\ref{subsection: Concentration of mass}\\
	$\rho_\lambda$ & Density of states of $H_\lambda$ conditionally on $\{o\in \mathbb V_\lambda\}$ & Sec.~\ref{Subsection: results atoms and the Anderson-percolation representation} \\
	$\phi$ & Distribution with Stieltjes transform $h$ & Sec.~\ref{Subsection: Results sparse srpinkling}  \\
	$\mathrm \zeta^x$ & Counting measure of $\mathrm N^x$ & Sec.~\ref{subsection: Concentration of mass}
	\\[0.1em]
	\midrule
	\\[-1em]
	$g$ & Stieltjes transform of spectral measure of $B$ in $\mathbb o$ & Sec.~\ref{subsection: Concentration of mass} \\
	$s$ & Stieltjes transform of $\nu$ & Sec.~\ref{Subsection: Results sparse srpinkling} \\
	$\hat s$ & Stieltjes transform of $\hat \nu$ & Sec.~\ref{Subsection: Results sparse srpinkling}\\
	$h$ & Derivative of $s$ at $p=0$ & Sec.~\ref{Subsection: Results sparse srpinkling} \\
	$r_x$ & Stieltjes transform of spectral measure of $\mathrm T^x$ with respect to $\varphi^x$ & Sec.~\ref{Subsection: results atoms and the Anderson-percolation representation}
	\\[0.1em]
	\midrule
	\\[-1em]
	$\mathrm S^x$ & Spectrum of $\mathrm T^x$ & Sec.~\ref{Subsection: results atoms and the Anderson-percolation representation} \\
	$\mathrm S^x_*$ & Eigenvalues of $\mathrm T^x$ to eigenvectors that are non-orthogonal to $\varphi^x$ & Sec.~\ref{Subsection: results atoms and the Anderson-percolation representation} \\
	$\mathrm D$ & Support of $\mathrm S^x$ & Sec.~\ref{Subsection: results atoms and the Anderson-percolation representation} \\
	$\mathrm N^x$ & Zeros of $r_x$ & Sec.~\ref{subsection: Concentration of mass}\\
	$\mathcal Z$ & Set of $\lambda$ for which $g(\lambda)r(\lambda)=1$ with positive probability & Sec.~\ref{subsection: Concentration of mass} \\
	$w(\lambda)$ & Mass of $\lambda$ under spectral measure of $T$ with respect to $\varphi$ & Sec.~\ref{Subsection: results sparse sprinkling and large d}
	\\[0.1em]
	\midrule
	\\[-1em]
	$\mathrm P$ & Distribution of $(\mathrm T^x, \varphi^x)$ & Sec.~\ref{Subsection: notation spinkled graphs}
	\\
	$\mathrm E$ & Expectation with respect to $\mathrm P$ & Sec.~\ref{Subsection: notation spinkled graphs}
	\\[0.1em]
	\bottomrule
	\end{tabular}
\end{center}

{\small

\vspace{1em}

\noindent
Antti Knowles, University of Geneva, \href{mailto:antti.knowles@unige.ch}{antti.knowles@unige.ch}.
\\[0.3em]
Steffen Polzer, IST Austria, \href{mailto:steffen.polzer@ist.ac.at}{steffen.polzer@ist.ac.at}.

\end{document}